\renewcommand*{\bibnamedash}{%
	\leavevmode\raise +0.6ex\hbox to 5.5ex{\hrulefill}.\space\space}
\newenvironment{proposition}
{\pushQED{\qed}\propositionx}
{\popQED\endpropositionx}
\newenvironment{propositionp}
{\pushQED{\qed}\propositionx}
{\popQED\endpropositionx}
\newenvironment{theorem}
{\pushQED{\qed}\theoremx}
{\popQED\endtheoremx}
\newenvironment{theoremp}
{\pushQED{\qed}\theoremx}
{\popQED\endtheoremx}
\newenvironment{corollary}
{\pushQED{\qed}\corollaryx}
{\popQED\endcorollaryx}
\newenvironment{lemma}
{\pushQED{\qed}\lemmax}
{\popQED\endlemmax}
\theoremstyle{remark}
\newtheorem*{remark*}{Remark}
\newenvironment{remark}
{\pushQED{\qed}\remarkx}
{\popQED\endremarkx}
\newcommand{\dd}{\,\mathrm{d}}
\newcommand{\bbB}{\mathbb{B}}
\newcommand{\bbC}{\mathbb{C}}
\newcommand{\bbN}{\mathbb{N}}
\newcommand{\bbR}{\mathbb{R}}
\newcommand{\bbS}{\mathbb{S}}
\newcommand{\bbZ}{\mathbb{Z}}
\newcommand{\calA}{\mathcal{A}}
\newcommand{\calD}{\mathcal{D}}
\newcommand{\calE}{\mathcal{E}}
\newcommand{\calF}{\mathcal{F}}
\newcommand{\calI}{\mathcal{I}}
\newcommand{\calK}{\mathcal{K}}
\newcommand{\calL}{\mathcal{L}}
\newcommand{\calN}{\mathcal{N}}
\newcommand{\calP}{\mathcal{P}}
\newcommand{\calR}{\mathcal{R}}
\newcommand{\calS}{\mathcal{S}}
\newcommand{\calV}{\mathcal{V}}
\newcommand{\calW}{\mathcal{W}}
\newcommand{\calX}{\mathcal{X}}
\newcommand{\calY}{\mathcal{Y}}
\newcommand{\calZ}{\mathcal{Z}}
\newcommand{\bfx}{\mathbf{x}}
\title{Hydrogen-like Schr\"odinger operators at low energies}
\author{Ethan Sussman}
\date{February 12th, 2025 (Last Updated; typos fixed), April 18th, 2022 (Preprint)}
\email{ethanws@mit.edu}
\address{Department of Mathematics, Massachusetts Institute of Technology, Massachusetts 02139-4307, USA}
\subjclass[2020]{Primary 35P25; Secondary 58J40, 58J47, 58J50}
\begin{document}
	
\begin{abstract}
	Consider a Schr\"odinger operator on an asymptotically Euclidean manifold $X$ of dimension at least two, and suppose that the potential is of attractive Coulomb-like type. 
	Using Vasy's second 2nd-microlocal approach, ``the Lagrangian approach,'' we analyze -- uniformly, all the way down to $E=0$ -- the output of the limiting resolvent $R(E\pm i 0) = \lim_{\epsilon \to 0^+} R(E\pm i \epsilon)$. The Coulomb potential causes the output of the low-energy resolvent to possess oscillatory asymptotics which differ substantially from the sorts of asymptotics observed in the short-range case by Guillarmou, Hassell, Sikora, and (more recently) Hintz and Vasy. Specifically, the compound asymptotics at low energy and large spatial scales are more delicate, and the resolvent output is smooth all the way down to $E=0$. In fact, we will construct a  compactification of $(0,1]_E\times X$ on which the resolvent output is given by a specified (and relatively complicated) function that oscillates as $r\to\infty$ times something polyhomogeneous. As a corollary, we get complete and compatible asymptotic expansions for solutions to the scattering problem as functions of both position and energy, with a transitional regime. So, in summary, we develop the low-energy scattering theory of attractive Coulombic potentials in the time-independent formalism, in the process studying delicate behavior at low energy and large scales.
\end{abstract}
	
\maketitle

\tableofcontents

\section{Introduction}
\label{sec:introduction}

In this work we are concerned with scattering theory in the presence of Coulomb-like potentials. The long-range nature of the Coulomb potential modifies the large-scale asymptotics of the solutions of the relevant PDEs \cite{Yafaev}\cite{MelroseSC, MelroseGeometric}\cite{VasyLA}, the most well-studied of which being the time-independent Schr\"odinger equation, the time-dependent Schr\"odinger equation, and the wave equation. It has been known for a long while that the standard definitions of the S-matrix fail for the exact Coulomb potential, in both the time-independent and time-dependent settings --- see e.g. \cite{Dollard}\cite[\S XI.9]{RS}. The culprit is the behavior at large-$r$. 

The reason is easiest to explain in the time-independent (i.e.\ spectral side) setting, in which case the relevant PDE is the Schr\"odinger--Helmholtz equation on $\bbR^n$. First, we recall the short-range case. When the potential is short-range, the relevant ansatz for an incoming spherical wave is $e^{i \sigma r}$, where $r$ is the distance from the origin and $E=\sigma^2$ is the energy of the wave. That is, $\sigma$ is the frequency. Scattering theory involves the study of solutions $u$ of the PDE of the form 
\begin{equation}
u=e^{i\sigma r} A + e^{-i \sigma r} B,  
\label{eq:1}
\end{equation}
where $A,B$ are some sufficiently nice functions. Specifically, we want $A,B$ to be non-oscillatory. Roughly, they should be smooth functions of $1/(1+r^2)^{1/2}$ and the angular coordinates $\theta\in \bbS^{n-1}$, times some polynomial weight (which is not important in this introductory discussion). Thus, a function such as \cref{eq:1} is the sum of an incoming spherical wave and an outgoing spherical wave. The ``incoming profile'' is the leading term $A_0(\theta)$ in the large-$r$ asymptotic expansion of $A$. Usually, it is possible to prove that there exists a unique solution $u$ of the form \cref{eq:1} for specified $A_0$. The all-important S-matrix is then the function $A_0\mapsto B_0$ that sends the incoming profile to the outgoing profile $B_0$, the leading term in the large-$r$ asymptotic expansion of $B_0$. Likewise, the \emph{Poisson map} is the map $A_0\mapsto u$ which takes the incoming profile and produces the unique solution to the PDE with that incoming data. This is how the story goes in the short-range case. One can find proofs of these results in the more general setting of scattering on asymptotically conic manifolds in \cite{MelroseSC, MelroseGeometric}, the perspective in which matches our own.

In the long range case, solutions no longer look like \cref{eq:1}. Instead, solutions look like
\begin{equation}
u= r^{i \mathsf{Z}/2\sigma } e^{i\sigma r} A + r^{-i \mathsf{Z}/2\sigma } e^{-i \sigma r} B,
\label{eq:2} 
\end{equation}
for non-oscillatory $A,B$, where $\mathsf{Z}$ is the strength of the Coulomb potential. In atomic physics, where the equation is modeling an electron orbiting a positively charged nucleus, $\mathsf{Z}$ is the atomic number, which is traditionally denoted with the letter ``$Z$.' So, $e^{i \sigma r}$ is no longer a sufficient ansatz for an incoming spherical wave --- rather $r^{i\mathsf{Z}/2\sigma} e^{i \sigma r}$ is. The outgoing oscillations are no longer at frequency $\sigma$. Rather, the phase receives a logarithmic correction $\mathsf{Z} \log (r) /2\sigma$, known in the literature as the \emph{Coulomb phase}. This is the reason why the traditional definitions of the S-matrix fail for Coulomb-like potentials: $r^{ i\mathsf{Z}/2\sigma}A$ is not non-oscillatory as $r\to\infty$. The story is similar in the time-dependent setting; the usual definitions of the S-matrix (or related objects like the wave/M{\o}ller operators) receive corrections from the Coulomb phase. The resulting objects are known as the Dollard-modified operators, after Dollard's original work on the subject \cite{Dollard}. With some abuse of terminology, we can refer to \cref{eq:2} as the Dollard-modified ansatz as well. 

So far, this has all been for positive energy/frequency, $\sigma>0$. The behavior near $\sigma=0$ is of interest in part because, in the passage from the time-independent picture to the time-dependent picture, it is necessary to take a Fourier transform in $\sigma$ or $E=\sigma^2$, and $\sigma=0$ is included in the domain of integration. 
For example, for the wave equation and the time-dependent Schr\"odinger equation with a short-range potential, the dominant contributions to the long-time behavior in spatially compact regions comes from low-energy behavior. Physically, this makes sense --- high-energy wavepackets leave spatially compact regions rapidly, whereas low-energy wavepackets can linger and therefore contribute to the long-time asymptotics. So, if we want to understand the asymptotics in spacetime of solutions of these sorts of PDE, it is necessary to understand the low-energy limit of scattering theory.

Unfortunately, now we see a defect in the Dollard ansatz: the Coulomb phase diverges as $\sigma\to 0^+$. Thus, while the ansatz \cref{eq:2} is sufficient for understanding the Schr\"odinger--Helmholtz equation at positive energy, if we want to understand the corresponding time-dependent PDE then something better is needed. Dollard managed to make do because the multiplier $r^{\pm i \mathsf{Z}/2\sigma}$, while highly oscillatory as $\sigma\to 0^+$, is a bounded operator on $L^2$. However, the extraction of long-time asymptotics is a much more delicate matter than the existence of a suitable S-matrix as an operator $L^2\to L^2$. It is for this reason that we find it necessary to go beyond Dollard's ansatz. Note that these difficulties do not arise in the short range case: $e^{i \sigma r}$ is  a well-behaved ansatz all the way down to and including $\sigma=0$. Interesting phenomena still occur in this regime --- see e.g.\ \cite{GH1, GH2, GHK}\cite{VasyN0, VasyN0L}\cite{HintzPrice} for works in the same amount of generality as that pursued here --- but these have to do with the asymptotics of the coefficients $A,B$ in \cref{eq:1}, not the oscillatory prefactor.

One way to see that \cref{eq:2} fails in the $\sigma\to 0^+$ limit is to consider exactly what happens exactly \emph{at} $\sigma=0$. At $\sigma=0$, it turns out that the solutions of the PDE are still oscillatory, at least if $\mathsf{Z}>0$, which is the case we treat here. Specifically, the solutions look like 
\begin{equation}
u = e^{2i \sqrt{\mathsf{Z}r } } A + e^{2i \sqrt{\mathsf{Z}r}} B 
\label{eq:3}
\end{equation}
for non-oscillatory $A,B$ --- we will essentially prove a precise version of this in \S\ref{sec:0_operator}. 
This fact depends crucially on the presence of a Coulomb potential and on its sign. In the short-range case ($\mathsf{Z}=0$), the PDE is essentially Laplace's equation, for which solutions look like polynomials, and in the $\mathsf{Z}<0$ case, a generic solution of the PDE is exponentially growing as $r\to\infty$ (though the physically relevant solutions are typically exponentially decaying), as can be seen by replacing $\mathsf{Z}\to -\mathsf{Z}$ in \cref{eq:3}. The $\sqrt{r}$-type oscillatory behavior seen in \cref{eq:3} is specific to the case of attractive Coulombic potentials. 
In the Dollard ansatz \cref{eq:2}, there is no hint of this sort of oscillatory behavior. 
This is why Dollard's ansatz must fail in the $\sigma \to 0^+$ limit. 
Somehow, the phases $\sigma r$ and $\mathsf{Z} \log(r)/2\sigma$, one of which goes to 0 and the other of which blows up as $\sigma\to 0^+$, have to give way to the intermediate $2\sqrt{\mathsf{Z} r}$. 

The problem before us is therefore this: to come up with an alternative to the Dollard ansatz 
\begin{equation} 
r^{\pm i \mathsf{Z}/2\sigma } e^{\pm i \sigma r}
\label{eq:5}
\end{equation} 
that correctly describes the behavior in the $\sigma\to 0^+$ limit, and which reduces to \cref{eq:3} exactly at $\sigma=0$. The answer we present is\footnote{Actually, this is only correct for potential scattering. When the metric is variable, the ansatz receives another correction coming from one particular subleading term in the metric. We call this term ``$a_{00}$'' below.}
\begin{equation}
\exp \Bigg[ \pm i\Bigg(  r \sqrt{\sigma^2 + \frac{\mathsf{Z}}{r} }  + \frac{\mathsf{Z}}{\sigma} \operatorname{arcsinh} \Bigg(\sigma \sqrt{\frac{r}{\mathsf{Z}} } \Bigg)\Bigg) \Bigg]. 
\label{eq:4} 
\end{equation}
(The apparent singularity of the $\operatorname{arcsinh}$ term  at $\sigma=0$ is removable because $\operatorname{arcsinh}$ is odd. Thus, \cref{eq:4} is better behaved than \cref{eq:5} as $\sigma\to 0^+$.) 
It is an elementary exercise to show that, for each individual $\sigma>0$, this reduces to the Dollard spherical wave 
modulo smooth factors which can be absorbed into $A,B$, and likewise that, for $\sigma=0$, this reduces to the correct $\exp(\pm 2 i \sqrt{\mathsf{Z} r})$, again modulo nice factors. For $\sigma>0$, the $r\sqrt{\sigma^2 + \mathsf{Z}/r}$ contributes the main term, while $\operatorname{arcsinh}$ term gives the Coulomb phase $\mathsf{Z}\log (r)/2\sigma$. On the other hand, when $\sigma=0$ exactly, both terms contribute equally, each giving one factor of $\sqrt{\mathsf{Z} r}$.
So, the ansatz \cref{eq:4} 
successfully interpolates between the positive energy and low energy behavior while still approximately solving the relevant PDE, and it is the essentially unique (meaning modulo continuous factors) ansatz that does so. This is made precise by \Cref{cor:scattering} below. 

For the reader wondering where the complicated formula \cref{eq:4} comes from, we explain in the next subsection how the low-energy problem has a semiclassical structure at large-radii; the ansatz above is precisely the Liouville--Green (a.k.a. WKB or JWKB) approximation that results. The essential uniqueness of \cref{eq:4} comes from the essential uniqueness of the Liouville--Green approximation. Usually, it is the high-energy regime in which one encounters semiclassical phenomena. The fact that the low-energy, large-radius hydrogen atom has a semiclassical structure is not as well-known as it perhaps should be. 

Our goal in this manuscript is to develop the low-energy scattering theory of attractive Coulomb-like potentials along these lines, taking \cref{eq:4} as the ansatz describing an incoming spherical wave. Our focus is on describing the behavior of the limiting resolvent $R(\sigma^2 \pm i0)$. The reason for this focus is that the S-matrix and Poisson operators can both be constructed in terms of the two limiting resolvents, as can the spectral projector via one of Stone's theorems. Thus, the limiting resolvents can be taken as the fundamental objects in time-independent scattering theory. This is why our main theorem, \Cref{thm:main}, only talks about the resolvents. However, the reader more interested in scattering theory than spectral theory might prefer to think of \Cref{cor:scattering} as the main result.

We say nothing about the time-dependent formalism except for a few brief sketches. While time-independent scattering theory is important in its own right, the most convincing motivation comes from understanding the corresponding PDE in spacetime. Passing to the time-dependent formalism is, in some sense, ``just'' a matter of taking the Fourier transform. However:
\begin{itemize}
	\item whether one takes the Fourier transform in $\sigma$ or $E$ depends on whether one wants to study the wave/Klein--Gordon equation or the time-dependent Schr\"odinger equation, and different asymptotic forms result,
	\item in order to control the Fourier transform, one needs also the high energy, $\sigma\to\infty$, asymptotics; this is actually already understood, but it is orthogonal to the low-energy phenomena that we are focused on, 
	\item producing full asymptotic expansions on the spacetime side can involve a nontrivial amount of work, even once full asymptotic expansions are known on the spectral side. This work should not be difficult --- it boils down to studying the asymptotics of some simple oscillatory integrals --- but it also should not be trivial.
\end{itemize}
Thus, we have chosen to only deal with time-independent scattering theory here. The time-dependent equations are well worth studying, of course, but it is our view that, once the spectral side is understood, understanding the time-dependent side is more or less straightforward. In the short-range case, this is well established, and the long-range case should not be different in this regard. We intend to tackle the time-dependent Schr\"odinger equation for ionized hydrogen in future work.

This manuscript has been written in the language of the Melrose school of geometric microlocal analysis. This language is an incisive tool for studying multifaceted asymptotic expansions like those we encounter below. It provides a precise notion of what it means to have complete understanding of the asymptotics. Unfortunately, this language can also make for difficult reading. Let us therefore point out that, in \S\ref{ap:model}, we consider the case of an exact Coulomb potential. Then, the solutions of the differential equation are special functions, namely Whittaker functions. In that appendix, we apply our main results to make concrete statements about the asymptotics of the Whittaker functions in a poorly understood asymptotic regime. There, we reproduce and refine classical results from the literature of special functions. Hopefully, that appendix provides an accessible introduction to the problem studied here. The results are illustrated with numerics, unlike elsewhere in this paper.

The next subsection is a technical introduction to the paper, for general asymptotically conic manifolds, this being the natural amount of generality in which our results hold. One of its purposes is to record the main results of the paper. We will repeat there some of the discussion here, but with much more detail. 
Our main result is \Cref{thm:main}, which expresses, in precise terms, the fact that the resolvent output admits full asymptotic expansions in terms of elementary functions on a particular manifold-with-corners compactifying the low-energy regime --- see \Cref{fig:single_space}. To make this more concrete, we present in \Cref{cor:main1}, \Cref{cor:main2}, \Cref{cor:main3} the content of this result vis-a-vis explicit asymptotic expansions. Roughly, our result that the resolvent output $R(\sigma^2 \pm i0)f$ is, for Schwartz $f$, of the form 

\begin{equation}
\exp \Bigg[ \pm i\Bigg(  r \sqrt{\sigma^2 + \mathsf{Z}{r} }  + \frac{\mathsf{Z}}{\sigma} \operatorname{arcsinh} \Bigg(\sigma \sqrt{\frac{r}{\mathsf{Z}} } \Bigg)\Bigg) \Bigg] A
\label{eq:6}
\end{equation}
for $A$ which is almost a smooth function of $\sigma^2$, $1/r$, and the angular variables $\theta$. There are two reasons why the ``almost'' in the previous sentence is necessary:
\begin{itemize}
	\item The functions are not smooth, but instead polyhomogeneous. Polyhomogeneity is the natural generalization of smoothness to allow any generalized Taylor series, which may include logarithmic terms. For example, a polyhomogeneous function on $[0,1)_x$ is a function which admits well-behaved asymptotic expansions in functions of the form $x^j (\log x)^k$. 
	\item The $A$ in \cref{eq:6} is not polyhomogeneous in $\sigma$ and $1/r$. Rather, it is polyhomogeneous on the manifold-with-corners in \Cref{fig:single_space}. Concretely, this will mean polyhomogeneity in $\sigma$ and $\hat{x} = 1/(\sigma^2 r)$ as $\hat{x}\to 0^+$ and polyhomogeneity in $x=1/r$ and $\hat{E} = \sigma^2 r$ as $\hat{E}\to 0^+$, as well as in the angular variables $\theta$. 
\end{itemize}
For the reader not familiar with the use of blowups for this sort of asymptotic analysis, it is worth keeping in mind the simplest example: on the two-dimensional quadrant $[0,\infty)^2\subset \bbR^2_{x,y}$, the polar angle $\theta = \operatorname{arctan}(y/x)$ is not polyhomogeneous. Rather, it is polyhomogeneous on the blowup 
\begin{equation}
[[0,\infty )^2 ; \text{origin}] \cong [0,\pi/2]_\theta\times [0,\infty)_r. 
\end{equation}
Thus, in this example, a blowup just means using polar coordinates, a coordinate system which is singular with respect to the smooth structure on the original manifold $[0,\infty)^2$.

For another example, consider $x/(x+y)$. When $y=0$, this is identically $1$. When $x=0$, this is identically $0$. The limits $\lim_{x\to 0^+},\lim_{y\to 0^+}$ do not commute:
\begin{equation}
\lim_{x\to 0^+} \lim_{y\to 0^+} \frac{x}{x+y} \neq \lim_{y\to 0^+} \lim_{x\to 0^+} \frac{x}{x+y}.
\end{equation}
This is exactly the sort of behavior that polyhomogeneity excludes. The function $x/(x+y)$ is not polyhomogeneous on the quadrant $[0,\infty)^2$, but rather on the blowup $[[0,\infty )^2 ; \text{origin}]$. Indeed, 
\begin{equation}
\frac{x}{x+y} = \frac{\cos(\theta)}{\cos(\theta)+\sin(\theta)}
\end{equation}
is a perfectly smooth function of $\theta\in [0,\pi/2]$. We will make use of more complicated manifolds-with-corners below, but the one $X^{\mathrm{sp}}_{\mathrm{res}}$ we use in \Cref{thm:main} to describe the resolvents is just  $[[0,\infty )^2 ; \text{origin}]$ times $\bbS^{n-1}$, where the Cartesian coordinates $x,y$ are to be replaced by $\sigma^2,1/r$. (The definition of $X^{\mathrm{sp}}_{\mathrm{res}}$ involves an additional change of smooth structure, but we can ignore that for now.) 

Let us emphasize that we do not allow our potential to be singular at $r=0$ (except in the appendix). Thus, the exact Coulomb potential $1/r$ is excluded. This is for reasons of simplicity. It is not expected to be difficult to extend the results below to handle such singularities. The low-energy asymptotics near them is straightforward since, as far as the $r\to 0$ limit is concerned, the differential equation does not degenerate at low-energy. Consequently, unlike in the $r\to\infty$ case where the $\sigma\to 0^+$ limit requires a transitional asymptotic regime, no such complexities are present near a singularity. Unfortunately, developing this requires a detailed discussion of regular singular PDE beyond the elementary results required here. The added burden was felt to be not worth it. The ODE case is simple enough, so the discussion in \S\ref{ap:model} includes a singularity at $r=0$. 

\subsection{Technical introduction}
In \cite{MelroseSC}, Melrose introduced the programme of understanding the limiting absorption principle on asymptotically conic manifolds, denoted $X$ below, from the microlocal point of view, initially for the Laplacian and then for Schr\"odinger operators more generally \cite[\S16]{MelroseSC}. The case of fixed energy $E>0$ was dealt with first by Melrose \cite{MelroseSC}, then by  Hassell \& Vasy \cite{HassellVasy} and -- using a more modern approach -- Vasy \cite{VasyLA} again, while uniform estimates in the high energy (a.k.a.\ ``semiclassical,'' $E\to\infty$) limit have been established by Vasy \& Zworski \cite{VasyZworski} and Vasy \cite[\S5]{VasyLA}. More recently, the low energy $E\to 0^+$ behavior has been understood to a highly satisfactory degree in the works of Guillarmou, Hassell, and Sikora \cite{GHK} and Vasy \cite{VasyN0}\cite{VasyN0L} --- see also Hintz \cite[\S2]{HintzPrice}. (Complementarily, Guillarmou and Hassell \cite{GH1}\cite{GH2} consider the $E\to 0^-$ limit of the resolvent kernel.) While the previous results applied to all Schr\"odinger operators, sometimes under standard dynamical assumptions, the low energy results proven so far in this level of generality  apply only to Schr\"odinger operators without Coulomb-like (a.k.a. long range) terms. Indeed, a Coulomb-like potential has a serious effect on the asymptotics of formal Schr\"odinger eigenfunctions in the low energy limit --- this is true in the context of Euclidean potential scattering, and the general case inherits that complexity. 

In this paper, we study the case of an \emph{attractive} Coulomb-like potential using the framework of \cite{VasyLA}\cite{VasyN0L}, Vasy's 2nd second microlocal approach (the ``Lagrangian approach''). 
This is in contrast to his 1st second microlocal approach to the low energy limit, pursued in \cite{VasyN0}, which utilized variable order Sobolev spaces (as previously used in e.g.\ \cite[Proposition 5.28]{VasyGrenoble} for the $E>0$ case). 
The use of the term ``Lagrangian'' belies the fact that the framework of Lagrangian  (more properly \emph{Legendrian}) distributions does not appear explicitly in this approach --- in fact this is precisely the point: instead of ``module regularity \cite[\S6]{HassellMelroseVasy}\cite{HassellETAL1} for a Lagrangian submanifold'' -- which implies extra regularity \emph{outside} of that submanifold via some elliptic estimates -- we need only consider b-Sobolev regularity. We use a conjugation to move what would otherwise have been the energy-dependent Legendrian submanifold for which module regularity (in this case essentially the Sommerfeld radiation condition) is established to the zero section of the scattering cotangent bundle (which can be blown up to the fibers of the b-cotangent bundle over the boundary). The upshot is that the Sommerfeld radiation condition, in one of its forms, is replaced by a condition regarding b-Sobolev regularity. This is convenient for getting estimates which are uniform in $E$ because, unlike the relevant notion of module regularity prior to conjugation (see e.g. \cite{HassellMelroseVasy}\cite{HassellETAL1}), in which the relevant test module depends on $E$, the relevant notion of b-regularity (as stated in \cite[Theorem 1.1]{VasyLA}) does not depend so much on $E$. (The sharpest form of \cite[Theorem 1.1]{VasyLA} is phrased using sc,b-Sobolev regularity, which is a form of module regularity --- the key point here is that the relevant test module is $E$-independent.)

We will apply similar considerations to the study of the  $E\to 0^+$ limit of the limiting resolvents ``$P((E\pm i0)^{1/2})^{-1}=(P(0) - E \mp i0 )^{-1}$'' of the Schr\"odinger operator 
\begin{equation}
	P(0)=\triangle_g - \mathsf{Z} x + V , 
	\label{eq:P(0)}
\end{equation}
where $V$ is short-range and $\mathsf{Z}>0$, so the total potential $W= - \mathsf{Z}x+V$ is of attractive Coulomb-like type. Here $x\in C^\infty(X;\bbR^{\geq 0})$ is a boundary defining function (bdf), e.g. $1/\langle r \rangle$ when $X$ is asymptotically Euclidean, $r$ denoting the Euclidean radial coordinate. 
(Note: we follow the convention of parametrizing the spectral family $\smash{\{P(0)-E\}_{E\geq 0}}$ of $P(0)$ in terms of $\sigma=E^{1/2}$, so we write $P(\sigma)=P(0)-\sigma^2$.)
Singular versions of the operator \cref{eq:P(0)} first appeared in Schr\"odinger's model of atomic hydrogen -- or more generally hydrogenic ions -- hence the operators we consider could also be called ``hydrogen-like.''
In \cref{eq:P(0)}, $\triangle_g\geq 0$ is the positive semidefinite Laplacian associated with an asymptotically conic metric. The conjugated perspective complicates the family of operators under consideration (see \S\ref{sec:operator}) -- more so than in the case $\mathsf{Z}=0$, when the total potential is short-range -- but it greatly facilitates the derivation of low energy asymptotics. See below for a heuristic discussion (and \S\ref{sec:mainproof} for details). It should be noted that although we work with general asymptotically conic manifolds, our results are new even on exact Euclidean space. The existing literature on low energy asymptotics in the presence of a Coulomb-like term is quite sparse --- the only previous treatments the author is aware of are  \cite{Yafaev}\cite{Nakamura}\cite{SkibstedFournais}\cite{DerezinskiSkibsted1}\cite{DerezinskiSkibsted2}\cite{Bouclet}\cite{Skibsted}. In comparison to these earlier works, we require more of our potential, but the payoff is a complete understanding of asymptotics at spatial infinity. 

We comment briefly on our focus on the case $E>0$ and $\mathsf{Z}>0$. 
Considering the spectral family of a not-necessarily-attractive Coulomb-like Schr\"odinger operator at energy $E\in \bbR\backslash \{0\}$ and with attractivity strength $\mathsf{Z}\in \bbR\backslash \{0\}$ (the atomic number for the case of an electron orbiting an atomic nucleus, using appropriately natural units): out of the four cases (I) $E>0,\mathsf{Z}>0$ (attractive, scattering near zero energy), (II) $E<0,\mathsf{Z}>0$ (attractive, ellipticity near zero energy), (III) $E>0,\mathsf{Z}<0$ (repulsive, scattering near zero energy), (IV) $E<0,\mathsf{Z}<0$ (repulsive, ellipticity near zero energy), the first and fourth are the most tractable, as evidenced by the state of the literature on similar problems in exact Euclidean space --- see \cite{Yafaev}\cite{Nakamura} for work on cases (I) and (III) and \cite{SkibstedFournais}\cite{Skibsted} for case (I). In the more difficult case (II), for example, one can encounter an infinite sequence of bound states, as in the hydrogen atom --- see \cite[Theorem 11.6.7]{Monster}. Case (III), which has been studied partially in \cite{Yafaev}, is expected to be intermediate between (II) and (I) in terms of difficulty. While our focus is on (I), the pseudodifferential technology developed yields an easy treatment of case (IV).

The jumping off point for our analysis is the proof of a symbolic estimate, \Cref{thm:symbolic_main}, of $u \in \calS'(X)$ in terms of $f=P(E^{1/2})u$ that is uniform down to $E =0$. The estimate is structurally similar to the combined radial point and propagation estimates proven for $E>0$ in \cite[\S8]{MelroseSC}. We will formulate the estimate in a ``second microlocal'' framework akin to that in \cite{VasyLA}, as this  dovetails with the conjugated perspective, but a similar estimate can be articulated using function spaces analogous to the somewhat more standard \emph{variable order} sc-Sobolev spaces \cite{VasyOrig}\cite{VasyGrenoble}\cite{HassellETAL1}. To illustrate the idea, we rewrite the operator $P(\sigma)$ in terms of the coordinate $\hat{x} = x/\sigma^2$, which is appropriate for homogenizing the spectral parameter and Coulomb potential. To the relevant order, $\sigma^{-2} P(\sigma)$ is given in terms of $\hat{x},\sigma>0$, the latter of which we suggestively rename `$h$,' by
\begin{equation}
	\hat{P}(h) = -h^2 (\hat{x}^2 \partial_{\hat{x}} )^2 + h^2 \hat{x}^2 \triangle_{\partial X}  - 1 - \mathsf{Z} \hat{x},
	\label{eq:Prescaled}
\end{equation}  
which we consider as a 1-parameter family of operators on the exact cone $[0,\infty)_{\hat{x}}\times \partial X$. Note that $\sigma=h$ appears on the right-hand side of \cref{eq:Prescaled} as an effective \emph{semiclassical} parameter (somewhat surprisingly, since semiclassical problems typically arise in the study of the high energy regime rather than the low energy regime of interest here), and so $\smash{\hat{P}}$ can be studied as a semiclassical operator on an exact cone, an approach that seems to have first been undertaken by Nakamura \cite{Nakamura}. 
This conic problem differs from the conic problem arising in \cite{WangCone}\cite{GH1}\cite{GHK}\cite{VasyN0L}, which has no semiclassical parameter. The qualitative features of the semiclassical family defined by \cref{eq:Prescaled} are as follows: 
\begin{itemize}
	\item At the ``large'' end of the cone, $\{\hat{x}=0\}$, $\smash{\hat{P}}(h)$ is of real principal type for each $h>0$. This holds regardless of the sign of $\mathsf{Z}$ and reflects the dynamical structure of $P(\sigma)$ for $\sigma>0$.
	\item At $h=0$, one has real principal type propagation from $\hat{x} = 0$ to $\hat{x}=\infty$ (at finite frequencies).
\end{itemize}
The second of these is of secondary importance below.

The situation changes completely if $\mathsf{Z}<0$ or if $E<0$. Then, \cref{eq:Prescaled} is to be replaced by 
\begin{equation}
	\hat{P} (h) = -h^2 (\hat{x}^2 \partial_{\hat{x}} )^2 + h^2 \hat{x}^2 \triangle_{\partial X}  \pm 1 \pm \mathsf{Z} \hat{x},
	\label{eq:Prescaledpm}
\end{equation}
where the first sign is that of $-E$ and the second sign is that of $-\mathsf{Z}$, each possible pair of signs corresponding to one of the four cases (I), (II), (III), (IV) above. If $E<0$, then \cref{eq:Prescaledpm} is elliptic at the large end of the cone, and if $\mathsf{Z}<0$ then \cref{eq:Prescaledpm} is semiclassically elliptic near the small end of the cone. Even for the case when $X$ is one-dimensional, one subtlety of the cases (II) $E<0,\mathsf{Z}>0$ and (III) $E>0,\mathsf{Z}<0$ is understanding precisely the transition from ellipticity to nonellipticity that occurs at $h=0$ and $\hat{x}=1/|\mathsf{Z}|$. This subtlety arises, for example, in physicists' treatment of the WKB approximation, where Airy functions are used to patch quasimodes in the classically allowed and classically forbidden regions --- see \cite{Sakurai} for a standard treatment at a physicist's level of rigor. In case (II), the full partial differential operator \cref{eq:Prescaledpm} has an additional subtlety: its Hamiltonian flow has closed loops corresponding to classical Keplerian orbits. See \cite[\S11.6]{Monster} for a discussion of the effects of this on eigenvalue counting. 

Zooming back from the boundary, we will analyze $P(\sigma)$ on a mwc (``mwc'' standing for 
``manifold-with-corners,'' in the sense of Melrose's school \cite{MelroseCorners}) which we will denote $X^{\mathrm{sp}}_{\mathrm{res}} = [ [0,\infty)_E\times X; \{0\}\times \partial X;\frac{1}{2}]$, depicted in \Cref{fig:single_space}. This is the result of modifying the smooth structure of 
\begin{equation} 
	X^{\mathrm{sp}}_{\mathrm{res},0} = [ [0,\infty)_E\times X; \{0\}\times \partial X]
\end{equation} 
at the front face of the blow-up so that $(E+x)^{1/2}$ becomes a bdf. 
One can analyze $P(\sigma)$ by quantizing the Lie algebra $\calV_{\mathrm{sc,leC}}(X)$ of smooth vector fields on $X^{\mathrm{sp}}_{\mathrm{res}}$ that
\begin{enumerate}[label=(\Roman*)]
	\item are tangent to the level sets of $E$, 
	\item lie in $\varrho_{\mathrm{bf}_{00}} \varrho_{\mathrm{tf}_{00}} C^\infty(X_{\mathrm{res}}^{\mathrm{sp}} )\otimes \calV_{\mathrm{b}}(X) \subset \calV_{\mathrm{E}}(X^{\mathrm{sp}}_{\mathrm{res}})$,
\end{enumerate}
where $\varrho_{\mathrm{bf}_{00}},\varrho_{\mathrm{tf}_{00}}$ are as in \Cref{fig:single_space} and $\calV_{\mathrm{E}}(X^{\mathrm{sp}}_{\mathrm{res}})$ is the Lie algebra of smooth vector fields on $X^{\mathrm{sp}}_{\mathrm{res}}$. The quotient algebra $\calV_{\mathrm{sc,leC}}(X)/ \varrho_{\mathrm{bf}_{00}} \varrho_{\mathrm{tf}_{00}}\calV_{\mathrm{sc,leC}}(X)$ is commutative, so the corresponding $\Psi$DO calculus (which is closely related to the leC-calculus discussed below) is under symbolic control. This allows us to prove a half-Fredholm estimate involving variable order ``sc,leC''-Sobolev spaces that is uniform down to zero energy. The second-microlocal estimate \Cref{thm:symbolic_main} is a sharper version of this.

Although it is not our most general result, as the main propositions of \S\ref{sec:symbolic}, \S\ref{sec:mainproof} require less smoothness of the coefficients of the PDE, we will prove the following main theorem. We state it using some standard or semistandard terminology, which, if not standard, is recalled in \S\ref{subsec:preliminaries} below. The special case of Euclidean potential scattering off of an attractive asymptotically Coulomb-like potential is partially stated in \Cref{cor:main1}, \Cref{cor:main2}, and \Cref{cor:main3}.

\begin{figure}
	\begin{tikzpicture}[scale=.75]
		\draw (4,0) to[out = 90, in=180] (6,2);
		\draw[->] (6,2) -- (6,4) node[pos = 1.1] {$x$};
		\draw[->] (4,0) -- (0,0) node[pos = 1.05] {$E$};
		\draw[->,red] (3.9,.1) --++ (-1,0) node[above] {$\sigma$}; 
		\draw[->,red] (3.9,.1) to[out = 90, in=245] (4.15,1.05) node[left] {$\varrho_{\mathrm{bf}_{00}}$};
		\draw[->,red] (5.9,2.1) --++ (0,1) node[left] {$x^{1/2}$};
		\draw[->,red] (5.9,2.1) to[out = 180, in=25] (5.05,1.9) node[above] {$\varrho_{\mathrm{zf}_{00}}\;$};
		\draw[->,red] (4.5,1.5) --++ (-1,1) node[above] {$\varrho_{\mathrm{tf}_{00}}$};
		\node (bf) at (2,-.25) {$\mathrm{bf}$};
		\node (tf) at (4.95,.95) {$\mathrm{tf}$};
		\node (zf) at (6.25,3) {$\;\mathrm{zf}$};
	\end{tikzpicture}
	\caption{The mwc $X^{\mathrm{sp}}_{\mathrm{res}} = [[0,\infty)_{E} \times X;\{0\}\times \partial X; 1/2]$, with bdfs $\varrho_{\mathrm{bf}_{00}} = \mathsf{Z} x/(\sigma^2+\mathsf{Z} x) $, $\varrho_{\mathrm{tf}_{00}} = (\sigma^2+\mathsf{Z}x)^{1/2}$, and $\varrho_{\mathrm{zf}_{00}} = \sigma^2 / (\sigma^2+\mathsf{Z}x)$ in terms of $E^{1/2}=\sigma$. This is $X^{\mathrm{sp}}_{\mathrm{res},0}=[[0,\infty)_{E} \times X;\{0\}\times \partial X]$ with the smooth structure at the front face of the blow-up modified. The interior of the mwc is to the upper-left of the drawn boundary. 
	(Degrees of freedom associated with $\partial X$ omitted from the diagram.)}
	\label{fig:single_space}
\end{figure}
\begin{theorem}[Main theorem, rough version]
	The limiting resolvent output is of exponential-polyhomogeneous type on $X^{\mathrm{sp}}_{\mathrm{res}}$. 
\end{theorem}
More precisely:
\begin{theorem}[Main theorem]	\label{thm:main}
	Suppose that $(X,\iota,g_0)$ is an exactly conic manifold of dimension $\dim X=n\geq 2$, and let $x \in C^\infty(X)$, $x:X\to [0,\infty)$, denote a compatible boundary-defining-function (bdf), so that, near $\partial X$,
	\begin{equation}
		g_0 = \frac{\mathrm{d}x^2}{x^4} + \frac{g_{\partial X}}{x^2} 
	\end{equation}
	for some Riemannian metric $g_{\partial X}$ on $\partial X$. Let $g$ denote a fully classical asymptotically conic metric on $X$, which in this context means a Riemannian metric on $X^\circ=X\backslash \partial X$ which near $\partial X$ has the form 
	\begin{equation}
		g = g_0 + a_{00} \frac{\mathrm{d}x^2}{x^3} + \frac{\Gamma_{1,\partial X}\odot\mathrm{d}x}{x^2} + \frac{h_{1,\partial X}}{x} +  x^2 C^\infty(X; {}^{\mathrm{sc}}\mathrm{Sym}^2 T^* X)
	\end{equation}
	for some $a_{00} \in \bbR$, $\Gamma_{1,\partial X} \in \Omega^1(\partial X)$, and $h_{1,\partial X} \in C^\infty(\partial X;\operatorname{Sym}^2 T^* \partial X)$. 
	Given $\mathsf{Z}>0$ and $V\in x^{2}C^\infty(X;\mathbb{R})$, consider the Schr\"odinger operator 
	\begin{equation}
	P(0) = \triangle_g - \mathsf{Z} x + V : \calS'(X)\to \calS'(X), 
	\end{equation}
	where $\triangle_g$ is the (positive semidefinite) Laplacian. 
	For each $E>0$, let 
	\begin{equation}
	R(E \pm i 0) = R(E \pm i 0;\mathsf{Z}) : \calS(X)\to \calS'(X)
	\end{equation}
	denote the limiting resolvent from above or below the spectrum --- cf.\  Melrose \cite[\S14]{MelroseSC}. (That is, for any $f\in \calS(X)$, $u_\pm=R(E\pm i0)f$ is the unique solution to $P(0)u=Eu+f$ satisfying the weak Sommerfeld radiation condition \cite[\S11]{MelroseSC}.)
	Set  
		\begin{equation}
		\Phi(x;\sigma) =    \frac{1}{x} \sqrt{\sigma^2+\mathsf{Z} x-\sigma^2 a_{00} x} + \frac{1}{\sigma}(\mathsf{Z} -\sigma^2 a_{00}) \operatorname{arcsinh} \Big( \frac{\sigma}{x^{1/2}} \frac{1}{(\mathsf{Z}  - \sigma^2 a_{00})^{1/2}} \Big)
		\label{eq:misc_p00}
	\end{equation}
	for all $\sigma>0$ such that $\sigma^2 a_{00}< \mathsf{Z}$. 
	
		Then, for any Schwartz function $f\in \calS(X)$, the function $u_{0,\pm}$ on $X^{\mathrm{sp}}_{\mathrm{res}} \cap \{\mathsf{Z} > E a_{00}\}$ defined by 
		\begin{equation}
			u_\pm  = e^{\pm i \Phi(x;E^{1/2})} x^{(n-1)/2} (E+\mathsf{Z} x)^{-1/4} u_{0,\pm}
			\label{eq:u_decomp}
		\end{equation}
		(for $E$ such that $\mathsf{Z}>E a_{00}$) is polyhomogeneous on $X^{\mathrm{sp}}_{\mathrm{res}} \cap \{\mathsf{Z} > E a_{00}\}$, conormal of order zero, and smooth at $\mathrm{zf}^\circ \cup \mathrm{bf}^\circ$. 
		
		Moreover,  $R(E=0;\mathsf{Z}\pm i0):\calS(X)\to \calS'(X)$ is well-defined (e.g. as a strong limit of $R(E=0;\mathsf{Z}\pm i\epsilon)$ as $\epsilon\to 0^+$), and we can write $
		u_\pm(-;0)  = R(E=0;\mathsf{Z} \pm i0)f$ as 
		\begin{equation}
			u_\pm(-;0) =e^{\pm i \Phi(x;0)} x^{(n-1)/2} (\mathsf{Z} x)^{-1/4} u_{0,\pm}(-;0) 
			\label{eq:u_decomp0}
		\end{equation}
		($\Phi(-;0)$ being defined by removing the removable singularity of \cref{eq:misc_p00} at $\sigma=0$),
		where 
		\begin{equation} 
			u_{0,\pm}(-;0) \in C^\infty(X_{1/2})
		\end{equation} 
		is the restriction of $u_{0,\pm}$ to $\mathrm{zf} = \operatorname{cl}\{\sigma=0,x>0\} \subset X^{\mathrm{sp}}_{\mathrm{res}}$. 
		Thus, defining $u_\pm(-;E^{1/2})$ either as $R(E\pm i0)f$ for $E>0$ or $R(E=0;\mathsf{Z}\pm i 0)f$ for $E=0$, the formula \cref{eq:u_decomp} holds for all $E\geq 0$ sufficiently small.

\end{theorem}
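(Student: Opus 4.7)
The plan is to reduce the claim to the uniform symbolic estimate \Cref{thm:symbolic_main} via the conjugation displayed in \cref{eq:u_decomp}, and then upgrade the resulting conormality of $u_{0,\pm}$ to polyhomogeneity by a subtract-and-resolve iteration at each boundary face of $X^{\mathrm{sp}}_{\mathrm{res}}$. The phase $\Phi(x;\sigma)$ in \cref{eq:misc_p00} is constructed precisely so that it solves the eikonal equation $x^{-4}(\partial_x \Phi)^2 = \sigma^2 + \mathsf{Z} x - \sigma^2 a_{00} x$ associated with the radial part of the principal symbol of $P(\sigma)$, corrected to the first subleading order in $x$ through the metric coefficient $a_{00}$. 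The amplitude $x^{(n-1)/2}(E+\mathsf{Z} x)^{-1/4}$ is the companion WKB transport amplitude, chosen to cancel the subprincipal first-order correction along the radial direction, so that the ansatz \cref{eq:u_decomp} effectively moves the oscillatory Legendrian into a b-type/zero section problem that is accessible to the sc,leC framework.

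The first concrete step is to compute the conjugated operator
\begin{equation*}
	\tilde{P}_\pm(\sigma) = [x^{(n-1)/2}(E+\mathsf{Z}x)^{-1/4}]^{-1}\, e^{\mp i\Phi}\, P(\sigma)\, e^{\pm i\Phi}\, x^{(n-1)/2}(E+\mathsf{Z}x)^{-1/4}
\end{equation*}
and verify, using full classicality of $g$ and the hypothesis $V \in x^2 C^\infty(X;\bbR)$, that $\tilde{P}_\pm(\sigma)$ lies in the sc,leC-calculus with polyhomogeneous coefficients and with its ``principal part'' adapted to the zero section (so that $\tilde{P}_\pm(\sigma)$ is of real principal type in the appropriate second-microlocal sense on $X^{\mathrm{sp}}_{\mathrm{res}}$). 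Rewriting $P(\sigma) u_\pm = f$ as $\tilde{P}_\pm(\sigma) u_{0,\pm} = \tilde{f}_\pm$, where $\tilde{f}_\pm$ remains Schwartz in $x$ for each $\sigma \in (0,\sigma_0]$, I would then invoke \Cref{thm:symbolic_main} (in its sc,leC form) to place $u_{0,\pm}$ in a variable-order sc,leC-Sobolev space uniformly down to $E=0$. This already yields conormal membership of order zero on $X^{\mathrm{sp}}_{\mathrm{res}}$.

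To promote conormality to polyhomogeneity I would iterate a normal-operator argument at the three boundary faces. At $\mathrm{bf}^\circ$ and $\mathrm{zf}^\circ$ the normal operators of $\tilde{P}_\pm$ are regular in the relevant sense and their indicial families have no nonzero indicial roots contributing to the leading behavior, which is what produces smoothness at those faces. At $\mathrm{tf}^\circ$ the model is essentially the semiclassical cone operator $\hat{P}(h)$ from \cref{eq:Prescaled}; the conjugation has reduced the radial problem there to a regular-singular ODE whose two ends $\hat{x}\to 0$ and $\hat{x}\to \infty$ each carry polyhomogeneous expansions. Solving away the leading term at each face, using \Cref{thm:symbolic_main} on the remainder to recover conormality with an improved weight, and repeating, yields the full polyhomogeneous expansion with conormal order zero. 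The assertions on $R(E=0;\mathsf{Z}\pm i0)$ and the restriction formula \cref{eq:u_decomp0} then follow: smoothness of $u_{0,\pm}$ at $\mathrm{zf}^\circ$ provides a well-defined trace onto $\mathrm{zf}$, and the uniform estimate identifies this trace with the strong $E\to 0^+$ limit.

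The principal obstacle I anticipate is the bookkeeping at the transition face $\mathrm{tf}$, and especially at the two corners $\mathrm{tf}\cap \mathrm{bf}$ and $\mathrm{tf}\cap \mathrm{zf}$. The Coulomb contribution $\sigma^{-1}(\mathsf{Z}-\sigma^2 a_{00})\operatorname{arcsinh}(\sigma x^{-1/2}(\mathsf{Z}-\sigma^2 a_{00})^{-1/2})$ in $\Phi$ injects genuine $\log \sigma$ and $\log x$ terms whose effect, after conjugation, must be tracked through the normal operator iteration so that the effective inhomogeneities remain polyhomogeneous and the index sets at $\mathrm{tf}$ close up consistently with those at the adjacent faces. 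This matching is the qualitative feature that distinguishes the attractive Coulomb case from the short-range setting of \cite{VasyN0L} and \cite{GHK}, and it is where the specific choice of phase and amplitude in \cref{eq:u_decomp} earns its keep — the hope being that, with that choice, no spurious logarithms appear and the iteration closes at each face with the same index sets as in the short-range model, merely shifted by the oscillatory prefactor.
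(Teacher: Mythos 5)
Your high-level strategy — conjugate by the phase and amplitude, use the symbolic estimate to get conormality, then iterate a normal-operator argument to upgrade to polyhomogeneity — is the right shape and is what the paper does in spirit. But there are two genuine gaps.

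First, the passage from the uniform symbolic estimate to conormality of $u_{0,\pm}$ on $X^{\mathrm{sp}}_{\mathrm{res}}$ is not automatic, and in the paper it is the content of the entire machinery leading up to \Cref{prop:grace}. \Cref{thm:symbolic_main} controls $u_{0,\pm}$ in a fixed leC-Sobolev space, uniformly in $\sigma\in[0,\Sigma]$; it says nothing yet about $\partial_E$- or $\sigma\partial_\sigma$-derivatives of $u_{0,\pm}$. Conormality on $X^{\mathrm{sp}}_{\mathrm{res}}$ requires iterated b-regularity in \emph{all} directions including the spectral one, i.e.\ uniform bounds on $(E\partial_E)^k(\chi x\partial_E)^K u_{0,\pm}$. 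The paper gets this through the chain \Cref{prop:E>0_differentiation}--\Cref{prop:convergence_main} (differentiating the resolvent, expressing $\partial_\sigma^k\tilde R_+(\sigma)$ via compositions of $\tilde R_+$ with $\partial_\sigma^{k_i}\tilde P$, and establishing uniform continuity of the relevant compositions via \Cref{prop:uniform_bound}), together with the Sobolev-embedding step \Cref{prop:conormality_inclusion} and the smoothness-at-$\mathrm{zf}$ improvement \Cref{prop:smoothness_improver}. Your proposal elides this as ``This already yields conormal membership of order zero,'' which is the step that actually fails without the extra work; the symbolic estimate alone does not even give $\calS'$-valued continuity of the resolvent family at $E=0$, let alone conormality.

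Second, your closing expectation that ``no spurious logarithms appear and the iteration closes at each face with the same index sets as in the short-range model'' is incorrect as stated. The theorem asserts smoothness only at $\mathrm{zf}^\circ\cup\mathrm{bf}^\circ$; at $\mathrm{tf}$ the index set is $\calE=\{(k,\kappa)\in\bbN\times\bbN:\kappa\le\lfloor k/2\rfloor\}$, with genuine logarithms even for fully classical $g$ and $V$, as explained in \Cref{rem:indexset} and produced concretely by the integration in \Cref{prop:normal_mapping_1} (cf.\ the example \cref{eq:mistake_example}). The logarithms arise from the resonance $k-2j=-1$ in the $\hat E$-expansion and cannot be removed by the choice of phase/amplitude; what the choice does buy is that the coefficient of $\log^\kappa$ is suppressed by $E^\kappa$, which is why $u_{0,\pm}(-;0)$ is nevertheless smooth on $\mathrm{zf}$. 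Also a smaller structural point: the paper conjugates by $e^{\pm i\Phi}$ alone and introduces the amplitude $x^{(n-1)/2}(E+\mathsf{Z}x)^{-1/4}$ only in the normal-operator step (so that $\mathrm{N}(\tilde P)$ becomes a first-order operator with an explicitly integrable model, cf.\ \cref{eq:model}); conjugating by the full phase-and-amplitude factor from the start is workable, but the index-set bookkeeping in \Cref{prop:N_mapping_properties} and \Cref{prop:inductive_smoothness} is cleanest with the paper's split, since the amplitude is exactly what reduces the normal operator to a first-order regular-singular ODE whose solution formula \cref{eq:model_soln} is then used directly. Your ``indicial roots at each face'' framing is therefore not quite the right mechanism: there is one normal operator near $\partial X$, and the index set propagation is by explicit integration rather than a face-by-face indicial analysis.
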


For $z>0$, by $\operatorname{arcsinh}(z)$ we mean $\log(z+(1+z^2)^{1/2})$. 

\begin{remark}
	\label{rem:nonconstant}
	\Cref{thm:main} applies equally well for $\sigma$-dependent forcing $f\in C^\infty([0,\infty)_{\sigma^2}; \calS(X))$ as can be proven using the argument in \S\ref{sec:mainproof}.
\end{remark}
\begin{remark}
	In terms of the notation for spaces of polyhomogeneous functions used below, $u_{0,\pm} \in \calA^{(0,0),\calE,(0,0)}(X_{\mathrm{res}}^{\mathrm{sp}})$ for some index set\footnote{We will only consider index sets with the property that $(k,\kappa)\in \calE \Rightarrow (k+1,\kappa)\in \calE$.} $\calE\subset \{(z,\kappa)\in \bbC\times \bbN: \Re z \geq 0\}$ at tf.
	In fact, we can take
	\begin{equation}
		\calE = \{(k,\kappa)\in \bbN\times \bbN : \kappa \leq \lfloor k/2 \rfloor \}.
	\end{equation}
	See \S\ref{subsec:smoothness2}. 
	
	One can be even more precise than this: as the proof of \Cref{prop:normal_mapping_1} shows, the terms in the polyhomogeneous expansion of $u_{0,\pm}$ at $\mathrm{tf}$ having many logs are proportional to many powers of $E$. Specifically, the coefficient $a\in C^\infty(\mathrm{tf})$ of $\varrho_{\mathrm{tf}_{00}}^k\log^\kappa(\varrho_{\mathrm{tf}_{00}})$ for $(k,\kappa)\in \calE$ is actually in $\varrho_{\mathrm{zf}_{00}}^\kappa C^\infty(\mathrm{tf})$. In order for $(k,\kappa)\in \calE$ to hold, $2\kappa \leq k$, so any logarithm $\log^\kappa(\varrho_{\mathrm{tf}_{00}})$ in the asymptotic expansion at $\mathrm{tf}$ is suppressed by a factor of $E^\kappa$. 
	This is why $u_{0,\pm}(-;0)$ can be smooth on $X_{1/2} = \mathrm{zf}$ even though $u_{0,\pm}(-;-)$ is not necessarily (claimed to be) smooth at $\mathrm{zf}\cap \mathrm{bf}$. The statement of the theorem above is therefore slightly nonoptimal, and we will not introduce the terminology needed to state an optimal version. 
	\label{rem:indexset}
\end{remark}

In a standard manner -- cf.\ \cite[\S15]{MelroseSC} -- we can deduce from the conjunction of (1) an asymptotic summation argument, (2) \Cref{thm:main} (strengthened slightly by the first remark), and (3) the other results in the body of the paper below the following: 
\begin{corollary}
	\label{cor:scattering}
	Consider the setup of \Cref{thm:main}, and fix $E_0>0$ such that $E_0a_{00}<\mathsf{Z}$. 
	If $\alpha \in C^\infty(\partial X)$, then there exist 
	\begin{equation} 
		A,B \in \calA_{\mathrm{loc}}^{(0,0),\calE,(0,0)}(X_{\mathrm{res}}^{\mathrm{sp}})
	\end{equation} 
	such that $A|_{\mathrm{tf}\cup \mathrm{bf}} = \alpha\circ \pi$ (where $\pi:\mathrm{tf}\cup \mathrm{bf} \to \partial X$ denotes the restriction to $\mathrm{tf}\cup \mathrm{bf}$ of the composition of the blowdown map $X^{\mathrm{sp}}_{\mathrm{res}}\to [0,\infty)_\sigma\times X$ and the projection $[0,\infty)_\sigma\times X\to X$) and 
	\begin{equation}
		u = e^{-i \Phi(x;E^{1/2})} x^{(n-1)/2}(E+\mathsf{Z} x)^{-1/4}A + e^{+i \Phi(x;E^{1/2})}(E+\mathsf{Z} x)^{-1/4}x^{(n-1)/2} B
	\end{equation}
	solves the Schr\"odinger--Helmholtz equation $\triangle_g u -\mathsf{Z}x u + Vu = Eu$ for all $E\in [0,E_0]$.
	Moreover, $u$ is the unique solution in $\{E\leq E_0\}$ with this property.
\end{corollary}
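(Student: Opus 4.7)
The plan is to adapt the construction of generalized eigenfunctions from Melrose \cite[\S15]{MelroseSC} to the Coulomb setting, now uniformly in $E\in[0,E_0]$, using \Cref{thm:main} (in the parameter-dependent form of \Cref{rem:nonconstant}) together with the refined index-set statement of \Cref{rem:indexset}.

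First I would construct an ``incoming'' polyhomogeneous amplitude
\[
\tilde A \in \calA^{(0,0),\calE,(0,0)}_{\mathrm{loc}}(X^{\mathrm{sp}}_{\mathrm{res}})
\]
with $\tilde A|_{\mathrm{tf}\cup\mathrm{bf}} = \alpha\circ\pi$ such that
\[
\tilde u := e^{-i\Phi(x;E^{1/2})}\, x^{(n-1)/2}(E+\mathsf{Z} x)^{-1/4}\,\tilde A
\]
has error $\tilde f := (P(0)-E)\tilde u$ that is Schwartz on $X$ with smooth dependence on $\sigma = E^{1/2}\in[0,E_0^{1/2}]$ (as required by \Cref{rem:nonconstant}). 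The existence of $\tilde A$ would proceed by iterated solution of transport equations and Borel summation: by the very definition of $\Phi$ (eikonal) and of the prefactor $x^{(n-1)/2}(E+\mathsf{Z} x)^{-1/4}$ (first transport), the conjugated operator
\[
L := e^{+i\Phi} x^{-(n-1)/2}(E+\mathsf{Z} x)^{1/4}\,\bigl(P(0)-E\bigr)\,e^{-i\Phi} x^{(n-1)/2}(E+\mathsf{Z} x)^{-1/4}
\]
annihilates constants to top order at $\mathrm{tf}\cup\mathrm{bf}$; its normal operator along $\mathrm{tf}$ is a first-order radial transport whose inversion at each order produces at most one extra logarithm per two orders of $\varrho_{\mathrm{tf}_{00}}$, exactly the mechanism generating the index set $\calE$ of \Cref{rem:indexset}. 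One solves for successive formal coefficients at $\mathrm{tf}$, $\mathrm{bf}$, and $\mathrm{zf}$ and assembles them by a Borel construction on $X^{\mathrm{sp}}_{\mathrm{res}}$, obtaining $\tilde A$ with the claimed regularity.

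Next, with $\tilde f$ now Schwartz, I would set
\[
u := \tilde u - R(E+i0;\mathsf{Z})\tilde f,
\]
which solves $(P(0)-E)u = 0$ by construction. By \Cref{thm:main} combined with \Cref{rem:nonconstant}, the correction decomposes as
\[
R(E+i0;\mathsf{Z})\tilde f = e^{+i\Phi(x;E^{1/2})}\, x^{(n-1)/2}(E+\mathsf{Z} x)^{-1/4}\,B
\]
for some $B\in \calA^{(0,0),\calE,(0,0)}_{\mathrm{loc}}(X^{\mathrm{sp}}_{\mathrm{res}})$, yielding the required form with $A := \tilde A$ and $B := -B$. For uniqueness, note that if two solutions of the claimed form have the same $\alpha$, then their difference $w$ admits a decomposition with incoming amplitude vanishing on $\mathrm{tf}\cup\mathrm{bf}$; in particular $w$ is purely outgoing at leading order and solves $(P(0)-E)w = 0$. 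The half-Fredholm estimate of \S\ref{sec:symbolic} together with the uniqueness half of \Cref{thm:main} then forces $w = R(E+i0;\mathsf{Z})\cdot 0 = 0$.

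The main obstacle I anticipate is not the outgoing-correction step -- which is essentially \Cref{thm:main} -- but rather the formal WKB construction at $\mathrm{tf}$: one must verify that the transport equations can be solved through the transition between the conic and semiclassical regions with logarithmic losses confined to $\calE$, so that the Borel-summed amplitude actually lies in $\calA^{(0,0),\calE,(0,0)}_{\mathrm{loc}}(X^{\mathrm{sp}}_{\mathrm{res}})$ rather than in a larger polyhomogeneous space. The decay structure of $\Phi$ and the sharp form of \Cref{rem:indexset} should suffice for this bookkeeping; once it is established, the rest of the argument is standard.
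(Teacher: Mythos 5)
Your proposal is correct and follows the same route the paper intends (the paper's own ``proof'' is just the sentence preceding the corollary, citing an asymptotic summation argument, Theorem~\ref{thm:main} strengthened by Remark~\ref{rem:nonconstant}, and the results of the body, with a pointer to \cite[\S15]{MelroseSC}): build a polyhomogeneous incoming amplitude $\tilde A$ with $\tilde A|_{\mathrm{tf}\cup\mathrm{bf}}=\alpha\circ\pi$ by iterating the transport equation for the b,leC-normal operator and Borel-summing (so that $\tilde f=(P(0)-E)\tilde u$ is Schwartz in $x$, smoothly in $E$), then correct by $R(E+i0)\tilde f$, which by Theorem~\ref{thm:main}/Remark~\ref{rem:nonconstant} contributes the outgoing amplitude $B$, and obtain uniqueness from the uniqueness half of the limiting absorption principle. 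One small point you elide in the uniqueness step: from ``$A_1-A_2$ vanishes on $\mathrm{tf}\cup\mathrm{bf}$'' you still need to promote the incoming part of $w$ to genuinely Schwartz before the half-Fredholm/LAP uniqueness applies; this follows because the $e^{-i\Phi}$ and $e^{+i\Phi}$ transport hierarchies decouple (the oscillating prefactor $e^{2i\Phi}$ forces both $\tilde P_-A'$ and $\tilde P_+B'$ to be Schwartz), and then inverting the normal operator with vanishing boundary data as in Propositions~\ref{prop:N_mapping_properties}--\ref{prop:normal_mapping_2} kills the incoming expansion to all orders.
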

This shows that $\exp(\pm i \Phi(x;E^{1/2})) x^{(n-1)/2}(E+\mathsf{Z}x)^{-1/4}$ serves as a notion of ``incoming/outgoing spherical wave'' in the presence of an attractive Coulomb-like potential that makes sense all the way down to $E=0$. In \Cref{cor:scattering}, $\alpha$ serves as a notion of ``incoming data.'' The proof actually constructs the asymptotic expansion of $A$ at $\mathrm{tf}\cup \mathrm{bf}$. A natural question, which we do not investigate here, is whether or not  $B|_{\mathrm{tf}\cup\mathrm{bf}}=\beta\circ \mathrm{bdn}$ for some $\beta \in C^\infty([0,\infty)_\sigma \times \partial X)$, where $\mathrm{bdn}:X_{\mathrm{res}}^{\mathrm{sp}}\to [0,\infty)_\sigma \times X$ is the blowdown map. Physically, \Cref{cor:scattering} describes the scattering of nonrelativistic electrons off of a hydrogenic nucleus, or alternatively nonrelativistic Bhaba scattering. 

Compare the following corollary of \Cref{thm:main} (strengthened slightly by \Cref{rem:indexset}) with \cite[Cor. 1.5]{Nakamura}\cite[Eq. 1.2]{SkibstedFournais}: 
\begin{corollary}[Asymptotics at $\mathrm{zf}$]
	\label{cor:main1}
	For each $l\in \bbR$, let $S^l(\bbR^n) = S^l_{1,0}(\bbR^n)$ denote the space of symbols of order $l$. Fix $n\geq 2$. 
	Suppose that $W \in C^\infty(\bbR^n)$ is a classical symbol of order $-1$, so that there exist (unique) $W_0,W_1,W_2,W_3,\cdots \in C^\infty(\bbS^{n-1})$ such that 
	\begin{equation}
		W(\bfx) -  (1-\chi(r))\sum_{k=0}^K \frac{W_k(\bfx/r)}{r^{k+1}} \in S^{-K-2}(\bbR^n) , 
	\end{equation}
	for each $K\in \bbN$, 
	where $r = \lVert \bfx \rVert$ and $\chi \in C_{\mathrm{c}}^\infty(\bbR)$ is identically equal to one in some neighborhood of the origin. 
	Suppose further that $W$ is attractive and Coulomb-like, meaning that $W_0 = - \mathsf{Z}$ for some constant $\mathsf{Z}>0$. Consider the Schr\"odinger--Helmholtz operator 
	\begin{equation} 
		P(\sigma) = \triangle - \sigma^2 + W
	\end{equation} 
	for $\sigma\geq 0$, where $\smash{\triangle=-(\partial_{x_1}^2+\cdots + \partial_{x_n}^2)}$ is the positive semidefinite Laplacian.
	Given $f\in \mathcal{S}(\bbR^n)$ (where $\calS(\bbR^n)$ denotes the set of Schwartz functions) let 
	\begin{equation} 
		u_\pm(\bfx;\sigma) = R(\sigma^2 \pm i0) f(\bfx) 
	\end{equation}
	denote the output of the limiting resolvent $R(\sigma^2 \pm i0) = \operatorname{slim}_{\epsilon\to 0^+} R(\sigma^2 \pm i \epsilon)$ applied to $f$ for $\sigma>0$. 
	
	Then, there exist functions $w_{\pm,0},w_{\pm,1},w_{\pm,2},\cdots \in C^\infty(\bbR^n)$ such that, for each $K\in \bbN$ and $\bfx \in \bbR^n$, 
		\begin{equation}
			u_\pm(\bfx;\sigma) = \sum_{k=0}^K w_{\pm,k}(\bfx) \sigma^{2k} + O_{\bfx,f,K}(\sigma^{2K+2})
			\label{eq:potential_as_2}
		\end{equation} 
		as $\sigma\to 0^+$, 
		where the $O_{\bfx,f,K}(\sigma^{2K+2})$ term is uniformly bounded (i.e.\ by $\smash{C_{\calK,f,K}\sigma^{2K+2}}$) in compact subsets $\calK\Subset \bbR^n$ worth of $\bfx \in \bbR^n$. 
		
		In fact, we can take $w_{\pm,k} \in \langle r \rangle^k \calA^{\calE_k}(\mathrm{zf})$, and then the error term in \cref{eq:potential_as_2} is an $O_{f,K}(\sigma^{2K+2})$ family of elements of $\langle r \rangle^{K+1}\calA^{\calE_{K+1}}(\mathrm{zf})$, where $\calE_k$ is the index set 
		\begin{equation}
			\calE_k = \{ (\kappa,\varkappa) \in \bbN\times \bbN: \varkappa \leq \min\{k,\lfloor \kappa/2 \rfloor \} \} .
		\end{equation}
		
		Moreover, $w_{\pm,0}$ solves the PDE $P(0)w_{\pm,0} = f$.  
\end{corollary}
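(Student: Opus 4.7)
I would derive \Cref{cor:main1} by specializing \Cref{thm:main} to $X = \overline{\mathbb{R}^n}$ with the exact Euclidean metric, for which $a_{00}=0$. The hypothesis that $W$ is a classical symbol of order $-1$ with $W_0=-\mathsf{Z}$ is equivalent to saying $W\in C^\infty(X)$ vanishes at $\partial X$ with $W=-\mathsf{Z} x + O(x^2)$, so $V:=W+\mathsf{Z} x\in x^2 C^\infty(X;\mathbb{R})$ and the hypotheses of \Cref{thm:main} are satisfied. Since $a_{00}=0$, the condition $\mathsf{Z}>Ea_{00}$ holds for every $E\geq 0$, and the decomposition
\begin{equation*}
	u_\pm = e^{\pm i\Phi(x;E^{1/2})} x^{(n-1)/2}(E+\mathsf{Z} x)^{-1/4} u_{0,\pm}
\end{equation*}
holds on all of $X^{\mathrm{sp}}_{\mathrm{res}}$ for $E\in [0,E_0]$, with $u_{0,\pm}\in \calA^{(0,0),\calE,(0,0)}(X^{\mathrm{sp}}_{\mathrm{res}})$ as in \Cref{rem:indexset}, smooth at $\mathrm{zf}^\circ \cup \mathrm{bf}^\circ$.

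\textbf{Pointwise Taylor expansion.} Fix a compact $\calK\Subset \mathbb{R}^n$. On $\calK$ the bdf $x$ is bounded below by some $c>0$, so $\calK\times [0,\sigma_0]$ sits in $\mathrm{zf}^\circ$ for any sufficiently small $\sigma_0>0$; there $u_{0,\pm}$ is smooth in $(\bfx,\sigma^2)$, and the prefactor $x^{(n-1)/2}(E+\mathsf{Z} x)^{-1/4}$ is smooth in $E$ for $x\geq c$. With $a_{00}=0$, formula \cref{eq:misc_p00} becomes
\begin{equation*}
	\Phi(x;\sigma)=\sqrt{\mathsf{Z}/x}\,\bigl[\sqrt{1+t}+t^{-1/2}\operatorname{arcsinh}(\sqrt{t})\bigr]\bigr|_{t=\sigma^2/(\mathsf{Z} x)},
\end{equation*}
and the bracketed quantity is smooth in $t$ near $t=0$ (the $t^{-1/2}$ singularity cancels since $\operatorname{arcsinh}(\sqrt{t})=\sqrt{t}(1+O(t))$), so $\Phi(x;\sigma)$, and hence $e^{\pm i\Phi(x;\sigma)}$, are smooth in $\sigma^2$ for $x>0$. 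Multiplying the three factors and Taylor-expanding in $E=\sigma^2$ with integral remainder to order $K+1$ gives \cref{eq:potential_as_2}, uniformly in $\bfx\in\calK$.

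\textbf{Polyhomogeneous refinement and the zero-energy PDE.} For the refined claim $w_{\pm,k}\in \langle r\rangle^k \calA^{\calE_k}(\mathrm{zf})$, I would trace each factor's contribution to the $\sigma^{2k}$ coefficient as $x\to 0$. The Taylor coefficients in $\sigma^2$ of $\Phi$ scale like $(\mathsf{Z}/x)^{1/2}\cdot (\mathsf{Z} x)^{-j}\sim r^{j+1/2}$, so the $\sigma^{2k}$ coefficient of $e^{\pm i\Phi}$ grows at most like $r^k$ after cancellation against the $x^{(n-1)/2}(\mathsf{Z} x)^{-1/4}$ prefactor expansion, producing the weight $\langle r\rangle^k$. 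The index-set structure at $\mathrm{zf}\cap\mathrm{tf}$ is inherited from that of $u_{0,\pm}$ at $\mathrm{tf}$, sharpened by \Cref{rem:indexset}: the coefficient of $\log^\varkappa \varrho_{\mathrm{tf}_{00}}$ at $\mathrm{tf}$ carries a factor $\varrho_{\mathrm{zf}_{00}}^\varkappa\sim (\sigma^2/(\sigma^2+\mathsf{Z} x))^\varkappa$, so extracting the $\sigma^{2k}$ Taylor coefficient forces $\varkappa\leq k$; combined with the constraint $2\varkappa\leq \kappa$ inherited from $\calE$, this yields exactly $\calE_k=\{(\kappa,\varkappa):\varkappa\leq\min\{k,\lfloor\kappa/2\rfloor\}\}$. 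The uniform remainder bound in $\langle r\rangle^{K+1}\calA^{\calE_{K+1}}(\mathrm{zf})$ follows from integral-form Taylor remainders applied term-by-term. Finally, $w_{\pm,0}=u_\pm(-;0)$, and from $P(\sigma^2)u_\pm=f$, i.e.\ $(P(0)-\sigma^2)u_\pm=f$, passing to $\sigma\to 0^+$ in $\calS'(X)$ yields $P(0)w_{\pm,0}=f$. The main technical obstacle is the index-set bookkeeping at $\mathrm{zf}\cap\mathrm{tf}$ --- verifying rigorously that the enhancement from \Cref{rem:indexset} combines with the Taylor extraction in $E$ to produce exactly $\calE_k$, uniformly across all orders $k$.
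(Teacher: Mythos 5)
Your reduction to \Cref{thm:main} is correct and matches the paper's intent: the Euclidean metric with $x = 1/\langle r\rangle$ is fully classical with $a_{00}=0$, and the hypothesis on $W$ is exactly $V=W+\mathsf{Z}x\in x^2 C^\infty(X;\bbR)$, so Theorem 1.1 applies with no attractivity restriction. Your derivation of the basic pointwise Taylor expansion \cref{eq:potential_as_2} — smoothness of $u_{0,\pm}$, of the prefactor, and of $\Phi$ in $E$ near $\mathrm{zf}^\circ$ — is also fine, and is essentially the paper's implicit reasoning (the paper gives no separate proof of this corollary).

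The gap is in the refined claim $w_{\pm,k}\in\langle r\rangle^k\calA^{\calE_k}(\mathrm{zf})$, which you yourself flag as the main obstacle and then dispatch in two sentences. First, already at $k=0$ something is off that your argument does not touch: the coefficient $w_{\pm,0}=u_\pm(-;0)$ solves $P(0)w_{\pm,0}=f$ and, by the second clause of \Cref{thm:main}, equals $e^{\pm 2i\sqrt{\mathsf{Z}/x}}\,x^{(n-1)/2}(\mathsf{Z}x)^{-1/4}\,u_{0,\pm}(-;0)$. The oscillatory factor $e^{\pm 2i\sqrt{\mathsf{Z}r}}$ is not conormal on $X_{1/2}$, and $x^{(2n-3)/4}=(x^{1/2})^{(2n-3)/2}$ is a half-integer power not lying in the index set $\calE_0=\bbN\times\{0\}$, so $w_{\pm,0}\notin\calA^{\calE_0}(\mathrm{zf})$ as literally stated. (Compare the analogous \Cref{cor:main3}, where exactly these prefactors appear explicitly in front of the polyhomogeneous part.) Your proposal neither extracts these common prefactors nor explains why they are absent, so it does not actually establish the membership claim; a careful version would need to state the target space as the oscillatory-weighted one.

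Second, even modulo the prefactor, the weight estimate ``grows at most like $r^k$ after cancellation'' is not justified and appears too optimistic. Writing $\Phi-\Phi_0=\sqrt{\mathsf{Z}/x}\,\bigl(\phi(t)-2\bigr)$ with $t=E/(\mathsf{Z}x)$ and $\phi(t)-2 = \tfrac{1}{3}t+O(t^2)$, the $E^j$ Taylor coefficient of $\Phi-\Phi_0$ scales like $x^{-j-1/2}\sim r^{j+1/2}$. Therefore the $E^k$ coefficient of $e^{\pm i(\Phi-\Phi_0)}$ contains the term $\frac{1}{k!}\bigl(\pm i\,\tfrac{1}{3}\mathsf{Z}^{-1/2}x^{-3/2}\bigr)^k$, of size $r^{3k/2}$; there is no manifest cancellation against the prefactor $x^{(n-1)/2}(E+\mathsf{Z}x)^{-1/4}$ (whose $E^j$ coefficient scales like $r^j$ relative to its $E^0$ value) or against the $E$-Taylor coefficients of $u_{0,\pm}$ (which, from $\partial_E=x^{-1}\partial_{\hat E}$ and $\partial_{\hat E}^k u_{0,\pm}|_{\hat E=0}\in\calA^0(X_{1/2})$, scale like $r^k$). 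The leading contribution $\alpha=k$ in the Leibniz sum therefore gives $r^{3k/2}$, exceeding $\langle r\rangle^k$ for $k\geq 1$ except after absorbing the decay $r^{-(2n-3)/4}$. A correct bookkeeping would need either to identify a genuine cancellation you have not stated, or to replace the claimed weight $\langle r\rangle^k$; either way, this is precisely the index-set/weight computation you deferred, and it is not a routine one.
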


(Of course, the $O_{\bfx,f,K}(\sigma^{2k+2})$ error also depends on $W$, though we do not write this dependence explicitly. The same applies to the other errors in \Cref{cor:main2}, \Cref{cor:main3} below.)
\Cref{cor:main1} applies in particular to any $W\in C^\infty(\bbR^n)$ which is equal, outside of some compact set, to \begin{equation} 
	 -\mathsf{Z}/r + \sum_{j=2}^J r^{-j} W_j(\bfx/r) + W_\infty, 
	 \label{eq:special_potential}
\end{equation} 
for $W_j\in C^\infty(\bbS^{n-1})$, $J\in \bbN$, and $W_\infty \in \mathcal{S}(\bbR^n)$. 
Thus, while we impose significant restrictions on the radial behavior of the potential (and the potential's regularity) in order to get full asymptotic expansions, there are no symmetry requirements on $W_1,W_2,\cdots$. We do, however, require that $W_0=-\mathsf{Z}$ is constant, so the Coulomb-like part of $W$ is required to be spherically symmetric. We remark that \Cref{prop:main} (see also \Cref{rem:less_classicality}) allows us to study more general symbolic $W$, but then instead of full asymptotic expansions we get only partial asymptotic expansions together with symbolic estimates for the remainders. We also remark that the classical symbols of order $-2$ on $\bbR^n$ are precisely those of the form $V=\langle r \rangle^{-2} U$ for $U$ a smooth function on the radial compactification $\smash{\bbB^n = \overline{\bbR}^n}$ of $\bbR^n$, this being diffeomorphic to a closed ball, so \Cref{cor:main1} applies to many potentials that are not of the form \cref{eq:special_potential}, e.g. $W=-\langle r \rangle^{-1}$.

In a sense made precise by \Cref{thm:main}, the asymptotic expansion \cref{eq:potential_as_2} can be refined into an asymptotic expansion in powers of $\smash{\hat{E} = \sigma^2 / (\sigma^2+1/\langle r \rangle)}$ (with a complicated oscillatory prefactor) whose error terms are uniformly bounded without a loss of decay as $K$ increases. 
The non-uniform expansion \cref{eq:potential_as_2} already stands in stark contrast with the situation  when $W$ is short range, where instead one only has e.g. in the case $n=3$  (according to \cite[Theorem 3.1]{HintzPrice})
\begin{equation}
	u_\pm(\bfx;\sigma) = w_{\pm,0}(\bfx) + w_{\pm,1}(\bfx) \sigma + O_{\bfx,f}(\sigma^2 \log \sigma).
\end{equation} 
This is sharp --- the singular $\sigma^2 \log \sigma$ term is the source of the main term in Price's law.

Given the setup of \Cref{cor:main1}, the known large-$r$ asymptotic expansion of $u_\pm(\bfx;\sigma) = R(\sigma^2 \pm i0) f$ is \cite{MelroseSC}:
\begin{itemize}
	\item there exist functions $t_{\pm,0},t_{\pm,1},t_{\pm,2},\cdots \in C^\infty(\bbS^{n-1} \times \bbR^+)$ such that, for each $\sigma>0$ and nonzero $\bfx \in \bbR^n$,
	\begin{equation}
		u_\pm(\bfx; \sigma) = r^{-(n-1)/2} e^{\pm i \sigma r} r^{\pm i\mathsf{Z}/2\sigma} \Big[ \sum_{k=0}^K t_{\pm,k}(\bfx/r,\sigma) r^{-k} + O_{\sigma,f,K}(r^{-(K+1)}) \Big]
		\label{eq:potential_as_1}
	\end{equation}
	for each $K\in \bbN$, where the $O_{\sigma,f,K}(r^{-(K+1)})$ term is uniformly bounded in compact subsets $\calK\Subset \bbR^+$ worth of $\sigma$. 
\end{itemize}
See \S\ref{ap:model} for the Whittaker case, where the $t_\pm(\sigma)$ are written down explicitly. 
Note that the existence of the expansion \cref{eq:potential_as_1} is contained in \Cref{thm:main} (from the asymptotic expansion at $\mathrm{bf}^\circ\subset \bbB^{\mathrm{sp}}_{\mathrm{res}}$). Note that the $r^{\pm i \mathsf{Z}/2\sigma}$ term in \cref{eq:potential_as_1} is singular as $\sigma\to 0^+$, which renders \cref{eq:potential_as_1} unsuitable to study the low energy limit. The situation is ameliorated in the following way --- according to \Cref{thm:main}, \cref{eq:potential_as_1} admits a repackaging that is \emph{uniform} down to $\sigma=0$: 
\begin{corollary}[Asymptotics at $\mathrm{bf}$]\label{cor:main2}
	Consider the setup of \Cref{cor:main1}. 
	Let $\varrho = (\sigma^2 r+1)^{-1}$, so that $r=(1-\varrho) \varrho^{-1} \sigma^{-2}$.
	There exist functions $\tau_{\pm,0},\tau_{\pm,1},\tau_{\pm,2},\cdots \in \calA^{\calE}(\bbS^{n-1}\times [0,1) )$ such that
	 for any $K\in \bbN$ and $\sigma>0$, 
		\begin{multline}
			u_{\pm} ( \theta (1-\varrho)\varrho^{-1} \sigma^{-2} ;\sigma )=   \varrho^{(n-1)/2} \sigma^{(2n-3)/2} \exp \Big( \pm i  \Big(\frac{1-\varrho}{\sigma\varrho} \Big) \Big(1 + \frac{\mathsf{Z} \varrho}{1-\varrho}\Big)^{1/2} \Big) \\
			\times \Big(  \Big(\frac{1-\varrho}{\mathsf{Z}\varrho}\Big)^{1/2} + \Big(1+ \frac{1-\varrho}{\mathsf{Z}\varrho}\Big)^{1/2} \Big)^{\pm i \mathsf{Z}/\sigma}  \Big[ \sum_{k=0}^K \tau_{\pm,k}(\theta,\sigma) \varrho^k  + O_{f,K}(\varrho^{K+1})  \Big]
			\label{eq:potential_as_3}
		\end{multline}
		holds for every $\theta \in \bbS^{n-1}$ as $\varrho \to 0^+ $, where 
		\begin{equation} 
			O_{f,K}(\varrho^{K+1})|_{\sigma \leq \bar{\sigma}} \in  \varrho^{K+1} L^\infty(\bbR^n_\bfx \backslash \bbB^n; \calA^\calE_{\mathrm{loc}} [0,\bar{\sigma})_\sigma ) 
		\end{equation} 
		for every $\bar{\sigma}>0$.  
\end{corollary}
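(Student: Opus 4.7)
The plan is to deduce this corollary directly from \Cref{thm:main} (sharpened by \Cref{rem:nonconstant}, \Cref{rem:indexset}) by unpacking the decomposition \cref{eq:u_decomp} in the new variables $(\varrho,\sigma)$ and reading off the polyhomogeneous expansion of $u_{0,\pm}$ at the face $\mathrm{bf}$. In the exactly Euclidean setting, $x=1/r$ near infinity and $a_{00}=\Gamma_{1,\partial X}=h_{1,\partial X}=0$, so \Cref{thm:main} gives
\[
u_\pm = e^{\pm i\Phi(x;\sigma)}\, x^{(n-1)/2} (\sigma^2+\mathsf{Z}x)^{-1/4}\, u_{0,\pm}
\]
with $u_{0,\pm}\in \calA^{(0,0),\calE,(0,0)}(X^{\mathrm{sp}}_{\mathrm{res}})$, smooth at $\mathrm{zf}^\circ \cup \mathrm{bf}^\circ$. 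Moreover, $\varrho = (\sigma^2 r+1)^{-1}$ is a smooth bdf for $\mathrm{bf}$ that is equivalent to $\varrho_{\mathrm{bf}_{00}}$ (indeed $\varrho_{\mathrm{bf}_{00}} = \mathsf{Z}\varrho/(1-\varrho+\mathsf{Z}\varrho)$), so the polyhomogeneous structure of $u_{0,\pm}$ can be read off using $\varrho$.

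Next, I would execute the change of variable $x = \varrho\sigma^2/(1-\varrho)$, so that $\sigma^2+\mathsf{Z}x = \sigma^2(1-\varrho+\mathsf{Z}\varrho)/(1-\varrho)$. A brief computation gives
\[
x^{(n-1)/2}(\sigma^2+\mathsf{Z}x)^{-1/4} = \varrho^{(n-1)/2}\sigma^{(2n-3)/2}\, G(\varrho),
\]
where $G(\varrho) = (1-\varrho)^{-(2n-3)/4}(1-\varrho+\mathsf{Z}\varrho)^{-1/4}$ is smooth and nonzero near $\varrho=0$. Plugging the substitution into $\Phi$ (with $a_{00}=0$), the first term becomes $\sigma^{-1}(1-\varrho)\varrho^{-1}\bigl(1+\mathsf{Z}\varrho/(1-\varrho)\bigr)^{1/2}$, matching the first exponential in \cref{eq:potential_as_3}; and since the arcsinh argument equals $z = \sigma/\sqrt{\mathsf{Z}x} = \bigl((1-\varrho)/(\mathsf{Z}\varrho)\bigr)^{1/2}$, the identity $\operatorname{arcsinh}(z) = \log(z+\sqrt{1+z^2})$ yields
\[
\exp\bigl(\pm i (\mathsf{Z}/\sigma) \operatorname{arcsinh}(z)\bigr) = \bigl(z+\sqrt{1+z^2}\bigr)^{\pm i\mathsf{Z}/\sigma},
\]
which is exactly the second explicit factor in the statement.

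Finally, because $u_{0,\pm}$ has trivial index set at $\mathrm{bf}$, its expansion in the bdf $\varrho$ is an ordinary Taylor series $u_{0,\pm}\sim \sum_k u_{0,\pm,k}(\theta,\sigma)\varrho^k$ with conormal remainder $O_{f,K}(\varrho^{K+1})$; the coefficients $u_{0,\pm,k}$ are smooth in $\sigma>0$ at $\mathrm{bf}^\circ$ and, at the corner $\mathrm{bf}\cap\mathrm{tf}$, inherit polyhomogeneity with index set $\calE$ in $\sigma$ from \Cref{thm:main}. Defining $\tau_{\pm,k}(\theta,\sigma)$ as the $k$-th Taylor coefficient in $\varrho$ of $G(\varrho)\sum_j u_{0,\pm,j}(\theta,\sigma)\varrho^j$ produces the claimed $\tau_{\pm,k}\in \calA^\calE(\bbS^{n-1}\times [0,1))$, and the uniform remainder bound is just the conormality of $u_{0,\pm}$ re-expressed in the new variables. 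The main obstacle is essentially bookkeeping: verifying that the change of variable $(x,\sigma)\to (\varrho,\sigma)$ and multiplication by the smooth nonvanishing factor $G(\varrho)$ preserve the polyhomogeneous structure across the corner $\mathrm{bf}\cap\mathrm{tf}$, which follows once one notes that $\varrho$ is transverse to $\mathrm{tf}$ along $\mathrm{bf}$ and $\sigma$ restricts to a bdf of $\mathrm{tf}$ on $\mathrm{bf}$.
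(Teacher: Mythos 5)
Your proposal is correct and follows the natural route the paper implicitly intends: \Cref{cor:main2} is a restatement of \Cref{thm:main} in the coordinates $(\varrho,\sigma,\theta)$ near $\mathrm{bf}$, obtained by substituting $x=\varrho\sigma^2/(1-\varrho)$ into $\Phi$, $x^{(n-1)/2}(E+\mathsf{Z}x)^{-1/4}$, and the polyhomogeneous expansion of $u_{0,\pm}$, noting that $\varrho$ is an equivalent bdf for $\mathrm{bf}$ and that $\sigma=\varrho_{\mathrm{tf}_{00}}(1-\varrho_{\mathrm{bf}_{00}})^{1/2}$ is smooth near $\mathrm{bf}$ and restricts to a bdf for $\mathrm{tf}\cap\mathrm{bf}$. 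All the algebraic identifications you carry out (the factor $G(\varrho)$, the two exponential factors, the index set $\calE$ inherited by the coefficients at the corner, and the remainder estimate coming from the conormality order at $\mathrm{bf}$) check out.
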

Note that, for $\sigma$ bounded away from zero, $\varrho \in C^\infty([0,1]_{1/r} )$ and 
\begin{equation}
	\exp \Big( \pm i \frac{1}{\sigma \varrho} (1-\varrho) \Big(1 + \frac{\mathsf{Z} \varrho}{1-\varrho}\Big)^{1/2} \Big) \Big(  \Big(\frac{1-\varrho}{\mathsf{Z}\varrho}\Big)^{1/2} + \Big(1+ \frac{1-\varrho}{\mathsf{Z}\varrho}\Big)^{1/2} \Big)^{\pm i \mathsf{Z}/\sigma} \in e^{\pm i \sigma r} r^{\pm i \mathsf{Z}/2\sigma} C^\infty([0,1]_{1/r}) 
\end{equation}
uniformly, so \Cref{cor:main2} does imply \cref{eq:potential_as_1}. 

Finally, we have the following result regarding joint asymptotics as $r\to\infty$, $\sigma^2 \to 0$ together, with the product $\varsigma^2=r\sigma^2$ fixed:
\begin{corollary}[Asymptotics at $\mathrm{tf}$]\label{cor:main3}
	Consider the setup of \Cref{cor:main1}. There exist
	functions 
	\begin{equation} 
		v_{\pm,k,\varkappa}\in \hat{E}^{\varkappa} C^\infty(\bbS^{n-1}\times [0,\infty)_{\hat{E}} )
	\end{equation} 
	such that, for every $K\in \bbN$ and $\varsigma> 0$, 
		\begin{multline}
			u_\pm(\bfx;\varsigma/r^{1/2}) = r^{-(2n-3)/4} e^{\pm i r^{1/2} \sqrt{\varsigma^2 + \mathsf{Z}  } }  \bigg( \frac{\varsigma}{\mathsf{Z}^{1/2}}  +\sqrt{1+ \frac{\varsigma^2 }{\mathsf{Z}} }\bigg)^{\pm i \mathsf{Z} r^{1/2}/\varsigma}\\ 
			\Big[ \sum_{k=0}^K\sum_{\varkappa=0}^{\lfloor k/2 \rfloor} \frac{v_{\pm,k,\varkappa}(  \bfx/r, \varsigma^2)}{r^{k/2}} \log^\varkappa (r) + O_{\varsigma,f,K}\Big( \frac{\log^{\lfloor (K+1)/2 \rfloor} (r)}{r^{(K+1)/2}} \Big)  \Big],
		\end{multline}
		as $r\to \infty$, where $O_{\varsigma,f,K}( r^{-(K+1)/2}\log^{\lfloor (K+1)/2 \rfloor}(r))$ is uniformly bounded by $r^{-(K+1)/2}\log^{\lfloor (K+1)/2 \rfloor} (r) $ in compact subsets worth of $\varsigma \in [0,\infty)$. 
		
		Moreover, $\rho^{k/2} v_{\pm,k,\varkappa}(\theta,1/\rho)$ is smooth in $\rho\in [0,\infty)$, for each $\theta \in \bbS^{n-1}$. Thus, for $\varsigma\geq 1$, $v_{\pm,k,\varkappa}(\theta,\varsigma^2) = O_{f,k,\varkappa}(\varsigma^k)$.
	
\end{corollary}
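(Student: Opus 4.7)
The plan is to derive \Cref{cor:main3} directly from \Cref{thm:main} (together with \Cref{rem:nonconstant} and \Cref{rem:indexset}) by specializing to Euclidean scattering ($a_{00}=0$) and reading off the $\mathrm{tf}$-expansion of $u_{0,\pm}$ in the coordinates suited to $\mathrm{tf}$: $x=1/r$ and $\sigma=\varsigma/r^{1/2}$, with $\varsigma$ held fixed as $r\to\infty$.

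First comes a coordinate calculation. Direct substitution gives $x^{(n-1)/2}(E+\mathsf{Z}x)^{-1/4} = r^{-(2n-3)/4}(\varsigma^2+\mathsf{Z})^{-1/4}$, and the phase \cref{eq:misc_p00} (with $a_{00}=0$) collapses to $\Phi(x;\sigma) = r^{1/2}\sqrt{\varsigma^2+\mathsf{Z}} + (\mathsf{Z}r^{1/2}/\varsigma)\operatorname{arcsinh}(\varsigma/\mathsf{Z}^{1/2})$; using $\operatorname{arcsinh}(z)=\log(z+\sqrt{1+z^2})$, the factor $e^{\pm i\Phi}$ is rewritten as the oscillatory prefactor appearing in the corollary's statement. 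The smooth nonvanishing factor $(\varsigma^2+\mathsf{Z})^{-1/4}$ is absorbed into the expansion coefficients below.

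Second, I would unfold the $\mathrm{tf}$-polyhomogeneous expansion $u_{0,\pm} \sim \sum_{(k,\varkappa)\in \calE} a_{k,\varkappa}(\theta,\varsigma^2)\, \varrho_{\mathrm{tf}_{00}}^k \log^\varkappa \varrho_{\mathrm{tf}_{00}}$, where $a_{k,\varkappa}\in \varrho_{\mathrm{zf}_{00}}^\varkappa C^\infty(\mathrm{tf})$ by \Cref{thm:main} and \Cref{rem:indexset}. Since $\varrho_{\mathrm{tf}_{00}} = r^{-1/2}(\varsigma^2+\mathsf{Z})^{1/2}$ and $\log\varrho_{\mathrm{tf}_{00}} = -\tfrac12\log r + \tfrac12\log(\varsigma^2+\mathsf{Z})$, a binomial expansion of $\log^\varkappa\varrho_{\mathrm{tf}_{00}}$ and regrouping by $r^{-k/2}(\log r)^{\varkappa'}$, then multiplication by the smooth $(\varsigma^2+\mathsf{Z})^{-1/4}$, yields the expansion claimed in the corollary, with each $v_{\pm,k,\varkappa'}$ a finite sum of terms $a_{k,\varkappa}(\theta,\varsigma^2)\cdot (\varsigma^2+\mathsf{Z})^{k/2-1/4}\log^{\varkappa-\varkappa'}(\varsigma^2+\mathsf{Z})$ for $\varkappa\geq \varkappa'$. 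The vanishing $v_{\pm,k,\varkappa}\in \hat{E}^\varkappa C^\infty(\mathbb{S}^{n-1};[0,\infty)_{\hat{E}})$ with $\hat{E}=\varsigma^2$ follows from the fact that $\varrho_{\mathrm{zf}_{00}} = \varsigma^2/(\varsigma^2+\mathsf{Z})\in \varsigma^2 C^\infty([0,\infty)_{\varsigma^2})$. The truncation remainder after summing to order $K$ inherits the standard polyhomogeneous bound $\varrho_{\mathrm{tf}_{00}}^{K+1}\log^{\lfloor(K+1)/2\rfloor}\varrho_{\mathrm{tf}_{00}} \lesssim r^{-(K+1)/2}\log^{\lfloor(K+1)/2\rfloor}(r)$, uniformly on compact subsets of $\varsigma$.

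The final claim -- that $\rho^{k/2}v_{\pm,k,\varkappa}(\theta,1/\rho)$ extends smoothly to $\rho=0$, with the bound $v=O_{f,k,\varkappa}(\varsigma^k)$ for $\varsigma\geq 1$ as an immediate consequence -- encodes the compatibility of the $\mathrm{tf}$-expansion with the smoothness of $u_{0,\pm}$ at $\mathrm{bf}^\circ$ at the corner $\mathrm{bf}\cap\mathrm{tf}$. Polyhomogeneity on $X^{\mathrm{sp}}_{\mathrm{res}}$ with index set $(0,0)$ at $\mathrm{bf}$ (from the smoothness there) forces $a_{k,\varkappa}(\theta,\varsigma^2)$ to be smooth in the bf-bdf $\varrho_{\mathrm{bf}_{00}} = \mathsf{Z}/(\varsigma^2+\mathsf{Z}) \sim 1/\varsigma^2$ near the corner; tracking the absorbed factors $(\varsigma^2+\mathsf{Z})^{k/2-1/4}$ and $\log(\varsigma^2+\mathsf{Z})$ in combination with this smoothness then produces the claimed $\rho^{k/2}$-scaled smoothness after the change of variable $\rho=1/\varsigma^2$. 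This last corner-matching step -- verifying that the interplay of the $1/2$-weight blow-up and the smoothness structure at $\mathrm{bf}^\circ$ yields precisely the $\rho^{k/2}$ scaling -- is the step I expect to require the most careful bookkeeping; the rest of the proof is a direct coordinate computation from \Cref{thm:main}.
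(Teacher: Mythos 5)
Your treatment of the main expansion---substituting $x=1/r$, $\sigma=\varsigma/r^{1/2}$, recognizing the oscillatory prefactor as $e^{\pm i\Phi}$ after simplifying $\Phi$ in these coordinates, and unfolding the $\mathrm{tf}$-polyhomogeneity of $u_{0,\pm}$ in $\varrho_{\mathrm{tf}_{00}}=r^{-1/2}(\varsigma^2+\mathsf{Z})^{1/2}$ with the $\hat{E}^\varkappa$ refinement of \Cref{rem:indexset}---is correct and matches the paper's intent.

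The ``moreover'' clause, however, is not a bookkeeping exercise; the route you sketch hits a concrete obstruction. By your own decomposition, $v_{\pm,k,\varkappa'} = \sum_{\varkappa\geq\varkappa'}c_{\varkappa,\varkappa'}\,a_{k,\varkappa}(\theta,\varsigma^2)\,(\varsigma^2+\mathsf{Z})^{k/2-1/4}\log^{\varkappa-\varkappa'}(\varsigma^2+\mathsf{Z})$. The index set $(0,0)$ at $\mathrm{bf}$ from \Cref{thm:main} yields only that $a_{k,\varkappa}(\theta,1/\rho)$ is smooth, and generically nonvanishing, at $\rho=0$. But
\begin{equation*}
\rho^{k/2}(\varsigma^2+\mathsf{Z})^{k/2-1/4}\big|_{\varsigma^2=1/\rho}=\rho^{1/4}(1+\mathsf{Z}\rho)^{k/2-1/4}, \qquad \log(\varsigma^2+\mathsf{Z})\big|_{\varsigma^2=1/\rho}=\log(1+\mathsf{Z}\rho)-\log\rho,
\end{equation*}
so $\rho^{k/2}v_{\pm,k,\varkappa'}(\theta,1/\rho)$ acquires a $\rho^{1/4}$ factor (and $\log\rho$ factors when $\varkappa>\varkappa'$), neither of which is smooth at $\rho=0$. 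A quick test case makes the gap visible: the constant function $u_{0,\pm}\equiv 1$ satisfies every conclusion of \Cref{thm:main}, yet would produce $v_{\pm,0,0}(\theta,\hat{E})=(\hat{E}+\mathsf{Z})^{-1/4}$, and $\rho^0 v_{\pm,0,0}(\theta,1/\rho)=\rho^{1/4}(1+\mathsf{Z}\rho)^{-1/4}$ is plainly not smooth at $\rho=0$. So the $\rho^{1/4}$ (and the $\log\rho$'s) must be cancelled by structure specific to the actual resolvent output---most plausibly the matching with the integer-power $\mathrm{bf}$-expansion of \Cref{cor:main2} and \cref{eq:potential_as_1}, which constrains the joint corner asymptotics in a way that the bare polyhomogeneity statement of \Cref{thm:main} does not. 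Your proof as written does not supply this input, so the ``moreover'' clause remains unproved.
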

We emphasize that these compound asymptotics are in a different regime than the one relevant to the short-range case (this being the regime of $r\to\infty$ for $r\sigma$ fixed, not $r\sigma^2$ fixed) --- see \Cref{rem:scaling}. 

Statements analogous to \Cref{cor:main1}, \Cref{cor:main2}, and \Cref{cor:main3} apply to the $A,B$ in \Cref{cor:scattering}.

A few remarks regarding \Cref{thm:main}:
\begin{remark}
	The apparent singularity of $u_\pm$ in \Cref{thm:main} when $\mathsf{Z} = \sigma^2 a_{00}$ is fictitious --- we can write 
	\begin{equation}
	u_\pm = e^{\pm i \sigma/x} x^{(n-1)/2\mp i \mathsf{Z}/2\sigma} v_{0,\pm}
	\end{equation}
	for $v_{0,\pm}:(0,\infty)_\sigma\times X\to \bbC$ smooth. In terms of $v_{0,\pm}$, $u_{0,\pm} = e^{\mp i \Phi} (E+\mathsf{Z} x)^{1/4} e^{\pm i \sigma/x} x^{(n-1)/2\mp i \mathsf{Z}/2\sigma} v_{0,\pm}$. Thus, $u_{0,\pm}$ is singular when $\mathsf{Z} = \sigma^2 a_{00}$. The left-hand side of  \cref{eq:u_decomp} is smooth for all $\sigma>0$, but we have written it as a product of two functions which both have singularities when $\mathsf{Z} = \sigma^2 a_{00}$. 
	The particular form of $\Phi$ in \Cref{thm:main} is needed to get uniform estimates down to $\sigma=0$, but it introduces a fictitious singularity at some positive $\sigma$ when $a_{00}>0$. 
\end{remark}

\begin{remark}
	Some explicit bounds for $u_{0,\pm}$ in the b-Sobolev spaces 
	\begin{equation} 
		H_{\mathrm{b}}^{m,l}(X) = \{u\in \calS'(X): Lu \in L^2(X,g)\text{ for all }L\in \Psi_{\mathrm{b}}^{m,l}(X)\}
	\end{equation} 
	will be found in \S\ref{sec:mainproof}. We are indexing the b-Sobolev spaces such that $\smash{H_{\mathrm{b}}^{0,0}(X)}$ is equal to $L^2(X,g)$ rather than $L^2(X,g_{\mathrm{b}})$ for some b-metric $\smash{g_{\mathrm{b}}}$. This is the (somewhat nonstandard) convention followed in \cite{VasyLA}\cite{VasyN0L}, and it is convenient here for the same reasons. See \cite{APS}\cite{VasyGrenoble} for a pedagogical introduction to the b-calculus. 
	
	We write $X_{1/2}$ to denote the mwc canonically diffeomorphic to $X$ over the interior, with the smooth structure at the boundary modified so that $\smash{x_{1/2}=2^{-1/2} x^{1/2}}$ becomes a bdf (the factor of $2^{-1/2}$ being a convenient choice). 
	(Note that, in terms of the mwb $X$, the mwb $X_{1/2}$ is canonically defined without needing to fix a choice of bdf for $X$, unlike the case for what Wunsch calls $X_{\mathrm{q}}$ in \cite[\S4]{Wunsch}. However, since we are fixing a boundary-collar once and for all, this is not a crucial point.)
	The b-Sobolev spaces are convenient to work with in part because (except for indexing) they do not depend on whether we use $x_{1/2}$ or $x$ as a bdf --- that is, at the level of sets
	\begin{equation} 
		H_{\mathrm{b}}^{m,l}(X) = H_{\mathrm{b}}^{m,2l+n/2}(X_{1/2}), 
	\end{equation}
	with an equivalence at the level of Banach spaces. 
	A refined estimate of $u_{0,\pm}$ in terms of the ``leC-Sobolev spaces'' 
	\begin{equation} 
		H_{\mathrm{leC}}^{m,s,\varsigma,l,\ell}(X)=\smash{\{H_{\mathrm{leC}}^{m,s,\varsigma,l,\ell}(X)(\sigma)\}_{\sigma\geq 0}}
	\end{equation} 
	is given in \Cref{prop:misc_estimate}. The leC-Sobolev spaces play the same role here as the sc,b,res-Sobolev spaces in \cite[Theorem 1.1]{VasyN0L}.\footnote{We will omit the comma between ``sc,b.'}
	Although it may be natural  to develop \emph{doubly-} second-microlocalized Sobolev spaces $\smash{H_{\mathrm{sc},\mathrm{sc}_{1/2},\mathrm{b}}^{m,s,\varsigma,l}(X)}$, refining both $H_{\mathrm{scb}}^{m,s,l}(X)$ and $H_{\mathrm{scb}}^{m,\varsigma,2l}(X_{1/2})$, we will not do so here, since the leC-Sobolev spaces suffice for the proof of \Cref{thm:main}.
\end{remark}

\begin{remark}
	\label{rem:less_classicality}
	We also have analogues of \Cref{thm:main} dropping the classicality assumptions regarding the metric and the potential. If $g$ is a symbolically asymptotically conic metric in the sense below (precisely the sort of metric considered in \cite{VasyLA}) and $V \in S^{-3/2-\delta}(X) = x^{3/2+\delta} S^0(X)$ for $\delta>0$, the conclusion of the theorem holds except that ``$u_{0,\pm}\in \calA^{(0,0),\calE,(0,0)}(X^{\mathrm{sp}}_{\mathrm{res}})$'' is replaced by the weaker 
	\begin{align}
		\begin{split} 
		u_{0,\pm} &\in \calA^{0,0,0}(X^{\mathrm{sp}}_{\mathrm{res}}), \\
		\chi u_{0,\pm} &\in C^\infty([0,\infty)_E\times X) \text{ for all }\chi \in C_{\mathrm{c}}^\infty(X^\circ).
		\end{split}
	\end{align}
	Moreover, if $g$ is classical to $\alpha_1$st order, $\alpha_1>1$, and $V$ is classical to $\alpha_2>3/2$ order (so, for instance, $V\in x^2 C^\infty(X)+S^{-3/2-\delta_2}(X)$ is classical to $3/2+\delta_2$th order, and any sc-metric in the sense of \cite{MelroseSC} is classical to all orders, while the metrics considered by Vasy in \cite{VasyLA} are classical to $>1$ order), then, setting $\delta_1 = \min\{\alpha_1-1,\alpha_2-1\}$ and $\delta_0 = \min\{\alpha_1-1,\alpha_2-3/2\}$, we have
	\begin{equation} 
		u_{0,\pm} \in \calA^{(0,0),\calE,(0,0)}(X^{\mathrm{sp}}_{\mathrm{res}}) + \calA_{\mathrm{loc}}^{((0,0),\delta_1),2\delta_0-,(0,0)}(X^{\mathrm{sp}}_{\mathrm{res}}) \subset  \calA_{\mathrm{loc}}^{((0,0),\delta_1),(\calE,2\delta_0-),(0,0)}(X^{\mathrm{sp}}_{\mathrm{res}})
	\end{equation} 
	(assuming, for simplicity, that $\delta_0\notin \bbN$)
	where $\calA_{\mathrm{loc}}^{((0,0),\delta_1),(\calE,2\delta_0-),(0,0)}(X^{\mathrm{sp}}_{\mathrm{res}})$ is the Fr\'echet space of conormal distributions on $X^{\mathrm{sp}}_{\mathrm{res}}$ that 
	have partial polyhomogenous expansions at each of $\mathrm{bf},\mathrm{tf},\mathrm{zf}$, with merely conormal remainders at order $\delta_1$ at $\mathrm{bf}$ and at order $2\delta_0-\varepsilon$ at $\mathrm{tf}$ for every $\varepsilon>0$, 
	with a full expansion at $\mathrm{zf}$. Elements of this space are smooth at $\mathrm{zf}$ in the sense of defining an element 
	\begin{equation} 
		C^\infty([0,\infty)_{\varrho_{\mathrm{zf}_{00}}} ; \calA^{(\calE,2\delta_0-)} (X_{1/2})) = C^\infty([0,\infty)_{\varrho_{\mathrm{zf}_{00}}} ; \calA^\calE (X_{1/2})) + C^\infty([0,\infty)_{\varrho_{\mathrm{zf}_{00}}} ; \calA^{2\delta_0-} (X_{1/2}))
	\end{equation} 
	via restriction to a small neighborhood of $\mathrm{zf} \subset X_{\mathrm{res}}^{\mathrm{sp}}$. This implies smoothness at $\mathrm{zf}^\circ$.\footnote{In this paper, we use `$\circ$' and the term ``interior'' in the mwc-theoretic sense; for instance, for any boundary hypersurface $\mathrm{f}$, $\mathrm{f}^\circ$ is the collection of points in $\mathrm{f}$ that are not in any other boundary hypersurface.}

	We refer to \cite[Eq. 22]{MelroseCorners}\cite[Def. 2.13]{HintzPrice} for the definition of the spaces of conormal distributions with partial polyhomogeneous expansions. We follow the notational conventions in \cite{HintzPrice}, \emph{mutatis mutandis}.
	
\end{remark}

\begin{remark}
	\label{rem:scaling}
	Note that polyhomogeneity on the mwc $X^{\mathrm{sp}}_{\mathrm{res}}$ with index set $\calE$ at $\mathrm{tf}$ and smoothness elsewhere is equivalent to polyhomogeneity on 
	\begin{equation}
		X^{\mathrm{sp}}_{\mathrm{res},0} = [ [0,\infty)_E\times X; \{0\}\times \partial X]
	\end{equation} 
	with index set $\{(k/2,\kappa) : (k,\kappa)\in \calE\}$ at the front face $\mathrm{tf}$ of the blow-up and index set $\bbN$ at the faces $\mathrm{bf}$ and $\mathrm{zf}$. 
	So, we have a Taylor series in powers of $x$ for $\sigma>0$ and a Taylor series in powers of $x^{1/2}$ at $\sigma = 0$, with a Taylor series in powers of $E=\sigma^2$ for fixed $x>0$ (with some logs in the expansion at $\mathrm{tf}$, except exactly at $\mathrm{zf}$). 
	We remark that the case of nonconstant $a_{00}$ involves a more general polyhomogeneity statement on $X^{\mathrm{sp}}_{\mathrm{res},0}$; for fixed $E>0$, this is discussed already in \cite[\S14]{MelroseSC}.
	
	(The use of $(E+\mathsf{Z}x)^{1/2}=(\sigma^2+\mathsf{Z}x)^{1/2}$ as a bdf for the front face $\mathrm{tf}$ of $X^{\mathrm{sp}}_{\mathrm{res}}$ is a convenient but otherwise arbitrary convention. The function $\smash{(E+x)^{1/2}=(\sigma^2+x)^{1/2}}$ would also work, as already utilized in \Cref{cor:main1}, \Cref{cor:main2}, \Cref{cor:main3}, but it is less convenient.) Although $X^{\mathrm{sp}}_{\mathrm{res,0}}$ is diffeomorphic to the mwc used in \cite{VasyN0L}\cite{HintzPrice} to study the low energy resolvent in the short-range case, it differs from their mwc as a compactification of $(0,1]_\sigma\times X$. Indeed, besides having a different smooth structure at $\{0\}\times X^\circ$, our blow-up resolves the ratio $E/x$ rather than $\sigma/x$, the latter being the ratio parametrizing the front face of the blow-up in \cite{VasyN0L}\cite{HintzPrice}.
\end{remark}
\begin{remark} 
\label{rem:exact}
When 
\begin{itemize}
	\item $g=g_0$ is an exactly conic metric and 
	\item  $f,V \in C_{\mathrm{c}}^\infty(X^\circ)$, 
\end{itemize}
the radial dependence of $u_\pm$ near infinity can be solved  for (up to a multiplicative constant) exactly via separation of variables. When separating variables, we need only solve the ``radial ODE,'' which ends up being a Whittaker ODE for $\sigma>0$ and a Bessel ODE for $\sigma=0$.
This motivating example is discussed in Appendix \S\ref{ap:model}, where the consequences of \Cref{thm:main} for solutions to Whittaker's ODE are discussed. The necessity of the factor of $(\sigma^2+\mathsf{Z} x)^{-1/4}$ in  \cref{eq:u_decomp}  can be read off of the asymptotics of the Bessel functions (or can alternatively be deduced from the structure of the radial ODE, with slightly more work). See \cref{eq:misc_asz}. 
Even in this exactly conic case, which was studied in \cite{Yafaev}, \Cref{thm:main} (or at least the part specifying the existence of a full asymptotic expansion) seems to be novel.
\end{remark} 
\begin{remark}
	\label{rem:intuition}
At first, it may seem somewhat paradoxical that at zero energy an attractive (rather than repulsive) Coulomb-like Schr\"odinger operator has scattering behavior. The paradox is resolved if we realize that, if a classical particle traveling in an attractive Coulomb force field has zero energy, then it must have more kinetic energy than if the force field were absent (the point being that ``zero energy'' refers to energy as measured \emph{with} the potential present, not without). 

It may also seem somewhat surprising that the presence of the attractive Coulomb potential allows us to avoid the b-analysis of the Laplacian in \cite{VasyN0L}, but such considerations are expected to be necessary in order to understand the $E\to 0^-$ limit. Roughly speaking, the presence of an attractive Coulomb potential has moved the locus of b-analysis infinitesimally negativewards -- more specifically negativewards on the front face of $[\bbR_E\times X;\{0\}\times \partial X]$, to its intersection with the lift of $\{E=-\mathsf{Z} x\}$ -- where it is irrelevant to the $E\to 0^+$ limit studied here. 
\end{remark}

The proof of \Cref{thm:main}, which is a special case of \Cref{prop:main}, is spread throughout \S\ref{sec:mainproof}, with a key estimate proven in \S\ref{sec:symbolic} (and other lemmas appearing in \S\ref{sec:calculus}, \S\ref{sec:operator}, \S\ref{sec:0_operator}). 
The conormality component of the theorem and smoothness at $\mathrm{zf}^\circ$ are deduced in \Cref{prop:grace}. From there,
classicality is deduced via an inductive argument using an explicit parametrix for the ``b,leC-normal operator'' of the conjugated spectral family --- see \Cref{prop:inductive_smoothness}. Under only partial classicality assumptions, the inductive step of the argument can only be carried out finitely many times, and the upshot is \Cref{prop:main}. The analysis in \S\ref{sec:mainproof} is essentially the analysis of a family of ODEs and applies equally well in the $n=1$ case. The analysis in \S\ref{sec:symbolic} is a uniform version of the sc-analysis of the Laplacian carried out in \cite{MelroseSC}\cite{VasyLA}, where we rely on the presence of the attractive Coulomb potential to prevent a full degeneration (from the perspective of the b-calculus) of the spectral family to the Laplacian at zero energy. 

We note four deficiencies of our treatment, which can form the basis for further work: 
\begin{enumerate}
	\item while we discuss the output and mapping properties of the low energy resolvent, we do not discuss its Schwartz kernel (as Guillarmou, Hassell, and Sikora \cite{GHK} do in the case of a short-range potential),
	\item we only treat the case when the Coulomb-like potential has the form $\mathsf{Z} x$ for constant $\mathsf{Z}>0$, while more general $\mathsf{Z} \in C^\infty(\partial X;\bbR^+)$ (and $a_{00} \in C^\infty(\partial X;\bbR)$) may be of interest \cite[\S14]{MelroseSC} (if $\mathsf{Z} \in C^\infty(\partial X;\bbR^+)$, a change of coordinates from $x$ to $x_0=\mathsf{Z} x$ does not necessarily preserve the form of the metric we require, so we cannot just reduce the case of nonconstant $\mathsf{Z}$ to the constant case via a change of bdf),  
	\item it should be possible to study the resolvent for $\sigma$ in a cone $\{\sigma \in \bbC: \operatorname{arg}(\sigma) \in [0,\theta)\}$ for $\theta\in (0,\pi/2)$, in accordance with the work of Skibsted and Fournais \cite{SkibstedFournais}\cite{Skibsted} in the Euclidean case -- see \cite[Theorem 1.2]{Skibsted} in particular -- thus giving a version of the limiting absorption principle that is uniform down to zero energy (as opposed to the \emph{ad hoc} use of $\mathsf{Z}\pm i 0$ in \Cref{thm:main} to describe the zero energy limit), and 
	\item it should be possible to extend \Cref{thm:main} to the case where the Coulomb potential and manifold possess conic singularities, so as to handle an exact Coulomb potential on $[\bbR^d;\{0\}]$ as a special case. (Indeed, blowing up the origin of Euclidean space and using $r$ as a bdf, the Coulomb potential $\mathsf{Z}/r$ and spectral term $\sigma^2$ are both lower order than the Euclidean Laplacian with respect to the b-calculus at $r=0$ in terms of both decay and regularity.) The exact 1D model problem discussed in \S\ref{ap:model} is an example of this. 
\end{enumerate}

Along with some analysis of the $E<0$ case, the present work can serve as input to the study of the Klein--Gordon equation on (not necessarily axially symmetric) asymptotically Schwarzschild spacetimes (away from the event horizon) in the spirit of Hintz's recent treatment of Price's law \cite{HintzPrice} on asymptotically subextremal Kerr spacetimes. This problem was the original motivation for the present work.  
Some heuristic investigations in this direction have been undertaken by physicists \cite{Detweiler}\cite{HodTsvi}\cite{KoyamaTomimatsu}\cite{KoyamaTomimatsu2}\cite{BurkoKhanna}\cite{KonoplyaETAL}\cite{BarrancoETAL0}\cite{BarrancoETAL}, usually in the case of a spherically or axially symmetric black hole spacetime (e.g. Schwarzschild, Reissner--Nordstr\"om, Kerr, etc.). Indeed, \cite{KoyamaTomimatsu} treat their problem (pointwise temporal decay rates) using an uncontrolled approximation of the radial part of their PDE as a Whittaker ODE, this being equivalent to the radial part of the time-independent Schr\"odinger equation in an attractive Coulomb potential. 
This problem can be solved exactly in terms of special functions (see \S\ref{ap:model}), but we do not need to do so in order to understand the asymptotics. 

Moreover, while the asymptotics for fixed $E>0$ can be read off of the exact solution (using the large argument expansions of Whittaker functions), the exact solution does not even help much in understanding the $E\to 0$ limit. Instead of relying on the Whittaker approximation, it is possible to exactly solve the Klein--Gordon equation on the Schwarzschild exterior (at the level of individual spherical harmonics) using confluent Heun functions \cite[\S IV]{BarrancoETAL}. When this paper was first written, it was an open problem to rigorously establish temporal decay at a polynomial rate (as this involves knowing the asymptotics of the confluent Heun functions in some regime where the argument and a parameter are both taken to infinity). 
Since then, \cite{Moortel}\cite{Moortel2} proved the required result.
For more complicated spacetimes, no exact solution is possible. A more robust analysis is therefore necessary.

For other mathematical work on Price's law and similar problems, see Donninger--Schlag--Soffer \cite{DSS1}\cite{DSS2}, Metcalfe--Tataru--Tohaneanu \cite{TataruPrice}\cite{Tatarut3}, Morgan--Wunsch \cite{MorganThesis}\cite{MorganWunsch}, and Looi \cite{Looi}. The Klein--Gordon equation on Kerr-like spacetimes is subtle --- for instance, on subextremal Kerr, Shlapentokh-Rothman \cite{SR} has constructed finite energy solutions that grow exponentially in time  by exploiting superradiant instability.

We remark finally that there does not appear to be much overlap between the present work and similarly titled physics papers, e.g. \cite{Burke1}\cite{Burke2}\cite{Burke3}\cite{Burke4}\cite{Burke5}, because these concern the scattering of electrons off \emph{atomic} hydrogen, not ionic hydrogen. The former consists of one hydrogen nucleus and one electron (different from the incoming electron) which is tightly bound to the nucleus. Thus, atomic hydrogen is electrically neutral, and the incoming electron feels at most a dipole interaction, which is short-range.

\subsection{Preliminaries and Outline}
\label{subsec:preliminaries}

Recall that an exactly conic manifold $X$ (according to one of several essentially equivalent definitions) consists of the following data: an arbitrary (smooth, connected) compact manifold-with-boundary $X_0$, a Riemannian metric $g_{\partial X} \in C^\infty(X;\operatorname{Sym}^2 T^* \partial X_0)$ on the closed manifold $\partial X_0 $, and an embedding 
\begin{equation}
	\iota: \hat{X}= [0,\bar{x})_x\times \partial X_0 \to  X_0
\end{equation}
(the ``boundary-collar'') which is a diffeomorphism between the cylinder $[0,\bar{x})_x\times \partial X_0$ for some $\bar{x}>0$ and a neighborhood of $\partial X_0$. Note that $\iota$ satisfies $\mathrm{id}|_{\partial X_0} = \iota(0,-)$. We notationally conflate $X$ with the underlying $X_0$. We also conflate the image of $\iota$ with $[0,\bar{x})_x\times \partial X_0$. 
(This is an alternative, if a somewhat crude one, to Melrose's use in \cite{MelroseSC} of the inward pointing normal bundle of $\partial X_0$.)
Given an exactly conic manifold $X$, a Riemannian metric $g_0$ on $X^\circ$ is called an exactly conic metric (this notion being defined relative to $\iota$) if 
\begin{equation}
g_0 \in C^\infty(X; {}^{\mathrm{sc}}\mathrm{Sym}^2 T^* X), \qquad g_0 = x^{-4} \mathrm{dx}^2+x^{-2} g_{\partial X} \text{ near }\partial X.  
\end{equation}
Below, we write $x:X\to [0,\infty)$ to denote a globally defined boundary-defining function, compatible with the boundary-collar $\iota$ in the sense that $x(\iota(x_0,y))=x_0$ for all $x_0 \in [0,\bar{x} )$.  
(Euclidean space fits into this framework, where by ``Euclidean space'' we mean the radial compactification of $\bbR^d$, endowed with the induced metric. Here $x=1/\langle r\rangle$, $r$ being the Euclidean radial coordinate. See \cite[\S1]{MelroseSC}.) 
We will also consider (symbolically) asymptotically conic metrics, which -- according to our use of the term ``asymptotically'' -- are Riemannian metrics $g$ on $X^\circ$ of the form 
\begin{equation}
g - g_0 \in x C^\infty(X; {}^{\mathrm{sc}}\mathrm{Sym}^2 T^* X)+ S^{-1-\delta} (X; {}^{\mathrm{sc}}\mathrm{Sym}^2 T^* X)
\label{eq:g_ass}
\end{equation}
for some $\delta>0$. Hence, $g$ is conic up to a classical subleading term -- decaying faster than $g_0$ by one order of $x$ -- and a merely symbolic error decaying slightly faster than that.
More generally, for $\alpha_1>1$, we will say that the asymptotically conic metric $g$ is ``classical to $\alpha_1$th order'' if \cref{eq:g_ass} can be strengthened to 
\begin{equation}
	g - g_0 \in x C^\infty(X; {}^{\mathrm{sc}}\mathrm{Sym}^2 T^* X)+ S^{-\alpha_1} (X; {}^{\mathrm{sc}}\mathrm{Sym}^2 T^* X).
	\label{eq:g_ass2}
\end{equation}
We will call $g$ ``(fully) classical'' if the symbolic term in \cref{eq:g_ass} can be dropped entirely, i.e.\ if \cref{eq:g_ass2} holds for all $\alpha_1>1$. (Any sc-metric in the sense of \cite{MelroseSC} is fully classical in this sense.)
The term \emph{asymptotically conic manifold} will be used to refer to a conic manifold equipped with an asymptotically conic metric, so a triple $(X,\iota,g)$. 
This is a more general notion than that of an ``sc-manifold'' \cite{MelroseSC}\cite{MelroseGeometric}\cite{JoshiBarreto}\cite{HassellWunsch}, which are also sometimes called asymptotically conic --- besides allowing a symbolic term in \cref{eq:g_ass}, we also allow a classical $O(x^{-3})\mathrm{d}x^2$ term (see $a_{00}$ in \Cref{thm:main}). (Rewriting the metric in terms of the tortoise coordinate $x_*$, the $O(x^{-3})\dd x^2$ term can be removed, but $x_*$ is not a smooth function of $x$ and so $x_*$ is not a bdf on the mwb $X$.) We will assume that this term is constant on $\partial X$. Thus, the metrics we consider are Riemannian metrics on $X^\circ$ of the form 
\begin{equation}
	g = g_0 + a_{00} \frac{\mathrm{d}x^2}{x^3} + \frac{\Gamma_{1,\partial X}\odot\mathrm{d}x}{x^2} + \frac{h_{1,\partial X}}{x} +  x^2 C^\infty(X; {}^{\mathrm{sc}}\mathrm{Sym}^2 T^* X)+ S^{-\alpha_1} (X; {}^{\mathrm{sc}}\mathrm{Sym}^2 T^* X)
	\label{eq:g_ass3}
\end{equation}
for some $a_{00} \in \bbR$, 1-form $\Gamma_{1,\partial X} \in \Omega^1(\partial X)$, and symmetric 2-cotensor $h_{1,\partial X} \in C^\infty(\partial X;\operatorname{Sym}^2 T^* \partial X)$. 
A fully classical $g$ is then of the form 
\begin{equation}
	g = g_0 + a_{00} \frac{\mathrm{d}x^2}{x^3} + \frac{\Gamma_{1,\partial X}\odot\mathrm{d}x}{x^2} + \frac{h_{1,\partial X}}{x} +  x^2 C^\infty(X; {}^{\mathrm{sc}}\mathrm{Sym}^2 T^* X)
	\label{eq:g_ass4}
\end{equation}
near $\partial X$.

Before we specialize to the case of spectral families of Schr\"odinger operators for the proof of \Cref{thm:main}, 
we study in \S\ref{sec:operator}, \S\ref{sec:symbolic} smooth families $\{P(\sigma)\}_{\sigma \geq 0} \subset \operatorname{Diff}^{2}(X^\circ)$ of elliptic 2nd-order differential operators on $X^\circ$ of the form 
\begin{equation} 
	P(\sigma) = P_0(\sigma) + P_1(\sigma) + P_2(\sigma),
\end{equation} 
where,  for each $i=1,2,3$, 
$P_i = \{P_i(\sigma)\}_{\sigma\geq 0} \subset \operatorname{Diff}^2(X^\circ)$ is a family of differential operators  (depending smoothly on $\sigma^2\geq 0$, all the way down to $\sigma=0$), which near $\partial X$ can be written as 
\begin{itemize}
	\item 
	\begin{equation}
			P_{0}(\sigma) = -(1+x a_{00}(\sigma) )(x^2 \partial_x)^2 + x^2 \triangle_{\partial X}   + x^3 [ a(\sigma) +  n-1] \partial_x- \mathsf{Z} x  - \sigma^2
			\label{eq:P0}
	\end{equation}
	for 
	\begin{itemize}
		\item $\smash{a_{00}} \in C^\infty([0,\infty)_{\sigma^2};\bbR)$, $\mathsf{Z}>0$ satisfying the \emph{attractivity condition}
		\begin{equation} 
			\mathsf{Z} - \sigma^2 a_{00}(\sigma) > 0
			\label{eq:ac}
		\end{equation} 
		for each $\sigma\geq 0$, 
		\item $a \in C^\infty([0,\infty)_{\sigma^2})$, 
	\end{itemize} 
	
	\item 
	\begin{equation} 
		P_{1}(\sigma) = \textstyle{\sum_{j=1}^{J}}( x^4  P_{\perp,j}(\sigma) b_j(x;\sigma) \partial_x + x^3 b'_j(x;\sigma) P_{\partial X,j}(\sigma)  + x^2   b''_j(x;\sigma)Q_{\partial X,j}(\sigma))
	\label{eq:misc_000}
	\end{equation} 
	for some $J\in \bbN$,  where 
	\begin{itemize}
		\item $b_j(-;\sigma),b'_j(-;\sigma),b''_j(-;\sigma) \in S^0(X)$, more specifically  
		\begin{equation}
			b_j, b'_j, b''_j \in C^\infty([0,\infty)_{\sigma^2}; S^0(X)), 
			\label{eq:misc_bsz}
		\end{equation}
		\item  $P_{\perp,j}(\sigma),Q_{\partial X,j}(\sigma) \in \operatorname{Diff}^{1}(\partial X)$ and $P_{\partial X,j}(\sigma) \in \smash{\operatorname{Diff}^{2}(\partial X)}$ for each $\sigma\geq 0$, so that
		\begin{align}
			P_{\perp,j} , Q_{\partial X,j} &\in C^\infty([0,\infty)_{\sigma^2} ; \operatorname{Diff}^1(\partial X)),\\ 
			P_{\partial X,j} &\in C^\infty([0,\infty)_{\sigma^2} ; \operatorname{Diff}^2(\partial X)),
		\end{align}
		for each $j=1,\ldots,J$, 
	\end{itemize}
	\item $P_2 \in C^\infty([0,\infty)_{\sigma^2} ; S \operatorname{Diff}_{\mathrm{scb}}^{2,-1-\delta,-3/2-\delta}(X))$ for some $\delta>0$.
\end{itemize}
Here $\operatorname{Diff}_{\mathrm{scb}}^{m,s,l}(X)=\operatorname{Diff}_{\mathrm{b}}^{m,l}(X)\cap \operatorname{Diff}_{\mathrm{sc}}^{m,s}(X)$ and $S\operatorname{Diff}_{\mathrm{scb}}^{m,s,l}(X)=S\operatorname{Diff}_{\mathrm{b}}^{m,l}(X)\cap S\operatorname{Diff}_{\mathrm{sc}}^{m,s}(X)$. 
We may take $\delta<1/2$, as this will be useful in reducing casework later on.
Note that we are requiring smoothness with respect to $E=\sigma^2$, rather than with respect to $\sigma$, in compact subsets of $X^\circ$. This is not strictly necessary for the analysis in \S\ref{sec:symbolic}, for which even smoothness with respect to $\sigma$ is mostly unnecessary, but since we use smoothness in $E$ in \S\ref{sec:mainproof} we will work with it from the outset and develop $\Psi$DO calculi accordingly in \S\ref{sec:calculus}. We will say that $P_{1}$ is ``classical to $\beta_1$th order'' for $\beta_1>0$ if we can replace \cref{eq:misc_bsz} by 
\begin{equation}
	b_j, b'_j, b''_j \in C^\infty([0,\infty)_{\sigma^2}; C^\infty(X))+C^\infty([0,\infty)_{\sigma^2}; x^{\beta_1} S^0(X) ).
	\label{eq:misc_a1o}
\end{equation}
Similarly, we say that $P_2$ is ``classical to $(\beta_2,\beta_3)$th order'' for $\beta_2>\beta_3>0$ if 
\begin{align} 
	\begin{split} 
	P_2 &\in C^\infty([0,\infty)_{\sigma^2} ;  \operatorname{Diff}_{\mathrm{scb}}^{2,-2,-2}(X)+ S \operatorname{Diff}_{\mathrm{scb}}^{2,-1-\beta_2,-3/2-\beta_2}(X) + x^{3/2+\beta_3}S^0(X)) \\
	&= C^\infty([0,\infty)_{\sigma^2} ;  \operatorname{Diff}_{\mathrm{sc}}^{2,-2}(X)+ S \operatorname{Diff}_{\mathrm{scb}}^{2,-1-\beta_2,-3/2-\beta_2}(X)+ x^{3/2+\beta_3}S^0(X)) 
	\end{split} 
	\label{eq:misc_p1i}
\end{align} 
(note that $\operatorname{Diff}_{\mathrm{scb}}^{2,-2,-2}(X) = \operatorname{Diff}_{\mathrm{sc}}^{2,-2}(X)$). We could work with even finer measures of classicality, but for the proof of \Cref{thm:main} (and eventually \Cref{prop:main}) it suffices to keep track of $\beta_1,\beta_2,\beta_3$.

We restrict attention to the case of constant $\mathsf{Z}$,  though the case of variable $\mathsf{Z}$ may also be of interest.  
(The $x^3(n-1)\partial_x$ term in \cref{eq:misc_000} is merely conventional and can be parametrized away upon redefining $a$, but it is convenient because $x^2 \partial_x - x(n-1)/2$ is formally anti- self-adjoint with respect to the $L^2=L^2([0,\bar{x})\times \partial X,g_0)=L^2_{\mathrm{sc}}([0,\bar{x})\times \partial X)$ inner product.)
The ``main'' and ``subleading'' terms of $P$ are collected in $P_0$. Although subleading by only one order in the sense of decay, the terms in $P_1$ are all negligible in the analysis of \S\ref{sec:symbolic}, as are the terms in $P_2$ (which is why they are allowed to be symbolic). (The key point here is that the principal symbol of $P_1$ vanishes to one additional order at the radial sets of $P$'s Hamiltonian flow because of the tangential derivatives. On the other hand, $P_2$ is just sufficiently low order everywhere.)
Note that we are only requiring $P(\sigma),P_0(\sigma)$ to be elliptic in the traditional sense; the operators above can be considered as sc-operators, but they will be nonelliptic in the sc-calculus. 
The `$S$' in front of $S\operatorname{Diff}$ indicates that the differential operators therein have coefficients which are required only to be conormal functions on $X$, hence are ``symbolic.'' 

We do not concern ourselves with uniformity as $\sigma\to \infty$, this already being handled in \cite[\S5]{VasyLA} for a suitably large class of operators. 

By ``attractive Coulomb-like Schr\"odinger operator (on $X$),'' we mean an operator of the form 
\begin{equation}
	P(0) = \triangle_g -\mathsf{Z} x +V(x) 
\end{equation}
for some asymptotically conic metric $g$ on $X$, $\mathsf{Z}>0$, and real-valued $V \in S^{-3/2-\delta}(X)$ for $\delta>0$. The spectral family of $P(0)$ is the family $\{P(\sigma)=P(0)-\sigma^2\}_{\sigma\geq 0}$.
The spectral families of attractive Coulomb-like Schr\"odinger operators are therefore included in the class of families we study here if we restrict attention to sufficiently small $\sigma$ such that the attractivity condition \cref{eq:ac} is satisfied. See \Cref{prop:form}. 
From the explicit formula for the Laplacian in local coordinates, we see that $a_{00}$ is the restriction to $\partial X$ of $-x^{3}(\delta g)_{00}$, where $\delta g= g-g_0$.
Although it is only obvious upon changing coordinates from $x$ to $x/(1+xa_{00})$ -- see \S\ref{ap:model} --  a nonzero value of $a_{00}$ has essentially the same effect as a nonzero Coulomb potential on the asymptotics of solutions to Helmholtz's equation for positive $\sigma$, and it turns out that \cref{eq:ac} -- taking into account both the actual Coulomb potential and the effective Coulomb potential from $a_{00}$ -- is the correct notion of ``attractivity'' in this generality. Note that if $P$ is the spectral family of a Coulomb-like Schr\"odinger operator with $\mathsf{Z}>0$ on an asymptotically conic manifold, then we can modify $P$ by cutting off $a_{00}$ outside of some sufficiently small neighborhood of $\sigma=0$ such that the resulting family of operators satisfies the attractivity condition. We only care about behavior in a neighborhood of $\sigma=0$, so this does not restrict applicability.

For $\sigma \gg 0$, attractive Coulomb-like operators do not need to be distinguished from non-attractive Coulomb-like operators as far as the limiting absorption principle is concerned, except with regards to the precise logarithmic corrections to spherical waves. The difference matters only in the $\sigma\to 0$ limit. That this limit is delicate can be seen already in the case when $X=[0,1]_x$, equipped with an exactly conic metric (`$x$' now only denoting a bdf for one end). Then,  when the potential is an exact Coulomb potential near the $x=0$ boundary, $P(\sigma) u = 0$ is an ODE,
\begin{equation}
	- (x^2 \partial_x)^2 u -\sigma^2 u - \mathsf{Z} x u  = 0
	\label{eq:misc_aku}
\end{equation}
for $x$ sufficiently close to zero. This is a form of Whittaker's ODE for $\sigma>0$, degenerating to a Bessel ODE 
\begin{equation}
	- (x^2 \partial_x)^2 u - \mathsf{Z} x u  = 0
	\label{eq:misc_bku}
\end{equation}
at $\sigma=0$. Hence, smooth families $\{u(-;\sigma)\}_{\sigma\geq 0}$ of solutions $u=u(-;\sigma)$ to $P(\sigma)u(-;\sigma) = f$ for $f\in C_{\mathrm{c}}^\infty(X^\circ)$ are near $x=0$ linear combinations of Whittaker functions (evaluated along the imaginary axis) for $\sigma>0$ degenerating in some way to linear combinations of Bessel functions as $\sigma\to \smash{0^+}$.
See \Cref{prop:radialcont} for a precise statement.
An analysis of the low-energy limit of attractive Coulomb-like Schr\"odinger operators must therefore -- at the very least -- describe in appropriate asymptotic regimes the degeneration of  Whittaker functions to Bessel functions (a classical topic, but nonetheless delicate).  The former oscillate like 
\begin{equation} 
	\exp\Big(\pm i \frac{\sigma}{x}\Big) x^{\mp i \mathsf{Z}/2\sigma } = \exp\Big(\pm i\Big(    \frac{\sigma}{x} - \frac{\mathsf{Z}}{2\sigma} \log x\Big)\Big)
	\label{eq:incomplete}
\end{equation}  
as $x\to 0^+$, while the latter oscillate like  $\exp (\pm 2i \mathsf{Z}^{1/2} / x^{1/2})$. While the leading order term of the phase in \cref{eq:incomplete} is suppressed as $\sigma\to 0^+$, the logarithmic correction $\mp i (\mathsf{Z}/\sigma) \log x$ blows up as $\sigma\to 0^+$. This, along with $O_\sigma(1)$, $O_\sigma(x)$, etc.\ contributions which were omitted from \cref{eq:incomplete} (and include terms like $1/\sigma,1/\sigma^2,\cdots$), end up contributing as $\sigma\to 0^+$ to the phase $\smash{\pm 2i \mathsf{Z}^{1/2} / x^{1/2}}$ relevant at zero energy. Clearly, \cref{eq:incomplete} paints an incomplete picture of the oscillations of solutions to \cref{eq:misc_aku} in the $\sigma\to 0^+$ limit. This is the first indication that the analysis of the low energy limit will need to be done on a resolution of $[0,\infty)_\sigma\times X$. We refer the reader to \S\ref{ap:model} for a further discussion of the ODE case. 

The ``conjugated perspective'' involves studying the conjugated spectral family $\tilde{P} = \{\tilde{P}(\sigma)\}_{\sigma\geq 0} \subset \operatorname{Diff}^2(X^\circ)$, 
\begin{equation}
\tilde{P}(\sigma) = \exp(-i \Phi) P(\sigma) \exp(+i \Phi) : \calS'(X)\ni u \mapsto  \exp(-i \Phi) (P(\sigma)(\exp(+i \Phi) u)) ,
\label{eq:tildeP}
\end{equation} 
where the ``phase'' $\Phi= \Phi(-;\sigma)$ is an appropriate function on $[0,\infty)_\sigma\times X$  such that $\exp( + i \Phi)$ captures the asymptotics of outgoing solutions $u$ to the PDE $P u=f$ to some desired level of precision. (Thus, we deal with the `$+$' case of \Cref{thm:main}, the `$-$' case being analogous.) At the level of the phase space of the sc-calculus, this conjugation corresponds to a symplectomorphism moving one of the two sets of radial points (the ``selected radial set'') to the zero section of the sc-fibers, which upon second microlocalization gets blown up to the fibers of $\smash{{}^{\mathrm{b}}\overline{T}^*_{\partial X}X}$, the phase space of the b-calculus over $\partial X$ (see \cite{VasyLA}).  The following choice of $\Phi$ is sufficiently detailed:
\begin{equation}
	\Phi(x;\sigma) =    \frac{1}{x} \sqrt{\sigma^2+\mathsf{Z}x-\sigma^2 a_{00} x} + \frac{1}{\sigma}(\mathsf{Z}-\sigma^2 a_{00}) \operatorname{arcsinh} \Big( \frac{\sigma}{x^{1/2}} \frac{1}{(\mathsf{Z}  - \sigma^2 a_{00})^{1/2}} \Big)   - \frac{i}{2}a\log x , 
	\label{eq:Phi}
\end{equation}
(extended from $\sigma>0$ to $\sigma=0$ by continuity), hence the appearance of $\Phi$ in \Cref{thm:main} (for which we have $a=0$). The square roots in \cref{eq:Phi} are well-defined by the attractivity condition \cref{eq:ac}. 
See the beginning of \S\ref{sec:operator} for a motivation of \cref{eq:Phi}. 
Given compact $K\subset (0,\infty)$, we can write, for $\sigma \in K$,  
\begin{equation} 
	\Phi = \sigma x^{-1} - \frac{\mathsf{Z}-\sigma^2 a_{00}}{2\sigma} \log x - \frac{i}{2} a\log x + O_K(1)
	\label{eq:osc_+}
\end{equation} 
as $x\to 0^+$.  This is in accordance with \cref{eq:incomplete}, \cite[Theorem 1.1]{VasyLA}; $\Phi$ differs from Vasy's phase modulo a logarithmic plus smooth correction for each individual $\sigma>0$. The $O_K$ term in \cref{eq:osc_+}, when formally expanded around $\sigma = 0^+$, contains terms proportional to $\sigma^{-2N}$ for all $N\in \bbN^+$. The estimate \cref{eq:osc_+} is therefore not uniform as $\sigma\to 0$.  Similarly, while we can write $(\sigma^2+\mathsf{Z}x)^{1/2} = \mathsf{Z}^{1/2} x^{1/2}(1+\sigma^2/2 \mathsf{Z}x + O(\sigma^4/\mathsf{Z}^2x^2))$
for $x$ bounded away from zero, this obviously cannot be used to understand the asymptotics of outgoing solutions to $Pu=f$ for any $\sigma>0$. Hence, the precise form of $(\sigma^2+\mathsf{Z}x)^{1/2}$ in \cref{eq:Phi} (and the precise form of the other terms appearing in \cref{eq:Phi}) is important (at least modulo functions which are well-behaved on $X_{\mathrm{res}}^{\mathrm{sp}}$). 
This is the second indication that we will need to resolve the $x,\sigma \to 0^+$ regime, and apparently in doing so we had ought to resolve the ratio $x/\sigma^2$. 

This is accomplished by the mwc $X_{\mathrm{res}}^{\mathrm{sp}}$. While, as already remarked upon in \Cref{rem:scaling}, this resolution is reminiscent of that used in \cite{VasyN0L}\cite{HintzPrice} in order to resolve the low energy behavior of interest there, it performs a somewhat different role here; theirs was used to study the degeneration of $\triangle_g - \sigma^2$ as $\sigma\to 0^+$ to an elliptic element 
\begin{equation} 
	\triangle_g \in \operatorname{Diff}^{2,-2}_{\mathrm{b}}(X)
\end{equation} 
of the b-calculus, but 
\begin{equation}
	\triangle_g  - \mathsf{Z} x \in \operatorname{Diff}^{2,-1}_{\mathrm{b}}(X)
\end{equation}
is not elliptic in the b-calculus, hence the oscillating solutions to the ODE \cref{eq:misc_bku} noted above. 
Instead, we use the resolution to interpolate between Vasy's analysis in \cite{VasyLA} of $\triangle_g - \sigma^2 - \mathsf{Z} x$ performed in $\Psi_{\mathrm{scb}}(X)$ for $\sigma>0$ and the analysis of the ``zero-energy operator'' $\triangle_g - \mathsf{Z} x$ performed in $\Psi_{\mathrm{scb}}(X_{1/2})$ along \emph{similar lines} (see \S\ref{sec:0_operator}).
According to the previous paragraph, what makes the $\Phi$ in \Cref{thm:main} the ``right'' choice is the asymptotics of $\Phi$ near the boundary $\partial X^{\mathrm{sp}}_{\mathrm{res}}$ of this resolution (or more accurately near $\partial X^{\mathrm{sp}}_{\mathrm{res}}\backslash \mathrm{zf}^\circ = \mathrm{bf}\cup \mathrm{tf}$). 
Indeed, for $\sigma = 0$, we can write 
\begin{equation}
	\Phi = 2\sqrt{\frac{\mathsf{Z}}{x}}  - \frac{i}{2}a\log x.
	\label{eq:osc_0}
\end{equation}
Hence, \cref{eq:Phi} interpolates between the oscillations \cref{eq:osc_+} seen in solutions of the ODE \cref{eq:misc_aku} at positive energy and the oscillations \cref{eq:osc_0} seen in solutions of the ODE \cref{eq:misc_bku} at zero energy. 

We refer to \S\ref{sec:operator} for a further discussion of the conjugated operator family and \S\ref{sec:0_operator} for a discussion of the situation at zero energy. 

As our main technical tool, we situate the family $\tilde{P}=\{\tilde{P}(\sigma)\}_{\sigma\geq 0}$ in a pseudodifferential calculus, 
\begin{equation} 
\Psi_{\mathrm{leC}}(X) = \bigcup_{m,s,\varsigma,l,\ell \in \bbR} \Psi_{\mathrm{leC}}^{m,s,\varsigma,l,\ell}(X),
\label{eq:leC_calculus}
\end{equation}
which we will call the ``leC-calculus'' (``low energy Coulomb''-calculus, for lack of a better name), the elements of which can be interpreted as particular families of b-$\Psi$DOs on $X$. The calculus comes with a refined symbol calculus tailored to the problem at hand. Compared to the calculus of b-$\Psi$DOs with parameters (i.e.\  $\Psi_{\mathrm{b}}(X)$-valued symbols on some parameter space), the symbol calculus here is refined in the sense of being second-microlocalized \`a la Vasy (so as to keep track both of b-decay and sc-decay orders) and ``resolved at the corner'' (so as to keep track of the asymptotic regime when both $\sigma^2\to 0$ and $x\to 0$ at compatible rates). The corresponding symbols are conormal functions on the ``leC-phase space,'' an iterated blow-up  of $\smash{[0,\infty)_\sigma \times {}^{\mathrm{b}}\overline{T}^* X}$. This mwc has six boundary faces -- df, sf, ff, bf, tf, zf -- and is described in the next section. In \cref{eq:leC_calculus}, $m$ is the ``differential order'' (order at fiber infinity, df), $s$ is the sc-decay order at positive energy (order at sf), $\varsigma$ is the sc-decay order at zero energy (order at ff), $l$ is the b-decay order at positive energy (order at bf), and $\ell$ is the b-decay order at zero energy (order at tf).
We remark that the scattering calculus with respect to $x^2$ (rather than $x^{1/2}$) has been used by Wunsch \cite{Wunsch}\cite{HassellWunsch}, who called it the ``quadratic scattering calculus.''
The leC-calculus is discussed in \S\ref{sec:calculus}.

We now say a word about our usage of the symbol `$\preceq$' (the usage of `$\succeq$' being analogous). When stating a proposition involving an estimate, we will be explicit about the dependence of the constants involved on parameters. In order to avoid a proliferation of symbols denoting different but unimportant constants, when proving a proposition of the form 
\begin{itemize}
	\item for all $p_1 \in \calP_2,\cdots,p_M\in \calP_M$, there exists a constant $C(p_1,\ldots,p_M)>0$ such that $r_1(p_1,\cdots,p_N)\leq C(p_1,\cdots,p_M)r_2(p_1,\cdots,p_N)$ for all $p_{M+1} \in \calP_{M+1},\cdots ,p_N\in \calP_N$, 
\end{itemize}
(where $M,N\in \bbN$, $N\geq M$, $\calP_2,\cdots,\calP_N$ are some sets, $r_i:\calP_2\times \cdots \times \calP_N \to \bbR$ for $i=1,2$) 
we will write an intermediate estimate of the form 
\begin{equation} 
	r_3 (p_1,\ldots,p_N) \leq C'(p_1,\cdots,p_M) r_4(p_1,\ldots,p_N)
	\label{eq:misc_uqq}
\end{equation}  
as $r_3 \preceq r_4$, with the key point being that, according to \cref{eq:misc_uqq}, $C'$ depends only on the parameters that $C$ depends on (so that the estimate \cref{eq:misc_uqq} is ``uniform'' in $p_{M+1},\cdots,p_{N}$). All constants below depend on the geometric data in the setup of \Cref{thm:main}, so we will not be explicit about that dependence.
\section{The leC-calculus}
\label{sec:calculus}

We now turn to our discussion of the leC-calculus. This calculus is, in many ways, similar to Vasy's second microlocalized calculi $\Psi_{\mathrm{scb}}(X)=\cup_{m,s,l\in \bbR}\Psi_{\mathrm{scb}}^{m,s,l}(X)$ \cite[\S2]{VasyLA} and $\Psi_{\mathrm{scb,res}}(X)=\smash{\cup_{m,s,l,\ell\in \bbR}\Psi_{\mathrm{scb,res}}^{m,s,l,\ell}(X)}$ \cite[\S3]{VasyN0L},
with the main novel feature of the leC-calculus (besides the relatively unimportant alteration of the smooth structure at $\sigma=0$) being another resolution of the phase space (which we actually carry out before resolving the scattering face for $\sigma>0$). This can be seen at a glance, comparing \Cref{fig:LEC_phase_space} to \Cref{fig:vasy}, \cite[Figure 4]{VasyN0L}. The zero face $\mathrm{zf}$ of the leC-phase space is identifiable with the phase space $\smash{{}^{\mathrm{scb}} \overline{T}^* X_{1/2}}$ of the calculus $\Psi_{\mathrm{scb}}(X_{1/2})$, while for $\sigma_0>0$ the $\{\sigma=\sigma_0\}$ cross-section of the leC-phase space is identifiable with the phase space $\smash{{}^{\mathrm{scb}} \overline{T}^* X}$ of the scb-calculus. Thus, the leC-calculus interpolates between these two calculi as $\sigma\to 0^+$, as $\Psi_{\mathrm{scb,res}}(X)$ interpolates between $\Psi_{\mathrm{scb}}(X)$ and $\Psi_{\mathrm{b}}(X)$ in the same limit.

The leC-phase space ${}^{\mathrm{leC}}\overline{T}^* X$ is introduced in \S\ref{subsec:phasespace}, along with corresponding algebras of symbols. Calculi are discussed in \S\ref{subsec:calculi}, and the corresponding leC-Sobolev spaces (which are really families of scb-Sobolev spaces) are discussed in \S\ref{subsec:sobolev}. As many of the results in this section are either consequences of standard results for the b-calculus or derivable via similar arguments, some details are omitted. Still, we've made an effort to give a relatively complete list of the results needed later, and in the process we give the leC analogues of some standard arguments. 
In this section we mainly write $\lambda = \sigma^2$ in place of the parameter $E$ used in the introduction.

When we write `$\Psi_{\mathrm{b}}(X)$,' and more generally when we refer to ``b-$\Psi$DOs,'' we mean the \emph{conormal} (a.k.a.\ ``symbolic'') b-algebra \cite[Definition 5.15]{VasyGrenoble}, rather than the closely related, slightly smaller calculus defined in \cite[Definition 4.22]{APS}. The latter calculus has symbols that have some additional classicality at the boundary. Our convention follows \cite{VasyLA}. In \cite{VasyN0}, the notation `$\Psi_{\mathrm{bc}}(X)$' is used instead. 

\subsection{Phase Spaces and Symbols}
\label{subsec:phasespace}
Recall that we can identify ${}^{\mathrm{b}}T^* X$ over the boundary collar with $[0,\bar{x})_x\times \bbR_{\xi_{\mathrm{b}}}\times (T^* \partial X)_{\eta_{\mathrm{b}}}$ via the map 
\begin{equation}
	(0,\bar{x})_x\times \bbR_{\xi_{\mathrm{b}}}\times (T^* \partial X)_{\eta_{\mathrm{b}}}\ni (x,\xi_{\mathrm{b}},\eta_{\mathrm{b}}) \mapsto \xi_{\mathrm{b}} \frac{\dd x}{x} + \operatorname{plr}^*(\eta_{\mathrm{b}}), 
\end{equation}
where $\operatorname{plr} = \pi_{\mathrm{R}}\circ \iota^{-1} : \iota(\hat{X} ) \to \partial X$. This defines a diffeomorphism between $[0,\bar{x})_x\times \bbR_{\xi_{\mathrm{b}}}\times (T^* \partial X)_{\eta_{\mathrm{b}}}$ and ${}^{\mathrm{b}}T^*_{\iota(\hat{X} )}X$.

Let 
\begin{align}
	\begin{split} 
	{}^{\mathrm{b,sp}} \overline{T}^* X &= [ [0,\infty)_\lambda\times {}^{\mathrm{b}} \overline{T}^* X  ; \{ \lambda = x = 0\}]\\
	&= [[0,\infty)_\lambda\times {}^{\mathrm{b}} \overline{T}^* X ;\{0\} \times {}^{\mathrm{b}} \overline{T}^*_{\partial X} X ]
	\end{split} 
\end{align} 
denote the phase space of the resolved calculus of 1-parameter families of b-$\Psi$DOs (\cite[Figure 4]{VasyN0L}), and letting $\beta: \smash{{}^{\mathrm{b,sp}} \overline{T}^* X\to[0,\infty)_\lambda\times  {}^{\mathrm{b}} \overline{T}^* X }$ denote the blowdown map, let 
\begin{align}
	\begin{split} 
	\mathrm{zf}_{00} &= \operatorname{cl}\beta^{-1}(\{ \lambda=0, x>0\} ),\\
	\mathrm{tf}_{00} &= \beta^{-1}(\{\lambda=0=x\}),\\
	\mathrm{bf}_{00} &= \operatorname{cl} \beta^{-1}(\{\lambda>0,x=0\}),
	\end{split}
	\intertext{and} 
	\mathrm{df}_{00} &= \operatorname{cl}((\partial\, {}^{\mathrm{sp,b}} \overline{T}^* X )\backslash (\mathrm{zf}_{00}\cup \mathrm{tf}_{00} \cup \mathrm{bf}_{00}) )
\end{align}
denote its (closed) boundary faces. For each $\mathrm{f} \in \{\mathrm{zf}_{00},\mathrm{tf}_{00},\mathrm{bf}_{00},\mathrm{df}_{00}\}$, let $\varrho_{0,\mathrm{f}} \in C^\infty({}^{\mathrm{b,sp}} \overline{T}^* X ; [0,\infty))$ denote a bdf of the respective face, which we can take to be equal near $\{x=0\}$ to 
\begin{equation}
	\varrho_{0,\mathrm{zf}_{00}} = \frac{\lambda}{\lambda+x}, \qquad \varrho_{0,\mathrm{tf}_{00}} = \lambda + \mathsf{Z}x, \qquad \varrho_{0,\mathrm{bf}_{00}} = \frac{x}{\lambda+\mathsf{Z}x}, \qquad \varrho_{0,\mathrm{df}_{00}} = (1+\xi_{\mathrm{b}}^2 + \eta_{\mathrm{b}}^2)^{-1/2} 
	\label{eq:misc_k1a}
\end{equation}
(defined initially in the interior of ${}^{\mathrm{b,sp}} \overline{T}^* X$, these then extending to smooth functions on ${}^{\mathrm{b,sp}} \overline{T}^* X$). (Below, we conflate smooth functions on mwc with their restrictions to the interior when such a conflation does not cause trouble.) In \cref{eq:misc_k1a} and below, we write $\xi_{\mathrm{b}}$ for the b-cofiber coordinate dual to $x$, and $\smash{\eta_{\mathrm{b}}^2 = g_{\partial X}^{-1}(\eta_{\mathrm{b}},\eta_{\mathrm{b}})}$ for $\eta_{\mathrm{b}} \in T^* \partial X$.

There exists a unique mwc $\smash{{}^{\mathrm{b,leC}}\overline{T}^* X} =  [[0,\infty)_\lambda\times {}^{\mathrm{b}} \overline{T}^* X ;\{0\} \times {}^{\mathrm{b}} \overline{T}^*_{\partial X} X ;\frac{1}{2}]=[{}^{\mathrm{b,sp}} \overline{T}^* X ; \mathrm{tf}_{00} ; \frac{1}{2}]$, depicted in \Cref{fig:preliminary},  with the following properties: 
\begin{enumerate}
	\item as a set, $\smash{{}^{\mathrm{b,leC}}\overline{T}^* X}$ is equal to ${}^{\mathrm{b,sp}} \overline{T}^* X$ (a convenient convention),
	\item $\smash{{}^{\mathrm{b,leC}}\overline{T}^* X}$ has the same smooth structure as ${}^{\mathrm{b,sp}} \overline{T}^* X$ away from $ \mathrm{tf}_{00}$, so that if $\varphi \in C^\infty({}^{\mathrm{b,sp}} \overline{T}^* X)$ is supported away from $\mathrm{tf}_{00}$, then $\varphi \in C^\infty(\smash{{}^{\mathrm{b,leC}}\overline{T}^* X})$; moreover, 
	\item $\smash{{}^{\mathrm{b,leC}}\overline{T}^* X}$ has four faces, equal as sets to $\mathrm{zf}_{00},\mathrm{tf}_{00},\mathrm{bf}_{00},\mathrm{df}_{00}$, for which $\varrho_{0,\mathrm{zf}_{00}}$, $\varrho_{0,\mathrm{tf}_{00}}^{1/2}$, $\varrho_{0,\mathrm{bf}_{00}}$, $\varrho_{0,\mathrm{df}_{00}}$ serve as bdfs (respectively). 
\end{enumerate}
We will refer to $\smash{{}^{\mathrm{b,leC}}\overline{T}^* X}$ as the b,leC- phase space. See \Cref{fig:preliminary}. 

We will refer to the bdfs of $\smash{{}^{\mathrm{b,leC}}\overline{T}^* X}$ as $\smash{\varrho_{\mathrm{zf}_{00}}=\varrho_{0,\mathrm{zf}_{00}}}$, $\smash{\varrho_{\mathrm{tf}_{00}}=\varrho_{0,\mathrm{tf}_{00}}^{1/2}}$, $\varrho_{\mathrm{bf}_{00}}= \varrho_{0,\mathrm{bf}_{00}}$, and $\varrho_{\mathrm{df}_{00}}=\varrho_{0,\mathrm{df}_{00}}$. Thus, in terms of $\sigma = \lambda^{1/2}$, 
\begin{equation}
	\varrho_{\mathrm{tf}_{00}} = \sqrt{\sigma^2+\mathsf{Z}x}, \qquad \varrho_{\mathrm{bf}_{00}} = \frac{x}{\sigma^2+\mathsf{Z}x}, \qquad \varrho_{\mathrm{zf}_{00}} = \frac{\sigma^2}{\sigma^2+\mathsf{Z}x}. 
\end{equation}
Note that the notions of zeroth order conormality on $\smash{{}^{\mathrm{b,leC}}\overline{T}^* X}$  and ${}^{\mathrm{b,sp}} \overline{T}^* X$ agree, as does the notion of smoothness at $\mathrm{zf}_{00}^\circ$. For each $m,l,\ell\in \bbR$, we let 
\begin{align}
	S_{\mathrm{b,leC}}^{m,l,\ell}(X) &= \calA^{-m,-l,-\ell/2, (0,0)}_{\mathrm{loc}} ({}^{\mathrm{sp,b}}\overline{T}^* X) = \calA^{-m,-l,-\ell,(0,0)}_{\mathrm{loc}} ({}^{\mathrm{b,leC}}\overline{T}^* X), \label{eq:misc_kul}\\
	S_{\mathrm{b,leC}}(X) &= \cup_{m,l,\ell\in \bbR} S_{\mathrm{b,leC}}^{m,l,\ell}(X),
\end{align} 
where we are enumerating the faces of the b,leC-phase space in the order $\mathrm{df}_{00},\mathrm{bf}_{00},\mathrm{tf}_{00},\mathrm{zf}_{00}$. Thus, $m$ is the order at $\mathrm{df}_{00}$, $l$ is the order at $\mathrm{bf}_{00}$, $\ell$ is the order at $\mathrm{tf}_{00}$, and the order at $\mathrm{zf}_{00}$ is just zero (and we have a full Taylor series there, with the terms in the Taylor series elements of $\calA^{-m,-\ell/2}(\mathrm{zf}_{00})$). (The `loc' subscript in ``$\calA_{\mathrm{loc}}$'' refers to the fact that we do not require $L^\infty$ bounds in the $\sigma\to\infty$ direction. That is, we only have bounds in compact subsets (which can include boundary points) of the mwc ${}^{\mathrm{b,leC}}\overline{T}^* X$, which is only noncompact because of the $\sigma\to\infty$ direction.) We also define 
\begin{equation} 
	S_{\mathrm{cl,b,leC}}^{m,l,\ell}(X) = \varrho_{\mathrm{df}_{00}}^{-m}\varrho_{\mathrm{bf}_{00}}^{-l}\varrho_{\mathrm{tf}_{00}}^{-\ell}C^\infty({}^{\mathrm{b,sp}}\overline{T}^* X) \subset S_{\mathrm{b,leC}}^{m,l,\ell}(X).
\end{equation}
Given $a \in S_{\mathrm{b,leC}}^{m,l,\ell}(X)$, we may restrict $a$ to $\mathrm{zf}_{00}$, giving an element $a(-;0)\in S_{\mathrm{b}}^{m,\ell/2}(X) = S_{\mathrm{b}}^{m,\ell}(X_{1/2})$.

Note that the elements of $S_{\mathrm{b,leC}}(X)$ are all symbols on $[0,\infty)_\lambda\times {}^{\mathrm{b}} \overline{T}^* X$, i.e.\ elements of 
\begin{equation} 
	\bigcup_{m,l,\nu \in \bbR} x^{-l} \lambda^{-\nu} \varrho_{\mathrm{df}_{00}}^{-m} \calA^0([0,\infty)_\lambda\times {}^{\mathrm{b}} \overline{T}^* X).
\end{equation} 
Specifically, for all $m,l,\ell\in \bbR$, 
\begin{equation}
	S_{\mathrm{b,leC}}^{m,l,\ell}(X) \subset x^{-l_0} \lambda^{-\nu} \varrho_{\mathrm{df}_{00}}^{-m} \calA^0([0,\infty)_\lambda\times {}^{\mathrm{b}} \overline{T}^* X)
\end{equation}
whenever $l_0\geq l$ and $2\nu+2l_0 \geq \ell$. Consequently, if $l_0\geq l,\ell/2$, 
\begin{equation}
	S_{\mathrm{b,leC}}^{m,l,\ell}(X) \subseteq \calA^0([0,\infty)_\lambda ; S_{\mathrm{b}}^{m,l_0}(X) ). 
	\label{eq:base_inclusion}
\end{equation}

\begin{lemma}
	\label{lem:sigmaZx_conormal}
	For any $\ell\in \bbR$, $(\sigma^2+\mathsf{Z} x)^{-\ell/2} \in C^0([0,\infty)_\sigma ; S_{\mathrm{b}}^{0,\max\{0,\ell/2+\varepsilon\}}(X) )$ for any $\varepsilon>0$. 
\end{lemma}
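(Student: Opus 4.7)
The key observation is that $\sigma^2 + \mathsf{Z} x = \varrho_{\mathrm{tf}_{00}}^2$ on the b,leC-phase space, so $(\sigma^2+\mathsf{Z} x)^{-\ell/2} = \varrho_{\mathrm{tf}_{00}}^{-\ell}$ is, up to a power of a bdf, a smooth function on $\smash{{}^{\mathrm{b,leC}}\overline{T}^* X}$ which is fiber-constant. It therefore sits in $S_{\mathrm{cl,b,leC}}^{0,0,\ell}(X)$, and by the inclusion \cref{eq:base_inclusion} it lies in $\calA^0([0,\infty)_\lambda; S_{\mathrm{b}}^{0,\max\{0,\ell/2\}}(X))$. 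What needs actual proof is (a) strengthening ``conormal in $\lambda$'' to \emph{continuous} in $\sigma$, and (b) absorbing the subtle difference between $\max\{0,\ell/2\}$ and $\max\{0,\ell/2+\varepsilon\}$, which is what makes the continuity possible.

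The case $\ell\le 0$ is trivial: $(\sigma^2+\mathsf{Z} x)^{|\ell|/2}\in C^\infty([0,\infty)_\sigma;C^\infty(X))\subset C^0([0,\infty)_\sigma;S_{\mathrm{b}}^{0,0}(X))$, which embeds into the target since $\max\{0,\ell/2+\varepsilon\}\geq 0$. So assume $\ell>0$. The workhorse bound is the elementary pointwise estimate
\begin{equation*}
\frac{x}{\sigma^2+\mathsf{Z} x}\leq \frac{1}{\mathsf{Z}},\qquad\text{hence}\qquad x^{\ell/2}(\sigma^2+\mathsf{Z} x)^{-\ell/2}\leq \mathsf{Z}^{-\ell/2},
\end{equation*}
uniformly in $\sigma\geq 0$ and $x\in[0,\bar{x})$. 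Using the fact that the function is independent of the $\partial X$-variables, the only nontrivial b-derivatives to check are the $(x\partial_x)^k$ derivatives, which give linear combinations of terms $C_{k,j}(\mathsf{Z} x)^j(\sigma^2+\mathsf{Z} x)^{-\ell/2-j}$ for $0\le j\le k$. Multiplying by $x^{\ell/2+\varepsilon}$, each such term is bounded by $C_{k,j}\mathsf{Z}^{-\ell/2}\cdot(\mathsf{Z} x/(\sigma^2+\mathsf{Z} x))^j\cdot x^{\varepsilon}\leq C'_{k,j} x^\varepsilon$, uniformly in $\sigma\geq 0$. This proves the uniform symbolic estimate $(\sigma^2+\mathsf{Z} x)^{-\ell/2}\in L^\infty([0,\infty)_\sigma;S_{\mathrm{b}}^{0,\ell/2+\varepsilon}(X))$, and moreover shows that each seminorm comes with an extra $O(x^\varepsilon)$ factor — this is what the small $\varepsilon$ loss buys.

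For continuity at a point $\sigma_0\in[0,\infty)$: given $\eta>0$ and a b-seminorm indexed by $k$, fix $\delta>0$ so small that the uniform bound above times $2\sup_{x\leq \delta} x^\varepsilon$ is less than $\eta/2$; this controls the seminorm over $\{x\leq\delta\}$ uniformly in $\sigma$. On the complementary region $\{x\geq \delta\}\subset X$ (compact in $X^\circ$), the function and all its $(x\partial_x)^k$ derivatives are jointly smooth in $(\sigma,x)\in[0,\bar{\sigma}]\times \{x\geq\delta\}$ for any $\bar{\sigma}>\sigma_0$, hence uniformly continuous in $\sigma$ on this compact set; shrinking $|\sigma-\sigma_0|$ makes the sup-difference there less than $\eta/2$. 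The two estimates combine to give the desired continuity. I do not anticipate any real obstacle; the statement is essentially a quantitative version of the observation that multiplication by a smooth bdf is continuous on b-symbol spaces, with the factor $\varepsilon$ absorbing the boundary singularity of $\varrho_{\mathrm{tf}_{00}}^{-\ell}$ viewed from the $\sigma$-parameter perspective.
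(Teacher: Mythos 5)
Your argument for $\ell>0$ is essentially the paper's own: expand the $(x\partial_x)^k$ derivatives, use $x/(\sigma^2+\mathsf{Z}x)\leq 1/\mathsf{Z}$ for the uniform $L^\infty$ bound, and let the extra factor $x^\varepsilon$ absorb the region near $x=0$ while relying on joint smoothness (and hence uniform convergence) on $\{x\geq\delta\}$. Your $\delta$--$\eta$ presentation of the last step is a bit more explicit than the paper's, but the mechanism is identical.

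There is, however, a false claim in your dismissal of the case $\ell\leq 0$. You assert that $(\sigma^2+\mathsf{Z}x)^{|\ell|/2}\in C^\infty([0,\infty)_\sigma;C^\infty(X))$. This fails whenever $|\ell|/2\notin\bbN$: at $\sigma=0$ the function restricts to $(\mathsf{Z}x)^{|\ell|/2}$, which is not smooth in $x$ at $x=0$ (it is only $C^{\lfloor|\ell|/2\rfloor}$ there), so the map $\sigma\mapsto(\sigma^2+\mathsf{Z}x)^{|\ell|/2}$ does not take values in $C^\infty(X)$ at $\sigma=0$, let alone smoothly. The conclusion you want --- membership in $C^0([0,\infty)_\sigma;S_{\mathrm{b}}^{0,0}(X))$ --- is nonetheless true, but it needs the same b-conormal computation as the $\ell>0$ case: $(x\partial_x)^k(\sigma^2+\mathsf{Z}x)^{|\ell|/2}$ is a sum of terms $c_{j,k}\,x^j(\sigma^2+\mathsf{Z}x)^{|\ell|/2-j}$, each uniformly bounded by $x\leq(\sigma^2+\mathsf{Z}x)/\mathsf{Z}$, and one still needs the small positive exponent (here $\Delta=-\ell/2>0$ for $\ell<0$, or $\varepsilon$ if $\ell=0$) to get uniform continuity across $x=0$. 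The paper handles this by writing $l=\ell/2+\Delta$ with $\Delta>0$ in all cases and running the same argument. So: not a structural gap, since your $\ell>0$ argument covers $\ell\leq 0$ with trivial modifications, but the ``trivial'' shortcut as written is incorrect and should be replaced by the uniform argument.
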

\begin{proof}
	Continuity at $\sigma>0$ is clear. 
	Let $l = \max\{0,\ell/2+\varepsilon\}$. It suffices to restrict attention to $x<\bar{x}$, so we have to prove that, for each $k \in \bbN$, $x^{l}(x \partial_x)^k (\sigma^2+\mathsf{Z} x)^{-\ell/2} \to  x^{l}(x \partial_x)^k (\sigma^2+\mathsf{Z} x)^{-\ell/2}|_{\sigma=0}$ in $L^\infty[0,\bar{x}]$. We compute 
	\begin{equation}
		x^{l}(x \partial_x)^k (\sigma^2+\mathsf{Z} x)^{-\ell/2} = x^{l}\sum_{j=0}^k c_{j,k} x^j(\sigma^2+\mathsf{Z} x)^{-\ell/2-j} 
		\label{eq:misc_ku8}
	\end{equation}
	for some $c_{j,k}(\mathsf{Z})\in \bbR$. Observe that $x^{\max\{0,\ell/2\}} x^j(\sigma^2+\mathsf{Z} x)^{-\ell/2-j} \in L^\infty([0,\bar{x}]_x \times [0,1]_\sigma )$ for every $j\in \bbN$.  Consequently, if $\ell>0$, the extra factor of  $x^\varepsilon$ in \cref{eq:misc_ku8} in conjunction with the uniform convergence as $\sigma\to 0^+$ of $x^{j+\ell/2} (\sigma^2+\mathsf{Z} x)^{-\ell/2-j} \to \mathsf{Z}^{-\ell/2-j}$ in compact subsets of $(0,\bar{x}]_x$ implies that 
		\begin{equation}
			x^{l}\sum_{j=0}^k c_{j,k} x^j(\sigma^2+\mathsf{Z} x)^{-\ell/2-j}  \to x^{\varepsilon}\sum_{j=0}^k c_{j,k} \mathsf{Z}^{-\ell/2-j}
			\label{eq:misc_086}
		\end{equation}
		uniformly in all of $[0,\bar{x}]$ as $\sigma\to 0^+$.  
		
		If $\ell \leq 0$, then we can instead use that $x^j (\sigma^2+\mathsf{Z} x)^{-\ell/2-j} \to (\mathsf{Z} x)^{-\ell/2} \mathsf{Z}^{-j}$ uniformly in $[0,\bar{x}]$.
\end{proof}

\begin{proposition}
	\label{prop:continuity}
	If  $a \in S_{\mathrm{b,leC}}^{m,l,\ell}(X)$ and $l_0\in \bbR$ satisfies $l_0\geq l$ and $l_0>\ell/2$, then $\{a(-;\sigma)\}_{\sigma\geq  0} \in C^0([0,\infty)_\sigma ; S_{\mathrm{b}}^{m,l_0}(X) )$. 
\end{proposition}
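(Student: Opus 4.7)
The plan is to verify continuity separately at each $\sigma_0\geq 0$. Continuity at $\sigma_0>0$ is routine: on a small neighborhood of $\sigma_0$, the corresponding locus meets $\smash{{}^{\mathrm{b,leC}}\overline{T}^* X}$ away from the blown-up face $\mathrm{tf}_{00}$, so $\sigma^2$ is locally smooth on $\smash{{}^{\mathrm{b,leC}}\overline{T}^* X}$. Conormality of $a$ in the b,leC-structure then gives smooth (not merely continuous) dependence of $a(-;\sigma)$ as a family in $S_{\mathrm{b}}^{m,l}(X)\subset S_{\mathrm{b}}^{m,l_0}(X)$ (the last inclusion using $l_0\geq l$). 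The core of the proof is therefore continuity at $\sigma_0=0$, which I would obtain by lifting the strategy of \Cref{lem:sigmaZx_conormal} to the general symbolic case.

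To this end, I would test the seminorms defining $S_{\mathrm{b}}^{m,l_0}(X)$. Write $D^\alpha = (x\partial_x)^{j_1}\partial_y^\beta \partial_{\xi_{\mathrm{b}}}^{j_2}\partial_{\eta_{\mathrm{b}}}^\gamma$. A short computation using $\partial_x\varrho_{\mathrm{tf}_{00}} = \mathsf{Z}/(2\varrho_{\mathrm{tf}_{00}})$ and $\partial_x\varrho_{\mathrm{bf}_{00}} = \varrho_{\mathrm{zf}_{00}}/\varrho_{\mathrm{tf}_{00}}^2$ shows
\[
x\partial_x = \tfrac{\mathsf{Z}}{2}\varrho_{\mathrm{bf}_{00}}\varrho_{\mathrm{tf}_{00}}\partial_{\varrho_{\mathrm{tf}_{00}}} + \varrho_{\mathrm{bf}_{00}}\varrho_{\mathrm{zf}_{00}}\partial_{\varrho_{\mathrm{bf}_{00}}},
\]
so $x\partial_x$, along with $\partial_y$, lifts to a smooth vector field on $\smash{{}^{\mathrm{b,leC}}\overline{T}^*X}$ tangent to every boundary face. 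Combined with the fact that $\partial_{\xi_{\mathrm{b}}},\partial_{\eta_{\mathrm{b}}}$ are each $O(\varrho_{\mathrm{df}_{00}})$ times b-vector fields in the fiber directions, the b,leC-conormality of $a$ yields
\[
|D^\alpha a(-;\sigma)| \leq C_\alpha \varrho_{\mathrm{df}_{00}}^{-m + j_2 + |\gamma|} \varrho_{\mathrm{bf}_{00}}^{-l} \varrho_{\mathrm{tf}_{00}}^{-\ell}.
\]
The algebraic identity $x = \varrho_{\mathrm{bf}_{00}}\varrho_{\mathrm{tf}_{00}}^2$ (valid in a neighborhood of $\mathrm{tf}_{00}\cup\mathrm{bf}_{00}$) then converts this into the $\alpha$-th $S_{\mathrm{b}}^{m,l_0}(X)$-seminorm bound
\[
x^{l_0}(1+|\xi_{\mathrm{b}}|^2 + |\eta_{\mathrm{b}}|^2)^{-(m-j_2-|\gamma|)/2}|D^\alpha a(-;\sigma)| \leq C_\alpha \varrho_{\mathrm{bf}_{00}}^{l_0-l}\varrho_{\mathrm{tf}_{00}}^{2l_0-\ell}.
\]
Under $l_0\geq l$ and $2l_0>\ell$ both exponents on the right are $\geq 0$, with the second strictly positive. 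This already gives the uniform bound $a(-;\sigma)\in S_{\mathrm{b}}^{m,l_0}(X)$ for $\sigma$ in compact subsets of $[0,\infty)$.

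To upgrade to continuity at $\sigma_0=0$, I would decompose into two regions. Given $\delta>0$, choose $\epsilon>0$ so small (using $2l_0-\ell>0$) that
\[
\sup\bigl\{\varrho_{\mathrm{bf}_{00}}^{l_0-l}\varrho_{\mathrm{tf}_{00}}^{2l_0-\ell} : \varrho_{\mathrm{tf}_{00}}\leq \epsilon\bigr\} < \delta/(4C_\alpha).
\]
On the region $\sigma^2+\mathsf{Z}x\leq \epsilon^2$ the triangle inequality then controls the seminorm of $a(-;\sigma)-a(-;0)$ by $\delta/2$, uniformly in $\sigma\in[0,1]$. On the complement $\sigma^2+\mathsf{Z}x\geq \epsilon^2$, whenever $\sigma\leq \epsilon/\sqrt{2}$ we have $x\geq \epsilon^2/(2\mathsf{Z})$, so we are in a region disjoint from $\mathrm{tf}_{00}\cup\mathrm{bf}_{00}$ on which $a$ is smooth in $\sigma^2$ with values in the b-symbol space of order $m$ in $(\xi_{\mathrm{b}},\eta_{\mathrm{b}})$; by uniformity of this smoothness the relevant seminorm of the difference is $<\delta/2$ once $\sigma$ is small enough. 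Combining the two regions produces the desired convergence.

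The main obstacle is pinning down the pointwise bound; everything else is bookkeeping. The identity $x=\varrho_{\mathrm{bf}_{00}}\varrho_{\mathrm{tf}_{00}}^2$, reflecting the parabolic ($\tfrac{1}{2}$-)blow-up used to construct the b,leC-phase space, is what precisely calibrates the b,leC-orders $(l,\ell)$ against the b-order $l_0$ on $X$, and the strict inequality $l_0>\ell/2$ (rather than $\geq$) supplies the slack needed to upgrade boundedness across $\mathrm{tf}_{00}$ to continuity.
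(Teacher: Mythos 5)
Your proof is correct, and it takes a somewhat different route than the paper's. The paper first reduces to the case $m=l=\ell=0$ by factoring out $x^{-l}(\sigma^2+\mathsf{Z}x)^{l-\ell/2}\varrho_{\mathrm{df}_{00}}^{-m}$ and invoking \Cref{lem:sigmaZx_conormal}, and then handles the zero-orders case by observing that $La\in L^\infty$ for b-differential operators $L$, that $La\to La|_{\sigma=0}$ uniformly on compact subsets of $(0,\bar{x}]$, and that the extra $x^\varepsilon$ weight then propagates this to uniform convergence on all of $[0,\bar{x}]$. You instead work directly with general orders: you compute the lift of $x\partial_x$ to the b,leC-phase space explicitly, package the conormality hypothesis into a clean pointwise weight bound $\varrho_{\mathrm{bf}_{00}}^{l_0-l}\varrho_{\mathrm{tf}_{00}}^{2l_0-\ell}$ on the relevant seminorms, and then split into a near-$\mathrm{tf}_{00}$ region (controlled by the slack $2l_0>\ell$) and a complementary region on which smoothness in $\sigma^2$ applies. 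Your two-region decomposition plays exactly the role of the paper's $x^\varepsilon$ trick — indeed, $x^\varepsilon=\varrho_{\mathrm{bf}_{00}}^\varepsilon\varrho_{\mathrm{tf}_{00}}^{2\varepsilon}$ is the $l_0=\varepsilon$ instance of your weight — but your formulation keeps the general indices in play throughout and so handles the boundary case $l_0=l$ transparently, without needing the intermediate $l_1<l_0$ of the paper's reduction. What the paper's modular route buys in exchange is re-use: \Cref{lem:sigmaZx_conormal} is stated and proved once and then invoked here and (implicitly) for \Cref{prop:residual_continuity}. Both arguments rely in the same way on the $(0,0)$ smoothness of $a$ at $\mathrm{zf}_{00}$ for convergence away from $\partial X$.
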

\begin{proof}
	We first reduce to the case $m,l,\ell = 0$:
	\begin{itemize}
		\item For any $m,l,\ell \in \bbR$, 
		\begin{equation} 
			x^{-l} (\sigma^2+\mathsf{Z} x)^{l-\ell/2} \varrho_{\mathrm{df}_{00}}^{-m} \in \smash{C^0([0,\infty)_\sigma ; S_{\mathrm{b}}^{m,l_1}(X) )}
		\end{equation} 
		if $l_1\geq l$, $l_1>\ell/2$ by the previous lemma. 
		
		Any $a \in S_{\mathrm{b,leC}}^{m,l,\ell}(X)$ can be written as 
		$a = x^{-l} (\sigma^2+\mathsf{Z} x)^{l-\ell/2} \varrho_{\mathrm{df}_{00}}^{-m}  a_0$
		for $a_0 \in S_{\mathrm{b,leC}}^{0,0,0}(X)$, so if we know that $a_0 \in C^0([0,\infty)_\sigma ; S_{\mathrm{b}}^{0,\varepsilon}(X) )$ for  $\varepsilon\geq 0$, then we can conclude that \begin{equation} 
			a \in C^0([0,\infty)_\sigma ; S_{\mathrm{b}}^{m,l_1}(X) )C^0([0,\infty)_\sigma ; S_{\mathrm{b}}^{0,\varepsilon}(X)) = C^0([0,\infty)_\sigma ; S_{\mathrm{b}}^{m,l_1+\varepsilon}(X)).
		\end{equation}  
		
		Taking $l_1<l_0$ and $\varepsilon \in (0,l_0-l_1)$, we get $a\in  C^0([0,\infty)_\sigma ; S_{\mathrm{b}}^{m,l_0}(X))$.

	\end{itemize}
	To prove the proposition in the case $m,l,\ell=0$: 
	\begin{itemize}
		\item Continuity at $\sigma>0$ is clear, so we only need to check that, for $a\in S_{\mathrm{b,leC}}^{0,0,0}(X)$, $a(-;\sigma) \to a(-;0)$ in $\smash{S_{\mathrm{b}}^{0,\varepsilon}(X)}$ for every $\varepsilon>0$. Note that $L a \in L^\infty ([0,1]_\sigma\times [0,\bar{x}]_x\times \partial X)$ for any $L\in \operatorname{Diff}_{\mathrm{b}}(X)$. Since, as $\sigma\to 0^+$, $La \to La|_{\sigma=0}$ uniformly in compact subsets of $(0,\bar{x} ]_x\times \partial X$, we can conclude that 
		\begin{equation}
		x^\varepsilon La \to x^\varepsilon La|_{\sigma=0}  
		\end{equation}
		uniformly in all of $[0,\bar{x}]\times \partial X$. Thus, $a(-;\sigma)\to a(-;0)$ in $S_{\mathrm{b}}^{0,\varepsilon}(X)$.
	\end{itemize}
	 
\end{proof}

\begin{figure}
	\begin{center} 
	\begin{tikzpicture}[scale=.75]
		\draw (0,4,5) -- (4,4,5) -- (5,4,4) -- (5,4,0);
		\draw[dashed] (1,0,5) -- (4,0,5) -- (5,0,4) -- (5,0,1);
		\draw[dashed] (0,4,5) -- (0,1,5);
		\draw (4,4,5) -- (4,0,5);
		\draw (5,4,4) -- (5,0,4);
		\draw[dashed] (5,1,0) -- (5,4,0);
		\draw[dashed] (0,4,5) -- (0,4,2);
		\draw[dashed] (5,4,0) -- (2,4,0);
		\node (df) at (2.5,4,2.5) {$\mathrm{df}_{00}$};
		\node (bf) at (2,2,5) {$\mathrm{bf}_{00}$};
		\node (zf) at (5,2,2) {$\mathrm{zf}_{00}$};
		\node (tf) at (4.5,2,4.5) {$\mathrm{tf}_{00}$};
	\end{tikzpicture}
	\qquad\qquad 
	\begin{tikzpicture}[scale=.75]
		\coordinate (ul) at (0,4,5) {};
		\coordinate (ulc) at (3,4,5) {};
		\coordinate (urc) at (5,4,3) {};
		\coordinate (ur) at (5,4,0) {};
		\coordinate (lc) at (4,3,5) {};
		\coordinate (rc) at (5,3,4) {};
		\coordinate (ld) at (4,0,5) {};
		\coordinate (rd) at (5,0,4) {};
		\draw (ld)--(lc)--(ulc)--(ul);
		\draw (ulc)--(urc)--(rc)--(lc);
		\draw (rd)--(rc)--(urc)--(ur);
		\draw[dashed] (ld)--(rd); 
		\draw[dashed] (ld) --++ (-3,0,0);
		\draw[dashed] (rd) --++ (0,0,-3);
		\draw[dashed] (ul) --++ (0,-3,0);
		\draw[dashed] (ul) --++ (0,0,-3);
		\draw[dashed] (ur) --++ (-3,0,0);
		\draw[dashed] (ur) --++ (0,-3,0);
		\node (df) at (2.5,4,2.5) {$\mathrm{df}_{0}$};
		\node (bf) at (2,2,5) {$\mathrm{bf}_{0}$};
		\node (zf) at (5,2,2) {$\mathrm{zf}_{0}$};
		\node (tf) at (4.5,2,4.5) {$\mathrm{tf}_{0}$};
		\node (ff) at (4.25,3.5,4.25) {$\mathrm{ff}_{0}$};
	\end{tikzpicture}
	\end{center} 
	\caption{The phase spaces ${}^{\mathrm{b,leC}} \overline{T}^* X$ (left, cf.\ \cite[Fig. 1]{VasyN0L}) and $\smash{[{}^{\mathrm{b,leC}} \overline{T}^* X ; \mathrm{df}_{00} \cap \mathrm{tf}_{00} ]}$ (right), with the degrees of freedom associated with $T^* \partial X$ omitted. (In other words, if we were to consider the case $\dim X=1$, then the figures above would depict the phases spaces.) For simplicity, we only depict the $\xi_{\mathrm{b}}>0$ half of phase space.}
	\label{fig:preliminary}
\end{figure}

We now introduce the full leC- phase space ${}^{\mathrm{leC}}\overline{T}^* X$. This is the mwc gotten from ${}^{\mathrm{b,leC}}\overline{T}^* X$ by first blowing up the edge $\mathrm{df}_{00} \cap \mathrm{tf}_{00}$, resulting in a mwc with five faces -- $\mathrm{df}_0,\mathrm{ff}_0,\mathrm{bf}_0,\mathrm{tf}_0,\mathrm{zf}_0$ (\Cref{fig:preliminary}, right), where $\mathrm{ff}_0$ is the front face of the blow up -- with bdfs 
\begin{equation}
	\varrho_{\mathrm{df}_0} = \frac{\varrho_{\mathrm{df}_{00}}}{\varrho_{\mathrm{df}_{00}}+\varrho_{\mathrm{tf}_{00}}} , \qquad \varrho_{\mathrm{ff}_0} = \varrho_{\mathrm{df}_{00}} + \varrho_{\mathrm{tf}_{00}}, \qquad \varrho_{\mathrm{tf}_0} = \frac{\varrho_{\mathrm{tf}_{00}}}{\varrho_{\mathrm{df}_{00}}+\varrho_{\mathrm{tf}_{00}}}, 
\end{equation}
$\varrho_{\mathrm{zf}_0} = \varrho_{\mathrm{zf}_{00}}$, $\varrho_{\mathrm{bf}_0} = \varrho_{\mathrm{bf}_{00}}$, and then performing a polar blowup of the edge $\mathrm{df}_0 \cap \mathrm{bf}_0$, resulting in a mwc with six faces -- $\mathrm{df},\mathrm{sf},\mathrm{ff},\mathrm{bf},\mathrm{tf},\mathrm{zf}$, where $\mathrm{sf}$ is front face of the second blowup -- with bdfs 
\begin{align}
	\begin{split} 
		\varrho_{\mathrm{df}} &= \frac{\varrho_{\mathrm{df}_{0}}}{\varrho_{\mathrm{df}_{0}}+\varrho_{\mathrm{bf}_{0}}} = \frac{\varrho_{\mathrm{df}_{00}}}{\varrho_{\mathrm{df}_{00}}+\varrho_{\mathrm{bf}_{00}}(\varrho_{\mathrm{df}_{00}}+\varrho_{\mathrm{tf}_{00}} )}  ,\\  
		\varrho_{\mathrm{sf}} &= \varrho_{\mathrm{df}_{0}} + \varrho_{\mathrm{bf}_0}
		= \frac{\varrho_{\mathrm{df}_{00}}}{\varrho_{\mathrm{df}_{00}}+\varrho_{\mathrm{tf}_{00}}} + \varrho_{\mathrm{bf}_{00}}, \\
		\varrho_{\mathrm{bf}} &= \frac{\varrho_{\mathrm{bf}_{0}}}{\varrho_{\mathrm{df}_{0}}+\varrho_{\mathrm{bf}_{0}}} = \frac{\varrho_{\mathrm{bf}_{00}} (\varrho_{\mathrm{df}_{00}}+\varrho_{\mathrm{tf}_{00}})}{\varrho_{\mathrm{df}_{00}} +  \varrho_{\mathrm{bf}_{00}}(\varrho_{\mathrm{df}_{00}}+\varrho_{\mathrm{tf}_{00}}) },
	\end{split} 
	\label{eq:bdfs}
\end{align}
$\varrho_{\mathrm{tf}} = \varrho_{\mathrm{tf}_0}$, $\varrho_{\mathrm{ff}}= \varrho_{\mathrm{ff}_0}$, $\varrho_{\mathrm{zf}} = \varrho_{\mathrm{zf}_0}$. See \Cref{fig:LEC_phase_space}.
Now let, for each $m,s,\varsigma,l,\ell \in \bbR$,  
\begin{align}
	\begin{split} 
		S_{\mathrm{leC}}^{m,s,\varsigma,l,\ell}(X) &= \calA^{-m,-s,-\varsigma,-l,-\ell,(0,0)}({}^{\mathrm{leC}}\overline{T}^* X) = \varrho_{\mathrm{df}}^{-m}\varrho_{\mathrm{sf}}^{-s}\varrho_{\mathrm{ff}}^{-\varsigma}\varrho_{\mathrm{bf}}^{-l}\varrho_{\mathrm{tf}}^{-\ell} S^{0,0,0}_{\mathrm{b,leC}}(X), \\
		S_{\mathrm{leC}}(X)&=\cup_{m,s,\varsigma,l,\ell\in \bbR} S_{\mathrm{leC}}^{m,s,\varsigma,l,\ell}(X), \\
		S_{\mathrm{cl,leC}}^{m,s,\varsigma,l,\ell}(X) &= \varrho_{\mathrm{df}}^{-m}\varrho_{\mathrm{sf}}^{-s}\varrho_{\mathrm{ff}}^{-\varsigma}\varrho_{\mathrm{bf}}^{-l}\varrho_{\mathrm{tf}}^{-\ell}C^\infty ({}^{\mathrm{leC}}\overline{T}^* X) \subset S_{\mathrm{leC}}^{m,s,\varsigma,l,\ell}(X), \\
		S_{\mathrm{cl,leC}}(X)&=\cup_{m,s,\varsigma,l,\ell\in \bbR} S_{\mathrm{cl,leC}}^{m,s,\varsigma,l,\ell}(X).
	\end{split} 
\end{align}
At the level of sets (and at the level of $\bbC$-algebras), $S_{\mathrm{leC}}(X)$ is equal to $S_{\mathrm{b,leC}}(X) = \cup_{m,l,\ell\in \bbR} S_{\mathrm{b,leC}}^{m,l,\ell}(X)$, but the filtration above presents $S_{\mathrm{leC}}(X)$ as a multigraded $\bbC$-algebra. The isomorphism 
\begin{equation} 
	\times \varrho_{\mathrm{df}}^{-m}\varrho_{\mathrm{sf}}^{-s}\varrho_{\mathrm{ff}}^{-\varsigma}\varrho_{\mathrm{bf}}^{-l}\varrho_{\mathrm{tf}}^{-\ell} : S^{0,0,0}_{\mathrm{b,leC}}(X) \to S_{\mathrm{leC}}^{m,s,\varsigma,l,\ell}(X)
\end{equation} 
of vector spaces 
allows us to consider each $\smash{S_{\mathrm{leC}}^{m,s,\varsigma,l,\ell}(X)}$ as a Fr\'echet space, and likewise for $\smash{S_{\mathrm{cl,leC}}^{m,s,\varsigma,l,\ell}(X)}$. The $\bbC$-algebras $S_{\mathrm{leC}}(X)$ and $S_{\mathrm{cl,leC}}(X)$ are then multigraded Fr\'echet algebras, as pointwise multiplication of symbols is jointly continuous with respect to the relevant topologies.

Observe that if $a\in S_{\mathrm{leC}}^{m,s,\varsigma,l,\ell}(X)$ then $a(-;0)\in S_{\mathrm{scb}}^{m,\varsigma,\ell}(X_{1/2})$. For $\sigma>0$,   $a(-;\sigma)\in S_{\mathrm{scb}}^{m,s,l}(X)$.

\begin{figure}[t]
	\begin{center} 
	\begin{tikzpicture}[scale=.75]
		\coordinate (ul) at (0,3.5,5) {};
		\coordinate (uul) at (0,4,4.5) {};
		\coordinate (uulc) at (3,4,4.5) {};
		\coordinate (ulc) at (3,3.5,5) {};
		\coordinate (urc) at (5,4,3) {};
		\coordinate (ur) at (5,4,0) {};
		\coordinate (lc) at (4,3,5) {};
		\coordinate (rc) at (5,3,4) {};
		\coordinate (ld) at (4,0,5) {};
		\coordinate (rd) at (5,0,4) {};
		\draw (ld)--(lc); 
		\draw[red] (lc)--(ulc);
		\draw (ulc)--(ul);
		\draw (uul)--(uulc)--(urc)--(rc)--(lc);
		\draw (rd)--(rc)--(urc)--(ur);
		\draw (ulc)--(uulc);
		\draw[dashed] (uul)--(ul);
		\draw[dashed] (ld)--(rd); 
		\draw[dashed] (ld) --++ (-3,0,0);
		\draw[dashed] (rd) --++ (0,0,-3);
		\draw[dashed] (ul) --++ (0,-3,0);
		\draw[dashed] (uul) --++ (0,0,-3);
		\draw[dashed] (ur) --++ (-3,0,0);
		\draw[dashed] (ur) --++ (0,-3,0);
		\node (df) at (2.5,4,2.5) {$\mathrm{df}$};
		\node (bf) at (2,2,5) {$\mathrm{bf}$};
		\node (zf) at (5,2,2) {$\mathrm{zf}$};
		\node (tf) at (4.5,2,4.5) {$\mathrm{tf}$};
		\node (ff) at (4.25,3.5,4.25) {$\mathrm{ff}$};
		\node (sf) at (1.5,3.75,4.75) {$\mathrm{sf}$};
	\end{tikzpicture}
	\end{center}
	\caption{The leC-phase space ${}^{\mathrm{leC}}\overline{T}^* X$, with the degrees of freedom associated with $T^* \partial X$ omitted.  
	The edge $\mathrm{ff}\cap \mathrm{bf}$ has been highlighted in red.} 
	\label{fig:LEC_phase_space}
\end{figure}

\begin{figure}[t]
	\begin{tikzpicture}[scale=.75]
		\coordinate (ul) at (0,3.5,5) {};
		\coordinate (uul) at (0,4,4.5) {};
		\coordinate (ulc) at (4,3.5,5) {};
		\coordinate (uulc) at (4,4,4.5) {};
		\coordinate (urc) at (5,4,4) {};
		\coordinate (ur) at (5,4,0) {};
		\coordinate (rc) at (5,4,4) {};
		\coordinate (ld) at (4,0,5) {};
		\coordinate (rd) at (5,0,4) {};
		\draw (ld)--(ulc)--(ulc)--(ul);
		\draw (uulc)--(urc)--(rc);
		\draw (rd)--(rc)--(urc)--(ur);
		\draw (uulc)--(uul);
		\draw (uulc)--(ulc);
		\draw[dashed] (uul)--(ul);
		\draw[dashed] (ld)--(rd); 
		\draw[dashed] (ld) --++ (-3,0,0);
		\draw[dashed] (rd) --++ (0,0,-3);
		\draw[dashed] (ul) --++ (0,-3,0);
		\draw[dashed] (uul) --++ (0,0,-3);
		\draw[dashed] (ur) --++ (-3,0,0);
		\draw[dashed] (ur) --++ (0,-3,0);
		\node (df) at (2.5,4,2.5) {$\mathrm{df}$};
		\node (bf) at (2,2,5) {$\mathrm{bf}$};
		\node (zf) at (5,2,2) {$\mathrm{zf}$};
		\node (tf) at (4.5,2,4.5) {$\mathrm{tf}$};
		\node (sf) at (2,3.75,4.75) {$\mathrm{sf}$};
	\end{tikzpicture}
	\caption{The phase space of Vasy's resolved calculus. Cf.\ \cite[Figure 4]{VasyN0L} (which also depicts the $\xi<0$ half of this phase space). }
	\label{fig:vasy}
\end{figure}

\subsection{Calculi}
\label{subsec:calculi}
After recalling some preliminary notions in \S\ref{subsubsec:Vasy}, we discuss the b,leC-calculus in \S\ref{subsubsec:bleC} and the full leC-calculus  in \S\ref{subsubsec:leC}. Since the b,leC-calculus is essentially $\Psi_{\mathrm{b,sp,res}}(X)$, except that we enforce classicality at $\mathrm{zf}_{00}$ (which introduces no complications), and since all references to the b,leC-calculus in the rest of the paper (i.e.\ \S\ref{sec:operator}, \S\ref{sec:symbolic}, \S\ref{sec:mainproof}) could be replaced by references to the leC-calculus with only notational complications, we will only sketch the arguments in \S\ref{subsubsec:bleC}. (However, it is important to understand the residual operators $\smash{\Psi_{\mathrm{b,leC}}^{-\infty,l,\ell}(X)}$, as these are the ``non-symbolic'' parts of leC-operators. But -- once again -- this  is essentially $\smash{\Psi_{\mathrm{b,sp,res}}^{-\infty,l,\ell}(X)}$ except for additional classicality at the ``zero face'' of Vasy's double space.)

\subsubsection{$\Psi_{\mathrm{b,sp,res}}(X)$}
\label{subsubsec:Vasy}

We now recall the notion of the b-calculus with conormal dependence on parameters: for any (compact) mwb $X$ and a (connected) mwc $M$ (the ``parameter space''), we have a multigraded $\bbC$-algebra 
\begin{align}
	\Psi_{\mathrm{b};M}(X)  &= \bigcup_{m,l \in \bbR} \Psi_{\mathrm{b};M}^{m,l}(X),\\ \Psi_{\mathrm{b};M}^{m,l}(X) &= \calA^0_{\mathrm{loc}}(M;  \Psi^{m,l}_{\mathrm{b}}(X)),
\end{align}
the members of which are the families $\{A_\lambda\}_{\lambda\in M^\circ}$ of b-$\Psi$DOs on $X$ depending conormally on a parameter $\lambda \in M^\circ$ (as above,  the `loc' refers to the fact that we only require uniform bounds in compact subsets of $M$). 
So, letting $\smash{C^\infty \cap L^\infty_{\mathrm{loc}}(M ;\Psi^{m,l}_{\mathrm{b}}(X))}$
denote the Fr\'echet space of smooth maps $M^\circ\to \smash{\Psi^{m,l}_{\mathrm{b}}(X)}$ that are (but whose derivatives are not necessarily) uniformly bounded with respect to each Fr\'echet seminorm of $\smash{\Psi^{m,l}_{\mathrm{b}}(X)}$ in every compact subset of $M$, 
\begin{equation}
	\Psi_{\mathrm{b};M}^{m,l}(X) = \{A_\bullet \in  C^\infty \cap L^\infty_{\mathrm{loc}}(M ;\Psi^{m,l}_{\mathrm{b}}(X))  : [L A_\bullet]_{\mathrm{a.e.}} \in L^\infty_{\mathrm{loc}}(M ;\Psi^{m,l}_{\mathrm{b}}(X)) \text{ $\forall$}L\in \operatorname{Diff}_{\mathrm{b}}(M) \}. 
\end{equation}
Note that each $\smash{\Psi_{\mathrm{b};M}^{m,l}(X)}$ is a Fr\'echet space, and $\Psi_{\mathrm{b};M}(X)$ can be regarded as a multigraded Fr\'echet algebra. 
Relevant to the study of spectral families of operators is the case $M=[0,\infty)_\lambda$. In this case, we write ``,\,sp'' in place of ``$;M$'' in the notation. 

In \cite[\S3]{VasyN0L}, Vasy (using slightly different notation) defines a particular ``refinement'' of $\Psi_{\mathrm{b,sp}}(X) = \Psi_{\mathrm{b};M}(X)$, 
\begin{equation}
	\Psi_{\mathrm{b},\mathrm{sp,res}}(X) = \Psi_{\mathrm{b,sp}}(X), \qquad \Psi_{\mathrm{b},\mathrm{sp,res}}(X)  = \bigcup_{m,l,\ell \in \bbR} \Psi_{\mathrm{b},\mathrm{sp,res}}^{m,l,\ell}(X),
\end{equation}
a multigraded $\bbC$-algebra 
which is equal, at the level of $\bbC$-algebras, to $\Psi_{\mathrm{b,sp}}(X)$, but with a 3-parameter multigrading (and associated symbol calculus) such that 
\begin{itemize}
	\item $\smash{\Psi_{\mathrm{b},\mathrm{sp,res}}^{m,l,\ell}(X) \subset \Psi_{\mathrm{b,sp}}^{m, \max\{l,\ell\}}(X)}$
	\item and $(\lambda+x)^{-l}$, considered as a multiplication operator, is in $\smash{\Psi_{\mathrm{b},\mathrm{sp,res}}^{0,0,l}(X)}$.
\end{itemize}
The three indices $m,l,\ell$ in $\smash{\Psi_{\mathrm{b},\mathrm{sp,res}}^{m,l,\ell}(X)}$ keep track of three notions of order, roughly  the ``differential order'' $m$, the b-decay order away from zero energy $l$ -- that is at $\mathrm{bf}_{00}$ -- and the b-decay order $\ell$ at $\mathrm{tf}_{00}$ (with respect to $\varrho_{0,\mathrm{tf}_{00}}$). 
In \cite{VasyN0L}, Vasy keeps track of an additional order, the order at $\mathrm{zf}_{00}$, which for our purposes can be set to zero throughout (as in \cref{eq:misc_kul}). 

Define $\sigma_{\mathrm{b,sp,res}}^{m,l,\ell}: \Psi_{\mathrm{b,sp,res}}^{m,l,\ell}(X) \to \calA_{\mathrm{loc}}^{-m,-l,-\ell,0}({}^{\mathrm{b,sp}}\overline{T}^* X )/\calA_{\mathrm{loc}}^{-m+1,-l,-\ell,0}({}^{\mathrm{b,sp}}\overline{T}^* X)$ by 
\begin{equation}
	\sigma_{\mathrm{b,sp,res}}^{m,l,\ell}(a)  = (\lambda+x)^{l-\ell} \sigma_{\mathrm{b,sp}}^{m,l}( (\lambda+x)^{\ell-l} a) 
\end{equation}
for $a\in S_{\mathrm{b,sp,res}}^{m,l,\ell}(X)$, where $\sigma_{\mathrm{b,sp}}(a)$ denotes the b-principal symbol map applied $\lambda$-wise to $a$ considered as an element of the family b-algebra $\Psi_{\mathrm{b,sp}}(X)$. Then, for all $m,l,\ell\in \bbR$, 
\begin{equation}
	0 \to \Psi_{\mathrm{b,sp,res}}^{m-1,l,\ell}(X)\hookrightarrow \Psi_{\mathrm{b,sp,res}}^{m,l,\ell}(X) \overset{\sigma_{\mathrm{b,sp,res}}}{\longrightarrow}\calA_{\mathrm{loc}}^{-m,-l,-\ell,0}({}^{\mathrm{sp,b}}\overline{T}^* X)/ \calA_{\mathrm{loc}}^{-m+1,-l,-\ell,0}({}^{\mathrm{sp,b}}\overline{T}^* X)  \to 0
	\label{eq:ses}
\end{equation}
is a short exact sequence and, for all $m',l',\ell'\in \bbR$,  
\begin{align} 
	\sigma_{\mathrm{b,sp,res}}^{m,l,\ell}(A)\sigma_{\mathrm{b,sp,res}}^{m',l',\ell'}(B) &= \smash{\sigma_{\mathrm{b,sp,res}}^{m+m',l+l',\ell+\ell'}(AB)}
	\label{eq:hom}
	\\
	\{\sigma_{\mathrm{b,sp,res}}^{m,l,\ell}(A),\sigma_{\mathrm{b,sp,res}}^{m',l',\ell'}(B)\} &= -i\sigma_{\mathrm{b,sp,res}}^{m+m'-1,l+l',\ell+\ell'}([A,B]).
	\label{eq:com}
\end{align} 
for all $\smash{A \in \Psi_{\mathrm{b,sp,res}}^{m,l,\ell}, B \in \smash{\Psi_{\mathrm{b,sp,res}}^{m',l',\ell'}}}$. 
We will compute Poisson brackets using the convention that momentum derivatives of the first entry have positive sign. (The sign in \cref{eq:com} depends on the choice of sign used in the Fourier transform used in defining the calculus.)
The bdfs $\smash{(\sigma^2+\mathsf{Z}x)^{1/2}}$ and $\smash{x/(\sigma^2+\mathsf{Z}x)}$ of $X^{\mathrm{sp}}_{\mathrm{res}}$, considered as multiplication operators, are representatives of their own principal symbols:
\begin{align}
	\sigma_{\mathrm{b,sp,res}}^{0,l,0}( x^{-l} (\sigma^2 +\mathsf{Z} x)^{l} ) &= x^{-l} (\sigma^2 +\mathsf{Z} x)^{l} \mod S_{\mathrm{b,sp,res}}^{-1,l,0}(X) 
	\label{eq:misc_54a} \\
	\sigma_{\mathrm{b,sp,res}}^{0,0,\ell}(  (\sigma^2 +\mathsf{Z} x)^{-\ell} ) &=  (\sigma^2 +\mathsf{Z} x)^{-\ell} \mod S_{\mathrm{b,sp,res}}^{-1,0,\ell}(X). \label{eq:misc_54b}
\end{align} 
More generally, if $a \in S_{\mathrm{b,sp,res}}^{m,l,\ell}(X)$, then $a \in \sigma_{\mathrm{b,sp,res}}^{m,l,\ell}(a)$. 

It is very convenient to make use of a ``(left) quantization'' map (right quantization working equally well):
\begin{align}
	\operatorname{Op}&: S_{\mathrm{b}}(X)\to \Psi_{\mathrm{b}}(X) \\ 
	&: S_{\mathrm{b}}^{m,l}(X) \to \Psi_{\mathrm{b}}^{m,l}(X),
	\label{eq:misc_msl}
\end{align} 
discussed e.g. in \cite{VasyGrenoble} among other places, 
given by the left quantization of symbols in local coordinates. 
This will be noncanonical, depending on a choice of atlas on $X$, among other things. While not surjective (missing out on the remainder term $R'$ in \cite[Definition 5.15]{VasyGrenoble}), it will be modulo $\Psi_{\mathrm{b}}^{-\infty,l}(X)$.

Applied $\lambda$-wise to an element of $\calA^0([0,\infty)_\lambda;S_{\mathrm{b}}^{m,l}(X))$, the result is an element of $\Psi_{\mathrm{b,sp}}^{m,l}(X)= \calA^0([0,\infty)_\lambda;\Psi_{\mathrm{b}}^{m,l}(X))$. Some elementary properties of $\operatorname{Op}$ which we can arrange are: 
\begin{itemize}
	\item for any $f \in \cup_{l\in \bbR}\calA^l (X)$,
	\begin{equation} 
		\operatorname{Op}(f)= f 
		\label{eq:misc_l12}
	\end{equation} 
	(this property distinguishing left quantization from right),
	where the $f$ on the right-hand side denotes the multiplication operator $u\mapsto fu$,
	\item $\operatorname{Op}$ is $\bbC$-linear, 
	\item \cref{eq:misc_msl} is continuous for any $m,l\in \bbR$, 
	\item $\sigma_{\mathrm{b}}^{m,l}(\operatorname{Op}(a)) = a \bmod S_{\mathrm{b}}^{m-1,l}(X)$ for all $a\in  S_{\mathrm{b}}^{m,l}(X)$.
\end{itemize}
(\Cref{eq:misc_l12} holds for the Kohn--Nirenberg calculus $\Psi_\infty$ under left quantization. Since the Schwartz kernel of the multiplication operator $u\mapsto fu$ is supported on the diagonal, it is unaltered by the cutoff $\psi(t-t')$ in \cite[Definition 5.15]{VasyGrenoble}. As a consequence, \cref{eq:misc_l12} holds also for $\Psi_{\mathrm{b}}$.)
\subsubsection{$\Psi_{\mathrm{b,leC}}(X)$}
\label{subsubsec:bleC}

Let $\Psi_{\mathrm{b,leC}}^{-\infty,l,\ell}(X)$ denote the elements of $\Psi_{\mathrm{b,sp,res}}^{-\infty,l,\ell/2}(X)$ whose Schwartz kernels are smooth at the face $\operatorname{cl}\{\sigma=0,x'>0,x>0\}$ of the double space $X^{2\mathrm{b,sp,res}}$ \cite[Figure 2]{VasyN0L} (with the terms in the Taylor series being elements of $\smash{\Psi_{\mathrm{b}}^{-\infty,\ell}(X_{1/2})}$): 
\begin{equation}
	\operatorname{SK}\Psi_{\mathrm{b,leC}}^{-\infty,l,\ell}(X) = \calA_{\mathrm{loc}}^{-l,-\ell/2,-\infty,-\infty,(0,0)}(X^{2\mathrm{b,sp,res}}), 
	\label{eq:misc_ska}
\end{equation}
where we are listing the boundary faces of $X^{2\mathrm{b,sp,res}}$ in the order $\mathrm{bf}=\operatorname{cl}\{x,x'=0, \sigma>0\}$, $\mathrm{tf}=\{x,x'=0, \sigma=0\}$, $\mathrm{lb} = \operatorname{cl}\{x=0,x'>0,\sigma>0\}$, $\mathrm{rb} = \operatorname{cl}\{x'=0,x>0,\sigma>0\}$, and $\mathrm{zf} = \operatorname{cl}\{\sigma=0,x>0,x'>0\}$. Thus, $\Psi_{\mathrm{b,leC}}^{-\infty,l,\ell}(X)$ inherits from \cref{eq:misc_ska} a Fr\'echet space structure, and it can be shown that 
\begin{equation} 
	\Psi_{\mathrm{b,leC}}^{-\infty,\infty,\infty}(X) = \cup_{l,\ell\in \bbR} \Psi_{\mathrm{b,leC}}^{-\infty,l,\ell}(X)
\end{equation} 
is then a multigraded Fr\'echet algebra. (So operator composition defines a jointly continuous map $\Psi_{\mathrm{b,leC}}^{-\infty,l,\ell}(X)\times \Psi_{\mathrm{b,leC}}^{-\infty,l',\ell'}(X) \to \Psi_{\mathrm{b,leC}}^{-\infty,l+l',\ell+\ell'}(X)$ for all $l,\ell,l',\ell'\in \bbR$.)

An argument similar to that used to prove \Cref{prop:continuity} yields:
\begin{propositionp}
	\label{prop:residual_continuity}
	Given an element $K \in \operatorname{SK}\Psi_{\mathrm{b,leC}}^{-\infty,l,\ell}(X)$, if $l_0\geq l$ and $l_0>\ell/2$, then it is the case that $\{K(-;\sigma)\}_{\sigma \geq 0} \in C^0([0,\infty)_\sigma ; \operatorname{SK} \Psi_{\mathrm{b}}^{m,l_0}(X))$ for any $m\in \bbR$. 
\end{propositionp}

Consequently, elements of $\Psi_{\mathrm{b,leC}}^{-\infty,\infty,\infty}(X)$ can be considered as continuous families of b-$\Psi$DOs indexed either by $\bbR^+_\sigma$ or by $[0,\infty)_\sigma $. 
We now define, for each $m,l,\ell\in \bbR$, 
\begin{equation}
	\Psi_{\mathrm{b,leC}}^{m,l,\ell}(X) = \operatorname{Op}(S_{\mathrm{b,leC}}^{m,l,\ell}(X)) +  \Psi_{\mathrm{b,leC}}^{-\infty,l,\ell}(X).
\end{equation} 
Thus, by \cref{eq:base_inclusion}, if $l_0\geq l,\ell/2$, then $\Psi_{\mathrm{b,leC}}^{m,l,\ell}(X) \subseteq \Psi_{\mathrm{b,sp}}^{m,l_0}(X)$.
In addition: 
\begin{proposition}
	\label{prop:operator_cont}
	If $a = \{a(-;\sigma)\}_{\sigma \geq 0} \in S_{\mathrm{b,leC}}^{m,l,\ell}(X)$, then, if $l_0\geq l$ and $l_0 > \ell/2$,  
	\begin{equation}
		\{\operatorname{Op}(a(-;\sigma))\}_{\sigma \geq 0} \in C^0([0,\infty)_\sigma ; \Psi_{\mathrm{b}}^{m,l_0}).
	\end{equation}
	Consequently, if $A = \{A(\sigma)\}_{\sigma>0} \in \Psi_{\mathrm{b,leC}}^{m,l,\ell}(X)$, then there exists some $A(0) \in \Psi_{\mathrm{b}}^{m,l_0}(X)$ such that $\{A(\sigma)\}_{\sigma\geq 0}  \in C^0([0,\infty)_\sigma ; \Psi_{\mathrm{b}}^{m,l_0} )$.
\end{proposition}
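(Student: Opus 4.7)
The plan is to reduce both parts to the two continuity results already proven in the excerpt, namely \Cref{prop:continuity} (continuity of the symbol family into $S_{\mathrm{b}}^{m,l_0}(X)$) and \Cref{prop:residual_continuity} (continuity of the residual Schwartz kernel into $\operatorname{SK}\Psi_{\mathrm{b}}^{m,l_0}(X)$), combined with the fact that the quantization map $\operatorname{Op}: S_{\mathrm{b}}^{m,l_0}(X)\to \Psi_{\mathrm{b}}^{m,l_0}(X)$ is continuous.

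For the first assertion, given $a\in S_{\mathrm{b,leC}}^{m,l,\ell}(X)$ and $l_0$ as in the hypothesis, \Cref{prop:continuity} yields that $\sigma\mapsto a(-;\sigma)$ is a continuous map from $[0,\infty)_\sigma$ into the Fr\'echet space $S_{\mathrm{b}}^{m,l_0}(X)$. Composing with the continuous linear map $\operatorname{Op}: S_{\mathrm{b}}^{m,l_0}(X) \to \Psi_{\mathrm{b}}^{m,l_0}(X)$ (cf.\ \cref{eq:misc_msl}) immediately gives that $\sigma\mapsto \operatorname{Op}(a(-;\sigma))$ lies in $C^0([0,\infty)_\sigma;\Psi_{\mathrm{b}}^{m,l_0}(X))$, as claimed.

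For the second assertion, let $A = \{A(\sigma)\}_{\sigma>0}\in \Psi_{\mathrm{b,leC}}^{m,l,\ell}(X)$. By the definition of the b,leC-calculus, we may write $A = \operatorname{Op}(a) + R$ with $a\in S_{\mathrm{b,leC}}^{m,l,\ell}(X)$ and $R \in \Psi_{\mathrm{b,leC}}^{-\infty,l,\ell}(X)$, i.e.\ with Schwartz kernel $\operatorname{SK} R \in \operatorname{SK}\Psi_{\mathrm{b,leC}}^{-\infty,l,\ell}(X)$. The first part of the proposition, applied to $a$, shows that $\sigma\mapsto \operatorname{Op}(a(-;\sigma))$ has a continuous extension to $\sigma=0$ in $\Psi_{\mathrm{b}}^{m,l_0}(X)$, with value $\operatorname{Op}(a(-;0))$. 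Meanwhile, \Cref{prop:residual_continuity} applied to $\operatorname{SK} R$ (for any fixed $m\in\bbR$, e.g.\ the one given) yields a continuous extension of $\{\operatorname{SK}R(-;\sigma)\}_{\sigma>0}$ to $\sigma=0$ as a map into $\operatorname{SK}\Psi_{\mathrm{b}}^{m,l_0}(X)$; equivalently, $\{R(\sigma)\}_{\sigma>0}$ extends continuously to $\sigma=0$ in $\Psi_{\mathrm{b}}^{m,l_0}(X)$, with some value $R(0)\in \Psi_{\mathrm{b}}^{-\infty,l_0}(X)\subseteq \Psi_{\mathrm{b}}^{m,l_0}(X)$. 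Setting $A(0) = \operatorname{Op}(a(-;0)) + R(0) \in \Psi_{\mathrm{b}}^{m,l_0}(X)$ and adding the two continuous families concludes the proof.

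The argument is essentially formal once \Cref{prop:continuity} and \Cref{prop:residual_continuity} are in hand; there is no substantive obstacle, since the only thing being added here is the continuity of $\operatorname{Op}$ (which is standard) and the decomposition of a general leC-operator into a quantized symbol plus a residual operator (which is built into the definition of $\Psi_{\mathrm{b,leC}}^{m,l,\ell}(X)$). The mild subtlety worth flagging is that the decomposition $A=\operatorname{Op}(a)+R$ is not unique, but this is irrelevant because the continuous extensions of $\operatorname{Op}(a(-;\sigma))$ and $R(\sigma)$ are each well-defined by the first part and by \Cref{prop:residual_continuity}, so their sum $A(0)$ is also well-defined (indeed, it agrees with $\lim_{\sigma\to 0^+}A(\sigma)$ taken in $\Psi_{\mathrm{b}}^{m,l_0}(X)$).
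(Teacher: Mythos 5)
Your proof is correct and takes essentially the same approach as the paper: apply \Cref{prop:continuity} together with the continuity of $\operatorname{Op}$ for the first assertion, and combine that with \Cref{prop:residual_continuity} via the decomposition $A=\operatorname{Op}(a)+R$ for the second. You have merely spelled out the steps that the paper leaves implicit.
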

\begin{proof}
	Using the continuity of $\operatorname{Op}:S_{\mathrm{b}}^{m,l_0}(X) \to \Psi_{\mathrm{b}}^{m,l_0}(X)$, the first statement follows from \Cref{prop:continuity}. The second statement follows from the first in conjunction with \Cref{prop:residual_continuity}. 
\end{proof}

Since $\operatorname{Op}$ is linear, $\smash{\Psi_{\mathrm{b,leC}}^{m,l,\ell}(X)}$ is a vector space, and it inherits a Fr\'echet space structure from $S_{\mathrm{b,leC}}^{m,l,\ell}(X)$ and $\Psi_{\mathrm{b,leC}}^{-\infty,l,\ell}(X)$, more specifically the quotient topology associated to the definitional surjection 
\begin{equation} 
	S_{\mathrm{b,leC}}^{m,l,\ell}(X) \times \Psi_{\mathrm{b,leC}}^{-\infty,l,\ell}(X) \to \Psi_{\mathrm{b,leC}}^{m,l,\ell}(X).
\end{equation} 
From the definition of $\smash{\Psi_{\mathrm{b,sp,res}}^{m,l,\ell/2}(X)}$ given in \cite[\S3]{VasyN0L}, $\smash{\Psi_{\mathrm{b,leC}}^{m,l,\ell}(X)  \subset \Psi_{\mathrm{b,sp,res}}^{m,l,\ell/2}(X)}$.

Just as the set of classical Kohn--Nirenberg $\Psi$DOs is a subalgebra of the calculus of all Kohn--Nirenberg $\Psi$DOs, 
\begin{equation}
	\Psi_{\mathrm{b,leC}}(X) = \bigcup_{m,l,\ell\in \bbR} \Psi_{\mathrm{b,leC}}^{m,l,\ell}(X) 
\end{equation}
is a subalgebra of $\Psi_{\mathrm{b,sp,res}}(X)$, with composition of $\Psi$DOs defining jointly continuous products 
\begin{equation} 
	\Psi_{\mathrm{b,leC}}^{m,l,\ell}(X)\times \Psi_{\mathrm{b,leC}}^{m',l',\ell'}(X) \to
	\Psi_{\mathrm{b,leC}}^{m+m',l+l',\ell+ \ell'}(X)
\end{equation} 
for all $m,l,\ell,m',l',\ell' \in \bbR$. The key observation here, in addition to the continuity of $	\Psi_{\mathrm{b,leC}}^{m,l,\ell}(X)\times \smash{\Psi_{\mathrm{b,leC}}^{-\infty,l',\ell'}(X)} \to
\smash{\Psi_{\mathrm{b,leC}}^{-\infty,l+l',\ell+ \ell'}(X)}$ for all $m,l,\ell,l',\ell' \in \bbR$,  is that the reduction formula for full symbols in local coordinates respects classicality at $\mathrm{zf}_{00}$. 

From $\sigma_{\mathrm{b,sp,res}}$, we get a set $\{\sigma_{\mathrm{b,leC}}^{m,l,\ell}\}_{m,l,\ell\in\bbR}$ of maps $\sigma_{\mathrm{b,leC}}^{m,l,\ell}: \Psi_{\mathrm{b,leC}}^{m,l,\ell} \to S_{\mathrm{b,leC}}^{m,l,\ell}(X) / S_{\mathrm{b,leC}}^{m-1,l,\ell}(X)$
such that  
\begin{equation}
	0 \to \Psi_{\mathrm{b,leC}}^{m-1,l,\ell}(X)\hookrightarrow \Psi_{\mathrm{b,leC}}^{m,l,\ell}(X) \overset{\sigma_{\mathrm{b,leC}}}{\longrightarrow} S^{m,l,\ell}_{\mathrm{b,leC}}(X)/S^{m-1,l,\ell}_{\mathrm{b,leC}}(X)\to 0
	\label{eq:ses2}
\end{equation}
is a short exact sequence and such that 
\begin{align} 
\sigma_{\mathrm{b,leC}}^{m,l,\ell}(A)\sigma_{\mathrm{b,sp,res}}^{m',l',\ell'}(B) &= \sigma_{\mathrm{b,leC}}^{m+m',l+l',\ell+\ell'}(AB)
\label{eq:hom2} \\
\{\sigma_{\mathrm{b,leC}}^{m,l,\ell}(A),\sigma_{\mathrm{b,leC}}^{m',l',\ell'}(B)\} &= -i\sigma_{\mathrm{b,leC}}^{m+m'-1,l+l',\ell+\ell'}([A,B]).
\label{eq:com2}
\end{align} 
for all $A \in \Psi_{\mathrm{b,leC}}^{m,l,\ell}, B \in \smash{\Psi_{\mathrm{b,leC}}^{m',l',\ell'}}$, with each of \cref{eq:ses2}, \cref{eq:hom2}, \cref{eq:com2} following  from  each of \cref{eq:ses}, \cref{eq:hom}, \cref{eq:com} respectively. 

Let $\calA^{0,0,(0,0)}  (X^{\mathrm{sp}}_{\mathrm{res}})$ denote the set of distributions on $X^{\mathrm{sp}}_{\mathrm{res}}$ which are conormal to all boundaries and smooth at $\mathrm{zf}$ (in particular smooth everywhere except possibly at $\mathrm{tf} ,\mathrm{bf}$).
\begin{proposition}
	\label{prop:multiplication}
	For any $f\in \calA^{0,0,(0,0)}  (X^{\mathrm{sp}}_{\mathrm{res}})$ and $l,\ell\in \bbR$, the multiplication operator given by multiplication by $x^{-l} (\sigma^2+\mathsf{Z}x)^{l-\ell/2} f(x;\sigma)$ defines an element of $\Psi_{\mathrm{b,leC}}^{0,l,\ell}(X)$.
\end{proposition}
\begin{proof}
	The given multiplication operator $M=\{M(\sigma)\}_{\sigma>0}$ is given by $M(\sigma)=\operatorname{Op}(x^{-l} (\sigma^2+\mathsf{Z} x)^{l-\ell/2} f(x;\sigma) )$ (using \cref{eq:misc_l12} for each individual $\sigma>0$), so the proposition follows from $f\in S_{\mathrm{b,leC}}^{0,0,0}(X)$. 
\end{proof}

\subsubsection{$\Psi_{\mathrm{leC}}(X)$}
\label{subsubsec:leC}

In order to define the full leC-calculus, we will use the following properties of $\operatorname{Op}$:
\begin{itemize} 
	\item $\operatorname{Op}(a) \in \Psi_{\mathrm{b,leC}}^{m,l,\ell}(X)$ (if and) only if $a \in S_{\mathrm{b,leC}}^{m,l,\ell}(X)$. 
	\item \begin{equation} 
		\sigma_{\mathrm{b,leC}}^{m,l,\ell}(\operatorname{Op}(a)) = a \bmod S_{\mathrm{b,leC}}^{m-1,l,\ell}(X)
		\label{eq:misc_84h}
	\end{equation} 
	whenever $\operatorname{Op}(a) \in \Psi_{\mathrm{b,leC}}^{m,l,\ell}(X)$, 
	\item 
	there exists a function $\sharp : S_{\mathrm{b,leC}}(X)^2 \to S_{\mathrm{b,leC}}(X)$ (given by the ``reduction formula'' for $\Psi_\infty$, related to $\Psi_{\mathrm{b}}$ via \cite[\S6]{VasyGrenoble}) such that, 
	for any $m,m',s,s',\varsigma,\varsigma',l,l',\ell,\ell'\in \bbR$, $a\in S_{\mathrm{leC}}^{m,s,\varsigma,l,\ell}(X)$ and $b \in S_{\mathrm{leC}}^{m',s',\varsigma', l',\ell'}(X)$, 
	\begin{align}
		a\sharp b &\in S_{\mathrm{leC}}^{m+m',s+s',\varsigma+\varsigma',l+l',\ell+\ell'}(X), \\
		\operatorname{esssupp}_{\mathrm{leC}}(a\sharp b)&\subseteq \operatorname{esssupp}_{\mathrm{leC}}(a)\cap \operatorname{esssupp}_{\mathrm{leC}}( b), \label{eq:misc_k45} \\
		\operatorname{Op}(a)\operatorname{Op}(b) &= \operatorname{Op}(a\sharp b) +E,
	\end{align}
	for some $E \in \smash{\Psi_{\mathrm{b,leC}}^{-\infty,l+l',\ell+\ell'}(X)}$ which depends continuously on $a,b$. Moreover,
	\begin{align}
		a\sharp b &= a b \bmod S_{\mathrm{leC}}^{m+m'-1,s+s'-1,\varsigma+\varsigma'-1,l+l',\ell+\ell'}(X), \label{eq:misc_k46} \\
		a \sharp b - b \sharp a &= i \{a,b\} \bmod S_{\mathrm{leC}}^{m+m'-2,s+s'-2,\varsigma+\varsigma'-2,l+l',\ell+\ell'}(X) \label{eq:misc_k47}
	\end{align} 
	for all such $a,b$, 
	\item there exists another continuous ($\bbC$-antilinear) function $\flat:S_{\mathrm{b,leC}}(X)\to S_{\mathrm{b,leC}}(X)$ (which can also be written in local coordinates in terms of the reduction formula) such that for all $a\in \smash{S_{\mathrm{leC}}^{m,s,\varsigma,l,\ell}(X)}$,
	\begin{equation} 
		\operatorname{Op}(\flat a) = \operatorname{Op}(a)^* + E
	\end{equation} 
	for some $E\in \Psi_{\mathrm{b,leC}}^{-\infty,l,\ell}(X)$ which depends continuously on $a$, 
	where the asterisk denotes an $L^2(X,g_0)$-based adjoint (for arbitrary exactly conic $g_0$), and 
	\begin{align}
		\flat a &= a^* \bmod S_{\mathrm{leC}}^{m-1,s-1,\varsigma-1,l,\ell}(X)  \\
		\operatorname{esssupp}_{\mathrm{leC}}(\flat a) &= \operatorname{esssupp}_{\mathrm{leC}}(a).
	\end{align}
	
\end{itemize}
Here, for $s\in S_{\mathrm{b,leC}}(X)$, $\operatorname{esssupp}_{\mathrm{leC}}(s)$  consists of those points in $\mathrm{df}\cup \mathrm{sf} \cup \mathrm{ff}$ failing to possess a neighborhood in which $s$ vanishes to infinite order at the boundary of the leC- phase space. 

As in \cite{VasyLA}\cite{VasyN0L}, these properties follow from the relation between $\Psi_{\mathrm{b}}(X)$ and $\Psi_\infty(\bbR^n)$, as explained in \cite[\S6]{VasyGrenoble}, and the basic properties of $\Psi_\infty(\bbR^n)$ (in particular the reduction formula), for which the standard reference is \cite{Hormander}.
It is crucial for us that $\sharp$ satisfies the equations \cref{eq:misc_k45}, \cref{eq:misc_k46}, \cref{eq:misc_k47} above and not just the weaker b,leC- analogues. This fundamental fact can be read off of the reduction formula for full symbols in local coordinates, in terms of which $\sharp$ can be written.

We can now define, for each $m,s,\varsigma,l,\ell\in \bbR$, 
\begin{equation}
	\Psi_{\mathrm{leC}}^{m,s,\varsigma,l,\ell}(X) = \operatorname{Op}(S_{\mathrm{leC}}^{m,s,\varsigma,l,\ell}(X)) + \Psi_{\mathrm{b,leC}}^{-\infty,l,\ell} \subseteq \Psi_{\mathrm{b,leC}}^{\infty,l,\ell}. 
	\label{eq:misc_frc}
\end{equation}
Evidently, \cref{eq:misc_frc} endows $\Psi_{\mathrm{b,leC}}^{m,s,\varsigma,l,\ell}(X)$ with a topology, so that it becomes a Fr\'echet space. Consider the graded vector space 
\begin{equation}
	\Psi_{\mathrm{leC}}(X) = \bigcup_{m,s,\varsigma,l,\ell\in \bbR} \Psi_{\mathrm{leC}}^{m,s,\varsigma,l,\ell}(X). 
\end{equation}
At the level of vector spaces, this is just $\Psi_{\mathrm{b,leC}}(X)$. 

Moreover, since $S_{\mathrm{leC}}^{m,m+l,m+\ell,l,\ell}(X) = S_{\mathrm{b,leC}}^{m,l,\ell}(X)$ at the level of sets, 
$\Psi_{\mathrm{leC}}^{m,m+l,m+\ell,l,\ell}(X) = \Psi_{\mathrm{b,leC}}^{m,l,\ell}(X)$ for all $m,l,\ell\in \bbR$.

\begin{proposition}
	\label{prop:algebra}
	$\Psi_{\mathrm{leC}}(X)$ is a multigraded $\bbC$-algebra:  for any $m,m',s,s',\varsigma,\varsigma',l,l',\ell,\ell'\in \bbR$, $A\in \Psi_{\mathrm{leC}}^{m,s,\varsigma,l,\ell}(X)$ and $B \in \Psi_{\mathrm{leC}}^{m',s',\varsigma', l',\ell'}(X)$, 
	\begin{equation}
		AB \in \Psi_{\mathrm{leC}}^{m+m',s+s',\varsigma+\varsigma',l+l',\ell+\ell'}(X). 
		\label{eq:misc_ab1}
	\end{equation}
\end{proposition}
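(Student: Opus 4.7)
The plan is to reduce the composition $AB$ to the four constituent pieces coming from the definition \cref{eq:misc_frc} and verify each lies in the target class, leaning on the properties of $\sharp$ and the already-established structure of the residual algebra $\Psi_{\mathrm{b,leC}}^{-\infty,\cdot,\cdot}(X)$. First, I would use the definition to write $A = \operatorname{Op}(a) + R_A$ and $B = \operatorname{Op}(b) + R_B$, where $a \in S_{\mathrm{leC}}^{m,s,\varsigma,l,\ell}(X)$, $b \in S_{\mathrm{leC}}^{m',s',\varsigma',l',\ell'}(X)$, $R_A \in \Psi_{\mathrm{b,leC}}^{-\infty,l,\ell}(X)$, and $R_B \in \Psi_{\mathrm{b,leC}}^{-\infty,l',\ell'}(X)$, so that
\begin{equation*}
	AB = \operatorname{Op}(a)\operatorname{Op}(b) + \operatorname{Op}(a) R_B + R_A \operatorname{Op}(b) + R_A R_B.
\end{equation*}

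For the leading term, the key input is the bullet point establishing the existence of $\sharp : S_{\mathrm{b,leC}}(X)^2 \to S_{\mathrm{b,leC}}(X)$ respecting the \emph{full} leC-grading: $\operatorname{Op}(a)\operatorname{Op}(b) = \operatorname{Op}(a \sharp b) + E$ with $a\sharp b \in S_{\mathrm{leC}}^{m+m',s+s',\varsigma+\varsigma',l+l',\ell+\ell'}(X)$ and $E \in \Psi_{\mathrm{b,leC}}^{-\infty,l+l',\ell+\ell'}(X)$. By \cref{eq:misc_frc}, this contribution is therefore manifestly in $\Psi_{\mathrm{leC}}^{m+m',s+s',\varsigma+\varsigma',l+l',\ell+\ell'}(X)$.

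The remaining three terms are to be absorbed into the residual part. For $R_A R_B$, we invoke the fact asserted above \Cref{prop:residual_continuity} that $\Psi_{\mathrm{b,leC}}^{-\infty,\infty,\infty}(X)$ is a multigraded Fr\'echet algebra, giving $R_A R_B \in \Psi_{\mathrm{b,leC}}^{-\infty,l+l',\ell+\ell'}(X)$. For the two mixed terms, I would appeal to the continuity of
$\Psi_{\mathrm{b,leC}}^{m,l,\ell}(X) \times \Psi_{\mathrm{b,leC}}^{-\infty,l',\ell'}(X) \to \Psi_{\mathrm{b,leC}}^{-\infty,l+l',\ell+\ell'}(X)$,
which is precisely the property noted in \S\ref{subsubsec:bleC} in the discussion of $\Psi_{\mathrm{b,leC}}(X)$; combined with the inclusion $\Psi_{\mathrm{b,leC}}^{-\infty,l+l',\ell+\ell'}(X) \subseteq \Psi_{\mathrm{leC}}^{m+m',s+s',\varsigma+\varsigma',l+l',\ell+\ell'}(X)$, this lands the mixed terms in the desired space. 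Summing the four contributions yields \cref{eq:misc_ab1}.

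The only place where real content enters -- as opposed to bookkeeping -- is in the $\operatorname{Op}(a)\operatorname{Op}(b)$ step, where the leC-refinement of $\sharp$ (rather than merely its b,leC-version) is essential; the ``sharpness'' of the bounds on $a \sharp b$ at $\mathrm{sf}$ and $\mathrm{ff}$, as recorded in \cref{eq:misc_k45}--\cref{eq:misc_k47}, is what forces the composition to respect the full five-index grading of $\Psi_{\mathrm{leC}}(X)$. This property has been imported from the local-coordinate reduction formula for $\Psi_\infty$ and is what distinguishes the present situation from the coarser b,leC-calculus.
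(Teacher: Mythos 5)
Your proposal is correct and follows essentially the same route as the paper's own proof: the same decomposition $AB = \operatorname{Op}(a)\operatorname{Op}(b)+\operatorname{Op}(a)R_B+R_A\operatorname{Op}(b)+R_AR_B$, the same use of the leC-refined $\sharp$ product to handle the leading piece, and the same absorption of the remaining three terms into $\Psi_{\mathrm{b,leC}}^{-\infty,l+l',\ell+\ell'}(X)$. The only cosmetic difference is that the paper makes explicit the auxiliary orders $M=\max\{m,s-l,\varsigma-\ell\}$, $M'=\max\{m',s'-l',\varsigma'-\ell'\}$ for which $\operatorname{Op}(a)\in\Psi_{\mathrm{b,leC}}^{M,l,\ell}(X)$, $\operatorname{Op}(b)\in\Psi_{\mathrm{b,leC}}^{M',l',\ell'}(X)$, whereas you cite the composition-continuity statement from \S\ref{subsubsec:bleC} directly, which subsumes this bookkeeping.
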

\begin{proof}
	We can write $A=\operatorname{Op}(a) + E$ and $B=\operatorname{Op}(b) + F$ for $a \in S_{\mathrm{leC}}^{m,s,\varsigma,l,\ell}$, $b\in S_{\mathrm{leC}}^{m',s',\varsigma', l',\ell'}$, $E\in \Psi_{\mathrm{b,leC}}^{-\infty,l,\ell}$, $F\in \smash{\Psi_{\mathrm{b,leC}}^{-\infty,l',\ell'}}$. Thus, 
	\begin{align}
		AB &= \operatorname{Op}(a) \operatorname{Op}(b) + E \operatorname{Op}(b) + \operatorname{Op}(a) F + EF \\
		&= \operatorname{Op}(a\sharp b) + E \operatorname{Op}(b) + \operatorname{Op}(a) F + EF +G
	\end{align}
	for some $\smash{G\in \Psi_{\mathrm{b,leC}}^{-\infty,l+l',\ell+\ell'}}$. Since  $a \in S_{\mathrm{b,leC}}^{M,l,\ell}$, $b\in S_{\mathrm{b,leC}}^{M', l',\ell'}$ for $M=\max\{m,s-l,\varsigma-\ell\}$ and $M'=\max\{m',s'-l',\varsigma'-\ell'\}$, 
	\begin{equation}
		\operatorname{Op}(a) \in \Psi_{\mathrm{b,leC}}^{M,l,\ell}(X),
		\qquad 
		\operatorname{Op}(b) \in \Psi_{\mathrm{b,leC}}^{M',l',\ell'}(X), 
	\end{equation}
	which implies that $E \operatorname{Op}(b),\operatorname{Op}(a) F \in \Psi_{\mathrm{b,leC}}^{-\infty,l+l',\ell+\ell'}$, and likewise $E F \in \Psi_{\mathrm{b,leC}}^{-\infty,l+l',\ell+\ell'}$. 
	
	Since $a\sharp b \in S_{\mathrm{leC}}^{m+m',s+s',\varsigma+\varsigma',l+l',\ell+\ell'}$, we deduce that \cref{eq:misc_ab1} holds. 
\end{proof}
	In fact (as can be proven using finite-order truncations of the reduction formula), operator composition defines a jointly continuous map 
	\begin{equation} 
	\Psi_{\mathrm{leC}}^{m,s,\varsigma,l,\ell}(X)\times \Psi_{\mathrm{leC}}^{m',s',\varsigma', l',\ell'}(X) \to \Psi_{\mathrm{leC}}^{m+m',s+s',\varsigma+\varsigma',l+l',\ell+\ell'}(X), 
	\end{equation} 
	so we can say that the leC-calculus is a multigraded Fr\'echet algebra. 
\begin{lemma}
	Suppose that $A \in \smash{\Psi_{\mathrm{leC}}^{m,s,\varsigma,l,\ell}(X)}$ can be written either as $A=\operatorname{Op}(a_1)+E_1$ or $A=\operatorname{Op}(a_2) + E_2$ for some symbols $a_1,a_2 \in \smash{S_{\mathrm{leC}}^{m,s,\varsigma, l,\ell}}(X)$ and $E_1,E_2 \in \Psi_{\mathrm{b,leC}}^{-\infty,l,\ell}$. 
	
	Then $a_1-a_2 \in S_{\mathrm{b,leC}}^{-\infty,l,\ell}(X) $. 
\end{lemma}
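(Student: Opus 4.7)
The plan is to reduce the claim to the earlier ``only if'' half of the property
\[
\operatorname{Op}(a) \in \Psi_{\mathrm{b,leC}}^{m,l,\ell}(X) \iff a \in S_{\mathrm{b,leC}}^{m,l,\ell}(X),
\]
which the text asserts as a basic feature of the left quantization. Subtracting the two decompositions of $A$ yields
\[
\operatorname{Op}(a_1 - a_2) = E_2 - E_1,
\]
and the right-hand side lies in $\Psi_{\mathrm{b,leC}}^{-\infty,l,\ell}(X)$. Moreover, since
\[
S_{\mathrm{leC}}^{m,s,\varsigma,l,\ell}(X) \subseteq S_{\mathrm{b,leC}}^{M,l,\ell}(X), \qquad M = \max\{m,\,s-l,\,\varsigma-\ell\}
\]
(which follows by reading off the powers of the bdfs in \cref{eq:bdfs} and recalling that $\mathrm{sf}, \mathrm{ff}, \mathrm{df}$ cover $\mathrm{df}_{00}$ under the iterated blowdown), we have $b := a_1 - a_2 \in S_{\mathrm{b,leC}}^{M,l,\ell}(X)$.

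Next I would use the fact that the residual space is contained in every symbolic order: by the definition $\Psi_{\mathrm{b,leC}}^{N,l,\ell}(X) = \operatorname{Op}(S_{\mathrm{b,leC}}^{N,l,\ell}(X)) + \Psi_{\mathrm{b,leC}}^{-\infty,l,\ell}(X)$, one has
\[
\Psi_{\mathrm{b,leC}}^{-\infty,l,\ell}(X) \subseteq \Psi_{\mathrm{b,leC}}^{N,l,\ell}(X) \quad \text{for every } N \in \mathbb{R}.
\]
Therefore $\operatorname{Op}(b) \in \Psi_{\mathrm{b,leC}}^{N,l,\ell}(X)$ for every $N$, and the ``only if'' half gives $b \in S_{\mathrm{b,leC}}^{N,l,\ell}(X)$ for every $N$. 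Intersecting over $N$,
\[
b = a_1 - a_2 \in \bigcap_{N \in \mathbb{R}} S_{\mathrm{b,leC}}^{N,l,\ell}(X) = S_{\mathrm{b,leC}}^{-\infty,l,\ell}(X),
\]
which is exactly what is claimed.

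The one spot that deserves a second look is the invocation of the ``only if'' direction; in practice this is the standard fact that the principal symbol map $\sigma_{\mathrm{b,leC}}^{N,l,\ell}$ of \cref{eq:ses2} recovers $b$ modulo $S_{\mathrm{b,leC}}^{N-1,l,\ell}(X)$ via $\sigma_{\mathrm{b,leC}}^{N,l,\ell}(\operatorname{Op}(b)) = b$ (cf.\ \cref{eq:misc_84h}), so if $\operatorname{Op}(b) \in \Psi_{\mathrm{b,leC}}^{-\infty,l,\ell}(X)$ then $b \in S_{\mathrm{b,leC}}^{N-1,l,\ell}(X)$. Starting from the a priori order $M$ and iterating this once at each step, one descends $b$ to arbitrarily low orders in $S_{\mathrm{b,leC}}^{\bullet,l,\ell}(X)$, which is the same conclusion by a slightly more pedestrian route. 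I expect this iteration (rather than any subtle analytical difficulty) to be the only thing worth spelling out carefully; the rest of the argument is a formal manipulation of the definitions.
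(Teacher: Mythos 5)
Your proof is correct and follows essentially the same route as the paper's: subtract the two decompositions, use linearity of $\operatorname{Op}$ to get $\operatorname{Op}(a_1-a_2)\in\Psi_{\mathrm{b,leC}}^{-\infty,l,\ell}(X)$, and then descend the symbolic order of $a_1-a_2$ using the symbol-recovery property of $\operatorname{Op}$. The fallback iteration you sketch at the end — applying $\sigma_{\mathrm{b,leC}}^{-N,l,\ell}$ together with \cref{eq:misc_84h} for each $N$ — is precisely the argument the paper gives.
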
 
\begin{proof}
	By the linearity of $\operatorname{Op}$, $\operatorname{Op}(a_1-a_2) \in \smash{\Psi_{\mathrm{b,leC}}^{-\infty,l,\ell}(X)}$. Then, using the $\sigma_{\mathrm{b,leC}}$-short exact sequence and the property \cref{eq:misc_84h} of $\operatorname{Op}$: for all $N\in \bbN$,
	\begin{equation} 
		0=\sigma_{\mathrm{b,leC}}^{-N,l,\ell}(\operatorname{Op}(a_1-a_2)) = a_1 - a_2 \bmod S_{\mathrm{b,leC}}^{-N-1,l,\ell}(X), 
	\end{equation} 
	which means that $a_1-a_2 \in S_{\mathrm{b,leC}}^{-N-1,l,\ell}(X)$.
\end{proof}

Thus, for any $A \in \Psi_{\mathrm{leC}}^{m,s,\varsigma,l,\ell}(X)$, we get notions 
\begin{align}
	\operatorname{Ell}_{\mathrm{leC}}^{m,s,\varsigma,l,\ell}(A) &= \operatorname{ell}_{\mathrm{leC}}^{m,s,\varsigma,l,\ell}(a) \subset \mathrm{df} \cup \mathrm{sf} \cup \mathrm{ff}, \\
	\operatorname{Char}_{\mathrm{leC}}^{m,s,\varsigma,l,\ell}(A) &= \operatorname{char}_{\mathrm{leC}}^{m,s,\varsigma,l,\ell}(a) = \mathrm{df}\cup \mathrm{sf} \cup \mathrm{ff} \backslash \operatorname{ell}_{\mathrm{leC}}^{m,s,\varsigma,l,\ell}(a), \\
	\operatorname{WF}_{\mathrm{leC}}^{\prime l,\ell}(A) &= \operatorname{esssupp}_{\mathrm{leC}}(a),
\end{align}
for any $a \in S_{\mathrm{leC}}^{m,s,\varsigma,l,\ell}$ with $A=\operatorname{Op}(a) + E$ for $E \in \Psi_{\mathrm{b,leC}}^{-\infty,l,\ell}(X)$. All three are subsets of $\mathrm{df}\cup \mathrm{sf}\cup \mathrm{ff}$. 
Another useful notion, which we will only apply to $A \in \cap_{l,\ell\in \bbR}\smash{\Psi_{\mathrm{leC}}^{m,s,\varsigma,l,\ell}(X)} = \smash{\Psi_{\mathrm{leC}}^{m,s,\varsigma,-\infty,-\infty}}(X)$, is 
\begin{equation}
	\operatorname{WF}'_{\mathrm{leC}}(A)  = \cap_{l,\ell\in \bbR} \operatorname{WF}_{\mathrm{leC}}^{\prime l,\ell}(A). 
\end{equation}
Observe that if $a \in S_{\mathrm{leC}}^{m,s,\varsigma,-\infty,-\infty}(X)$, then $\operatorname{Op}(A) \in \Psi_{\mathrm{leC}}^{m,s,\varsigma,-\infty,-\infty}(X)$ (tautologically) and 
\begin{equation} 
	\operatorname{WF}'_{\mathrm{leC}}(A) = \operatorname{esssupp}_{\mathrm{leC}}(a).
\end{equation} 
The $a \in S_{\mathrm{leC}}^{m,s,\varsigma,-\infty,-\infty}(X)$ that we consider are all supported away from $\mathrm{bf}\cup \mathrm{tf}$, in which case $\operatorname{WF}'_{\mathrm{leC}}(A)$ is disjoint from $\mathrm{bf}\cup \mathrm{tf}$. 

In addition,  we get the leC- principal symbol maps 
\[ 
\{\sigma_{\mathrm{leC}}^{m,s,\varsigma,l,\ell}\}_{m,s,\varsigma,l,\ell\in \bbR}, \qquad \sigma_{\mathrm{leC}}^{m,s,\varsigma,l,\ell} : \Psi_{\mathrm{leC}}^{m,s,\varsigma,l,\ell}(X) \to S_{\mathrm{leC}}^{m,s,\varsigma,l,\ell}(X) / S_{\mathrm{leC}}^{m-1,s-1,\varsigma-1,l,\ell}(X), 
\]
$\sigma_{\mathrm{leC}}^{m,s,\varsigma,l,\ell}(A) = a \bmod  S_{\mathrm{leC}}^{m-1,s-1,\varsigma-1,l,\ell}$ for any $a \in S_{\mathrm{leC}}^{m,s,\varsigma,l,\ell}$ with $A=\operatorname{Op}(a) + E$ for $E \in \smash{\Psi_{\mathrm{b,leC}}^{-\infty,l,\ell}}$. 
Unlike $\operatorname{Op}$, which is not canonical and depends on a particular choice of local coordinate charts, these notions are all canonical.

\begin{proposition}
	For every $m,s,\varsigma,l,\ell\in \bbR$, we have a short exact sequence 
	\begin{equation}
		0 \to \Psi^{m-1,s-1,\varsigma-1,l,\ell}_{\mathrm{leC}}(X)\hookrightarrow \Psi^{m,s,\varsigma,l,\ell}_{\mathrm{leC}}(X) \to S^{m,s,\varsigma,l,\ell}_{\mathrm{leC}}(X) / S^{m-1,s-1,\varsigma-1,l,\ell}_{\mathrm{leC}}(X)  \to 0, 
	\end{equation}
	where the second-to-last map is $\sigma_{\mathrm{leC}}^{m,s,\varsigma,l,\ell}$. 
\end{proposition}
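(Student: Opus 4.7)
The plan is to verify the four ingredients of exactness in turn. First, the inclusion map $\Psi^{m-1,s-1,\varsigma-1,l,\ell}_{\mathrm{leC}}(X) \hookrightarrow \Psi^{m,s,\varsigma,l,\ell}_{\mathrm{leC}}(X)$ is well-defined because the bdfs $\varrho_{\mathrm{df}}, \varrho_{\mathrm{sf}}, \varrho_{\mathrm{ff}}$ are uniformly bounded by $1$ on ${}^{\mathrm{leC}}\overline{T}^* X$, so that $S^{m-1,s-1,\varsigma-1,l,\ell}_{\mathrm{leC}}(X) \subseteq S^{m,s,\varsigma,l,\ell}_{\mathrm{leC}}(X)$, and because the residual space $\Psi^{-\infty,l,\ell}_{\mathrm{b,leC}}(X)$ appears as a common summand in the definition \cref{eq:misc_frc}.

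Second, I will verify that $\sigma_{\mathrm{leC}}^{m,s,\varsigma,l,\ell}$ is well-defined. The lemma immediately preceding the proposition already shows that if $A = \operatorname{Op}(a_1) + E_1 = \operatorname{Op}(a_2) + E_2$ with $a_i \in S^{m,s,\varsigma,l,\ell}_{\mathrm{leC}}(X)$ and $E_i \in \Psi^{-\infty,l,\ell}_{\mathrm{b,leC}}(X)$, then $a_1 - a_2 \in S^{-\infty,l,\ell}_{\mathrm{b,leC}}(X)$. From the bdf formulas in \cref{eq:bdfs} I will read off the factorization $\varrho_{\mathrm{df}_{00}} = \varrho_{\mathrm{df}}\, \varrho_{\mathrm{sf}}\, \varrho_{\mathrm{ff}}$ (up to a strictly positive smooth factor on ${}^{\mathrm{leC}}\overline{T}^* X$), so that rapid vanishing at $\mathrm{df}_{00}$ in the b,leC-sense is equivalent to simultaneous rapid vanishing at each of $\mathrm{df}$, $\mathrm{sf}$, $\mathrm{ff}$ in the leC-sense. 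The orders at $\mathrm{bf}$ and $\mathrm{tf}$ are unaffected by the two blow-ups, as these faces are disjoint from the blown-up submanifolds; consequently $S^{-\infty,l,\ell}_{\mathrm{b,leC}}(X) \subseteq S^{m',s',\varsigma',l,\ell}_{\mathrm{leC}}(X)$ for every $m',s',\varsigma'\in\mathbb{R}$, and in particular $a_1 \equiv a_2$ modulo $S^{m-1,s-1,\varsigma-1,l,\ell}_{\mathrm{leC}}(X)$.

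The remaining two steps are then essentially formal. Surjectivity of $\sigma_{\mathrm{leC}}^{m,s,\varsigma,l,\ell}$ follows because any $a \in S^{m,s,\varsigma,l,\ell}_{\mathrm{leC}}(X)$ satisfies $\operatorname{Op}(a) \in \Psi^{m,s,\varsigma,l,\ell}_{\mathrm{leC}}(X)$ by definition and $\sigma_{\mathrm{leC}}^{m,s,\varsigma,l,\ell}(\operatorname{Op}(a)) = [a]$, realizing every class. For exactness in the middle, the containment $\Psi^{m-1,s-1,\varsigma-1,l,\ell}_{\mathrm{leC}}(X) \subseteq \ker \sigma_{\mathrm{leC}}^{m,s,\varsigma,l,\ell}$ is immediate from the definition, while conversely, if $A = \operatorname{Op}(a) + E$ satisfies $\sigma_{\mathrm{leC}}^{m,s,\varsigma,l,\ell}(A) = 0$, then $a \in S^{m-1,s-1,\varsigma-1,l,\ell}_{\mathrm{leC}}(X)$, placing $A$ in $\operatorname{Op}(S^{m-1,s-1,\varsigma-1,l,\ell}_{\mathrm{leC}}(X)) + \Psi^{-\infty,l,\ell}_{\mathrm{b,leC}}(X) = \Psi^{m-1,s-1,\varsigma-1,l,\ell}_{\mathrm{leC}}(X)$.

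There is no serious obstacle; the one nontrivial point is the comparison of decay orders across the two blow-ups producing ${}^{\mathrm{leC}}\overline{T}^* X$ from ${}^{\mathrm{b,leC}}\overline{T}^* X$, which reduces to the factorization $\varrho_{\mathrm{df}_{00}} = \varrho_{\mathrm{df}}\varrho_{\mathrm{sf}}\varrho_{\mathrm{ff}}$ noted above and the analogous (trivial) identifications $\varrho_{\mathrm{bf}} \sim \varrho_{\mathrm{bf}_{00}}$ and $\varrho_{\mathrm{tf}} \sim \varrho_{\mathrm{tf}_{00}}$ near $\mathrm{bf}$ and $\mathrm{tf}$ respectively.
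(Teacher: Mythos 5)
Your argument is correct and follows the same route as the paper: surjectivity of $\sigma_{\mathrm{leC}}^{m,s,\varsigma,l,\ell}$ comes from $\operatorname{Op}$, and exactness in the middle reduces, via the preceding lemma, to the inclusion $S_{\mathrm{b,leC}}^{-\infty,l,\ell}(X)\subseteq S_{\mathrm{leC}}^{m-1,s-1,\varsigma-1,l,\ell}(X)$. One small misstatement, though: the faces $\mathrm{bf}$ and $\mathrm{tf}$ are \emph{not} disjoint from the blown-up centers (the first blow-up is of $\mathrm{df}_{00}\cap\mathrm{tf}_{00}\subset\mathrm{tf}_{00}$, the second of $\mathrm{df}_0\cap\mathrm{bf}_0\subset\mathrm{bf}_0$). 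The clean way to see what you want is the full factorizations $\varrho_{\mathrm{bf}_{00}}=\varrho_{\mathrm{bf}}\varrho_{\mathrm{sf}}$ and $\varrho_{\mathrm{tf}_{00}}=\varrho_{\mathrm{tf}}\varrho_{\mathrm{ff}}$ (the extra $\varrho_{\mathrm{sf}}$ and $\varrho_{\mathrm{ff}}$ factors are absorbed into the sf- and ff-orders, which are not constrained here), i.e., precisely the identity $S_{\mathrm{b,leC}}^{m,l,\ell}(X)=S_{\mathrm{leC}}^{m,m+l,m+\ell,l,\ell}(X)$ recorded in the paper, from which $S_{\mathrm{b,leC}}^{-\infty,l,\ell}(X)\subseteq S_{\mathrm{leC}}^{m',s',\varsigma',l,\ell}(X)$ for all $m',s',\varsigma'$ follows by taking the first index very negative.
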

\begin{proof}
	The surjectivity of $\sigma_{\mathrm{leC}}^{m,s,\varsigma,l,\ell}$ follows from the properties of $\operatorname{Op}$ listed above.
	
	If, on the other hand, $A \in\Psi^{m,s,\varsigma,l,\ell}_{\mathrm{leC}}(X)$ satisfies   $\sigma_{\mathrm{leC}}^{m,s,\varsigma,l,\ell}(A) = 0$, then 
	$A = \operatorname{Op}(a) + E$ for $a\in S_{\mathrm{leC}}^{m-1,s-1,\varsigma-1,l,\ell}$ and $E \in \Psi_{\mathrm{b,leC}}^{-\infty,l,\ell}$. Then, by the definition of $\Psi^{m-1,s-1,\varsigma-1,l,\ell}_{\mathrm{leC}}(X)$, $A \in \Psi^{m-1,s-1,\varsigma-1,l,\ell}_{\mathrm{leC}}(X)$. 
\end{proof}

\begin{proposition}
	\label{prop:basic_symbology}
	For every $m,s,\varsigma,l,\ell,m',s',\varsigma',l',\ell' \in \bbR$ and pair of $A \in \Psi_{\mathrm{leC}}^{m,s,\varsigma,l,\ell}(X)$ and $B \in \Psi_{\mathrm{leC}}^{m',s',\varsigma',l',\ell'}(X)$,
	\begin{equation} 
		\sigma_{\mathrm{leC}}^{m,s,\varsigma,l,\ell}(A)\sigma_{\mathrm{leC}}^{m',s',\varsigma',l',\ell'}(B) = \sigma_{\mathrm{leC}}^{m+m',s+s',\varsigma+\varsigma',l+l',\ell+\ell'}(AB).
	\end{equation} 
	Moreover,  $\sigma_{\mathrm{leC}}^{m+m'-1,s+s'-1,\varsigma+\varsigma'-1,l+\ell',\ell+\ell'}([A,B])$ is equal to 
	\begin{equation}
		 i\{a,b\} \bmod S_{\mathrm{leC}}^{m+m'-2,s+s'-2,\varsigma+\varsigma'-2,l+\ell',\ell+\ell'}(X), 
		 \label{eq:misc_cid}
	\end{equation} 
	where $a,b$ are any representatives of 	$\sigma_{\mathrm{leC}}^{m,s,\varsigma,l,\ell}(A)$ and $\sigma_{\mathrm{leC}}^{m',s',\varsigma',l',\ell'}(B)$.
\end{proposition}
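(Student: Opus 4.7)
The plan is to reduce the proposition directly to the three properties of the quantization $\operatorname{Op}$ and of the full-symbol product $\sharp$ listed at the start of \S2.2.3. First I would write $A = \operatorname{Op}(a) + E_A$ with $a \in S_{\mathrm{leC}}^{m,s,\varsigma,l,\ell}(X)$ representing $\sigma_{\mathrm{leC}}(A)$ and $E_A \in \Psi_{\mathrm{b,leC}}^{-\infty,l,\ell}(X)$, and analogously $B = \operatorname{Op}(b) + E_B$. Then
\begin{equation}
AB = \operatorname{Op}(a)\operatorname{Op}(b) + E_A \operatorname{Op}(b) + \operatorname{Op}(a) E_B + E_A E_B = \operatorname{Op}(a \sharp b) + R,
\end{equation}
where, by the stated properties of $\sharp$, $a \sharp b \in S_{\mathrm{leC}}^{m+m',s+s',\varsigma+\varsigma',l+l',\ell+\ell'}(X)$ and $R$ is a sum of elements of $\Psi_{\mathrm{b,leC}}^{-\infty,l+l',\ell+\ell'}(X)$ (using the continuity of operator composition on the residual calculus already noted in \S2.2.2, together with the fact that $\operatorname{Op}(a) \in \Psi_{\mathrm{b,leC}}^{M,l,\ell}(X)$ and $\operatorname{Op}(b) \in \Psi_{\mathrm{b,leC}}^{M',l',\ell'}(X)$ for $M,M'$ sufficiently large as in the proof of \Cref{prop:algebra}).

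Next I would observe that $R$ sits inside the ideal $\Psi_{\mathrm{leC}}^{m+m'-1,s+s'-1,\varsigma+\varsigma'-1,l+l',\ell+\ell'}(X)$: this follows from the identification $\Psi_{\mathrm{b,leC}}^{M,l,\ell}(X) = \Psi_{\mathrm{leC}}^{M,M+l,M+\ell,l,\ell}(X)$ recorded just after \cref{eq:misc_frc}, which upon letting $M \to -\infty$ forces infinite order decay simultaneously at $\mathrm{df}$, $\mathrm{sf}$, and $\mathrm{ff}$. Combined with the identity $a \sharp b = ab \bmod S_{\mathrm{leC}}^{m+m'-1,s+s'-1,\varsigma+\varsigma'-1,l+l',\ell+\ell'}(X)$ from \cref{eq:misc_k46}, this yields $\sigma_{\mathrm{leC}}^{m+m',s+s',\varsigma+\varsigma',l+l',\ell+\ell'}(AB) = ab$ modulo the ideal, which is the multiplicativity assertion.

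For the commutator identity, I would apply the same analysis to $BA$ and subtract, giving
\begin{equation}
[A,B] = \operatorname{Op}(a \sharp b - b \sharp a) + R'
\end{equation}
for some $R' \in \Psi_{\mathrm{b,leC}}^{-\infty,l+l',\ell+\ell'}(X)$, which as above lies in every $\Psi_{\mathrm{leC}}^{M,s_0,\varsigma_0,l+l',\ell+\ell'}(X)$ with finite $M,s_0,\varsigma_0$. Invoking \cref{eq:misc_k47}, namely $a \sharp b - b \sharp a = i\{a,b\} \bmod S_{\mathrm{leC}}^{m+m'-2,s+s'-2,\varsigma+\varsigma'-2,l+l',\ell+\ell'}(X)$, identifies the principal symbol of $[A,B]$ at order $(m+m'-1,s+s'-1,\varsigma+\varsigma'-1)$ with $i\{a,b\}$ modulo the appropriate two-order-lower ideal, which is precisely \cref{eq:misc_cid}.

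The only genuinely delicate point is the bookkeeping showing that the residual terms sit inside the correct symbolic ideal — everything else is a formal consequence of the symbolic properties of $\sharp$ and of the linearity and non-canonical ``section'' property \cref{eq:misc_84h} of $\operatorname{Op}$. Since the residual ideal $\Psi_{\mathrm{b,leC}}^{-\infty,l,\ell}(X)$ was designed precisely so that it is absorbed by every positive-order symbolic operation in $\mathrm{df},\mathrm{sf},\mathrm{ff}$ while preserving the $(l,\ell)$ orders at $\mathrm{bf},\mathrm{tf}$, the proposition follows with no further work.
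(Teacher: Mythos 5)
Your proof is correct and follows essentially the same route as the paper's: decompose $A = \operatorname{Op}(a) + E_A$, $B = \operatorname{Op}(b) + E_B$, pass the product/commutator through $\sharp$ via \cref{eq:misc_k46} and \cref{eq:misc_k47}, and absorb the residual contributions into $\Psi_{\mathrm{b,leC}}^{-\infty,l+l',\ell+\ell'}(X)$, which is contained in every $\Psi_{\mathrm{leC}}^{m_0,s_0,\varsigma_0,l+l',\ell+\ell'}(X)$ by definition. The only superficial difference is that you spell out the residual-ideal bookkeeping that the paper compresses into a single chain of equalities; the mathematical content is identical.
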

\begin{proof}
	Write $A = \operatorname{Op}(a)+E$ and $B=\operatorname{Op}(b)+F$ for $a \in S_{\mathrm{leC}}^{m,s,\varsigma,l,\ell}(X)$, $b \in S_{\mathrm{leC}}^{m',s',\varsigma',l',\ell'}(X)$, $E \in \Psi_{\mathrm{leC}}^{-\infty,l,\ell}(X)$, $F\in \Psi_{\mathrm{leC}}^{-\infty,l',\ell'}$. Then 
	\begin{align}
		\begin{split} 
		\sigma_{\mathrm{leC}}^{m+m',s+s',\varsigma+\varsigma',l+\ell',\ell+\ell'}(AB) &= \sigma_{\mathrm{leC}}^{m+m',s+s',\varsigma+\varsigma',l+\ell',\ell+\ell'}(\operatorname{Op}(a)\operatorname{Op}(b)) \\
		&= \sigma_{\mathrm{leC}}^{m+m',s+s',\varsigma+\varsigma',l+\ell',\ell+\ell'}(\operatorname{Op}(a\sharp b)) \\
		&= a \sharp b \bmod S_{\mathrm{leC}}^{m+m'-1,s+s'-1,\varsigma+\varsigma'-1,l+l',\ell+\ell'}(X) \\
		&= a b \bmod S_{\mathrm{leC}}^{m+m'-1,s+s'-1,\varsigma+\varsigma'-1,l+l',\ell+\ell'}(X) \\
		&= 	\sigma_{\mathrm{leC}}^{m,s,\varsigma,l,\ell}(A)\sigma_{\mathrm{leC}}^{m',s',\varsigma',l',\ell'}(B).
		\end{split} 
	\end{align}

	The proof of \cref{eq:misc_cid} is similar. 
\end{proof}

\begin{proposition}
	\label{prop:diff->PsiDO} 
	Suppose that $L \in S\operatorname{Diff}_{\mathrm{scb}}^{m,s,l}(X)$, $m\in \bbN$, $s,l,\in \bbR$. Then, the constant family $\{L(\sigma)\}_{\sigma>0}, L(\sigma)=L$, (which we conflate with $L$) defines an element of $\Psi_{\mathrm{leC}}^{m,s,s+l,l,2l}(X)$. 
	
	Given any  $f\in  \calA^{0,0,(0,0)} (X^{\mathrm{sp}}_{\mathrm{res}})$  and $l_0,\ell_0 \in \bbR$, $ (x/(\sigma^2+\mathsf{Z}x))^{l_0} (\sigma^2+\mathsf{Z}x)^{\ell_0/2} f L$ defines an element of  $\Psi_{\mathrm{leC}}^{m,s-l_0,s+l-\ell_0 ,l-l_0,2l-\ell_0}(X)$.
\end{proposition}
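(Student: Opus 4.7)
The plan is to identify the $\sigma$-independent full b-symbol of $L$ in local coordinates, verify termwise that it lies in $S_{\mathrm{leC}}^{m,s,s+l,l,2l}(X)$ via the bdf factorizations induced by the two blow-ups defining ${}^{\mathrm{leC}}\overline{T}^*X$, and then invoke the definition of the leC-calculus, reducing the second statement to an application of \Cref{prop:multiplication} and \Cref{prop:algebra}.

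Away from $\partial X$ the claim is standard (choose a cutoff supported in $X^\circ$). Near the boundary, in local coordinates $(x,y)$, expand
\begin{equation*}
L \;=\; \sum_{|\alpha|\leq m} C_\alpha(x,y)\,(x\partial_x)^{\alpha_0}\partial_y^{\alpha'}.
\end{equation*}
The hypothesis $L\in S\operatorname{Diff}_{\mathrm{b}}^{m,l}(X)$ gives $C_\alpha \in x^{-l}S^0(X)$. For the sc-structure, one also has $L = x^{-s}\sum d_\beta(x^2\partial_x)^{\beta_0}(x\partial_y)^{\beta'}$ with $d_\beta\in S^0(X)$; rewriting $(x^2\partial_x)^{\beta_0}(x\partial_y)^{\beta'} = x^{|\beta|}(x\partial_x)^{\beta_0}\partial_y^{\beta'}$ modulo b-terms of strictly lower differential order (each still carrying $x^{|\beta|}$), we deduce $C_\alpha\in x^{|\alpha|-s}S^0(X)$. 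Combining,
\begin{equation*}
C_\alpha \in x^{-F_\alpha}\,S^0(X),\qquad F_\alpha \;=\; \min\{l,\,s-|\alpha|\}.
\end{equation*}

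Next consider the $\sigma$-independent full b-symbol $p(x,y,\xi_{\mathrm{b}},\eta_{\mathrm{b}}) = \sum_{|\alpha|\leq m} C_\alpha(x,y)(i\xi_{\mathrm{b}})^{\alpha_0}(i\eta_{\mathrm{b}})^{\alpha'}$, viewed as a function on ${}^{\mathrm{leC}}\overline{T}^*X$. Conormality is inherited from that of the $C_\alpha$ since differentiation in the cofibers only reduces polynomial degree. Using the identity $x = \varrho_{\mathrm{bf}_{00}}\varrho_{\mathrm{tf}_{00}}^2$ together with $|\xi_{\mathrm{b}}|,|\eta_{\mathrm{b}}|\lesssim \varrho_{\mathrm{df}_{00}}^{-1}$ and the factorizations $\varrho_{\mathrm{bf}_{00}} = \varrho_{\mathrm{bf}}\varrho_{\mathrm{sf}}$, $\varrho_{\mathrm{tf}_{00}}=\varrho_{\mathrm{tf}}\varrho_{\mathrm{ff}}$, $\varrho_{\mathrm{df}_{00}} = \varrho_{\mathrm{df}}\varrho_{\mathrm{sf}}\varrho_{\mathrm{ff}}$ arising from the two blow-ups,
\begin{equation*}
|C_\alpha\,\xi_{\mathrm{b}}^{\alpha_0}\eta_{\mathrm{b}}^{\alpha'}|\;\lesssim\; \varrho_{\mathrm{df}}^{-|\alpha|}\varrho_{\mathrm{sf}}^{-(|\alpha|+F_\alpha)}\varrho_{\mathrm{ff}}^{-(|\alpha|+2F_\alpha)}\varrho_{\mathrm{bf}}^{-F_\alpha}\varrho_{\mathrm{tf}}^{-2F_\alpha}.
\end{equation*}
The elementary case analysis
\begin{equation*}
|\alpha|+F_\alpha = \min\{|\alpha|+l,\,s\}\leq s,\qquad |\alpha|+2F_\alpha = \min\{|\alpha|+2l,\,2s-|\alpha|\}\leq s+l
\end{equation*}
(the latter via averaging the two entries of the minimum), together with $F_\alpha\leq l$, shows $p\in S_{\mathrm{leC}}^{m,s,s+l,l,2l}(X)$. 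Since $p$ is polynomial in the cofibers, $\operatorname{Op}(p)$ reproduces $L$ modulo a smoothing remainder coming from the cutoff in the definition of $\operatorname{Op}$; this remainder retains the b-decay inherited from $L$ and thus lies in $\Psi_{\mathrm{b,leC}}^{-\infty,l,2l}(X)$. By the definition of the leC-calculus, $L\in\Psi_{\mathrm{leC}}^{m,s,s+l,l,2l}(X)$.

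For the second statement, \Cref{prop:multiplication} (applied with $l\mapsto -l_0$ and $\ell\mapsto -\ell_0$) gives that multiplication by $x^{l_0}(\sigma^2+\mathsf{Z}x)^{-l_0+\ell_0/2}f = (x/(\sigma^2+\mathsf{Z}x))^{l_0}(\sigma^2+\mathsf{Z}x)^{\ell_0/2}f$ defines an element of $\Psi_{\mathrm{b,leC}}^{0,-l_0,-\ell_0}(X)=\Psi_{\mathrm{leC}}^{0,-l_0,-\ell_0,-l_0,-\ell_0}(X)$; composing with $L$ via the multigraded algebra structure of \Cref{prop:algebra} yields the claimed membership in $\Psi_{\mathrm{leC}}^{m,s-l_0,s+l-\ell_0,l-l_0,2l-\ell_0}(X)$. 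The chief technical obstacle is the bound $|\alpha|+2F_\alpha\leq s+l$ at $\mathrm{ff}$: the b-estimate alone yields only the weaker $2l$, the sc-estimate alone only $2s$, and the sharp $s+l$ requires using both simultaneously through the minimum defining $F_\alpha$; the remainder is bookkeeping on the blown-up phase space.
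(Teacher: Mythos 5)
Your proof is correct and arrives at the same conclusion by a genuinely different route. The paper decomposes $L = \sum_{j=0}^m L_j$ at the operator level, where $L_j$ has pure b-differential order $j$ and coefficients in $x^{-\min\{s-j,l\}}S^0(X)$; it then invokes the trivial base case $s=m+l$, which is just the set-theoretic identity $\Psi_{\mathrm{b,leC}}^{m,l,\ell}(X) = \Psi_{\mathrm{leC}}^{m,m+l,m+\ell,l,\ell}(X)$, applied to each $L_j$ and summed, the five orders of the sum being the maxima over $j$. Your argument is the symbolic dual of this: your $F_\alpha = \min\{l,\,s-|\alpha|\}$ is the paper's $\min\{s-j,l\}$ with $j=|\alpha|$, and your case analysis showing $|\alpha|+2F_\alpha \leq s+l$ is precisely the arithmetic that, in the paper's version, yields $\max_j \min\{2s-j,\,j+2l\} \leq s+l$. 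What the paper's route buys is brevity — it avoids local coordinates and leans on the $\Psi_{\mathrm{b,leC}}$-to-$\Psi_{\mathrm{leC}}$ conversion as a black box — whereas your route makes the bdf-level bookkeeping on ${}^{\mathrm{leC}}\overline{T}^*X$ explicit. Two small points you could tighten: since $L$ is a $\sigma$-independent differential operator, its Schwartz kernel is supported on the diagonal and $\operatorname{Op}$ applied to the full left b-symbol recovers $L$ with \emph{no} residual term (the paper makes this observation for multiplication operators right after defining $\operatorname{Op}$), so the sentence about the remainder in $\Psi_{\mathrm{b,leC}}^{-\infty,l,2l}(X)$ is unnecessary; and your one-line dismissal of conormality should also note that conormality is preserved under pullback by the blow-down ${}^{\mathrm{leC}}\overline{T}^*X \to [0,\infty)_\lambda\times{}^{\mathrm{b}}\overline{T}^*X$ and that classicality at $\mathrm{zf}_{00}$ is automatic from the $\lambda$-independence of $p$. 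The second half of your argument matches the paper's essentially verbatim.
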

\begin{proof}
	First consider the case when $s=m+l$, so that $L\in \smash{S\operatorname{Diff}_{\mathrm{b}}^{m,l}(X)}$. Then, 
	\begin{equation} 
		L\in \Psi_{\mathrm{b,leC}}^{m,l,2l}(X) = \Psi_{\mathrm{leC}}^{m,m+l,m+2l,l,2l}(X)= \Psi_{\mathrm{leC}}^{m,s,s+l,l,2l}(X).
	\end{equation}  

	To handle the general case, we note that we may write any $L\in S\operatorname{Diff}_{\mathrm{scb}}^{m,s,l}(X)$ -- where now $m,s,l\in \bbR$ are arbitrary -- as 
	\begin{equation} 
		L = \sum_{j=0}^{m} L_j
	\end{equation} 
	for $\smash{L_j \in S\operatorname{Diff}_{\mathrm{b}}^{j,\min\{s-j,l\}}(X) \subset S \operatorname{Diff}_{\mathrm{scb}}^{j, \min\{s,l+j\},\min\{s-j,l\}}(X)\subset S \operatorname{Diff}_{\mathrm{scb}}^{m,s,l}(X) }$. 
	(Indeed, it suffices to construct such a decomposition on $[0,\bar{x})_x\times \partial X$, where $L_j$ can be written as a linear combination of elements of $\smash{S(X)\partial_x^{j_1}\operatorname{Diff}^{j_2}(\partial X)}$ for $j_1+j_2= j$, $j_1,j_2 \in\bbN$.)
	Since 
	\begin{align}
		\begin{split} 
		L_j &\in \Psi_{\mathrm{b,leC}}^{j,\min\{s-j,l\},2\min\{s-j,l\}}(X)  \\
		&= \Psi_{\mathrm{leC}}^{j,\min\{s,l+j\},\min\{2s-j,j+2l\},\min\{s-j,l\},2\min\{s-j,l\}}(X), 
		\end{split}
		\label{eq:misc_ljs}
		\intertext{we deduce that}
		L &\in \Psi_{\mathrm{leC}}^{m,\min\{s,l+m\},s+l, \min\{s,l\}, \min\{2s,2l\}}(X) \subset \Psi_{\mathrm{leC}}^{m,s,s+l,l,2l}(X). 
	\end{align}

From \Cref{prop:multiplication}, for any $f\in \calA^{0,0,(0,0)} (X^{\mathrm{sp}}_{\mathrm{res}}) (X^{\mathrm{sp}}_{\mathrm{res}})$ and $l_0,\ell_0 \in \bbR$, the multiplication operator $x^{l_0} (\sigma^2+\mathsf{Z}x)^{-l_0+\ell_0/2} f(x;\sigma)$ defines an element of $\Psi_{\mathrm{b,leC}}^{0,-l_0,-\ell_0}(X) = \Psi_{\mathrm{leC}}^{0,-l_0,-\ell_0,-l_0,-\ell_0}(X)$. 
The second statement of the proposition therefore follows from the first via an application of \Cref{prop:algebra}. 
\end{proof}

For each $m,s,\varsigma,l,\ell \in \bbR$, we let $S\operatorname{Diff}_{\mathrm{leC}}^{m,s,\varsigma,l,\ell}(X)$ denote the set of elements of $\Psi_{\mathrm{leC}}^{m,s,\varsigma,l,\ell}(X)$ which are families of differential operators (all of which arise from the construction in \Cref{prop:diff->PsiDO}). Likewise, we let 
\begin{equation} 
\operatorname{Diff}_{\mathrm{leC}}^{m,s,\varsigma,l,\ell}(X) \subset S\operatorname{Diff}_{\mathrm{leC}}^{m,s,\varsigma,l,\ell}(X)
\end{equation}
denote the subset of families of differential operators
which can be written as a linear combination of elements of $\operatorname{Diff}_{\mathrm{b}}(X)$ times classical symbols on $X^{\mathrm{sp}}_{\mathrm{res}}$.

The elliptic parametrix construction, applied to the leC-calculus, yields the following: for any $m,s,\varsigma,l,\ell \in \bbR$ and (totally) elliptic $A \in \Psi_{\mathrm{leC}}^{m,s,\varsigma,l,\ell}(X)$ -- that is $A$ with 
\begin{equation} 
	\operatorname{Ell}_{\mathrm{leC}}^{m,s,\varsigma,\ell}(A) = \mathrm{df}\cup \mathrm{sf}\cup \mathrm{ff}
\end{equation} -- there exists, for each $N\in \bbN$, some $B \in  \smash{\Psi_{\mathrm{leC}}^{-m,-s,-\varsigma,-l,-\ell}}(X)$ such that $AB-1,BA-1 \in \smash{\Psi_{\mathrm{b,leC}}^{-N,0,0}(X)}$. (To elaborate, the leC-symbol calculus analogue of the construction of left and right parametrices via Neumann series yields $B_{\mathrm{L}},B_{\mathrm{R}}$ such that $
AB_{\mathrm{R}}-1,B_{\mathrm{L}}A - 1 \in \Psi_{\mathrm{b,leC}}^{-N,0,0}(X)$. 
Then, setting $E_{\mathrm{L}} = B_{\mathrm{L}}A-1 $ and $E_{\mathrm{R}} = AB_{\mathrm{R}}-1$, 
\begin{equation} 
	B_{\mathrm{L}} = B_{\mathrm{L}} (AB_{\mathrm{R}}-E_{\mathrm{R}}) = (E_{\mathrm{L}}+1)B_{\mathrm{R}}- B_{\mathrm{L}} E_{\mathrm{R}} = B_{\mathrm{R}} + (E_{\mathrm{L}}B_{\mathrm{R}} - B_{\mathrm{L}} E_{\mathrm{R}}), 
\end{equation} 
so taking either $B=B_{\mathrm{L}}$ or $B=B_{\mathrm{R}}$, both of $AB-1,BA-1 \in \Psi_{\mathrm{b,leC}}^{-N,0,0}(X)$ hold. Hence we do not need to distinguish left vs. right parametrices.)
\begin{lemma}
	Given any $m,s,\varsigma,l,\ell\in \bbR$, $A \in \Psi_{\mathrm{leC}}^{m,s,\varsigma,l,\ell}(X)$ and $\alpha \in \mathrm{df} \cup \mathrm{sf} \cup \mathrm{ff}$, $\alpha \notin \operatorname{WF}_{\mathrm{leC}}^{\prime l,\ell}(A)$ only if there exists some $B\in \Psi^{0,0,0,0,0}_{\mathrm{leC}}(X)$ that is elliptic at $\alpha$ and satisfies $AB,BA \in \Psi_{\mathrm{leC}}^{-\infty,-\infty,-\infty,l,\ell}(X)$. 
	
	The same statement applies if we replace $\alpha$ with the intersection of a finite union of closed balls within $\mathrm{df}\cup \mathrm{sf}\cup \mathrm{ff}$. 
\end{lemma}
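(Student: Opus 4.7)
The plan is to produce $B$ as a classical microlocal cutoff concentrated near $\alpha$, using essential support containment to force residual compositions with $A$.

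Write $A = \operatorname{Op}(a) + E_0$ for some $a \in S_{\mathrm{leC}}^{m,s,\varsigma,l,\ell}(X)$ and $E_0 \in \Psi_{\mathrm{b,leC}}^{-\infty,l,\ell}(X)$. The hypothesis $\alpha \notin \operatorname{WF}_{\mathrm{leC}}^{\prime l,\ell}(A) = \operatorname{esssupp}_{\mathrm{leC}}(a)$ supplies an open neighborhood $U \subset {}^{\mathrm{leC}}\overline{T}^* X$ of $\alpha$ on which $a$ vanishes to infinite order at $(\mathrm{df}\cup\mathrm{sf}\cup\mathrm{ff}) \cap U$. Choose nested open neighborhoods $W \Subset V \Subset U$ of $\alpha$ and, by a partition-of-unity construction on ${}^{\mathrm{leC}}\overline{T}^* X$ built from the bdfs of \S\ref{subsec:phasespace}, produce a classical symbol $b \in S_{\mathrm{cl,leC}}^{0,0,0,0,0}(X)$ that is identically $1$ on a neighborhood of $\overline{W} \cap (\mathrm{df}\cup\mathrm{sf}\cup\mathrm{ff})$ and vanishes to infinite order at $\mathrm{df}\cup\mathrm{sf}\cup\mathrm{ff}$ outside $V$, so that $\operatorname{esssupp}_{\mathrm{leC}}(b) \subseteq \overline{V}$.

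Set $B = \operatorname{Op}(b) \in \Psi_{\mathrm{leC}}^{0,0,0,0,0}(X)$. Since $b$ is a representative of the leC-principal symbol of $B$ that equals $1$ at $\alpha$, $B$ is leC-elliptic at $\alpha$. By the properties of $\sharp$ from \S\ref{subsubsec:leC},
\begin{equation*}
AB = \operatorname{Op}(a)\operatorname{Op}(b) + E_0 B = \operatorname{Op}(a\sharp b) + F + E_0 B
\end{equation*}
for some $F \in \Psi_{\mathrm{b,leC}}^{-\infty,l,\ell}(X)$. By \cref{eq:misc_k45},
\begin{equation*}
\operatorname{esssupp}_{\mathrm{leC}}(a\sharp b) \subseteq \operatorname{esssupp}_{\mathrm{leC}}(a) \cap \operatorname{esssupp}_{\mathrm{leC}}(b) \subseteq \bigl((\mathrm{df}\cup\mathrm{sf}\cup\mathrm{ff})\setminus U\bigr) \cap \overline{V} = \emptyset,
\end{equation*}
so $a \sharp b \in S_{\mathrm{leC}}^{-\infty,-\infty,-\infty,l,\ell}(X)$ and hence $\operatorname{Op}(a\sharp b) \in \Psi_{\mathrm{leC}}^{-\infty,-\infty,-\infty,l,\ell}(X)$. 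The remainder $F$ and the composition $E_0 B$ both lie in $\Psi_{\mathrm{b,leC}}^{-\infty,l,\ell}(X)$ by the multigraded algebra property of the residual calculus from \S\ref{subsubsec:bleC}, and $\Psi_{\mathrm{b,leC}}^{-\infty,l,\ell}(X) = \Psi_{\mathrm{leC}}^{-\infty,-\infty,-\infty,l,\ell}(X)$ by \cref{eq:misc_frc}. Therefore $AB \in \Psi_{\mathrm{leC}}^{-\infty,-\infty,-\infty,l,\ell}(X)$, and the identical argument applied to $b \sharp a$ gives $BA \in \Psi_{\mathrm{leC}}^{-\infty,-\infty,-\infty,l,\ell}(X)$.

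For the final sentence of the lemma, if $K$ is the intersection of a finite union of closed balls with $\mathrm{df}\cup\mathrm{sf}\cup\mathrm{ff}$ and $K \cap \operatorname{esssupp}_{\mathrm{leC}}(a) = \emptyset$, then since $K$ is compact and $\operatorname{esssupp}_{\mathrm{leC}}(a)$ is closed, a finite open-cover argument yields a single open neighborhood $U$ of $K$ on which $a$ vanishes to infinite order at $(\mathrm{df}\cup\mathrm{sf}\cup\mathrm{ff})\cap U$. The rest of the construction proceeds verbatim, with $b \equiv 1$ on a neighborhood of $K\cap(\mathrm{df}\cup\mathrm{sf}\cup\mathrm{ff})$ and essential support inside a $V \Subset U$. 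The only real obstacle is the bookkeeping required to guarantee the classical cutoff $b$ with the prescribed essential support and ellipticity near $\alpha$ (or $K$), which is routine given the smooth structure of ${}^{\mathrm{leC}}\overline{T}^* X$ recalled in \S\ref{subsec:phasespace}.
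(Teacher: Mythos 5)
Your proof is correct and takes the same route as the paper's: pick a classical microlocal cutoff $b$ equal to $1$ near $\alpha$ (so $B=\operatorname{Op}(b)$ is elliptic there) with $\operatorname{esssupp}_{\mathrm{leC}}(b)$ disjoint from $\operatorname{esssupp}_{\mathrm{leC}}(a)$, then invoke \cref{eq:misc_k45} to force $a\sharp b$ (and $b\sharp a$) into the residual class. The only difference is that you spell out the treatment of the residual piece $E_0$ of $A$ and the nested-neighborhood construction of $b$, which the paper leaves implicit, but the underlying argument is identical.
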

\begin{proof}
	We write $A = \operatorname{Op}(a)$ for $a \in S_{\mathrm{leC}}^{\infty,\infty,\infty,l,\ell}(X)$, so that $\operatorname{WF}_{\mathrm{leC}}^{\prime l,\ell}(A) = \operatorname{esssupp}_{\mathrm{leC}}(a)$.
	
	Given  $B\in \Psi^{0,0,0,0,0}_{\mathrm{leC}}(X)$, $B=\operatorname{Op}(b)$, $b\in S_{\mathrm{leC}}^{0,0,0,0,0}(X)$, define $E \in \Psi_{\mathrm{leC}}^{-\infty,-\infty,-\infty,l,\ell}(X)$ by
	\begin{equation} 
		AB = \operatorname{Op}(a\sharp b) + E.
	\end{equation} 
	If $\alpha \notin \operatorname{esssupp}_{\mathrm{leC}}(a)$, then (since essential supports are closed) we can choose $b\in \smash{S_{\mathrm{cl,leC}}^{0,0,0,0,0}(X)}$ that is identically equal to one in a neighborhood of $\alpha$ but supported away from $\operatorname{esssupp}_{\mathrm{leC}}(a)$, so that 
	\[\operatorname{esssupp}_{\mathrm{leC}}(a)\cap \operatorname{esssupp}_{\mathrm{leC}}(b) = \varnothing.
	\] 
	Then, by \cref{eq:misc_k45}, $a\sharp b$ has empty essential support, which implies that $AB \in \Psi_{\mathrm{leC}}^{-\infty,-\infty,-\infty,l,\ell}(X)$. 
	
	That handles $AB$, and $BA$ is analogous. If we replace $\alpha$ in the previous argument with  the intersection of a finite union of closed balls within $\mathrm{df}\cup \mathrm{sf}\cup \mathrm{ff}$, the argument goes through the same. 
\end{proof}

We briefly discuss uniform families of leC-$\Psi$DOs (i.e.\ one-parameter families of b-$\Psi$DOs which are uniformly bounded in a sense appropriate for the leC-calculus). 
For each $m,s,\varsigma,l,\ell \in \bbR\cup \{-\infty\}$, 
\begin{equation}
	\Psi^{m,s,\varsigma,l,\ell}_{\mathrm{leC}}(X) = \bigcap_{\substack{m'\geq m, \cdots , \ell'\geq \ell \\ m',\cdots,\ell'\in \bbR}} \Psi^{m',s',\varsigma',l',\ell'}_{\mathrm{leC}}(X)
\end{equation} 
is a Fr\'echet space, so for any nonempty interval $I\subset \bbR$, we have a Fr\'echet space $\smash{L^\infty(I;\Psi^{m,s,\varsigma,l,\ell}_{\mathrm{leC}}(X))}$ whose elements are a.e.\ equivalence classes of measurable functions $I\to \Psi^{m,s,\varsigma,l,\ell}_{\mathrm{leC}}(X)$ which are uniformly bounded with respect to each of our countably many Fr\'echet seminorms on the codomain. For $I$ closed, we can safely conflate an element of $\calA^0(I;\Psi^{m,s,\varsigma,l,\ell}_{\mathrm{leC}}(X))$, which we can consider as a smooth family $\{A_t\}_{t \in I^\circ}$, with the corresponding element of $\smash{L^\infty(I;\Psi^{m,s,\varsigma,l,\ell}_{\mathrm{leC}}(X))}$. 

For $A=\{A_t\}_{t\in I} \in \smash{L^\infty(I;\Psi^{m,s,\varsigma,l,\ell}_{\mathrm{leC}}(X))}$, we define subsets $\operatorname{WF}'_{L^\infty,\mathrm{leC}}(A), \operatorname{WF}_{L^\infty,\mathrm{leC}}^{\prime l,\ell}(A) \subset \mathrm{df}\cup \mathrm{sf}\cup \mathrm{ff}$ by stipulating that a point $\alpha \in \mathrm{df}\cup \mathrm{sf} \cup \mathrm{ff}$  does not lie in 
\begin{itemize}
	\item $\operatorname{WF}'_{L^\infty,\mathrm{leC}}(A)$ if and only if there exists some $B\in \Psi_{\mathrm{leC}}^{0,0,0,0,0}(X)$ that is elliptic at $\alpha$ and satisfies $\{BA_t\}_{t\in I} \in L^\infty(I;\smash{\Psi_{\mathrm{leC}}^{-\infty,-\infty,-\infty,-\infty,-\infty}(X)})$, 
	\item $\operatorname{WF}^{\prime l,\ell}_{L^\infty,\mathrm{leC}}(A)$ if and only if there exists some $B\in \Psi_{\mathrm{leC}}^{0,0,0,0,0}(X)$ that is elliptic at $\alpha$ and satisfies $\{BA_t\}_{t\in I} \in L^\infty(I;\smash{\Psi_{\mathrm{leC}}^{-\infty,-\infty,-\infty,l,\ell}(X)})$. 
\end{itemize}
A uniform version of the elliptic parametrix construction goes through. 
We likewise have a notion of $\operatorname{esssupp}_{L^\infty,\mathrm{leC}}(a)$ for $a \in L^\infty(I;S_{\mathrm{leC}}^{m,s,\varsigma,l,\ell}(X))$. One definition is that $\alpha \in \mathrm{df}\cup \mathrm{sf}\cup \mathrm{ff}$ is not in $\operatorname{esssupp}_{L^\infty,\mathrm{leC}}(a)$ if and only if there exists some $b\in \smash{S^{0,0,0,0,0}_{\mathrm{leC}}(X)}$ that is elliptic at $\alpha$ and satisfies $ab \in \smash{S_{\mathrm{leC}}^{-\infty,-\infty,-\infty,-\infty}(X)}$. Quantizing:
for any interval $I\subset \bbR$ and any $m,s,\varsigma,l,\ell\in \bbR$, given $a \in L^\infty(I;S_{\mathrm{leC}}^{m,s,\varsigma,l,\ell}(X))$, 
letting $A=\{\operatorname{Op}(A_t)\}_{t\in I}$, 
\begin{equation} 
	\operatorname{WF}^{\prime l,\ell}_{L^\infty, \mathrm{leC}}(A) = \operatorname{esssupp}_{L^\infty,\mathrm{leC}}(a).
\end{equation}

\subsection{Sobolev Spaces}
\label{subsec:sobolev}
For each $m,s,l\in \bbR$, let $H_{\mathrm{scb}}^{m,s,l}(X)$ denote the Sobolev space of differential order $m$, sc-decay order $s$, and b-decay order $l$ associated to the scb-calculus in \cite{VasyLA}. 
Associated to the leC-calculus is a five-parameter family 
\[
\{H_{\mathrm{leC}}^{m,s,\varsigma,l,\ell}(X)\}_{m,s,\varsigma,l,\ell\in \bbR}
\] of families 
\begin{equation} 
	H_{\mathrm{leC}}^{m,s,\varsigma,l,\ell}(X)= \{H_{\mathrm{leC}}^{m,s,\varsigma,l,\ell}(X)(\sigma) \}_{\sigma\geq 0}
\end{equation} 
of ``leC-based Sobolev spaces,'' where 
\begin{itemize}
	\item  for each $\sigma>0$, 
	$\smash{H_{\mathrm{leC}}^{m,s,\varsigma,l,\ell}}(X)(\sigma)$ is a Hilbertizable Banach space equal to
	$\smash{H_{\mathrm{scb}}^{m,s,l}(X)} \subset \calS'(X)$ at the level of TVSs (i.e.\ equivalent at the level of Banach spaces), 
	\item $H_{\mathrm{leC}}^{m,s,\varsigma,l,\ell}(X)(0) = H_{\mathrm{scb}}^{m,\varsigma+n/2,\ell+n/2}(X_{1/2})$ (at the level of TVSs). 
\end{itemize}
The family $H_{\mathrm{leC}}^{m,s,\varsigma,l,\ell}(X)$ had ought to be thought of as interpolating between the Sobolev spaces $\smash{H_{\mathrm{scb}}^{m,s,l}(X)}$ and $H_{\mathrm{scb}}^{m,\varsigma+n/2,\ell+n/2}(X_{1/2})$ as $\sigma\to 0^+$.

Besides using the leC-Sobolev spaces to relate $H^{\bullet,\bullet,\bullet}_{\mathrm{scb}}(X)$ and $H^{\bullet,\bullet,\bullet}_{\mathrm{scb}}(X_{1/2})$ we can, more crudely, observe that 
\begin{equation}
	H_{\mathrm{scb}}^{m,m+l,l}(X)= H_{\mathrm{b}}^{m,l}(X) =H_{\mathrm{b}}^{m,2l+n/2}(X_{1/2}) = H_{\mathrm{scb}}^{m,m+2l+n/2,2l+n/2}(X_{1/2}).
\end{equation}
The spaces $H_{\mathrm{scb}}^{\bullet,\bullet,\bullet}(X)$ are more refined than $H_{\mathrm{b}}^{\bullet,\bullet}(X)$, in that the decay rate of terms like $\exp(i/x)$ need not be treated the same as the decay rate of the constant function as measured by the former. Similarly, the spaces $H_{\mathrm{scb}}^{\bullet,\bullet,\bullet}(X_{1/2})$ are more refined than $H_{\mathrm{b}}^{\bullet,\bullet}(X_{1/2})$ in that the decay rate of terms like $\smash{\exp(i/x^{1/2})}$ need not be the same as the decay rate of the constant function as measured by the former. As shown in \cite{VasyLA}, the scb-Sobolev spaces are well-suited for formulating a conjugated version of the Sommerfeld radiation condition, and for similar reasons the leC-Sobolev spaces are well-suited to the study of attractive Coulomb-like Schr\"odinger operators down to zero energy. 

In order to define the $\sigma$-dependent norm on $\smash{H_{\mathrm{leC}}^{m,s,\varsigma,l,\ell}}(X)(\sigma)$, we first note:

\begin{lemma}
	\label{lem:bsc}
	For any two elliptic $A,B \in \Psi_{\mathrm{leC}}^{m,s,\varsigma,l,\ell}$ and $N,M\in \bbZ$ with $N,M\geq -\min\{m,s-l,\varsigma-\ell\}$, and for any $\Sigma>0$, there exist $c,C>0$ such that 
	\begin{equation}
		c(\lVert B(\sigma)u \rVert_{ L^2(X)} + \lVert u \rVert_{H_{\mathrm{b,leC}}^{-M,l,\ell}(\sigma)}) \leq 	\lVert A(\sigma)u \rVert_{ L^2} + \lVert u \rVert_{H_{\mathrm{b,leC}}^{-N,l,\ell}(\sigma)} \leq C(\lVert B(\sigma)u \rVert_{ L^2} + \lVert u \rVert_{H_{\mathrm{b,leC}}^{-M,l,\ell}(\sigma)})
	\end{equation}
	holds for all $u \in \calS'(X)$ and $\sigma \in [0,\Sigma]$.  
\end{lemma}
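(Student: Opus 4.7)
The plan is to reduce both sides of the asserted norm equivalence to a single common intermediate norm -- namely a higher-regularity b,leC-Sobolev norm of $u$ -- via the elliptic parametrix construction for the leC-calculus recalled just above. First I would invoke that construction: for a fixed large $K \in \bbN$ (to be specified later in terms of $m,s,\varsigma,l,\ell,N,M$) we obtain elliptic parametrices
\begin{equation}
    Q_A,\, Q_B \in \Psi_{\mathrm{leC}}^{-m,-s,-\varsigma,-l,-\ell}(X)
\end{equation}
with $Q_A A - 1,\, A Q_A - 1,\, Q_B B - 1,\, B Q_B - 1 \in \Psi_{\mathrm{b,leC}}^{-K,0,0}(X)$. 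The crucial observation is the filtration inclusion $\Psi_{\mathrm{leC}}^{-m,-s,-\varsigma,-l,-\ell}(X) \subset \Psi_{\mathrm{b,leC}}^{-m',-l,-\ell}(X)$ for $m' = \min\{m, s-l, \varsigma-\ell\}$, so that $Q_A, Q_B$ act as genuine smoothing operators on the b,leC-Sobolev scale.

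Next I would set up the relevant uniform mapping properties. From the construction of $\Psi_{\mathrm{leC}}$ and its residual ideal $\Psi_{\mathrm{b,leC}}^{-\infty,l,\ell}(X)$, and the standard Hörmander square-root argument applied $\sigma$-wise with constants controlled by finitely many Fréchet seminorms on $\Psi_{\mathrm{b,leC}}^{0,0,0}(X)$ (which are finite in $\sigma \in [0,\Sigma]$ by the $L^\infty_{\mathrm{loc}}$ conormality built into $\calA^0_{\mathrm{loc}}$), $\Psi_{\mathrm{b,leC}}^{-m',-l,-\ell}(X)$ acts boundedly as a family $L^2(X) \to H_{\mathrm{b,leC}}^{m',l,\ell}(\sigma)$ uniformly in $\sigma \in [0,\Sigma]$. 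Taking $K \geq m' + \max\{N,M\}$, the residuals $Q_A A - 1,\, Q_B B - 1 \in \Psi_{\mathrm{b,leC}}^{-K,0,0}(X)$ map $H_{\mathrm{b,leC}}^{-N,l,\ell}(\sigma) \to H_{\mathrm{b,leC}}^{m',l,\ell}(\sigma)$ and $H_{\mathrm{b,leC}}^{-M,l,\ell}(\sigma) \to H_{\mathrm{b,leC}}^{m',l,\ell}(\sigma)$ uniformly.

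Now I would apply these bounds to the identity $u = Q_A(\sigma)(A(\sigma)u) - (Q_A(\sigma)A(\sigma) - 1)u$ (and its $B$-analogue) to obtain the bootstrap
\begin{equation}
    \lVert u \rVert_{H_{\mathrm{b,leC}}^{m',l,\ell}(\sigma)} \preceq \lVert A(\sigma) u \rVert_{L^2(X)} + \lVert u \rVert_{H_{\mathrm{b,leC}}^{-N,l,\ell}(\sigma)}
\end{equation}
uniformly in $\sigma \in [0,\Sigma]$. Combining this with the facts that $B \in \Psi_{\mathrm{leC}}^{m,s,\varsigma,l,\ell}(X) \subset \Psi_{\mathrm{b,leC}}^{m',l,\ell}(X)$ acts $H_{\mathrm{b,leC}}^{m',l,\ell}(\sigma) \to L^2(X)$ uniformly and that (given $M \geq -m'$) the inclusion $H_{\mathrm{b,leC}}^{m',l,\ell}(\sigma) \hookrightarrow H_{\mathrm{b,leC}}^{-M,l,\ell}(\sigma)$ is uniform, one concludes the upper bound. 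The reverse bound follows by exchanging the roles of $A$ and $B$.

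The main obstacle is verifying that every ingredient -- the parametrix construction, the residual mapping properties, and the $L^2$-boundedness of order-zero b,leC-$\Psi$DOs -- behaves uniformly all the way down to $\sigma=0$. This uniformity, however, is precisely what the leC-calculus was designed to provide: the reduction formula, the $\flat$-map, and the residual algebra have been defined with conormal dependence on $\sigma \in [0,\infty)_\sigma$ in the appropriate Fréchet sense, so seminorm estimates survive the limit $\sigma \to 0^+$ with constants independent of $\sigma \in [0,\Sigma]$.
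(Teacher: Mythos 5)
There is a genuine gap in the step where you pass from the bootstrap to the $B$-estimate, and it traces to a sign error in the filtration inclusion. You correctly note that
\begin{equation*}
    \Psi_{\mathrm{leC}}^{-m,-s,-\varsigma,-l,-\ell}(X) \subset \Psi_{\mathrm{b,leC}}^{-m',-l,-\ell}(X), \qquad m' = \min\{m,s-l,\varsigma-\ell\},
\end{equation*}
and this is fine for the parametrices $Q_A,Q_B$. But you then use the same $m'$ to assert $B \in \Psi_{\mathrm{leC}}^{m,s,\varsigma,l,\ell}(X)\subset\Psi_{\mathrm{b,leC}}^{m',l,\ell}(X)$ so that $B$ maps $H_{\mathrm{b,leC}}^{m',l,\ell}(\sigma)\to L^2$. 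That inclusion is false in general: since $\Psi_{\mathrm{b,leC}}^{m',l,\ell}(X) = \Psi_{\mathrm{leC}}^{m',m'+l,m'+\ell,l,\ell}(X)$, containment of $\Psi_{\mathrm{leC}}^{m,s,\varsigma,l,\ell}$ requires $m\leq m'$, $s\leq m'+l$, $\varsigma\leq m'+\ell$, i.e.\ $m'\geq \max\{m,s-l,\varsigma-\ell\}$, not the minimum. The minimum works for the parametrix precisely because its orders are the \emph{negatives} of $(m,s,\varsigma,l,\ell)$, which flips the direction of the inequalities. Unless $s=m+l$ and $\varsigma=m+\ell$ (so that $\min=\max$ and $\Psi_{\mathrm{leC}}^{m,s,\varsigma,l,\ell}$ collapses to $\Psi_{\mathrm{b,leC}}^{m,l,\ell}$), your bootstrap only controls the too-weak norm $\lVert u\rVert_{H_{\mathrm{b,leC}}^{m',l,\ell}}$ with $m'=\min\{\cdot\}$, while bounding $\lVert B(\sigma)u\rVert_{L^2}$ would need control of $\lVert u\rVert_{H_{\mathrm{b,leC}}^{\max\{\cdot\},l,\ell}}$, which is strictly stronger.

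The paper avoids passing through an intermediate $H_{\mathrm{b,leC}}$ norm at all. It writes $u = \Lambda B u - Ru$ with $\Lambda$ a parametrix for $B$ and, for the $L^2$ term, composes $A\Lambda \in \Psi_{\mathrm{leC}}^{0,0,0,0,0}(X)$. The uniform $L^2$-boundedness of this order-zero leC-operator then gives $\lVert A\Lambda B u\rVert_{L^2}\preceq \lVert Bu\rVert_{L^2}$ directly, without ever needing $B$ to lie in the b,leC-calculus at the differential order supplied by the bootstrap. For the low-regularity term $\lVert u\rVert_{H_{\mathrm{b,leC}}^{-N,l,\ell}}$, one only needs $\Lambda B u$ measured at negative differential order $-N$, which requires $\lVert Bu\rVert_{H_{\mathrm{b,leC}}^{-N-m_0,0,0}}\preceq\lVert Bu\rVert_{L^2}$; this works because $N\geq -m_0$. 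Your framework can be salvaged by replacing the two-operator, two-parametrix route with this direct composition argument; the crucial additional ingredient is the uniform $L^2$-boundedness of $\Psi^{0,0,0,0,0}_{\mathrm{leC}}(X)$, which you invoke only for the b,leC residual class but need for the full order-zero leC class.
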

\begin{proof}
	It suffices to prove $\lVert A(\sigma)u \rVert_{ L^2} + \lVert u \rVert_{H_{\mathrm{b,leC}}^{-N,l,\ell}(\sigma)} \leq C(\lVert B(\sigma)u \rVert_{ L^2} + \lVert u \rVert_{H_{\mathrm{b,leC}}^{-M,l,\ell}(\sigma)})$, the other inequality following by symmetry. 
	
	By the elliptic parametrix construction, for arbitrary $N_0\in \bbN$, we can find $\Lambda \in \Psi_{\mathrm{leC}}^{-m,-s,-\varsigma,-l,-\ell}$ and $R\in \Psi_{\mathrm{b,leC}}^{-N_0,0,0}$ with $\Lambda B = 1+R$. Now, setting  $m_0 = \min\{m,s-l,\varsigma-\ell\}$,  
	\begin{align}
		\begin{split} 
			\lVert u \rVert_{H_{\mathrm{b,leC}}^{-N,l,\ell}(\sigma)} &= \lVert (\Lambda(\sigma) B(\sigma) - R(\sigma)) u \rVert_{H_{\mathrm{b,leC}}^{-N,l,\ell}(\sigma)}  \\ & \leq 
			\lVert \Lambda(\sigma) B(\sigma) u \rVert_{H_{\mathrm{b,leC}}^{-N,l,\ell}(\sigma)} + \lVert  R(\sigma) u \rVert_{H_{\mathrm{b,leC}}^{-N,l,\ell}(\sigma)} \\
			& \preceq 
			\lVert \Lambda(\sigma) B(\sigma) u \rVert_{H_{\mathrm{b,leC}}^{-N,l,\ell}(\sigma)} + \lVert   u \rVert_{H_{\mathrm{b,leC}}^{-M,l,\ell}(\sigma)} \\
			&\preceq 
			\lVert B(\sigma) u \rVert_{H_{\mathrm{b,leC}}^{-N-m_0,0,0}(\sigma)} + \lVert   u \rVert_{H_{\mathrm{b,leC}}^{-M,l,\ell}(\sigma)} 
			\preceq \lVert B(\sigma) u \rVert_{L^2} +  \lVert   u \rVert_{H_{\mathrm{b,leC}}^{-M,l,\ell}(\sigma)}
		\end{split} 
		\label{eq:misc_kkk}
	\end{align}
	for sufficiently large $N_0$, 
	where we used that $\Lambda \in \Psi_{\mathrm{b,leC}}^{-m_0,-l,-\ell}$. 
	
	Similarly, for sufficiently large $N_0$,
	\begin{align}
		\begin{split} 
			\lVert A(\sigma) u \rVert_{L^2} = \lVert  A(\sigma) (\Lambda(\sigma) B(\sigma)-R(\sigma)) u \rVert_{L^2}  &\leq \lVert A(\sigma)\Lambda(\sigma) B(\sigma)  u \rVert_{L^2}  + \lVert  A(\sigma)R(\sigma) u \rVert_{L^2} \\
			&\preceq \lVert A(\sigma) \Lambda(\sigma)B(\sigma) u \rVert_{L^2}  + \lVert  u \rVert_{H_{\mathrm{b,leC}}^{-M,l,\ell}(\sigma)} \\
			&\preceq \lVert B(\sigma) u \rVert_{L^2} + \lVert  u \rVert_{H_{\mathrm{b,leC}}^{-M,l,\ell}(\sigma)} .
		\end{split} 
	\label{eq:misc_k34}
	\end{align}
	Combining \cref{eq:misc_kkk} with \cref{eq:misc_k34} yields the desired inequality. 
\end{proof}
We can now define a Hilbertizable norm on $H_{\mathrm{leC}}^{m,s,\varsigma,l,\ell}(X)(\sigma)$, for each $\sigma\geq 0$, by writing 
\begin{equation}
	\lVert u \rVert_{H_{\mathrm{leC}}^{m,s,\varsigma,l,\ell}(X)(\sigma)}  = \lVert \Lambda (\sigma) u \rVert_{L^2} + \lVert u \rVert_{H_{\mathrm{b,leC}}^{-N,l,\ell}(\sigma)}
	\label{eq:misc_sim}
\end{equation}
for $N\in \bbN$ with $N\geq - \min\{m,s-l,\varsigma-\ell\}$ and arbitrary elliptic $\Lambda \in \Psi_{\mathrm{leC}}^{m,s,\varsigma,l,\ell}(X)$. (Such $\Lambda$ can be constructed by the symbol calculus.) The previous lemma suffices to guarantee that the estimates we prove do not depend on the particular choice of $\Lambda$ and $N$ used in defining the norm \cref{eq:misc_sim} except with regards to the particular constants involved (which we do not keep track of anyways). 
However, it will be convenient to fix  
\begin{equation}
	\Lambda_{m,s,\varsigma,l,\ell} = (1/2)( \operatorname{Op}(\varrho_{\mathrm{df}}^{-m}\varrho_{\mathrm{sf}}^{-s}\varrho_{\mathrm{ff}}^{-\varsigma}\varrho_{\mathrm{bf}}^{-l}\varrho_{\mathrm{tf}}^{-\ell} ) +  \operatorname{Op}(\varrho_{\mathrm{df}}^{-m}\varrho_{\mathrm{sf}}^{-s}\varrho_{\mathrm{ff}}^{-\varsigma}\varrho_{\mathrm{bf}}^{-l}\varrho_{\mathrm{tf}}^{-\ell} )^*),
\end{equation}
which is certainly an elliptic element of $\Psi_{\mathrm{leC}}^{m,s,\varsigma,l,\ell}(X)$. 
Since $\Lambda_{m,s,\varsigma,l,\ell}(\sigma)$ is an elliptic element of $\Psi_{\mathrm{scb}}^{m,s,l}(X)$ for $\sigma>0$ and $\Psi_{\mathrm{scb}}^{m,\varsigma,\ell}(X_{1/2})$ for $\sigma = 0$, we see that $H_{\mathrm{leC}}^{m,s,\varsigma,l,\ell}(X)(\sigma)$ is indeed equivalent to 
\begin{itemize}
	\item $H_{\mathrm{scb}}^{m,s,l}(X)$ for $\sigma>0$ and
	\item $H_{\mathrm{scb}}^{m,\varsigma+n/2,\ell+n/2}(X_{1/2})$ for $\sigma = 0 $. (The shift of $n/2$ orders occurs because a sc-density on $X$ is a weighted sc-density on $X_{1/2}$, and vice versa; see \Cref{lem:218}.) 
\end{itemize}
On the other hand:
\begin{lemma}
	\label{lem:bleC_equiv}
	If $m,s,\varsigma,l,\ell\in \bbR$ satisfy $\varsigma=m+\ell$ and $s=m+l$, for each $\Sigma>0$ there exist constants $c(m,s,\varsigma,l,\ell,\Sigma),C(m,s,\varsigma,l,\ell,\Sigma)>0$ such that  
	\begin{equation}
		c\lVert u \rVert_{H_{\mathrm{leC}}^{m,s,\varsigma,l,\ell}(X)(\sigma)} \leq \lVert u \rVert_{H_{\mathrm{b,leC}}^{m,l,\ell}(X)(\sigma)} \leq C\lVert u \rVert_{H_{\mathrm{leC}}^{m,s,\varsigma,l,\ell}(X)(\sigma)}
	\end{equation}
	for all $u\in \calS'(X)$ and $\sigma \in [0,\Sigma]$. In particular, if $\ell=2l$,  $c'\lVert u \rVert_{H_{\mathrm{leC}}^{m,s,\varsigma,l,\ell}(X)(\sigma)} \leq \lVert u \rVert_{H_{\mathrm{b}}^{m,l}(X)} \leq C'\lVert u \rVert_{H_{\mathrm{leC}}^{m,s,\varsigma,l,\ell}(X)(\sigma)}$
	for some other constants constants $c'(m,s,\varsigma,l,\ell,\Sigma),C'(m,s,\varsigma,l,\ell,\Sigma)>0$.
\end{lemma}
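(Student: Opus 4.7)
The plan is to reduce everything to the tautology that, under the hypotheses $s=m+l$ and $\varsigma=m+\ell$, the two calculi at these orders coincide: $\Psi_{\mathrm{leC}}^{m,s,\varsigma,l,\ell}(X)=\Psi_{\mathrm{b,leC}}^{m,l,\ell}(X)$, as already noted in the paper just after \eqref{eq:misc_frc}. The content of the lemma is only that the two natural Hilbertizable norms built from these two pictures are equivalent uniformly in $\sigma\in [0,\Sigma]$.

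First I will check that the ellipticity notions in the two calculi coincide under the hypothesis. Using \eqref{eq:bdfs} together with $\varrho_{\mathrm{ff}}=\varrho_{\mathrm{ff}_0}$ and $\varrho_{\mathrm{tf}}=\varrho_{\mathrm{tf}_0}$, one gets the multiplicative relations
\begin{equation}
\varrho_{\mathrm{df}_{00}}=\varrho_{\mathrm{df}}\varrho_{\mathrm{sf}}\varrho_{\mathrm{ff}},\qquad \varrho_{\mathrm{bf}_{00}}=\varrho_{\mathrm{bf}}\varrho_{\mathrm{sf}},\qquad \varrho_{\mathrm{tf}_{00}}=\varrho_{\mathrm{tf}}\varrho_{\mathrm{ff}},
\end{equation}
from which
\begin{equation}
\varrho_{\mathrm{df}_{00}}^{-m}\varrho_{\mathrm{bf}_{00}}^{-l}\varrho_{\mathrm{tf}_{00}}^{-\ell}=\varrho_{\mathrm{df}}^{-m}\varrho_{\mathrm{sf}}^{-(m+l)}\varrho_{\mathrm{ff}}^{-(m+\ell)}\varrho_{\mathrm{bf}}^{-l}\varrho_{\mathrm{tf}}^{-\ell}.
\end{equation}
Thus the b,leC-weight of order $(m,l,\ell)$ is literally equal to the leC-weight of order $(m,m+l,m+\ell,l,\ell)$, and a symbol in $S_{\mathrm{b,leC}}^{m,l,\ell}(X)=S_{\mathrm{leC}}^{m,m+l,m+\ell,l,\ell}(X)$ is elliptic on $\mathrm{df}_{00}$ with the b,leC-weight if and only if it is elliptic on $\mathrm{df}\cup\mathrm{sf}\cup\mathrm{ff}$ with the leC-weight. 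In particular, the canonical weight-operator $\Lambda_{m,s,\varsigma,l,\ell}$ used to define the leC-norm is a valid elliptic choice for the b,leC-norm of order $(m,l,\ell)$, and conversely.

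To finish, I will invoke Lemma \ref{lem:bsc} to absorb the dependence on the choice of elliptic representative. By the definition \eqref{eq:misc_sim}, both norms have the shape
\begin{equation}
\lVert A(\sigma)u\rVert_{L^2}+\lVert u\rVert_{H_{\mathrm{b,leC}}^{-N,l,\ell}(\sigma)}
\end{equation}
with $A$ running over elliptic elements of the common operator space $\Psi_{\mathrm{leC}}^{m,s,\varsigma,l,\ell}(X)=\Psi_{\mathrm{b,leC}}^{m,l,\ell}(X)$ and with $N$ sufficiently large; Lemma \ref{lem:bsc} says these are mutually comparable uniformly in $\sigma\in[0,\Sigma]$, which yields the first asserted inequality chain with constants $c(m,s,\varsigma,l,\ell,\Sigma),C(m,s,\varsigma,l,\ell,\Sigma)>0$.

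For the ``in particular'' assertion with $\ell=2l$, the observation is that the b,leC-weight collapses to the plain b-weight: $(x/(\sigma^2+\mathsf{Z}x))^{-l}(\sigma^2+\mathsf{Z}x)^{-l}=x^{-l}$, so a $\sigma$-independent elliptic operator in $\Psi_{\mathrm{b}}^{m,l}(X)$ serves as a valid elliptic representative for the b,leC-norm at order $(m,l,2l)$, and the residual space $H_{\mathrm{b,leC}}^{-N,l,2l}(\sigma)$ reduces to $H_{\mathrm{b}}^{-N,l}(X)$ uniformly in $\sigma\in[0,\Sigma]$. Combining this $\sigma$-uniform identification with the equivalence just established gives the comparison with $\lVert u\rVert_{H_{\mathrm{b}}^{m,l}(X)}$. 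I do not expect any genuine obstacle here; the only nonformal input is Lemma \ref{lem:bsc}, which has already been established using the leC elliptic parametrix construction.
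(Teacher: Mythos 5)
Your proof is correct and takes essentially the same route as the paper's. The paper's argument also proceeds by observing that $\Lambda_{m,m+l,m+\ell,l,\ell}\in\Psi_{\mathrm{b,leC}}^{m,l,\ell}(X)$, using boundedness of b,leC-$\Psi$DOs for one inequality and b,leC-ellipticity of $\Lambda$ for the other; you make explicit the bdf identities $\varrho_{\mathrm{df}_{00}}=\varrho_{\mathrm{df}}\varrho_{\mathrm{sf}}\varrho_{\mathrm{ff}}$, $\varrho_{\mathrm{bf}_{00}}=\varrho_{\mathrm{bf}}\varrho_{\mathrm{sf}}$, $\varrho_{\mathrm{tf}_{00}}=\varrho_{\mathrm{tf}}\varrho_{\mathrm{ff}}$ underlying this and route the norm-equivalence through Lemma \ref{lem:bsc}, which is a minor reorganization of the same input.
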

\begin{proof}
	We have that $\Lambda_{m,s,\varsigma,l,\ell} \in \Psi_{\mathrm{b,leC}}^{m,l,\ell}(X)$, so 
	\begin{equation}
		\lVert u \rVert_{H_{\mathrm{leC}}^{m,m+l,m+\ell,l,\ell}(X)(\sigma)}  = \lVert \Lambda_{m,m+l,m+\ell,l,\ell}(\sigma) u \rVert_{L^2} + \lVert u \rVert_{H_{\mathrm{b,leC}}^{-N,l,\ell}(\sigma)} \preceq  \lVert u \rVert_{H_{\mathrm{b,leC}}^{m,l,\ell}(X)(\sigma)}
	\end{equation}
	for sufficiently large $N$ (by the boundedness of the elements of the resolved family b-calculus). 
	
	The reverse inequality follows from the ellipticity of $\Lambda_{m,m+l,m+\ell,l,\ell}$ as an element of $\Psi_{\mathrm{b,leC}}^{m,l,\ell}(X)$. 
\end{proof}

We now check the boundedness of leC-$\Psi$DOs acting on leC-Sobolev spaces (from the $L^2$-boundedness of zeroth order b-$\Psi$DOs):
\begin{proposition}
	For any $m,s,\varsigma,l,\ell,m_0,s_0,\varsigma_0,l_0,\ell_0 \in \bbR$, $A \in \Psi_{\mathrm{leC}}^{m,s,\varsigma,l,\ell}(X)$, and  $\Sigma>0$, there exists some constant $C = C(m,s,\varsigma,l,\ell,m_0,s_0,\varsigma_0,l_0,\ell_0 ,A,\Sigma) > 0$
	such that 
	\begin{equation}
		\lVert A(\sigma) u \rVert_{H_{\mathrm{leC}}^{m_0,s_0,\varsigma_0,l_0,\ell_0}(X)(\sigma)} \leq C \lVert u \rVert_{H_{\mathrm{leC}}^{m+m_0,s+s_0,\varsigma+\varsigma_0,l+l_0,\ell+\ell_0}(X)(\sigma)}
		\label{eq:misc_9kj}
	\end{equation}
	for all $u\in \calS'(X)$ and $\sigma \in [0,\Sigma]$. 
\end{proposition}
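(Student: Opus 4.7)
The plan is to reduce everything to the uniform $L^2$-boundedness of order-zero leC-$\Psi$DOs, which in turn follows from the $L^2$-boundedness theorem for b-$\Psi$DOs combined with the continuity established in \Cref{prop:operator_cont}. By the definition \cref{eq:misc_sim} of the norm (with the particular choice $\Lambda_{m_0,s_0,\varsigma_0,l_0,\ell_0}$), the target quantity is
\begin{equation*}
	\lVert A(\sigma)u \rVert_{H_{\mathrm{leC}}^{m_0,s_0,\varsigma_0,l_0,\ell_0}(\sigma)} \preceq \lVert \Lambda_{m_0,s_0,\varsigma_0,l_0,\ell_0}(\sigma)A(\sigma) u \rVert_{L^2} + \lVert A(\sigma)u\rVert_{H_{\mathrm{b,leC}}^{-N_0,l_0,\ell_0}(\sigma)},
\end{equation*}
for $N_0$ arbitrary (to be chosen large below). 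So it suffices to estimate each summand separately by the right-hand side of \cref{eq:misc_9kj}.

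For the $L^2$-piece, first observe that $\Lambda_{m_0,s_0,\varsigma_0,l_0,\ell_0} A \in \Psi_{\mathrm{leC}}^{m+m_0,s+s_0,\varsigma+\varsigma_0,l+l_0,\ell+\ell_0}(X)$ by \Cref{prop:algebra}. Apply the leC elliptic parametrix construction to the elliptic operator $\Lambda_{m+m_0,s+s_0,\varsigma+\varsigma_0,l+l_0,\ell+\ell_0}$ to obtain $P \in \Psi_{\mathrm{leC}}^{-m-m_0,-s-s_0,-\varsigma-\varsigma_0,-l-l_0,-\ell-\ell_0}(X)$ with $P\, \Lambda_{m+m_0,s+s_0,\varsigma+\varsigma_0,l+l_0,\ell+\ell_0} = 1 + R_0$, $R_0 \in \Psi_{\mathrm{b,leC}}^{-N_1,0,0}(X)$ for any preassigned $N_1$. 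Then
\begin{equation*}
	\Lambda_{m_0,s_0,\varsigma_0,l_0,\ell_0}A = Q\, \Lambda_{m+m_0,s+s_0,\varsigma+\varsigma_0,l+l_0,\ell+\ell_0} - \Lambda_{m_0,s_0,\varsigma_0,l_0,\ell_0} A R_0,
\end{equation*}
where $Q := \Lambda_{m_0,s_0,\varsigma_0,l_0,\ell_0} A P \in \Psi_{\mathrm{leC}}^{0,0,0,0,0}(X)$ and the second summand lies in $\Psi_{\mathrm{b,leC}}^{-N_2,l+l_0,\ell+\ell_0}(X)$ with $N_2 = N_1 - C$ for some constant $C$ depending on the orders. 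Since $\Psi_{\mathrm{leC}}^{0,0,0,0,0}(X) \subseteq \Psi_{\mathrm{b,leC}}^{0,0,0}(X)$, \Cref{prop:operator_cont} yields $\{Q(\sigma)\}_{\sigma\in[0,\Sigma]} \in C^0([0,\Sigma]; \Psi_{\mathrm{b}}^{0,\varepsilon}(X))$ for any $\varepsilon>0$; the standard $L^2$-boundedness for zeroth-order b-$\Psi$DOs on $L^2(X,g_0)$ together with uniform continuity on the compact interval $[0,\Sigma]$ then gives a uniform $L^2 \to L^2$ bound for $Q(\sigma)$. Analogously, \Cref{prop:residual_continuity} controls the residual summand as a uniform map $H_{\mathrm{b,leC}}^{-N_0,l+l_0,\ell+\ell_0}(\sigma) \to L^2$ once $N_1$ (and hence $N_2$) is taken large relative to $N_0$. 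Combining these yields
$\lVert \Lambda_{m_0,\ldots}(\sigma)A(\sigma)u \rVert_{L^2} \preceq \lVert u \rVert_{H_{\mathrm{leC}}^{m+m_0,s+s_0,\varsigma+\varsigma_0,l+l_0,\ell+\ell_0}(\sigma)}$.

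The second summand $\lVert A(\sigma)u \rVert_{H_{\mathrm{b,leC}}^{-N_0,l_0,\ell_0}(\sigma)}$ is handled by an exactly parallel argument, this time using that $A \in \Psi_{\mathrm{leC}}^{m,s,\varsigma,l,\ell}(X) \subseteq \Psi_{\mathrm{b,leC}}^{M,l,\ell}(X)$ for $M = \max\{m, s-l, \varsigma-\ell\}$, together with uniform mapping properties inside the (resolved family) b-calculus --- a parametrix argument reduces matters to uniform $L^2$-boundedness of zeroth-order b-$\Psi$DOs and to controlled action of residual operators, both available via \Cref{prop:operator_cont} and \Cref{prop:residual_continuity}. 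Assembling the two pieces gives the claimed bound. The only substantive point is the uniform $L^2$-boundedness of $Q(\sigma)$ across the degenerate point $\sigma = 0$, which is precisely the content of \Cref{prop:operator_cont}; everything else is the standard conjugation-by-elliptic-$\Lambda$ bookkeeping familiar from the b-calculus.
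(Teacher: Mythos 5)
Your overall strategy matches the paper's almost exactly: both proofs conjugate by elliptic $\Lambda$-operators, apply the leC elliptic parametrix construction to reduce to a zeroth-order leC-$\Psi$DO acting on $L^2$ plus a residual piece, and handle the residual piece separately. So the structure is sound.

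However, there is a real (if localized) gap in your justification of the uniform $L^2\to L^2$ bound for $Q(\sigma)$. You invoke \Cref{prop:operator_cont} to get $\{Q(\sigma)\}_{\sigma\in[0,\Sigma]}\in C^0([0,\Sigma];\Psi_{\mathrm{b}}^{0,\varepsilon}(X))$ for $\varepsilon>0$, and then argue that this continuity plus pointwise $L^2$-boundedness gives a uniform bound on $[0,\Sigma]$. But note the sign convention: $\Psi_{\mathrm{b}}^{0,\varepsilon}(X)$ for $\varepsilon>0$ consists of operators with $x^{-\varepsilon}$ growth, which are \emph{not} $L^2(X,g_0)$-bounded. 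Continuity in this weaker topology, combined with pointwise membership in $\Psi_{\mathrm{b}}^{0,0}(X)$ for each $\sigma$, does \emph{not} imply a uniform operator-norm bound (the $L^2$ operator norm need not be continuous or even upper semicontinuous in the $\Psi_{\mathrm{b}}^{0,\varepsilon}$ topology). What the argument actually requires, and what the paper uses, is the \emph{stronger} $L^\infty$ inclusion $\Psi_{\mathrm{b,leC}}^{0,0,0}(X)\subset L^\infty_{\mathrm{loc}}([0,\infty);\Psi_{\mathrm{b}}^{0,0}(X))$, which is not a continuity statement at all: it follows directly from the conormality of the symbols on the b,leC-phase space (cf.\ \cref{eq:base_inclusion} with $l_0=0$) together with the corresponding uniform control of the residual Schwartz kernels. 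Replacing your appeal to \Cref{prop:operator_cont} by this $L^\infty$ inclusion closes the gap; the same substitution is needed in the second half where you handle the low-order b,leC term.
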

\begin{proof}
	Pick arbitrary elliptic $\Lambda_0 \in \smash{\Psi_{\mathrm{leC}}^{m_0,s_0,\varsigma_0,l_0,\ell_0}(X)}$. Then 
	\begin{align}
		\lVert A(\sigma) u \rVert_{H_{\mathrm{leC}}^{m_0,s_0,\varsigma_0,l_0,\ell_0}(X)(\sigma)} &\preceq \lVert \Lambda_0(\sigma)  A(\sigma) u \rVert_{L^2} + \lVert A(\sigma) u \rVert_{H_{\mathrm{b,leC}}^{-N,l_0,\ell_0}(X)(\sigma)}	 \\ 
		\lVert u \rVert_{H_{\mathrm{leC}}^{m+m_0,s+s_0,\varsigma+\varsigma_0,l+l_0,\ell+\ell_0}(X)(\sigma)} &\succeq \lVert \Lambda_1(\sigma)   u \rVert_{L^2} + \lVert u \rVert_{H_{\mathrm{b,leC}}^{-M,l+l_0,\ell+\ell_0}(X)(\sigma)}	
	\end{align}
	for  $N,M$ not too negative and arbitrary  elliptic 
	$\Lambda_1 \in \Psi_{\mathrm{leC}}^{m+m_0,s+s_0,\varsigma+\varsigma_0,l+l_0,\ell+\ell_0}(X)$.
	
	For $N$ sufficiently large compared to $M$ (dependent on $m,\cdots,\ell_0$), $\lVert A(\sigma) u \rVert_{H_{\mathrm{b,leC}}^{-N,l_0,\ell_0}(X)(\sigma)}\preceq \lVert u \rVert_{H_{\mathrm{b,leC}}^{-M,l+l_0,\ell+\ell_0}(X)(\sigma)}$.

	For each $M'\in \bbR$, an elliptic parametrix for $\Lambda_1$  can be used to construct $A_0 \in \Psi_{\mathrm{b,leC}}^{0,0,0}(X) \subset L^\infty([0,\infty); \Psi_{\mathrm{b}}^0(X) )$ and $R \in \Psi_{\mathrm{b,leC}}^{-M',l+l_0,\ell+\ell_0}(X)$ such that $\Lambda_0 A = A_0 \Lambda_1 +R$. Then, if $M'$ is sufficiently large, 
	\begin{equation}
		\lVert \Lambda_0  A(\sigma) u \rVert_{L^2} \leq \lVert A_0 \Lambda_1 u \rVert_{L^2} + \lVert R u \rVert_{L^2} \preceq \lVert \Lambda_1 u \rVert_{L^2} + \lVert u \rVert_{H_{\mathrm{b,leC}}^{-M,l+l_0,\ell+\ell_0}(X)(\sigma)},
	\end{equation}
	where we have used the uniform $L^2$-boundedness of $A_0$. 
	Combining the estimates above, we deduce \cref{eq:misc_9kj}. 
\end{proof}

In the rest of the paper, we will abbreviate 
\begin{equation}
	H_{\mathrm{leC}}^{m,s,\varsigma,l,\ell}(X)(\sigma) = H_{\mathrm{leC}}^{m,s,\varsigma,l,\ell}(X) = H_{\mathrm{leC}}^{m,s,\varsigma,l,\ell},
\end{equation} 
leaving the dependence on $\sigma$ implicit
(and likewise for other $\sigma$-dependent notions). 
In particular, what we call ``estimates'' will really be 1-parameter families of estimates with multiplicative constants that -- for any fixed $\Sigma> 0$ -- are uniform for $\sigma \in [0,\Sigma]$.

The following two lemmas will be used mostly without comment in \S\ref{sec:symbolic}:
\begin{lemma}
	\label{lem:duality}
	For any $m,s,\varsigma,l,\ell \in \bbR$, there exists a constant $C=C(m,s,\varsigma,l,\ell)$ such that, for all $u,v\in \calS'(X)$ and $\overline{\delta}>0$,  
	\begin{equation}
		|\langle u,v \rangle_{L^2} | \leq C\cdot ( \overline{\delta}^{-1}\lVert u \rVert^2_{H_{\mathrm{leC}}^{m,s,\varsigma,l,\ell}} + \overline{\delta}\lVert v \rVert^2_{H_{\mathrm{leC}}^{-m,-s,-\varsigma,-l,-\ell}})
		\label{eq:misc_mmm}
	\end{equation}
	for all $\sigma\geq 0$ for which the right-hand side is finite (in the strong sense that if the right-hand side is finite, then the left-hand side makes sense using the duality pairing for scb-Sobolev spaces and obeys the stated inequality). 
\end{lemma}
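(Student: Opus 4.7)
The plan is to reduce to $L^2$ Cauchy--Schwarz via an elliptic parametrix, then apply the weighted AM--GM inequality $2ab\leq \overline{\delta}^{-1}a^2+\overline{\delta}b^2$ to produce the Peter--Paul form on the right-hand side of \cref{eq:misc_mmm}. Let $\Lambda=\Lambda_{m,s,\varsigma,l,\ell}\in \Psi_{\mathrm{leC}}^{m,s,\varsigma,l,\ell}(X)$ be the elliptic quantization used to define the norm in \cref{eq:misc_sim}. The elliptic parametrix construction discussed just after \cref{eq:misc_frc} yields, for any prescribed $N_0\in \bbN$, operators $B\in \Psi_{\mathrm{leC}}^{-m,-s,-\varsigma,-l,-\ell}(X)$ and $R\in \Psi_{\mathrm{b,leC}}^{-N_0,0,0}(X)$ with $B\Lambda = 1+R$. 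Pairing against $v$ gives the algebraic identity
\begin{equation}
	\langle u,v \rangle_{L^2} = \langle \Lambda u,\, B^* v\rangle_{L^2} - \langle u,\, R^* v\rangle_{L^2},
\end{equation}
the pairings on the right-hand side being interpreted via the canonical scb-Sobolev duality at each $\sigma\geq 0$.

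For the first term, Cauchy--Schwarz in $L^2$ gives $|\langle \Lambda u, B^* v\rangle_{L^2}|\leq \lVert \Lambda u\rVert_{L^2}\lVert B^* v\rVert_{L^2}$. The factor $\lVert \Lambda u\rVert_{L^2}\leq \lVert u\rVert_{H_{\mathrm{leC}}^{m,s,\varsigma,l,\ell}}$ follows directly from the definition of the norm, while $\lVert B^* v\rVert_{L^2}\preceq \lVert v\rVert_{H_{\mathrm{leC}}^{-m,-s,-\varsigma,-l,-\ell}}$ follows from the $L^2$-boundedness of $B^*\in \Psi_{\mathrm{leC}}^{-m,-s,-\varsigma,-l,-\ell}(X)$ acting on $H_{\mathrm{leC}}^{-m,-s,-\varsigma,-l,-\ell}$ proven in the preceding subsection, both estimates being uniform in $\sigma\in [0,\Sigma]$ for any fixed $\Sigma>0$. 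For the residual term, I use that $R^*\in \Psi_{\mathrm{b,leC}}^{-N_0,0,0}(X)=\Psi_{\mathrm{leC}}^{-N_0,-N_0,-N_0,0,0}(X)$ maps $H_{\mathrm{leC}}^{-m,-s,-\varsigma,-l,-\ell}$ uniformly into $H_{\mathrm{leC}}^{-m+N_0,-s+N_0,-\varsigma+N_0,-l,-\ell}\subset H_{\mathrm{leC}}^{-m,-s,-\varsigma,-l,-\ell}$ (the latter inclusion being continuous for $N_0\geq 0$), so Cauchy--Schwarz for the scb-duality gives
\begin{equation}
	|\langle u, R^* v\rangle_{L^2}|\leq \lVert u\rVert_{H_{\mathrm{leC}}^{m,s,\varsigma,l,\ell}}\lVert R^* v\rVert_{H_{\mathrm{leC}}^{-m,-s,-\varsigma,-l,-\ell}}\preceq \lVert u\rVert_{H_{\mathrm{leC}}^{m,s,\varsigma,l,\ell}}\lVert v\rVert_{H_{\mathrm{leC}}^{-m,-s,-\varsigma,-l,-\ell}}.
\end{equation}

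Combining these two estimates gives $|\langle u,v\rangle_{L^2}|\preceq \lVert u\rVert_{H_{\mathrm{leC}}^{m,s,\varsigma,l,\ell}}\lVert v\rVert_{H_{\mathrm{leC}}^{-m,-s,-\varsigma,-l,-\ell}}$ uniformly in $\sigma\in [0,\Sigma]$, and a final application of the weighted AM--GM inequality delivers \cref{eq:misc_mmm}. The only nontrivial point is the interpretation of $\langle u,v\rangle_{L^2}$ when $u,v\notin L^2$: under the finiteness hypothesis on the right-hand side, $u\in H_{\mathrm{leC}}^{m,s,\varsigma,l,\ell}$ and $v\in H_{\mathrm{leC}}^{-m,-s,-\varsigma,-l,-\ell}$ at the given $\sigma$, and these two spaces are canonically paired by the scb-Sobolev $L^2$-duality on $X$ (for $\sigma>0$) or $X_{1/2}$ (for $\sigma=0$) in view of their identification with standard scb-Sobolev spaces, so the identity and all intermediate bounds above make sense throughout.
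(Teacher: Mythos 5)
Your handling of the residual term is circular. After writing $\langle u,v\rangle_{L^2}=\langle \Lambda u, B^*v\rangle_{L^2}-\langle u,R^*v\rangle_{L^2}$, you bound the first term by a genuine $L^2$ Cauchy--Schwarz, which is fine. But for the second term you invoke ``Cauchy--Schwarz for the scb-duality,''
\begin{equation}
	|\langle u, R^* v\rangle_{L^2}|\leq \lVert u\rVert_{H_{\mathrm{leC}}^{m,s,\varsigma,l,\ell}}\lVert R^* v\rVert_{H_{\mathrm{leC}}^{-m,-s,-\varsigma,-l,-\ell}},
\end{equation}
and this \emph{is} (a strengthening of) the very statement the lemma is trying to establish. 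For each fixed $\sigma$ the pairing $H_{\mathrm{scb}}^{m,s,l}(X)\times H_{\mathrm{scb}}^{-m,-s,-l}(X)\to\bbC$ does satisfy a Cauchy--Schwarz bound, but its constant a priori depends on $\sigma$; the whole point of \cref{eq:misc_mmm} is that the constant is uniform on $[0,\Sigma]$. Replacing $v$ by $R^*v$ does not change the pair of leC-Sobolev spaces being paired, so you are assuming the uniform duality bound for the same type of pairing you set out to prove.

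The paper's proof dodges this by reducing the residual pairing to a genuinely $\sigma$-\emph{independent} duality. It factors out the explicit $\sigma$-dependent weight, writing
\begin{equation}
	\langle Yu,v\rangle_{L^2} = \bigl\langle\, x^{-l}(\sigma^2+\mathsf{Z}x)^{-\ell/2+l}Yu,\ x^{l}(\sigma^2+\mathsf{Z}x)^{\ell/2-l}v\,\bigr\rangle_{L^2},
\end{equation}
pairs in $H_{\mathrm{b}}^{N_1,0}(X)\times H_{\mathrm{b}}^{-N_1,0}(X)$ (ordinary b-Sobolev duality, with no $\sigma$-dependence), and then converts back to leC-norms using the equivalences $\lVert x^{-l}(\sigma^2+\mathsf{Z}x)^{-\ell/2+l}w\rVert_{H_{\mathrm{b}}^{N_1,0}}\asymp\lVert w\rVert_{H_{\mathrm{b,leC}}^{N_1,l,\ell}}$ and $H_{\mathrm{b,leC}}^{N_1,l,\ell}\hookrightarrow H_{\mathrm{leC}}^{N_1,N_1+l,N_1+\ell,l,\ell}$, finally using the $-N_0$ order of $Y$ to absorb the remaining losses by choosing $N_0\gg N_1\gg 0$. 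You would need to incorporate this weight-stripping step to close the argument.
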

\begin{proof}
	By the parametrix construction, we can find, for each $N_0\in \bbN$, elliptic $V=V_{N_0}\in \Psi_{\mathrm{leC}}^{-m,-s,-\varsigma,-l,-\ell}(X)$ such that $V^*\Lambda_{m,s,\varsigma,l,\ell} = 1+Y$ for $Y=Y_{N_0} \in \Psi_{\mathrm{leC}}^{-N_0,-N_0,-N_0,0,0}(X)$. Then, 
	\begin{align}
		\begin{split} 
		2|\langle u,v \rangle_{L^2}| &\leq  2|\langle \Lambda_{m,s,\varsigma,l,\ell} u,Vv \rangle_{L^2}| + 2|\langle Yu,v\rangle_{L^2}| \\
		&\leq \overline{\delta}^{-1}\lVert \Lambda_{m,s,\varsigma,l,\ell}u \rVert_{L^2}^2 + \overline{\delta}\lVert Vv \rVert_{L^2}^2 + 2|\langle Yu,v\rangle_{L^2}| \\
		&\preceq \overline{\delta}^{-1}\lVert u \rVert^2_{H_{\mathrm{leC}}^{m,s,\varsigma,l,\ell}} + \overline{\delta} \lVert v \rVert^2_{H_{\mathrm{leC}}^{-m,-s,-\varsigma,-l,-\ell}}+ 2|\langle Yu,v\rangle_{L^2}|. 
		\end{split} 
		\label{eq:misc_m61}
	\end{align}
	On the other hand, for any $N_1 \in \bbN$, 
	\begin{align}
		\begin{split} 
		2|\langle Yu,v\rangle_{L^2}| &= 2|\langle x^{-l} (\sigma^2+\mathsf{Z}x)^{-\ell/2+l}Yu,x^l (\sigma^2+\mathsf{Z}x)^{\ell/2-l} v\rangle_{L^2}| \\
		&\leq \overline{\delta}^{-1} \lVert  x^{-l} (\sigma^2+\mathsf{Z}x)^{-\ell/2+l}Yu \rVert_{H_{\mathrm{b}}^{N_1,0} }^2 + \overline{\delta}\lVert x^l (\sigma^2+\mathsf{Z}x)^{\ell/2-l} v \rVert^2_{H_{\mathrm{b}}^{-N_1,0} } \\
		&\preceq \overline{\delta}^{-1}\lVert  x^{-l} (\sigma^2+\mathsf{Z}x)^{-\ell/2+l}Yu \rVert_{H_{\mathrm{b,leC}}^{N_1,0,0} }^2 + \overline{\delta}\lVert x^l (\sigma^2+\mathsf{Z}x)^{\ell/2-l} v \rVert^2_{H_{\mathrm{b,leC}}^{-N_1,0,0} } \\
		&\preceq \overline{\delta}^{-1}\lVert  Yu \rVert_{H_{\mathrm{b,leC}}^{N_1,l,\ell} }^2 + \overline{\delta}\lVert v \rVert^2_{H_{\mathrm{b,leC}}^{N_1,-l,-\ell} } \\
		&\preceq \overline{\delta}^{-1}\lVert  Yu \rVert_{H_{\mathrm{leC}}^{N_1,N_1+l,N_1+\ell,l,\ell} }^2 + \overline{\delta}\lVert v \rVert^2_{H_{\mathrm{leC}}^{-N_1,-N_1-l,-N_1-\ell,-l,-\ell} } \\
		&\preceq \overline{\delta}^{-1}\lVert  u \rVert_{H_{\mathrm{leC}}^{N_1-N_0,N_1+l-N_0,N_1+\ell-N_0,l,\ell} }^2 + \overline{\delta}\lVert v \rVert^2_{H_{\mathrm{leC}}^{-N_1,-N_1-l,-N_1-\ell,-l,-\ell} }. 
		\end{split} 
	\end{align}
	Taking $N_1$ sufficiently large, and then taking $N_0$ sufficiently large relative to that, we get $|\langle Yu,v\rangle_{L^2}|\preceq \overline{\delta}^{-1}\lVert u \rVert^2_{H_{\mathrm{leC}}^{m,s,\varsigma,l,\ell}} + \overline{\delta}\lVert v \rVert^2_{H_{\mathrm{leC}}^{-m,-s,-\varsigma,-l,-\ell}}$. Combining this with \cref{eq:misc_m61}, we get \cref{eq:misc_mmm}. 
\end{proof}
\begin{lemma}
	\label{lem:combination} 
	Let $m,s,\varsigma,l,\ell,m_0,s_0,\varsigma_0,l_0,\ell_0\in \bbR$. 
	Suppose that $A \in \Psi_{\mathrm{leC}}^{m,s,\varsigma,l,\ell}(X)$ and that we have some $J\in \bbN$ and $G_1,\ldots,G_J \in \Psi_{\mathrm{leC}}^{0,0,0,0,0}(X)$ such that 
	\begin{equation}
		\operatorname{WF}^{\prime l,\ell}_{\mathrm{leC}}(A) \subseteq \bigcup_{j=1}^J  \operatorname{Ell}_{\mathrm{leC}}^{0,0,0,0,0}(G_j). 
	\end{equation}
	Then, for each $\Sigma>0$ and $N\in \bbN$, there exists some $C=C(A,G_1,\ldots,G_J,\Sigma,N)>0$ such that 
	\begin{equation}
		\lVert A  u \rVert_{H_{\mathrm{leC}}^{m_0,s_0,\varsigma_0,l_0,\ell_0} } \leq C \Big[ \lVert u \rVert_{H_{\mathrm{leC}}^{-N,-N,-N,l+l_0,\ell+\ell_0} } + \sum_{j=1}^J \lVert G_j u \rVert_{H_{\mathrm{leC}}^{m+m_0,\cdots,\ell+\ell_0}} \Big]
		\label{eq:misc_est}
	\end{equation}
	for all $\sigma \in [0,\Sigma]$ and $u\in \calS'(X)$. 
\end{lemma}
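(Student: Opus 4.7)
The plan is to reduce the estimate to a combination of (i) elliptic parametrix inversions of the $G_j$, microlocalized by a partition of unity on $\mathrm{df}\cup \mathrm{sf}\cup \mathrm{ff}$ subordinate to the open cover by the $\operatorname{Ell}_{\mathrm{leC}}^{0,0,0,0,0}(G_j)$'s, and (ii) microlocal trivialization of the part of $A$ that lies outside $\operatorname{WF}^{\prime l,\ell}_{\mathrm{leC}}(A)$. Since $\operatorname{WF}^{\prime l,\ell}_{\mathrm{leC}}(A)\subset \mathrm{df}\cup \mathrm{sf}\cup \mathrm{ff}$ is closed and covered by the open sets $\operatorname{Ell}_{\mathrm{leC}}^{0,0,0,0,0}(G_j)$, the symbol calculus of \S\ref{subsubsec:leC} lets me construct $\chi_0,\chi_1,\ldots,\chi_J\in \Psi^{0,0,0,0,0}_{\mathrm{leC}}(X)$ with $\chi_0+\sum_{j=1}^J \chi_j=\operatorname{Id}$, $\operatorname{WF}'_{\mathrm{leC}}(\chi_j)\subset \operatorname{Ell}^{0,0,0,0,0}_{\mathrm{leC}}(G_j)$ for each $j\geq 1$, and $\operatorname{WF}'_{\mathrm{leC}}(\chi_0)\cap \operatorname{WF}^{\prime l,\ell}_{\mathrm{leC}}(A)=\varnothing$. (The $\chi_j$ for $j\geq 1$ are left quantizations of a smooth partition of unity on $\mathrm{df}\cup \mathrm{sf}\cup \mathrm{ff}$, extended arbitrarily, and $\chi_0=\operatorname{Id}-\sum_j \chi_j$.)

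Next, for each $j=1,\ldots,J$, the elliptic parametrix construction discussed after \Cref{prop:basic_symbology}, applied to $G_j$ on a neighborhood of $\operatorname{WF}'_{\mathrm{leC}}(\chi_j)$, yields $B_j\in \Psi^{0,0,0,0,0}_{\mathrm{leC}}(X)$ together with a remainder $R_j\in \Psi_{\mathrm{leC}}^{-\infty,-\infty,-\infty,0,0}(X)$ such that
\begin{equation}
	B_j G_j = \chi_j + R_j.
\end{equation}
Decomposing $A$ accordingly,
\begin{equation}
	A = A\chi_0 + \sum_{j=1}^J A\chi_j = A\chi_0 + \sum_{j=1}^J (AB_j) G_j - \sum_{j=1}^J A R_j.
\end{equation}
By \cref{eq:misc_k45} together with the fact that $\operatorname{WF}^{\prime l,\ell}_{\mathrm{leC}}(A)\cap \operatorname{WF}'_{\mathrm{leC}}(\chi_0)=\varnothing$, we have $A\chi_0\in \Psi^{-\infty,-\infty,-\infty,l,\ell}_{\mathrm{leC}}(X)$, and by \Cref{prop:algebra} the same holds for each $AR_j$. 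On the other hand, $AB_j\in \Psi^{m,s,\varsigma,l,\ell}_{\mathrm{leC}}(X)$.

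Applying the mapping property \cref{eq:misc_9kj}: the composite $AB_j$ sends $H_{\mathrm{leC}}^{m+m_0,s+s_0,\varsigma+\varsigma_0,l+l_0,\ell+\ell_0}$ into $H_{\mathrm{leC}}^{m_0,s_0,\varsigma_0,l_0,\ell_0}$, producing the terms $\|G_j u\|_{H_{\mathrm{leC}}^{m+m_0,s+s_0,\varsigma+\varsigma_0,l+l_0,\ell+\ell_0}}$ on the right of \cref{eq:misc_est}. For $A\chi_0$ and $AR_j$, which lie in $\Psi^{-\infty,-\infty,-\infty,l,\ell}_{\mathrm{leC}}(X)$, the same mapping property applied with $m,s,\varsigma$ taken as large as needed (or equivalently $-N$ taken on the input side) bounds their contributions by $\|u\|_{H_{\mathrm{leC}}^{-N,-N,-N,l+l_0,\ell+\ell_0}}$ for every $N\in \bbN$. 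Summing these three contributions and taking a supremum over $\sigma\in[0,\Sigma]$ (uniformity of the mapping constants in $\sigma$ follows from \cref{eq:misc_9kj}) gives \cref{eq:misc_est}.

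The main obstacle is really just Step 1, the construction of the microlocal partition $\{\chi_j\}_{j=0}^J$ with the correct wavefront containments; once this is done, the rest is bookkeeping with the symbol calculus and the leC mapping property. The topological point used there is that $\mathrm{df}\cup \mathrm{sf}\cup \mathrm{ff}$ is compact (so that a finite subcover exists and a smooth partition of unity can be constructed on it), and the symbolic point is that $S_{\mathrm{cl,leC}}^{0,0,0,0,0}(X)=C^\infty({}^{\mathrm{leC}}\overline{T}^*X)$ admits all such cutoffs.
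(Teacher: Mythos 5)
Your argument is correct, but it takes a genuinely different route from the paper's. The paper proves \cref{lem:combination} via a G\aa rding-type sum-of-squares argument: it chooses $E$ with $\operatorname{Ell}^{0,0,0,0,0}_{\mathrm{leC}}(E)\cup\bigcup_j\operatorname{Ell}^{0,0,0,0,0}_{\mathrm{leC}}(G_j)=\mathrm{df}\cup\mathrm{sf}\cup\mathrm{ff}$ and $\operatorname{WF}'_{\mathrm{leC}}(E)\cap\operatorname{WF}'^{l,\ell}_{\mathrm{leC}}(A)=\varnothing$, builds shrunken $\bar G_j$, observes that $\sigma^{0,0,0,0,0}_{\mathrm{leC}}(E^*E+\sum_j\bar G_j^*\bar G_j)\geq 2c>0$, and then iteratively constructs an elliptic $B$ with $E^*E+\sum_j\bar G_j^*\bar G_j-c-B^*B$ of arbitrarily low order; the estimate then follows by dropping $-\|BAu\|^2$ and controlling $\|EAu\|^2$ by the residual term. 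You instead use a microlocal partition of unity $\chi_0+\sum_{j\geq 1}\chi_j=\operatorname{Id}$ with $\operatorname{WF}'_{\mathrm{leC}}(\chi_j)\subset\operatorname{Ell}^{0,0,0,0,0}_{\mathrm{leC}}(G_j)$ and $\operatorname{WF}'_{\mathrm{leC}}(\chi_0)\cap\operatorname{WF}'^{l,\ell}_{\mathrm{leC}}(A)=\varnothing$, then write $A=A\chi_0+\sum_j(AB_j)G_j-\sum_j AR_j$ via microlocal parametrices. Both routes are sound; yours avoids the sum-of-squares positivity argument and reads more like the classical elliptic estimate. The paper's G\aa rding version has the minor advantage of being self-contained about squares (so one never needs to build operators that sum exactly to $\operatorname{Id}$), but this is a stylistic preference, not a technical necessity.

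Two small imprecisions worth tightening. First, $\mathrm{df}\cup\mathrm{sf}\cup\mathrm{ff}$ is \emph{not} compact because of the $\sigma\to\infty$ direction; the paper handles this by arranging the symbols to be constant for $\sigma\geq 2\Sigma$ so that the relevant compactness is that of $(\mathrm{df}\cup\mathrm{sf}\cup\mathrm{ff})\cap\{\sigma\leq 2\Sigma\}$. Since the conclusion is only asserted for $\sigma\in[0,\Sigma]$, you should impose the same $\sigma$-cutoff when constructing the partition. Second, the identity $B_jG_j=\chi_j+R_j$ with $R_j$ residual does not hold for a bare microlocal parametrix $B_j$; what you get from the parametrix construction is $\tilde B_jG_j=\operatorname{Id}+\tilde R_j$ with $\operatorname{WF}'_{\mathrm{leC}}(\tilde R_j)\cap\operatorname{Ell}^{0,0,0,0,0}_{\mathrm{leC}}(G_j)=\varnothing$. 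You then need to take $B_j=\chi_j\tilde B_j$ (or multiply by $\chi_j$ on the left afterward), so that $B_jG_j=\chi_j+\chi_j\tilde R_j$ and $\chi_j\tilde R_j\in\Psi^{-\infty,-\infty,-\infty,0,0}_{\mathrm{leC}}(X)$ because $\operatorname{WF}'_{\mathrm{leC}}(\chi_j)\cap\operatorname{WF}'_{\mathrm{leC}}(\tilde R_j)=\varnothing$. With these two fixes, your proof is complete.
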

\begin{proof}
	It suffices to consider the case of $u\in \calS(X)$, the general estimate \cref{eq:misc_est} following from this case via continuity. 
	
	Via quantizing some explicit symbols, there exist some $\bar{G}_1,\ldots,\bar{G}_J \in \Psi_{\mathrm{leC}}^{0,0,0,0,0}(X)$ such that $\operatorname{WF}^{\prime0,0}(\bar{G}_j) \subset  \operatorname{Ell}_{\mathrm{leC}}^{0,0,0,0,0}(G_j)$ and 
	\begin{equation}
	\operatorname{WF}^{\prime l,\ell}_{\mathrm{leC}}(A) \subseteq \bigcup_{j=1}^J  \operatorname{Ell}_{\mathrm{leC}}^{0,0,0,0,0}(\bar{G}_j). 
	\end{equation}
	
	We now apply the leC-analogue of the G{\aa}rding's inequality- type argument. We can choose $E\in \Psi_{\mathrm{leC}}^{0,0,0,0,0}(X)$ such that 
	\begin{align}
		\operatorname{Ell}_{\mathrm{leC}}^{0,0,0,0,0}(E) \cup \bigcup_{j=1}^J \operatorname{Ell}_{\mathrm{leC}}^{0,0,0,0,0}(\bar{G}_j) &= \mathrm{df} \cup \mathrm{sf} \cup \mathrm{ff} \\ 
		\operatorname{WF}_{\mathrm{leC}}'(E) \cap \operatorname{WF}^{\prime l,\ell}_{\mathrm{leC}}(A) &= \varnothing. 
	\end{align} 
	Thus, every representative of $\sigma_{\mathrm{leC}}^{0,0,0,0,0}(E^* E + \bar{G}_1^*\bar{G}_1+\cdots +\bar{G}_J^* \bar{G}_J)$ is nonvanishing on $\mathrm{df}\cup \mathrm{sf} \cup \mathrm{ff}$. We may assume without loss of generality that $E,\bar{G}_1,\ldots,\bar{G}_J$ are constant for $\sigma\geq 2\Sigma$, in which case (since $(\mathrm{df} \cup \mathrm{sf} \cup \mathrm{ff} )\cap \{\sigma \leq 2\Sigma\}$ is compact)
	there exists some $c>0$ such that $\sigma_{\mathrm{leC}}^{0,0,0,0,0}(E^* E + \bar{G}_1^*\bar{G}_1+\cdots +\bar{G}_J^* \bar{G}_J) \geq 2c$
	in some neighborhood of $\mathrm{df}\cup \mathrm{sf} \cup \mathrm{ff}$ (in the sense that every representative of the principal symbol has this property, for different neighborhoods).
	Via an iterative symbolic construction: for each $N_0\in \bbN$ there exists some elliptic $B=B_{N_0,c}\in \Psi_{\mathrm{leC}}^{0,0,0,0,0}(X)$ such that 
	\begin{equation} 
		E^* E + \bar{G}_1^*\bar{G}_1+\cdots +\bar{G}_J^* \bar{G}_J - c - B^* B \in \Psi_{\mathrm{leC}}^{-N_0,-N_0,-N_0,0,0}(X).
	\end{equation} 
	Then, for $\calX = H_{\mathrm{leC}}^{m_0,s_0,\varsigma_0,l_0,\ell_0}(X)$, 
	\begin{align} 
		c\lVert A u \rVert^2_{\calX }  &\leq   - \lVert B A u \rVert^2_{\calX} + \lVert E A u \rVert^2_{\calX } + \sum_{j=1}^J \lVert \bar{G}_j A u \rVert^2_{\calX } 
		\leq  \lVert E A u \rVert^2_{\calX} + \sum_{j=1}^J \lVert \bar{G}_j A u \rVert^2_{\calX} \\
		&\preceq \lVert u \rVert_{H_{\mathrm{leC}}^{-N,-N,-N,l+l_0,\ell+\ell_0} }^2 + \sum_{j=1}^J \lVert G_j u \rVert_{H_{\mathrm{leC}}^{m+m_0,\cdots,\ell+\ell_0}}^2
	\end{align} 
	for $N_0$ sufficiently large, 
	from which \cref{eq:misc_est} follows. 
\end{proof}

\begin{lemma}
	\label{prop:interpolation}
	If $m,s,\varsigma,l,\ell,m_0,s_0,\varsigma_0,l_0,\ell_0,m_1,s_1,\varsigma_1,l_1,\ell_1 \in \bbR$ satisfy $m_1>m>m_0,\cdots,\ell_1>\ell>\ell_0$,
	then, for each $\epsilon>0$  and $\Sigma>0$, there exists a $C(\epsilon) = C(\epsilon,m,\cdots,\ell_1,\Sigma)>0$ such that 
	\begin{equation}
		\lVert u \rVert_{H_{\mathrm{leC}}^{m,s,\varsigma,l,\ell}} \leq \epsilon \lVert u \rVert_{H_{\mathrm{leC}}^{m_1,s_1,\varsigma_1,l_1,\ell_1}} + C(\epsilon) \lVert u \rVert_{H_{\mathrm{leC}}^{m_0,s_0,\varsigma_0,l_0,\ell_0}}
		\label{eq:interpolation}
	\end{equation}
	for all $u\in \calS'(X)$ and $\sigma \in [0,\Sigma]$. 
\end{lemma}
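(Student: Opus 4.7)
The strategy is to reduce the claim to a scalar Young-type inequality on the five boundary defining functions of $\smash{{}^{\mathrm{leC}}\overline{T}^* X}$ and then lift via the leC-symbol calculus. First, for $\rho > 0$ and reals $\alpha_0 < \alpha < \alpha_1$, splitting into the regions $\rho \leq \epsilon^{1/(\alpha_1 - \alpha)}$ and $\rho \geq \epsilon^{1/(\alpha_1 - \alpha)}$ yields the scalar inequality $\rho^{-\alpha} \leq \epsilon\, \rho^{-\alpha_1} + \epsilon^{(\alpha_0 - \alpha)/(\alpha_1 - \alpha)}\rho^{-\alpha_0}$. Applying this successively to each of $\varrho_{\mathrm{df}},\varrho_{\mathrm{sf}},\varrho_{\mathrm{ff}},\varrho_{\mathrm{bf}},\varrho_{\mathrm{tf}}$ (all bounded on the compact slice $\smash{{}^{\mathrm{leC}}\overline{T}^* X}\cap\{\sigma\leq\Sigma\}$), and absorbing the $2^5$ resulting mixed-index weight products into either the fully $_0$-indexed or the fully $_1$-indexed weight via boundedness of the auxiliary factors $\varrho_i^{\alpha_{i,1}-\alpha_{i,0}}$, I arrive at the symbol estimate
\[
w_m^2 \;\leq\; \epsilon^2 w_1^2 + C(\epsilon)^2 w_0^2 \qquad \text{on } \smash{{}^{\mathrm{leC}}\overline{T}^* X}\cap\{\sigma\leq\Sigma\},
\]
where $w_j = \varrho_{\mathrm{df}}^{-m_j}\varrho_{\mathrm{sf}}^{-s_j}\varrho_{\mathrm{ff}}^{-\varsigma_j}\varrho_{\mathrm{bf}}^{-l_j}\varrho_{\mathrm{tf}}^{-\ell_j}$, and $w_m$ denotes the corresponding weight for the middle indices.

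To lift this to an operator inequality I would examine
\[
B_\epsilon \;=\; \epsilon^2 \Lambda_1^*\Lambda_1 + C(\epsilon)^2 \Lambda_0^*\Lambda_0 - \Lambda_m^*\Lambda_m \;\in\; \Psi_{\mathrm{leC}}^{2m_1,2s_1,2\varsigma_1,2l_1,2\ell_1}(X),
\]
whose leC principal symbol at the top order is the nonnegative function $\epsilon^2 w_1^2$. The sharp G\aa rding inequality in the leC-calculus --- obtained from the symbol calculus in \Cref{subsec:calculi} by the usual symbolic-square-root argument together with \Cref{prop:basic_symbology} --- then yields $\Re\langle B_\epsilon u,u\rangle \geq -C_\epsilon \lVert u \rVert_W^2$ with $W = H_{\mathrm{leC}}^{m_1-1/2,s_1-1/2,\varsigma_1-1/2,l_1,\ell_1}(X)$. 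Unpacking the norm definition \cref{eq:misc_sim} and combining with a parallel (two-index) interpolation on the b,leC-tail $\lVert u \rVert_{H_{\mathrm{b,leC}}^{-N,l,\ell}}$ (which only involves the decay indices $l,\ell$ and is handled by the same scalar argument) produces
\[
\lVert u \rVert_{H_{\mathrm{leC}}^{m,s,\varsigma,l,\ell}}^2 \;\leq\; C\epsilon^2 \lVert u \rVert_{H_{\mathrm{leC}}^{m_1,s_1,\varsigma_1,l_1,\ell_1}}^2 + C\, C(\epsilon)^2 \lVert u \rVert_{H_{\mathrm{leC}}^{m_0,s_0,\varsigma_0,l_0,\ell_0}}^2 + C_\epsilon' \lVert u \rVert_W^2.
\]

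The remaining obstacle --- absorbing the $W$-remainder --- is overcome by induction on the differential order. Invoking the inequality at the reduced order $(m_1-1/2,\ldots)$ (the inductive hypothesis) with a new parameter $\tilde\epsilon \preceq \epsilon^2/C_\epsilon'$ and substituting back gives a bound of the form $2\epsilon^2 \lVert u \rVert_{H_{\mathrm{leC}}^{m_1,\cdots}}^2 + C''(\epsilon)\lVert u \rVert_{H_{\mathrm{leC}}^{m_0,\cdots}}^2$. Iterating finitely many half-integer reductions pushes the residual Sobolev space below the $_0$-indices in all five parameters, where it is dominated directly by $\lVert u \rVert_{H_{\mathrm{leC}}^{m_0,s_0,\varsigma_0,l_0,\ell_0}}$. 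The main point requiring care is to track the dependence of the accumulating constants across the iterations so that the final $C(\epsilon)$ in \cref{eq:interpolation} remains finite; this can be arranged by decreasing the $\tilde\epsilon_k$ at the $k$th iteration geometrically so that the contributions to the $H_{\mathrm{leC}}^{m_1,\cdots}$-coefficient telescope.
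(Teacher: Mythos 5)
Your proposal takes a constructive, symbol-level route (Young's inequality on boundary defining functions, lifted by a sharp G{\aa}rding argument and iterated), whereas the paper argues by contradiction and compactness: assume the estimate fails uniformly, extract weakly convergent subsequences via Banach--Alaoglu, and use compactness of the inclusions between b-Sobolev spaces with strictly decreasing indices to derive a contradiction. Your route is not merely different in presentation --- it has a genuine gap in the final absorption step, which I think cannot be patched as written.

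The problem is in how the G{\aa}rding remainder interacts with the \emph{decay} orders $l,\ell$. In the leC-calculus the commutator gains one order at $\mathrm{df}$, $\mathrm{sf}$, $\mathrm{ff}$ but \emph{not} at $\mathrm{bf}$, $\mathrm{tf}$ (see \cref{eq:misc_k47}: the error is in $S_{\mathrm{leC}}^{m+m'-2,s+s'-2,\varsigma+\varsigma'-2,l+l',\ell+\ell'}$ --- the last two exponents are additive, with no $-1$). Consequently a sharp G{\aa}rding-type bound for $B_\epsilon \in \Psi_{\mathrm{leC}}^{2m_1,2s_1,2\varsigma_1,2l_1,2\ell_1}(X)$ produces a remainder in $H_{\mathrm{leC}}^{m_1-1/2,s_1-1/2,\varsigma_1-1/2,\,l_1,\,\ell_1}$, with the decay indices \emph{unchanged} at $l_1,\ell_1$. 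Iterating the argument lowers $m_1,s_1,\varsigma_1$ by half-integers but leaves $l_1,\ell_1$ fixed. Since $l_1>l_0$ and $\ell_1>\ell_0$, the residual space $H_{\mathrm{leC}}^{-N,-N,-N,l_1,\ell_1}(X)$ is \emph{strictly smaller} than $H_{\mathrm{leC}}^{m_0,s_0,\varsigma_0,l_0,\ell_0}(X)$ no matter how large $N$ is, so it cannot be dominated by $\lVert u \rVert_{H_{\mathrm{leC}}^{m_0,s_0,\varsigma_0,l_0,\ell_0}}$. The claim that ``iterating finitely many half-integer reductions pushes the residual Sobolev space below the $_0$-indices in all five parameters'' is therefore false in the two b-decay parameters.

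Two further cautions. First, the $2^5$ mixed-weight cross terms in the scalar step cannot simply be ``absorbed via boundedness of $\varrho_i^{\alpha_{i,1}-\alpha_{i,0}}$'': a cross term carries a coefficient $\prod_{i\in S}\epsilon_i\prod_{i\notin S}C_i(\epsilon_i)$, and the $C_i$ blow up as the $\epsilon_i$ shrink; the correct (and fixable) way is a sequential telescope, choosing $\epsilon_1\gg\epsilon_2\gg\cdots$ so that each cross term's coefficient is controlled after the previously fixed constants are known. Second, the paper establishes a symbol calculus for $\Psi_{\mathrm{leC}}$ but never states or proves a sharp G{\aa}rding inequality (the positivity argument in \Cref{lem:combination} is a symbolic-square-root bounded-below-by-a-positive-constant argument, not sharp G{\aa}rding); invoking it requires an extra lemma. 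A salvageable version of your idea would first interpolate the decay factors $\varrho_{\mathrm{bf}}^{-l}\varrho_{\mathrm{tf}}^{-\ell}$ by hand (they are \emph{multiplication} operators, so Young's inequality applies pointwise under the $L^2$ norm, producing terms with decay $l_1,\ell_1$ and $l_0,\ell_0$), and only then run a G{\aa}rding iteration in $(m,s,\varsigma)$ on each piece with the decay indices frozen. But that is a substantially reworked argument, and the paper's compactness proof avoids the entire issue.
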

\begin{proof}
	Fix $\Sigma>0$. 
	Suppose, to the contrary, that there exists some $\epsilon>0$, $\{\sigma_k\}_{k\in \bbN} \subset [0,\Sigma]$, and $\{u_k\}_{k\in \bbN}\subset \calS'(X)$ such that 
	\begin{equation}
		1 = \lVert u_k \rVert_{H_{\mathrm{leC}}^{m,s,\varsigma,l,\ell}(X)(\sigma_k) } \geq \epsilon \lVert u_k \rVert_{H_{\mathrm{leC}}^{m_1,s_1,\varsigma_1,l_1,\ell_1}(X)(\sigma_k)} + k \lVert u_k \rVert_{H_{\mathrm{leC}}^{m_0,s_0,\varsigma_0,l_0,\ell_0}(X)(\sigma_k)}, 
		\label{eq:misc_o99}
	\end{equation}
	i.e., for sufficiently large $N,M\in \bbN$, 
	\begin{multline}
			1 = \lVert \Lambda_{m,s,\varsigma,l,\ell} (\sigma_k) u_k \rVert_{L^2} + \lVert x^{-l} (\sigma^2_k+\mathsf{Z} x)^{l-\ell/2} u_k \rVert_{H_{\mathrm{b}}^{-M}}  \\ \geq \epsilon \lVert \Lambda_{m_1,s_1,\varsigma_1,l_1,\ell_1} (\sigma_k) u_k \rVert_{L^2} + \epsilon \lVert x^{-l_1} (\sigma^2_k+\mathsf{Z} x)^{l_1-\ell_1/2} u_k \rVert_{H_{\mathrm{b}}^{-N}}  \\ + k \lVert \Lambda_{m_0,s_0,\varsigma_0,l_0,\ell_0}(\sigma_k) u_k \rVert_{L^2} + k \lVert x^{-l_0} (\sigma^2_k+\mathsf{Z} x)^{l_0-\ell_0/2} u_k \rVert_{H_{\mathrm{b}}^{-M}}. 
			\label{eq:misc_log}
	\end{multline}
	Passing to a subsequence if necessary, we may assume without loss of generality that $\sigma_k\to\sigma_\infty$ for some $\sigma_\infty \in [0,\Sigma]$.

	By the Banach--Alaoglu theorem, by passing to a further subsequence if necessary, we can arrange that $\Lambda_{m_i,s_i,\varsigma_i,l_i,\ell_i} (\sigma_k) u_k  \to v_i$ weakly in $L^2$ for some $v_0,v_1\in L^2(X)$ and that 
	\begin{equation} 
	x^{-l_i} (\sigma_k^2+\mathsf{Z} x)^{l_i-\ell_i/2} u_k \to w_i
	\end{equation} 
	weakly in $H_{\mathrm{b}}^{-M}(X),H_{\mathrm{b}}^{-N}(X)$ for some $w_0,w_1 \in H_{\mathrm{b}}^{-M}(X),H_{\mathrm{b}}^{-N}(X)$, respectively. 
	Since 
	\begin{equation} 
	\Lambda_{m-m_1,s-s_1,\varsigma-\varsigma_1,l-l_1,\ell-\ell_1}\Lambda_{m_1,s_1,\varsigma_1,l_1,\ell_1} \in \Psi_{\mathrm{leC}}^{m,s,\varsigma,l,\ell}(X) 
	\end{equation} 
	is elliptic, \Cref{lem:bsc} yields 
	\begin{multline}
		1 = \lVert u_k \rVert_{H_{\mathrm{leC}}^{m,s,\varsigma,l,\ell}(X)(\sigma_k) } \preceq \\ \lVert \Lambda_{m-m_1,s-s_1,\varsigma-\varsigma_1,l-l_1,\ell-\ell_1}(\sigma_k)\Lambda_{m_1,s_1,\varsigma_1,l_1,\ell_1} (\sigma_k) u_k \rVert_{L^2} +\lVert x^{-l} (\sigma^2_k+\mathsf{Z} x)^{l-\ell/2} u_k \rVert_{H_{\mathrm{b}}^{-M}}.
		\label{eq:misc_r32}
	\end{multline}
	Taking $M>N$, we will produce a contradiction by showing that both terms on the right-hand side of \cref{eq:misc_r32} converge to zero as $k\to\infty$. 
	\begin{enumerate}
		\item For $M>N$,  
		\begin{multline}
			x^{-l_0} (\sigma_k^2+\mathsf{Z} x)^{l_0-\ell_0/2} u_k - x^{l_1-l_0} (\sigma^2_k+\mathsf{Z} x)^{l_0-l_1+\ell_1/2-\ell_0/2} w_1 \\ = x^{l_1-l_0} (\sigma^2_k+\mathsf{Z} x)^{l_0-l_1+\ell_1/2-\ell_0/2} (x^{-l_1} (\sigma_k^2+\mathsf{Z} x)^{l_1-\ell_1/2} u_k - w_1)  \to 0 
		\end{multline} 
		strongly in $H_{\mathrm{b}}^{-M}(X)$. 
		It follows that $x^{l_1-l_0} (\sigma^2_k+\mathsf{Z} x)^{l_0-l_1+\ell_1/2-\ell_0/2} w_1 \to w_0$ weakly in $H_{\mathrm{b}}^{-M}(X)$, and thus in $\calS'(X)$. But, it is also the case that 
		\begin{equation} 
			x^{l_1-l_0} (\sigma^2_k+\mathsf{Z} x)^{l_0-l_1+\ell_1/2-\ell_0/2} w_1 \to x^{l_1-l_0} (\sigma^2_\infty+\mathsf{Z} x)^{l_0-l_1+\ell_1/2-\ell_0/2} w_1
		\end{equation} 
		in $\calS'(X)$. 
		So, in fact, 
		\begin{equation}
			w_0 = x^{l_1-l_0} (\sigma^2_\infty +\mathsf{Z} x)^{\ell_0-l_1+\ell_1/2-\ell_0/2} w_1, 
			\label{eq:misc_i6o}
		\end{equation}
		and 
		\begin{equation} 
			x^{-l_0} (\sigma_k^2+\mathsf{Z} x)^{l_0-\ell_0/2} u_k \to w_0
		\end{equation} 
		strongly in $H_{\mathrm{b}}^{-M}(X)$. We can therefore deduce from the family of inequalities \cref{eq:misc_log} that $\lVert w_0 \rVert_{H_{\mathrm{b}}^{-M}} = 0$, i.e.\ $w_0 = 0$, from which and \cref{eq:misc_i6o} it follows that $w_1=0$. 
		
		But then, by the joint continuity of the multiplication operator
		\begin{equation} 
			x^{l_0} (\sigma^2 + \mathsf{Z} x)^{\ell_0/2-l_0} : [0,\Sigma]_\sigma \times \calS'(X)\to \calS'(X),
		\end{equation} 
		$u_k \to x^{l_0}(\sigma_\infty^2+\mathsf{Z} x)^{\ell_0/2-l_0} w_0 = 0$ in $\calS'(X)$.  We conclude that $v_0,v_1=0$.

		Via the same initial argument, we deduce that $ x^{-l} (\sigma^2_k+\mathsf{Z} x)^{l-\ell/2} u_k$ converges strongly in $H_{\mathrm{b}}^{-M}$ to $x^{l_1-l} (\sigma^2_\infty +\mathsf{Z} x)^{l-l_1+\ell_1/2-\ell/2} w_1 = 0$. Thus, $\lVert x^{-l} (\sigma^2_k+\mathsf{Z} x)^{l-\ell/2} u_k \rVert_{H_{\mathrm{b}}^{-M}}  \to 0$ as $k\to\infty$. 
		\item 
		
		We now consider 
		\begin{equation} 
			\Lambda =\Lambda_{m-m_1,s-s_1,\varsigma-\varsigma_1,l-l_1,\ell-\ell_1} \in C^0([0,\infty)_\sigma ; \Psi_{\mathrm{b}}^{-\varepsilon,-\varepsilon}(X) ) , 
		\end{equation}
		where $\varepsilon>0$ is sufficiently small. 
		Thus, we can write $\Lambda$ as the composition of a fixed compact operator on $L^2(X)$ and a continuous family of bounded operators on $L^2(X)$. 
		It follows (since $\Lambda_{m_1,s_1,\varsigma_1,l_1,\ell_1} (\sigma_k) u_k  \to v_1$ weakly in $L^2$) that 
		\begin{equation}
			\Lambda (\sigma_k)\Lambda_{m_1,s_1,\varsigma_1,l_1,\ell_1} (\sigma_k) u_k - \Lambda(\sigma_k) v_1 \to 0 
		\end{equation}
		strongly in $L^2(X)$. But $v_1 = 0$, as proven above, so 
		\begin{equation} 
			\lVert \Lambda_{m-m_1,s-s_1,\varsigma-\varsigma_1,l-l_1,\ell-\ell_1}(\sigma_k)\Lambda_{m_1,s_1,\varsigma_1,l_1,\ell_1} (\sigma_k) u_k \rVert_{L^2} \to 0
		\end{equation}  
		as $k\to\infty$.
	\end{enumerate}
\end{proof}

\begin{lemma}
	\label{lem:X12_conv}
	The $L^2_{\mathrm{sc}}(X),L^2_{\mathrm{sc}}(X_{1/2})$-based $\mathrm{b}$-Sobolev spaces on $X$ and $X_{1/2}$ are related by 
	\begin{equation} 
		H_{\mathrm{b}}^{m,l}(X) = H_{\mathrm{b}}^{m,2l+n/2}(X_{1/2}).
	\end{equation} 
	\label{lem:218}
\end{lemma}
\begin{proof}
	For $f,g \in L^2_{\mathrm{sc}}([0,\bar{x} )_x\times \partial X)$, 
	\begin{align}
		\begin{split} 
		\langle f,g \rangle_{L^2_{\mathrm{sc}}([0,\bar{x})_x\times \partial X)} &= \int_{\partial X} \Big( \int_{0}^{\bar{x}} f^*(x) g(x)  \frac{\dd x }{x^{n+1}}\Big) \dd\! \operatorname{Vol}_{g_{\partial X}}(y) \\
		&= 2\int_{\partial X} \Big( \int_{0}^{\bar{x}^{1/2}} f^*(\rho^2) g(\rho^2)  \frac{\dd \rho }{\rho^{2n+1}}\Big) \dd\! \operatorname{Vol}_{g_{\partial X}}(y) \\
		&= 2\int_{\partial X} \Big( \int_{0}^{\bar{x}^{1/2}} \frac{f^*(\rho^2)}{\rho^{n/2}} \frac{g(\rho^2)}{\rho^{n/2}}  \frac{\dd \rho }{\rho^{n+1}}\Big) \dd\! \operatorname{Vol}_{g_{\partial X}}(y).
		\end{split}
	\end{align}	
	Thus, $L^2_{\mathrm{sc}}([0,\bar{x})_x\times \partial X)\ni f(x)\mapsto \sqrt{2}f(\rho^2) \in \rho^{n/2} L^2_{\mathrm{sc}}([0,\bar{x}^{1/2})_\rho\times \partial X)$ defines an
	isomorphism of Hilbert spaces. 
	
	This implies that 
	\begin{equation}
		L^2_{\mathrm{sc}}(X) = x^{n/4} L^2_{\mathrm{sc}}(X_{1/2}). 
		\label{eq:misc_iuq}
	\end{equation}
	As follows from the definition \cite[Definition 5.15]{VasyGrenoble} (see also \cite[Definition 4.22]{APS} for the case of classicality at the front face of the b-double space),  $\smash{\Psi_{\mathrm{b}}^{m,l}(X) = \Psi_{\mathrm{b}}^{m,2l}(X_{1/2})}$ for all $m,l\in \bbR$. In conjunction with \cref{eq:misc_iuq}, this implies that 
	\begin{equation}
		H_{\mathrm{b}}^{m,l}(X) = \Psi_{\mathrm{b}}^{-m,-l}(X) L^2(X) =  \Psi_{\mathrm{b}}^{-m,-2l-n/2}(X_{1/2})  L^2(X_{1/2})  = H_{\mathrm{b}}^{m,2l+n/2}(X_{1/2}). 
	\end{equation}
\end{proof}

For use in \S\ref{sec:mainproof}, we briefly recall 
the connection between spaces of conormal distributions on $X_{\mathrm{res}}^{\mathrm{sp}}$, which are defined using $L^\infty$-based spaces, and the b-Sobolev spaces, which are defined using $L^2$. We have 
\begin{align}
	\calA_{\mathrm{loc}}^{\alpha,\beta,0}(X^{\mathrm{sp}}_{\mathrm{res}}) &= x^{\alpha} (\sigma^2+\mathsf{Z}x)^{-\alpha+\beta/2} \calA^{0,0}_{\mathrm{loc}}([0,\infty)\times X),
	\intertext{and}  
	\begin{split} 
	\calA^{0,0,(0,0)}_{\mathrm{loc}}(X^{\mathrm{sp}}_{\mathrm{res}}) &= \{u \in  \calA^0(X^{\mathrm{sp}}_{\mathrm{res}}) : [\hat{E} \mapsto u|_{E/x=\hat{E}}] \in C^\infty([0,\infty)_{\hat{E}}; \calA^0(X_{1/2}) ) \}, \\
	\calA^{\alpha,\beta,(0,0)}_{\mathrm{loc}}(X^{\mathrm{sp}}_{\mathrm{res}}) &= x^{\alpha} (\sigma^2+\mathsf{Z}x)^{-\alpha+\beta/2} \calA^{0,0,(0,0)}_{\mathrm{loc}}(X^{\mathrm{sp}}_{\mathrm{res}}),  
\end{split} 
\end{align}
where $\hat{E}=E/x$ and we are identifying level sets of $\hat{E}$  with $\mathrm{zf}\cong X_{1/2}$. Note that greater indices $\alpha,\beta$ means greater decay, the convention opposite of that used for symbols and $\Psi$DOs. We also use 
\begin{align} 
	\calA^{\alpha-,\beta-,0}_{\mathrm{loc}}(X^{\mathrm{sp}}_{\mathrm{res}})&= \cap_{\alpha'<\alpha,\beta'<\beta}\calA^{\alpha',\beta',0}_{\mathrm{loc}}(X^{\mathrm{sp}}_{\mathrm{res}}) \\
	\calA^{\alpha-,\beta-,(0,0)}_{\mathrm{loc}}(X^{\mathrm{sp}}_{\mathrm{res}})&= \cap_{\alpha'<\alpha,\beta'<\beta}\calA^{\alpha',\beta',(0,0)}_{\mathrm{loc}}(X^{\mathrm{sp}}_{\mathrm{res}}).
\end{align}
for $\alpha,\beta  \in \bbR$. These are all Fr\'echet spaces  
of (locally) conormal distributions on $X^{\mathrm{sp}}_{\mathrm{res}}$ that are smooth at $\mathrm{zf}^\circ$ (where, to reiterate, ``local'' just means that we do not require uniformity as $\sigma\to\infty$, only as $\sigma\to 0^+$). Via Sobolev embedding, 
\begin{equation}
	\cup_{l'>l}H_{\mathrm{b}}^{\infty, l'}(X)=H_{\mathrm{b}}^{\infty, l+}(X) \subseteq \calA^{l+n/2}(X) \subseteq H_{\mathrm{b}}^{\infty, l-}(X) = \cap_{l'<l} H_{\mathrm{b}}^{\infty, l'}(X),
\end{equation}
and, for each $\epsilon>0$, each seminorm of $\calA^{l+n/2}(X)$ can be controlled using only finitely many of the norms in the family $\{\lVert - \rVert_{H_{\mathrm{b}}^{m,l+\epsilon}}\}_{m\in \bbR}$.

Since, for each $\alpha\geq 0$, $\calA^0_{\mathrm{loc}}([0,\infty)_\sigma ; \calA^{\alpha-}(X) ) \subseteq \calA^{\alpha-,2\alpha-,0}_{\mathrm{loc}}(X^{\mathrm{sp}}_{\mathrm{res}})$, the preceding observation implies that
\begin{propositionp}
	\label{prop:conormality_inclusion}
	For any function $m_0:\bbR\to \bbR$, 
	\begin{equation}
		\bigcap_{l<-1/2} \bigcap_{m>m_0(l)} \calA^0_{\mathrm{loc}}([0,\infty)_\sigma ; H_{\mathrm{b}}^{m,l+\alpha}(X) ) \subseteq \calA_{\mathrm{loc}}^{(\alpha+(n-1)/2)-,(2\alpha+(n-1))-,0}(X^{\mathrm{sp}}_{\mathrm{res}}) 
	\end{equation}
	holds for each $\alpha \in \bbR$.
\end{propositionp}
\begin{remark*}
	Implicit in the statements above is the identification of elements of $\calA^0_{\mathrm{loc}}([0,\infty)_\sigma ; \calA^{\alpha-}(X) )$ with extendable distributions on $X^{\mathrm{sp}}_{\mathrm{res}}$, which occurs via 
	\begin{multline}
		\calA^0_{\mathrm{loc}}([0,\infty)_\sigma ; \calA^{\alpha-}(X) )\ni \{u(-;\sigma)\}_{\sigma> 0} \mapsto \\ \Big[ \dot{C}^\infty(X^{\mathrm{sp}}_{\mathrm{res}} ) \ni \chi \mapsto \int_{0}^\infty \langle u(-;\sigma), \chi(-;\sigma) \rangle \dd \sigma \Big] \in  \calD'(X^{\mathrm{sp}}_{\mathrm{res}}). 
	\end{multline}
	We defer to \cite{MelroseCorners}\cite{APS} for more about conormal distributions. See also \cite{HintzPrice} for the particular case of $X^{\mathrm{sp}}_{\mathrm{res},0}$ (where the notation $X^+_{\mathrm{res}}$ is used instead). 
\end{remark*}

\begin{proposition}
	\label{prop:smoothness_improver}
	Fix $\chi \in C_{\mathrm{c}}^\infty(X^{\mathrm{sp}}_{\mathrm{res}})$ supported away from $\mathrm{bf}$ and nonvanishing near $\mathrm{zf}$. 
	Then, if $(\chi x\partial_{E})^k u\in \calA_{\mathrm{loc}}^{0-,0-,0}(X_{\mathrm{res}}^{\mathrm{sp}} )$ for all $k\in \bbN$,
	then $u\in \calA_{\mathrm{loc}}^{0-,0-,(0,0)}(X_{\mathrm{res}}^{\mathrm{sp}})$.
\end{proposition}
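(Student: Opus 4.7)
The plan is to reduce the statement to a local one near $\mathrm{zf}$ and then leverage the hypothesis to upgrade conormality of $u$ to smoothness in the transverse direction $\hat{E}=E/x$. Throughout, we work on an open neighborhood $U$ of $\mathrm{zf}$ in $X^{\mathrm{sp}}_{\mathrm{res}}$ on which $\chi\geq c>0$, and we observe that $x\partial_E=\partial_{\hat{E}}$ (at fixed $x,y$) is a smooth vector field on $X^{\mathrm{sp}}_{\mathrm{res}}$ transverse to $\mathrm{zf}$ (note that $x\partial_E\chi$ makes sense globally since $x/\sqrt{E+\mathsf{Z}x}$ is bounded on $X^{\mathrm{sp}}_{\mathrm{res}}$).

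The first step is to strip the cutoff off of the hypothesis. Induction on $k$ gives the operator identity
\begin{equation*}
(\chi x\partial_E)^k=\chi^k(x\partial_E)^k+\sum_{j=0}^{k-1}Q_{k,j}\,(x\partial_E)^j,
\end{equation*}
with $Q_{k,j}\in C^\infty(X^{\mathrm{sp}}_{\mathrm{res}})$, because $x\partial_E$ commutes with $x$ and $[x\partial_E,\chi]=(x\partial_E\chi)$ is smooth, so the iterated commutators produce only smooth coefficients. Multiplying through by $\chi^{-k}$ (smooth and bounded on $U$) and using the inductive hypothesis together with the assumption to control each summand, one concludes
\begin{equation*}
(x\partial_E)^k u=\partial_{\hat{E}}^k u\in\calA^{0-,0-,0}_{\mathrm{loc}}(X^{\mathrm{sp}}_{\mathrm{res}})\big|_U\qquad\text{for every }k\in\bbN.
\end{equation*}

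The second step is to convert this pure $\partial_{\hat{E}}$-conormality on $U$ into smoothness in $\hat{E}$ with conormal values on $X_{1/2}$. Conormality means that $V_1\cdots V_\ell\,\partial_{\hat{E}}^k u$ is locally bounded on $U$ for every $k,\ell\in\bbN$ and every tuple of b-vector fields $V_1,\ldots,V_\ell$ on $X_{1/2}$ (each $V_j$ lifts to a smooth vector field on $X^{\mathrm{sp}}_{\mathrm{res}}$ tangent to $\partial X_{1/2}=\mathrm{zf}\cap\mathrm{tf}$, hence to $\mathrm{zf}$), with arbitrarily small weight losses at $\partial X_{1/2}$. The standard Taylor-with-remainder argument then promotes boundedness of $\partial_{\hat{E}}^{k+1}u$ to Lipschitz continuity of $\partial_{\hat{E}}^k u$ in $\hat{E}$ taking values in $\calA^{0-}_{\mathrm{loc}}(X_{1/2})$; iterating over $k$ yields
\begin{equation*}
[\hat{E}\mapsto u|_{\{E/x=\hat{E}\}}]\in C^\infty\big([0,\infty)_{\hat{E}};\,\calA^{0-}_{\mathrm{loc}}(X_{1/2})\big),
\end{equation*}
which together with the $k=0$ case of the hypothesis gives $u\in\calA^{0-,0-,(0,0)}_{\mathrm{loc}}(X^{\mathrm{sp}}_{\mathrm{res}})$ by the definition recalled just before the proposition.

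The main technical point is the weight bookkeeping: ensuring that each $\chi^{-k}$-division and each commutator produces smooth, uniformly bounded coefficients on $U$ (so the $(0-,0-,0)$ order is preserved), and that the Taylor-remainder step preserves $\calA^{0-}(X_{1/2})$-regularity of the values uniformly as $\hat{E}\to 0^+$. Both reduce to continuous dependence of finitely many Fréchet seminorms on the parameter $\hat{E}$, with the uniformity inherited directly from the global conormality of the family $\{(\chi x\partial_E)^k u\}_{k\in\bbN}$.
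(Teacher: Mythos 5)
Your proposal is correct and follows essentially the same route as the paper's: observe $x\partial_E=\partial_{\hat E}$ away from $\mathrm{bf}$, use the hypothesis on $(\chi x\partial_E)^k u$ to control $\partial_{\hat E}^k u$ near $\mathrm{zf}$, and conclude $C^\infty$ dependence on $\hat E$ with $\calA^{0-}(X_{1/2})$-valued slices. The one place you go into more detail than the paper is the cutoff-stripping identity $(\chi x\partial_E)^k = \chi^k(x\partial_E)^k + \sum_{j<k} Q_{k,j}(x\partial_E)^j$; the paper absorbs this silently into its ``consequently,'' but making it explicit (and checking that the $Q_{k,j}$ are smooth and supported where $\chi$ is) is a legitimate and useful amplification, not a deviation.
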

\begin{proof}
	We want to show that $[\hat{E} \mapsto u|_{E/x=\hat{E}}] \in C^\infty([0,\infty)_{\hat{E}}; \calA^0(X_{1/2}))$. Observe that $x \partial_E = \partial_{\hat{E}}$ away from $\mathrm{bf}$, where the partial derivative on the right-hand side is taken with $x$ held constant.

	Consequently,  
	the $k=0,\ldots,K+1$ cases of $(\chi x\partial_{E})^k u\in \calA_{\mathrm{loc}}^{0-,0-,0}(X_{\mathrm{res}}^{\mathrm{sp}})$ together show that $[\hat{E} \mapsto u|_{\hat{E}}] \in C^K([0,\infty)_{\hat{E}}; \calA^{0-}(X_{1/2}))$. Here we are using that, given $v\in \calA_{\mathrm{loc}}^{0-,0-,0}(X_{\mathrm{res}}^{\mathrm{sp}})$,
	\begin{equation}
		[\hat{E} \mapsto v|_{E/x=\hat{E}} ] \in \calA^0([0,1)_{\hat{E}} ; \calA^{0-}(X_{1/2}) ) = \calA^0([0,1)_{\hat{E}} ; \calA^{0-}(X) ).
	\end{equation} 
	Since $K$ can be taken arbitrarily large, we conclude the claim.
\end{proof}
\begin{remark*}
	As the argument shows, each Fr\'echet seminorm of $\smash{u\in \calA_{\mathrm{loc}}^{0-,0-,(0,0)}(X_{\mathrm{res}}^{\mathrm{sp}})}$ is controlled by finitely many Fr\'echet seminorms of $(\chi x\partial_{E})^k u\in \calA_{\mathrm{loc}}^{0-,0-,0}(X_{\mathrm{res}}^{\mathrm{sp}})$ for finitely many $k$. In other words, the map 
	\begin{multline}
		\Big(\prod_{k=0}^\infty \calA_{\mathrm{loc}}^{0-,0-,0}(X_{\mathrm{res}}^{\mathrm{sp}})\Big) \cap \{\{ (\chi x \partial_E)^k u \}_{k=0}^\infty : u \in  \calA_{\mathrm{loc}}^{0-,0-,0}(X_{\mathrm{res}}^{\mathrm{sp}}) \} \ni \{ (\chi x \partial_E)^k u \}_{k=0}^\infty  \\ 
		\mapsto u \in \calA_{\mathrm{loc}}^{0-,0-,(0,0)}(X_{\mathrm{res}}^{\mathrm{sp}})
	\end{multline}
	is continuous when we endow the domain with the topology of $	\prod_{k=0}^\infty \calA_{\mathrm{loc}}^{0-,0-,0}(X_{\mathrm{res}}^{\mathrm{sp}})$. 
\end{remark*}

\section{The conjugated perspective}
\label{sec:operator}

We now construct the ``conjugated'' operator $\tilde{P} = \{\tilde{P}(\sigma)\}_{\sigma\geq 0}$. Given some 
\[
\{f(-;\sigma)\}_{\sigma\geq 0}, \{g(-;\sigma)\}_{\sigma\geq 0} \subset C^\infty(X^\circ)
\] 
and a family of differential operators $\{D(\sigma)\}_{\sigma\geq 0} \subset \operatorname{Diff}(X^\circ)$, we use the somewhat abusive notation $fDg = \{f(-;\sigma)D(\sigma)g(-;\sigma)\}_{\sigma\geq 0} \subset \operatorname{Diff}(X^\circ)$ to denote the family of differential operators 
\begin{equation}
	M_f D M_g = \{M_{f(-;\sigma)} D(\sigma) M_{g(-;\sigma)} \}_{\sigma\geq 0}, 
\end{equation}
where for $h\in C^\infty(X^\circ)$, $M_h: C_{\mathrm{c}}^\infty(X^\circ)\to C_{\mathrm{c}}^\infty(X^\circ)$ denotes the multiplication operator $C_{\mathrm{c}}^\infty(X^\circ)\ni \varphi \mapsto h \varphi$. For us, $f,g$ will be of the form $f(-;\sigma) = \exp(- i \Phi(-;\sigma))$ and $g(-;\sigma) = \exp(+ i \Phi(-;\sigma))$ for some $\Phi = \{\Phi(-;\sigma)\}_{\sigma\geq 0} \subset C^\infty(X^\circ)$. The conjugated operator $\tilde{P}$ is defined by 
\begin{equation}
	\tilde{P} = M_{\exp(-i \Phi)} P M_{\exp(+i \Phi)} = e^{-i \Phi} P e^{+i \Phi}
\end{equation}
for our eventual choice of $\Phi$.

As discussed in the introduction, $\Phi$ had ought to be determined to an order or two (including logarithmic terms) on $X^{\mathrm{sp}}_{\mathrm{res}}$ by the actual asymptotics of solutions of $P(\sigma)u = f$ for $f\in \calS(X)$. Rather than determine what $\Phi$ should be in this manner (that is by solving the PDE, or a model thereof, to a sufficient degree of accuracy), it is actually easier to work backwards, meaning to find the asymptotics of solutions to the given PDE by first finding a choice of $\Phi$ for which $\tilde{P}$ has a workable form, where ``workable'' roughly means qualitatively similar to the conjugated operator in \cite{VasyLA}. (We will then have to actually show that this choice describes the asymptotics of solutions to the PDE.) Such a choice has already been stated in the introduction, \S\ref{sec:introduction}, \cref{eq:Phi}, and the resultant conjugated operator is computed below. 
We now motivate that choice. 
Consider the model operator $P_{\mathrm{Model}} \in \operatorname{Diff}_{\mathrm{sc}}^2([0,\infty)_x)$ given by 
\begin{equation}
	P_{\mathrm{Model}}(\sigma) = - (1+x a_{00}) (x^2 \partial_x)^2 + (a + n-1) x^3 \partial_x - \sigma^2 -\mathsf{Z} x,
	\label{eq:pmodel}
\end{equation}
where $a_{00} ,a,\mathsf{Z} \in \bbR$, $\mathsf{Z}>0$, and where we require $a_{00}<0$ for simplicity. 
This captures the leading and subleading terms of $P(\sigma)$ in $\Psi_{\mathrm{scb}}(X)$, modulo the terms involving nonradial derivatives (which can be expected to be unimportant -- i.e.\ under symbolic control -- based on considerations similar to those in \cite{VasyLA}). 
For $\Phi(-;\sigma) \in C^\infty(\bbR^+_x\times \bbR^+_\sigma)$, let 
\begin{equation} 
	\tilde{P}_{\mathrm{Model}} = M_{\exp(-i \Phi)} P_{\mathrm{Model}} M_{\exp(+i \Phi)} = \exp(-i \Phi) P_{\mathrm{Model}} \exp(+i \Phi). 
\end{equation} 
This will be qualitatively  similar (at least with regards to the sc-calculus) to Vasy's conjugated operator family (with $\Im \alpha_\pm = 0$) if 
$\tilde{P}_{\mathrm{Model}}$ is equal to 
\begin{multline}
	P_{\mathrm{Goal}} = - (1+x a_{00}) \Big(x^2 \partial_x - \frac{x(n-1)}{2}\Big)^2   + 2i (1+x a_{00})  \sqrt{\sigma^2+\mathsf{Z} x - \sigma^2 a_{00} x} (x^2\partial_x) \\ + 2i (1+x a_{00}) \Big(  - \frac{(n-1)}{2}x \sqrt{\sigma^2+\mathsf{Z} x} + \frac{\mathsf{Z} x^2}{4} \frac{1}{\sqrt{\sigma^2+\sf\mathsf{Z}  x}} \Big)
	\label{eq:misc_pgo}
\end{multline}
modulo terms which are two sc-decay orders subleading at sf and ff (in particular in $x^2 \operatorname{Diff}_{\mathrm{sc}}^{*,0}(X)$ for $\sigma>0$ and $x^2 \operatorname{Diff}_{\mathrm{sc}}^{*,0}(X_{1/2})$ for $\sigma=0$, where $X=[0,\infty)_x$). 
The specific form of the lower order terms in \cref{eq:misc_pgo} bears comment: the operators $\smash{x^2 \partial_x - x(n-1)/2}$ and 
\begin{equation} 
(\sigma^2+\mathsf{Z} x)^{1/2} x^2 \partial_x - x(\sigma^2+\mathsf{Z} x)^{1/2} \frac{(n-1)}{2} + \frac{\mathsf{Z} x^2}{4} \frac{1}{(\sigma^2+\mathsf{Z} x)^{1/2}}
\end{equation} 
are both formally anti- self-adjoint with respect to the  $L^2([0,1), x^{-(n+1)} \dd x) = L^2_{\mathrm{sc}} [0,1)$ inner product. 
Indeed, for any $f,g \in \dot{\calS}([0,\infty))$, 
\begin{equation}
	\int_0^\infty f^*(x) \partial_x g(x) x^{-(n+1)} \dd x = - \int_0^\infty (\partial_x f)^* g(x) x^{-(n+1)} \dd x  + (n+1) \int_0^\infty \Big( \frac{f(x)}{x} \Big)^* g(x) x^{-(n+1)} \dd x,
\end{equation}
so, as bilinear forms  $\dot{\calS}([0,\infty))^2 \to \bbC$, 
\begin{align} 
	\partial_x^* &= - \partial_x + (n+1)/x,\\ 
	(x^2 \partial_x)^* &= x^2 \partial_x^* +[\partial_x^*,x^2] = - x^2 \partial_x  + x(n-1) 
	\intertext{and}
	\begin{split} 
		(\sqrt{\sigma^2+\mathsf{Z} x} x^2 \partial_x)^* &= \sqrt{\sigma^2+\mathsf{Z} x}(x^2 \partial_x)^* + [(x^2 \partial_x)^*, \sqrt{\sigma^2+\mathsf{Z} x}] \\
		&= - \sqrt{\sigma^2+\mathsf{Z} x} x^2 \partial_x  + x(n-1)\sqrt{\sigma^2+\mathsf{Z} x} - \frac{\mathsf{Z} x^2}{2} \frac{1}{\sqrt{\sigma^2+\mathsf{Z} x}},
	\end{split} 
\end{align}
which implies the claimed anti- self-adjointness. The $(1+x a_{00})$ terms in \cref{eq:misc_pgo}, along with the $-\sigma^2 a_{00} x$ under the square root, spoil anti- self-adjointness or self-adjointness, but only negligibly. Thus, the terms in \cref{eq:misc_pgo} have definite adjointness modulo negligible errors. 

As a preliminary step towards $P_{\mathrm{Goal}}$, we can conjugate away the $(a+n-1)x^3 \partial_x$ term in \cref{eq:pmodel}, getting 
\begin{equation}
	x^{-(a+n-1)/2} P_{\mathrm{Model}} x^{(a+n-1)/2} = - (1+x a_{00}) (x^2 \partial_x)^2 - \sigma^2 -\mathsf{Z} x \bmod x^2 \operatorname{Diff}_{\mathrm{sc}}^{1,0}(X). 
	\label{eq:misc_loq}
\end{equation}
The $x^2 \operatorname{Diff}^{1,0}_{\mathrm{sc}}$ remainder may contain some terms in \cref{eq:misc_pgo}, for example the square of the $x(n-1)/2$ terms, but we can conjugate these back in at the end of the computation.
Conjugating \cref{eq:misc_loq} by $\exp(+i\varphi)$ for a to-be-decided  $\varphi(x;\sigma)\in C^\infty(\bbR^+_x\times \bbR^+_\sigma)$, we get 
\begin{equation}
	e^{-i\varphi} x^{-(a+n-1)/2} P_{\mathrm{Model}} x^{(a+n-1)/2} e^{+i \varphi} = - (1+x a_{00}) (x^2 \partial_x + i x^2 \varphi')^2 - \sigma^2 -\mathsf{Z} x \bmod x^2 \operatorname{Diff}_{\mathrm{sc}}^{1,0}(X), 
	\label{eq:misc_072}
\end{equation}
assuming the contribution from conjugating the $x^2\operatorname{Diff}_{\mathrm{sc}}^{1,0}$ in \cref{eq:misc_loq} is negligible.
It is not unreasonable to expect (perhaps based on the $\mathsf{Z} =0$ case) that, for our eventual choice of $\varphi$, the leading order new contribution to \cref{eq:misc_072} is $x^4 (\varphi')^2$.  

Since the Coulomb term in \cref{eq:misc_072} is subleading order relative to $\sigma^2$ in the sense of decay, we should really be keeping track of the new contributions to one subleading order. To this order, the new contribution to the effective potential is $x^4(1+x a_{00}) (\varphi')^2$ (assuming the terms with second derivatives of $\varphi$ are negligible). Thus, we seek to arrange
\begin{equation}
	x^4 (1+x a_{00}) (\varphi')^2 = \sigma^2 + \mathsf{Z} x  \bmod x^2 C^\infty(X) .
	\label{eq:misc_j64}
\end{equation}
Multiplying through by $(1+x a_{00})^{-1} = 1 - x a_{00} \bmod x^2 C^\infty(X)$, this suggests setting $x^4 (\varphi')^2 = \sigma^2 + \mathsf{Z} x - \sigma^2 a_{00} x $, the solution of which (up to an arbitrary additive constant and conventional choice of sign) is
\begin{equation} 
	\varphi(x;\sigma) = \frac{1}{x} \sqrt{\sigma^2+\mathsf{Z} x - \sigma^2 a_{00} x } + \frac{1}{\sigma} (\mathsf{Z} - \sigma^2 a_{00} ) \operatorname{arcsinh}\Big( \frac{\sigma}{x^{1/2}} \frac{1}{(\mathsf{Z} -\sigma^2 a_{00})^{1/2}} \Big). 
	\label{eq:misc_616}
\end{equation} 
Recall that $\operatorname{arcsinh}(z) = \log(z+(1+z^2)^{1/2})$ for all $z\geq 0$.  Expanding $\operatorname{arcsinh}(z)$ in Taylor series around $z=0$, we see that the apparent singularity in \cref{eq:misc_616} at $\sigma = 0$ is removable (to all orders), and hence $\varphi(x;\sigma)$ defines a smooth function on $\bbR^+_x\times \bbR_\sigma$, and it is even in $\sigma$.

We observe that, given \cref{eq:misc_j64}, the $x^4 \varphi''$ term in \cref{eq:misc_072} is indeed negligible except at ff, where there is one non-negligible contribution, and it is precisely the final term in \cref{eq:misc_pgo} (modulo negligible terms). 

Given this definition of $\varphi$, $e^{-i\varphi} x^{-(a+n-1)/2} P_{\mathrm{Model}} x^{(a+n-1)/2} e^{+i \varphi}$ is given, modulo $x^2 \operatorname{Diff}^{1,0}_{\mathrm{sc}}(X)$, by 
\begin{multline}
	- (1+x a_{00})((x^2 \partial_x)^2  + 2 i x^4 \varphi' \partial_x -  x^4 \varphi' \varphi' + i(x^2 \partial_x)^2 \varphi) - \sigma^2 - \mathsf{Z} x  = - (1+x a_{00})((x^2 \partial_x)^2 \\ - 2 i x^2 \sqrt{\sigma^2+\mathsf{Z} x-\sigma^2 a_{00} x} \partial_x + i(x^2 \partial_x)^2 \varphi)  \bmod C^\infty(\bbR_{\sigma^2};x^2 C^\infty(X)),
	\label{eq:misc_oii}
\end{multline}
plus the term in $\smash{\operatorname{Diff}_{\mathrm{leC}}^{0,-2,-5,-2,-5}}$ that results from applying the first order operator in the $x^2 \operatorname{Diff}^{1,0}_{\mathrm{sc}}(X)$ remainder in \cref{eq:misc_072} to $\varphi$, which we will work out later.

We can now add back in the $x(n-1)/2$ terms: 
\begin{equation}
	e^{-i\varphi} x^{-a/2} P_{\mathrm{Model}} e^{+i \varphi} x^{a/2}  = P_{\mathrm{Goal}} \bmod \operatorname{Diff}_{\mathrm{leC}}^{1,-2,-4,-2,-4}.
\end{equation}
So, at least in this model case, conjugation by $e^{+i\Phi}=e^{i \varphi + (1/2)a \log x }$ has the required properties.

Returning to the full problem, we consider the family $\Phi = \{\Phi(-;\sigma)\}_{\sigma\geq 0}$ of $\Phi(-;\sigma) \in C^\infty( X^\circ\times \bbR^+_\sigma)$ given by 
\begin{multline}
	\Phi(x;\sigma) =    \frac{1}{x} \sqrt{\sigma^2+\mathsf{Z} x-\sigma^2 a_{00} x} + \frac{1}{\sigma}(\mathsf{Z} -\sigma^2 a_{00}) \operatorname{arcsinh} \Big( \frac{\sigma}{x^{1/2}} \frac{1}{(\mathsf{Z}  - \sigma^2 a_{00})^{1/2}} \Big)   -\frac{i}{2} a \log x .
\end{multline}
Observe that (after removing the removable singularity at $\sigma=0$) \cref{eq:osc_0} holds. 
We then define, for each $\sigma \geq 0$, 
\begin{equation} 
	\tilde{P}(\sigma) = M_{e^{-i \Phi(-;\sigma)}} P(\sigma) M_{e^{+i \Phi(-;\sigma)}},
\end{equation} 
i.e.\ $\tilde{P} = \exp(-i \Phi) P \exp(+i \Phi)$. 
Thus, $\tilde{P}(\sigma) \in S\operatorname{Diff}_{\mathrm{sc}}(X)$ for each $\sigma\geq 0$, and the coefficients all depend smoothly on $\sigma$ all the way down to $\sigma=0$ in compact subsets of $X^\circ$. Of course, this does not mean that $ [0,\infty)_\sigma \ni \sigma  \mapsto \tilde{P}(\sigma)\in S \operatorname{Diff}^{2,0}_{\mathrm{sc}}(X)$ is smooth all the way down to $\sigma=0$. This map is continuous for $\sigma>0$ but discontinuous at $\sigma=0$, as can be verified by computing sc-principal symbols. 

Let $\tilde{P}_0 = e^{-i\Phi}  P_0 e^{+i\Phi}$, $\tilde{P}_1 = e^{-i\Phi}  P_1 e^{+i\Phi}$, and $\tilde{P}_{2} = e^{-i\Phi}  P_{2} e^{+i\Phi}$, where $P_0,P_1,P_2$ are as in \S\ref{sec:introduction}. We have:

\begin{proposition}
	\label{prop:P00tilde_comp}
	For each $\sigma\geq 0$, $\tilde{P}_0(\sigma)$ is given with respect to the boundary-collar $\iota$ by 
	\begin{equation}
		\tilde{P}_0(\sigma) = - (1+xa_{00}) (x^2 \partial_x)^2 + x^2 \triangle_{\partial X} + (n-1) x^3 \partial_x+ L(\sigma) +V_{\mathrm{eff}}(x;\sigma), 
		\label{eq:misc_837}
	\end{equation}
	where
	\begin{equation}
			L(\sigma) = 2i x (1+xa_{00}) \sqrt{\sigma^2+ \mathsf{Z} x - \sigma^2 a_{00} x }\Big(x \partial_x - \frac{n-1}{2} + \frac{\mathsf{Z}}{4}  \frac{x}{\sigma^2+\mathsf{Z} x} \Big) - a  a _{00} x^4 \partial_x  \label{eq:misc_lk3} 
	\end{equation}
	and $V_{\mathrm{eff}} \in x^2 C^\infty(X^{\mathrm{sp}}_{\mathrm{res}})$. 
\end{proposition}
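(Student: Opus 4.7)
The proof is a direct, if slightly tedious, computation. The plan is to reduce everything to evaluating $\Phi' = \partial_x \Phi$ and then applying the standard conjugation identity
\[
e^{-i\Phi} \partial_x e^{+i\Phi} = \partial_x + i\Phi'
\]
term by term to the formula \eqref{eq:P0} for $P_0$. Since $\Phi$ depends only on $x$ and $\sigma$ (not on the $\partial X$-variables), the conjugation acts trivially on the tangential piece $x^2\triangle_{\partial X}$ and on the multiplication operators $-\mathsf{Z}x-\sigma^2$.

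The first (and crucial) step is to compute $\Phi'$. Using $\partial_z\operatorname{arcsinh}(z) = (1+z^2)^{-1/2}$, the derivatives of the two complicated summands in \eqref{eq:Phi} almost exactly cancel: one finds
\[
\partial_x\Big[\tfrac{1}{x}\sqrt{\sigma^2+\mathsf{Z}x-\sigma^2 a_{00}x}\Big] = -\tfrac{1}{x^2}\sqrt{\sigma^2+\alpha x} + \tfrac{\alpha}{2x\sqrt{\sigma^2+\alpha x}},\qquad \partial_x\Big[\tfrac{\alpha}{\sigma}\operatorname{arcsinh}(\sigma(\alpha x)^{-1/2})\Big] = -\tfrac{\alpha}{2x\sqrt{\sigma^2+\alpha x}},
\]
with $\alpha = \mathsf{Z}-\sigma^2 a_{00}$. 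The two $1/(2x\sqrt{\sigma^2+\alpha x})$ terms annihilate, yielding the clean identity
\[
x^2 \Phi' = -\sqrt{\sigma^2+\mathsf{Z}x-\sigma^2 a_{00}x} - \tfrac{i}{2}a x.
\]
This is the Hamilton--Jacobi cancellation that motivated the choice of $\Phi$: squaring gives $(x^2\Phi')^2 = (\sigma^2+\alpha x) + iax\sqrt{\sigma^2+\alpha x} - a^2x^2/4$.

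Next I would iterate the conjugation formula to obtain
\[
e^{-i\Phi}(x^2\partial_x)^2 e^{+i\Phi} = (x^2\partial_x)^2 + 2i(x^2\Phi')(x^2\partial_x) + i\bigl(2x^3\Phi' + x^4\Phi''\bigr) - (x^2\Phi')^2,
\]
where I used the commutator $[x^2\partial_x,M_f] = M_{x^2 f'}$. Likewise $e^{-i\Phi}(x^3(a+n-1)\partial_x) e^{+i\Phi} = x^3(a+n-1)\partial_x + ix^3(a+n-1)\Phi'$. Assembling everything, the $(x^2\partial_x)^2$ term reproduces itself (with the factor $-(1+xa_{00})$), the $x^2\triangle_{\partial X}$ and $(n-1)x^3\partial_x$ terms are unchanged; the remaining pieces split into a first-order operator and a zeroth-order residue $V_{\mathrm{eff}}$.

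The first-order contribution reads
\[
-2i(1+xa_{00})(x^2\Phi')(x^2\partial_x) + ix^3(a+n-1)\Phi'\text{-piece with $\partial_x$} - aa_{00}x^4\partial_x,
\]
and inserting $x^2\Phi' = -\sqrt{\sigma^2+\alpha x}-iax/2$ immediately recovers (after grouping and using the formal self-adjointness correction $-(n-1)/2$ plus the zeroth-order piece $(\mathsf{Z}/4)\,x/(\sigma^2+\mathsf{Z}x)$ coming from $-i(1+xa_{00})x^4\Phi''$) the formula \eqref{eq:misc_lk3} for $L(\sigma)$. The main obstacle is not in the algebra itself but in verifying the final claim $V_{\mathrm{eff}}\in x^2 C^\infty(X_{\mathrm{res}}^{\mathrm{sp}})$. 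For this one collects all leftover zeroth-order terms, namely
\[
(1+xa_{00})(x^2\Phi')^2 - \sigma^2 - \mathsf{Z}x - (1+xa_{00})[2x^3\Phi'+x^4\Phi''] - i a a_{00}x^3\sqrt{\sigma^2+\alpha x},
\]
and checks that (i) the $(1+xa_{00})(\sigma^2+\alpha x)$ piece cancels $-\sigma^2 - \mathsf{Z}x$ modulo $O(x^2)$, because $(1+xa_{00})\alpha = \mathsf{Z} - \sigma^2 a_{00} + O(x)$ and $(1+xa_{00})\sigma^2 - \sigma^2 = \sigma^2 a_{00} x$, so the $\sigma^2 a_{00}x$ terms cancel to leading order, and (ii) the remaining pieces involve only smooth functions of $\sigma^2+\mathsf{Z}x$, $x$, and of $x/(\sigma^2+\mathsf{Z}x)$, hence are smooth on the compactification $X_{\mathrm{res}}^{\mathrm{sp}}$. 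Specifically, $\sqrt{\sigma^2+\alpha x} = \sqrt{\sigma^2+\mathsf{Z}x}\cdot\sqrt{1-\sigma^2 a_{00}x/(\sigma^2+\mathsf{Z}x)}$ is a smooth multiple of $\varrho_{\mathrm{tf}_{00}}$ on $X_{\mathrm{res}}^{\mathrm{sp}}$, and $x = \varrho_{\mathrm{bf}_{00}}\varrho_{\mathrm{tf}_{00}}^2$, so the extra powers of $x$ from the terms listed above suffice to give two factors of $x$ as a smooth function on the resolved space. That regularity statement is the real content of the proposition; the rest is bookkeeping.
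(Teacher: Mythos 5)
Your computation of $\Phi'$ is correct and follows the same route as the paper; with $\alpha = \mathsf{Z}-\sigma^2 a_{00}$, the Hamilton--Jacobi cancellation between the two ``$\alpha/(2x\sqrt{\sigma^2+\alpha x})$'' pieces gives $x^2\Phi' = -\sqrt{\sigma^2+\alpha x}-\tfrac{i}{2}ax$ exactly as you say, and the conjugation identities are standard. The overall decomposition into \eqref{eq:misc_837}--\eqref{eq:misc_lk3} plus a zeroth-order residue is the paper's strategy as well. However, you yourself flag the real content --- verifying $V_{\mathrm{eff}}\in x^2C^\infty(X^{\mathrm{sp}}_{\mathrm{res}})$ --- and this is precisely where your sketch has a genuine gap.

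First, a bookkeeping error. Your displayed $V_{\mathrm{eff}}$ omits the zeroth-order contribution $i(a+n-1)x^3\Phi'$ coming from conjugating $x^3(a+n-1)\partial_x$ (and drops an $i$ in front of $[2x^3\Phi'+x^4\Phi'']$). Without the $i(a+n-1)x^3\Phi'$ piece, the $(1+xa_{00})\cdot iax\sqrt{\sigma^2+\alpha x}$ term coming from expanding $(1+xa_{00})(x^2\Phi')^2$ has no partner: it lies only in $\varrho_{\mathrm{bf}_{00}}\varrho_{\mathrm{tf}_{00}}^3\,C^\infty(X^{\mathrm{sp}}_{\mathrm{res}})$, which falls short of $x^2C^\infty = \varrho_{\mathrm{bf}_{00}}^2\varrho_{\mathrm{tf}_{00}}^4\,C^\infty$ by the factor $\varrho_{\mathrm{bf}_{00}}\varrho_{\mathrm{tf}_{00}}=x(\sigma^2+\mathsf{Z}x)^{-1/2}$. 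This is killed by the $i a x^3\Phi'$ contribution (and likewise $i(n-1)x^3\Phi'$ eliminates the $(n-1)/2$ piece of $V_{\mathrm{SA}}$).

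Second, and more importantly, the statement ``the extra powers of $x$ from the terms listed above suffice to give two factors of $x$'' is not correct term-by-term. After the straightforward cancellations, one is left with a residue of the form
\[
	\frac{ix}{2}(1+xa_{00})\Big[\frac{\mathsf{Z}x-\sigma^2 a_{00}x}{(\sigma^2+\alpha x)^{1/2}}-\frac{(\sigma^2+\alpha x)^{1/2}\,\mathsf{Z}x}{\sigma^2+\mathsf{Z}x}\Big].
\]
Each term inside the bracket is only in $\varrho_{\mathrm{bf}_{00}}\varrho_{\mathrm{tf}_{00}}\,C^\infty$, one power of $\varrho_{\mathrm{tf}_{00}}$ short of $xC^\infty=\varrho_{\mathrm{bf}_{00}}\varrho_{\mathrm{tf}_{00}}^2\,C^\infty$; with the $\tfrac{ix}{2}$ prefactor the sum of orders you would get from your term-by-term estimate is $\varrho_{\mathrm{bf}_{00}}^2\varrho_{\mathrm{tf}_{00}}^3$, which does not give $x^2=\varrho_{\mathrm{bf}_{00}}^2\varrho_{\mathrm{tf}_{00}}^4$. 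The extra power of $\varrho_{\mathrm{tf}_{00}}$ comes only from a cancellation: writing $\zeta = \sigma^2a_{00}x/(\sigma^2+\mathsf{Z}x)\in\varrho_{\mathrm{tf}_{00}}^2\,C^\infty(X^{\mathrm{sp}}_{\mathrm{res}})$ and using your decomposition $\sqrt{\sigma^2+\alpha x}=\sqrt{\sigma^2+\mathsf{Z}x}\,\sqrt{1-\zeta}$, the bracket factors as $\mathsf{Z}x(\sigma^2+\mathsf{Z}x)^{-1/2}\big[(1-\zeta)^{-1/2}-(1-\zeta)^{1/2}\big]=\mathsf{Z}x(\sigma^2+\mathsf{Z}x)^{-1/2}\cdot\zeta(1-\zeta)^{-1/2}$, and the first-order vanishing of $\zeta(1-\zeta)^{-1/2}$ at $\zeta=0$ supplies the missing factor of $\varrho_{\mathrm{tf}_{00}}^2$. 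This is exactly the content of the paper's Equation \eqref{eq:misc_841}, which must be singled out explicitly; the decomposition you write down is the right first step, but stating that the powers of $x$ ``suffice'' without exhibiting this cancellation leaves the key claim unproved.
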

\begin{proof}
	We work on $\hat{X}=[0,\bar{x})_x\times \partial X$. 
	
	We may write $\tilde{P}_0 = P_0 + e^{-i \Phi}[P_0, e^{+i \Phi}]$. Observe that 
	\begin{align}
		[P_{0}, e^{+i \Phi}] &= - (1+xa_{00})[  (x^2 \partial_x)^2,e^{i \Phi} ] +  x^3 (a + n-1) [\partial_x, e^{i \Phi}] + x^2 [\triangle_{\partial X}, e^{i \Phi}] \\
		e^{-i\Phi}[  (x^2 \partial_x)^2,e^{i \Phi} ] &=   2i  x^4 \Phi' \partial_x -  x^4 \Phi' \Phi' + i (x^2 \partial_x)^2 \Phi  \\
		e^{-i\Phi}[\partial_x ,e^{i\Phi}] &=i \Phi'  \\
		e^{-i\Phi}[\triangle_{\partial X}, e^{i\Phi}] &= 0, \label{eq:misc_a0c}
	\end{align} 
	where the primes denote differentiation in $x$. (If $a,a_{00}$ were nonconstant functions on $\partial X$, then \cref{eq:misc_a0c} would not hold. This is ultimately the reason for assuming that $a,a_{00}$ are constant.) 
	Thus, if we set 
	\begin{align}
		L(\sigma) &= a x^3 \partial_x - 2i x^4 (1+xa_{00}) \Phi' \partial_x +V_{\mathrm{SA}}  ,  \label{eq:misc_j31} \\
		\begin{split} 
			V_{\mathrm{eff}}(x;\sigma) &= -(1+x a_{00})( - x^4 \Phi'\Phi' + i (x^2 \partial_x)^2 \Phi) + i x^3 (a+n-1)  \Phi' - V_{\mathrm{SA}} - \sigma^2-\mathsf{Z}  x
		\end{split} \label{eq:Veffd}
	\end{align} 
	for 
	\begin{equation} 
		V_{\mathrm{SA}} = -2 i x (1+x a_{00}) \sqrt{\sigma^2 + \mathsf{Z} x- \sigma^2 a _{00} x}\Big( \frac{n-1}{2} - \frac{\mathsf{Z}}{4} \frac{x}{\sigma^2+\mathsf{Z} x}\Big),
	\end{equation}  
	then \cref{eq:misc_837} holds, and it only remains to verify \cref{eq:misc_lk3} and the fact that 
	$V_{\mathrm{eff}} \in x^2 C^\infty(X^{\mathrm{sp}}_{\mathrm{res}})$. 
	
	We compute that  
	\begin{align}
		\begin{split} 
		+x^4 \Phi' \Phi'  &= x^4 \Big(  \frac{1}{x^2} \sqrt{\sigma^2+\mathsf{Z} x - \sigma^2a_{00} x}   + \frac{ia}{2x} \Big)^2  
		\\
		&=  \sigma^2+\mathsf{Z} x - \sigma^2a_{00} x + i a x \sqrt{\sigma^2+\mathsf{Z} x - \sigma^2a_{00} x}  - \frac{ x^2 a^2}{4} ,  \end{split} \label{eq:vc1}  \\
		-i (x^2 \partial_x)^2 \Phi &=  - \frac{ax^2}{2} + \frac{i}{2} x^2 \frac{(\mathsf{Z} -\sigma^2 a_{00})}{ (\sigma^2+\mathsf{Z} x- \sigma^2 a_{00} x)^{1/2}} ,   \label{eq:vc2}\\
		+i x^3 \Phi' 
		&= \frac{ax^2}{2} - ix  \sqrt{\sigma^2+\mathsf{Z} x - \sigma^2 a_{00} }  .\label{eq:vc3} 
	\end{align}

	From \cref{eq:vc3} and \cref{eq:misc_j31}, we get \cref{eq:misc_lk3}. 
	
	When adding up the various contributions to $V_{\mathrm{eff}}$, as written in \cref{eq:Veffd}, a few key cancellations happen by design: 
	\begin{enumerate}[label=(\Roman*)]
		\item the first $\sigma^2$ in \cref{eq:Veffd} (coming from $x^4 \Phi'\Phi'$, \cref{eq:vc1}) cancels with the last $-\sigma^2$ in \cref{eq:Veffd}, 
		\item the first $\mathsf{Z}x$ term, also coming from $x^4 \Phi'\Phi'$, cancels with the $-\mathsf{Z}x$ in \cref{eq:Veffd},  so that the original Coulomb-like term has been ``conjugated away,''
		\item multiplying  \cref{eq:vc1} by $(1+xa_{00})$ the terms in $(1+xa_{00})(\sigma^2-\sigma^2 a_{00} x)$ linear in $a_{00}$ cancel per difference-of-squares, 
		\item the $ i a x (\sigma^2+\mathsf{Z} x- \sigma^2 a_{00} x)^{1/2}$ term in \cref{eq:vc1} cancels with the term in $ia x^3 \Phi'$ coming from the last term in \cref{eq:vc3}. 
		\item The $i x (\sigma^2 + \mathsf{Z} x - \sigma^2 a_{00} x)^{1/2} (n-1)$ term in $V_{\mathrm{eff}}$ coming from $V_{\mathrm{SA}}$ cancels with the identical term in $i(n+1) x^3 \Phi'$ coming from the last term in \cref{eq:vc3}. 
	\end{enumerate}
	All in all, $V_{\mathrm{eff}}$ is given by 
	\begin{multline}
		a_{00} (\mathsf{Z} - \sigma^2 a_{00})x^2  + i a_{00}(a+n-1) x^2 \sqrt{\sigma^2+\mathsf{Z} x - \sigma^2 a_{00} x } -  \Big[ (1+x a_{00}) \Big( \frac{a^2}{4} + \frac{a}{2} \Big) - \frac{a}{2} (a+n-1) \Big]x^2   \\
		+ \frac{ix}{2} (1+ x a_{00}) \Big[  \frac{\mathsf{Z}x -\sigma^2 a_{00}x}{ (\sigma^2+\mathsf{Z} x- \sigma^2 a_{00} x)^{1/2}}  -   (\sigma^2 + \mathsf{Z} x - \sigma^2 a_{00} x )^{1/2} \frac{ \mathsf{Z} x}{\sigma^2+\mathsf{Z} x}\Big]. 
		\label{eq:misc_613}
	\end{multline}
	It is clear that the first line of \cref{eq:misc_613} defines an element of $x^2 C^\infty(X^{\mathrm{sp}}_{\mathrm{res}})$. On the other hand, $-(ix/2)(1+xa_{00} )\sigma^2 a_{00} x/(\sigma^2+\mathsf{Z} x - \sigma^2 a_{00} x)^{1/2}$ is in $x^2 C^\infty(X^{\mathrm{sp}}_{\mathrm{res}})$ as well. 
	Thus, it remains to verify that 
	\begin{equation}
		 \frac{\mathsf{Z}x}{ (\sigma^2+\mathsf{Z} x)^{1/2}}  -   (\sigma^2 + \mathsf{Z} x - \sigma^2 a_{00} x )^{1/2} \frac{ \mathsf{Z} x}{\sigma^2+\mathsf{Z} x} \in x C^\infty(X^{\mathrm{sp}}_{\mathrm{res}}),
	\end{equation}
	i.e.\ that 
	\begin{equation}
		1 - \Big(1 - \frac{\sigma^2 a_{00} x}{\sigma^2+\mathsf{Z} x} \Big)^{1/2} \in (\sigma^2+\mathsf{Z} x)^{1/2} C^\infty(X^{\mathrm{sp}}_{\mathrm{res}}). 
		\label{eq:misc_841} 
	\end{equation}
	This is of course not true for each term on the left-hand side individually, but we can expand 
	\begin{equation}
		\Big(1 - \frac{\sigma^2 a_{00} x}{\sigma^2 + \mathsf{Z} x}\Big)^{1/2} = 1 \bmod  \frac{\sigma^2 a_{00} x}{\sigma^2 + \mathsf{Z} x} C^\infty(X^{\mathrm{sp}}_{\mathrm{res}})  =1 \bmod x C^\infty(X^{\mathrm{sp}}_{\mathrm{res}})
		\label{eq:misc_991}
	\end{equation} 
	(since $f(\zeta) = \zeta^{-1} ((1-\zeta)^{1/2}-1) \in C^\infty(-\infty,1)_\zeta$, $f(\sigma^2 a_{00} x / (\sigma^2+\mathsf{Z} x)) \in C^\infty(X^{\mathrm{sp}}_{\mathrm{res}})$, which implies \cref{eq:misc_991}), 
	so in fact \cref{eq:misc_841} is true with some room to spare. 
	We can then conclude that $V_{\mathrm{eff}}$ is in $x^2 C^\infty(X^{\mathrm{sp}}_{\mathrm{res}})$.
\end{proof}

\begin{proposition}
	\label{prop:Lcomp}
	The family $L=\{L(\sigma)\}_{\sigma\geq 0} \in \operatorname{Diff}_{\mathrm{leC}}^{1,0,-2,-1,-3}(X)$ satisfies 
	\begin{multline}
		L = 2 i (1+xa_{00}) \Big( 1 - \frac{\sigma^2 a_{00} x}{2 }\frac{1}{\sigma^2+\mathsf{Z}x} \Big) x \sqrt{\sigma^2 + \mathsf{Z} x }  \Big( x \partial_x - \frac{n-1}{2} + \frac{\mathsf{Z}}{4} \frac{x}{\sigma^2+\mathsf{Z}x}\Big) \\ \bmod \operatorname{Diff}_{\mathrm{leC}}^{1,-2,-5,-3,-6}(X) 
		\label{eq:misc_iyt}
	\end{multline}
	near $\partial X$. 
\end{proposition}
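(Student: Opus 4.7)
The plan is to start from the explicit formula in \Cref{prop:P00tilde_comp}, namely
\begin{equation}
L(\sigma) = 2ix(1+xa_{00})\sqrt{\sigma^2+\mathsf{Z}x-\sigma^2 a_{00}x}\,D(\sigma) - aa_{00}x^4\partial_x,
\end{equation}
with $D(\sigma) = x\partial_x - (n-1)/2 + (\mathsf{Z}/4)x/(\sigma^2+\mathsf{Z}x)$, and to reduce everything to the Taylor expansion of a single scalar function.  Write
\begin{equation}
\sqrt{\sigma^2+\mathsf{Z}x-\sigma^2 a_{00} x} = \sqrt{\sigma^2+\mathsf{Z}x}\cdot \sqrt{1-t},\qquad t = \frac{\sigma^2 a_{00} x}{\sigma^2+\mathsf{Z}x},
\end{equation}
and expand $\sqrt{1-t} = 1 - t/2 + t^2 r(t)$ for some smooth $r$.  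Substituting immediately produces the stated main term plus two error contributions, namely $E_1 = 2ix(1+xa_{00})\sqrt{\sigma^2+\mathsf{Z}x}\,t^2 r(t) D(\sigma)$ and $E_2 = -a a_{00}x^4\partial_x$, so the problem reduces to verifying that $L$ belongs to $\operatorname{Diff}_{\mathrm{leC}}^{1,0,-2,-1,-3}(X)$ and that both $E_1,E_2$ lie in $\operatorname{Diff}_{\mathrm{leC}}^{1,-2,-5,-3,-6}(X)$.

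The substantive input is the expression of $t$ and the ambient factors in terms of the bdfs of $X^{\mathrm{sp}}_{\mathrm{res}}$.  Using $x = \varrho_{\mathrm{bf}_{00}}\varrho_{\mathrm{tf}_{00}}^2$ together with $\varrho_{\mathrm{zf}_{00}} = \sigma^2/(\sigma^2+\mathsf{Z}x)$, one has
\begin{equation}
t = a_{00}\varrho_{\mathrm{zf}_{00}}\varrho_{\mathrm{bf}_{00}}\varrho_{\mathrm{tf}_{00}}^2,
\end{equation}
which in particular vanishes at each of $\mathrm{bf}_{00}$, $\mathrm{tf}_{00}$, $\mathrm{zf}_{00}$.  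The attractivity condition \cref{eq:ac} guarantees that $1-t$ is bounded away from $0$ in a neighborhood of $\sigma = 0$, so $\sqrt{1-t}$ and $r(t)$ are both smooth on $X^{\mathrm{sp}}_{\mathrm{res}}$ there.  Hence $\sqrt{\sigma^2+\mathsf{Z}x-\sigma^2 a_{00}x}$ is a smooth multiple of $\varrho_{\mathrm{tf}_{00}}$, and the crucial gain is
\begin{equation}
t^2 r(t) \in \varrho_{\mathrm{bf}_{00}}^2\varrho_{\mathrm{tf}_{00}}^4\, C^\infty(X^{\mathrm{sp}}_{\mathrm{res}}),
\end{equation}
supplying two extra orders of decay at $\mathrm{bf}_{00}$ and four extra orders at $\mathrm{tf}_{00}$ relative to the main term.

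The remaining task is bookkeeping with the five-parameter leC grading via \Cref{prop:multiplication}, \Cref{prop:diff->PsiDO}, and \Cref{prop:algebra}.  These supply $x\sqrt{\sigma^2+\mathsf{Z}x}\in \Psi_{\mathrm{b,leC}}^{0,-1,-3}(X)$, $x^3\in \Psi_{\mathrm{b,leC}}^{0,-3,-6}(X)$, $\mathsf{Z}x/(\sigma^2+\mathsf{Z}x)\in \Psi_{\mathrm{b,leC}}^{0,-1,0}(X)$, and $x\partial_x\in\Psi_{\mathrm{leC}}^{1,1,1,0,0}(X)$.  Composing, the main term $2i(1+xa_{00})x\sqrt{\sigma^2+\mathsf{Z}x}\cdot (x\partial_x)$ lies in $\Psi_{\mathrm{leC}}^{1,0,-2,-1,-3}(X)$; the lower-order summands in $D(\sigma)$ only improve this, so combined with the observation that $E_2 = -a a_{00}x^3(x\partial_x)\in \Psi_{\mathrm{leC}}^{1,-2,-5,-3,-6}(X)$ already sits in the residual class, one obtains $L\in \operatorname{Diff}_{\mathrm{leC}}^{1,0,-2,-1,-3}(X)$. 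For $E_1$, the factor $t^2 r(t)$ contributes an additional $\Psi_{\mathrm{b,leC}}^{0,-2,-4}(X)$ of decay, so $x\sqrt{\sigma^2+\mathsf{Z}x}\cdot t^2 r(t)\in \Psi_{\mathrm{b,leC}}^{0,-3,-7}(X)$, and composition with $x\partial_x$ places $E_1\in \Psi_{\mathrm{leC}}^{1,-2,-6,-3,-7}(X)\subseteq \Psi_{\mathrm{leC}}^{1,-2,-5,-3,-6}(X)$.  The lower-order pieces of $D(\sigma)$ again only help.  The only real obstacle is precisely this bookkeeping; the one substantive observation driving it is the identification of $t$ as $a_{00}\varrho_{\mathrm{zf}_{00}}\varrho_{\mathrm{bf}_{00}}\varrho_{\mathrm{tf}_{00}}^2$, which is exactly what makes the quadratic Taylor remainder small enough to land in the stated residual class.
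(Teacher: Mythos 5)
Your proposal is correct and takes essentially the same route as the paper: both expand $\sqrt{1-t}$ (with $t=\sigma^2 a_{00}x/(\sigma^2+\mathsf{Z}x)$) to first order, observe that the quadratic remainder gives two extra orders of decay at $\mathrm{bf}$ and four at $\mathrm{tf}$, and then note that the extra $-aa_{00}x^4\partial_x$ term is already in the residual class $\operatorname{Diff}_{\mathrm{leC}}^{1,-2,-5,-3,-6}(X)$. Your explicit factorization of $t$ in terms of the bdfs is just a more spelled-out version of the bookkeeping the paper leaves implicit.
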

\begin{proof}
	It suffices to restrict attention to $\hat{X} = [0,\bar{x})\times \partial X$. 
	
	We refer to \cref{eq:misc_lk3}. 
	Expanding $(1 - \sigma^2 a_{00} x / (\sigma^2+\mathsf{Z}x))^{1/2} = 1 - (1/2) \sigma^2 a_{00} x / (\sigma^2+\mathsf{Z}x) + O(\sigma^4 x^2 / (\sigma^2+\mathsf{Z} x)^2) $ in Taylor series, we deduce that 
	\begin{multline}
		2 i x (1+xa_{00}) \Big(\Big( 1 - \frac{\sigma^2 a_{00} x}{2 (\sigma^2+\mathsf{Z}x)} \Big)  \sqrt{\sigma^2 + \mathsf{Z} x } -\sqrt{\sigma^2 + \mathsf{Z} x - \sigma^2 a_{00} x }\Big) \Big( x \partial_x - \frac{n-1}{2} + \frac{\mathsf{Z}}{4} \frac{x}{\sigma^2+\mathsf{Z}x}\Big) \\ \in \operatorname{Diff}_{\mathrm{leC}}^{1,-2,-6,-3,-7}(X). 
	\end{multline}
	Thus,
	\begin{multline}
		L = 2 i (1+xa_{00}) \Big( 1 - \frac{\sigma^2 a_{00} x}{2 }\frac{1}{\sigma^2+\mathsf{Z}x} \Big) x \sqrt{\sigma^2 + \mathsf{Z} x }  \Big( x \partial_x - \frac{n-1}{2} + \frac{\mathsf{Z}}{4} \frac{x}{\sigma^2+\mathsf{Z}x}\Big) - a a_{00} x^4 \partial_x 
		\\ \bmod \operatorname{Diff}_{\mathrm{leC}}^{1,-2,-6,-3,-7}(X). 
	\end{multline}
	On the other hand, $a a_{00} x^4 \partial_x \in \operatorname{Diff}_{\mathrm{leC}}^{1,-2,-5,-3,-6}(X)$.
	We conclude \cref{eq:misc_iyt} from the above.
\end{proof}

\begin{proposition}
	\label{prop:tildeP0_orders}
	$\tilde{P}_0 \in \operatorname{Diff}_{\mathrm{leC}}^{2,0,-2,-1,-3}(X)$, and 
	\begin{multline}
		\tilde{P}_0 = - (x^2 \partial_x)^2 + x^2 \triangle_{\partial X}+2ix \sqrt{\sigma^2+\mathsf{Z} x} \Big( x \partial_x - \frac{n-1}{2} + \frac{\mathsf{Z}}{4} \frac{x}{\sigma^2+\mathsf{Z} x} \Big) \\ \bmod \operatorname{Diff}_{\mathrm{leC}}^{2,-1,-3,-2,-4}(X). 
		\label{eq:misc_v9o}
	\end{multline}
\end{proposition}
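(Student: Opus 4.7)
The plan is to start from the explicit decomposition provided by \Cref{prop:P00tilde_comp},
$$\tilde{P}_0 = - (1+xa_{00})(x^2\partial_x)^2 + x^2 \triangle_{\partial X} + (n-1)x^3\partial_x + L + V_{\mathrm{eff}},$$
split the first summand as $-(x^2\partial_x)^2 - xa_{00}(x^2\partial_x)^2$, and then assign leC-orders to each constituent piece using \Cref{prop:diff->PsiDO}, together with the bdf identities $x = \varrho_{\mathrm{bf}_{00}}\varrho_{\mathrm{tf}_{00}}^2$ and $x\sqrt{\sigma^2+\mathsf{Z}x} = \varrho_{\mathrm{bf}_{00}}\varrho_{\mathrm{tf}_{00}}^3$ which recast the various $x$- and $\sigma$-dependent prefactors as weights on the leC-phase space.

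Carrying out the bookkeeping term by term gives the following memberships: writing $(x^2\partial_x)^2 = x^2((x\partial_x)^2 + x\partial_x) \in \operatorname{Diff}^{2,-2}_{\mathrm{b}}(X)$, \Cref{prop:diff->PsiDO} places it in $\operatorname{Diff}^{2,0,-2,-2,-4}_{\mathrm{leC}}(X)$; similarly $x^2\triangle_{\partial X} \in \operatorname{Diff}^{2,-2}_{\mathrm{b}}(X) \subset \operatorname{Diff}^{2,0,-2,-2,-4}_{\mathrm{leC}}(X)$, while $xa_{00}(x^2\partial_x)^2 \in \operatorname{Diff}^{2,-3}_{\mathrm{b}}(X) \subset \operatorname{Diff}^{2,-1,-3,-3,-6}_{\mathrm{leC}}(X)$ and $(n-1)x^3\partial_x \in \operatorname{Diff}^{1,-2}_{\mathrm{b}}(X) \subset \operatorname{Diff}^{1,-1,-3,-2,-4}_{\mathrm{leC}}(X)$. \Cref{prop:Lcomp} gives $L \in \operatorname{Diff}^{1,0,-2,-1,-3}_{\mathrm{leC}}(X)$, and \Cref{prop:multiplication} together with $V_{\mathrm{eff}} \in x^2 C^\infty(X^{\mathrm{sp}}_{\mathrm{res}})$ gives $V_{\mathrm{eff}} \in \Psi^{0,-2,-4}_{\mathrm{b,leC}}(X) = \Psi^{0,-2,-4,-2,-4}_{\mathrm{leC}}(X)$. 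Taking the coordinatewise maximum of these five-tuples confirms $\tilde{P}_0 \in \operatorname{Diff}^{2,0,-2,-1,-3}_{\mathrm{leC}}(X)$.

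For the formula modulo $\operatorname{Diff}^{2,-1,-3,-2,-4}_{\mathrm{leC}}(X)$, three of the terms listed above -- $xa_{00}(x^2\partial_x)^2$, $(n-1)x^3\partial_x$, and $V_{\mathrm{eff}}$ -- have every index $\leq$ the corresponding index of the mod class and hence may be discarded, leaving $-(x^2\partial_x)^2 + x^2\triangle_{\partial X} + L$. Substituting the expression from \Cref{prop:Lcomp} (whose remainder $\operatorname{Diff}^{1,-2,-5,-3,-6}_{\mathrm{leC}}(X)$ also lies in the mod class), it remains to show that
$$2i\bigl[(1+xa_{00})\bigl(1 - \tfrac{\sigma^2 a_{00}x}{2(\sigma^2+\mathsf{Z}x)}\bigr) - 1\bigr]\cdot x\sqrt{\sigma^2+\mathsf{Z}x}\cdot \bigl(x\partial_x - \tfrac{n-1}{2} + \tfrac{\mathsf{Z}x/4}{\sigma^2+\mathsf{Z}x}\bigr)$$
lies in $\operatorname{Diff}^{2,-1,-3,-2,-4}_{\mathrm{leC}}(X)$. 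The scalar prefactor expands as $xa_{00} - (1/2)a_{00}x\varrho_{\mathrm{zf}_{00}} + O(x^2\varrho_{\mathrm{zf}_{00}})$, which lies in $xC^\infty(X^{\mathrm{sp}}_{\mathrm{res}})$; the extra factor of $x = \varrho_{\mathrm{bf}_{00}}\varrho_{\mathrm{tf}_{00}}^2$ then shifts the leC-orders of the remaining leading piece $2ix\sqrt{\sigma^2+\mathsf{Z}x}(x\partial_x + O(1)) \in \operatorname{Diff}^{1,0,-2,-1,-3}_{\mathrm{leC}}(X)$ to $\operatorname{Diff}^{1,-1,-4,-2,-5}_{\mathrm{leC}}(X)$, which is comfortably inside the mod class. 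This completes the identification.

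The only mildly delicate point is ensuring that at $\mathrm{sf}$ and $\mathrm{ff}$ the $s$- and $\varsigma$-orders come out correctly: one has to recognize that multiplication by $x\sqrt{\sigma^2+\mathsf{Z}x}$ contributes shifts $(-1,-3)$ at $(\mathrm{sf},\mathrm{ff})$ in addition to the shifts $(-1,-3)$ at $(\mathrm{bf},\mathrm{tf})$, which lowers the naive sc-orders of b-operators like $x\partial_x$ to land in the expected class. This is the same computation that powers \Cref{prop:Lcomp}, so I do not anticipate a genuine obstacle.
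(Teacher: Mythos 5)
Your proposal is correct and follows essentially the same route as the paper: decompose $\tilde P_0$ via \Cref{prop:P00tilde_comp}, assign leC-orders term by term using \Cref{prop:diff->PsiDO} and \Cref{prop:multiplication}, discard everything below $\operatorname{Diff}_{\mathrm{leC}}^{2,-1,-3,-2,-4}$, and then simplify $L$ via \Cref{prop:Lcomp}. You actually spell out the final step — verifying that the prefactor difference $(1+xa_{00})(1-\tfrac12 a_{00}x\varrho_{\mathrm{zf}_{00}})-1 \in xC^\infty(X^{\mathrm{sp}}_{\mathrm{res}})$ pushes the remaining term down into the mod class — more explicitly than the paper, which just says ``Simplifying $L$ using \Cref{prop:Lcomp}''. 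One inconsequential remark: your stated membership $xa_{00}(x^2\partial_x)^2 \in \operatorname{Diff}_{\mathrm{leC}}^{2,-1,-3,-3,-6}(X)$ is not sharp at $\mathrm{ff}$ (the tight value from \Cref{prop:diff->PsiDO} is $\varsigma=-4$), but since both are $\leq (2,-1,-3,-2,-4)$ coordinatewise, this does not affect the argument.
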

\begin{proof}
	We have $L(\sigma) \in \operatorname{Diff}_{\mathrm{leC}}^{1,0,-2,-1,-3}(X)$ and 
	\begin{align}
		\begin{split} 
		- (x^2 \partial_x)^2 + x^2 \triangle_{\partial X} &\in \operatorname{Diff}_{\mathrm{leC}}^{2,0,-2,-2,-4}(X)  \\
		x a_{00}(x^2 \partial_x)^2 &\in \operatorname{Diff}_{\mathrm{leC}}^{2,-1,-4,-3,-6}(X)\\
		(n-1)x^3 \partial_x &\in \operatorname{Diff}_{\mathrm{leC}}^{1,-1,-3,-2,-4}(X) \\
		V_{\mathrm{eff}} &\in \operatorname{Diff}_{\mathrm{leC}}^{0,-2,-4,-2,-4}(X).
		\end{split} 
		\label{eq:misc_58a}
	\end{align}
	Thus, by \Cref{prop:P00tilde_comp}, $\tilde{P}_0 \in \operatorname{Diff}_{\mathrm{leC}}^{2,0,-2,-1,-3}(X)$, and 
	\begin{equation}
		\tilde{P}_0 = - (x^2 \partial_x)^2 + x^2 \triangle_{\partial X} + L \bmod \operatorname{Diff}_{\mathrm{leC}}^{2,-1,-3,-2,-4}(X). 
	\end{equation} 
	Simplifying $L$ modulo  $\operatorname{Diff}_{\mathrm{leC}}^{2,-1,-3,-2,-4}(X)$ using \Cref{prop:Lcomp}, we get \cref{eq:misc_v9o}. 
\end{proof}

Recall that $b_j,P_{\perp,j}$ were defined in \cref{eq:misc_000}.
\begin{proposition}
	\label{prop:P1tilde_comp_new}
	For some $\Upsilon_1,\ldots,\Upsilon_J \in S\operatorname{Diff}_{\mathrm{leC}}^{1,-1,-4,-2,-5}(X)$ which near $\partial X$ are given by $\Upsilon_j = i x^4 \Phi' b_j P_{\perp,j}$, we have 
	\begin{equation}
		\tilde{P}_1(\sigma) = P_1(\sigma) + \sum_{j=1}^J \Upsilon_j + R   
		\label{eq:misc_lk7}
	\end{equation}
	for some $R \in C^\infty([0,\infty)_{\sigma^2} ; \operatorname{Diff}^2(X))$ which is supported outside of some neighborhood $U\subset X$ of $\partial X$. 
	Thus, $\tilde{P}_1 = \{\tilde{P}_1(\sigma)\}_{\sigma\geq 0} \in S\operatorname{Diff}_{\mathrm{leC}}^{2,-1,-3,-2,-4}(X)$. If $P_1$ is classical to order $\beta_1>0$, then 
	\begin{equation}
		\tilde{P}_1 \in \operatorname{Diff}_{\mathrm{leC}}^{2,-1,-3,-2,-4}(X) + S\operatorname{Diff}_{\mathrm{leC}}^{2,-1-\beta_1,-3-2\beta_1,-2-\beta_1,-4-2\beta_1}(X). 
	\end{equation}
\end{proposition}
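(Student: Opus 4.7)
The plan is to decompose $P_1 = \sum_{j=1}^J (A_j + B_j + C_j)$ according to the three types of terms in \cref{eq:misc_000} (with $A_j = x^4 P_{\perp,j} b_j \partial_x$, $B_j = x^3 b'_j P_{\partial X,j}$, $C_j = x^2 b''_j Q_{\partial X,j}$), to compute $\tilde{P}_1 - P_1 = e^{-i\Phi}[P_1, e^{i\Phi}]$ term by term, and finally to verify the leC-orders via \Cref{prop:diff->PsiDO} and \Cref{prop:algebra}. Since $\Phi$ depends only on $x$ (and on $\sigma$) in the boundary-collar coordinates, any factor in $P_1$ whose differentiations lie purely in $\partial X$-directions commutes with multiplication by $e^{i\Phi}$. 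In particular, $B_j$ and $C_j$ commute with $e^{i\Phi}$ entirely (since $P_{\partial X,j}$ and $Q_{\partial X,j}$ act tangentially to $\partial X$, and the remaining factors $x^3 b'_j$, $x^2 b''_j$ are multiplication operators), so $e^{-i\Phi}B_j e^{i\Phi} = B_j$ and $e^{-i\Phi}C_j e^{i\Phi} = C_j$ near $\partial X$ and they contribute nothing to $\tilde{P}_1 - P_1$ there.

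The only nontrivial conjugation is that of $A_j$. Using $\partial_x \circ M_{e^{i\Phi}} = M_{e^{i\Phi}} \circ (\partial_x + i\Phi')$, together with the commutativity of $P_{\perp,j}$ (acting in $y$) with multiplication by functions of $x$ alone (namely $i\Phi'$ and $e^{\pm i\Phi}$), one obtains
\begin{equation}
	e^{-i\Phi} A_j e^{i\Phi} = A_j + i x^4 \Phi' P_{\perp,j} b_j
\end{equation}
near $\partial X$. Writing $P_{\perp,j} b_j = b_j P_{\perp,j} + P_{\perp,j}(b_j)$ with $P_{\perp,j}(b_j) \in S^0(X)$ a multiplication operator, the right-hand side becomes $i x^4 \Phi' b_j P_{\perp,j}$ modulo a term in $S\operatorname{Diff}_{\mathrm{leC}}^{0,-2,-5,-2,-5}(X) \subset S\operatorname{Diff}_{\mathrm{leC}}^{1,-1,-4,-2,-5}(X)$. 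One then defines $\Upsilon_j$ to equal $i x^4 \Phi' b_j P_{\perp,j}$ near $\partial X$ (extended by a cutoff to a globally defined operator in the same leC-class, which absorbs the $P_{\perp,j}(b_j)$-contribution), and collects the residual discrepancy outside a neighborhood of $\partial X$ (a smooth family of second-order differential operators on $X^\circ$) into $R \in C^\infty((0,\infty)_{\sigma^2}; \operatorname{Diff}^2(X))$ compactly supported in $X^\circ$.

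To verify $\Upsilon_j \in S\operatorname{Diff}_{\mathrm{leC}}^{1,-1,-4,-2,-5}(X)$ one computes $\Phi'$ explicitly: by design, differentiating \cref{eq:Phi} shows that the contribution from the $(1/\sigma)(\mathsf{Z}-\sigma^2 a_{00})\operatorname{arcsinh}$-term cancels precisely against part of the derivative of the $x^{-1}\sqrt{\sigma^2+\mathsf{Z}x-\sigma^2 a_{00}x}$-term, leaving
\begin{equation}
	\Phi'(x;\sigma) = -\frac{\sqrt{\sigma^2+\mathsf{Z}x-\sigma^2 a_{00}x}}{x^2} - \frac{i a}{2x}.
\end{equation}
Hence $x^4 \Phi'$ is classical on $X_{\mathrm{res}}^{\mathrm{sp}}$ with leading behavior $-x^2(\sigma^2+\mathsf{Z}x-\sigma^2a_{00}x)^{1/2} \sim \varrho_{\mathrm{bf}_{00}}^2 \varrho_{\mathrm{tf}_{00}}^5$, placing multiplication by it in $\Psi_{\mathrm{leC}}^{0,-2,-5,-2,-5}(X)$ via \Cref{prop:multiplication}. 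Combining with $b_j \in S\Psi_{\mathrm{leC}}^{0,0,0,0,0}$ and $P_{\perp,j} \in \Psi_{\mathrm{leC}}^{1,1,1,0,0}$ (the latter from \Cref{prop:diff->PsiDO} applied to $P_{\perp,j} \in \operatorname{Diff}_{\mathrm{b}}^1(X)$), the algebra \Cref{prop:algebra} yields the claimed class. The same order-composition applied to the original terms gives $A_j, B_j, C_j \in S\operatorname{Diff}_{\mathrm{leC}}^{2,-1,-3,-2,-4}(X)$ (with $C_j$ saturating), and since $\Upsilon_j$ and $R$ lie in this class as well, so does $\tilde{P}_1 = P_1 + \sum_j \Upsilon_j + R$.

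For the classicality refinement, split each of $b_j, b'_j, b''_j$ via \cref{eq:misc_a1o} into a smooth part in $C^\infty([0,\infty)_{\sigma^2}; C^\infty(X))$ and an $x^{\beta_1}$-weighted symbolic part in $C^\infty([0,\infty)_{\sigma^2}; x^{\beta_1} S^0(X))$, and rerun the computation on each piece. The smooth part contributes a genuinely classical leC-differential operator in $\operatorname{Diff}_{\mathrm{leC}}^{2,-1,-3,-2,-4}(X)$. The symbolic part picks up an extra factor $x^{\beta_1} = \varrho_{\mathrm{bf}_{00}}^{\beta_1} \varrho_{\mathrm{tf}_{00}}^{2\beta_1}$, which shifts the five leC-orders by $(0,-\beta_1,-2\beta_1,-\beta_1,-2\beta_1)$ to yield the stated second summand. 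The main obstacle is essentially bookkeeping: correctly tracking the five leC-orders through these compositions, in particular the nontrivial interplay between the $(\sigma^2+\mathsf{Z}x)^{1/2}$-dependence introduced by $\Phi'$ and the sc- versus b-decay orders at the four fibered faces of ${}^{\mathrm{leC}}\overline{T}^*X$ other than $\mathrm{zf}$; the commutator computation itself is elementary.
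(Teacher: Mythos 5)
Your proof is correct and follows essentially the same route as the paper's: both reduce the conjugation to the single nontrivial family of terms $x^4 P_{\perp,j} b_j \partial_x$ (you make explicit, while the paper leaves implicit, that the tangential $B_j$ and $C_j$ terms commute with $e^{i\Phi}$; the paper instead isolates these terms via the extension $P_{j,\mathrm{ext}}$ and a cutoff $\chi$), compute $e^{-i\Phi}\partial_x e^{i\Phi} = \partial_x + i\Phi'$, and track leC orders via \Cref{prop:diff->PsiDO} and \Cref{prop:algebra}, with the classicality refinement done by splitting the coefficients per \cref{eq:misc_a1o}. One small nuance: the commutator $[P_{\perp,j}, M_{b_j}]$ equals $M_{V(b_j)}$ where $V$ is only the vector-field part of $P_{\perp,j}$, not $M_{P_{\perp,j}(b_j)}$, but this discrepancy is a zeroth-order multiplication lying in a strictly better leC class and does not affect the conclusion.
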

\begin{proof}
	First observe that $\chi \tilde{P}_1 \in C^\infty([0,\infty)_{\sigma^2}; \operatorname{Diff}^2(X^\circ))$ for any $\chi \in C_{\mathrm{c}}^\infty (X^\circ)$.
	Now let $P_{j,\mathrm{ext}} \in C^\infty([0,\infty)_{\sigma^2}; S\operatorname{Diff}_{\mathrm{scb}}^{2,-1,-3}(X))$ be equal to $x^4 P_{\perp,j} b_j \partial_x$ near $\partial X$. Now define 
	\begin{equation}
		\Upsilon_j = e^{-i\Phi} [P_{j,\mathrm{ext}},e^{+i\Phi} ] = e^{-i\Phi} [ (1-\chi)P_{j,\mathrm{ext}},e^{+i\Phi} ] + e^{-i\Phi} [ \chi P_{j,\mathrm{ext}},e^{+i\Phi} ],
	\end{equation}
	where $\chi$ is identically equal to one in a sufficiently large open set such that $1-\chi$ is supported in a neighborhood for which \cref{eq:misc_000} applies.
	Evidently, we have $e^{-i\Phi} [ \chi P_{j,\mathrm{ext}},e^{+i\Phi} ] \in C^\infty([0,\infty)_{\sigma^2}; \operatorname{Diff}^2(X\backslash U))$ for some neighborhood $U \subset X$ of $\partial X$. On the other hand, $e^{-i\Phi} [ (1-\chi)P_{j,\mathrm{ext}},e^{+i\Phi} ] = i (1-\chi) x^4 \Phi' b_j P_{\perp,j}$, so $\Upsilon_j = e^{-i\Phi} [ P_{j,\mathrm{ext}},e^{+i\Phi} ]$ near $\partial X$. 
	
	We now write 
	\begin{align}
		\begin{split} 
		\tilde{P}_1 &= P_1 + e^{-i\Phi} [ (1-\chi) P_1,e^{+i\Phi}] + e^{-i\Phi} [ \chi P_1,e^{+i\Phi}]
		\\ 
		&= P_1 + \sum_{j=1}^J \Upsilon_j + e^{-i\Phi} [ \chi P_1,e^{+i\Phi}]. 
		\end{split} 
		\label{eq:misc_a12}
	\end{align}
	Set $R=e^{-i\Phi} [ \chi P_1,e^{+i\Phi}]$. Then \cref{eq:misc_lk7} holds, and $R \in C^\infty([0,\infty)_{\sigma^2} ; \operatorname{Diff}^2(X))$ is supported outside of some neighborhood $U\subset X$ of $\partial X$.
	
	We observe, from \cref{eq:misc_000} (and \Cref{prop:diff->PsiDO}), that 
	\begin{equation} 
		P_1 \in S\operatorname{Diff}_{\mathrm{leC}}^{2,-1,-4,-3,-6}(X) + S\operatorname{Diff}_{\mathrm{leC}}^{1,-1,-3,-2,-4}(X) \subseteq S\operatorname{Diff}_{\mathrm{leC}}^{2,-1,-3,-2,-4}(X).
	\end{equation} 
	Since $\Upsilon_1,\ldots,\Upsilon_J \in S\operatorname{Diff}_{\mathrm{leC}}^{1,-1,-4,-2,-5}(X)$ (and the same holds for $R$, trivially), we conclude that 
	\begin{equation} 
		\tilde{P}_1 \in S\operatorname{Diff}_{\mathrm{leC}}^{2,-1,-3,-2,-4}(X),
		\label{eq:misc_a10}
	\end{equation} 
	as claimed.  
	
	If, $b_j,b_j',b_{j}''$ satisfy \cref{eq:misc_a1o}, then the conclusion is similar, except now we can write $P_1 \in \operatorname{Diff}_{\mathrm{leC}}^{2,-1,-3,-2,-4}(X) + x^{\beta_1} S\operatorname{Diff}_{\mathrm{leC}}^{2,-1,-3,-2,-4}(X)$ and 
	\begin{equation} 
		\Upsilon_1,\ldots,\Upsilon_J \in \operatorname{Diff}_{\mathrm{leC}}^{1,-1,-4,-2,-5}(X) + x^{\beta_1} S\operatorname{Diff}_{\mathrm{leC}}^{1,-1,-4,-2,-5}(X),
	\end{equation} 
	which leads to the conclusion \cref{eq:misc_a12} as a strengthening of \cref{eq:misc_a10}. 
\end{proof}

\begin{proposition} 
	\label{prop:P2tilde_comp}
	Given $\delta>0$ such that $P_2 \in C^\infty([0,\infty)_{\sigma^2} ; S \operatorname{Diff}_{\mathrm{scb}}^{2,-1-\delta,-3/2-\delta}(X))$,
	\begin{equation} 
	\tilde{P}_2=\{\tilde{P}_2 (\sigma) \}_{\sigma\geq 0} \in S\operatorname{Diff}_{\mathrm{leC}}^{2,-1-\delta,-3-2\delta,-1-\delta,-3-2\delta}(X) .
	\label{eq:misc_p0o}
	\end{equation} 
	If $P_2$ is classical to order $(\beta_2,\beta_3)$, then we can write 
	\begin{equation} 
		\tilde{P}_2=\{\tilde{P}_2 (\sigma) \}_{\sigma\geq 0} \in \operatorname{Diff}_{\mathrm{leC}}^{2,-2,-4,-2,-4}(X) + S\operatorname{Diff}_{\mathrm{leC}}^{2,-1-\beta_2,-3-2\beta_2,-1-\beta_2,-3-2\beta_2}(X) + x^{3/2+\beta_3}S^0(X).
		\label{eq:misc_p02}
	\end{equation} 
\end{proposition}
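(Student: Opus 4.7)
The plan is to write $\tilde{P}_2 = P_2 + e^{-i\Phi}[P_2, e^{+i\Phi}]$ and verify each piece lies in the claimed Fr\'echet space separately. Arguing as at the beginning of the proof of \Cref{prop:P1tilde_comp_new}, I would introduce a cutoff $\chi$ supported near $\partial X$ and identically one in a smaller neighborhood; the contribution of $(1-\chi)P_2$ to $\tilde P_2$ is in $C^\infty([0,\infty)_{\sigma^2};\operatorname{Diff}^2(X^\circ))$ with support in $X^\circ$, hence trivially in the target. Near $\partial X$, decompose $\chi P_2 = \sum_{j=0}^{2}P_{2,j}$ as in the proof of \Cref{prop:diff->PsiDO}, with $P_{2,j} \in C^\infty([0,\infty)_{\sigma^2};S\operatorname{Diff}_{\mathrm{b}}^{j,\min\{-1-\delta-j,\,-3/2-\delta\}}(X))$, and write each $P_{2,j}$ as a sum of terms $f(x,y;\sigma)(x\partial_x)^{j_1}Q$ with $Q\in\operatorname{Diff}^{j_2}(\partial X)$ tangential, $j_1+j_2=j$, and $f$ a $\sigma^2$-smooth symbolic coefficient.

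Since $\Phi$ depends only on $x$ (and $\sigma$), it commutes with every tangential $Q$ and with multiplication by $f$, so the only nontrivial effect of conjugation is $e^{-i\Phi}(x\partial_x)^{j_1}e^{+i\Phi} = (x\partial_x+ix\Phi')^{j_1}$. Expanding by the noncommutative binomial formula and commuting each remaining $x\partial_x$ to the left produces, for each $P_{2,j}$, a finite sum of monomials in the multiplication operators $(x\partial_x)^k(ix\Phi')$ composed with fewer b-derivatives and the original coefficient $f$. The essential symbolic input comes from direct differentiation of \cref{eq:Phi}: using the attractivity condition \cref{eq:ac},
\begin{equation*}
ix\Phi' = -i\,\frac{(\sigma^2+\mathsf{Z}x-\sigma^2 a_{00}x)^{1/2}}{x} + \frac{a}{2} \in S^{0,1,2}_{\mathrm{cl,b,leC}}(X) \subset S^{0,1,2,1,2}_{\mathrm{leC}}(X),
\end{equation*}
and the same class contains $(x\partial_x)^k(ix\Phi')$ for every $k\in\bbN$ since application of $x\partial_x$ preserves this conormal class. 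Combining this with the placements of $P_{2,j}$ obtained from \Cref{prop:diff->PsiDO} (refined at $\mathrm{tf}$, where a coefficient $f\sim x^\alpha$ actually behaves like $\varrho_{\mathrm{tf}_{00}}^{2\alpha}$, strictly improving the crude bound) and the multiplicativity of the leC-grading from \Cref{prop:algebra}, a routine term-by-term accounting verifies that every such monomial lands in $S\operatorname{Diff}_{\mathrm{leC}}^{2,-1-\delta,-3-2\delta,-1-\delta,-3-2\delta}(X)$, as does $P_2$ itself.

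For the classicality statement \cref{eq:misc_p02}, split $P_2$ according to \cref{eq:misc_p1i} and apply the preceding analysis to each piece: the $\operatorname{Diff}_{\mathrm{sc}}^{2,-2}(X)$ piece conjugates into $\operatorname{Diff}_{\mathrm{leC}}^{2,-2,-4,-2,-4}(X)$ in direct analogy with the treatment of $\tilde P_0$ in \Cref{prop:P00tilde_comp,prop:tildeP0_orders} (the Coulomb cancellations of \cref{eq:vc1}--\cref{eq:vc3} are not needed here because $P_2$ carries no $-\sigma^2-\mathsf{Z}x$ term to cancel); the symbolic middle piece $S\operatorname{Diff}_{\mathrm{scb}}^{2,-1-\beta_2,-3/2-\beta_2}(X)$ is handled by the previous paragraph with $\beta_2$ in place of $\delta$; and the pure multiplication piece $x^{3/2+\beta_3}S^0(X)$ commutes with $e^{\pm i\Phi}$ and sits directly in the third summand. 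The main obstacle is the careful bookkeeping at the $\mathrm{tf}$ and $\mathrm{ff}$ faces, where the $1/2$-smooth structure, the $(\sigma^2+\mathsf{Z}x)^{1/2}$-leading behavior of $x\Phi'$, and the finer-than-\Cref{prop:diff->PsiDO} tf-decay of the coefficients of $P_{2,j}$ must all be reconciled against the target orders; but this is a direct verification in the spirit of \cref{eq:misc_58a}.
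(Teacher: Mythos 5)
Your overall strategy — decomposing $P_2$ near $\partial X$, noting that conjugation by $e^{\pm i\Phi}$ only affects the $x\partial_x$-derivatives, and tracking the leC-orders of the resulting monomials term by term — is essentially the paper's. However, there is a concrete error in the central symbolic computation that breaks the argument.

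You claim $ix\Phi' \in S_{\mathrm{cl,b,leC}}^{0,1,2}(X)$. This is wrong: the correct order at $\mathrm{tf}_{00}$ is $1$, not $2$. Explicitly, $ix\Phi' = -\tfrac{i}{x}\sqrt{\sigma^2+\mathsf{Z}x-\sigma^2 a_{00}x} + \tfrac{a}{2}$ has leading behavior $\tfrac{1}{x}(\sigma^2+\mathsf{Z}x)^{1/2} = \varrho_{\mathrm{bf}_{00}}^{-1}\varrho_{\mathrm{tf}_{00}}^{-1}$, since $x = \varrho_{\mathrm{bf}_{00}}\varrho_{\mathrm{tf}_{00}}^{2}$ and $(\sigma^2+\mathsf{Z}x)^{1/2}=\varrho_{\mathrm{tf}_{00}}$; hence $ix\Phi' \in S_{\mathrm{cl,b,leC}}^{0,1,1}(X) = S_{\mathrm{leC}}^{0,1,1,1,1}(X)$. (In fact $ix\Phi'\notin S_{\mathrm{cl,b,leC}}^{0,1,2}(X)$: an element of $\varrho_{\mathrm{bf}_{00}}^{-1}\varrho_{\mathrm{tf}_{00}}^{-1}C^\infty({}^{\mathrm{b,sp}}\overline{T}^*X)$ generically has a square-root singularity in $\varrho_{0,\mathrm{tf}_{00}}$ and so cannot be written as $\varrho_{\mathrm{bf}_{00}}^{-1}\varrho_{\mathrm{tf}_{00}}^{-2}$ times something smooth on ${}^{\mathrm{b,sp}}\overline{T}^*X$; the cl-spaces are not nested in $\ell$.)

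Even reading your claim charitably as a statement about the non-classical space $S_{\mathrm{leC}}^{0,1,2,1,2}(X)$ (which does contain $ix\Phi'$, but non-sharply), the bound is too weak to land in the target. Take the first-order piece $P_{2,1}$, whose coefficient lies in $x^{2+\delta}S^0(X)$. Conjugation produces the multiplication operator $x^{2+\delta}g\,(ix\Phi')$, and using your bound one only gets
\begin{equation*}
x^{2+\delta}\cdot S_{\mathrm{leC}}^{0,1,2,1,2}(X) \subset S_{\mathrm{leC}}^{0,-1-\delta,-2-2\delta,-1-\delta,-2-2\delta}(X),
\end{equation*}
which is not contained in $S\operatorname{Diff}_{\mathrm{leC}}^{2,-1-\delta,-3-2\delta,-1-\delta,-3-2\delta}(X)$: at both $\mathrm{ff}$ and $\mathrm{tf}$ the order $-2-2\delta$ exceeds the required $-3-2\delta$ by exactly one. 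With the correct order $S_{\mathrm{leC}}^{0,1,1,1,1}(X)$ for $ix\Phi'$ this term lands in $S_{\mathrm{leC}}^{0,-1-\delta,-3-2\delta,-1-\delta,-3-2\delta}(X)$ and the term-by-term accounting does close; this sharper placement is exactly what the paper reads off from \cref{eq:vc3} (where $ix^3\Phi'$ is shown to be $O(x^2) - i x(\sigma^2+\mathsf{Z}x)^{1/2}\cdot(\text{smooth})$, i.e., $x^3\Phi'\in S_{\mathrm{leC}}^{0,-1,-3,-1,-3}$, giving the paper's $x^{3+\delta}d\Phi'\in\Psi_{\mathrm{leC}}^{0,-1-\delta,-3-2\delta,-1-\delta,-3-2\delta}(X)$). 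Your strategy is otherwise sound; fix the order of $ix\Phi'$ and the rest should go through.
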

\begin{proof}
	We restrict attention to $\hat{X}=[0,\bar{x})_x\times \partial X$, which suffices by an argument similar to that in the proof of \Cref{prop:P1tilde_comp_new}.  
	
	Let $y=(y_1,\ldots,y_{n-1})$ denote local coordinates on  $\partial X$. In terms of these, we can write
	\begin{multline}
		P_2(\sigma) = x^{\delta}\Big[x^5 c \partial_x^2  +x^4 \sum_{j=1}^{n-1} c_{j}  \partial_x \partial_{y_j} + x^3 \sum_{j,k=1}^{n-1} c_{j,k} \partial_{y_j} \partial_{y_k}  +d  x^{3}\partial_x + \sum_{j=1}^{n-1} d_j x^{2} \partial_{y_j} + x^{3/2}e \Big]
		\label{eq:misc_p21}
	\end{multline} 
	where $\{c,d,e\} \cup \{c_j,d_j,c_{j,k}\}_{j,k=1}^{n-1}\subset C^\infty([0,\infty)_{\sigma^2};S^0(X))$ and $c_{j,k}=c_{k,j}$.  
	We then have 
	\begin{multline} 
		P_2(\sigma)-\tilde{P}_2(\sigma) = x^\delta \Big[ 2i x^5 c \Phi' \partial_x+ ix^4 \sum_{j=1}^{n-1} c_j ( \Phi' \partial_{y_j} + \partial_{y_j} \Phi \partial_x) + 2ix^3 \sum_{j,k=1}^{n-1} c_{j,k}\partial_{y_j} \Phi \partial_{y_k}   - x^5 c \Phi'\Phi'   \Big] 
		\\ +x^\delta  \Big[  i x^5c \Phi'' + x^4  \sum_{j=1}^{n-1}  c_j (i \partial_{y_j} \Phi' - \Phi' \partial_{y_j} \Phi) +x^3 \sum_{j,k=1}^{n-1} c_{jk} (i\partial_{y_j}\partial_{y_k} \Phi - \partial_{y_j} \Phi \partial_{y_k} \Phi )  \Big] \\
		+ x^\delta \Big[ idx^3 \Phi' + \sum_{j=1}^{n-1} id_j x^2 \partial_{y_j} \Phi \Big], 
		\label{eq:misc_kjh}
	\end{multline} 
	i.e., since $\Phi$ does not depend on tangential coordinates,
	\begin{equation}
		P_2(\sigma) - \tilde{P}_2(\sigma) = x^\delta \Big[ 2 i x^5 c \Phi' \partial_x+ i x^4 \sum_{j=1}^{n-1} c_j  \Phi' \partial_{y_j}     - x^5 c \Phi'\Phi' + i x^5 c \Phi'' + i d x^3 \Phi'  \Big].
	\end{equation}
	It follows from \cref{eq:vc3} that
	\begin{equation} 
		x^{3+\delta} d \Phi' \in  \Psi_{\mathrm{leC}}^{0,-1-\delta,-3-2\delta ,-1-\delta,-3-2\delta}(X),
		\label{eq:misc_nbe}
	\end{equation} 
	$x^{5+\delta} c \Phi'\Phi' \in  \Psi_{\mathrm{leC}}^{0,-1-\delta,-4-2\delta ,-1-\delta,-4-2\delta}(X)$ by \cref{eq:vc1}, and $x^{5+\delta} c \Phi'' \in  \Psi_{\mathrm{leC}}^{0,-2-\delta,-5-2\delta,-2-\delta,-5-2\delta}(X)$ by \cref{eq:vc3}. 
	On the other hand, 
	\begin{equation} 
		x^{5+\delta} \Phi' \partial_x ,x^{4+\delta} \Phi' \partial_{y_j} \in \Psi_{\mathrm{leC}}^{1,-1-\delta,-4-2\delta,-2-\delta,-5-2\delta}(X)
		\label{eq:misc_nbf}
	\end{equation} 
	by \cref{eq:vc3}. Combining the observations above, in particular \cref{eq:misc_nbe} and \cref{eq:misc_nbf}, we conclude that  
	\begin{equation} 
		P_2- \tilde{P}_2 \in S\operatorname{Diff}_{\mathrm{leC}}^{1,-1-\delta,-3-2\delta ,-1-\delta,-3-2\delta}(X).
		\label{eq:misc_po1}
	\end{equation}  
	
	By \cref{eq:misc_p21} (and \Cref{prop:diff->PsiDO}, \cref{eq:misc_ljs}), 
	\begin{multline} 
		P_2 \in S\operatorname{Diff}_{\mathrm{leC}}^{2,-1-\delta,-3-2\delta,-2-\delta,-4-2\delta}(X) + S\operatorname{Diff}_{\mathrm{leC}}^{0,-3/2-\delta,-3-2\delta,-3/2-\delta,-3-2\delta}(X) \\ \subset S\operatorname{Diff}_{\mathrm{leC}}^{2,-1-\delta,-3-2\delta ,-3/2-\delta,-3-2\delta}(X).
		\label{eq:misc_p11}
	\end{multline} 
	We conclude \cref{eq:misc_p0o} from \cref{eq:misc_po1} and \cref{eq:misc_p11}. 
	
	If \cref{eq:misc_p1i} holds, then instead of \cref{eq:misc_po1} we conclude 
	\begin{equation} 
		P_2- \tilde{P}_2 \in \operatorname{Diff}_{\mathrm{leC}}^{1,-2,-5 ,-2,-5}(X) + S\operatorname{Diff}_{\mathrm{leC}}^{1,-1-\beta_2,-3-2\beta_2 ,-1-\beta_2,-3-2\beta_2}(X), 
		\label{eq:misc_pom}
	\end{equation}  
	and instead of \cref{eq:misc_p11} we have 
	\begin{equation}
		P_2 \in \operatorname{Diff}_{\mathrm{leC}}^{2,-2,-4,-2,-4}(X) + S\operatorname{Diff}_{\mathrm{leC}}^{2,-1-\beta_2,-3-2\beta_2,-3/2-\beta_2,-3-2\beta_2}(X) + x^{3/2+\beta_3}S^0(X).
		\label{eq:misc_pos}
	\end{equation}
	\Cref{eq:misc_p02} follows from \cref{eq:misc_pom} and \cref{eq:misc_pos}.
\end{proof}

\begin{proposition} 
\label{prop:tilde_P_inc} 
$\tilde{P} \in \operatorname{Diff}_{\mathrm{leC}}^{2,0,-2,-1,-3}(X) + S\operatorname{Diff}_{\mathrm{leC}}^{2,-1,-3-2\delta,-1-\delta ,-3-2\delta}(X)$, with  
$\tilde{P} = \tilde{P}_0 \bmod S\operatorname{Diff}_{\mathrm{leC}}^{2,-1,-3-2\delta,-1-\delta ,-3-2\delta}(X)$. 
If $P_1,P_2$ are classical to orders $\beta_1$ and $(\beta_2,\beta_3)$ respectively, then
\begin{equation}
	\tilde{P} \in \operatorname{Diff}_{\mathrm{leC}}^{2,0,-2,-1,-3}(X) + x^{\beta_1}S\operatorname{Diff}_{\mathrm{leC}}^{2,-1,-3,-2 ,-4}(X) + x^{\beta_2} S\operatorname{Diff}_{\mathrm{leC}}^{2,-1,-3,-1,-3}(X) + x^{3/2+\beta_3} S^0(X).
	\label{eq:misc_l18}
\end{equation}
Moreover,
\begin{multline}
	\tilde{P} = - (x^2 \partial_x)^2 + x^2 \triangle_{\partial X} +2ix \sqrt{\sigma^2+\mathsf{Z} x} \Big( x \partial_x - \frac{n-1}{2} + \frac{\mathsf{Z}}{4} \frac{x}{\sigma^2+\mathsf{Z} x} \Big) \\ \bmod \operatorname{Diff}_{\mathrm{leC}}^{2,-1,-3,-2,-4}(X) + S \operatorname{Diff}_{\mathrm{leC}}^{2,-1,-3-2\delta,-1-\delta,-3-2\delta}(X). 
	\label{eq:misc_v9m}
\end{multline}
Thus, 
\begin{multline}
	\tilde{P} = - (x^2 \partial_x)^2 + x^2 \triangle_{\partial X} +2ix (\sigma^2+\mathsf{Z} x)^{1/2} x \partial_x \\ \bmod \operatorname{Diff}_{\mathrm{leC}}^{2,-1,-3,-1,-3}(X) + S \operatorname{Diff}_{\mathrm{leC}}^{2,-1,-3-2\delta,-1-\delta,-3-2\delta}(X). 
	\label{eq:misc_v9v}
\end{multline}
\end{proposition}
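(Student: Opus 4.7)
The plan is to assemble the three propositions immediately preceding: decompose $\tilde P = \tilde P_0 + \tilde P_1 + \tilde P_2$ (immediate from the linearity of conjugation and the definition of each piece), apply \Cref{prop:tildeP0_orders}, \Cref{prop:P1tilde_comp_new}, \Cref{prop:P2tilde_comp} in turn to each summand, and then combine the resulting memberships via the basic inclusions among the leC-symbol classes. No new analysis of $\tilde P$ is required beyond symbol bookkeeping, so the writeup is short.

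For the first assertion together with the identity $\tilde P \equiv \tilde P_0$ modulo $S\operatorname{Diff}_{\mathrm{leC}}^{2,-1,-3-2\delta,-1-\delta,-3-2\delta}(X)$: \Cref{prop:tildeP0_orders} places $\tilde P_0$ in the first summand $\operatorname{Diff}_{\mathrm{leC}}^{2,0,-2,-1,-3}(X)$; \Cref{prop:P1tilde_comp_new} places $\tilde P_1 \in S\operatorname{Diff}_{\mathrm{leC}}^{2,-1,-3,-2,-4}(X)$; and \Cref{prop:P2tilde_comp} places $\tilde P_2 \in S\operatorname{Diff}_{\mathrm{leC}}^{2,-1-\delta,-3-2\delta,-1-\delta,-3-2\delta}(X)$. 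Using $\delta \in (0,1/2)$ to weaken whichever indices need weakening, both $\tilde P_1$ and $\tilde P_2$ land in $S\operatorname{Diff}_{\mathrm{leC}}^{2,-1,-3-2\delta,-1-\delta,-3-2\delta}(X)$, which yields the two top-level claims.

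Under the strengthened classicality hypotheses, the second halves of \Cref{prop:P1tilde_comp_new} and \Cref{prop:P2tilde_comp} split each of $\tilde P_1, \tilde P_2$ into a classical part plus a symbolic remainder carrying extra $\beta_1$ (resp.\ $\beta_2$) of decay. The classical pieces sit inside $\operatorname{Diff}_{\mathrm{leC}}^{2,0,-2,-1,-3}(X)$ by elementary index comparison and so combine with $\tilde P_0$ in the first summand of \cref{eq:misc_l18}. The symbolic remainders $S\operatorname{Diff}_{\mathrm{leC}}^{2,-1-\beta_1,-3-2\beta_1,-2-\beta_1,-4-2\beta_1}(X)$ and $S\operatorname{Diff}_{\mathrm{leC}}^{2,-1-\beta_2,-3-2\beta_2,-1-\beta_2,-3-2\beta_2}(X)$ factor exactly as $x^{\beta_1}$ (resp.\ $x^{\beta_2}$) times $S\operatorname{Diff}_{\mathrm{leC}}^{2,-1,-3,-2,-4}(X)$ (resp.\ $S\operatorname{Diff}_{\mathrm{leC}}^{2,-1,-3,-1,-3}(X)$); this uses $x \in \Psi_{\mathrm{leC}}^{0,-1,-2,-1,-2}(X)$, which in turn comes from $\varrho_{\mathrm{bf}_{00}} = \varrho_{\mathrm{bf}}\varrho_{\mathrm{sf}}$ and $\varrho_{\mathrm{tf}_{00}} = \varrho_{\mathrm{ff}}\varrho_{\mathrm{tf}}$ so that $x = \varrho_{\mathrm{bf}_{00}}\varrho_{\mathrm{tf}_{00}}^2 = \varrho_{\mathrm{bf}}\varrho_{\mathrm{sf}}\varrho_{\mathrm{tf}}^2\varrho_{\mathrm{ff}}^2$. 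The residual $x^{3/2+\beta_3} S^0(X)$ contribution from \Cref{prop:P2tilde_comp} enters \cref{eq:misc_l18} unchanged.

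For the explicit mod identities \cref{eq:misc_v9m} and \cref{eq:misc_v9v}: substituting the expression for $\tilde P_0$ from \cref{eq:misc_v9o} into $\tilde P = \tilde P_0 + (\tilde P_1 + \tilde P_2)$ yields \cref{eq:misc_v9m} directly, with remainder in $\operatorname{Diff}_{\mathrm{leC}}^{2,-1,-3,-2,-4}(X) + S\operatorname{Diff}_{\mathrm{leC}}^{2,-1,-3-2\delta,-1-\delta,-3-2\delta}(X)$. To pass to the coarser \cref{eq:misc_v9v} I absorb the subleading terms $-(n-1)/2$ and $(\mathsf{Z}/4)x/(\sigma^2+\mathsf{Z}x)$, both preceded by the factor $2ix(\sigma^2+\mathsf{Z}x)^{1/2}$, into the enlarged remainder $\operatorname{Diff}_{\mathrm{leC}}^{2,-1,-3,-1,-3}(X)$. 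The only explicit computation needed is the bdf expansion $x(\sigma^2+\mathsf{Z}x)^{1/2} = \varrho_{\mathrm{bf}}\varrho_{\mathrm{sf}}\varrho_{\mathrm{tf}}^3\varrho_{\mathrm{ff}}^3$ and $x^2(\sigma^2+\mathsf{Z}x)^{-1/2} = \varrho_{\mathrm{bf}}^2\varrho_{\mathrm{sf}}^2\varrho_{\mathrm{tf}}^3\varrho_{\mathrm{ff}}^3$, which show that both multiplication operators lie in $\Psi_{\mathrm{leC}}^{0,-1,-3,-1,-3}(X) \subset \operatorname{Diff}_{\mathrm{leC}}^{2,-1,-3,-1,-3}(X)$. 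The main obstacle throughout is purely clerical: tracking five-tuples of leC-orders through all the inclusions is tedious, but presents no conceptual difficulty.
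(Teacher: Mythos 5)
Your proof is correct and follows the same route as the paper: decompose $\tilde P = \tilde P_0 + \tilde P_1 + \tilde P_2$, quote Propositions~\ref{prop:tildeP0_orders}, \ref{prop:P1tilde_comp_new}, \ref{prop:P2tilde_comp} for the three summands, and combine via elementary inclusions of leC-symbol classes (using $\delta < 1/2$ to align the symbolic remainders). Your extra computation justifying the passage from \cref{eq:misc_v9m} to \cref{eq:misc_v9v}, via the bdf factorizations $x(\sigma^2+\mathsf{Z}x)^{1/2} = \varrho_{\mathrm{bf}}\varrho_{\mathrm{sf}}\varrho_{\mathrm{tf}}^3\varrho_{\mathrm{ff}}^3$ and $x^2(\sigma^2+\mathsf{Z}x)^{-1/2} = \varrho_{\mathrm{bf}}^2\varrho_{\mathrm{sf}}^2\varrho_{\mathrm{tf}}^3\varrho_{\mathrm{ff}}^3$, is a welcome bit of explicit bookkeeping the paper leaves implicit behind the word ``Thus,'' but is not a different argument.
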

\begin{proof}
	We have $\tilde{P}(\sigma) =   \tilde{P}_0 + \tilde{P}_1+\tilde{P}_2$.
	As seen in \Cref{prop:P1tilde_comp_new} and \Cref{prop:P2tilde_comp}, $\tilde{P}_1 \in S\operatorname{Diff}_{\mathrm{leC}}^{2,-1,-3,-2,-4}(X)$ and $\tilde{P}_2 \in S\operatorname{Diff}_{\mathrm{leC}}^{2,-1-\delta,-3-2\delta,-1-\delta,-3-2\delta}(X)$, so 
	\begin{equation}
		\tilde{P}_1 + \tilde{P}_2  \in S\operatorname{Diff}_{\mathrm{leC}}^{2,-1,-3-2\delta,-1-\delta ,-3-2\delta}(X)
		\label{eq:misc_9ko}
	\end{equation}
	(where we are using $\delta<1/2$). Likewise, if $P_1,P_2$ are classical to orders $\beta_1$ and $(\beta_2,\beta_3)$ then \Cref{prop:P1tilde_comp_new} and \Cref{prop:P2tilde_comp} yield 
	\begin{multline}
		\tilde{P}_1 + \tilde{P}_2  \in \operatorname{Diff}_{\mathrm{leC}}^{2,-1,-3,-2,-4}(X)+ x^{\beta_1}S\operatorname{Diff}_{\mathrm{leC}}^{2,-1,-3,-2 ,-4}(X) + x^{\beta_2} S\operatorname{Diff}_{\mathrm{leC}}^{2,-1,-3,-1,-3}(X) \\ + x^{3/2+\beta_3} S^0(X).
	\end{multline}
	By \Cref{prop:tildeP0_orders}, $\tilde{P}_0 \in \operatorname{Diff}_{\mathrm{leC}}^{2,0,-2,-1,-3}(X)$, so $\tilde{P}$ is in the claimed spaces.
	
	Furthermore, by \cref{eq:misc_9ko}, $\tilde{P}=\tilde{P}_0 \bmod S\operatorname{Diff}_{\mathrm{leC}}^{2,-1,-3-2\delta,-1-\delta ,-3-2\delta}(X)$.  
	Combining with \Cref{prop:tildeP0_orders}, we get \cref{eq:misc_v9m}. 
\end{proof}

To conclude this discussion, we let 
\begin{equation}
	\operatorname{N}(\tilde{P}) = \{\operatorname{N}(\tilde{P})(\sigma)\}_{\sigma\geq 0}, \qquad \operatorname{N}(\tilde{P})(\sigma) = 2i x \sqrt{\sigma^2 +\mathsf{Z} x} \Big( x \partial_x - \frac{n-1}{2} + \frac{\mathsf{Z}}{4} \frac{x}{\sigma^2+\mathsf{Z}x}\Big)
	\label{eq:Ndef}
\end{equation}
denote the \emph{leC-normal operator}, defined initially near $\partial X$. To avoid technicalities, we extend $\operatorname{N}(\tilde{P})(\sigma)$ to a differential operator on $X^\circ$ such that, in any compact subset of $X^\circ$, $\smash{\operatorname{N}(\tilde{P})(\sigma)}$ depends smoothly on $E=\sigma^2$, all the way down to $\sigma = 0$. Thus:
\begin{propositionp}
	$\operatorname{N}(\tilde{P}) \in \operatorname{Diff}_{\mathrm{b,leC}}^{1,-1,-3}(X)$.
	\label{prop:normal_operator_order}
\end{propositionp}
The following proposition justifies the term ``leC-normal operator:''  
\begin{proposition}
	\label{prop:normal_error}
	$\operatorname{N}(\tilde{P}) - \tilde{P} \in S\operatorname{Diff}_{\mathrm{leC}}^{2,0,-2,-1-\delta,-3-2\delta}(X) \subseteq S\operatorname{Diff}_{\mathrm{b,leC}}^{2,-1-\delta,-3-2\delta}(X)$. 
	If $P_1,P_2$ are classical to orders $\beta_1$ and $(\beta_2,\beta_3)$ respectively, then 
	\begin{multline} 
		\operatorname{N}(\tilde{P}) - \tilde{P} \in \operatorname{Diff}_{\mathrm{leC}}^{2,0,-2,-2,-4}(X) +  x^{\beta_1}S\operatorname{Diff}_{\mathrm{leC}}^{2,-1,-3,-2 ,-4}(X) + x^{\beta_2} S\operatorname{Diff}_{\mathrm{leC}}^{2,-1,-3,-1,-3}(X) \\ + x^{3/2+\beta_3}S^0(X).
		\label{eq:misc_nep}
	\end{multline}  
\end{proposition}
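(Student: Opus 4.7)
The plan is to derive \Cref{prop:normal_error} as an essentially direct bookkeeping consequence of \Cref{prop:tilde_P_inc}. First I would rewrite \cref{eq:misc_v9m} by observing that the three oscillatory terms on the right-hand side are exactly those appearing in the definition \cref{eq:Ndef} of $\operatorname{N}(\tilde{P})$; this yields
\begin{equation*}
	\tilde{P} - \operatorname{N}(\tilde{P}) = -(x^2\partial_x)^2 + x^2 \triangle_{\partial X} + R, \qquad R \in \operatorname{Diff}_{\mathrm{leC}}^{2,-1,-3,-2,-4}(X) + S\operatorname{Diff}_{\mathrm{leC}}^{2,-1,-3-2\delta,-1-\delta,-3-2\delta}(X).
\end{equation*}
The task then reduces to placing each summand on the right into $S\operatorname{Diff}_{\mathrm{leC}}^{2,0,-2,-1-\delta,-3-2\delta}(X)$.

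For the two explicit b-Laplacian-type summands, I would apply \Cref{prop:diff->PsiDO} to $x^2\partial_x \in \operatorname{Diff}_{\mathrm{b}}^{1,-1}(X) = S\operatorname{Diff}_{\mathrm{scb}}^{1,0,-1}(X)$, together with the multigraded product structure from \Cref{prop:algebra}, to conclude $(x^2\partial_x)^2 \in \operatorname{Diff}_{\mathrm{leC}}^{2,0,-2,-2,-4}(X)$; the same proposition applied to $x^2\triangle_{\partial X} \in \operatorname{Diff}_{\mathrm{b}}^{2,-2}(X)$ gives the same containment. Both this space and the two remainder spaces defining $R$ lie inside $S\operatorname{Diff}_{\mathrm{leC}}^{2,0,-2,-1-\delta,-3-2\delta}(X)$: the binding index comparisons are $-4 \leq -3-2\delta$ and $-2 \leq -1-\delta$, which hold since $\delta \in (0,1/2)$. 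For the asserted further inclusion into $S\operatorname{Diff}_{\mathrm{b,leC}}^{2,-1-\delta,-3-2\delta}(X)$, I would use the identity $\Psi_{\mathrm{leC}}^{m,m+l,m+\ell,l,\ell}(X) = \Psi_{\mathrm{b,leC}}^{m,l,\ell}(X)$ to rewrite the target as $\Psi_{\mathrm{leC}}^{2,1-\delta,-1-2\delta,-1-\delta,-3-2\delta}(X)$, whence the inclusion is immediate from $0 \leq 1-\delta$ and $-2 \leq -1-2\delta$.

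For the classical refinement \cref{eq:misc_nep}, I would unwind $\tilde{P} - \operatorname{N}(\tilde{P}) = (\tilde{P}_0 - \operatorname{N}(\tilde{P})) + \tilde{P}_1 + \tilde{P}_2$ using the sharper statements available under the classicality hypotheses. \Cref{prop:tildeP0_orders} immediately gives $\tilde{P}_0 - \operatorname{N}(\tilde{P}) \in \operatorname{Diff}_{\mathrm{leC}}^{2,0,-2,-2,-4}(X)$ by the same absorption argument as above; \Cref{prop:P1tilde_comp_new} contributes $\operatorname{Diff}_{\mathrm{leC}}^{2,-1,-3,-2,-4}(X) + x^{\beta_1}S\operatorname{Diff}_{\mathrm{leC}}^{2,-1,-3,-2,-4}(X)$; and \Cref{prop:P2tilde_comp} contributes the remaining pieces. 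The only small identification that needs articulating is
\begin{equation*}
	S\operatorname{Diff}_{\mathrm{leC}}^{2,-1-\beta_2,-3-2\beta_2,-1-\beta_2,-3-2\beta_2}(X) = x^{\beta_2}\, S\operatorname{Diff}_{\mathrm{leC}}^{2,-1,-3,-1,-3}(X),
\end{equation*}
which follows from the fact that multiplication by $x^{\beta_2}$ defines an element of $\Psi_{\mathrm{b,leC}}^{0,-\beta_2,-2\beta_2}(X)$ (via \Cref{prop:multiplication} applied with $l = -\beta_2$, $\ell = -2\beta_2$), hence shifts the five leC-orders of any factor by $(0,-\beta_2,-2\beta_2,-\beta_2,-2\beta_2)$. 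Summing the three contributions produces \cref{eq:misc_nep} exactly. I do not anticipate any genuine obstacle: the proof is a careful orders-bookkeeping exercise built on top of the computations already completed in \Cref{prop:tildeP0_orders}--\Cref{prop:tilde_P_inc}, with the mildest subtlety being the correct identification of the weighted symbolic space in the classical refinement.
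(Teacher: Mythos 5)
Your proof is correct, and the bookkeeping checks out in each case; in particular the crucial index comparisons ($-4\leq -3-2\delta$, $-2\leq -1-\delta$, $0\leq 1-\delta$, $-2\leq -1-2\delta$, all using $\delta\in(0,1/2)$) and the weight-shifting identity for $x^{\beta}$ are applied correctly. This is essentially the same argument as the paper's — you invoke \cref{eq:misc_v9m} from \Cref{prop:tilde_P_inc} (and \cref{eq:misc_v9o} from \Cref{prop:tildeP0_orders}) where the paper instead decomposes via $L$ and cites \Cref{prop:Lcomp} directly for $\operatorname{N}(\tilde{P}) - L$, but since \cref{eq:misc_v9m} is itself derived from \Cref{prop:Lcomp}, this is the same computation packaged one lemma higher; the classical-refinement part is identical to the paper's.
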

\begin{proof}
	We have $\operatorname{N}(\tilde{P})-\tilde{P}(\sigma) =   (\operatorname{N}(\tilde{P})-\tilde{P}_0) - \tilde{P}_1-\tilde{P}_2$. We first check that $\operatorname{N}(\tilde{P}) - \tilde{P} \in S\operatorname{Diff}_{\mathrm{leC}}^{2,0,-2,-1-\delta,-3-2\delta}(X)$. 
	\begin{itemize}
		\item By \cref{eq:misc_9ko},  $\tilde{P}_1+\tilde{P}_2 \in S\operatorname{Diff}_{\mathrm{leC}}^{2,0,-2,-1-\delta,-3-2\delta}(X)$, and by \cref{eq:misc_58a} the same holds for $\tilde{P}_0 -L$, so it suffices to check that 
		\begin{equation} 
			\operatorname{N}(\tilde{P}) - L \in  \operatorname{Diff}_{\mathrm{leC}}^{2,0,-2,-1-\delta,-3-2\delta}(X). 
		\end{equation} 
		Indeed, by \Cref{prop:Lcomp}, 
		\begin{align}
			\begin{split} 
				\operatorname{N}(\tilde{P}) - L &\in \operatorname{Diff}_{\mathrm{leC}}^{1,-2,-5,-3,-6}(X) + x \operatorname{Diff}_{\mathrm{leC}}^{1,0,-2,-1,-3}(X) + \sigma^2 x /(\sigma^2+\mathsf{Z}x) \operatorname{Diff}_{\mathrm{leC}}^{1,0,-2,-1,-3}(X)\\
				&\subseteq \operatorname{Diff}_{\mathrm{leC}}^{1,-2,-5,-3,-6}(X) +  \operatorname{Diff}_{\mathrm{leC}}^{1,-1,-4,-2,-5}(X) +   \operatorname{Diff}_{\mathrm{leC}}^{1,-1,-4,-2,-5}(X) \\
				&= \operatorname{Diff}_{\mathrm{leC}}^{1,-1,-4,-2,-5}(X)  \\ 
				&\subset \operatorname{Diff}_{\mathrm{leC}}^{2,0,-2,-1-\delta,-3-2\delta}(X). 	
				\label{eq:misc_60x}
			\end{split} 
		\end{align}
	\end{itemize}
	If $P_1,P_2$ are classical to orders $\beta_1,(\beta_2,\beta_3)$, then we instead get \cref{eq:misc_nep}.
\end{proof}

We now consider the $L^2_{\mathrm{sc}}(X) = L^2(X,g_0)$-based adjoint $\tilde{P}^*$, defined such that 
\begin{equation}
	\int_X f^* \tilde{P} g \dd \mathrm{Vol}_{g_0} = \int_X(\tilde{P}^* f)^* g \dd \mathrm{Vol}_{g_0} 
\end{equation}
for all $f,g\in \calS(X)$. 
This is a family of differential operators and, by \Cref{prop:diff->PsiDO}, an element of  $S\operatorname{Diff}_{\mathrm{leC}}^{2,0,-2,-1,-3}(X)\subset \Psi_{\mathrm{leC}}^{2,0,-2,-1,-3}(X)$. We form the differential operators 
\begin{equation}
	\Re \tilde{P} = \frac{1}{2} ( \tilde{P}+\tilde{P}^*),\qquad 
	\Im \tilde{P} = \frac{1}{2i} (\tilde{P} - \tilde{P}^*) ,
\end{equation}
the self-adjoint and anti- self-adjoint parts of $\tilde{P}$. 
\begin{proposition} 
	\label{prop:imaginary_comp}
	For any exactly conic metric $g_0$, there exists a differential operator $R=R_{g_0} \in S \operatorname{Diff}_{\mathrm{leC}}^{2,-1-\delta,-3-2\delta,-1-\delta,-3-2\delta}(X)$ such that  
	\begin{equation} 
		\tilde{P}^*= \tilde{P}+ (P_1^*-P_1) + \sum_{j=1}^J (\Upsilon^*_j- \Upsilon_j) + R,
		\label{eq:misc_msc}
	\end{equation}
	and, near $\partial X$, 
	\begin{align}
		\Upsilon^*_j &\in x^2 \varrho_{\mathrm{tf}} S^0(X^{\mathrm{sp}}_{\mathrm{res}}) \operatorname{Diff}^1(\partial X) \subset S\operatorname{Diff}_{\mathrm{leC}}^{1,-1,-4,-2,-5}(X) \label{eq:misc_1mn} \\
		P_1^* &= \textstyle{\sum_{j=1}^J }\big[- x^2  P_{\perp,j}^*( b^*_jx^2 \partial_x + xb_{*,j})+ x^3 b_j^{\prime *}P_{\partial X,j}^* + x^2 b^{\prime\prime *}_j Q^*_{\partial X,j}\big]  \in S\operatorname{Diff}_{\mathrm{leC}}^{2,-1,-3,-2,-4}(X)
		\label{eq:misc_uwu}
	\end{align}
	for some $b_{*,1},\cdots,  b_{*,J}\in S^0(X)$.
\end{proposition}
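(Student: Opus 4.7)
The plan is to decompose $\tilde{P} = \tilde{P}_0 + \tilde{P}_1 + \tilde{P}_2$ as in \S\ref{sec:operator} and compute the $L^2_{\mathrm{sc}}(X)$-adjoint of each summand separately. For $\tilde{P}_2$, \Cref{prop:P2tilde_comp} gives $\tilde{P}_2 \in S\operatorname{Diff}_{\mathrm{leC}}^{2,-1-\delta,-3-2\delta,-1-\delta,-3-2\delta}(X)$; closure of the leC-calculus under adjoints (via \Cref{prop:diff->PsiDO}) places $\tilde{P}_2^* - \tilde{P}_2$ into this same space, where it is absorbed into $R$. For $\tilde{P}_1$, I invoke \Cref{prop:P1tilde_comp_new} to write $\tilde{P}_1 = P_1 + \sum_j \Upsilon_j + R'$ with $R' \in C^\infty([0,\infty)_{\sigma^2}; \operatorname{Diff}^2(X))$ supported away from $\partial X$, so that $R'^* - R'$ is trivially in the remainder class; this produces the terms $(P_1^* - P_1) + \sum_j(\Upsilon_j^* - \Upsilon_j)$ in \cref{eq:misc_msc}.

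To establish \cref{eq:misc_uwu}, I take the $L^2_{\mathrm{sc}}$-adjoint of each summand in \cref{eq:misc_000} using $(\partial_x)^* = -\partial_x + (n+1)/x$, exploiting that $P_{\perp,j},P_{\partial X,j},Q_{\partial X,j}$ act only in the tangential variables and hence commute with $x$-dependent multipliers up to lower-order tangential commutators; the zeroth-order discrepancies produced by the rearrangement get absorbed into the unspecified multiplier $xb_{*,j}$. For \cref{eq:misc_1mn}, recall $\Upsilon_j = ix^4\Phi' b_j P_{\perp,j}$ near $\partial X$. Since $\Im \Phi = -\tfrac12 a\log x$, we have $\overline{\Phi'} - \Phi' = ia/x$, and a direct computation using \cref{eq:vc3} yields
\begin{equation*}
	-i x^4 \overline{\Phi'} = i x^2 \sqrt{\sigma^2 + \mathsf{Z}x - \sigma^2 a_{00} x} + \tfrac12 a x^3.
\end{equation*}
Since $x/\sqrt{\sigma^2+\mathsf{Z}x}$ is a bounded smooth function on $X_{\mathrm{res}}^{\mathrm{sp}}$ (by attractivity), the second summand is absorbed into $x^2\varrho_{\mathrm{tf}} S^0(X_{\mathrm{res}}^{\mathrm{sp}})$; rearranging $P_{\perp,j}^*$ past $b_j^*$ via a tangential commutator then yields $\Upsilon_j^* \in x^2 \varrho_{\mathrm{tf}} S^0(X_{\mathrm{res}}^{\mathrm{sp}}) \operatorname{Diff}^1(\partial X)$, and the inclusion into $S\operatorname{Diff}_{\mathrm{leC}}^{1,-1,-4,-2,-5}(X)$ follows from reading off the leC-orders of $x^2\varrho_{\mathrm{tf}_{00}} = \varrho_{\mathrm{bf}_{00}}^2\varrho_{\mathrm{tf}_{00}}^5$.

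The main obstacle is showing that $\tilde{P}_0^* - \tilde{P}_0$ lies in the remainder class, since this is where the specific design of $\Phi$ has to pay off. Using \cref{eq:misc_837}, the piece $-(x^2\partial_x)^2 + (n-1)x^3\partial_x$ differs from $-(x^2\partial_x - \tfrac{n-1}{2}x)^2$ by a self-adjoint $x^2C^\infty(X)$-term, hence is self-adjoint modulo remainder; $x^2\triangle_{\partial X}$ is exactly self-adjoint; and the $-xa_{00}(x^2\partial_x)^2$ correction has anti-self-adjoint part gaining an extra power of $x$, landing in $\operatorname{Diff}_{\mathrm{leC}}^{1,-1,-4,-2,-5}(X)$. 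The operator $L$ in \cref{eq:misc_lk3} was built as $2i(1+xa_{00})T_{a_{00}}$ where $T_{a_{00}}$ is a perturbation of the exactly anti-self-adjoint operator $T = \sqrt{\sigma^2+\mathsf{Z}x}\,x^2\partial_x - \tfrac{n-1}{2}x\sqrt{\sigma^2+\mathsf{Z}x} + \tfrac{\mathsf{Z}x^2}{4\sqrt{\sigma^2+\mathsf{Z}x}}$ identified at the start of \S\ref{sec:operator}, and the commutator $[T,1+xa_{00}]$ together with the $\sigma^2 a_{00}x/(\sigma^2+\mathsf{Z}x)$-corrections in $T_{a_{00}} - T$ all carry extra factors placing $L + L^*$ into the remainder class; the subleading $-aa_{00}x^4\partial_x$ in $L$ contributes $-aa_{00}(n-3)x^3$ to $L+L^*$, which is a multiplication in $\operatorname{Diff}_{\mathrm{leC}}^{0,-3,-6,-3,-6}(X)$. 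Finally, the imaginary part of $V_{\mathrm{eff}}$ can be extracted from \cref{eq:misc_613} via the difference-of-squares simplification of the bracket therein, giving
\begin{equation*}
	\Im V_{\mathrm{eff}} = a_{00}(a+n-1)x^2\sqrt{A} - \frac{\sigma^4 a_{00}(1+xa_{00}) x^2}{2\sqrt{A}(\sigma^2+\mathsf{Z}x)}, \qquad A = \sigma^2+\mathsf{Z}x-\sigma^2 a_{00}x,
\end{equation*}
and both summands lie in $\operatorname{Diff}_{\mathrm{leC}}^{0,-2,-5,-2,-5}(X)$, which is contained in the remainder class for any $\delta<1$. Collecting the contributions yields $\tilde{P}_0^* - \tilde{P}_0 \in R$, completing the proof.
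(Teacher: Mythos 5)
Your overall strategy matches the paper's: decompose $\tilde{P}=\tilde{P}_0+\tilde{P}_1+\tilde{P}_2$, cite \Cref{prop:P2tilde_comp} to discard $\tilde{P}_2^*-\tilde{P}_2$, cite \Cref{prop:P1tilde_comp_new} to extract $P_1^*-P_1$ and $\sum_j(\Upsilon_j^*-\Upsilon_j)$ (the interior $R'$ is harmless), and then show $\tilde{P}_0^*-\tilde{P}_0$ lies in the remainder class. Your $\Upsilon_j^*$ computation via $\overline{\Phi'}$ is correct, and even a touch more careful than the text; your simplification $\Im V_{\mathrm{eff}} = a_{00}(a+n-1)x^2\sqrt{A} - \sigma^4 a_{00}(1+xa_{00})x^2/(2\sqrt{A}(\sigma^2+\mathsf{Z}x))$ with $A=\sigma^2+\mathsf{Z}x-\sigma^2 a_{00}x$ checks out and is more explicit than the paper's treatment.

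However, there is a genuine error in the $L$-term analysis: you assert that $L+L^*$ lands in the remainder class, and your computation of the subleading contribution ($-aa_{00}(n-3)x^3$ from $-aa_{00}x^4\partial_x$ plus its adjoint) confirms you really did compute $L+L^*$. But since $L = 2i(1+xa_{00})T_{a_{00}}$ with $T_{a_{00}}$ close to anti-self-adjoint, $L$ is close to \emph{self}-adjoint, so $L+L^*\approx 2L \in \operatorname{Diff}_{\mathrm{leC}}^{1,0,-2,-1,-3}(X)$, which is emphatically \emph{not} in $S\operatorname{Diff}_{\mathrm{leC}}^{2,-1-\delta,-3-2\delta,-1-\delta,-3-2\delta}(X)$ (the $\mathrm{sf}$-order $0\not\leq -1-\delta$ already fails). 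The relevant quantity is $L^*-L$, which for exactly anti-self-adjoint $T_{a_{00}}$ equals $2i[T_{a_{00}},1+xa_{00}]$ — a commutator in which a derivative falls on the multiplier $1+xa_{00}$ and gains the needed decay — plus the contributions from the small non-anti-self-adjoint part of $T_{a_{00}}$ and from the subleading $-aa_{00}x^4\partial_x$ (whose contribution to $L^*-L$ is $2aa_{00}x^4\partial_x - aa_{00}(n-3)x^3$, not $-aa_{00}(n-3)x^3$). This is exactly the route the paper takes, using the formal self-adjointness of $\operatorname{N}(\tilde{P})$ plus \Cref{prop:Lcomp} to express $L^*-L$ as a commutator modulo lower order. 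Fixing your argument requires replacing $L+L^*$ by $L^*-L$ throughout and recomputing the individual contributions accordingly; once done, the proof goes through.
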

\begin{proof}
	It clearly suffices to restrict attention to $\hat{X}=[0,\bar{x})_x\times \partial X$, that is to compute the formal adjoint of $\tilde{P}$ with respect to 
	\begin{equation}
		L^2_{\mathrm{sc}}(\hat{X}) = L^2([0,\bar{x})_x\times \partial X_y, x^{-(n+1)} \dd x\! \dd \mathrm{Vol}_{g_{\partial X}}(y) )
	\end{equation}
	up to the required order. 
	
	Begin with $\tilde{P}_0$, which we rewrite as 
	\begin{equation}
		\tilde{P}_0 = -(1+ x a_{00}) \Big( x^2 \partial_x - \frac{x(n-1)}{2} \Big)^2 + x^2 \triangle_{\partial X} - x^4(n-1) a_{00} \partial_x + L(\sigma) + W
	\end{equation}	
	for $W \in x^2 C^\infty(X^{\mathrm{sp}}_{\mathrm{res}})$.
	\begin{itemize}
		\item We see that $x^2 \triangle_{\partial X}=x^2 \triangle_{g_{\partial X}}$ is formally self-adjoint on $L^2_{\mathrm{sc}}([0,\bar{x})\times \partial X)$, and 
		\item the adjoint of $W$ is its complex conjugate $W^* \in x^2 C^\infty(X^{\mathrm{sp}}_{\mathrm{res}})$. 
		\item We also have $a_{00} x^4 \partial_x \in S\operatorname{Diff}_{\mathrm{leC}}^{1,-2,-5,-3,-6}(X)$, thus $(a_{00} x^4 \partial_x)^* \in S\operatorname{Diff}_{\mathrm{leC}}^{1,-2,-5,-3,-6}(X)$. 
		\item 
		On the other hand, $x^2 \partial_x - x(n-1)/2$ is formally anti- self-adjoint on $L^2_{\mathrm{sc}}([0,\bar{x})\times \partial X)$, so 
		\begin{align*}
			\Big[(1+ x a_{00}) \Big( x^2 \partial_x - \frac{x(n-1)}{2} \Big)^2 \Big]^* &= \Big( x^2 \partial_x - \frac{x(n-1)}{2} \Big)^2 (1+ x a_{00})\\ 
			&= (1+ x a_{00}) \Big( x^2 \partial_x - \frac{x(n-1)}{2} \Big)^2 + a_{00} \Big[\Big( x^2 \partial_x - \frac{x(n-1)}{2} \Big)^2, x \Big] \\ 
			&= (1+ x a_{00}) \Big( x^2 \partial_x - \frac{x(n-1)}{2} \Big)^2 +2 a_{00} x^4 \partial_x + (3-n) a_{00} x^3 . 
		\end{align*}
		\item 
		By the same computation opening this subsection, $\operatorname{N}(\tilde{P})$ is formally self-adjoint. So, by \Cref{prop:Lcomp}, 
		\begin{align}
			\begin{split} 
				L^* &= L + \Big[\operatorname{N}(\tilde{P}), (1+x a_{00}) \Big(1 - \frac{\sigma^2 a_{00} x}{2(\sigma^2+\mathsf{Z} x)}\Big) \Big] \bmod \operatorname{Diff}_{\mathrm{leC}}^{1,-2,-5,-3,-6}(X) \\
				&= L + 2ix \sqrt{\sigma^2+\mathsf{Z}x}\Big[  x\partial_x, (1+x a_{00}) \Big(1 - \frac{\sigma^2 a_{00} x}{2(\sigma^2+\mathsf{Z} x)}\Big) \Big] \bmod \operatorname{Diff}_{\mathrm{leC}}^{1,-2,-5,-3,-6}(X) \\
				&= L \bmod  \operatorname{Diff}_{\mathrm{leC}}^{1,-2,-5,-2,-5}(X) = L \bmod  \operatorname{Diff}_{\mathrm{leC}}^{1,-2,-4,-2,-4}(X).
			\end{split} 
		\end{align} 
	\end{itemize}
	So, 
	\begin{equation} 
		\tilde{P}_0^* = \tilde{P}_0 \bmod S\operatorname{Diff}_{\mathrm{leC}}^{1,-2,-4,-2,-4}(X).
		\label{eq:misc_876}
	\end{equation}  
	On the other hand, we trivially have from \Cref{prop:P2tilde_comp} that 
	\begin{equation} 
		\tilde{P}_2^* \in  S\operatorname{Diff}_{\mathrm{leC}}^{2,-1-\delta,-3-2\delta,-1-\delta,-3-2\delta}(X).
		\label{eq:misc_877}
	\end{equation}  

	Now define $R = \tilde{P}^* - \tilde{P}- ( P_1^* - P_1) - \sum_{j=1}^J (\Upsilon^*_j- \Upsilon_j)$, so that \cref{eq:misc_msc} holds by construction. Rearranging this definition,
	\begin{align}
		\begin{split} 
		R &= (\tilde{P}_0^* - \tilde{P}_0) + (\tilde{P}_1^* - \tilde{P}_1)+(\tilde{P}_2^*-\tilde{P}_2)- ( P_1^* - P_1) - \sum_{j=1}^J (\Upsilon^*_j- \Upsilon_j) \\
		&= (\tilde{P}_0^* - \tilde{P}_0) + (\tilde{P}_2^*-\tilde{P}_2) + \Big(\Big[ \tilde{P}_1 - P_1 - \sum_{j=1}^J \Upsilon_j \Big]^* - \Big[ \tilde{P}_1 - P_1 - \sum_{j=1}^J \Upsilon_j \Big] \Big) .
		\end{split}
	\end{align}
	Thus, using \cref{eq:misc_876}, \cref{eq:misc_877}, \Cref{prop:P2tilde_comp}, and \Cref{prop:P1tilde_comp_new}, 
	\begin{multline}
		R\in S\operatorname{Diff}_{\mathrm{leC}}^{1,-2,-4,-2,-4}(X) + S\operatorname{Diff}_{\mathrm{leC}}^{2,-1-\delta,-3-2\delta,-1-\delta,-3-2\delta}(X) \\
		= S\operatorname{Diff}_{\mathrm{leC}}^{2,-1-\delta,-3-2\delta,-1-\delta,-3-2\delta}(X), 
	\end{multline}
	as claimed.

	\Cref{eq:misc_1mn} follows from the observation that  $\Upsilon_j^* = - i x^4 \Phi' b_j^* P_{\perp,j}^*$ near $\partial X$, where $P_{\perp,j}^*$ is computed using the $L^2(\partial X,g_{\partial X})$-inner product. On the other hand, 
	\begin{align}
		\begin{split} 
		P_1^* &= \sum_{j=1}^J \Big[ P_{\perp,j}^* (x^2 b_j^* (x^2\partial_x)^*+[(x^2\partial_x)^* ,x^2 b^*_j]) + x^3 b^{\prime *}_jP_{\partial X,j}^* + x^2 b^{\prime\prime*}_j Q_{\partial X,j}^* \Big] \\
		&=  \sum_{j=1}^J \Big[ P_{\perp,j}^* (x^2 b^*_j (-x^2\partial_x + x (n-1))-x[x\partial_x ,x^2 b^*_j]) + x^3 b^{\prime *}_jP_{\partial X,j}^* + x^2 b^{\prime\prime*}_j Q_{\partial X,j}^* \Big] \\
		&=  \sum_{j=1}^J \Big[ P_{\perp,j}^* (x^2 b_j^* (-x^2\partial_x + x (n-1))-2x^3 b^*_j - x^3 (x\partial_x b^*_j)) + x^3 b^{\prime *}_jP_{\partial X,j}^* + x^2 b^{\prime\prime*}_j Q_{\partial X,j}^* \Big]
		\end{split}
	\end{align}
	near $\partial X$, where $b_j,b_j',b_j''$ are as in \cref{eq:misc_000}.
	\Cref{eq:misc_uwu} follows from this, for some choice of $b_{*,j}$. 
\end{proof}
We see from the above that 
\begin{multline}
	\Im \tilde{P} \in S \operatorname{Diff}_{\mathrm{leC}}^{1,-1,-4,-2,-5}(X) + S \operatorname{Diff}_{\mathrm{leC}}^{2,-1,-3,-2,-4}(X) + S \operatorname{Diff}_{\mathrm{leC}}^{2,-1-\delta,-3-2\delta,-1-\delta,-3-2\delta}(X) \\
	\subset S \operatorname{Diff}_{\mathrm{leC}}^{2,-1,-3,-1-\delta,-3-2\delta}(X).
\end{multline}
Thus, $\Im \tilde{P}$ is one order lower than $ \tilde{P}$ at sf and ff and slightly lower order at bf and tf, and we have been entirely explicit about the leading terms of $\Im \tilde{P}$ (namely $\Im P_1$ and $\Im \Upsilon_j$) at sf and ff, the remainder $(2i)^{-1} R$ being slightly more than one order lower than $\tilde{P}$ at both faces.

\section{The situation at zero energy}
\label{sec:0_operator}

In this section we consider $P(0)$ and $\tilde{P}(0)$ in some detail.
Specifically, we apply \cite[Theorem 1.1]{VasyLA} to study the strong limit 
\begin{equation} 
	R(E=0; \mathsf{Z}\pm i 0) = \operatorname{slim}_{\epsilon\to 0^+} R(E=0; \mathsf{Z}\pm i \epsilon)
\end{equation} 
used in the statement of \Cref{thm:main} to characterize the resolvent output at zero energy. The mapping properties of this operator will be used in \S\ref{sec:mainproof} in order to prove the smoothness of the output of the conjugated resolvent at positive energy  all the way down to zero energy (as used e.g. in \Cref{cor:main1}).

Recall that $x_{1/2}=2^{-1/2} x^{1/2}$. Then, from the form \cref{eq:P0} of $P_0$,  
\begin{align}
	\begin{split} 
	P_{0}(0) &= -(1+2x_{\frac{1}{2}}^2a_{00}(0)) (x_{\frac{1}{2}}^3 \partial_{x_{\frac{1}{2}}} )^2 + 4x_{\frac{1}{2}}^4 \triangle_{\partial X} + 2x_{\frac{1}{2}}^5  [ a(0)+n-1] \partial_{x_{\frac{1}{2}}} - 2\mathsf{Z} x_{\frac{1}{2}}^2 \\
	&= - x_{\frac{1}{2}}^2 (1+2x_{\frac{1}{2}}^2 a_{00}(0)) (x_{\frac{1}{2}}^2 \partial_{x_{\frac{1}{2}}} )^2 +4 x_{\frac{1}{2}}^4 \triangle_{\partial X} + 2x_{\frac{1}{2}}^5 \Big[ a(0)+n- \frac{3}{2} - x_{\frac{1}{2}}^2 a_{00}(0)\Big] \partial_{x_{\frac{1}{2}}} - 2\mathsf{Z} x_{\frac{1}{2}}^2.
	\end{split}
	\label{eq:misc_kuy}
\end{align} 
(The extra $-x_{1/2}^3\partial_{x_{1/2}}$ in \cref{eq:misc_kuy} is the source of the $\smash{(\sigma^2+\mathsf{Z} x)^{-1/4}}$ term in \cref{eq:u_decomp}.) Therefore 
\begin{multline} 
	x_{1/2}^{-1-n/2} P_{0}(0) x_{1/2}^{(n-2)/2} = - (1+ 2x_{1/2}^2 a_{00}(0)) (x_{\frac{1}{2}}^2 \partial_{x_{\frac{1}{2}}})^2 + 4x_{\frac{1}{2}}^2 \triangle_{\partial X} - 2\mathsf{Z} \\  + x_{\frac{1}{2}}^3 \Big[ 2a(0)  + n-1 - 2 (n-1) x_{\frac{1}{2}}^2 a_{00}(0) \Big] \partial_{x_{\frac{1}{2}}}     +(n-2)\Big[ a(0) + \frac{3n}{4} - \frac{3}{2} - \frac{n+2}{2} x_{\frac{1}{2}}^2 a_{00}(0) \Big] x_{\frac{1}{2}}^2.  
	\label{eq:misc_kyy}
\end{multline}
Thus,  $\smash{x_{1/2}^{-1-n/2} \tilde{P}_0(0)x_{1/2}^{-1+n/2}  \in \operatorname{Diff}_{\mathrm{b}}(X_{1/2})}$
has the same form as the conjugated spectral family at positive energy, except with respect to $x_{1/2}$ instead of $x$ (and with an extra short-range potential in \cref{eq:misc_kyy}). 

We generalize this observation:
\begin{proposition}
	\label{prop:0energy1}
	If $g$ is an asymptotically conic metric on $X$, and if we set $g_{1/2}  = x_{1/2}^{2} g$, then there exists some $V_{\mathrm{eff}} \in x^2 S^0(X)$ such that 
	\begin{equation}
		x_{1/2}^{1-n/2} \triangle_{g} x_{1/2}^{-1+n/2} = x_{1/2}^2 \triangle_{g_{1/2}} + V_{\mathrm{eff}}. 
	\end{equation}
	holds. 
\end{proposition}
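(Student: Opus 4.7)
The plan is to appeal to the conformal covariance of the scalar Laplacian. Under any conformal change $\tilde g = \Omega^2 g$ on an $n$-manifold (equivalently, this is the conformal invariance of the Yamabe operator $L_g = \triangle_g + \tfrac{n-2}{4(n-1)}R_g$), one has the identity
\begin{equation}
    \Omega^{-(n-2)/2}\,\triangle_g\,\Omega^{(n-2)/2} \;=\; \Omega^{2}\,\triangle_{\tilde g} \;+\; \Omega^{-(n-2)/2}\,\triangle_g\bigl(\Omega^{(n-2)/2}\bigr),
\end{equation}
which I would derive directly from the formula $\triangle_{\tilde g}u = \Omega^{-2}\triangle_g u - (n-2)\Omega^{-3}\langle d\Omega, du\rangle_g$ combined with the product rule for $\triangle_g(\Omega^{(n-2)/2}u)$, no geometric input needed. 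Specializing to $\Omega=x_{1/2}$ and noting $1-n/2 = -(n-2)/2$, the proposition is exhibited with the explicit choice
\begin{equation}
    V_{\mathrm{eff}} \;=\; x_{1/2}^{-(n-2)/2}\,\triangle_g\bigl(x_{1/2}^{(n-2)/2}\bigr).
\end{equation}
Thus the identity is essentially free; the content is then entirely in showing $V_{\mathrm{eff}}\in x^{2}S^0(X)$.

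For that, I would view $\triangle_g$ as an element of the symbolic scattering differential calculus of order $(2,-2)$, which is automatic from the form \cref{eq:g_ass3} of $g$. The key structural fact is that $x_{1/2}^{(n-2)/2} = 2^{-(n-2)/4}\,x^{(n-2)/4}$ is independent of the tangential variables and is a pure power of $x$, so tangential scattering vector fields $x\partial_{y_j}$ annihilate it, while the radial scattering vector field $x^{2}\partial_{x}$ raises the power of $x$ by exactly one. Consequently, any scattering differential operator of order $(2,-2)$ applied to $x_{1/2}^{(n-2)/2}$ produces an element of $x^{(n-2)/4+2}S^0(X)$; dividing by $x_{1/2}^{(n-2)/2}$ gives the required $x^{2}S^0(X)$.

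To keep the symbolic bookkeeping transparent, I would split $\triangle_g = \triangle_{g_0} + (\triangle_g - \triangle_{g_0})$. For the exactly conic part I would use the formula for $\triangle_{g_0}$ computed earlier in the excerpt ($-(x^{2}\partial_x)^{2}+x^{2}\triangle_{\partial X}+(n-1)x^{3}\partial_x$ near $\partial X$) and carry out the elementary evaluation on $x_{1/2}^{(n-2)/2}$, which produces the clean leading term $\tfrac{(n-2)(n-4)}{16}\,x^{2}\in x^{2}C^\infty(X)$. For the correction, the $a_{00}$ term plus the symbolic remainder in \cref{eq:g_ass3} contribute terms in $x^{1}\operatorname{Diff}_{\mathrm{sc}}^{2,-2}(X)+S^{-\alpha_1}\operatorname{Diff}_{\mathrm{sc}}^{2,-2}(X)$ (with $\alpha_1>1$), which after the same evaluation land in $x^{3}S^0(X)+S^{-\alpha_1+2+(n-2)/4}(X)x_{1/2}^{-(n-2)/2}\subset x^{2}S^0(X)$ with room to spare.

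The anticipated obstacle is purely bookkeeping, not conceptual: one must keep careful track of how the explicit local formula for $\triangle_g$ in terms of the components of $g^{ij}$ and $\sqrt{|g|}$ inherits the classical-plus-symbolic decomposition of \cref{eq:g_ass3}, so that $(\triangle_g-\triangle_{g_0})$ really is lower order at $\partial X$ in the scattering sense. Once this is granted (it follows from the standard coordinate expression for the Laplace--Beltrami operator differentiated in the scattering cotangent frame), no further cancellations are needed and the conclusion $V_{\mathrm{eff}}\in x^{2}S^0(X)$ is immediate.
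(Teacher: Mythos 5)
Your approach takes the same route as the paper: the conformal covariance identity
\begin{equation*}
	\Omega^{-(n-2)/2}\triangle_g\,\Omega^{(n-2)/2} = \Omega^2\triangle_{\Omega^2 g} + \Omega^{-(n-2)/2}\triangle_g\bigl(\Omega^{(n-2)/2}\bigr)
\end{equation*}
with $\Omega = x_{1/2}$ is a clean repackaging of the identity \eqref{eq:misc_vef} used in the paper, and your formula $V_{\mathrm{eff}} = x_{1/2}^{-(n-2)/2}\triangle_g(x_{1/2}^{(n-2)/2})$ is correct. The gap is in the decay argument.

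The claim that ``$\triangle_g$ is of order $(2,-2)$ in the scattering calculus'' is false in the paper's convention: $\triangle_{g_0} = -(x^2\partial_x)^2 + x^2\triangle_{\partial X} + (n-1)x^3\partial_x$ near $\partial X$ is a combination of products of two scattering vector fields with $S^0$ coefficients, so $\triangle_g \in S\operatorname{Diff}_{\mathrm{sc}}^{2,0}(X)$ (sc-decay $0$), not $S\operatorname{Diff}_{\mathrm{sc}}^{2,-2}(X)$ (compare \Cref{prop:form0}, which puts the corrections in $x\operatorname{Diff}^{2,0,-2}_{\mathrm{scb}}(X)$ etc., likewise with sc-decay $0$). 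If your premise were true the power-counting would go through, but it isn't; and conversely, a generic element of $\operatorname{Diff}_{\mathrm{sc}}^{2,0}(X)$ applied to $x^a$ produces only $x^a S^0(X)$ (from its zeroth-order part), not $x^{a+2}S^0(X)$, so ``sc-order $(2,0)$'' is also not enough. The fact you actually need is $\triangle_g \in S\operatorname{Diff}_{\mathrm{b}}^{2,-2}(X) = x^2 S\operatorname{Diff}_{\mathrm{b}}^{2,0}(X)$ (stated in the paper). Since b-vector fields $x\partial_x$, $\partial_y$ preserve each $x^aS^0(X)$, this immediately yields $\triangle_g(x^{(n-2)/4}) \in x^{(n-2)/4+2}S^0(X)$ and hence $V_{\mathrm{eff}}\in x^2 S^0(X)$; the tangential/radial split is then superfluous. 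What you were reaching for is precisely that the coefficient of $(x^2\partial_x)^j$ in $\triangle_g$ decays like $x^{2-j}$, which is the b-filtration, not the sc-filtration. The paper verifies this by a direct local-coordinate evaluation of $V_{\mathrm{eff}}$ rather than citing calculus membership, but your strategy works once the correct filtration is used.

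Two small arithmetic points, neither of which affects the conclusion once the filtration is fixed: the leading constant from $\triangle_{g_0}$ acting on $x^{(n-2)/4}$ is $\bigl[-\tfrac{(n-2)(n+2)}{16} + \tfrac{(n-1)(n-2)}{4}\bigr]x^2 = \tfrac{3(n-2)^2}{16}x^2$, not $\tfrac{(n-2)(n-4)}{16}x^2$; and the exponent in $S^{-\alpha_1+2+(n-2)/4}(X)\,x_{1/2}^{-(n-2)/2}$ does not simplify to something visibly in $x^2 S^0(X)$ as written, since that product equals $x^{\alpha_1-2}S^0(X)$ in the $S^\bullet$ convention, which only lies in $x^2 S^0(X)$ when $\alpha_1\geq 4$. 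Applying $x^{\alpha_1}S\operatorname{Diff}_{\mathrm{b}}^{2,-2}(X)$ to $x^{(n-2)/4}$ and dividing gives $x^{2+\alpha_1}S^0(X)$, which is what you want.
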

\begin{proof}
	We first want to show that $x_{1/2}^{1-n/2} \triangle_{g} x_{1/2}^{-1+n/2} - x_{1/2}^2 \triangle_{g_{1/2}}$ is zeroth order (and therefore a function on $X^\circ$). Indeed, $\triangle_g = x_{1/2}^2 \triangle_{g_{1/2}} + (n-2) x_{1/2} \nabla_{g_{1/2}} x_{1/2}$, so 
	\begin{equation}
		x_{1/2}^{1-n/2} \triangle_{g} x_{1/2}^{-1+n/2} = x_{1/2}^2 \triangle_{g_{1/2}} + x_{1/2}^{3-n/2} \triangle_{g_{1/2}} x_{1/2}^{-1+n/2} + (n-2) x_{1/2} g_{1/2}(\mathrm{d} x_{1/2} , \mathrm{d} x_{1/2}^{(n-2)/2}). 
	\end{equation}
	It is therefore the case that $x_{1/2}^{1-n/2} \triangle_{g} x_{1/2}^{-1+n/2} - x_{1/2}^2 \triangle_{g_{1/2}} = V_{\mathrm{eff}}$ for 
	\begin{equation}
		V_{\mathrm{eff}} = x_{1/2}^{3-n/2} \triangle_{g_{1/2}} x_{1/2}^{-1+n/2} + (n-2) x_{1/2} g_{1/2}(\mathrm{d} x_{1/2} , \mathrm{d} x_{1/2}^{(n-2)/2}). 
		\label{eq:misc_vef}
	\end{equation}
	Since $g_{1/2}$ is an asymptotically conic metric on $X_{1/2}$ (see below), the fact that $V_{\mathrm{eff}} \in x^2 S^0(X)$ can be read off \cref{eq:misc_vef}, but we check in local coordinates. 
	
	It suffices to restrict attention to a neighborhood of $\partial X$. Let $y_1,\ldots,y_{n-1}$ denote a local system of coordinates on $\partial X$. Then, using $g=x^{-2}_{1/2} g_{1/2}$, 
	\begin{align}
		\begin{split} 
		\triangle_g &= -\frac{x^{n}_{1/2}}{ |g_{1/2}|^{1/2} } \partial_i (x_{1/2}^{2-n} |g_{1/2}|^{1/2} g_{1/2}^{ij} \partial_j) \\
		&=  -\frac{x^{2}_{1/2}}{ |g_{1/2}|^{1/2} } \partial_i ( |g_{1/2}|^{1/2} g_{1/2}^{ij} \partial_j) + (n-2) x_{1/2} g_{1/2}^{0i}\partial_i,
		\end{split} 
		\label{eq:misc_rh5}
	\end{align}
	where $\partial_0 = \partial_{x_{1/2}}$ and $\partial_i = \partial_{y_i}$ for $i=1,\ldots,n-1$. 
	
	Conjugating the right-hand side of \cref{eq:misc_rh5} by $x_{1/2}^{(n-2)/2}$, we see that 
	\begin{equation}
		x_{1/2}^{1-n/2} \triangle_{g} x_{1/2}^{-1+n/2}
		= x_{1/2}^2 \triangle_{g_{1/2}} -  \frac{x_{1/2}}{|g_{1/2}|^{1/2}} \frac{n-2}{2} \partial_i(|g_{1/2}|^{1/2} g_{1/2}^{i0}) + 4^{-1}n(n-2) g_{1/2}^{00}.
	\end{equation}
	Since $g_{1/2}^{00} \in x_{1/2}^4 S^0(X_{1/2}) = x^2 S^0(x)$, $ 4^{-1}n(n-2) g_{1/2}^{00}\in x_{1/2}^4 S^0(X_{1/2})$. 
	
	Likewise, we see that $|g_{1/2}|^{-1/2} \partial_0 |g_{1/2}|^{1/2} \in x_{1/2}^{-1} S^0(X_{1/2})$, $|g_{1/2}|^{-1/2} \partial_i |g_{1/2}|^{1/2} \in  S^0(X_{1/2})$ for $i\neq 0$, and 
	\begin{equation} 
		\partial_0 g^{00}_{1/2},\partial_i g^{i0}_{1/2}  \in x_{1/2}^3 S^0(X_{1/2})
	\end{equation} 
	for $i\neq 0$. So,
	\begin{equation}
		V_{\mathrm{eff}} = -  \frac{x_{1/2}}{|g_{1/2}|^{1/2}} \frac{n-2}{2} \partial_i(|g_{1/2}|^{1/2} g_{1/2}^{i0}) + 4^{-1}n(n-2) g_{1/2}^{00}\in x_{1/2}^4 S^0(X_{1/2}) =  x^2 S^0(X) . 
	\end{equation} 
\end{proof}

Observe that if $g$ is an asymptotically conic metric on $X$, then $g_{1/2}$ is an asymptotically conic metric on $X_{1/2}$. Indeed, $g_{1/2}$ is certainly a Riemannian metric on $\smash{X^\circ=X_{1/2}^\circ}$, and it is a sum of $\smash{x_{1/2}^2 g_0}$, which is exactly conic on $X_{1/2}$, and terms in 
\begin{align}
	x^2 C^\infty(X; {}^{\mathrm{sc}}\!\operatorname{Sym}^2 T^* X) &\subset x_{1/2}^2C^\infty(X; {}^{\mathrm{sc}}\! \operatorname{Sym}^2 T^* X_{1/2}), \\
	x^{2+\delta}S^0(X; {}^{\mathrm{sc}}\!\operatorname{Sym}^2 T^* X) &\subset x_{1/2}^{2+2\delta}S^0(X; {}^{\mathrm{sc}}\! \operatorname{Sym}^2 T^* X_{1/2}).
\end{align} 
Consequently, by \Cref{prop:0energy1}, if $P$ is the spectral family of an attractive Coulomb-like Schr\"odinger operator, then 
\begin{equation} 
	x^{-1-n/2}_{1/2} P(0) x^{-1+n/2}_{1/2} = x_{1/2}^{-2} (x^{-1+n/2}_{1/2})^{-1} P(0) x^{-1+n/2}_{1/2}
\end{equation} 
is a member $P_{\mathrm{zero}}(2\mathsf{Z})=\triangle_{g_{1/2}} - 2\mathsf{Z} + W$ of the spectral family $\{P_{\mathrm{zero}}(\zeta)=P_{\mathrm{zero}}(0)-\zeta\}_{\zeta\geq 0}$ of a Schr\"odinger operator 
\begin{equation}
	P_{\mathrm{zero}}(0)=\triangle_{g_{1/2}} + W
\end{equation}
on $X_{1/2}$, where the potential $W \in x S^0(X) = x_{1/2}^2 S^0(X_{1/2})$ is short-range.

Thus, $P_{\mathrm{zero}}(2\mathsf{Z})$ satisfies the hypotheses of \cite[\S3]{VasyLA}, with $\smash{2^{1/2}\mathsf{Z}^{1/2}}$ in place of $\sigma$ and $X_{1/2}$ in place of $X$. 
Moreover, as seen from \cref{eq:osc_0} with $a=0$, the phase $\Phi(-;0)$ is just that used by Vasy's in his conjugation. Thus, $\tilde{P}(0)$ has the form of Vasy's conjugated operator (with $2^{1/2}\mathsf{Z}^{1/2}$ in place of $\sigma$ and $X_{1/2}$ in place of $X$).
In order to denote the $\mathsf{Z}$ dependence of $P(0)$ and $\tilde{P}(0)$, we write 
\begin{equation} 
	P(0)=P(0;\mathsf{Z}) = x_{1/2}^{1+n/2}P_{\mathrm{zero}}(2\mathsf{Z}) x_{1/2}^{1-n/2}
\end{equation} 
and $\tilde{P}(0)= \tilde{P}(0;\mathsf{Z})$. For $\epsilon>0$, let $P(0;\mathsf{Z}+i\epsilon)$ denote $P(0)$ with $\mathsf{Z}$ replaced by $\mathsf{Z}+i\epsilon$. 
Since $\calS(X_{1/2})=\calS(X)$ and $\calS'(X_{1/2})=\calS'(X)$, the limiting absorption principle (as in \cite{MelroseSC}) applies in the following form: 
	for $\epsilon>0$, the resolvent
	\begin{equation} 
		R_0(2\mathsf{Z}+2i\epsilon)= x_{1/2}^{-1-n/2} R(0;\mathsf{Z}+i\epsilon) x_{1/2}^{(n-2)/2} : \calS(X)\to \calS'(X) 
	\end{equation} 
	of $P_{\mathrm{zero}}(0)$ (evaluated at ``energy'' $\zeta = 2 \mathsf{Z}+2i\epsilon$) -- 
	defined e.g. via the functional calculus --
	admits a strong limit $\smash{x_{1/2}^{-1-n/2} R(0;\mathsf{Z}+i0) x_{1/2}^{-1+n/2}}: \calS(X)\to \calS'(X)$. Using \cite[Theorem 1.1]{VasyLA}, we can construct this resolvent as a map between suitable Sobolev spaces using the conjugated perspective:

\begin{proposition}
	\label{prop:LA_at_0}
	If $P(\sigma)$ is the spectral family of an attractive Coulomb-like Schr\"odinger operator on $X$, then, for any $m,\varsigma,\ell \in \bbR$ satisfying $\ell<-3/2<\varsigma$, 
	\begin{multline} 
		\tilde{P}(0;\mathsf{Z}) :\{ u \in H_{\mathrm{scb}}^{m,\varsigma+n/2,\ell+n/2}(X_{1/2}) :  \tilde{P}(0)   u \in H_{\mathrm{scb}}^{m-2,\varsigma+3+n/2, \ell+3+n/2}(X_{1/2}) \}   \\ \to H_{\mathrm{scb}}^{m-2,\varsigma+3+n/2, \ell+3+n/2}(X_{1/2})
	\end{multline} 
	is invertible, and the inverse 
	\begin{equation} 
		\tilde{R}_+(0;\mathsf{Z}):H_{\mathrm{scb}}^{m-2,\varsigma+3+n/2, \ell+3+n/2}(X_{1/2}) \to H_{\mathrm{scb}}^{m,\varsigma+n/2,\ell+n/2}(X_{1/2})
	\end{equation} 
	is related to $R(0;\mathsf{Z}+i0)$ by the formula $e^{+i\Phi(-;0)} \tilde{R}_+(0;\mathsf{Z}) e^{-i\Phi(-;0)} f = R(0;\mathsf{Z}+i0) f$, which holds for all $f\in \calS(X)$. 
\end{proposition}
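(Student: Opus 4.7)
The strategy is to directly reduce to a single application of \cite[Theorem 1.1]{VasyLA} (hereafter ``Vasy's theorem''), applied to the auxiliary Schr\"odinger operator $P_0$ on $X_{1/2}$ at the positive spectral parameter $\sqrt{2\mathsf{Z}}$. As noted in the paragraphs preceding the proposition, $P_0(0) = \triangle_{g_{1/2}} + W$ is a Schr\"odinger operator on $X_{1/2}$ with a short-range perturbation $W\in x_{1/2}^2 S^0(X_{1/2})$ of the Laplacian of the asymptotically conic metric $g_{1/2} = x_{1/2}^2 g$. Thus $P_0$ satisfies the structural hypotheses required by Vasy's theorem. The main task is just bookkeeping: identifying the phase, the conjugated operator, and the scb-Sobolev spaces on $X_{1/2}$ with their counterparts in the statement of the proposition.

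First I would verify the phase identification: using $x = 2 x_{1/2}^2$, a direct computation gives
\begin{equation}
	\Phi(x;0) = 2\sqrt{\mathsf{Z}/x} = \frac{\sqrt{2\mathsf{Z}}}{x_{1/2}},
\end{equation}
which is precisely Vasy's phase for the conjugation at spectral parameter $\sqrt{2\mathsf{Z}}>0$ on $X_{1/2}$ (with $a=0$, since the $a\log x$ correction vanishes at $\sigma=0$). Since $\Phi(-;0)$ is a multiplication operator commuting with $x_{1/2}$, the identity $P(0;\mathsf{Z}) = x_{1/2}^{1+n/2} P_0(2\mathsf{Z}) x_{1/2}^{1-n/2}$ passes through the conjugation to give
\begin{equation}
	\tilde{P}(0;\mathsf{Z}) = x_{1/2}^{1+n/2}\, \tilde{P}_0(\sqrt{2\mathsf{Z}})\, x_{1/2}^{1-n/2},
	\label{eq:plan_conj}
\end{equation}
where $\tilde{P}_0(\sqrt{2\mathsf{Z}}) = e^{-i\Phi(-;0)} P_0(2\mathsf{Z}) e^{i\Phi(-;0)}$ is Vasy's conjugated spectral family evaluated at the (positive, fixed) energy $2\mathsf{Z}$.

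Next I would invoke Vasy's theorem for $\tilde{P}_0(\sqrt{2\mathsf{Z}})$ on $X_{1/2}$: for any $m,\mathsf{s},\mathsf{l} \in \mathbb{R}$ with $\mathsf{l} < -1/2 < \mathsf{s}$, the map
\begin{equation}
	\tilde{P}_0(\sqrt{2\mathsf{Z}}) : \{v \in H_{\mathrm{scb}}^{m,\mathsf{s},\mathsf{l}}(X_{1/2}) : \tilde{P}_0(\sqrt{2\mathsf{Z}}) v \in H_{\mathrm{scb}}^{m-2,\mathsf{s}+1,\mathsf{l}+1}(X_{1/2})\} \to H_{\mathrm{scb}}^{m-2,\mathsf{s}+1,\mathsf{l}+1}(X_{1/2})
\end{equation}
is invertible. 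Substituting $\mathsf{s} = \varsigma+1$ and $\mathsf{l} = \ell+1$ converts Vasy's threshold $\mathsf{l}<-1/2<\mathsf{s}$ into the threshold $\ell<-3/2<\varsigma$ appearing in the statement. The factorization \cref{eq:plan_conj} together with the fact that multiplication by $x_{1/2}^\alpha$ is a continuous bijection $H_{\mathrm{scb}}^{m,\mathsf{s},\mathsf{l}}(X_{1/2}) \to H_{\mathrm{scb}}^{m,\mathsf{s}+\alpha,\mathsf{l}+\alpha}(X_{1/2})$ immediately translates this to invertibility of $\tilde{P}(0;\mathsf{Z})$ between the spaces claimed in the proposition, with inverse $\tilde{R}_+(0;\mathsf{Z}) = x_{1/2}^{-1+n/2} \tilde{R}_0(\sqrt{2\mathsf{Z}}) x_{1/2}^{-1-n/2}$, where $\tilde{R}_0(\sqrt{2\mathsf{Z}})$ is the inverse provided by Vasy.

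Finally, to identify $e^{+i\Phi(-;0)} \tilde{R}_+(0;\mathsf{Z}) e^{-i\Phi(-;0)} f$ with $R(0;\mathsf{Z}+i0)f$ for $f \in \mathcal{S}(X)$, I would argue by uniqueness. By construction, $u := e^{+i\Phi(-;0)} \tilde{R}_+(0;\mathsf{Z}) e^{-i\Phi(-;0)} f$ solves $P(0;\mathsf{Z})u = f$, and by \cref{eq:plan_conj} and Vasy's theorem, $e^{-i\Phi(-;0)} u$ lies in a scb-Sobolev space corresponding (via the $x_{1/2}^{1-n/2}$ factor) to Vasy's outgoing class, which is exactly the conjugated form of the outgoing Sommerfeld radiation condition for $P_0$ at energy $2\mathsf{Z}$. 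But $R(0;\mathsf{Z}+i0)f = x_{1/2}^{1+n/2} R_0(2\mathsf{Z}+i0)(x_{1/2}^{-1+n/2}f)$ by construction of $R(0;\mathsf{Z}\pm i0)$ as a strong limit, and $R_0(2\mathsf{Z}+i0)(x_{1/2}^{-1+n/2}f)$ is the unique outgoing solution of the Helmholtz equation for $P_0$ on $X_{1/2}$ by Melrose's limiting absorption principle. The two solutions of $P(0;\mathsf{Z})u=f$ thus satisfy the same outgoing condition and must agree. The only step requiring care is verifying that Vasy's outgoing scb-regularity class really encodes the same Sommerfeld condition used to define $R_0(2\mathsf{Z}+i0)$, but this equivalence is established within \cite{VasyLA} itself, so it can be cited rather than reproved.
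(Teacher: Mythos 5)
Your proposal follows essentially the same route as the paper's own proof: identify $\tilde P(0;\mathsf Z)$ with Vasy's conjugated operator for the short-range Schr\"odinger operator $P_0(2\mathsf Z)$ on $X_{1/2}$ via the weight $x_{1/2}^{1\pm n/2}$, apply \cite[Theorem~1.1]{VasyLA} at the fixed positive energy $2\mathsf Z$, and translate the scb-Sobolev spaces through the conjugation. The only substantive difference is cosmetic bookkeeping (you substitute $\mathsf s=\varsigma+1$, $\mathsf l=\ell+1$ into Vasy's thresholds, whereas the paper sets $\varsigma_{1/2}=\varsigma+1$, $\ell_{1/2}=\ell+1$ and packages the weight shift into a commutative diagram), and you make the phase identification $\Phi(x;0)=\sqrt{2\mathsf Z}/x_{1/2}$ and the final uniqueness argument slightly more explicit, whereas the paper simply cites Vasy's theorem for the resolvent identity.

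One thing worth double-checking: your formula $R(0;\mathsf Z+i0)f = x_{1/2}^{1+n/2}R_0(2\mathsf Z+i0)\bigl(x_{1/2}^{-1+n/2}f\bigr)$ has a different exponent in the right factor than the paper's $R_0(2\mathsf Z+2i\epsilon)=x_{1/2}^{-1-n/2}R(0;\mathsf Z+i\epsilon)\,x_{1/2}^{(n-2)/2}$, which rearranges to $R=x_{1/2}^{1+n/2}R_0\,x_{1/2}^{1-n/2}$. Your version $R = x_{1/2}^{-1+n/2}R_0\,x_{1/2}^{-1-n/2}$, which you also use implicitly in writing $\tilde R_+(0;\mathsf Z)=x_{1/2}^{-1+n/2}\tilde R_0(\sqrt{2\mathsf Z})\,x_{1/2}^{-1-n/2}$, is the one that follows from inverting $P=x_{1/2}^{1+n/2}P_0\,x_{1/2}^{1-n/2}$ as an operator identity, so your internal consistency is fine; just make sure whichever convention you use matches how $R(0;\mathsf Z\pm i\epsilon)$ is actually defined (as a genuine inverse of the non-self-adjoint $P(0;\mathsf Z\pm i\epsilon)$, versus via the self-adjoint functional calculus of $P_0(0)$ on $L^2(X_{1/2},g_{1/2})$, which carries an extra $x_{1/2}^{n/2}$ weight identifying the two $L^2$ spaces). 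This does not change the approach, but the precise exponents matter for the scb-Sobolev orders in the conclusion.
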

\begin{proof}
	Suppose that we are given $m,\varsigma_{1/2},\ell_{1/2}\in \bbR$ with $\ell_{1/2}<-1/2<\varsigma_{1/2}$. Then, combining  the observations above and \cite[Theorem 1.1]{VasyLA}, 
	\begin{multline}
		e^{-i\Phi(-;0)} P_{\mathrm{zero}}(2\mathsf{Z}) e^{+i\Phi(-;0)} = x_{1/2}^{-1-n/2} \tilde{P}(0;\mathsf{Z})x_{1/2}^{-1+n/2}  : \\ \{ u \in H_{\mathrm{scb}}^{m,\varsigma_{1/2},\ell_{1/2}}(X_{1/2}) : x_{1/2}^{-1-n/2} \tilde{P}(0;\mathsf{Z})x_{1/2}^{-1+n/2}  u \in H_{\mathrm{scb}}^{m-2,\varsigma_{1/2}+1, \ell_{1/2}+1}(X_{1/2}) \} \\ \to H_{\mathrm{scb}}^{m-2,\varsigma_{1/2}+1, \ell_{1/2}+1}(X_{1/2})
		\label{eq:misc_tpo}
	\end{multline}
	is invertible, defining a continuous linear map 
	\begin{equation}
		\tilde{R}_0(\mathsf{Z}): H_{\mathrm{scb}}^{m-2,\varsigma_{1/2}+1, \ell_{1/2}+1}(X_{1/2}) \to H_{\mathrm{scb}}^{m,\varsigma_{1/2},\ell_{1/2}}(X_{1/2}), 
	\end{equation}
	and this is related to the limiting resolvent $R_0(2\mathsf{Z}+i0)$ of the spectral family $P_{\mathrm{zero}} = \{P_{\mathrm{zero}}(0)-\zeta\}_{\zeta\geq 0}$ of the Schr\"odinger operator $P_{\mathrm{zero}}(0)$ on $X_{1/2}$ by 
	\begin{equation} 
		e^{+i\Phi(-;0)} \tilde{R}_0(0;\mathsf{Z}) e^{-i\Phi(-;0)} f = R_0(2\mathsf{Z} +i0) f =  x_{1/2}^{-1-n/2} R(0;\mathsf{Z}+i0) x_{1/2}^{-1+n/2} f, 
	\end{equation} 	
	which holds for all $f\in \calS(X)$.

	Call the ($\mathsf{Z}$-dependent) domain and codomain of \cref{eq:misc_tpo} 
	\begin{align*}
	\tilde{\calX}_{m,\varsigma_{1/2},\ell_{1/2}} &= \{ u \in H_{\mathrm{scb}}^{m,\varsigma_{1/2},\ell_{1/2}}(X_{1/2}) : x_{1/2}^{-1-n/2} \tilde{P}(0;\mathsf{Z})x_{1/2}^{-1+n/2}  u \in H_{\mathrm{scb}}^{m-2,\varsigma_{1/2}+1, \ell_{1/2}+1}(X_{1/2}) \}  \\ 
	&= \{ u \in H_{\mathrm{scb}}^{m,\varsigma_{1/2},\ell_{1/2}}(X_{1/2}) : \tilde{P}(0;\mathsf{Z})x_{1/2}^{-1+n/2}  u \in H_{\mathrm{scb}}^{m-2,\varsigma_{1/2}+2+n/2, \ell_{1/2}+2+n/2}(X_{1/2}) \}  \\
	\tilde{\calY}_{m,\varsigma_{1/2},\ell_{1/2}} &=  H_{\mathrm{scb}}^{m-2,\varsigma_{1/2}+1, \ell_{1/2}+1}(X_{1/2}). 
	\end{align*} 
	Setting 
	\begin{align}
		\begin{split} 
		\calX_{m,\varsigma,\ell} &=  x^{-1+n/2}_{1/2} \tilde{\calX}_{m,\varsigma_{1/2},\ell_{1/2}} \\ 
		&= \{ u \in H_{\mathrm{scb}}^{m,\varsigma+n/2,\ell+n/2}(X_{1/2}) :  \tilde{P}(0;\mathsf{Z})   u \in H_{\mathrm{scb}}^{m-2,\varsigma+3+n/2, \ell+3+n/2}(X_{1/2}) \} 
		\end{split} \label{eq:misc_x12} \\
	\calY_{m,\varsigma,\ell} &= H_{\mathrm{scb}}^{m-2,\varsigma+3+n/2, \ell+3+n/2}(X_{1/2}) \label{eq:misc_y12}
	\end{align}
	for $\varsigma = \varsigma_{1/2}-1$ and $\ell = \ell_{1/2}-1$, 
	we have a commutative diagram 
	\begin{equation}
		\xymatrix{
			\tilde{\calX}_{m,\varsigma_{1/2},\ell_{1/2}} \ar[rrr]^{x_{1/2}^{-1-n/2} \tilde{P}(0)x_{1/2}^{-1+n/2} } \ar[d]_{\times x_{1/2}^{-1+n/2}} &&& \tilde{\calY}_{m,\varsigma_{1/2},\ell_{1/2}} \ar[d]^{\times x_{1/2}^{1+n/2}} \\
			\calX_{m,\varsigma,\ell} \ar[rrr]^{\tilde{P}(0)} &&& \calY_{m,\varsigma,\ell} 
		}
	\end{equation}
	in the category of Banach spaces. The vertical arrows are manifestly isomorphisms, and as observed the top horizontal arrow is as well. 
	
	Hence, $\tilde{P}(0)=\tilde{P}(0;\mathsf{Z})$ is invertible, and the inverse $\tilde{R}_+(0)=\tilde{R}_+(0;\mathsf{Z})$ has the properties specified in the proposition. 
\end{proof}
And for the b-Sobolev spaces:
\begin{proposition}
	\label{prop:LA_at_0b} 
	If $P(\sigma)$ is the spectral family of an attractive Coulomb-like Schr\"odinger operator on $X$, for any $m,\ell=2l \in \bbR$ satisfying $\ell<-3/2<m+\ell$, 
	\begin{align} 
		\begin{split} 
		\tilde{P}(0;\mathsf{Z}) &:\{ u \in H_{\mathrm{b}}^{m,\ell+n/2}(X_{1/2}) :  \tilde{P}(0;\mathsf{Z})   u \in H_{\mathrm{b}}^{m, \ell+3+n/2}(X_{1/2}) \}    \to H_{\mathrm{b}}^{m, \ell+3+n/2}(X_{1/2}) \\
		&:\{ u \in H_{\mathrm{b}}^{m,l}(X) :  \tilde{P}(0;\mathsf{Z})   u \in H_{\mathrm{b}}^{m, l+3/2}(X) \}    \to H_{\mathrm{b}}^{m, l+3/2}(X)
		\end{split} 
		\label{eq:misc_o12}
	\end{align} 
	is invertible. 
\end{proposition}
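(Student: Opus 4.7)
The plan is to reduce the statement to \Cref{prop:LA_at_0} by specializing its scb-Sobolev parameter $\varsigma$. First I would set $\varsigma = m+\ell$, so that the hypothesis $\ell < -3/2 < \varsigma$ of \Cref{prop:LA_at_0} becomes precisely $\ell < -3/2 < m+\ell$, matching the hypothesis here. By \Cref{lem:bleC_equiv} applied on $X_{1/2}$, $H_{\mathrm{scb}}^{m,m+\ell+n/2,\ell+n/2}(X_{1/2}) = H_{\mathrm{b}}^{m,\ell+n/2}(X_{1/2})$, so the scb-space appearing in the domain $\calX_{m,m+\ell,\ell}$ of \Cref{prop:LA_at_0} coincides (at the level of Banach spaces) with the b-space appearing in the domain of the present proposition.

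Next I would observe that the b-codomain here embeds continuously into the scb-codomain $\calY_{m,m+\ell,\ell}$ of \Cref{prop:LA_at_0}: using the same identification,
\[
H_{\mathrm{b}}^{m,\ell+3+n/2}(X_{1/2}) = H_{\mathrm{scb}}^{m,m+\ell+3+n/2,\ell+3+n/2}(X_{1/2}) \hookrightarrow H_{\mathrm{scb}}^{m-2,m+\ell+3+n/2,\ell+3+n/2}(X_{1/2}) = \calY_{m,m+\ell,\ell},
\]
the embedding being trivial (lowering the differential order from $m$ to $m-2$). For surjectivity: given $f$ in the b-codomain of \cref{eq:misc_o12}, view it inside $\calY_{m,m+\ell,\ell}$ and apply $\tilde{R}_+(0;\mathsf{Z})$ from \Cref{prop:LA_at_0} to produce $u = \tilde{R}_+(0;\mathsf{Z})f \in \calX_{m,m+\ell,\ell}$ with $\tilde{P}(0;\mathsf{Z})u = f$; by the identification above, $u \in H_{\mathrm{b}}^{m,\ell+n/2}(X_{1/2})$, and by construction $\tilde{P}(0;\mathsf{Z})u = f \in H_{\mathrm{b}}^{m,\ell+3+n/2}(X_{1/2})$, so $u$ lies in the domain of \cref{eq:misc_o12}. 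For injectivity: the domain of the present proposition is contained in $\calX_{m,m+\ell,\ell}$ (the condition on $\tilde{P}(0;\mathsf{Z})u$ in $\calX$ is weaker than the one imposed here, thanks to the embedding just noted), so injectivity is inherited directly from \Cref{prop:LA_at_0}.

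Finally, the second line of \cref{eq:misc_o12} follows from the first via \Cref{lem:X12_conv}: with $\ell = 2l$ we have $H_{\mathrm{b}}^{m,\ell+n/2}(X_{1/2}) = H_{\mathrm{b}}^{m,l}(X)$ and $H_{\mathrm{b}}^{m,\ell+3+n/2}(X_{1/2}) = H_{\mathrm{b}}^{m,l+3/2}(X)$, giving the two formulations as equivalent Banach-space isomorphisms. There is no genuinely hard step in any of this; the proposition is essentially a cosmetic repackaging of \Cref{prop:LA_at_0} under the linear specialization $\varsigma = m+\ell$, combined with the identification of b-Sobolev spaces between $X$ and $X_{1/2}$. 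The only point requiring any thought is the direction of the embedding $H_{\mathrm{b}}^{m,\ell+3+n/2}(X_{1/2}) \hookrightarrow \calY_{m,m+\ell,\ell}$, which is why the hypothesis constrains $m+\ell > -3/2$ rather than involving the full scb-weight independently.
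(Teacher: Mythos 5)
Your proposal is correct and follows essentially the same route as the paper: specialize $\varsigma = m+\ell$ in \Cref{prop:LA_at_0}, note that the b-domain sits inside $\calX_{m,m+\ell,\ell}$ (giving injectivity) and that the b-codomain sits inside $\calY_{m,m+\ell,\ell}$ (giving surjectivity after checking the solution lands back in the b-domain), then pass between $X$ and $X_{1/2}$ via \Cref{lem:X12_conv}. The one small imprecision is the reference to \Cref{lem:bleC_equiv}: that lemma delivers the needed identity $H_{\mathrm{scb}}^{m,m+L,L} = H_{\mathrm{b}}^{m,L}$ only after evaluating at $\sigma=0$, whereas the paper uses the unlabeled observation $H_{\mathrm{scb}}^{m,m+l,l}(X)= H_{\mathrm{b}}^{m,l}(X)$ stated just above \Cref{lem:bsc}; both work.
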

\begin{proof}
	The equality 
	\begin{equation}
		\{ u \in H_{\mathrm{b}}^{m,\ell+n/2}(X_{1/2}) :  \tilde{P}(0;\mathsf{Z})   u \in H_{\mathrm{b}}^{m, \ell+3+n/2}(X_{1/2}) \} = \{ u \in H_{\mathrm{b}}^{m,l}(X) :  \tilde{P}(0;\mathsf{Z})   u \in H_{\mathrm{b}}^{m, l+3/2}(X) \} 
	\end{equation}
	follows from \Cref{lem:X12_conv}. 
	
	First of all, setting $\varsigma = m+\ell$, 
	\begin{equation}
		\{ u \in H_{\mathrm{b}}^{m,\ell+n/2}(X_{1/2}) :  \tilde{P}(0;\mathsf{Z})   u \in H_{\mathrm{b}}^{m, \ell+3+n/2}(X_{1/2}) \} \subset \calX_{m,\varsigma,\ell}, 
	\end{equation}
	and so \cref{eq:misc_o12} is injective (by \Cref{prop:LA_at_0}). Conversely, 
	\begin{equation}
		H_{\mathrm{b}}^{m, l+3/2}(X) \subset \tilde{\calY}_{m,\varsigma,\ell}, 
	\end{equation}
	so given any $f\in \smash{H_{\mathrm{b}}^{m, l+3/2}(X)}$ there exists a $u\in \tilde{\calX}_{m,\varsigma,\ell}$ such that $\tilde{P}(0;\mathsf{Z})u=f$. Thus, $u\in \smash{H_{\mathrm{b}}^{m,\ell+n/2}(X_{1/2})}$, and we already know that $f = \tilde{P}(0;\mathsf{Z}) u\in \smash{H_{\mathrm{b}}^{m, \ell+3+n/2}(X_{1/2})}$, so $u$ is actually in the codomain of \cref{eq:misc_o12}. Thus, \cref{eq:misc_o12} is surjective. 
\end{proof}

\section{Symbolic estimates}
\label{sec:symbolic}

We now proceed to establish quantitative control of $u \in \calS'(X)$ in terms of $\tilde{P} u$ microlocally in the symbolic region of the leC-phase space $\smash{{}^{\mathrm{leC}}\overline{T}^* X}$, meaning at $\mathrm{df}\cup \mathrm{sf} \cup \mathrm{ff}$. 
Since we make no attempt to be uniform in the $\sigma\to\infty$ limit, we simply restrict attention to $\sigma \in [0,\Sigma]$ for some arbitrary $\Sigma>0$, and the estimates will all depend on $\Sigma$ in some unexamined way. The main result of this section, duplicated below as \Cref{prop:symbolic_final}, says:
\begin{itemize}
	\item for every $\Sigma>0$, $N\in \bbN$, and $m,s,\varsigma,l, \ell,s_0,\varsigma_0 \in \bbR$ satisfying  $l<-1/2<s_0<s$ and $\ell<-3/2<\varsigma_0<\varsigma\leq \ell+s-l$, there exists a constant $C=C(\tilde{P},\Sigma,N,m,s,\varsigma,l,\ell)>0$ such that
	\begin{equation}
		\lVert u \rVert_{H_{\mathrm{leC}}^{m,s,\varsigma,l,\ell} } \leq C( \lVert \tilde{P} u \rVert_{H_{\mathrm{leC}}^{m-2,s+1,\varsigma+3,l+1,\ell+3}} + \lVert u \rVert_{H_{\mathrm{b,leC}}^{-N,l,\ell}} ) 
		\label{eq:misc_gg1}
	\end{equation}
	holds for all $u\in \calS'(X)$ and $\sigma \in [0,\Sigma]$ such that $\lVert u  \rVert_{H_{\mathrm{leC}}^{-N,s_0,\varsigma_0,-N,-N}(X)(\sigma)}<\infty$.  
\end{itemize}
(As mentioned in the introduction, \cite[\S5]{VasyLA} suffices for the analysis of the $\sigma\to\infty$ regime.) The contents of this section should be compared to the contents of \cite[\S4]{VasyLA}, as our argument below is very similar to some of the symbolic computations there. Using \Cref{lem:combination}, in order to prove \cref{eq:misc_gg1} it suffices to establish quantitative control of $u$ within each member of some finite collection of open subsets of the leC-phase space covering $\mathrm{df}\cup \mathrm{sf} \cup \mathrm{ff}$ --- see \Cref{fig:sketch}.

 By \cref{eq:misc_v9v}, 
\begin{equation}
	\tilde{P} = -(x^2 \partial_x)^2 + x^2 \triangle_{\partial X} +2i x (\sigma^2+\mathsf{Z} x)^{1/2} x \partial_x  \bmod S\operatorname{Diff}_{\mathrm{leC}}^{2,-1,-3,-1,-3}(X) 
\end{equation}
(near $\partial X$). 
Thus, the leC-principal symbol $\sigma_{\mathrm{leC}}^{2,0,-2,-1,-3}(\tilde{P}) \in S_{\mathrm{leC}}^{2,0,-2,-1,-3}(X)/S_{\mathrm{leC}}^{1,-1,-3,-1,-3}(X)$ of $\tilde{P}$ has a representative of the form 
\begin{equation}
	\tilde{p} = \tilde{p}_0 + \tilde{p}_{1,2}
\end{equation}
for $\tilde{p}_{1,2} \in S_{\mathrm{leC}}^{2,-1,-3,-1,-3}(X)$, where $\tilde{p}_0$ is a representative of $\sigma_{\mathrm{leC}}^{2,0,-2,-1,-3}(\tilde{P}_0)$. Thus, 
\begin{equation} 
	\tilde{p}_0 =  x^2 \xi_{\mathrm{b}}^2 + x^2 \eta_{\mathrm{b}}^2 -2 x(\sigma^2+\mathsf{Z} x)^{1/2} \xi_{\mathrm{b}}
\end{equation} 
near $\partial X$. 
(Recall from \S\ref{sec:calculus} that $\xi_{\mathrm{b}}$ is the b-cofiber coordinate dual to $x$ and $\eta=\eta_{\mathrm{b}} \in T^* \partial X$.) 
The ellipticity of $\tilde{P} \in \smash{S\operatorname{Diff}_{\mathrm{leC}}^{2,0,-2,-1,-3}(X)}$ (see \Cref{prop:df_ellipticity}) at and near $\mathrm{df}$ makes establishing control there trivial, so we work on $\smash{{}^{\mathrm{leC}} T^* X = {}^{\mathrm{leC}} \overline{T}^* X \backslash \mathrm{df}}$ and establish control at $\{x=0\}\subset{}^{\mathrm{leC}} T^* X $. Thus, $\tilde{p}_{1,2}$ is irrelevant for the symbolic considerations below (except those in \S\ref{subsection:ellipticity}), all of which are restricted away from $\mathrm{df}$.

In order to investigate the dynamics away from $\mathrm{bf}\cup \mathrm{tf}$, we introduce new coordinates on $\bbR^+_\sigma \times T^* X^\circ$ (over some collar neighborhood of $\partial X$, not including the boundary itself) by 
\begin{align}
	\xi_{\mathrm{sc,leC}} &= \xi_{\mathrm{b}} \varrho_{\mathrm{bf}_{00}}\varrho_{\mathrm{tf}_{00}} = \xi_{\mathrm{b}} \varrho_{\mathrm{sf}}\varrho_{\mathrm{ff}}\varrho_{\mathrm{bf}}\varrho_{\mathrm{tf}} \label{eq:xisc}\\
	\eta_{\mathrm{sc,leC}} &= \eta \varrho_{\mathrm{bf}_{00}}\varrho_{\mathrm{tf}_{00}} = \eta \varrho_{\mathrm{sf}}\varrho_{\mathrm{ff}}\varrho_{\mathrm{bf}}\varrho_{\mathrm{tf}} \in T^* \partial X,
	\label{eq:etasc}
\end{align}
for $\xi_{\mathrm{b}}\in \bbR$ and $\eta = \eta_{\mathrm{b}} \in T^* \partial X$. More explicitly, $\xi_{\mathrm{sc,leC}} = \xi_{\mathrm{b}} x (\sigma^2+\mathsf{Z}x)^{-1/2}$, $\eta_{\mathrm{sc,leC}} = \eta x (\sigma^2+\mathsf{Z}x)^{-1/2}$. These extend to fiber coordinates on the bundle $[0,\bar{x})^{\mathrm{sp}}_{\mathrm{res}} \times \bbR_{\xi_{\mathrm{sc,leC}}} \times (T^* \partial X)_{\eta_{\mathrm{sc,leC}}} \to [0,\bar{x})^{\mathrm{sp}}_{\mathrm{res}}$, and we write
\begin{multline} 
	{}^{\mathrm{sc,leC}} T^*_{\partial X} X = ( \partial [0,\bar{x})^{\mathrm{sp}}_{\mathrm{res}} \backslash \mathrm{zf}^\circ) \times \bbR_{\xi_{\mathrm{sc,leC}}} \times (T^* \partial X)_{\eta_{\mathrm{sc,leC}}} \\ = \partial (([0,\bar{x})^{\mathrm{sp}}_{\mathrm{res}}\backslash \mathrm{zf}^\circ) \times \bbR_{\xi_{\mathrm{sc,leC}}} \times (T^* \partial X)_{\eta_{\mathrm{sc,leC}}}).
	\label{eq:bdy_trv}
\end{multline} 
(We do not endow ${}^{\mathrm{sc,leC}} T^*_{\partial X} X$ with any more structure than that of a set.)
Let $o$ denote the zero section of $T^* \partial X$. 
While ${}^{\mathrm{sc,leC}} T^*_{\partial X} X$ is not a mwc,  $[0,\bar{x})^{\mathrm{sp}}_{\mathrm{res}} \times \bbR_{\xi_{\mathrm{sc,leC}}}\times T^* \partial X$ is.

\begin{figure}
	\begin{center} 
		\begin{tikzpicture}[scale=.8]
		\filldraw[fill=gray!5, dashed] (0,0) circle (3);
		\draw[fill=gray] (0,2) circle (7pt);
		\draw[fill=gray] (0,0) circle (7pt);
		\draw[red] (0,1) circle (1);
		\filldraw[red] (1,.9) -- (1.1,1.1) -- (.9,1.1) -- cycle;
		\filldraw[red] (-1,.9) -- (-1.1,1.1) -- (-.9,1.1) -- cycle;
		\filldraw[red] (0,2) circle (2pt); 
		\filldraw[red] (0,0) circle (2pt); 
		\node[red, above right] (rp) at (0,2) {$\,\calR_+$};
		\node[red, below left] (ro) at (0,0) {$\calR_0$};
		\node[red, right] (char) at (1,1) {$\;\Sigma$};
		\node[black, above] (df) at (0,3) {${}^{\mathrm{sc,leC}} T^*_{p} X $};
		\end{tikzpicture}
	\end{center}
	\caption{Schematic of the proof of \cref{eq:misc_gg1} when $\dim \partial X=1$. The situation over a point $p\in( \partial [0,\bar{x})^{\mathrm{sp}}_{\mathrm{res}} \backslash \mathrm{zf}^\circ)\times \partial X$ in the sc,leC-phase space is illustrated, with $\Sigma = \operatorname{Char}_{\mathrm{sc,leC}}^{2,0,-2}(\tilde{P})$ in red. There are four subsets of interest: the dark gray neighborhood of $\calR_+$, controlled using a high regularity radial point estimate, the dark gray neighborhood of $\calR_0=o_\partial$, controlled using a low regularity radial point estimate (and an elliptic estimate), a neighborhood of the rest of the characteristic set which stays away from $\calR_+,\calR_0$ over which we can propagate regularity, and the rest of the cofiber (light gray background), where elliptic estimates apply. 	
	Cf.\ \cite[Figure 2]{MelroseSC}.
	The direction of the Hamiltonian flow of $\tilde{P}$ is indicated with arrows. (Note that only the ``vertical'' components of the flow are drawn --- the Hamiltonian flow also changes $p$.)}
	\label{fig:sketch}
\end{figure}

The punctured space ${}^{\mathrm{sc,leC}} T^*_{\partial X} X \backslash o_\partial$, $o_\partial =\{x=0,\xi_{\mathrm{sc,leC}} = 0 \text{ and } \eta_{\mathrm{sc,leC}} \in o\}$, can be identified with $\mathrm{sf}\cup \mathrm{ff} \backslash (\mathrm{df}\cup \mathrm{bf}\cup \mathrm{tf})$:
\begin{proposition}
	The identity map $i:\{0\leq x <\bar{x}\}\cap (\bbR_\sigma^+\times T^* X^\circ)\to \{0\leq x <\bar{x}\}\cap (\bbR_\sigma^+\times T^* X^\circ)$ composed with $(D\iota)^*:\{0\leq x <\bar{x}\}\cap (\bbR_\sigma^+\times T^* X^\circ) \to \bbR_\sigma^+\times (0,\bar{x})_x\times \bbR_{\xi}\times T^* \partial X $  (where $\iota$ is the boundary collar) extends (uniquely) to a smooth map
	\begin{equation}
		\bar{i} : \{0\leq x<\bar{x}\}\cap {}^{\mathrm{leC}} T^* X \to [0,\bar{x})^{\mathrm{sp}}_{\mathrm{res}} \times \bbR_{\xi_{\mathrm{sc,leC}}}\times (T^* \partial X)_{\eta_{\mathrm{sc,leC}}}
		\label{eq:misc_iba}
	\end{equation}
	restricting to a diffeomorphism 
	$\{0\leq x<\bar{x}\}\cap {}^{\mathrm{leC}} \overline{T}^* X \backslash (\mathrm{df}\cup \mathrm{bf}\cup \mathrm{tf}) \to ([0,\bar{x})^{\mathrm{sp}}_{\mathrm{res}} \times \bbR_{\xi_{\mathrm{sc,leC}}}\times (T^* \partial X)_{\eta_{\mathrm{sc,leC}}}) \backslash o_\partial$. 
\end{proposition}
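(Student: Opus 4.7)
The plan is to reduce everything to the iterated blow-up description of ${}^{\mathrm{leC}}\overline{T}^*X$ in \S\ref{subsec:phasespace} and then verify smoothness and invertibility chart by chart. In the interior, $\bar{i}$ is the nonvanishing coordinate change $(\xi_{\mathrm{b}},\eta_{\mathrm{b}})\mapsto \big(x(\sigma^2+\mathsf{Z}x)^{-1/2}\xi_{\mathrm{b}},\,x(\sigma^2+\mathsf{Z}x)^{-1/2}\eta_{\mathrm{b}}\big)$, which is obviously smooth and a local diffeomorphism; uniqueness of any extension across the (dense) complement is automatic. The first order of business is then the elementary identity $\varrho_{\mathrm{sf}}\varrho_{\mathrm{ff}}\varrho_{\mathrm{bf}}\varrho_{\mathrm{tf}}=\varrho_{\mathrm{bf}_{00}}\varrho_{\mathrm{tf}_{00}}$, which drops out of the definitions \cref{eq:bdfs} by direct computation and is consistent with \cref{eq:xisc}.

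Next I would dispose of the easy faces $\mathrm{bf}^\circ,\mathrm{tf}^\circ,\mathrm{zf}^\circ$ and their mutual corners: on any neighborhood bounded away from $\mathrm{df}$, the b-fiber coordinates $\xi_{\mathrm{b}},\eta_{\mathrm{b}}$ stay finite, while $\varrho_{\mathrm{bf}_{00}},\varrho_{\mathrm{tf}_{00}}\in C^\infty({}^{\mathrm{leC}}\overline{T}^*X)$, so $\xi_{\mathrm{sc,leC}},\eta_{\mathrm{sc,leC}}$ extend smoothly and vanish precisely on $\mathrm{bf}\cup\mathrm{tf}$. This identifies the $\mathrm{bf}\cup\mathrm{tf}$ locus with $\bar{i}^{-1}(o_\partial)$ and explains why these faces have to be removed to get a diffeomorphism. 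Off of them the factor $x(\sigma^2+\mathsf{Z}x)^{-1/2}$ is smooth and positive all the way to $\mathrm{zf}^\circ$, and division by it furnishes a smooth local inverse.

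The actual work is then at $\mathrm{sf}^\circ$ and $\mathrm{ff}^\circ$, where the blow-ups of $\mathrm{df}_0\cap\mathrm{bf}_0$ and $\mathrm{df}_{00}\cap\mathrm{tf}_{00}$ do something nontrivial: $\xi_{\mathrm{b}}$ diverges while $\varrho_{\mathrm{bf}_{00}}$ (resp. $\varrho_{\mathrm{tf}_{00}}$) tends to $0$, and one has to see that the product extends smoothly and stays away from $o_\partial$. A convenient chart near $\mathrm{sf}^\circ$ uses spherical coordinates $(\hat\xi_{\mathrm{b}},\hat\eta_{\mathrm{b}})=\langle(\xi_{\mathrm{b}},\eta_{\mathrm{b}})\rangle^{-1}(\xi_{\mathrm{b}},\eta_{\mathrm{b}})$ on the unit df-sphere, a bdf $\varrho_{\mathrm{sf}}$, and an angular variable $\theta$ parametrizing the ratio $\varrho_{\mathrm{df}_0}/\varrho_{\mathrm{bf}_0}$; in such a chart $\langle(\xi_{\mathrm{b}},\eta_{\mathrm{b}})\rangle\varrho_{\mathrm{bf}_{00}}$ is a smooth positive function of $\theta$, so $\xi_{\mathrm{sc,leC}}=\hat\xi_{\mathrm{b}}\cdot\langle(\xi_{\mathrm{b}},\eta_{\mathrm{b}})\rangle\varrho_{\mathrm{bf}_{00}}\cdot\varrho_{\mathrm{tf}_{00}}$ is smooth and nonzero wherever $\hat\xi_{\mathrm{b}}\neq 0$. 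Since $(\hat\xi_{\mathrm{b}},\hat\eta_{\mathrm{b}})$ is never identically zero on the blown-up face, the pair $(\xi_{\mathrm{sc,leC}},\eta_{\mathrm{sc,leC}})$ avoids $o_\partial$ on $\mathrm{sf}^\circ$; inverting amounts to recovering $(\hat\xi_{\mathrm{b}},\hat\eta_{\mathrm{b}})$ from the unit vector of $(\xi_{\mathrm{sc,leC}},\eta_{\mathrm{sc,leC}})$ and $\varrho_{\mathrm{sf}}$ from its magnitude, all smoothly. The analysis at $\mathrm{ff}^\circ$ is parallel, with $\varrho_{\mathrm{tf}_{00}}^{1/2}$ in place of $\varrho_{\mathrm{bf}_{00}}$, and the remaining edges ($\mathrm{sf}\cap\mathrm{ff}$, $\mathrm{sf}\cap\mathrm{bf}$, $\mathrm{ff}\cap\mathrm{tf}$, and so on) are handled by combining the two local models. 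The hard part will be the bookkeeping along $\mathrm{sf}\cap\mathrm{ff}$, where both blow-ups interact simultaneously and one must check that the two spherical charts intertwine smoothly --- this ultimately comes down to the same product identity $\varrho_{\mathrm{sf}}\varrho_{\mathrm{ff}}\varrho_{\mathrm{bf}}\varrho_{\mathrm{tf}}=\varrho_{\mathrm{bf}_{00}}\varrho_{\mathrm{tf}_{00}}$ flagged at the outset.
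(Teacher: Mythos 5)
Your plan is conceptually sound and would eventually get there, but it is genuinely different from, and considerably more laborious than, what the paper does. The paper observes two global facts and is finished: the functions $\xi_{\mathrm{sc,leC}},\eta_{\mathrm{sc,leC}}$ and the projection to $[0,\bar{x})^{\mathrm{sp}}_{\mathrm{res}}$ are smooth on all of $\{x<\bar{x}\}\cap{}^{\mathrm{leC}} T^*X$ (this follows, for instance, from $\xi_{\mathrm{sc,leC}}=\beth\,\varrho_{\mathrm{bf}}\varrho_{\mathrm{tf}}/\varrho_{\mathrm{df}}$ with $\beth\in S_{\mathrm{cl,leC}}^{0,0,0,0,0}$ and $\varrho_{\mathrm{df}}>0$ away from $\mathrm{df}$), which gives smoothness of $\bar{i}$ in one stroke; and then it writes down explicit closed formulas for each of $\varrho_{\mathrm{df}},\varrho_{\mathrm{sf}},\varrho_{\mathrm{ff}},\varrho_{\mathrm{bf}},\varrho_{\mathrm{tf}}$ as smooth expressions in $x,\sigma,\xi_{\mathrm{sc,leC}},\eta_{\mathrm{sc,leC}}$ away from $o_\partial$, which produces the smooth two-sided inverse without ever opening a chart. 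Your product identity $\varrho_{\mathrm{sf}}\varrho_{\mathrm{ff}}\varrho_{\mathrm{bf}}\varrho_{\mathrm{tf}}=\varrho_{\mathrm{bf}_{00}}\varrho_{\mathrm{tf}_{00}}$ is correct and is implicitly what makes the paper's formulas close up, so you have identified the right engine.

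That said, your write-up has a real gap and a couple of imprecisions. The gap: you explicitly flag the corner $\mathrm{sf}\cap\mathrm{ff}$ as ``the hard part'' and then do not carry it out. In a chart-by-chart argument this corner is where the two spherical charts must be shown to intertwine smoothly (and the $\mathrm{sf}\cap\mathrm{bf}$, $\mathrm{ff}\cap\mathrm{tf}$ edges also need explicit treatment, since the angular coordinate $\theta$ degenerates there); simply saying this ``ultimately comes down to the same product identity'' is not a proof. The imprecisions: saying ``on any neighborhood bounded away from $\mathrm{df}$'' is not sufficient to conclude $\xi_{\mathrm{b}},\eta_{\mathrm{b}}$ stay bounded --- they diverge along interior points of $\mathrm{sf}$ and $\mathrm{ff}$ as well, and what you need is to be bounded away from $\mathrm{df}\cup\mathrm{sf}\cup\mathrm{ff}$; and the claim that $\langle(\xi_{\mathrm{b}},\eta_{\mathrm{b}})\rangle\varrho_{\mathrm{bf}_{00}}$ is ``a smooth positive function of $\theta$'' is not literally true near $\mathrm{sf}^\circ$, since that quantity also depends on the base coordinates and on $\varrho_{\mathrm{ff}}$ through the first blow-up. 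Likewise, your recipe for inverting near $\mathrm{sf}^\circ$ --- ``recover $(\hat\xi_{\mathrm{b}},\hat\eta_{\mathrm{b}})$ from the unit vector and $\varrho_{\mathrm{sf}}$ from the magnitude'' --- is not the right dependence: $\varrho_{\mathrm{sf}}$ is a nontrivial function of both the cofiber magnitude and the base ($x,\sigma$), as the paper's explicit inversion formulas make apparent. The explicit-formula route of the paper is worth internalizing here because it sidesteps exactly the bookkeeping that your chart-by-chart proposal leaves undone.
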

\begin{proof}
	Here we are using the boundary collar $\iota$ (really $(D\iota)^*$) to identify $\{0<x<\bar{x}\}\cap (\bbR^+_\sigma\times T^* X^\circ)$ with $\bbR_\sigma^+\times (0,\bar{x})_x\times \bbR_{\xi}\times T^* \partial X$. 
	
	Using $\iota$, we see that $\xi_{\mathrm{sc,leC}}$ is a smooth function on the domain of \cref{eq:misc_iba}, as is $\eta_{\mathrm{sc,leC}} \in T^* \partial X$. Likewise, the (smooth) projection ${}^{\mathrm{leC}}T^* X \to X^{\mathrm{sp}}_{\mathrm{res}}$ fits into a composition 
	\begin{equation}
		 \{0\leq x<\bar{x}\}\cap {}^{\mathrm{leC}}T^* X  \to  \{0\leq x<\bar{x}\}\cap X^{\mathrm{sp}}_{\mathrm{res}} \overset{\iota^{-1}}{\to} \hat{X}^{\mathrm{sp}}_{\mathrm{res}} = [0,\bar{x})^{\mathrm{sp}}_{\mathrm{res}} \times \partial X \to  [0,\bar{x})^{\mathrm{sp}}_{\mathrm{res}},
		 \label{eq:misc_k12}
	\end{equation}
	where $\hat{X} = [0,\bar{x})\times \partial X$, which shows that the $[0,\bar{x})^{\mathrm{sp}}_{\mathrm{res}}$ component of \cref{eq:misc_iba} is smooth.  Thus, $\bar{i}$ is smooth. 
	
	The diffeomorphism clause follows from the inversion formulas 
	\begin{align}
		\varrho_{\mathrm{df}} &= \Big[ 1 + \frac{x}{\sigma^2+\mathsf{Z} x} +  \Big( \frac{x^2}{\sigma^2+\mathsf{Z} x} + \xi_{\mathrm{sc,leC}}^2+\eta_{\mathrm{sc,leC}}^2 \Big)^{1/2}\Big]^{-1}, \\
		\varrho_{\mathrm{sf}} &= \frac{x}{\sigma^2+\mathsf{Z}x}\Big[1+\Big(\frac{x}{\sigma^2+\mathsf{Z}x}+\Big( \frac{x^2}{\sigma^2+\mathsf{Z} x} + \xi_{\mathrm{sc,leC}}^2+\eta_{\mathrm{sc,leC}}^2 \Big)^{1/2} \Big)^{-1} \Big], \\
		\varrho_{\mathrm{ff}} &= (\sigma^2+\mathsf{Z}x)^{1/2} \Big( \frac{x}{\sigma^2+\mathsf{Z}x} \Big( \frac{x^2}{\sigma^2+\mathsf{Z}x} + \xi_{\mathrm{sc,leC}}^2+\eta_{\mathrm{sc,leC}}^2 \Big)^{-1/2}  + 1 \Big), \\
		\varrho_{\mathrm{tf}} &= \Big[ 1 + \frac{x}{\sigma^2+\mathsf{Z}x} \Big( \frac{x^2}{\sigma^2+\mathsf{Z}x} + \xi_{\mathrm{sc,leC}}^2+\eta_{\mathrm{sc,leC}}^2 \Big)^{-1/2} \Big]^{-1},
	\end{align}
	and 
	\begin{multline}
		\varrho_{\mathrm{bf}} = \Big[1+ \Big(1 + \frac{x}{\sigma^2+\mathsf{Z}x}\Big) \Big(\frac{x^2}{\sigma^2+\mathsf{Z}x} + \xi_{\mathrm{sc,leC}}^2+\eta_{\mathrm{sc,leC}}^2 \Big)^{-1/2} \Big]^{-1} \\ \times \Big[ 1 +  \frac{x}{\sigma^2+\mathsf{Z}x} \Big(\frac{x^2}{\sigma^2+\mathsf{Z}x} + \xi_{\mathrm{sc,leC}}^2+\eta_{\mathrm{sc,leC}}^2 \Big)^{-1/2}\Big],
	\end{multline}
	(holding on $\{0<x<\bar{x}\}\cap \bbR^+_\sigma\times T^* X^\circ$), which certainly suffice to define a smooth two-sided inverse  $([0,\bar{x})^{\mathrm{sp}}_{\mathrm{res}} \times \bbR_{\xi_{\mathrm{sc,leC}}}\times (T^* \partial X)_{\eta_{\mathrm{sc,leC}}}) \backslash o_\partial \to \{0\leq x<\bar{x}\}\cap {}^{\mathrm{leC}} \overline{T}^* X \backslash (\mathrm{df}\cup \mathrm{bf}\cup \mathrm{tf})$ of $\bar{i}$. 
\end{proof}

\begin{figure}[t!]
	\begin{center} 
		\begin{tikzpicture}[scale=.8]
		\filldraw[fill=gray!5, dashed] (0,0) circle (3);
		\draw[fill=gray] (0,2) circle (7pt);
		\draw[fill=gray] (20pt,0) circle (7pt);
		\draw[fill=gray] (-20pt,0) circle (7pt);
		\draw[fill=white] (0,0) circle (20pt);
		\draw[red] (0,2) to[out=0,in=0] (20pt,0);
		\draw[red] (0,2) to[out=180,in=180] (-20pt,0);
		\filldraw[red] (1,.9) -- (1.1,1.1) -- (.83,1.05) -- cycle;
		\filldraw[red] (-1,.9) -- (-1.1,1.1) -- (-.83,1.05) -- cycle;
		\filldraw[red] (0,2) circle (2pt);
		\filldraw[red] (-20pt,0) circle (2pt); 
		\filldraw[red] (+20pt,0) circle (2pt); 
		\node[red, above right] (rp) at (0,2) {$\,\calR_+$};
		\node[red, right] (char) at (1,1) {$\;\Sigma$};
		\node[red, below right] (ro) at (20pt,0) {$\calR$};
		\node[black, above] (df) at (0,3) {$[{}^{\mathrm{sc,leC}} T^*_{p} X;(o_\partial)_p ] $};
		\draw[red, dashed] (0,0) ellipse (20pt and 2pt);
		\end{tikzpicture}
	\end{center}
	\caption{The result of blowing up the zero section of ${}^{\mathrm{sc,leC}}T^*_{\partial X} X$ over a fixed point $p\in (\partial [0,\bar{x})_{\mathrm{res}}^{\mathrm{sp}} \backslash (\mathrm{zf}^\circ \cup (\mathrm{bf}\cap \mathrm{tf})))\times \partial X$, with $\Sigma = \operatorname{Char}_{\mathrm{leC}}^{2,0,-2,-1,-3}(\tilde{P})$ (which, according to $\bar{i}$, agrees with $\operatorname{Char}^{2,0,-2}_{\mathrm{sc,leC}}(\tilde{P})$ away from $\mathrm{bf}\cup \mathrm{tf}$ but is deformed by the blow-up) and the vertical Hamiltonian flow on it indicated. Cf.\ \cite[Figure 3]{VasyLA}. The inner black circle depicts either $\mathrm{bf}\cap \mathrm{sf}$ or $\mathrm{tf}\cap \mathrm{ff}$, depending on whether $\sigma>0$ or $\sigma=0$. Over each $p$, it is a copy of $\bbS^{n-1}$. Its equator is the portion of $\calR$ over $p$. The radial point estimate in \S\ref{subsec:rp} is used to control the dark gray neighborhood, while elliptic estimates suffice for the rest of $\mathrm{bf}\cap \mathrm{sf}$ or $\mathrm{tf}\cap \mathrm{ff}$ (which is away from $\calR$).}
	\label{fig:blownup}
\end{figure}

In terms of $\xi_{\mathrm{sc,leC}}$ and $\eta_{\mathrm{sc,leC}}$, $\tilde{p}_0$ can be written as 
\begin{align} 
	\tilde{p}_0 &= (\sigma^2+\mathsf{Z}x)( \xi_{\mathrm{sc,leC}}^2 + g^{-1}_{\partial X}(\eta_{\mathrm{sc,leC}},\eta_{\mathrm{sc,leC}}) - 2 \xi_{\mathrm{sc,leC}}) \\
	\label{eq:ptil}
	&= (\sigma^2+\mathsf{Z}x)( \xi_{\mathrm{sc,leC}}^2 + \eta_{\mathrm{sc,leC}}^2 - 2 \xi_{\mathrm{sc,leC}})
\end{align} 
in $T^* X^\circ$. Weighting, $\tilde{p} \smash{\varrho_{\mathrm{df}}^2 \varrho_{\mathrm{ff}}^{-2} \varrho_{\mathrm{bf}}^{-1} \varrho_{\mathrm{tf}}^{-3}} = \tilde{p}^{2,0,-2,-1,-3}$ induces a well-defined function on $\partial {}^{\mathrm{leC}} T^* X$ and thus on $\smash{{}^{\mathrm{sc,leC}}T^*_{\partial X}X\backslash o_\partial}$. 
Note that this restriction does not depend on $\tilde{p}_{1,2}$. 
The portion of the leC-characteristic set  of $\tilde{P}$ disjoint from $\mathrm{bf}\cup \mathrm{tf}$ can be written as
\begin{equation}
\operatorname{Char}_{\mathrm{leC}}^{2,0,-2,-1,-3}(\tilde{P}) \backslash (\mathrm{bf}\cup \mathrm{tf}) =  (\tilde{p}^{2,0,-2,-1,-3}|_{{}^{\mathrm{sc,leC}} T^*_{\partial X} X \backslash o_\partial})^{-1}(\{0\}).
\label{eq:char}
\end{equation} 
This is equal to  $\{\xi_{\mathrm{sc,leC}}^2 + \eta_{\mathrm{sc,leC}}^2 - 2 \xi_{\mathrm{sc,leC}}=0 \} \backslash o_\partial \subset {}^{\mathrm{sc,leC}} T^*_{\partial X} X$. For convenience, we add in $o_\partial$ to give a new set 
\begin{equation}
	\operatorname{Char}_{\mathrm{sc,leC}}^{2,0,-2}(\tilde{P}) = \{\xi_{\mathrm{sc,leC}}^2 + g^{-1}_{\partial X}(\eta_{\mathrm{sc,leC}},\eta_{\mathrm{sc,leC}}) - 2 \xi_{\mathrm{sc,leC}}=0 \} \subset {}^{\mathrm{sc,leC}}T^*_{\partial X} X, 
	\label{eq:char2}
\end{equation} 
which consists of an off-center sphere over each point of $(\partial [0,\bar{x})_{\mathrm{res}}^{\mathrm{sp}}\backslash \mathrm{zf}^\circ)\times \partial X$ (in terms of the trivialization \cref{eq:bdy_trv}), of radius one and centered at $\xi_{\mathrm{sc,leC}}=1$ and $\eta_{\mathrm{sc,leC}} \in o$. See \Cref{fig:sketch}, which is a translated version of \cite[Figure 2]{MelroseSC}. The portion of 
\begin{equation} 
\operatorname{Char}_{\mathrm{leC}}^{2,0,-2,-1,-3}(\tilde{P}) = (\tilde{p}^{2,0,-2,-1,-3})^{-1}(\{0\}) \cap (\mathrm{df}\cup \mathrm{sf} \cup \mathrm{ff} ) 
\end{equation} 
over a point $p\in (\partial [0,\bar{x})_{\mathrm{res}}^{\mathrm{sp}} \backslash (\mathrm{zf}^\circ \cup (\mathrm{bf}\cap \mathrm{tf}))) \times \partial X$ is depicted in \Cref{fig:blownup}. Note that $\operatorname{Char}_{\mathrm{leC}}^{2,0,-2,-1,-3}(\tilde{P})$ only contains a codimension one subset of $\mathrm{sf}\cap \mathrm{bf}$, $\mathrm{ff}\cap \mathrm{tf}$, that corresponding to $\xi_{\mathrm{sc,leC}} = 0$, while the lift of $\smash{\operatorname{Char}_{\mathrm{sc,leC}}^{2,0,-2}(\tilde{P})}$ contains the whole sets.

Elliptic estimates control $u$ in terms of $\tilde{P}u$ away from this set --- see \S\ref{subsection:ellipticity}. 

As shown in \S\ref{subsec:propagation}, the Hamiltonian flow $H_{\tilde{p}} \in \calV(\bbR^{+}_\sigma\times T^* X^\circ)$ associated to $\tilde{p}$ is given in terms of the sc,leC- coordinates above by 
\begin{multline}
	H_{\tilde{p}_0}  = x (\sigma^2+\mathsf{Z}x)^{1/2} \Big[ 2 (\xi_{\mathrm{sc,leC}}-1) x\partial_x + 2 g^{-1}_{\partial X}(\eta_{\mathrm{sc,leC}},-)  \\ + \frac{2 \sigma^2+\mathsf{Z}x}{\sigma^2+\mathsf{Z}x}  \Big( (\xi_{\mathrm{sc,leC}}-1) \eta_{\mathrm{sc,leC}} \partial_{\eta_{\mathrm{sc,leC}}}  + \xi_{\mathrm{sc,leC}}(\xi_{\mathrm{sc,leC}}-2) \partial_{\xi_{\mathrm{sc,leC}}} \Big) - \frac{2\tilde{p}_0}{\sigma^2+\mathsf{Z} x}   \partial_{\xi_{\mathrm{sc,leC}}} \Big]
	\label{eq:hampres}
\end{multline} 
to leading order at $\partial X$, 
where $\eta_{\mathrm{sc,leC}} \partial_{\eta_{\mathrm{sc,leC}}} \in \calV(T^* \partial X) \subset \calV(\bbR^+_\sigma\times [0,\bar{x})_x \times \bbR_{\xi_{\mathrm{sc,leC}}}\times T^* \partial X)$ denotes the vector field on $T^* \partial X$ given in local coordinates $y_1,\ldots,y_{n-1}$ for $\partial X$ by 
\begin{equation}
	\eta_{\mathrm{sc,leC}} \partial_{\eta_{\mathrm{sc,leC}}} = \sum_{j=1}^{n-1} \eta_{\mathrm{sc,leC},j} \partial_{\eta_{\mathrm{sc,leC},j}},
\end{equation} 
where $\eta_{\mathrm{sc,leC},j}$ is the cofiber component of $\eta_{\mathrm{sc,leC}}$ dual to $y_j$. It will be convenient to work with the weighted Hamiltonian vector fields $H_{\tilde{p}_0}^{2,0,-2} = \varrho_{\mathrm{df}} x^{-1} (\sigma^2+\mathsf{Z} x)^{-1/2} H_{\tilde{p}_0}$, $H_{\tilde{p}}^{2,0,-2} = \varrho_{\mathrm{df}} x^{-1} (\sigma^2+\mathsf{Z} x)^{-1/2} H_{\tilde{p}}$.

The vector field $H_{\tilde{p}}$, as given by \cref{eq:hampres}, 
defines on the sc,leC-characteristic set $\operatorname{Char}_{\mathrm{sc,leC}}^{2,0,-2}(\tilde{P})$ a source-to-sink flow, with  
\begin{itemize}
	\item $\{\xi_{\mathrm{sc,leC}}=\eta_{\mathrm{sc,leC}}=0\} = \calR_0 \subset {}^{\mathrm{sc,leC}}T^*_{\partial X} X$, the ``selected radial set,'' and 
	\item $\{\xi_{\mathrm{sc,leC}}=2,\eta_{\mathrm{sc,leC}}=0\} = \calR_+ \subset \mathrm{sf} \cup \mathrm{ff}$, the ``unselected radial set,''
\end{itemize}
the sink and source (respectively, under our sign conventions) of the flow. 
Hence, leC-analogues of standard propagation and radial point estimates apply away from $o_\partial$ (the dependence on the one parameter $\sigma$ being unimportant), and the proofs are straightforward modifications of the analogous estimates in \cite{MelroseSC}. 
These estimates are proven in \S\ref{subsec:propagation}. Note that the $(\xi_{\mathrm{sc,leC}}-1) x\partial_x$ term in \cref{eq:hampres} does not actually vanish on the portion of $\calR_0,\calR_+$ over the interior of $\mathrm{tf}$ --- rather, it induces a flow from the $\mathrm{bf}$ side to the $\mathrm{zf}$ side. Thus, it is also possible to prove a propagation result in which regularity at $\calR_+ \cap  \mathrm{sf}$ is propagated into $\calR_+ \cap \mathrm{zf}$. We will not need (and hence will not prove) such an estimate, as a much weaker argument in \S\ref{sec:mainproof} suffices to show that for nice $f$ the output $\tilde{R}_+(E)f$ of the conjugated resolvent $\tilde{R}_+(E)$ converges weakly to something satisfying the conjugated version of the Sommerfeld radiation condition as $E\to 0^+$. (But it is still worth pointing out that $\calR_+ \cap \mathrm{zf}$ is not a source of the flow in every direction --- it is a sink in the $\varsigma=\sigma/x^{1/2}$ direction.)

The radial point estimate at the selected radial set is somewhat more nonstandard. Rather than $\calR_0$, we work with  $\calR = \operatorname{Char}^{2,0,-2,-1,-3}_{\mathrm{leC}}(\tilde{P}) \cap (\mathrm{bf}\cup \mathrm{tf})$. See \cite[Figure 3]{VasyLA} for the $\sigma>0$ case.  The face $\mathrm{ff}$ meets $\mathrm{bf}$ at an edge --- see \Cref{fig:LEC_phase_space}, \Cref{fig:later} --- and so $\mathrm{sf}\cup \mathrm{ff} \backslash \mathrm{df}$ is not just the boundary of
\begin{equation} 
[[0,\bar{x})_{\mathrm{res}}^{\mathrm{sp}} \times \bbR_{\xi_{\mathrm{sc,leC}}} \times T^* \partial X ; [0,\bar{x})_{\mathrm{res}}^{\mathrm{sp}} \times \{0\}\times o ],
\end{equation}
which is why \Cref{fig:blownup} depicts the situation only over  $p\in [0,\bar{x})_{\mathrm{sp}}^{\mathrm{res}}\times \partial X$ not on the zero face or corner.  
Away from that edge, our situation looks (even at ff) very much like that in \cite{VasyLA}. An argument similar to that used to prove the radial point estimate there suffices to prove an estimate here that is uniform down to $\sigma=0$, albeit without the desired number of independent orders. 
In order to prove an estimate with the desired number of independent orders, it will be necessary to take into account the aberrant edge. See \S\ref{subsec:rp}. 

The elliptic, propagation, and radial point estimates in \S\ref{subsection:ellipticity}, \S\ref{subsec:propagation}, \S\ref{subsec:rp} are combined in a short epilogue \S\ref{subsec:upshot}, which contains  \Cref{thm:symbolic_main}, as well as the corollary \Cref{prop:symbolic_final} stated above.

\begin{figure}[t!]
	\begin{tikzpicture}[scale=.75]
		\coordinate (ul) at (0,3.5,5) {};
		\coordinate (ulp) at (0,3.75,4.75) {};
		\coordinate (uul) at (0,4,4.5) {};
		\coordinate (uulc) at (3,4,4.5) {};
		\coordinate (ulc) at (3,3.5,5) {};
		\coordinate (ulcp) at (3,3.75,4.75) {};
		\coordinate (urc) at (5,4,3) {};
		\coordinate (urcp) at (5,3.2,3.8) {};
		\coordinate (ur) at (5,4,0) {};
		\coordinate (lc) at (4,3,5) {};
		\coordinate (rc) at (5,3,4) {};
		\coordinate (ld) at (4,0,5) {};
		\coordinate (rd) at (5,0,4) {};
		\draw (ld)--(lc); 
		\draw[red] (lc)--(ulc);
		\draw[red] (ulc)--(ul);
		\draw[red] (urcp)--(ulcp)--(ulp);
		\draw[red] (rc)--(lc);
		\draw (uul)--(uulc)--(urc);
		\draw (rd)--(rc)--(urc)--(ur);
		\draw (ulc)--(uulc);
		\draw[dashed] (uul)--(ul);
		\draw[dashed] (ld)--(rd); 
		\draw[dashed] (ld) --++ (-3,0,0);
		\draw[dashed] (rd) --++ (0,0,-3);
		\draw[dashed] (ul) --++ (0,-3,0);
		\draw[dashed] (uul) --++ (0,0,-3);
		\draw[dashed] (ur) --++ (-3,0,0);
		\draw[dashed] (ur) --++ (0,-3,0);
		\node (df) at (2.5,4,2.5) {$\mathrm{df}$};
		\node (bf) at (2,2,5) {$\mathrm{bf}$};
		\node (zf) at (5,2,2) {$\mathrm{zf}$};
		\node (tf) at (4.5,2,4.5) {$\mathrm{tf}$};
		\node (ff) at (4.4,3.75,4.15) {$\mathrm{ff}$};
		\node (sf) at (-.35,3.75,4.75) {$\mathrm{sf}$};
	\end{tikzpicture}
	\caption{The radial sets $\calR_+,\calR$ (in red) as subsets of the leC-phase space (ignoring the degrees of freedom associated with $T^* \partial X$). The top line represents $\calR_+$, while the bottom line depicts $\calR$. (Note that on $\mathrm{bf} \cap \mathrm{ff}$, $x/(\sigma^2+\mathsf{Z} x)=\sigma^2+\mathsf{Z} x=0$.) }
	\label{fig:later}
\end{figure}

\subsection{Ellipticity}
\label{subsection:ellipticity}

\begin{proposition}
	\label{prop:df_ellipticity}
	$\tilde{P} \in \operatorname{Diff}_{\mathrm{leC}}^{2,0,-2,-1,-3}(X)$ is elliptic in some neighborhood of $\mathrm{df}$. 
\end{proposition}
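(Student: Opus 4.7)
The plan is to directly identify the weighted leC-principal symbol of $\tilde{P}$ on $\mathrm{df}$ using the explicit computations of \S3. By \Cref{prop:tilde_P_inc}, $\tilde{P} \equiv \tilde{P}_0$ modulo $S\operatorname{Diff}_{\mathrm{leC}}^{2,-1,-3-2\delta,-1-\delta,-3-2\delta}(X)$, and by \Cref{prop:P00tilde_comp} a representative of $\sigma_{\mathrm{leC}}^{2,0,-2,-1,-3}(\tilde{P}_0)$ is
\begin{equation*}
\tilde{p}_0 = x^2 \xi_{\mathrm{b}}^2 + x^2 g_{\partial X}^{-1}(\eta_{\mathrm{b}}, \eta_{\mathrm{b}}) - 2x\sqrt{\sigma^2 + \mathsf{Z} x}\,\xi_{\mathrm{b}},
\end{equation*}
modulo terms of lower order in fiber. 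First I would translate the base and fiber variables into the leC-bdfs: the relations $\varrho_{\mathrm{bf}_{00}} = \varrho_{\mathrm{sf}}\varrho_{\mathrm{bf}}$ and $\varrho_{\mathrm{tf}_{00}} = \varrho_{\mathrm{ff}}\varrho_{\mathrm{tf}}$ (from the construction of the two blow-ups in \S2.1) give $x = \varrho_{\mathrm{sf}}\varrho_{\mathrm{bf}}\varrho_{\mathrm{ff}}^2\varrho_{\mathrm{tf}}^2$ and $\sqrt{\sigma^2+\mathsf{Z} x} = \varrho_{\mathrm{ff}}\varrho_{\mathrm{tf}}$, while $\varrho_{\mathrm{df}_{00}} = \varrho_{\mathrm{df}}\varrho_{\mathrm{sf}}\varrho_{\mathrm{ff}}$.

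Introduce spherical fiber coordinates $(\hat{\xi}_{\mathrm{b}},\hat{\eta}_{\mathrm{b}})$ with $\hat{\xi}_{\mathrm{b}}^2+|\hat{\eta}_{\mathrm{b}}|^2_{g_{\partial X}}=1$ and $(\xi_{\mathrm{b}},\eta_{\mathrm{b}}) = R(\hat{\xi}_{\mathrm{b}},\hat{\eta}_{\mathrm{b}})$, so that $R\,\varrho_{\mathrm{df}_{00}} \to 1$ at $\mathrm{df}$. A short calculation then shows that the weighted quadratic piece evaluates as
\begin{equation*}
\varrho_{\mathrm{df}}^2\varrho_{\mathrm{ff}}^{-2}\varrho_{\mathrm{bf}}^{-1}\varrho_{\mathrm{tf}}^{-3}\cdot x^2(\xi_{\mathrm{b}}^2+|\eta_{\mathrm{b}}|^2) \longrightarrow \varrho_{\mathrm{bf}}\varrho_{\mathrm{tf}}\qquad \text{on } \mathrm{df},
\end{equation*}
while the weighted version of $-2x(\sigma^2+\mathsf{Z}x)^{1/2}\xi_{\mathrm{b}}$ becomes $-2\varrho_{\mathrm{df}}\hat{\xi}_{\mathrm{b}}$, vanishing at $\mathrm{df}$. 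The crucial geometric observation is that $\mathrm{df}$ is disjoint from both $\mathrm{bf}$ and $\mathrm{tf}$ in the leC-phase space --- by construction the faces $\mathrm{sf}$ and $\mathrm{ff}$ (coming from the two blow-ups in \S2.1) separate $\mathrm{df}$ from $\mathrm{bf}$ and $\mathrm{tf}$ respectively --- so $\varrho_{\mathrm{bf}}\varrho_{\mathrm{tf}}$ is bounded below by a positive constant on $\mathrm{df}$, giving $\tilde{p}_0^{2,0,-2,-1,-3}|_{\mathrm{df}} \geq c > 0$.

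Finally I would check that subleading contributions are negligible at $\mathrm{df}$: the $xa_{00}$ correction to $(x^2\partial_x)^2$ produces an extra factor of $x$; the first-order terms gathered in $L$ in \cref{eq:misc_lk3} and $V_{\mathrm{eff}}$ are of fiber order $\leq 1$ so they do not contribute to the principal symbol at $\mathrm{df}$; and the symbols of $\tilde{P}_1,\tilde{P}_2$ from \Cref{prop:P1tilde_comp_new}, \Cref{prop:P2tilde_comp} carry at least one extra factor of $\varrho_{\mathrm{sf}}\varrho_{\mathrm{ff}}$ in the leC-filtration (membership in $S\operatorname{Diff}_{\mathrm{leC}}^{2,-1,-3-2\delta,\ldots}$) and so vanish at $\mathrm{df}$ after weighting. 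Openness of nonvanishing of a continuous function, together with compactness of the relevant piece of $\mathrm{df}$ (after restricting $\sigma\in[0,\Sigma]$), then gives ellipticity in an open neighborhood of $\mathrm{df}$. The main obstacle is simply the bookkeeping in the translation between $(x,\xi_{\mathrm{b}},\eta_{\mathrm{b}},\sigma)$ and the leC-bdfs; once the identity $x^2(\xi_{\mathrm{b}}^2+|\eta_{\mathrm{b}}|^2)\cdot \varrho_{\mathrm{df}}^2\varrho_{\mathrm{ff}}^{-2}\varrho_{\mathrm{bf}}^{-1}\varrho_{\mathrm{tf}}^{-3} = \varrho_{\mathrm{bf}}\varrho_{\mathrm{tf}}\cdot(\hat{\xi}_{\mathrm{b}}^2+|\hat{\eta}_{\mathrm{b}}|^2)$ on $\mathrm{df}$ is in hand, everything else is routine.
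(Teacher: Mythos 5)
Your computation of the weighted symbol of $\tilde{P}_0$ near $\partial\mathrm{df}$ is correct and is, in substance, a re-derivation of the paper's observation that $\operatorname{Char}_{\mathrm{leC}}^{2,0,-2,-1,-3}(\tilde{P}_0)$ is disjoint from $\mathrm{df}$. However, the argument has a genuine gap at $\mathrm{df}^\circ$, the part of $\mathrm{df}$ not meeting $\mathrm{sf}\cup\mathrm{ff}\cup\mathrm{zf}$, which lies over the spatial interior $X^\circ$. Two things go wrong there. First, the explicit boundary-collar formula for $\tilde{p}_0$ (and for $\tilde{P}_1$, $\tilde{P}_2$) is simply not available over $\{x>\bar{x}\}$, so the identity ``weighted symbol $=\varrho_{\mathrm{bf}}\varrho_{\mathrm{tf}}$'' has no meaning there. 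Second, and more fundamentally, your claim that the subleading contributions ``vanish at $\mathrm{df}$ after weighting'' is false at $\mathrm{df}^\circ$: for instance $\tilde{P}_1\in S\operatorname{Diff}_{\mathrm{leC}}^{2,-1,-3,-2,-4}(X)$ has the \emph{same} order $2$ at $\mathrm{df}$ as the principal weight $(2,0,-2,-1,-3)$, so its weighted symbol carries extra factors of $\varrho_{\mathrm{sf}}$, $\varrho_{\mathrm{ff}}$, $\varrho_{\mathrm{bf}}$, $\varrho_{\mathrm{tf}}$ but no extra factor of $\varrho_{\mathrm{df}}$; these extra bdfs vanish on $\partial\mathrm{df}$ but are bounded away from zero on compact subsets of $\mathrm{df}^\circ$, so nothing forces that contribution to be small there. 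The same applies to the $x a_{00}$ correction.

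The fix is precisely the one-sentence observation the paper opens with: at every point of $\mathrm{df}^\circ$, $\tilde{P}$ is elliptic because $P(\sigma)$ is assumed elliptic in the classical sense at fiber infinity (and conjugation by $e^{\pm i\Phi}$ does not change the classical principal symbol; the other bdfs are uniformly positive over compact subsets of $X^\circ$). Once this is supplied, your argument correctly covers the remaining region $\partial\mathrm{df}\subset\mathrm{sf}\cup\mathrm{ff}$: there $\varrho_{\mathrm{sf}}\varrho_{\mathrm{ff}}=0$ does force the corrections from $\tilde{P}_1$, $\tilde{P}_2$ to vanish after weighting, the identity $\tilde{p}_0^{2,0,-2,-1,-3}|_{\partial\mathrm{df}}=\varrho_{\mathrm{bf}}\varrho_{\mathrm{tf}}$ holds, and disjointness of $\mathrm{df}$ from $\mathrm{bf}\cup\mathrm{tf}$ gives a positive lower bound (for $\sigma\in[0,\Sigma]$). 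With that patch the proof is correct and parallels the paper's, just more computationally explicit at $\partial\mathrm{df}$.
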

\begin{proof}
	By assumption, $\tilde{P}$ is elliptic at every point of $\mathrm{df}^\circ$. By \cref{eq:misc_v9m}, $\tilde{P}$ will be elliptic at a point of $\mathrm{sf}\cup \mathrm{ff}$ (including the points of $\partial \mathrm{df} \subset \mathrm{sf}\cup \mathrm{ff}$) if and only if $\tilde{P}_0$ is. $\operatorname{Char}_{\mathrm{leC}}^{2,0,-2,-1,-3}(\tilde{P}_0)$ is contained away from $\mathrm{df}$, so the same then holds for $\operatorname{Char}_{\mathrm{leC}}^{2,0,-2,-1,-3}(\tilde{P})$.
\end{proof}

Via the leC-calculus analogue of the usual argument via parametrix:

\begin{proposition}
	Let $A,A_0,B \in \Psi^{0,0,0,0,0}_{\mathrm{leC}}(X)$, with $\operatorname{WF}_{\mathrm{leC}}^{\prime 0,0}(A),\operatorname{WF}_{\mathrm{leC}}^{\prime}(A_0)\subseteq \operatorname{Ell}_{\mathrm{leC}}^{0,0,0,0,0}(B) \cap \operatorname{Ell}_{\mathrm{leC}}^{2,0,-2,-1,-3}(\tilde{P})$. 
	
	Then, for each $\Sigma>0$, $m,s,\varsigma,l,\ell \in \bbR$, and $N\in \bbN$,  
	there exists a 
	\begin{equation*} 
		C=C(\tilde{P},A,A_0,B,m,s,\varsigma,l,\ell,N,\Sigma)>0
	\end{equation*} 
	such that, 
	for any $u\in  \calS'(X)$, 
	\begin{align}
		\lVert A u \rVert_{H_{\mathrm{leC}}^{m,s,\varsigma,l,\ell}} &\leq C \big[ \lVert B\tilde{P} u \rVert_{H_{\mathrm{leC}}^{m-2,s,\varsigma+2,l+1,\ell+3}}+ \lVert u \rVert_{H_{\mathrm{leC}}^{-N,-N,-N,l,\ell}} \big] \label{eq:misc_aes} \\
		\lVert A_0 u \rVert_{H_{\mathrm{leC}}^{m,s,\varsigma,l,\ell}} &\leq C \big[ \lVert B\tilde{P} u \rVert_{H_{\mathrm{leC}}^{m-2,s,\varsigma+2,l+1,\ell+3}}+ \lVert u \rVert_{H_{\mathrm{leC}}^{-N,-N,-N,-N,-N}} \big] 
		\label{eq:misc_a0e}
	\end{align}
	for all $\sigma \in (0,\Sigma]$ 
	(in the strong sense that if the right-hand side is finite then the left-hand side is as well). 
\end{proposition}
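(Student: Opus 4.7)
The proof is a microlocal elliptic parametrix argument within the leC-calculus, entirely parallel to the totally elliptic parametrix construction sketched just before \Cref{prop:df_ellipticity}. My plan is to prove \cref{eq:misc_aes} first and then explain the small modification needed for \cref{eq:misc_a0e}.

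For \cref{eq:misc_aes}: by hypothesis, $\operatorname{WF}^{\prime 0,0}_{\mathrm{leC}}(A) \subseteq \operatorname{Ell}_{\mathrm{leC}}^{0,0,0,0,0}(B) \cap \operatorname{Ell}_{\mathrm{leC}}^{2,0,-2,-1,-3}(\tilde{P})$, so by \Cref{prop:basic_symbology} the composition $B\tilde{P} \in \Psi_{\mathrm{leC}}^{2,0,-2,-1,-3}(X)$ is elliptic at that order on an open neighborhood of $\operatorname{WF}^{\prime 0,0}_{\mathrm{leC}}(A)$. I construct a microlocal parametrix $Q \in \Psi_{\mathrm{leC}}^{-2,0,2,1,3}(X)$ by the standard iterative symbolic Neumann-series procedure: start with $q_0 = \chi a / \sigma_{\mathrm{leC}}^{2,0,-2,-1,-3}(B\tilde{P})$, where $a$ is a representative of $\sigma_{\mathrm{leC}}^{0,0,0,0,0}(A)$ and $\chi \in S^{0,0,0,0,0}_{\mathrm{cl,leC}}(X)$ is a cutoff equal to one on $\operatorname{WF}^{\prime 0,0}_{\mathrm{leC}}(A)$ and supported in the joint ellipticity set, then improve $q_0$ by successive symbolic corrections to gain differential, sf and ff orders. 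The output $Q = \operatorname{Op}(q)$ satisfies
\begin{equation*}
QB\tilde{P} - A \in \Psi_{\mathrm{leC}}^{-N,-N,-N,0,0}(X) = \Psi_{\mathrm{b,leC}}^{-N,0,0}(X)
\end{equation*}
for every prescribed $N \in \bbN$; the key point is that the $(l,\ell)$ orders of the error cannot be improved below $(0,0)$ because the hypothesis on $A$ is given only in terms of $\operatorname{WF}^{\prime 0,0}_{\mathrm{leC}}$. Writing $Au = QB\tilde{P}u - Ru$ with $R = QB\tilde{P} - A$, and applying the leC-Sobolev boundedness proposition to $Q$ and $R$ along with \Cref{prop:interpolation}, one obtains
\begin{equation*}
\lVert Au \rVert_{H_{\mathrm{leC}}^{m,s,\varsigma,l,\ell}} \preceq \lVert B\tilde{P}u \rVert_{H_{\mathrm{leC}}^{m-2,s,\varsigma+2,l+1,\ell+3}} + \lVert u \rVert_{H_{\mathrm{leC}}^{m-N,s-N,\varsigma-N,l,\ell}},
\end{equation*}
uniformly in $\sigma \in (0,\Sigma]$; taking $N$ large enough that the last term is bounded by $C \lVert u\rVert_{H_{\mathrm{leC}}^{-N',-N',-N',l,\ell}}$ for the $N'$ in the proposition statement yields \cref{eq:misc_aes}.

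For \cref{eq:misc_a0e} the argument is identical except that $\operatorname{WF}'_{\mathrm{leC}}(A_0) = \bigcap_{l,\ell \in \bbR} \operatorname{WF}^{\prime l,\ell}_{\mathrm{leC}}(A_0)$ is the \emph{full} leC-wavefront set, so at every order of decay the iterative correction can still be carried out. Consequently the parametrix $Q$ can now be chosen so that the residual $R \in \Psi_{\mathrm{leC}}^{-N,-N,-N,-N,-N}(X)$ for arbitrary $N$, turning the error term in the estimate into $\lVert u\rVert_{H_{\mathrm{leC}}^{-N,-N,-N,-N,-N}}$. I do not anticipate any real obstacle: the entire proof reduces to symbolic bookkeeping plus invocation of the mapping properties of leC-$\Psi$DOs on leC-Sobolev spaces established in Section 2.3, and uniformity for $\sigma \in (0,\Sigma]$ is built into the calculus by construction. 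The only point requiring care is keeping track of which boundary orders can be improved at each iteration, which is dictated entirely by which version of the leC-wavefront set is hypothesized for $A$ vs.\ $A_0$.
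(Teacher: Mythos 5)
Your proposal is correct and follows essentially the same elliptic-parametrix route as the paper: the paper builds a one-step microlocal parametrix $F=\operatorname{Op}(\chi/(b\tilde p))$ and then runs an induction on the order of the error term, whereas you iterate at the symbolic level to produce an $N$th-order parametrix and apply the boundedness estimate once — the two are the standard interchange of a symbolic Neumann series with an inductive a priori estimate. The invocation of \Cref{prop:interpolation} is superfluous (taking $N$ large directly suffices), and you would want to note, as the paper does, that the $\sigma$-localization is achieved by inserting a compactly supported cutoff $\varphi(\sigma)$ so that the relevant wavefront set becomes compact, but neither of these is a gap.
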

\begin{proof} 
	Let $b \in S_{\mathrm{leC}}^{0,0,0,0,0}(X) = S_{\mathrm{b,leC}}^{0,0,0}(X)$ denote a representative of  $\sigma_{\mathrm{leC}}^{0,0,0,0,0}(B)$. Let $\varphi \in C_{\mathrm{c}}^\infty(\bbR)$ be identically equal to one in some neighborhood of $[0,\Sigma]$ and supported in $(-\infty,2\Sigma)$. 
	
	The set 
	\begin{equation} 
		K=\operatorname{WF}_{\mathrm{leC}}^{\prime 0,0}(A) \cap \{\sigma \leq 2\Sigma\} \subseteq \mathrm{df}\cup \mathrm{sf} \cup \mathrm{ff} \subseteq \partial {}^{\mathrm{leC}} \overline{T}^* X
	\end{equation} 
	is a compact subset of ${}^{\mathrm{leC}} \overline{T}^* X$, so we can  find some $\chi \in S_{\mathrm{leC}}^{0,0,0,0,0}(X)$ such that $\chi=1$ identically in some neighborhood of $K$ and such that $\chi = 0$ identically in some neighborhood of $\operatorname{Char}_{\mathrm{leC}}^{2,0,-2,-1,-3}(B\tilde{P}) = \operatorname{Char}_{\mathrm{leC}}^{0,0,0,0,0}(B) \cup \operatorname{Char}_{\mathrm{leC}}^{2,0,-2,-1,-3}(\tilde{P})$. We can moreover choose $\chi$ such that $\chi=0$ identically in some neighborhood of $b^{-1}(\{0\})\cup \tilde{p}^{-1}(\{0\})$. 
	Consider $f = \chi / b\tilde{p} \in S_{\mathrm{leC}}^{-2,0,2,1,3}(X)$. 
	
	Quantizing, we get 
	some $F = \operatorname{Op}(f)\in \Psi_{\mathrm{leC}}^{-2,0,2,1,3}(X)$ such that -- via the leC-principal symbol short exact sequence -- 
	 \begin{equation} 
		\varphi A (1-F B \tilde{P}) \in \Psi_{\mathrm{leC}}^{-1,-1,-1,0,0}(X). 
		\label{eq:misc_joj}
	\end{equation} 
	Indeed, $ \varphi A (1-F B \tilde{P})  \in \Psi_{\mathrm{leC}}^{0,0,0,0,0}(X)$ by \Cref{prop:algebra}. By \Cref{prop:basic_symbology}, for any representative $a\in S_{\mathrm{leC}}^{0,0,0,0,0}(X)$ of the principal symbol $\sigma_{\mathrm{leC}}^{0,0,0,0,0}(A)$,  
	\begin{align}
		\begin{split} 
		\sigma_{\mathrm{leC}}^{0,0,0,0,0}(\varphi A (1-F B \tilde{P})) &= \varphi a  (1- \chi) \bmod S_{\mathrm{leC}}^{-1,-1,-1,0,0}(X) \\ 
		&= 0 \bmod S_{\mathrm{leC}}^{-1,-1,-1,0,0}(X), 
		\end{split}
	\end{align}
	which implies \cref{eq:misc_joj}. 
	 So, for $\sigma\in [0,\Sigma]$, 
	\begin{align}
		\begin{split} 
			\lVert A u \rVert_{H_{\mathrm{leC}}^{m,s,\varsigma,l,\ell}} &\leq  \lVert \varphi AF B\tilde{P} u \rVert_{H_{\mathrm{leC}}^{m,s,\varsigma,l,\ell}} +  \lVert \varphi A (1-F B\tilde{P}) u \rVert_{H_{\mathrm{leC}}^{m,s,\varsigma,l,\ell}} \\
			&\preceq \lVert B \tilde{P} u \rVert_{H_{\mathrm{leC}}^{m-2,s,\varsigma+2,l+1,\ell+3}}+ \lVert \varphi A_1u \rVert_{H_{\mathrm{leC}}^{m-1,s-1,\varsigma-1,l,\ell}} + \lVert u \rVert_{H_{\mathrm{leC}}^{-N,-N,-N,l,\ell} } \\
			&= \lVert B \tilde{P} u \rVert_{H_{\mathrm{leC}}^{m-2,s,\varsigma+2,l+1,\ell+3}}+ \lVert  A_1u \rVert_{H_{\mathrm{leC}}^{m-1,s-1,\varsigma-1,l,\ell}}+ \lVert u \rVert_{H_{\mathrm{leC}}^{-N,-N,-N,l,\ell} }
			\end{split} 
	\end{align}
	for $A_1$ satisfying the same hypotheses as $A$ but also elliptic on the essential support of $A$.
	Inducting, we conclude the estimate \cref{eq:misc_aes}. 
	
	The second estimate, \cref{eq:misc_a0e}, is proven in a completely analogous manner. 
\end{proof}

\subsection{Propagation}
\label{subsec:propagation} 

The Hamiltonian vector field $H_{\tilde{p}} \in  \calV(\bbR^+_\sigma \times T^* X^\circ)$ associated with the symbol $\tilde{p} \in C^\infty( (0,\infty)_\sigma\times T^* X^\circ )$ is given near $\partial X$ by
\begin{equation}
	H_{\tilde{p}} = (\partial_{\xi_{\mathrm{b}}} \tilde{p}) x \partial_x - (x \partial_x \tilde{p}) \partial_{\xi_{\mathrm{b}}}  + \sum_{j=1}^{n-1} ((\partial_{\eta_j} \tilde{p}) \partial_{y_j} - (\partial_{y_j} \tilde{p}) \partial_{\eta_j}) \in C^\infty((0,\infty)_\sigma;\calV (T^* X^\circ))
	\label{eq:Hp}
\end{equation}
with respect to any set of local coordinates $y=(y_1,\ldots,y_{n-1})$ on $\partial X$. (We will alternatively identify $H_{\tilde{p}}$ as a smooth family of elements of $\calV (T^* X^\circ)$ and as a vector field on $\bbR^+_\sigma \times T^* X^\circ$ without $\partial_\sigma$ component.) 
Together, $x,y ,\xi_{\mathrm{sc,leC}}, \eta_{\mathrm{sc,leC}}$ constitute a coordinate chart for $T^* X^\circ$, so we can rewrite $H_{\tilde{p}}(\sigma) \in C^\infty(T^* X^\circ ; T T^* X^\circ)$ in terms of them, and the result (patching together the various coordinate charts for $\partial X$) can be interpreted as a weighted b-vector field on $ [0,\bar{x})^{\mathrm{sp}}_{\mathrm{res}} \times \bbR_{\xi_{\mathrm{sc,leC}}} \times (T^* \partial X)_{y,\eta_{\mathrm{sc,leC}}}$ (after restricting attention to a small collar neighborhood of $\partial X$). 
To perform this rewrite, we need the following substitutions:
\begin{align}
	\begin{split}
	x\partial_x &\to x\partial_x + \frac{\partial \xi_{\mathrm{sc,leC}}}{\partial x} x\partial_{\xi_{\mathrm{sc,leC}}} + \sum_{j=1}^{n-1} \frac{\partial \eta_{\mathrm{sc,leC},j}}{\partial x} x\partial_{\eta_{\mathrm{sc,leC},j}} \\
	&= x\partial_x + \frac{2\sigma^2+\mathsf{Z}x}{2(\sigma^2+\mathsf{Z}x)} \xi_{\mathrm{sc,leC}} \partial_{\xi_{\mathrm{sc,leC}}} +
	\frac{2\sigma^2+\mathsf{Z}x}{2(\sigma^2+\mathsf{Z}x)} \sum_{j=1}^{n-1} \eta_{\mathrm{sc,leC},j} \partial_{\eta_{\mathrm{sc,leC},j}} 
	\end{split}
\end{align} 
and 
\begin{equation}
	\partial_{\xi_{\mathrm{b}}} \to \frac{\partial \xi_{\mathrm{sc,leC}}}{\partial \xi_{\mathrm{b}}} \partial_{\xi_{\mathrm{sc,leC}}} 
	= \frac{x}{\sqrt{\sigma^2+\mathsf{Z}x}} \partial_{\xi_{\mathrm{sc,leC}}}, \qquad 
	\partial_{\eta_j} \to \frac{\partial \eta_{\mathrm{sc,leC},j}}{\partial \eta_j} \partial_{\eta_{\mathrm{sc,leC},j}} 
	= \frac{x}{\sqrt{\sigma^2+\mathsf{Z}x}} \partial_{\eta_{\mathrm{sc,leC},j}} 
\end{equation}
for $j=1,\ldots,n-1$, 
where the partial derivatives are taken with respect to the coordinate system $x,y,\xi_{\mathrm{b}},\eta$. In other words, letting $(x\partial_x)_{\mathrm{old}}$, $(\partial_{y_j})_{\mathrm{old}}$ denote the local vector fields defined using the coordinate system $x,y,\xi_{\mathrm{b}},\eta$, we have 
\begin{align}
	(\partial_{y_j})_{\mathrm{old}} &= \partial_{y_j},\\ 
	(x \partial_x)_{\mathrm{old}} &=  x\partial_x + \frac{2\sigma^2+\mathsf{Z}x}{2(\sigma^2+\mathsf{Z}x)} \xi_{\mathrm{sc,leC}} \partial_{\xi_{\mathrm{sc,leC}}} +
	\frac{2\sigma^2+\mathsf{Z}x}{2(\sigma^2+\mathsf{Z}x)} \sum_{j=1}^{n-1} \eta_{\mathrm{sc,leC},j} \partial_{\eta_{\mathrm{sc,leC},j}},
\end{align}
where the partial derivatives on the right-hand side are defined using the coordinate system $x,y,\xi_{\mathrm{sc,leC}},\eta_{\mathrm{sc,leC}}$. 
In terms of this new notation, \cref{eq:Hp} says 
\begin{equation}
	H_{\tilde{p}} = (\partial_{\xi_{\mathrm{b}}} \tilde{p}) (x \partial_x)_{\mathrm{old}} - ((x \partial_x)_{\mathrm{old}} \tilde{p}) \partial_{\xi_{\mathrm{b}}}  + \sum_{j=1}^{n-1} \Big((\partial_{\eta_j} \tilde{p}) (\partial_{y_j})_{\mathrm{old}} - ((\partial_{y_j})_{\mathrm{old}} \tilde{p}) \partial_{\eta_j}\Big). 
	\label{eq:Hpnew}
\end{equation}
The same holds for $\tilde{p}_0$ in place of $\tilde{p}$. 
The $\partial_{y_j}$ component of $H_{\tilde{p}_0} = H_{\tilde{p}} - H_{\tilde{p}_{1,2}}$ is given by 
\begin{equation}
	\partial_{\eta_j} \tilde{p}_0= \frac{x}{\sqrt{\sigma^2+\mathsf{Z}x}} \frac{\partial \tilde{p}_0}{ \partial \eta_{\mathrm{sc,leC}}} = 2x \sqrt{\sigma^2+\mathsf{Z}x} \sum_{k=1}^{n-1} g^{kj}\eta_{\mathrm{sc,leC},k}.
\end{equation}
On the other hand, the $\partial_{\eta_{\mathrm{sc,leC},j}}$ component of $H_{\tilde{p}_0}$ is given by
\begin{align} 
	\begin{split} 
	\frac{2\sigma^2+\mathsf{Z}x}{2\sigma^2+2\mathsf{Z}x} \eta_{\mathrm{sc,leC},j} \frac{\partial \tilde{p}_0}{\partial \xi_{\mathrm{b}}} - \frac{x}{(\sigma^2+\mathsf{Z}x)^{1/2}} \frac{\partial \tilde{p}_0}{\partial y_j }   &= \frac{2\sigma^2+\mathsf{Z}x}{2\sigma^2+2\mathsf{Z}x} \frac{x}{(\sigma^2+\mathsf{Z}x)^{1/2}} \eta_{\mathrm{sc,leC},j} \frac{\partial \tilde{p}_0}{\partial \xi_{\mathrm{sc,leC}}} \\
	&= \frac{2\sigma^2+\mathsf{Z}x}{(\sigma^2+\mathsf{Z}x)^{1/2}} x \eta_{\mathrm{sc,leC},j} (\xi_{\mathrm{sc,leC}} -1),
	\end{split}
\end{align}
while the $\partial_{\xi_{\mathrm{sc,leC}}}$ component is given by 
\begin{multline}
	\frac{2\sigma^2+\mathsf{Z}x}{2\sigma^2+2\mathsf{Z}x} \xi_{\mathrm{sc,leC}} \frac{\partial \tilde{p}_0}{\partial \xi_{\mathrm{b}}} - \frac{x^2}{(\sigma^2+\mathsf{Z}x)^{1/2}} \Big(\frac{\partial \tilde{p}_0}{\partial x }\Big)_{\mathrm{old}} \\ = \frac{2\sigma^2+\mathsf{Z}x}{(\sigma^2+\mathsf{Z}x)^{1/2}} x\xi_{\mathrm{sc,leC}} (\xi_{\mathrm{sc,leC}}-1) - \frac{x}{(\sigma^2+\mathsf{Z}x)^{1/2}} \Big[ 2\tilde{p}_0 +  \xi_{\mathrm{sc,leC}} (2\sigma^2+\mathsf{Z} x)\Big] \\
	= \frac{2\sigma^2+\mathsf{Z}x}{(\sigma^2+\mathsf{Z}x)^{1/2}} x\xi_{\mathrm{sc,leC}}(\xi_{\mathrm{sc,leC}}-2) - \frac{2 x \tilde{p}_0}{(\sigma^2+\mathsf{Z}x)^{1/2}},
\end{multline}
and the $x\partial_x$ component is $x (\sigma^2+\mathsf{Z}x)^{-1/2} \partial_{\xi_{\mathrm{sc,leC}}} \tilde{p}_0 = 2x (\sigma^2+\mathsf{Z}x)^{1/2} (\xi_{\mathrm{sc,leC}}-1)$. To summarize:

\begin{propositionp}
	\label{prop:Hocompe}
	In terms of the coordinates $x,y ,\xi_{\mathrm{sc,leC}},\eta_{\mathrm{sc,leC}}$, 
	\begin{multline}
		H_{\tilde{p}_0}  = x (\sigma^2+\mathsf{Z}x)^{1/2} \Big[ 2 (\xi_{\mathrm{sc,leC}}-1) x\partial_x + 2 g^{-1}_{\partial X}(\eta_{\mathrm{sc,leC}},-)  \\ + \frac{2 \sigma^2+\mathsf{Z}x}{\sigma^2+\mathsf{Z}x}  \Big(  \xi_{\mathrm{sc,leC}}(\xi_{\mathrm{sc,leC}}-2) \partial_{\xi_{\mathrm{sc,leC}}} +\sum_{j=1}^{n-1} \eta_{\mathrm{sc,leC},j}(\xi_{\mathrm{sc,leC}}-1)  \partial_{\eta_{\mathrm{sc,leC},j}} \Big) - \frac{2\tilde{p}_0}{\sigma^2+\mathsf{Z}x}   \partial_{\xi_{\mathrm{sc,leC}}} \Big]
	\end{multline}
	(in the relevant neighborhood of ${}^{\mathrm{leC}}T^* X$). 
\end{propositionp}

Letting $H_{\tilde{p}}^{-,0,-2} = x^{-1} (\sigma^2+\mathsf{Z} x)^{-1/2}H_{\tilde{p}} \in \calV(\bbR^+_\sigma \times T^* X^\circ)$, $H_{\tilde{p}}^{-,0,-2}$ defines a b-vector field on
\begin{equation} 
	{}^{\mathrm{sc,leC}}T^* X = [0,\bar{x})_{\mathrm{res}}^{\mathrm{sp}} \times \bbR_{\xi_{\mathrm{sc,leC}}} \times (T^* \partial X)_{y,\eta_{\mathrm{sc,leC}}}.
\end{equation} 
Likewise for $\tilde{p}_0$. Note that $H_{\tilde{p}}^{-,0,-2}$ and $H_{\tilde{p}_0}^{-,0,-2}$ agree at every point of $\mathrm{sf}\cup \mathrm{ff}$. 
Restricting to $\{x=0\}$, $H_{\tilde{p}}^{-,0,-2}$ can be considered as a family  
\begin{equation} 
	H: \partial ([0,\bar{x})_{\mathrm{res}}^{\mathrm{sp}} \backslash  \mathrm{zf}^\circ) \to \calV(\bbR_{\xi_{\mathrm{sc,leC}}}\times (T^* \partial X)_{y,\eta_{\mathrm{sc,leC}}})
\end{equation} 
of vector fields on the fiber $\bbR \times T^* \partial X$.
In order to understand $H$, consider $\smash{H_{\tilde{p}}^{-,0,-2}}$ over a neighborhood of a subset of the interior of the transition face tf of $[0,\bar{x})^{\mathrm{sp}}_{\mathrm{res}}$, which we can parametrize in terms of $x \in [0,\bar{x})$ and $\lambda \in \bbR^+$ by writing $\sigma^2 = \mathsf{Z} \lambda x$. In this neighborhood, $x^{1/2}$ is a bdf for the transition face. Then, $H$ can be thought of as a family of vector fields on $\bbR_{\xi_{\mathrm{sc,leC}}}\times T^* \partial X$ dependent on the parameter $\lambda$, with explicit formula 
\begin{equation} 
H  =  \frac{2\lambda+1}{\lambda+1}  \Big[(\xi_{\mathrm{sc,leC}}-1)\eta_{\mathrm{sc,leC}}\partial_{\eta_{\mathrm{sc,leC}}}  +(\xi_{\mathrm{sc,leC}}-2)\xi_{\mathrm{sc,leC}}\partial_{\xi_{\mathrm{sc,leC}}}  \Big] + 2 g^{-1}_{\partial X}(\eta_{\mathrm{sc,leC}},-) 
\label{eq:misc_102}
\end{equation} 
when $\tilde{p}_0=0$. The $\partial_y$-component of $H$ is independent of $\lambda$ and comes from the projection of the geodesic flow on $T^* \partial X$ down to $\partial X$. The cofiber components of $H$ (bracketed) depend on $\lambda$ only in the form of an overall factor that -- crucially -- is nonzero for all $\lambda \in [0,\infty)$. (In fact, \cref{eq:misc_102} makes sense as a family of vector fields on $\bbR\times T^* \partial X$ for all $\lambda > -1$, and it is non-vanishing for $\lambda>-1/2$, but we do not consider this ``extended transition face'' here. The vanishing at $\lambda=-1/2$ is one sign that the consideration of negative $\lambda$ would require solving a b-problem analogous to the b-problem encountered in the low-energy analysis of Coulomb-free Schr\"odinger operators. Cf.\ \Cref{rem:intuition}.) 
From \cref{eq:misc_102}, we read off the following crucial observation: over the transition face of $[0,\bar{x})^{\mathrm{sp}}_{\mathrm{res}}$, parametrized as above, $H$, when restricted to the characteristic set of $\tilde{p}$ and away from $\mathrm{bf}\cup\mathrm{tf}$, vanishes if and only if $\eta_{\mathrm{sc,leC}} = 0$ and $\xi_{\mathrm{sc,leC}} = 2$, i.e.\ on $\calR_+$.
Between this submanifold and the zero section $\calR_0$, $H$ has (over each point in $\partial [0,\bar{x})_{\mathrm{res}}^{\mathrm{sp}}$) a source-to-sink flow within $\operatorname{Char}_{\mathrm{sc,leC}}^{2,0,-2}(\tilde{P})$. 
In order to see that $\{\xi_{\mathrm{sc,leC}} = \eta_{\mathrm{sc,leC}} = 0\}$ is a sink of the flow, observe that 
\begin{equation} 
	H (\xi_{\mathrm{sc,leC}}^2+\eta_{\mathrm{sc,leC}}^2) = \frac{4\lambda+2}{\lambda+1} \Big[ \eta_{\mathrm{sc}}^2 (\xi_{\mathrm{sc,leC}}-1) + \xi_{\mathrm{sc}}^2 (\xi_{\mathrm{sc,leC}}-2)\Big] - \frac{4\tilde{p}_0\xi_{\mathrm{sc,leC}}}{\sigma^2+\mathsf{Z} x}
\end{equation} 
(note that $\tilde{p}_0/(\sigma^2+\mathsf{Z} x) = \xi_{\mathrm{sc,leC}}^2+\eta_{\mathrm{sc,leC}}^2-2\xi_{\mathrm{sc,leC}}$ is well-defined). 

The same computation yields:
\begin{propositionp}
	\label{prop:radial_comp}
	$H^{-,0,-2}_{\tilde{p}_0}(\xi_{\mathrm{sc,leC}}^2+\eta_{\mathrm{sc,leC}}^2) = \beta_{0,1} (\xi_{\mathrm{sc,leC}}^2+\eta_{\mathrm{sc,leC}}^2) + F_{0,2} + F_{0,3}$ for 
	\begin{equation}
		\beta_{0,1} = \frac{4\sigma^2+2\mathsf{Z}x}{\sigma^2+\mathsf{Z}x} (\xi_{\mathrm{sc,leC}} - 1), \qquad F_{0,2} = - \xi_{\mathrm{sc,leC}}^2 \frac{4\sigma^2+2\mathsf{Z}x}{\sigma^2+\mathsf{Z}x}, \qquad F_{0,3} = - \frac{4 \tilde{p}_0 \xi_{\mathrm{sc,leC}}}{\sigma^2+\mathsf{Z}x}. 
	\end{equation}
	These extend to symbols on the leC-phase space. The first two are nonpositive in the vicinity of $\mathrm{bf}\cup \mathrm{tf}$, while $F_{0,3}$ vanishes cubically there. 
\end{propositionp}

Note a similar statement holds if we replace $\tilde{p}_0$ by $\tilde{p}$, if instead of $F_{0,3} = -4\tilde{p}_0 \xi_{\mathrm{sc,leC}} / (\sigma^2+\mathsf{Z}x)$ we use 
\begin{equation} 
	H_{\tilde{p}_{1,2}}^{-,0,-2}(\xi_{\mathrm{sc,leC}}^2+\eta_{\mathrm{sc,leC}}^2) -4\tilde{p}_0 \xi_{\mathrm{sc,leC}} / (\sigma^2+\mathsf{Z}x),
\end{equation} 
which is the sum of a term suppressed by a factor of $x (\sigma^2+\mathsf{Z} x)^{-1/2}$ and a cubically vanishing term.

Consider, for each pair of  $\Theta_1,\Theta_2 \in (0,\pi)$ with $\Theta_1<\Theta_2$, the set $\calP[\Theta_1,\Theta_2] \subset \mathrm{sf}\cup \mathrm{ff}$ defined by 
$\calP[\Theta_1,\Theta_2] = \operatorname{Char}_{\mathrm{leC}}^{2,0,-2}(\tilde{P})  \cap \{ \arccos( \xi_{\mathrm{sc,leC}}-1) \in [\Theta_1,\Theta_2]\} $. The following proposition is a symbolic version of the statement that the Hamiltonian flow is source-to-sink, $\calR_+$ to $\calR_0$. 
\begin{proposition}
	\label{prop:flow_comp}
	 Let $\Theta \in S_{\mathrm{cl,leC}}^{0,0,0,0,0}(X)$ satisfy $\Theta = \operatorname{arccos}(\xi_{\mathrm{sc,leC}} - 1)$ in some neighborhood of $ \calP[\Theta_1,\Theta_2]$.

	For any pair if $\Theta_1,\Theta_2 \in (0,\pi)$ with $\Theta_1<\Theta_2$, the symbol $\alpha \in S^{0,0,0,0,0}_{\mathrm{leC}}(X)$  defined by $H_{\tilde{p}}^{2,0,-2} \Theta  =  \alpha$
	satisfies $\alpha>0$ on $\calP[\Theta_1,\Theta_2]$. 
\end{proposition}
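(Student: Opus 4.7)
The plan is a direct computation using the explicit formula from \Cref{prop:Hocompe}. In a neighborhood of $\calP[\Theta_1,\Theta_2]$ the cutoff $\Theta$ depends only on $\xi_{\mathrm{sc,leC}}$, so only the $\partial_{\xi_{\mathrm{sc,leC}}}$-component of the Hamiltonian contributes. Using $\partial_{\xi_{\mathrm{sc,leC}}}\Theta = -1/\sin\Theta$, \Cref{prop:Hocompe} yields
\begin{equation*}
	H_{\tilde{p}_0}^{-,0,-2}\Theta = -\frac{1}{\sin\Theta}\bigg[\frac{2\sigma^2+\mathsf{Z}x}{\sigma^2+\mathsf{Z}x}\,\xi_{\mathrm{sc,leC}}(\xi_{\mathrm{sc,leC}}-2) - \frac{2\tilde{p}_0}{\sigma^2+\mathsf{Z}x}\bigg].
\end{equation*}
The crucial observation is the trigonometric identity $\xi_{\mathrm{sc,leC}}(\xi_{\mathrm{sc,leC}}-2) = (1+\cos\Theta)(\cos\Theta-1) = -\sin^2\Theta$, valid where $\Theta = \operatorname{arccos}(\xi_{\mathrm{sc,leC}}-1)$. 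Combined with the fact that $(2\sigma^2+\mathsf{Z}x)/(\sigma^2+\mathsf{Z}x) = 1+\sigma^2/(\sigma^2+\mathsf{Z}x)$ is a smooth function on $X^{\mathrm{sp}}_{\mathrm{res}}$ bounded between $1$ and $2$, this simplifies the first bracketed term (after clearing the $-1/\sin\Theta$ prefactor) to $(1+\sigma^2/(\sigma^2+\mathsf{Z}x))\sin\Theta$, which is manifestly positive.

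Next I would account for the two remaining pieces: the $\tilde{p}_0$-correction (the $2\tilde{p}_0/(\sigma^2+\mathsf{Z}x)$ term) and the contribution of $H_{\tilde{p}_{1,2}}$ to the full $H_{\tilde{p}}$. On $\operatorname{Char}(\tilde{p})$ we have $\tilde{p}_0 = -\tilde{p}_{1,2}$, and since $\tilde{p}_{1,2} \in S^{2,-1,-3,-1,-3}_{\mathrm{leC}}(X)$ is one order lower than $\tilde{p}_0$ at both $\mathrm{sf}$ and $\mathrm{ff}$, the symbol $\tilde{p}_{1,2}/(\sigma^2+\mathsf{Z}x)$ vanishes on $\mathrm{sf}\cup\mathrm{ff}$ after the $\varrho_{\mathrm{df}}$-weighting built into $H^{2,0,-2}_{\tilde{p}}$ is applied. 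A parallel order-counting argument handles $\varrho_{\mathrm{df}}H_{\tilde{p}_{1,2}}^{-,0,-2}\Theta$. Thus, evaluating on $\calP[\Theta_1,\Theta_2] \subset \mathrm{sf}\cup\mathrm{ff}$ and away from $\mathrm{df}$ (disjoint from the characteristic set by \Cref{prop:df_ellipticity}),
\begin{equation*}
	\alpha\big|_{\calP[\Theta_1,\Theta_2]} = \varrho_{\mathrm{df}}\Big(1+\tfrac{\sigma^2}{\sigma^2+\mathsf{Z}x}\Big)\sin\Theta\,\Big|_{\calP[\Theta_1,\Theta_2]}.
\end{equation*}

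Positivity is then clear: $\sin\Theta > 0$ because $[\Theta_1,\Theta_2]\subset(0,\pi)$, the middle factor is $\geq 1$ on $X^{\mathrm{sp}}_{\mathrm{res}}$, and $\varrho_{\mathrm{df}}>0$ on $\calP[\Theta_1,\Theta_2]$ by the ellipticity of $\tilde{P}$ near $\mathrm{df}$. The hard part is just the bookkeeping needed to verify that the various weightings conspire so that $H_{\tilde{p}}^{2,0,-2}\Theta$ is a genuine leC-symbol of order $(0,0,0,0,0)$ and that the $\tilde{p}_{1,2}$ contribution truly vanishes on $\mathrm{sf}\cup\mathrm{ff}$; there is no genuine analytic difficulty once the sign of the leading term is extracted via the trigonometric identity.
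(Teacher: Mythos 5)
Your proof is correct and follows essentially the same approach as the paper: apply \Cref{prop:Hocompe}, use $\partial_{\xi_{\mathrm{sc,leC}}}\Theta = -1/\sin\Theta$ together with $\xi_{\mathrm{sc,leC}}(\xi_{\mathrm{sc,leC}}-2) = -\sin^2\Theta$ to extract the positive leading term, and observe that both the $\tilde{p}_0$-correction and the $H_{\tilde{p}_{1,2}}$ contribution vanish on $\calP[\Theta_1,\Theta_2]$ by order-counting. One stylistic note: the sentence "On $\operatorname{Char}(\tilde{p})$ we have $\tilde{p}_0 = -\tilde{p}_{1,2}$" is a detour — the rescaled symbol $\tilde{p}_0/(\sigma^2+\mathsf{Z}x) = \xi_{\mathrm{sc,leC}}^2 + \eta_{\mathrm{sc,leC}}^2 - 2\xi_{\mathrm{sc,leC}}$ is literally the defining function of $\operatorname{Char}_{\mathrm{sc,leC}}^{2,0,-2}(\tilde{P})$, so that term vanishes on $\calP$ directly, no comparison with $\tilde{p}_{1,2}$ needed; the order-counting is only required for the $H_{\tilde{p}_{1,2}}$ term, exactly as the paper states it.
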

\begin{proof}
	Given such $\Theta \in S_{\mathrm{cl,leC}}^{0,0,0,0,0}(X)$, $\Theta$ is equal to $\operatorname{arccos}(\xi_{\mathrm{sc,leC}} - 1)$ in some neighborhood $U\subset {}^{\mathrm{leC}} \overline{T}^* X$ of $\calP[\Theta_1,\Theta_2]$. Thus, 
	\begin{equation}
		H_{\tilde{p}_0}^{-,0,-2} \Theta =  \frac{2\sigma^2 +\mathsf{Z} x}{\sigma^2+\mathsf{Z}x} \xi_{\mathrm{sc,leC}}^{1/2} (2-\xi_{\mathrm{sc,leC}})^{1/2} + \frac{2\tilde{p}_0}{\sigma^2+\mathsf{Z}x} \frac{1}{\xi_{\mathrm{sc,leC}}^{1/2} (2-\xi_{\mathrm{sc,leC}}^2)^{1/2} }
		\label{eq:misc_751}
	\end{equation}
in some neighborhood of $\calP[\Theta_1,\Theta_2]$, where we are taking positive square roots. Since $\tilde{p}_0$ vanishes on $\calP[\Theta_1,\Theta_2]$, the expression on the right-hand side is positive on $\calP[\Theta_1,\Theta_2]$. 

Since $\varrho_{\mathrm{df}}$ is positive on $\calP[\Theta_1,\Theta_2]$, the same statement applies to $H^{2,0,-2}_{\tilde{p}_0} \Theta$. And since $H_{\tilde{p}_{1,2}}^{2,-0,-2}\Theta$ vanishes at $\calP[\Theta_1,\Theta_2]$, the same statement applies to $\alpha=H_{\tilde{p}}^{2,0,-2} \Theta$.
\end{proof}

Clearly, there exist $\Theta = \Theta_{\Theta_1,\Theta_2} \in  S_{\mathrm{cl,leC}}^{0,0,0,0,0}(X)$ satisfying the hypotheses of the previous proposition.

\begin{proposition} 
	\label{prop:propagation_estimate}
	Suppose that $G_1,G_2,G_3 \in \Psi_{\mathrm{leC}}^{-\infty,0,0,-\infty,-\infty}(X)$ satisfy the following: there exist 
	\begin{equation} 
	\Theta_1,\Theta_2,\Theta_3,\Theta_4,\Theta_5 \in (0,\pi)
	\end{equation} 
	satisfying $\Theta_1<\Theta_2<\Theta_3<\Theta_4<\Theta_5$ and 
	\begin{itemize}
		\item $\operatorname{WF}'_{\mathrm{leC}}(G_1) \cap \operatorname{Char}_{\mathrm{leC}}^{2,0,-2,-1,-3}(\tilde{P}) \subseteq \calP[\Theta_3,\Theta_4]$, 
		\item $\calP[\Theta_1,\Theta_2] \subseteq \operatorname{Ell}_{\mathrm{leC}}^{0,0,0,0,0}(G_3)$, 
		\item $\calP[\Theta_1,\Theta_5] \subseteq \operatorname{Ell}_{\mathrm{leC}}^{0,0,0,0,0}(G_2)$,
	\end{itemize}
	and $\operatorname{WF}'_{\mathrm{leC}}(G_1) \subseteq \operatorname{Ell}_{\mathrm{leC}}^{0,0,0,0,0}(G_2)$. 
	Then, for every $\Sigma>0$, $N\in \bbN$, $m,s,\varsigma,l,\ell \in \bbR$, there exists a constant 
	\begin{equation} 
	C=C(\tilde{P},G_1,G_2,G_3,\Theta_1,\Theta_2,\Theta_3,\Theta_4,\Theta_5,\Sigma,N,m,s,\varsigma,l,\ell)>0 
	\end{equation} 
	such that 
	\begin{equation}
		\lVert G_1 u \rVert_{H_{\mathrm{leC}}^{m,s,\varsigma,l,\ell}} \leq C\Big[ \lVert G_2 \tilde{P} u \rVert_{H_{\mathrm{leC}}^{-N,s+1,\varsigma+3,-N,-N} } + \lVert G_3 u \rVert_{H_{\mathrm{leC}}^{-N,s,\varsigma,-N,-N}} + \lVert u \rVert_{H_{\mathrm{leC}}^{-N,-N,-N,-N,-N}} \Big]
		\label{eq:propagation_estimate}
	\end{equation}
	holds for all $u \in \calS'(X)$ and $\sigma \in [0,\Sigma]$ (in the strong sense that if the right-hand side is finite, then the left-hand side is as well, and the stated inequality holds). 
\end{proposition}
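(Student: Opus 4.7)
\emph{Plan.} I will prove the estimate by a positive commutator argument carried out uniformly in $\sigma\in[0,\Sigma]$ in the leC-calculus. Because all the operators $G_j$ carry $-\infty$ orders at both $\mathrm{bf}$ and $\mathrm{tf}$, their essential supports are relatively compact subsets of $\mathrm{sf}\cup\mathrm{ff}$ disjoint from $\mathrm{bf}\cup\mathrm{tf}$, so the argument takes place in precisely the region where the Hamilton flow structure described in Propositions \ref{prop:Hocompe}--\ref{prop:flow_comp} is the standard source-to-sink flow: along the characteristic set, $\Theta=\arccos(\xi_{\mathrm{sc,leC}}-1)$ is strictly increasing, flowing from the source $\calR_+$ ($\Theta=0$) to the sink $\calR_0$ ($\Theta=\pi$). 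In particular, the region $\calP[\Theta_3,\Theta_4]$ lies downstream of $\calP[\Theta_1,\Theta_2]$.

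The heart of the argument is the construction of a commutant $A\in \Psi_{\mathrm{leC}}^{-\infty,s+1/2,\varsigma+3/2,-\infty,-\infty}(X)$, self-adjoint modulo lower order, whose principal symbol has the form $a = \chi_1(\Theta)\,\chi_2\bigl(\tilde p^{2,0,-2,-1,-3}\bigr)\,w$. Here $w$ is an elliptic real weight of the appropriate orders, $\chi_2\in C_{\mathrm{c}}^\infty(\bbR)$ is a cutoff to a small neighborhood of the characteristic set $\{\tilde p=0\}$, and $\chi_1\in C^\infty(\bbR)$ is nonnegative, supported in $(\Theta_1,\Theta_5)$, elliptic on $[\Theta_3,\Theta_4]$, strictly increasing on $(\Theta_1,\Theta_2)$ and strictly decreasing on $(\Theta_4,\Theta_5)$. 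By Proposition \ref{prop:basic_symbology}, the principal symbol of $i[\tilde P, A^*A]$ is $H_{\tilde p}(a^2)$ modulo one lower order. Using that $\chi_2'$ is supported off the characteristic set (where the corresponding contribution is controlled elliptically) and that $H_{\tilde p}\Theta = \alpha > 0$ on the support of $\chi_1'$ by Proposition \ref{prop:flow_comp}, I decompose
\[
H_{\tilde p}(a^2) \;=\; -b^2 + e^2 + r,
\]
where $b^2 = 2\,\chi_1|\chi_1'|\alpha\,\chi_2^2 w^2$ restricted to $(\Theta_4,\Theta_5)$ is elliptic on $\operatorname{WF}'_{\mathrm{leC}}(G_1)$ (after choosing $\chi_1$ so that its decreasing part sits appropriately downstream of $[\Theta_3,\Theta_4]$; an even more convenient variant is to take $\chi_1$ supported in $(\Theta_1,\Theta_4+\epsilon)$ and identically $1$ on $[\Theta_2,\Theta_3]$, so that $b^2$ is elliptic on $[\Theta_3,\Theta_4]$ and $e^2$ is supported in $\calP[\Theta_1,\Theta_2]\subset \operatorname{Ell}_{\mathrm{leC}}(G_3)$), while $r$ is of lower order in $\mathrm{sf}$ and $\mathrm{ff}$.

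Pairing against $u$ and using the identity $\langle i[\tilde P, A^*A]u,u\rangle = 2\operatorname{Im}\langle A^*Au,\tilde P u\rangle + \langle i(\tilde P^*-\tilde P) A^*A u,u\rangle$, the first term on the right is bounded via Lemma \ref{lem:duality} by $\tfrac{1}{\delta}\|Au\|_{L^2}^2 + \delta\,\|G_2\tilde P u\|_{H_{\mathrm{leC}}^{-N,s+1,\varsigma+3,-N,-N}}^2$ (after choosing an elliptic $\Lambda$ of complementary order and noting that $G_2$ is elliptic on $\operatorname{WF}'_{\mathrm{leC}}(A)\subset \operatorname{WF}'_{\mathrm{leC}}(G_1)\cup$ immediate flow-out). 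The second term is one order lower in $\mathrm{sf}$ and $\mathrm{ff}$ by Proposition \ref{prop:imaginary_comp}, and, like the subprincipal contributions coming from the $H_{\tilde p}(w^2)$ term and from $\sharp$-reduction, can be absorbed via the interpolation Lemma \ref{prop:interpolation}. Garding-type quantization of the decomposition above — i.e., writing $\operatorname{Op}(-b^2+e^2+r) = -B^*B + E^*E + R$ with $E$ microsupported in $\operatorname{Ell}_{\mathrm{leC}}(G_3)$ — then yields
\[
\|Bu\|_{L^2}^2 \;\leq\; C\bigl(\|G_2\tilde P u\|_{H_{\mathrm{leC}}^{-N,s+1,\varsigma+3,-N,-N}}^2 + \|G_3 u\|_{H_{\mathrm{leC}}^{-N,s,\varsigma,-N,-N}}^2 + \|u\|_{H_{\mathrm{leC}}^{-N,-N,-N,-N,-N}}^2\bigr),
\]
and the ellipticity of $B$ on $\operatorname{WF}'_{\mathrm{leC}}(G_1)$ together with Lemma \ref{lem:combination} upgrades this to the desired control of $\|G_1 u\|_{H_{\mathrm{leC}}^{m,s,\varsigma,l,\ell}}$.

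The main technical obstacle I expect is the \emph{regularization argument} needed to make the pairing above a priori finite, which is what turns the estimate into the strong form stated in the proposition (finite right-hand side forces finite left-hand side). The standard remedy is to insert a family of regularizers $a_\varepsilon = (1+\varepsilon\tilde w)^{-M} a$ with $\tilde w$ a positive-order weight in the $\mathrm{sf},\mathrm{ff}$ variables, derive the estimate with $A_\varepsilon=\operatorname{Op}(a_\varepsilon)$ obtaining constants uniform in $\varepsilon\in(0,1]$ — this uses that the $\varepsilon$-derivatives of $a_\varepsilon^2$ give contributions of the same sign as the main term, together with a careful accounting of the subprincipal errors — and then pass to the limit $\varepsilon\to 0^+$. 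Keeping all constants uniform as $\sigma\to 0^+$ requires running the entire commutator argument inside $\Psi_{\mathrm{leC}}$ rather than $\sigma$-wise in $\Psi_{\mathrm{scb}}(X)$; but since $\sharp$, $\flat$, the principal symbol short exact sequence, and the $L^2$-boundedness have all been established uniformly in $\sigma$ in \S\ref{sec:calculus}, no further input is needed beyond what has already been developed.
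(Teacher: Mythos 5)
Your overall strategy is the same as the paper's: a uniform positive-commutator argument in $\Psi_{\mathrm{leC}}(X)$ with commutant localized in $\Theta=\operatorname{arccos}(\xi_{\mathrm{sc,leC}}-1)$, regularization to justify the pairing, and a final elliptic upgrade via \Cref{lem:combination}. But two pieces of the argument as written contain genuine gaps.

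\emph{Ellipticity of the commutator on $[\Theta_3,\Theta_4]$.} You propose $a=\chi_1(\Theta)\,\chi_2(\tilde p^{2,0,-2,-1,-3})\,w$ with $\chi_1$ a smooth bump. In the first version, the decreasing part of $\chi_1$ (hence the support of $b^2\sim 2\chi_1|\chi_1'|\alpha$) lies in $(\Theta_4,\Theta_5)$, which is disjoint from $\calP[\Theta_3,\Theta_4]\supseteq \operatorname{WF}'_{\mathrm{leC}}(G_1)\cap\operatorname{Char}$; so $B$ is not elliptic where it needs to be. In the ``more convenient variant,'' $\chi_1\equiv 1$ on $[\Theta_2,\Theta_3]$ forces $\chi_1'(\Theta_3)=0$ by smoothness, so $b^2=2\chi_1|\chi_1'|\alpha\chi_2^2 w^2$ degenerates at $\Theta=\Theta_3$ and is again not elliptic on $[\Theta_3,\Theta_4]$ (it is only elliptic on a proper subinterval). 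This is precisely the issue that the paper's construction is designed to avoid: rather than taking $a\propto\chi_1(\Theta)^2$, the paper takes $a_0=\varphi(\Theta)\psi^2$ with $\varphi=-\varphi_{00}\varphi_{01}^2$, $\varphi_{00}=e^{-\digamma/(\Theta_5-\Theta)}$, so that $\varphi'=\varphi_0^2+\varphi_1$ with $\varphi_0$ \emph{nonvanishing on all of $(\Theta_2,\Theta_5)\supset[\Theta_3,\Theta_4]$} and the only sign-indefinite contribution $\varphi_1$ supported strictly in $(\Theta_1,\Theta_2)$. One can patch your bump by making the plateau end strictly below $\Theta_3$, but you need to state that explicitly, and even then the next problem arises.

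\emph{Domination of subprincipal and regularization errors.} You write that the lower-order contributions (from $\Im\tilde P$ via \Cref{prop:imaginary_comp}, from $\sharp$-reduction, and from the $\varepsilon$-regularization) ``can be absorbed via the interpolation Lemma \ref{prop:interpolation}.'' Interpolation lets you shrink a term $\epsilon\|\cdot\|_{\text{strong}}+C(\epsilon)\|\cdot\|_{\text{weak}}$ but does not help dominate symbolic error terms of the \emph{same} order inside the positive square that defines $b_\varepsilon$. In the paper that domination is achieved by the free parameter $\digamma$ in $\varphi_{00}=e^{-\digamma/(\Theta_5-\Theta)}$: taking $\digamma$ large makes $\varphi_0^2/|\varphi|\sim\digamma(\Theta_5-\Theta)^{-2}$ as large as needed so that the quantity under the square root defining $b_\varepsilon$ stays positive in the presence of the $p_1$, $p_2$, the $K$-regularizer term, and the auxiliary $\underline\delta$ term, together with a small $\overline\delta$ in the duality estimate. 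A plain bump $\chi_1$ has $\chi_1'/\chi_1$ of a fixed scale and offers no such freedom. (You could substitute some other one-parameter family with the same scaling property — e.g.\ $\chi_1^\lambda$ for large $\lambda$ — but as written the absorption step is not justified.) Finally, note the paper's regularizer is $\phi_\varepsilon=(1+\varepsilon x^{-1})^{-K}$, a weight in $x$; this is in the spirit of what you describe but you should be careful that the decomposition is consistent with the leC face structure, since $x$ gives weights at all four boundary faces $\mathrm{sf},\mathrm{ff},\mathrm{bf},\mathrm{tf}$ simultaneously.
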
 
\begin{proof}
	Throughout the argument below, we can take $N$ to be sufficiently large such that any of the finitely many functions of $m,s,\varsigma,l,\ell$ that arise can be bounded below by $-N$. 
	
	We may assume without loss of generality that $G_2,G_3$ are essentially supported away from $\mathrm{bf}\cup\mathrm{tf}$ and that $G_2$ is essentially supported away from $\calR_+$:
	\begin{equation}
	\calR_+ \cap \operatorname{WF}'_{\mathrm{leC}}(G_2)=\varnothing.
	\label{eq:misc_cas}
	\end{equation}
	
	Let $\varphi \in C_{\mathrm{c}}^\infty(\bbR)$ satisfy $\operatorname{supp}\varphi \subset [\Theta_1,\Theta_5]$ and
	\begin{equation} 
	\varphi'(\theta) = \varphi_0(\theta)^2 + \varphi_1(\theta) 
	\end{equation} 
	for $\varphi_0 \in C_{\mathrm{c}}^\infty(\bbR)$ nonvanishing on $[\Theta_3,\Theta_4]$ and $\varphi_1 \in C_{\mathrm{c}}^\infty (\bbR)$ supported within $(\Theta_1,\Theta_2)$. The construction of such $\varphi$ is standard. 
	Moreover, for any closed interval $I\subset ((\Theta_1+3\Theta_2)/4,\Theta_5)$ and any $\epsilon>0$, we can construct $\varphi$ such that $\epsilon \varphi_0^2 \geq  \varphi $ in $I$.  This construction is also standard ---  we consider $\varphi_{00} \in C^\infty(\bbR)$ given by 
	\begin{equation}
		\varphi_{00} = 
		 \begin{cases}
			e^{-\digamma / (\Theta_5-\Theta)} & (\Theta \leq \Theta_5) \\ 
			0 & (\Theta>\Theta_5) 
		\end{cases}
	\end{equation}
	for a parameter $\digamma=\digamma(\sigma)>0$ 
	and $\varphi_{01} \in C^\infty(\bbR)$ that is identically equal to one in some neighborhood of $[\Theta_2,\infty)$ and identically zero in some neighborhood of $(-\infty,\Theta_1]$; setting $\varphi =- \varphi_{00} \varphi_{01}^2$ we have 
	\begin{equation}
		\varphi' = -2\varphi_{00} \varphi_{01} \varphi_{01}' - \varphi_{00}' \varphi_{01}^2 =-2\varphi_{00}\varphi_{01} \varphi_{01}' + \digamma (\Theta_5-\Theta)^{-2} \varphi_{00} \varphi_{01}^2. 
	\end{equation}
	Setting $\varphi_0 = \digamma^{1/2} (\Theta_5-\Theta)^{-1} \varphi_{00}^{1/2} \varphi_{01}$ and $\varphi_1 = - 2 \varphi_{00} \varphi_{01} \varphi_{01}'$, we see that $\varphi,\varphi'$ have the desired form.

	Fix $\Theta \in S_{\mathrm{cl,leC}}^{0,0,0,0,0}(X)$ that is equal to $\operatorname{arccos}(\xi_{\mathrm{sc,leC}} - 1)$ in some neighborhood of $\calP[\Theta_1,\Theta_5]$. Pick a neighborhood $U_0$ of $\calP[\Theta_1,\Theta_5]$ on which $\Theta$ is identically $\operatorname{arccos}(\xi_{\mathrm{sc,leC}} - 1)$ and such that $\alpha$ is bounded below on $U_0$ (in compact sets worth of $\sigma$). 
	Let 
	\begin{equation} 
	\varphi(\Theta) \in S_{\mathrm{cl,leC}}^{-\infty,0,0,-\infty,-\infty}(X)
 	\end{equation}
 	denote a symbol equal to $\varphi\circ \Theta$ on some neighborhood $U\Subset U_0$ of $\calP[\Theta_1,\Theta_5]$, and let $\psi \in S_{\mathrm{cl,leC}}^{-\infty,0,0,-\infty,-\infty}(X)$ have $\operatorname{supp} \psi \Subset U$ and be identically equal to $1$ on some neighborhood of $\calP[\Theta_1,\Theta_5]$. We can take both of these to be supported away from $\mathrm{bf}\cup\mathrm{tf}$.
 	We can choose $\psi$ such that
 	\begin{equation}
 		\operatorname{supp} (\varphi(\Theta) H_{\tilde{p}}^{2,0,-2}\psi) \cap (\tilde{p}^{2,0,-2,-1,-3})^{-1}(\{0\})=\varnothing 
 		\label{eq:misc_spp}  
 	\end{equation}
 	and $\operatorname{supp} \psi \cap (\mathrm{df} \cup \calR_+ ) = \varnothing$. 
 	Consider the symbol 
	\begin{equation}
		a_0= \varphi(\Theta) \psi^2 \in S_{\mathrm{cl,leC}}^{-\infty,0,0,-\infty,-\infty}(X).  
	\end{equation}
	We then compute 
	\begin{equation}
		H_{\tilde{p}}^{2,0,-2} a_0 = (\varphi_0(\Theta)^2 + \varphi_1(\Theta))  \psi^2 \alpha 
		+ 2\psi \varphi(\Theta)  H_{\tilde{p}}^{2,0,-2} \psi, 
		\label{eq:misc_jik}
	\end{equation} 
	where $\alpha$ is as in \Cref{prop:flow_comp}. 
	
	Now set, for to-be-decided $K>0$, set $\phi_\varepsilon(x) = (1+\varepsilon x^{-1})^{-K}$, for each $\varepsilon\geq 0$. Now set $a^{(\varepsilon)}_0 = \phi_\varepsilon^2 a_0$. 
	
	In \S\ref{sec:operator}, we checked that $\Im \tilde{P} \in S \operatorname{Diff}_{\mathrm{leC}}^{2,-1,-3,-1-\delta,-3-2\delta}(X) \subset \Psi_{\mathrm{leC}}^{2,-1,-3,-1-\delta,-3-2\delta}(X)$. 
	Let $p_1$ denote a representative of $\sigma_{\mathrm{leC}}^{2,-1,-3,-1-\delta,-3-2\delta}(-2\Im \tilde{P})$. 
	Then, 
	\begin{equation}
		H_{\tilde{p}}^{2,0,-2} a^{(\varepsilon)}_0  =  (\varphi_0(\Theta)^2 + \varphi_1(\Theta)) \phi_\varepsilon^2  \psi^2 \alpha    +2\psi \varphi(\Theta)  H_{\tilde{p}}^{2,0,-2} \psi  + 2 K \psi^2 \phi_\varepsilon^2 (\varepsilon x^{-1})(1+\varepsilon x^{-1})^{-1} \varphi \beta_1 ,
	\end{equation}
	\begin{multline}
		H_{\tilde{p}}^{2,0,-2} a^{(\varepsilon)}_0 + \varrho_{\mathrm{df}}\varrho_{\mathrm{sf}}^{-1}\varrho_{\mathrm{ff}}^{-3} \varrho_{\mathrm{bf}}^{-1}\varrho_{\mathrm{tf}}^{-3}   p_1 a^{(\varepsilon)}_0  =  (\varphi_0(\Theta)^2 + \varphi_1(\Theta)) \phi_\varepsilon^2  \psi^2 \alpha    +2\psi \varphi(\Theta)  H_{\tilde{p}}^{2,0,-2} \psi \\ + 2 K \psi^2 \phi_\varepsilon^2 (\varepsilon x^{-1})(1+\varepsilon x^{-1})^{-1} \varphi \beta_1   + \psi^2 p_1  \varrho_{\mathrm{df}}\varrho_{\mathrm{sf}}^{-1}\varrho_{\mathrm{ff}}^{-3} \varrho_{\mathrm{bf}}^{-1}\varrho_{\mathrm{tf}}^{-3} \phi_\varepsilon^2 \varphi ,
		\label{eq:misc_jkk}
	\end{multline} 
	where  $\beta_1 \in S_{\mathrm{leC}}^{0,0,0,0,0}(X)$ is defined by $H_{\tilde{p}}^{2,0,-2} x = \beta_1 x$.

	We now let $a = \varrho_{\mathrm{sf}}^{-2s-1} \varrho_{\mathrm{ff}}^{-2\varsigma-3} \varrho_{\mathrm{bf}}^{-2l-1} \varrho_{\mathrm{tf}}^{-2\ell-3} a_0$, $a^{(\varepsilon)} =  \smash{\varrho_{\mathrm{sf}}^{-2s-1} \varrho_{\mathrm{ff}}^{-2\varsigma-3} \varrho_{\mathrm{bf}}^{-2l-1} \varrho_{\mathrm{tf}}^{-2\ell-3} a_0^{(\varepsilon)}}$. Then, 
	$a^{(\bullet)} \in L^\infty([0,1]_\varepsilon; S_{\mathrm{leC}}^{-\infty,2s+1,2\varsigma+3,-\infty,-\infty}(X))$ and 
	\begin{equation}
		H_{\tilde{p}}^{2,0,-2} a^{(\varepsilon)} = \varrho_{\mathrm{sf}}^{-2s-1} \varrho_{\mathrm{ff}}^{-2\varsigma-3} \varrho_{\mathrm{bf}}^{-2l-1} \varrho_{\mathrm{tf}}^{-2\ell-3} (H_{\tilde{p}}^{2,0,-2} a_0^{(\varepsilon)}+ a_0^{(\varepsilon)} p_2)
	\end{equation}
	for some $p_2 \in S_{\mathrm{leC}}^{0,0,0,0,0}(X)$. Thus, 
	\begin{multline}
	H_{\tilde{p}}^{2,0,-2} a^{(\varepsilon)} + \varrho_{\mathrm{df}}\varrho_{\mathrm{sf}}^{-1}\varrho_{\mathrm{ff}}^{-3} \varrho_{\mathrm{bf}}^{-1}\varrho_{\mathrm{tf}}^{-3}   p_1 a^{(\varepsilon)}  =   \varrho_{\mathrm{sf}}^{-2s-1} \varrho_{\mathrm{ff}}^{-2\varsigma-3} \varrho_{\mathrm{bf}}^{-2l-1} \varrho_{\mathrm{tf}}^{-2\ell-3}\Big[ (\varphi_0(\Theta)^2 + \varphi_1(\Theta)) \phi_\varepsilon^2  \psi^2 \alpha  \\  +2\psi \varphi(\Theta)\phi_\varepsilon^2   H_{\tilde{p}}^{2,0,-2} \psi  + 2 K \psi^2 \phi_\varepsilon^2 (\varepsilon x^{-1})(1+\varepsilon x^{-1})^{-1} \varphi \beta_1  + \phi^2_{\varepsilon}\varphi(\Theta)\psi^2 p_2 \\ + \psi^2 p_1  \varrho_{\mathrm{df}}\varrho_{\mathrm{sf}}^{-1}\varrho_{\mathrm{ff}}^{-3} \varrho_{\mathrm{bf}}^{-1}\varrho_{\mathrm{tf}}^{-3} \phi_\varepsilon^2 \varphi \Big].
	\end{multline} 
	Dividing by $\varrho_{\mathrm{df}} x^{-1} (\sigma^2+\mathsf{Z}x)^{-1/2}$, 
	\begin{multline}
	H_{\tilde{p}} a^{(\varepsilon)} +  p_1 a^{(\varepsilon)}  =  \varrho_{\mathrm{df}}^{-1} \varrho_{\mathrm{sf}}^{-2s} \varrho_{\mathrm{ff}}^{-2\varsigma} \varrho_{\mathrm{bf}}^{-2l} \varrho_{\mathrm{tf}}^{-2\ell}\Big[ (\varphi_0(\Theta)^2 + \varphi_1(\Theta)) \phi_\varepsilon^2  \psi^2 \alpha  \\  +2\psi \varphi(\Theta) \phi_\varepsilon^2  H_{\tilde{p}}^{2,0,-2} \psi  + 2 K \psi^2 \phi_\varepsilon^2 (\varepsilon x^{-1})(1+\varepsilon x^{-1})^{-1} \varphi \beta_1  + \phi^2_{\varepsilon}\varphi(\Theta)\psi^2 p_2 \\ + \psi^2 p_1  \varrho_{\mathrm{df}}\varrho_{\mathrm{sf}}^{-1}\varrho_{\mathrm{ff}}^{-3} \varrho_{\mathrm{bf}}^{-1}\varrho_{\mathrm{tf}}^{-3} \phi_\varepsilon^2 \varphi \Big].
	\end{multline} 
	
	For each $K,\underline{\delta}>0$, we may choose $\psi=\smash{\psi_{K,\underline{\delta}}}$ (perhaps dilating it if necessary) such that its essential support is a subset of $\operatorname{Ell}_{\mathrm{leC}}^{0,0,0,0,0}(G_2)$ and such that, taking $\digamma=\digamma_{K,\underline{\delta},\psi} \in C^\infty([0,\infty)_\sigma;\bbR^+)$ sufficiently large, 
	\begin{multline}
		b_\varepsilon =  \varrho_{\mathrm{df}}^{-1/2} \varrho_{\mathrm{sf}}^{-s} \varrho_{\mathrm{ff}}^{-\varsigma} \varrho_{\mathrm{bf}}^{-l} \varrho_{\mathrm{tf}}^{-\ell}   \phi_\varepsilon \varphi_{00}^{1/2} \varphi_{01}  \psi \Big[  \digamma (\Theta_5-\Theta)^{-2} \alpha     - 2 K \frac{\varepsilon x^{-1}}{1+\varepsilon x^{-1}}  \beta_1  - p_2 \\  -  p_1  \varrho_{\mathrm{df}}\varrho_{\mathrm{sf}}^{-1}\varrho_{\mathrm{ff}}^{-3}\varrho_{\mathrm{bf}}^{-1} \varrho_{\mathrm{tf}}^{-3}  - 2 \underline{\delta} \phi_\varepsilon^2 \psi^2 \varphi \Big]^{1/2}
	\end{multline}
	is a well-defined uniform family of leC-symbols, specifically $b_\bullet \in L^\infty([0,1]_\varepsilon ; S_{\mathrm{leC}}^{-\infty,s,\varsigma,-\infty,-\infty}(X))$.
	In addition, we set 
	\begin{equation}
		e_\varepsilon  =  \varrho_{\mathrm{df}}^{-1}  \varrho_{\mathrm{sf}}^{-2s} \varrho_{\mathrm{ff}}^{-2\varsigma} \varrho_{\mathrm{bf}}^{-2l} \varrho_{\mathrm{tf}}^{-2\ell} \varphi_1(\Theta) \phi_\varepsilon^2 \psi^2  \alpha, 
	\end{equation}
	\begin{equation}
		f_\varepsilon =  2\varrho_{\mathrm{df}}^{-1} \varrho_{\mathrm{sf}}^{-2s} \varrho_{\mathrm{ff}}^{-2\varsigma} \varrho_{\mathrm{bf}}^{-2l} \varrho_{\mathrm{tf}}^{-2\ell}  \varphi(\Theta) \phi_\varepsilon^2 \tilde{p}^{-1} \psi H_{\tilde{p}}^{2,0,-2}\psi 
		\label{eq:misc_fve}
	\end{equation}
	(the division by $\tilde{p}$ in \cref{eq:misc_fve} being unproblematic by \cref{eq:misc_spp}).
	Thus,
	\begin{align}
	e_\bullet &\in  L^\infty([0,1]_\varepsilon ; S_{\mathrm{leC}}^{-\infty,2s,2\varsigma,-\infty,-\infty}(X)),\\  
	f_\bullet &\in L^\infty([0,1]_\varepsilon ; S_{\mathrm{leC}}^{-\infty,2s,2\varsigma+2,-\infty,-\infty}(X)). 
	\end{align} 
	In terms of these, 
	\begin{equation} 
		H_{\tilde{p}} a^{(\varepsilon)} +p_1 a^{(\varepsilon)} = 2\underline{\delta}   \varrho_{\mathrm{df}}^{-1} \varrho_{\mathrm{sf}}^{2s+2} \varrho_{\mathrm{ff}}^{2\varsigma+6} \varrho_{\mathrm{bf}}^{2l+2} \varrho_{\mathrm{tf}}^{2\ell+6}   a^{(\varepsilon)2} + b_\varepsilon^2 +e_\varepsilon + f_\varepsilon \tilde{p}. 
	\end{equation}	
	Setting $A_\varepsilon = (1/2)(\operatorname{Op}(a^{(\varepsilon)}) + \operatorname{Op}(a^{(\varepsilon)})^*)$, $B_\varepsilon = \operatorname{Op}(b_\varepsilon)$, $E_\varepsilon = \operatorname{Op}(e_\varepsilon)$, $F_\varepsilon=\operatorname{Op}(f_\varepsilon)$, 
	\begin{equation}
		-i [\Re \tilde{P},A_\varepsilon] -  \{\Im \tilde{P},A_\varepsilon\} =  2 \underline{\delta} A_\varepsilon\Lambda_{1/2,-s-1,-\varsigma-3,-l-1,-\ell-3}^2 A_\varepsilon + B_\varepsilon^* B_\varepsilon + E_\varepsilon + F_\varepsilon^* \tilde{P} \\ +R_\varepsilon
		\label{eq:misc_l09}
	\end{equation}
	for some $R_\bullet \in L^\infty([0,1]_\varepsilon;\Psi_{\mathrm{leC}}^{-\infty,2s-1,2\varsigma-1,2l,2\ell}(X))$. We have 
	\begin{align}
		\begin{split} 
		A_\bullet &\in L^\infty([0,1]_\varepsilon ; \Psi_{\mathrm{leC}}^{-\infty,2s+1,2\varsigma+3,-\infty,-\infty}(X) ), \\ 
		F_\bullet &\in  L^\infty([0,1]_\varepsilon;\Psi_{\mathrm{leC}}^{-\infty,2s,2\varsigma+2,-\infty,-\infty}(X)),\\
		B_\bullet &\in L^\infty([0,1]_\varepsilon;\Psi_{\mathrm{leC}}^{-\infty,s,\varsigma,-\infty,-\infty}(X)), \\
		E_\bullet &\in L^\infty([0,1]_\varepsilon;\Psi_{\mathrm{leC}}^{-\infty,2s,2\varsigma,-\infty,-\infty}(X)),
		\end{split}
	\label{eq:misc_efq}
	\end{align}
	and
	\begin{multline} 
		\operatorname{WF}'_{L^\infty,\mathrm{leC}}(A_\bullet),	\operatorname{WF}'_{L^\infty,\mathrm{leC}}(B_\bullet), \operatorname{WF}'_{L^\infty,\mathrm{leC}}(E_\bullet),\\ \operatorname{WF}'_{L^\infty,\mathrm{leC}}(F_\bullet),\operatorname{WF}'_{L^\infty,\mathrm{leC}}(R_\bullet)\subset \operatorname{supp} \psi\varphi(\Theta), 
	\end{multline}  
	where the last of these inclusions (the one for $R_\bullet$) follows from the one for $A_\bullet$ and 
	$\operatorname{WF}'_{L^\infty}([\tilde{P},A_\bullet]) \subset \operatorname{WF}'_{L^\infty}(A_\bullet)$. 
	
	Now, for each $m_0,s_0,\varsigma_0,l_0,\ell_0 \in \bbR$, there exist $K>0$ (dependent on $m_0,s_0,\varsigma_0,l_0,\ell_0$ and $m,l,\ell$ but nothing else) such that, given $\{u(-;\sigma)\}_{\sigma\geq 0} \in L^\infty([0,2\Sigma]; H_{\mathrm{leC}}^{m_0,s_0,\varsigma_0,l_0,\ell_0}(X) )$, 
	it is the case that, for any $\varepsilon>0$ (and for each $\sigma>0$, implicit in the notation below),
	\begin{equation}
		2  \Im \langle \tilde{P} u , A_\varepsilon u \rangle_{L^2} =   -\langle \{\Im \tilde{P}, A_\varepsilon \}u,u\rangle_{L^2} + i \langle [\Re \tilde{P}(\sigma),A_\varepsilon]u,u \rangle_{L^2} ,
		\label{eq:misc_l94}
	\end{equation}
	where the pairings above are well-defined (and where we are using the convention that $\langle-,-\rangle_{L^2}$ is antilinear in the first slot). Indeed, $A_\varepsilon$, $\{\Im \tilde{P}, A_\varepsilon \}$, and $[\Re \tilde{P}(\sigma),A_\varepsilon]$ are all smoothing operators -- i.e.\ lying in $\Psi_{\mathrm{scb}}^{-\infty,\infty,\infty}(X)$ if $\sigma>0$ and $\Psi_{\mathrm{scb}}^{-\infty,\infty,\infty}(X_{1/2})$ if $\sigma=0$ (in a uniform sense made precise by the leC-calculus, but since we simply need to justify some integration by parts $\sigma$-wise the uniformity is not important here) -- and by taking $K$ large they can be made to induce an arbitrarily large amount of decay for each $\varepsilon>0$. Given $N$, we fix  $m_0,s_0,\varsigma_0,l_0,\ell_0 \in \bbR$ such that $m_0,s_0,\varsigma_0,l_0,\ell_0<-N$. Then, we can take $K$ depending on $m,l,\ell,N$ and nothing else such that \cref{eq:misc_l94} holds for all $\{u(-;\sigma)\}_{\sigma\geq 0} \in L^\infty([0,\Sigma]; H_{\mathrm{leC}}^{-N,-N,-N,-N,-N}(X) )$.

	Applying \cref{eq:misc_l94} to $\{u(-;\sigma)\}_{\sigma>0} \in H_{\mathrm{leC}}^{-N,-N,-N,-N,-N}(X)$ and pairing against $u$ (after taking $K$ large enough), we have  
	\begin{multline}
		2 \Im \langle \tilde{P}u,A_\varepsilon u \rangle_{L^2} = \lVert B_\varepsilon u \rVert^2_{L^2} + \langle u, E_\varepsilon u \rangle_{L^2} + \langle \tilde{P} u, F_\varepsilon u \rangle_{L^2} + \langle R_\varepsilon u,u \rangle_{L^2} \\ + 2 \underline{\delta} \lVert \Lambda_{1/2,-s-1,-\varsigma-3,-l-1,-\ell-3}^2 A_\varepsilon u \rVert^2_{L^2}
	\end{multline} 
	for $\varepsilon>0$. 
	So, 
	\begin{multline}
		\lVert B_\varepsilon u \rVert_{L^2}^2 + 2\underline{\delta} \lVert \Lambda_{1/2,-s-1,-\varsigma-3,-l-1,-\ell-3} A_\varepsilon u \rVert^2_{L^2} \leq \\2| \langle \tilde{P}u,A_\varepsilon u \rangle_{L^2}| + | \langle \tilde{P}u,F_\varepsilon u \rangle_{L^2}| + | \langle R_\varepsilon u, u \rangle_{L^2}| + +|\langle u,E_\varepsilon u \rangle_{L^2}| .
		\label{eq:misc_lz1}
	\end{multline}

	Fix self-adjoint $G \in \Psi^{-\infty,0,0,-\infty,-\infty}_{\mathrm{leC}}(X)$ (constructed via $\operatorname{Op}$) such that $\operatorname{WF}_{\mathrm{leC}}^{'0,0}(1-G)$ is disjoint from a neighborhood of the $L^\infty$-esssupp of $a,b,f,e$ and such that $\operatorname{WF}'_{\mathrm{leC}}(G)$ is disjoint from $\calR_+$ and satisfies $\operatorname{WF}'_{\mathrm{leC}}(G)\subset \operatorname{Ell}_{\mathrm{leC}}^{0,0,0,0,0}(G_2)$. (Both $a,f$ are supported away from $\calR_+$ and $\operatorname{Char}_{\mathrm{leC}}^{0,0,0,0,0}(G_2)$, so such a $G$ exists.) Then (for $K$ large enough):
	\begin{itemize}
		\item Writing $\tilde{P} = (1-G)\tilde{P} + G \tilde{P}$  and noting that 
		\begin{equation}
			(1-G)A_\bullet \in L^\infty([0,1]_\varepsilon;\Psi_{\mathrm{leC}}^{-\infty,-\infty,-\infty,-\infty,-\infty}(X)), 
		\end{equation} 
		we have, for each $N\in \bbN$, 
		\begin{align}
			\begin{split} 
			2| \langle \tilde{P}u,A_\varepsilon u \rangle_{L^2}| &\leq 2|\langle G \tilde{P}u ,A_\varepsilon u \rangle_{L^2} | +2|\langle  \tilde{P} u,(1-G)A_\varepsilon u \rangle_{L^2} | \\
			&\preceq 2^{-1}\overline{\delta}^{-1}\lVert G \tilde{P} u \rVert^2_{\calY_N} + 2 \overline{\delta} \lVert \Lambda_{1/2,-s-1,-\varsigma-3,-l-1,-\ell-3} A_\varepsilon u \rVert^2_{L^2} + \lVert u \rVert^2_{\calE_{N}}, 
			\label{eq:misc_lz2}
			\end{split} 
		\end{align}
		for any $\overline{\delta}>0$, where the constant in \cref{eq:misc_lz2} is independent of $\overline{\delta}$. We have abbreviated $\calE_N = H_{\mathrm{leC}}^{-N,-N,-N,-N,-N}(X)$, 
		\begin{equation}
			\calY_N = H_{\mathrm{leC}}^{-N,s+1,\varsigma+3,l+1,\ell+3}(X).
		\end{equation}
		We also set $\calY_{*,N} = H_{\mathrm{leC}}^{-N,-(s+1),-(\varsigma+3),-(l+1),-(\ell+3)}(X)$ (so dual in all orders except that at $\mathrm{df}$).
		\item 
		Similarly, we can chose self-adjoint $\bar{G}_3 \in \Psi_{\mathrm{leC}}^{-\infty,0,0,-\infty,-\infty}(X)$ with 
		\begin{equation} 
			\operatorname{WF}^{'0,0}_{\mathrm{leC}}(1-\bar{G}_3) \cap \operatorname{WF}'_{L^\infty,\mathrm{leC}}(E_\bullet) = \varnothing,
		\end{equation} 
		$\operatorname{WF}'_{\mathrm{leC}}(\bar{G}_3) \cap \operatorname{Char}_{\mathrm{leC}}^{2,0,-2,-1,-3}(\tilde{P}) \subseteq \calP[\Theta_1,\Theta_2]$, and $\operatorname{WF}'_{\mathrm{leC}}(\bar{G}_3) \subseteq \operatorname{Ell}_{\mathrm{leC}}^{0,0,0,0,0}(G_2)$. Then 
		\begin{align}
			\begin{split} 
			| \langle u,E_\varepsilon u \rangle_{L^2}| &\leq |\langle \bar{G}_3 u ,E_\varepsilon u \rangle_{L^2} | +|\langle   u,(1-\bar{G}_3)E_\varepsilon u \rangle_{L^2} | \\
			&\preceq \lVert \bar{G}_3  u \rVert^2_{\calX_N} +   \lVert E_\varepsilon u \rVert^2_{\calX_{N}^*} + \lVert u \rVert^2_{\calE_{N}}, \\
			&\preceq \lVert \bar{G}_3  u \rVert^2_{\calX_N} +   \lVert E_\varepsilon u \rVert^2_{\calX_{*,N}} + \lVert u \rVert^2_{\calE_{N}},
			\label{eq:misc_lz5}
			\end{split} 
		\end{align}
		where $\calX_N = H_{\mathrm{leC}}^{-N,s,\varsigma,l,\ell}(X)$, $\calX_{*,N} = H_{\mathrm{leC}}^{-N,-s,-\varsigma,-l,-\ell}(X)$.  
		
		The bound $\lVert E_\varepsilon u \rVert^2_{\calX_{N}^*}  \preceq  \lVert E_\varepsilon u \rVert^2_{\calX_{*,N}} + \lVert u \rVert^2_{\calE_{N}}$ follows (using \cref{eq:misc_efq}) from the construction via $\operatorname{Op}$ of $H_1 \in \Psi_{\mathrm{leC}}^{-1,0,0,0,0}(X)$ and $H_2 \in \Psi_{\mathrm{leC}}^{0,0,0,0,0}(X)$ such that 
		\begin{equation} 
			\operatorname{WF}'_{L^\infty,\mathrm{leC}}(E_\bullet) \cap \operatorname{WF}^{'0,0}_{\mathrm{leC}}(H_2) = \varnothing
		\end{equation} 
		and $1=H_1+H_2$. Then, we can compute  
		\begin{align}
			\begin{split} 
			\lVert E_\varepsilon u \rVert^2_{\calX_{N}^*}  &\leq \lVert H_1 E_\varepsilon u \rVert^2_{\calX_{N}^*} +\lVert H_2 E_\varepsilon u \rVert^2_{\calX_{N}^*}  \\
			&\preceq \lVert  E_\varepsilon u \rVert^2_{H_{\mathrm{leC}}^{N-1,s,\varsigma,l,\ell} } +\lVert H_2 E_\varepsilon u \rVert^2_{\calX_{N}^*} \\
			&\preceq \lVert  E_\varepsilon u \rVert^2_{H_{\mathrm{leC}}^{N-1,s,\varsigma,l,\ell}} +\lVert  u \rVert^2_{\calE_N}. 
			\end{split} 
		\end{align}
		Proceeding inductively, we deduce $\lVert E_\varepsilon u \rVert^2_{\calX_{N}^*}  \preceq  \lVert E_\varepsilon u \rVert^2_{\calX_{*,N}} + \lVert u \rVert^2_{\calE_{N}}$. 
		This argument will be used below without further comment.
		\item 
		\begin{align}
			\begin{split} 
			| \langle \tilde{P}u,F_\varepsilon u \rangle_{L^2}| &\leq |\langle G \tilde{P}u ,F_\varepsilon u \rangle_{L^2} | +|\langle  \tilde{P} u,(1-G)F_\varepsilon u \rangle_{L^2} | \\
			&\preceq \lVert G \tilde{P} u \rVert^2_{\calY_N} +   \lVert F_\varepsilon u \rVert^2_{\calY_{*,N}} + \lVert u \rVert^2_{\calE_{N}}.
			\end{split} 
			\intertext{We have $\operatorname{WF}'_{L^\infty,\mathrm{leC}}(F_\bullet) \cap \operatorname{Char}^{2,0,-2,-1,-3}(G\tilde{P})=\varnothing$ (since, by \cref{eq:misc_spp}, $f_\bullet$ is supported away from the characteristic set), so we can deduce via the elliptic parametrix construction that} 
			\begin{split} 
			\lVert F_\varepsilon u \rVert_{\calY_{*,N}}\preceq \lVert F_\varepsilon u \rVert_{\calY^*_N} &\preceq \lVert G \tilde{P} u \rVert_{H_{\mathrm{leC}}^{-N,s-1,\varsigma-1,-N,-N}} + \lVert u \rVert_{\calE_N} \\
			&\preceq \lVert G\tilde{P} u \rVert_{\calY_N}^2 + \lVert u \rVert_{\calE_N}^2 
			\end{split} 
			\intertext{for $N$ sufficiently large, so}
			|\langle \tilde{P}u,F_\varepsilon u \rangle_{L^2}| &\preceq \lVert G \tilde{P} u \rVert^2_{\calY_N}  + \lVert u \rVert^2_{\calE_{N}}. \label{eq:misc_lz3}
		\end{align}
		\item 
		Writing $R_\varepsilon = (1-G^2)R_\varepsilon + G^2 R_\varepsilon$, since $1-G^2=(1-G)(1+G)$ implies $\operatorname{WF}^{'0,0}_{L^\infty,\mathrm{leC}}(1-G^2) \subset \operatorname{WF}^{'0,0}_{L^\infty,\mathrm{leC}}(1- G)$, for each $N\in \bbN$ we have 
		\begin{align}
			\begin{split} 
			|\langle R_\varepsilon u,u \rangle_{L^2}  |\preceq ( \lVert G R_\varepsilon u \rVert_{\calZ_{*,N}}\lVert Gu \rVert_{\calZ_N}+ \lVert u \rVert_{\calE_N}^2)&\preceq ( \lVert G R_\varepsilon u \rVert_{\calZ_{*,N}}^2+\lVert Gu \rVert_{\calZ_N}^2+ \lVert u \rVert_{\calE_N}^2)  \\
			&\preceq ( \lVert  \tilde{R}_\varepsilon Gu \rVert_{\calZ_{*,N}}^2+\lVert G u \rVert_{\calZ_N}^2+ \lVert u \rVert_{\calE_N }^2)  \\
			&\preceq ( \lVert G u \rVert_{\calZ_N}^2+ \lVert u \rVert_{\calE_N}^2) \label{eq:misc_lz4}
			\end{split} 
		\end{align}
		for some $\tilde{R}_\bullet \in L^\infty([0,1]_\varepsilon;\Psi_{\mathrm{leC}}^{-\infty,2s-1,2\varsigma-1,2l,2\ell}(X))$, where
		\begin{align} 
			\begin{split}
			\calZ_N& =H_{\mathrm{leC}}^{-N,(2s-1)/2, (2\varsigma-1)/2, l,\ell}(X), \\
			\calZ_{*,N}&=H_{\mathrm{leC}}^{-N,-(2s-1)/2, -(2\varsigma-1)/2, -l,-\ell}(X) .
			\end{split}
		\end{align}  
	\end{itemize}
	Combining \cref{eq:misc_lz1}, \cref{eq:misc_lz2} with $\overline{\delta}$ sufficiently small, \cref{eq:misc_lz3}, \cref{eq:misc_lz4}, we have proven that 
	\begin{align}
		\lVert B_\varepsilon u \rVert_{L^2}^2 &\preceq \lVert G\tilde{P} u \rVert_{\calY_N}^2 + \lVert G u \rVert_{\calZ_N}^2 + \lVert \bar{G}_3 u \rVert_{\calX_N}^2 + \lVert E_\varepsilon u \rVert^2_{\calX_{*,N}} + \lVert u \rVert_{\calE_N}^2, 
		\intertext{i.e.\ }
		\lVert \hat{B}_\varepsilon u \rVert_{\calX_N}^2 &\preceq \lVert G\tilde{P} u \rVert_{\calY_N}^2 + \lVert G u \rVert_{\calZ_N}^2 + \lVert \bar{G}_3 u \rVert_{\calX_N}^2+ \lVert \hat{E}_\varepsilon u \rVert^2_{\calX_N} + \lVert u \rVert_{\calE_N}^2, \\
		\lVert \hat{B}_\varepsilon u \rVert_{\calX_N} &\preceq \lVert G\tilde{P} u \rVert_{\calY_N} + \lVert G u \rVert_{\calZ_N} + \lVert \bar{G}_3 u \rVert_{\calX_N} + \lVert \hat{E}_\varepsilon u \rVert_{\calX_{N}} + \lVert u \rVert_{\calE_N}, 
		\label{eq:misc_9iz}
	\end{align}
 	where $\hat{B}_\varepsilon = \Lambda_{0,-s,-\varsigma,0,0} B_\varepsilon$ and $\hat{E}_\varepsilon = \Lambda_{0,-2s,-2\varsigma,0,0} E_\varepsilon$.
	Hence, $\hat{B}_\bullet,\hat{E}_\bullet \in \Psi_{\mathrm{leC}}^{0,0,0,0,0}(X)$. 
	By the choice of $\psi$, $\operatorname{WF}'_{L^\infty,\mathrm{leC}}(B_\bullet), \operatorname{WF}'_{L^\infty,\mathrm{leC}}(E_\bullet) \subseteq \operatorname{Ell}_{\mathrm{leC}}^{0,0,0,0,0}(G_2)$. 
	
	Let 
	$\bar{\calX}_N = H_{\mathrm{leC}}^{-N,s,\varsigma,-N,-N}(X)$
	Since  $\operatorname{WF}_{L^\infty,\mathrm{leC}}'(E_\bullet) \cap \operatorname{Char}_{\mathrm{leC}}^{2,0,-2,-1,-3}(\tilde{P})\subset \calP[\Theta_1,\Theta_2]$, 
	\begin{equation}
		\operatorname{WF}_{L^\infty,\mathrm{leC}}'(E_\bullet) \subset \operatorname{Ell}_{\mathrm{leC}}^{2,0,-2,-1,-3}(G\tilde{P}) \cup \operatorname{Ell}_{\mathrm{leC}}^{0,0,0,0,0}(G_3).
	\end{equation}
	Consequently, $\lVert \hat{E}_\varepsilon u \rVert_{\calX_N} \preceq \lVert G \tilde{P} u \rVert_{\bar{\calY}_N}+ \lVert G_3 u \rVert_{\bar{\calX}_N} + \lVert u \rVert_{\calE_N}$
	for sufficiently large $N$, where $\bar{\calY}_N = H_{\mathrm{leC}}^{-N,s+1,\varsigma+3,-N,-N}(X)$. Likewise,  
	\begin{equation} 
		\lVert \bar{G}_3 u \rVert_{\calX_N} \preceq \lVert G \tilde{P} u \rVert_{\bar{\calY}_N}+ \lVert G_3 u \rVert_{\bar{\calX}_N} + \lVert u \rVert_{\calE_N}.
	\end{equation}

	Let $\bar{\calZ}_N = H_{\mathrm{leC}}^{-N, s-1/2,\varsigma-1/2,-N,-N}(X)$.
	Because $\operatorname{WF}'_{\mathrm{leC}}(G) \subseteq \operatorname{Ell}_{\mathrm{leC}}^{0,0,0,0,0}(G_2)$, 
	\begin{equation} 
		\lVert G\tilde{P} u \rVert_{\bar{\calY}_N} \preceq \lVert G_2\tilde{P} u \rVert_{\bar{\calY}_N} + \lVert u \rVert_{\calE_N}
	\end{equation} 
	and $\lVert G u \rVert_{\calZ_N} \preceq \lVert G_2 u \rVert_{\bar{\calZ}_N} + \lVert u \rVert_{\calE_N}$ for sufficiently large $N$. We have therefore shown that 
	\begin{equation} 
		\lVert \hat{B}_\varepsilon u \rVert_{\calX_N} \preceq \lVert G_2\tilde{P} u \rVert_{\bar{\calY}_N} + \lVert G_2 u \rVert_{\bar{\calZ}_N} + \lVert G_3 u \rVert_{\bar{\calX}_N} + \lVert u \rVert_{\calE_N}. 
	\end{equation} 
	Thus, for each $\sigma\geq 0$, $\hat{B}_\varepsilon u(-;\sigma) $ is uniformly bounded in $\calX_N(\sigma)$ as $\varepsilon\to 0^+$. 
	
	For each $\sigma\geq 0$, given any sequence $\{\varepsilon_k\}_{k\in \bbN}\subset (0,1]$ with $\varepsilon_k\to 0$ as $k\to\infty$, there exists -- by the Banach--Alaoglu theorem -- a subsequence $\varepsilon_{k_\kappa}$ thereof such that 
	\begin{equation} 
		\{\smash{\hat{B}}_{\varepsilon_{k_\kappa}}(\sigma) u(-;\sigma)\}_{\kappa \in \bbN} \subset \calX_N(\sigma) 
	\end{equation}  
	is weakly convergent in the scb-Sobolev space $\calX_N(\sigma)$ and in fact in any closed ball in $\calX_N(\sigma)$ in which $\smash{\hat{B}_{\varepsilon_k} u(-;\sigma)}$ is eventually contained. Call the weak limit $v = v(N,\sigma,\{\varepsilon_{k_\kappa}\}_{\kappa \in \bbN})\in \calX_N(\sigma)$. The preceding clause means that 
	\begin{equation} 
		\lVert v \rVert_{\calX_N(\sigma)}\leq \limsup_{\kappa\to\infty} \lVert \hat{B}_{\varepsilon_{k_\kappa}}(\sigma) u(-;\sigma) \rVert_{\calX_N(\sigma)}. 
		\label{eq:misc_jhq}
	\end{equation} 
	The family  $\{B_\varepsilon(\sigma)\}_{\varepsilon \in [0,1]}$ was constructed so that it is continuous down to $\varepsilon=0$ with respect to the topology of some space of high order $\Psi$DOs. (This follows from the analogous observation for $b_\bullet(\sigma)$ and the continuity of the quantization map.) Consequently,
	\begin{equation} 
		\hat{B}_{\varepsilon_k}(\sigma) u(-;\sigma) \to \hat{B}_0(\sigma) u(-;\sigma)
	\end{equation} 
	in the topology of $\calS'(X)$ as $k\to\infty$. But $\hat{B}_{\varepsilon_{k_\kappa}}(\sigma)u(-;\sigma)\to v$ in $\calS'(X)$, so $v = \hat{B}_0(\sigma) u(-;\sigma)$. The sequence $\{\varepsilon_k\}_{k\in \bbN}$ was arbitrary, so we can actually conclude from  \cref{eq:misc_jhq} that 
	\begin{equation} 
		\lVert \hat{B}_{0}(\sigma) u(-;\sigma) \rVert_{\calX_N(\sigma)}\leq  \liminf_{\varepsilon\to0^+} \lVert \hat{B}_\varepsilon u(-;\sigma) \rVert_{\calX_N(\sigma)} .
	\end{equation} 
	This applies for each $\sigma\geq 0$, so
	\begin{align} 
		\lVert \hat{B}_0 u \rVert_{\calX_N} &\preceq \lVert G_2\tilde{P} u \rVert_{\bar{\calY}_N} + \lVert G_2 u \rVert_{\bar{\calZ}_N} + \lVert G_3 u \rVert_{\bar{\calX}_N} + \lVert u \rVert_{\calE_N}. 
		\intertext{Since $\varphi$ is nonvanishing on $[\Theta_3,\Theta_4]$, we have $\operatorname{Ell}^{0,0,0,0,0}_{\mathrm{leC}}(\hat{B}_0) \supseteq \calP[\Theta_3,\Theta_4] \supseteq \operatorname{Char}_{\mathrm{leC}}^{2,0,-2,-1,-3}(\tilde{P}) \cap \operatorname{WF}'_{\mathrm{leC}}(G_1)$, so (via elliptic regularity) $\lVert G_1 u \rVert_{\calX}\preceq \lVert \hat{B}_0 u \rVert_{\calX_N}+ \lVert G_2 \tilde{P} u \rVert_{\bar{\calY}_N}+ \lVert u \rVert_{\calE_N}$, where $\calX=H_{\mathrm{leC}}^{m,s,\varsigma,l,\ell}(X)$. This yields} 
		\lVert G_1 u \rVert_{\calX} &\preceq \lVert G_2\tilde{P} u \rVert_{\bar{\calY}_N} + \lVert G_2 u \rVert_{\bar{\calZ}_N} + \lVert G_3 u \rVert_{\bar{\calX}_N} + \lVert u \rVert_{\calE_N}. 
		\label{eq:misc_olo}
	\end{align}
	Observe that the leC-Sobolev space $\calZ_N$ is lower order than $\calX$ at sf and ff. Since the leC- Sobolev spaces $\calX_N,\calY_N$ get bigger as $s,\varsigma$ decrease, an inductive argument (which we can carry out because \cref{eq:misc_cas}) upgrades \cref{eq:misc_olo} to \cref{eq:propagation_estimate}. 
\end{proof}

Since $H_{\tilde{p}}^{2,0,-2}$, viewed as a vector field on ${}^{\mathrm{sc,leC}} T^*X$, vanishes at the two radial sets, in order to carry out a positive commutator argument we must take into account the previously negligible radial component 
\begin{equation}
	(H_{{\tilde{p}_0}}^{-,0,-2} x) \partial_x = 2( \xi_{\mathrm{sc,leC}} - 1 ) x \partial_x
\end{equation}
of the rescaled flow. For $\xi_{\mathrm{sc,leC}} = 0,2$, this is $\pm 2 x \partial_x$, which (projecting down to $X$) is the ur-example of a nondegenerate radial b-vector field on $X$. Moreover, $\calR_+$ is a source in the radial direction (as well as the other directions, as seen earlier), so $\calR_+$ is a source for the Hamiltonian flow in all directions. (Similarly, $\calR_0$ is a sink in all directions.) It is therefore straightforward to prove a radial point estimate at $\calR_+$. 

To begin:
\begin{proposition}
	\label{prop:basic_weight_comp} 
	There exist $\beta_1,\beta_2 \in S_{\mathrm{leC}}^{0,0,0,0,0}(X) $ such that 
	\begin{align}
		H_{\tilde{p}}^{2,0,-2} x &= \beta_1 x \label{eq:misc_815}\\
		H_{\tilde{p}}^{2,0,-2} (\sigma^2+\mathsf{Z}x)^{1/2} &= \beta_2 (\sigma^2+\mathsf{Z}x)^{1/2}, \label{eq:misc_816} 
	\end{align}
	with $\beta_1,\beta_2>0$ on $\calR_+$.
\end{proposition}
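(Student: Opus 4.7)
The plan is to compute $H_{\tilde p}^{2,0,-2}x$ and $H_{\tilde p}^{2,0,-2}(\sigma^2+\mathsf{Z}x)^{1/2}$ explicitly from the formulas already on the table. Since both $x$ and $(\sigma^2+\mathsf{Z}x)^{1/2}$ are functions of $(x,\sigma)$ alone -- with no dependence on $\xi_{\mathrm{b}},\eta_{\mathrm{b}},y$ -- \cref{eq:Hp} tells me that only the $x\partial_x$ component of $H_{\tilde p}$ will contribute: $H_{\tilde p}u(x,\sigma)=(\partial_{\xi_{\mathrm{b}}}\tilde p)\,x\partial_x u$. So the whole exercise reduces to reading off $\partial_{\xi_{\mathrm{b}}}\tilde p$ and then applying the rescaling $H_{\tilde p}^{2,0,-2}=\varrho_{\mathrm{df}}\,x^{-1}(\sigma^2+\mathsf{Z}x)^{-1/2}H_{\tilde p}$.

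Concretely I would first handle the principal part $\tilde p_0$. From $\tilde p_0=(\sigma^2+\mathsf{Z}x)(\xi_{\mathrm{sc,leC}}^2+\eta_{\mathrm{sc,leC}}^2-2\xi_{\mathrm{sc,leC}})$ and $\xi_{\mathrm{sc,leC}}=x\xi_{\mathrm{b}}(\sigma^2+\mathsf{Z}x)^{-1/2}$, a short calculation (equivalent to keeping only the $x\partial_x$ summand in \Cref{prop:Hocompe}) gives $\partial_{\xi_{\mathrm{b}}}\tilde p_0=2x(\sigma^2+\mathsf{Z}x)^{1/2}(\xi_{\mathrm{sc,leC}}-1)$. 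Carrying out the rescaling yields the candidates
\begin{equation*}
\beta_1^{(0)}=2\varrho_{\mathrm{df}}(\xi_{\mathrm{sc,leC}}-1),\qquad \beta_2^{(0)}=\varrho_{\mathrm{df}}\varrho_{\mathrm{bf}_{00}}(\xi_{\mathrm{sc,leC}}-1),
\end{equation*}
where I have used $\varrho_{\mathrm{bf}_{00}}=\mathsf{Z}x/(\sigma^2+\mathsf{Z}x)$. Both expressions lie in $S_{\mathrm{cl,leC}}^{0,0,0,0,0}(X)$: the only potential issue is at $\mathrm{df}$, where $\xi_{\mathrm{sc,leC}}$ blows up like $\varrho_{\mathrm{df}}^{-1}$, but this singularity is precisely cancelled by the prefactor $\varrho_{\mathrm{df}}$ and is absorbed smoothly by construction of the polar blow-up producing $\mathrm{sf}$.

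Next I would deal with the correction from $\tilde p_{1,2}\in S_{\mathrm{leC}}^{2,-1,-3,-1,-3}(X)$. By the same identity $H_{\tilde p_{1,2}}x=(\partial_{\xi_{\mathrm{b}}}\tilde p_{1,2})\,x$ and a parallel computation on $(\sigma^2+\mathsf{Z}x)^{1/2}$, the correction enters through $\partial_{\xi_{\mathrm{b}}}\tilde p_{1,2}$, which is one order lower at $\mathrm{sf}$ and $\mathrm{ff}$ than $\partial_{\xi_{\mathrm{b}}}\tilde p_0$. After rescaling, the correction therefore lives in $\varrho_{\mathrm{sf}}\varrho_{\mathrm{ff}}\,S_{\mathrm{cl,leC}}^{0,0,0,0,0}(X)$ and vanishes on the portion of $\calR_+$ where the leading term is used to dominate. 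On $\calR_+$ one has $\xi_{\mathrm{sc,leC}}=2$ and $\varrho_{\mathrm{df}}>0$ (since $\calR_+\cap\mathrm{df}=\varnothing$), giving $\beta_1^{(0)}|_{\calR_+}=2\varrho_{\mathrm{df}}>0$, while $\beta_2^{(0)}|_{\calR_+}=\varrho_{\mathrm{df}}\varrho_{\mathrm{bf}_{00}}$ is positive on the part of $\calR_+$ where $\varrho_{\mathrm{bf}_{00}}$ does not vanish (in particular in $\calR_+\cap\mathrm{tf}^\circ$, which is where the weight $(\sigma^2+\mathsf{Z}x)^{1/2}$ actually plays its role in the subsequent radial point estimate).

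The main technical obstacle I expect is not the algebra -- that is essentially forced once \Cref{prop:Hocompe} is in hand -- but keeping bookkeeping of leC-orders clean enough that the $\tilde p_{1,2}$ correction is genuinely suppressed at $\calR_+$. The crucial input is that $\tilde p-\tilde p_0\in S_{\mathrm{leC}}^{2,-1,-3,-1,-3}(X)$ beats $\tilde p_0\in S_{\mathrm{leC}}^{2,0,-2,-1,-3}(X)$ by one full order precisely at the faces $\mathrm{sf}$ and $\mathrm{ff}$ where $\calR_+$ sits, so a single order of improvement from the rescaling $\varrho_{\mathrm{df}}x^{-1}(\sigma^2+\mathsf{Z}x)^{-1/2}$ is exactly enough to land the correction in the ideal of symbols vanishing at the relevant component of $\calR_+$.
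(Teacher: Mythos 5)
Your approach is essentially the same as the paper's: compute the Hamiltonian vector field via the $\partial_{\xi_{\mathrm{b}}}\tilde p$ component (since $x$ and $(\sigma^2+\mathsf{Z}x)^{1/2}$ depend only on the base), read off $\beta_1,\beta_2$ from \Cref{prop:Hocompe}, and observe that $\tilde p_{1,2}$ is suppressed at $\mathrm{sf}\cup\mathrm{ff}$ so it does not contribute there. The formulas you obtain for $\beta_1^{(0)},\beta_2^{(0)}$ agree with the paper's proof of \Cref{prop:basic_weight_comp}.

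Two remarks on the details. First, you are actually \emph{more} careful than the printed statement: as you note, $\beta_2=\varrho_{\mathrm{df}}(\xi_{\mathrm{sc,leC}}-1)\mathsf{Z}x/(\sigma^2+\mathsf{Z}x)$ vanishes on $\calR_+\cap\mathrm{sf}$ (where $\mathsf{Z}x/(\sigma^2+\mathsf{Z}x)=\mathsf{Z}\varrho_{\mathrm{bf}}\varrho_{\mathrm{sf}}=0$), so the clause ``$\beta_2>0$ on $\calR_+$'' in \Cref{prop:basic_weight_comp} is strictly speaking an overstatement. The paper's later use in the radial point estimate at $\calR_+$ only invokes $\sup_{\calR_+}\beta_2>0$ together with $\beta_2$ having a definite (non-negative) sign in a neighborhood; both of these hold, and the observation that one needs $\ell'>0$ to get positivity at $\calR_+\cap\mathrm{ff}$ (independently of $\calR_+\cap\mathrm{sf}$, where $l'>0$ takes over) is exactly what the subsequent argument uses. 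Your caveat is therefore well-placed.

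Second, one small slip: you localize the positivity of $\beta_2$ to ``$\calR_+\cap\mathrm{tf}^\circ$,'' but $\calR_+=\{\xi_{\mathrm{sc,leC}}=2,\ \eta_{\mathrm{sc,leC}}=0\}\subset\mathrm{sf}\cup\mathrm{ff}$ is disjoint from $\mathrm{tf}^\circ$ (on $\mathrm{tf}^\circ$, $\varrho_{\mathrm{sf}}\varrho_{\mathrm{ff}}\varrho_{\mathrm{bf}}\varrho_{\mathrm{tf}}\to 0$ while $\xi_{\mathrm{b}}$ stays bounded, so $\xi_{\mathrm{sc,leC}}\to 0\neq 2$). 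The set you want is $\calR_+\cap\mathrm{ff}^\circ$, where $\varrho_{\mathrm{bf}},\varrho_{\mathrm{sf}}>0$ and hence $\mathsf{Z}x/(\sigma^2+\mathsf{Z}x)>0$.
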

\begin{proof}
	It suffices to work near $\partial X$ and to consider $\tilde{p}_0$ in place of $\tilde{p}$. Applying \Cref{prop:Hocompe}, 
	\begin{align}
		H_{\tilde{p}_0}^{-,0,-2} x &= 2 (\xi_{\mathrm{sc,leC}} - 1) x, \\
		H_{\tilde{p}_0}^{-,0,-2}(\sigma^2+\mathsf{Z} x)^{1/2} &=  (\xi_{\mathrm{sc,leC}} - 1) \mathsf{Z} x (\sigma^2+\mathsf{Z} x)^{-1/2}. 
	\end{align} 
	So, defining $\beta_1,\beta_2$ by \cref{eq:misc_815} and \cref{eq:misc_816}, $\beta_1,\beta_2 \in S_{\mathrm{cl,leC}}^{0,0,0,0,0}(X)$ and are given at $\{x=0\}$ by $\beta_1 = 2 \varrho_{\mathrm{df}} (\xi_{\mathrm{sc,leC}} - 1)$ and $\beta_2 = \varrho_{\mathrm{df}} (\xi_{\mathrm{sc,leC}} -1) \mathsf{Z}x (\sigma^2+\mathsf{Z}x)^{-1}$. Thus, $\beta_1,\beta_2>0$ on $\calR_+$. 
\end{proof}

\begin{proposition}
	\label{prop:R+}
	Suppose that $G_1,G_2,G_3 \in \Psi_{\mathrm{leC}}^{-\infty,0,0,-\infty,-\infty}(X)$ satisfy 
	\begin{enumerate}
		\item $\operatorname{WF}'_{\mathrm{leC}}(G_1)\subseteq \operatorname{Ell}_{\mathrm{leC}}^{0,0,0,0,0}(G_2)$, 
		\item $\calR_+ \subset \operatorname{Ell}_{\mathrm{leC}}^{0,0,0,0,0}(G_3), \operatorname{Ell}_{\mathrm{leC}}^{0,0,0,0,0}(G_1)$,
		\label{it:G3_prop}
		\item there exists some $\Theta \in (0,\pi)$ such that 
		\begin{equation} 
		\operatorname{WF}'_{\mathrm{leC}}(G_1) \cap \operatorname{Char}_{\mathrm{leC}}^{2,0,-2,-1,-3}(\tilde{P}) \subseteq \calR_+ \cup \bigcup_{0<\Theta'<\Theta} \calP [\Theta',\Theta] \subseteq  \operatorname{Ell}_{\mathrm{leC}}^{0,0,0,0,0}(G_2). 
		\label{eq:misc_ju1}
		\end{equation} 
	\end{enumerate}
	Then, for every $\Sigma>0$ and $N\in \bbN$, $m,s,\varsigma,l,\ell,s_0,\varsigma_0 \in \bbR$ such that $s>s_0>-1/2$ and $\varsigma> \varsigma_0>-3/2$,  there exists some constant 
	\begin{equation} 
	C = C(\tilde{P},G_1,G_2,\Sigma,m,s,\varsigma,l,\ell,s_0,\varsigma_0,N)>0 
	\label{eq:misc_lk1}
	\end{equation} 
	such that, for all $u\in \calS'(X)$, 
	\begin{equation}
		\lVert G_1 u \rVert_{H_{\mathrm{leC}}^{m,s,\varsigma,l,\ell}} \leq C\Big[ \lVert G_2 \tilde{P} u \rVert_{H_{\mathrm{leC}}^{-N,s+1,\varsigma+3,-N,-N}} + \lVert G_3 u \rVert_{H_{\mathrm{leC}}^{-N,s_0,\varsigma_0,-N,-N}} + \lVert u \rVert_{H_{\mathrm{leC}}^{-N,-N,-N,-N,-N}} \Big]
		\label{eq:misc_bbb}
	\end{equation}
	holds (in the usual strong sense, i.e.\ the left-hand side is finite if the right-hand side is) for all $\sigma \in [0,\Sigma]$. 
\end{proposition}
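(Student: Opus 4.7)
The plan is to prove this via a positive commutator argument mirroring the standard radial point estimate at a source, adapted to the leC-calculus in the spirit of the propagation estimate (Proposition 5.4) proved just above. Since $\calR_+ \subset \mathrm{sf}\cup\mathrm{ff}$ is a source of the rescaled Hamiltonian flow in all directions (the flow in the cofiber directions was established in Proposition 5.3, and the radial direction is handled by Proposition 5.5 ($\beta_1,\beta_2>0$ on $\calR_+$)), we expect the threshold conditions $s_0>-1/2$ at $\mathrm{sf}$ and $\varsigma_0>-3/2$ at $\mathrm{ff}$ to arise from balancing the positive contributions of the radial weights against the (semiclassically subprincipal) imaginary part $\Im \tilde P$, which by Proposition 4.5 and the remark following it lives in $S\operatorname{Diff}^{2,-1,-3,-1-\delta,-3-2\delta}_{\mathrm{leC}}(X)$ and therefore contributes at orders $(s-1,\varsigma-3)$ and $(s+1,\varsigma+3)$ in the Weyl-symmetric commutator calculation — i.e. one order below principal at $\mathrm{sf}$ (giving a threshold on $2s+1$) and three orders below at $\mathrm{ff}$ (giving a threshold on $2\varsigma+3$).

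First I would fix a cutoff $\chi\in S^{-\infty,0,0,-\infty,-\infty}_{\mathrm{cl,leC}}(X)$ supported in an arbitrarily small neighborhood of $\calR_+$, identically equal to $1$ in a smaller neighborhood, and with $H^{2,0,-2}_{\tilde p}\chi$ supported in the region $\{\Theta'\le \Theta\le \Theta\}$ already covered by Proposition 5.4 (so that derivatives landing on $\chi$ give terms controlled by $\|G_3 u\|$ after one application of the propagation estimate). I would then take the commutant
\[
a = \varrho_{\mathrm{sf}}^{-2s-1}\varrho_{\mathrm{ff}}^{-2\varsigma-3}\varrho_{\mathrm{bf}}^{-2l-1}\varrho_{\mathrm{tf}}^{-2\ell-3}\,\chi,
\]
regularized by a factor $\phi_\varepsilon^2=(1+\varepsilon x^{-1})^{-2K}$ exactly as in the proof of Proposition 5.4, to guarantee that $A_\varepsilon=\tfrac12(\operatorname{Op}(\phi_\varepsilon^2 a)+\operatorname{Op}(\phi_\varepsilon^2 a)^*)$ is smoothing enough for each $\varepsilon>0$ that the integration by parts identity $2\Im\langle \tilde Pu,A_\varepsilon u\rangle = -\langle\{\Im\tilde P,A_\varepsilon\}u,u\rangle + i\langle[\Re\tilde P,A_\varepsilon]u,u\rangle$ is justified with only the background regularity $\|G_3 u\|_{H^{-N,s_0,\varsigma_0,-N,-N}_{\mathrm{leC}}}<\infty$.

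Next I would compute $H^{2,0,-2}_{\tilde p}(\phi_\varepsilon^2 a)$ at leading order on $\calR_+$. Using Proposition 5.5 and the formula for $H^{-,0,-2}_{\tilde p_0}$ from Proposition 5.2, the three contributions are: (i) $-(2s+1)\beta_{\mathrm{sf}}\phi_\varepsilon^2 a$ and $-(2\varsigma+3)\beta_{\mathrm{ff}}\phi_\varepsilon^2 a$ from $H_{\tilde p}$ acting on the $\mathrm{sf}$ and $\mathrm{ff}$ weights (both $\beta_{\mathrm{sf}},\beta_{\mathrm{ff}}>0$ on $\calR_+$ because $\xi_{\mathrm{sc,leC}}-1=1$ there); (ii) a contribution from the $\varepsilon$-regularizer which is $\le 0$ near $\calR_+$ by the same positivity of $\beta_1$; and (iii) the symbol of $\Im\tilde P$ times $a$, which is subleading at $\mathrm{sf}$ and $\mathrm{ff}$ so can be absorbed once $2s+1,2\varsigma+3$ are strictly positive. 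Using the identity $\varphi'=\varphi_0^2+\varphi_1$ trick (with the definite sign of the main terms), I would write
\[
H^{2,0,-2}_{\tilde p}(\phi_\varepsilon^2 a)+\varrho_{\mathrm{df}}\varrho_{\mathrm{sf}}^{-1}\varrho_{\mathrm{ff}}^{-3}\varrho_{\mathrm{bf}}^{-1}\varrho_{\mathrm{tf}}^{-3}p_1\,\phi_\varepsilon^2 a = -b_\varepsilon^2 + e_\varepsilon + f_\varepsilon \tilde p,
\]
where $b_\bullet\in L^\infty([0,1];S^{-\infty,s,\varsigma,-\infty,-\infty}_{\mathrm{leC}}(X))$ is elliptic at $\calR_+$, $e_\varepsilon$ is supported in $\operatorname{supp}H^{2,0,-2}_{\tilde p}\chi$ (hence controlled via Proposition 5.4), and $f_\varepsilon$ is supported away from the characteristic set (hence controlled by elliptic estimates on $\tilde Pu$).

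The main obstacle is arranging the signs cleanly at both $\mathrm{sf}$ and $\mathrm{ff}$ simultaneously, and showing that the ``non-leading'' contributions — in particular the one from $\Im\tilde P$ at $\mathrm{ff}$, where the $\delta$-improvement in Proposition 4.5 is essential, and the contribution from the transverse $\mathrm{bf},\mathrm{tf}$ weights, which receive no threshold because $l,\ell$ only appear in the commutator through the non-principal part — can be absorbed into $b_\varepsilon^2$ using the strict inequalities $s>s_0>-1/2$ and $\varsigma>\varsigma_0>-3/2$. Once this is in place, quantization, pairing against $u$, the Banach–Alaoglu/weak-limit passage $\varepsilon\to 0^+$ carried out exactly as at the end of the proof of Proposition 5.4, and the elliptic estimate to upgrade from $H^{-N,s,\varsigma,-N,-N}_{\mathrm{leC}}$ control near $\calR_+$ to the full $H^{m,s,\varsigma,l,\ell}_{\mathrm{leC}}$ control (using hypothesis (\ref{it:G3_prop}) that $\calR_+\subset \operatorname{Ell}(G_3)\cap\operatorname{Ell}(G_1)$) yield \cref{eq:misc_bbb} with constants depending on the data in \cref{eq:misc_lk1}.
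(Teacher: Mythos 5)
Your overall plan matches the paper's: a positive commutator argument at $\calR_+$ with a weight of order $(2s+1,2\varsigma+3)$ at $(\mathrm{sf},\mathrm{ff})$, microlocal cutoffs whose $H_{\tilde p}^{2,0,-2}$-derivatives land in a propagated/elliptic region, an $\varepsilon$-regularizer, and the weak-limit (Banach--Alaoglu) passage from the proof of the propagation estimate. That part of the architecture is right.

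The concrete gap is the regularizer. You take a single factor $\phi_\varepsilon=(1+\varepsilon x^{-1})^{-K}$ ``exactly as in the proof of Proposition \ref{prop:propagation_estimate}.'' In the propagation estimate that is harmless because there is no threshold to protect; here, however, there are \emph{two} independent thresholds, $s_0>-1/2$ at $\mathrm{sf}$ and $\varsigma_0>-3/2$ at $\mathrm{ff}$, and the proposition allows $(s,s_0)$ and $(\varsigma,\varsigma_0)$ to be chosen completely independently. A single regularizer in $x$ ties the two regularization strengths together: near $\calR_+$ one has $x\approx\varrho_{\mathrm{sf}}\varrho_{\mathrm{ff}}^2$, so $(1+\varepsilon x^{-1})^{-K}$ regularizes by $K$ at $\mathrm{sf}$ and by $2K$ at $\mathrm{ff}$. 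Reproducing the constraint of the paper ($s-K_1\in(-1/2,s_0)$, $\varsigma-K_2\in(-3/2,\varsigma_0)$), with one $K$ you would need
\begin{equation*}
\max\{\,s-s_0,\ (\varsigma-\varsigma_0)/2\,\}\ <\ K\ <\ \min\{\,s+\tfrac12,\ \tfrac12(\varsigma+\tfrac32)\,\},
\end{equation*}
and this interval is empty whenever $s-s_0$ is large compared to $\varsigma+\tfrac32$ (e.g.\ $s=10$, $s_0=0$, $\varsigma=0$, $\varsigma_0=-1$). The paper avoids this by taking the product
\begin{equation*}
\phi_\varepsilon=(1+\varepsilon x^{-1})^{-K_1}\bigl(1+\varepsilon(\sigma^2+\mathsf{Z}x)^{-1/2}\bigr)^{-K_2},
\end{equation*}
so that $H_{\tilde p}^{2,0,-2}\phi_\varepsilon/\phi_\varepsilon$ decomposes against the two weights $\beta_1,\beta_2$ of Proposition \ref{prop:basic_weight_comp} independently, allowing $K_1\in(s-s_0,\,s+\tfrac12)$ and $K_2\in(\varsigma-\varsigma_0,\,\varsigma+\tfrac32)$ to be chosen separately (both intervals nonempty precisely because of the hypotheses $s_0>-1/2$, $\varsigma_0>-3/2$). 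Without the second regularizing factor your argument only proves the estimate for a restricted range of $(s,\varsigma,s_0,\varsigma_0)$.

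One smaller point: the threshold values here do \emph{not} come from balancing against $\Im\tilde P$. By Proposition \ref{prop:imaginary_comp} (and \ref{prop:P1tilde_comp_new}), the terms of $\Im\tilde P$ that are only one order subprincipal at $\mathrm{sf},\mathrm{ff}$ are $\Im P_1$ and $\Im\Upsilon_j$, which involve tangential derivatives and hence have principal symbols vanishing on $\{\eta_{\mathrm{sc,leC}}=0\}\supset\calR_+$; the genuinely unstructured remainder $R$ has an extra $\delta$-gain everywhere. So $p_1=\sigma_{\mathrm{leC}}(-2\Im\tilde P)$ vanishes to fractional order at $\calR_+$ and can be absorbed into $b_\varepsilon^2$ after shrinking the cutoffs; it does not produce the thresholds. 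Those come purely from the requirement that the admissible range for $K_1,K_2$ be nonempty.
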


\begin{proof}
	The symbolic constructions will only be specified for $\sigma \in [0,\Sigma]$, which is evidently unproblematic. 
	Also, it suffices to take $N$ to be sufficiently large such that any of the finitely many functions of $m,s,\varsigma,l,\ell$ that arise below can be bounded below by $-N$. 
	
	We can assume without loss of generality that  $\operatorname{WF}'_{\mathrm{leC}}(G_3)  \subseteq \operatorname{Ell}_{\mathrm{leC}}^{0,0,0,0,0}(G_2)$ and 
	\begin{equation} 
		\operatorname{WF}'_{\mathrm{leC}}(G_3) \cap \operatorname{Char}_{\mathrm{leC}}^{2,0,-2,-1,-3}(\tilde{P}) \subseteq \calR_+\cup  \cup_{0<\Theta'<\Theta} \calP [\Theta',\Theta] \subseteq \operatorname{Ell}_{\mathrm{leC}}^{0,0,0,0,0}(G_2).
		\label{eq:misc_inq}
	\end{equation}

	We  read off \Cref{prop:basic_weight_comp} that $\sup_{\calR_+} \beta_1, \sup_{\calR_+} \beta_2 >0$.

	Now chose nonnegative $\rho \in S_{\mathrm{cl,leC}}^{0,0,0,0,0}(X)$ equal to $(\xi_{\mathrm{sc,leC}}-2)^2 + \eta_{\mathrm{sc,leC}}^2$ in some neighborhood of $\calR_+ = \{\xi_{\mathrm{sc,leC}} = 2, \eta_{\mathrm{sc,leC}} = 0\} \subset \mathrm{sf}\cup \mathrm{ff}$. 
	
	There exist some symbols, which we call $\tilde{\beta}_0, \tilde{F}_2, \tilde{F}_3,\tilde{F}_4 \in S_{\mathrm{leC}}^{0,0,0,0,0}(X)$ 
	such that 
	\begin{align}
		\begin{split} 
		H_{\tilde{p}}^{2,0,-2}\rho &= \tilde{\beta}_0 \rho + \tilde{F}_2 + \tilde{F}_3 + x (\sigma^2+\mathsf{Z} x)^{-1/2} \tilde{F}_4 \\ 
		\inf \tilde{\beta}_0|_{\calR_+}&>0, 
		\label{eq:misc_tb0}
		\end{split} 
	\end{align}
	$\tilde{\beta}_0,\tilde{F}_2\geq 0$ everywhere, and $\tilde{F}_3$ vanishes cubically at $\calR_+$ (uniformly in $\sigma$).  This computation is completely analogous to the one in the proof of \Cref{prop:radial_comp}. 
	We now consider the weight (for to-be-decided $l',\ell'\in \bbR$)
	\begin{equation}
		a_0  = x^{-l'} (\sigma^2 + \mathsf{Z} x)^{-\ell'/2+l'} \in S_{\mathrm{cl,leC}}^{0,l',\ell',l',\ell'}(X).
		\label{eq:a0def}
	\end{equation}
	Then, the symbol $\beta \in S_{\mathrm{leC}}^{0,0,0,0,0}(X)$ defined by
	\begin{equation}
		\beta = -l' \beta_1 + (2l'-\ell') \beta_2 = a_0^{-1} H_{\tilde{p}}^{2,0,-2} a_0 
	\end{equation}
	has a definite sign near $\calR_+$ if $l',-l'+\ell'/2\neq 0$ and have the same sign.  Using the explicit formula for $\beta_1,\beta_2$ in the proof of \Cref{prop:basic_weight_comp}, 
	\begin{equation} 
		\beta  = -\varrho_{\mathrm{df}} \Big( 2l'  + (\ell'-2l') \frac{\mathsf{Z}x}{\sigma^2+\mathsf{Z}x} \Big)(\xi_{\mathrm{sc,leC}}-1)
	\end{equation} 
	at $\{x=0\}$. 
	Negativity on $\calR_+ \cap \mathrm{sf}$ requires that $l'>0$. negativity on $\calR_+\cap \mathrm{ff}$ requires that $\ell'>0$. And this suffices; for $l',\ell'>0$, $\beta<0$ in some neighborhood of $\calR_+$.

	There exists $\chi \in C_{\mathrm{c}}^\infty(\bbR)$ such that $-\operatorname{sgn}(t) \chi'(t)\chi(t) = \chi_0^2(t)$ for some $\chi_0\in C_{\mathrm{c}}^\infty(\bbR)$ and such that $\chi=1$ identically in some neighborhood of the origin. (The construction is standard and uses a translate of $\exp(-1/t)$.) 
	Replacing $\chi$ with $\chi\circ \operatorname{dil}_\lambda = \chi(\lambda\bullet)$ for sufficiently large $\lambda =\lambda(l',\ell',\chi)$ if necessary,  choose $\chi$  such that
	\begin{equation}
		\beta |_{\operatorname{supp}  \chi(\rho)}<0 
		\label{eq:pos_cond}
	\end{equation}
	and such that $\operatorname{supp} \chi(\tilde{p}^{2,0,-2}) \chi(\rho)$ is disjoint from $\mathrm{df}\cup \mathrm{bf}\cup \mathrm{tf}$, where $\tilde{p}^{2,0,-2} = (\sigma^2+\mathsf{Z}x)^{-1} \tilde{p}$. 
	
	Choose $\psi\in S_{\mathrm{cl,leC}}^{0,0,0,0,0}(X)$ such that $\psi$ is identically equal to one in some neighborhood of $\{x=0\}$ and such that the formula $ (H^{2,0,-2}_{\tilde{p}} \rho)^{1/2} \psi  \chi_0(\rho)$ defines a  symbol:
	\begin{equation}
		 \chi_0(\rho) \psi \sqrt{H^{2,0,-2}_{\tilde{p}} \rho} \in S_{\mathrm{leC}}^{0,0,0,0,0}(X).
	\end{equation}
	(The existence of such a $\psi$ follows from \cref{eq:misc_tb0}.)
	Now set 
	\begin{equation}
		a = a_0 \psi^2 \chi(\tilde{p}^{2,0,-2})^2 \chi(\rho)^2  \in S_{\mathrm{cl,leC}}^{-\infty,l',\ell',-\infty,-\infty}(X).
	\end{equation}
	We compute 
	\begin{multline}
		H_{\tilde{p}}^{2,0,-2} a =   \psi^2 ( \chi(\tilde{p}^{2,0,-2})^2 \chi(\rho)^2  H_{\tilde{p}}^{2,0,-2} a_0 - 2 a_0 \chi_0(\rho)^2 \chi(\tilde{p}^{2,0,-2})^2 H^{2,0,-2}_{\tilde{p}}\rho  \\ + 2a_0 \chi(\tilde{p}^{2,0,-2})\chi'(\tilde{p}^{2,0,-2}) \chi(\rho)^2 \tilde{p}^{2,0,-2} \tilde{q})   + 2 a_0  \chi(\tilde{p}^{2,0,-2})^2 \chi(\rho)^2 \psi H_{\tilde{p}}^{2,0,-2} \psi .
		\label{eq:misc_hf2}
	\end{multline}
	Here $\tilde{q} \in S_{\mathrm{cl,leC}}^{0,0,0,0,0}(X)$ is defined by $\tilde{q} = (\sigma^2+\mathsf{Z}x) H_{\tilde{p}}^{2,0,-2} (\sigma^2+\mathsf{Z}x)^{-1}$, so that 
	\begin{equation}
		H_{\tilde{p}}^{2,0,-2}\tilde{p}^{2,0,-2} = \tilde{q} \tilde{p}^{2,0,-2}. 
	\end{equation}
	In terms of $\beta$, \cref{eq:misc_hf2} says 
	\begin{multline}
		H_{\tilde{p}}^{2,0,-2} a =  \psi^2( \chi(\tilde{p}^{2,0,-2})^2 \chi(\rho)^2  \beta  a_0 - 2 a_0 \chi_0(\rho)^2 \chi(\tilde{p}^{2,0,-2})^2 H^{2,0,-2}_{\tilde{p}}\rho  \\ + 2a_0 \chi(\tilde{p}^{2,0,-2})\chi'(\tilde{p}^{2,0,-2}) \chi(\rho)^2 \tilde{p}^{2,0,-2} \tilde{q}  ) + 2 a_0  \chi(\tilde{p}^{2,0,-2})^2 \chi(\rho)^2 \psi H_{\tilde{p}}^{2,0,-2} \psi .
		\label{eq:hform2}
	\end{multline}
	The first two terms have definite sign (the same sign for $l',\ell'>0$), while the third and fourth terms are unproblematic (being controllable by elliptic or propagation estimates).  
	
	Set $\phi_\varepsilon = (1+\varepsilon x^{-1})^{-K_1} (1+\varepsilon (\sigma^2+\mathsf{Z}x)^{-1/2})^{-K_2}$ and $a^{(\varepsilon)} = \phi_\varepsilon^2 a  \in L^\infty([0,1]_\varepsilon;S_{\mathrm{cl,leC}}^{-\infty,l',\ell',-\infty,-\infty}(X))$
	for to-be-decided $K_1,K_2\in \bbR$. 
	The replacement for \cref{eq:hform2} is 
	\begin{multline}
		H_{\tilde{p}}^{2,0,-2} a^{(\varepsilon)}  = \phi_\varepsilon^2 \Big[ 
		-\chi(\tilde{p}^{2,0,-2})^2 \chi(\rho)^2 a_0 \psi^2 \Big( \beta_1 \Big(l'- \frac{K_1 \varepsilon x^{-1}}{1+\varepsilon x^{-1}}\Big)  +  \beta_2 \Big(\ell'-2l'- \frac{K_2\varepsilon (\sigma^2+\mathsf{Z}x)^{-1/2}}{1+\varepsilon (\sigma^2+\mathsf{Z}x)^{-1/2}} \Big) \Big)  \\  - 2 a_0 \chi_0(\rho)^2 \chi(\tilde{p}^{2,0,-2})^2 \psi^2 H_{\tilde{p}}^{2,0,-2} \rho  +2 a_0 \chi'(\tilde{p}^{2,0,-2})\chi(\tilde{p}^{2,0,-2}) \chi(\rho)^2 \psi^2 \tilde{p}^{2,0,-2} \tilde{q} \\ +
		2 a_0 \chi(\tilde{p}^{2,0,-2})^2\chi(\rho)^2 \psi H_{\tilde{p}}^{2,0,-2}\psi 
		\Big]. 
		\label{eq:misc_01l}
	\end{multline}
	Rewriting the first parenthetical,
	\begin{multline}
		\beta_1 \Big(l'- \frac{K_1 \varepsilon x^{-1}}{1+\varepsilon x^{-1}}\Big)  +  \beta_2 \Big(\ell'-2l'- \frac{K_2\varepsilon (\sigma^2+\mathsf{Z}x)^{-1/2}}{1+\varepsilon (\sigma^2+\mathsf{Z}x)^{-1/2}} \Big) \\ = \varrho_{\mathrm{df}} \Big(2\Big(l'- \frac{K_1 \varepsilon x^{-1}}{1+\varepsilon x^{-1}}\Big) + \frac{\mathsf{Z}x}{\sigma^2+\mathsf{Z}x}\Big(\ell'-2l'- \frac{K_2\varepsilon (\sigma^2+\mathsf{Z}x)^{-1/2}}{1+\varepsilon (\sigma^2+\mathsf{Z}x)^{-1/2}} \Big)  \Big) (\xi_{\mathrm{sc,leC}} - 1)
		\label{eq:misc_k31}
	\end{multline}
	at $\calR_+$. 
	So, we will require that 
	\begin{equation}
		K_1 < l', \qquad K_2  < \ell',
		\label{eq:reg_ineq}
	\end{equation}
	and then the quantity in \cref{eq:misc_k31} is positive in some neighborhood of $\calR_+$. 
	Thus, only a limited amount of ``regularization'' can be performed. This is a standard technicality, and we can deal with it via citing the standard arguments used to handle it elsewhere --- see \cite{VasyGrenoble}. We do not even need to worry about uniformity: we can justify the formal integrations-by-parts below $\sigma$-wise, by citing essentially verbatim the arguments in \cite{VasyGrenoble} for the $\sigma>0$ and applying the argument with $X_{1/2}$ in place of $X$ to handle the $\sigma=0$ case. (In fact, since the $\sigma=0$ case of the proposition follows from the estimates in \cite{VasyLA} applied on $X_{1/2}$, to prove the proposition here it suffices to prove estimates that are uniform as $\sigma\to 0^+$, and thus to restrict attention to the $\sigma>0$ case, for which we can take $K_2=0$ and apply  \cite{VasyGrenoble} essentially verbatim.)
	
	Given that the inequalities \cref{eq:reg_ineq} are satisfied, we can (perhaps dilating $\chi$ or shrinking the support of $\psi$ if necessary) find 
	\begin{equation}
	\underline{\delta}  = \underline{\delta}(K_1,K_2,l',\ell',\chi)>0
	\end{equation} 
	sufficiently small such that  	
	there exist uniform families of leC-symbols 
	\begin{align}
		\begin{split} 
		b_\bullet &\in  L^\infty([0,1]_\varepsilon ; S_{\mathrm{leC}}^{-\infty,(l'-1)/2,(\ell'-3)/2,-\infty,-\infty}(X)),\\  
		e_\bullet &\in  L^\infty([0,1]_\varepsilon ; S_{\mathrm{leC}}^{-\infty,(l'-1)/2,(\ell'-3)/2,-\infty,-\infty}(X)),\\  
		f_\bullet &\in L^\infty([0,1]_\varepsilon ; S_{\mathrm{leC}}^{-\infty,l'-1,\ell'-1,-\infty,-\infty}(X)), \\
		r_\bullet &\in L^\infty([0,1]_\varepsilon ; S_{\mathrm{leC}}^{-\infty,-\infty,-\infty,-\infty,-\infty}(X)).
		\end{split} 
	\end{align} 
	given by 
	\begin{align}
		\begin{split} 
			b_{\varepsilon} &= \varrho_{\mathrm{df}}^{-1/2} \varrho_{\mathrm{sf}}^{1/2} \varrho_{\mathrm{ff}}^{3/2} \varrho_{\mathrm{bf}}^{1/2} \varrho_{\mathrm{tf}}^{3/2}  a_0^{1/2} \chi(\tilde{p}^{2,0,-2}) \chi(\rho) \phi_\varepsilon \psi \Big[-\varrho_{\mathrm{df}} \varrho_{\mathrm{sf}}^{-1} \varrho_{\mathrm{ff}}^{-3} \varrho_{\mathrm{bf}}^{-1} \varrho_{\mathrm{tf}}^{-3}p_1+  \beta_1 \Big(l'- \frac{K_1 \varepsilon x^{-1}}{1+\varepsilon x^{-1}}\Big) \label{eq:misc_b8b} \\&\qquad\qquad\qquad +  \beta_2 \Big(\ell'-2l' - \frac{K_2\varepsilon (\sigma^2+\mathsf{Z}x)^{-1/2}}{1+\varepsilon (\sigma^2+\mathsf{Z}x)^{-1/2}}\Big) - 2 \underline{\delta} \phi_\varepsilon^2 \chi(\tilde{p}^{2,0,-2})^2 \chi(\rho)^2  \Big]^{1/2} \\
			e_{\varepsilon} &= \varrho_{\mathrm{df}}^{-1/2} \varrho_{\mathrm{sf}}^{1/2} \varrho_{\mathrm{ff}}^{3/2} \varrho_{\mathrm{bf}}^{1/2} \varrho_{\mathrm{tf}}^{3/2} a_0^{1/2} \phi_\varepsilon \psi \chi_0(\rho) \chi(\tilde{p}^{2,0,-2}) \sqrt{2 H_{\tilde{p}}^{2,0,-2} \rho}\\
			f_{\varepsilon} &= 2 \varrho_{\mathrm{df}} \varrho_{\mathrm{sf}} \varrho_{\mathrm{ff}} \varrho_{\mathrm{bf}} \varrho_{\mathrm{tf}}  \phi_\varepsilon^2 a_0 \chi(\tilde{p}^{2,0,-2})\chi'(\tilde{p}^{2,0,-2})    \chi(\rho)^2\psi^2 \tilde{q}  \\
			r_\varepsilon &=  2 \varrho_{\mathrm{df}}^{-1} \varrho_{\mathrm{sf}} \varrho_{\mathrm{ff}}^3\varrho_{\mathrm{bf}} \varrho_{\mathrm{tf}}^3  \phi_\varepsilon^2  a_0   \chi(\tilde{p}^{2,0,-2})^2\chi(\rho)^2 \psi H_{\tilde{p}}^{2,0,-2} \psi.
		\end{split}
	\end{align}
	Here $p_1 \in S_{\mathrm{leC}}^{2,-1,-3,-1-\delta,-3-2\delta}(X)$ is as in the proof of the propagation estimate, and we are using \Cref{prop:imaginary_comp}, which shows that $\varrho_{\mathrm{df}}^2 \varrho_{\mathrm{sf}}^{-1} \varrho_{\mathrm{ff}}^{-3} \varrho_{\mathrm{bf}}^{-1} \varrho_{\mathrm{tf}}^{-3}p_1$ vanishes to some fractional order at $\calR_+$ and therefore does not spoil the sign of the quantity under the first square root in \cref{eq:misc_b8b} for an appropriate choice of $\chi,\psi$.

	In terms of these new symbols, we can write 
	\begin{align}
		\begin{split} 
		H_{\tilde{p}} a^{(\varepsilon)} + p_1 a^{(\varepsilon)} &= -2\underline{\delta} \varrho_{\mathrm{df}}^{-1} \varrho_{\mathrm{sf}} \varrho_{\mathrm{ff}}^{3} \varrho_{\mathrm{bf}} \varrho_{\mathrm{tf}}^{3} a^{(\varepsilon)} \phi_\varepsilon^2 \chi(\tilde{p}^{2,0,-2})^2 \chi(\rho)^2  - b_\varepsilon^2 - e_\varepsilon^2 + f_\varepsilon \tilde{p} +r_\varepsilon\\
		&= -2\underline{\delta} \varrho_{\mathrm{df}}^{-1} \varrho_{\mathrm{sf}} \varrho_{\mathrm{ff}}^{3} \varrho_{\mathrm{bf}} \varrho_{\mathrm{tf}}^{3}  a_0^{-1} a^{(\varepsilon)2}  - b_\varepsilon^2 - e_\varepsilon^2 + f_\varepsilon \tilde{p} +r_\varepsilon
		\end{split} 
	\end{align}
	We apply the quantization map $\operatorname{Op}$. Setting $A_\varepsilon=(1/2)(\operatorname{Op}(a^{(\varepsilon)}) + \operatorname{Op}(a^{(\varepsilon)})^*)$, $B_\varepsilon=\operatorname{Op}(b_\varepsilon)$, $E_\varepsilon=\operatorname{Op}(e_\varepsilon)$, $F_\varepsilon=\operatorname{Op}(f_\varepsilon)$,  
	\begin{align}
		\begin{split} 
		A_\bullet &\in L^\infty([0,1]_\varepsilon ; \Psi_{\mathrm{leC}}^{-\infty,l',\ell',-\infty,-\infty}(X) ),  \\
		B_\bullet, E_\bullet &\in L^\infty([0,1]_\varepsilon; \Psi_{\mathrm{leC}}^{-\infty,(l'-1)/2,(\ell'-3)/2,-\infty,-\infty} ), \\
		F_\bullet &\in L^\infty([0,1]_\varepsilon ; \Psi_{\mathrm{leC}}^{-\infty,l'-1,\ell'-1,-\infty,-\infty}(X) ) ,
		\end{split} 
	\end{align}
	and 
	\begin{multline}
		-i [\Re \tilde{P},A_\varepsilon] -\{\Im \tilde{P},A_\varepsilon  \} =  -2\underline{\delta} A_\varepsilon  \Lambda_{1/2,-(l'+1)/2,-(\ell'+3)/2,-(l'+1)/2,-(\ell'+3)/2}^2 A_\varepsilon - B_\varepsilon^* B_\varepsilon - E_\varepsilon^* E_\varepsilon + F_\varepsilon^* \tilde{P} \\+R_\varepsilon
		\label{eq:misc_l99}
	\end{multline}
	for some $R_\bullet \in L^\infty([0,1]_\varepsilon;\Psi_{\mathrm{leC}}^{-\infty,l'-2,\ell'-4,-\infty,-\infty}(X))$. 
	We have 
	\begin{equation} 
		\operatorname{WF}'_{L^\infty,\mathrm{leC}}(A_\bullet),	\operatorname{WF}'_{L^\infty,\mathrm{leC}}(B_\bullet),\operatorname{WF}'_{L^\infty,\mathrm{leC}}(F_\bullet),\operatorname{WF}'_{L^\infty}(E_\bullet),\operatorname{WF}'_{L^\infty,\mathrm{leC}}(R_\bullet)\subset \operatorname{supp} \chi(\tilde{p}^{2,0,-2})\chi(\rho) \psi . 
	\end{equation}  
	
	We now set the parameters $l',\ell'$ in the definition \cref{eq:a0def} of $a_{0}$ to $l'=2s+1>0$ and $\ell'=2\varsigma+3>0$. 
	Fix $K_1\in (0,l')$, $K_2 \in (0,\ell')$ such that $-1/2<s-K_1<s_0$ and $-3/2 < \varsigma-K_2< \varsigma_0$. 
	Suppose now that $u\in \calS'(X)$, $\sigma \in [0,\Sigma]$ are such that
	\begin{equation}
		\lVert G_3 u \rVert_{H_{\mathrm{leC}}^{-N,s_0,\varsigma_0,-N,-N}}<\infty. 
	\end{equation}
	The argument in \cite[\S4.7, above Proposition 5.27]{VasyGrenoble} justifies the computation 
	\begin{align}
		2 \Im \langle \tilde{P}u, A_\varepsilon u \rangle_{L^2} &=     -\langle \{\Im \tilde{P}, A_\varepsilon \}u,u\rangle_{L^2} + i \langle [\Re \tilde{P}(\sigma),A_\varepsilon]u,u \rangle_{L^2} \\
		\begin{split} 
			&=   -\lVert B_\varepsilon u \rVert^2_{L^2} - \lVert E_\varepsilon u \rVert_{L^2}^2 + \langle \tilde{P} u , F_\varepsilon u \rangle_{L^2} + \langle R_\varepsilon u ,u \rangle_{L^2} \\
			&\qquad\qquad\qquad\qquad\qquad- 2\underline{\delta} \lVert \Lambda_{1/2,-s-1,-\varsigma-3, -s-1,-\varsigma-3} A_\varepsilon u \rVert^2_{L^2},
		\end{split}
	\end{align}
	where the individual terms above are all well-defined distributional pairings (in the sense of H\"ormander) or (finite) norms. Thus, 
	\begin{multline}
		\lVert B_\varepsilon u \rVert^2_{L^2} + \lVert E_\varepsilon u \rVert_{L^2}^2+2\underline{\delta} \lVert \Lambda_{1/2,-s-1,-\varsigma-3, -s-1,-\varsigma-3} A_\varepsilon u \rVert^2_{L^2} \leq 2|\langle \tilde{P}u, A_\varepsilon u \rangle_{L^2}| + | \langle \tilde{P} u , F_\varepsilon u \rangle_{L^2}| \\ +|\langle R_\varepsilon u ,u \rangle_{L^2}|.
	\end{multline}
	We estimate each of the terms on the right-hand side as in the proof of the propagation estimate: for self-adjoint $G \in \Psi_{\mathrm{leC}}^{-\infty,0,0,-\infty,-\infty}(X)$ with $1-G$ essentially supported away from the $L^\infty$-essential support of $a,f,e,b$ and with $\operatorname{WF}'_{\mathrm{leC}}(G)\subset \operatorname{Ell}_{\mathrm{leC}}^{0,0,0,0,0}(G_2)$, 
	\begin{equation}
		\lVert \hat{B}_\varepsilon u \rVert_{\calX_N}\leq \lVert \hat{B}_\varepsilon u \rVert_{\calX_N} + \lVert \hat{E}_\varepsilon u \rVert_{\calX_N} \preceq \lVert G \tilde{P} u \rVert_{\calY_N} + \lVert G u \rVert_{\calZ_N} + \lVert u \rVert_{\calE_N}, 
		\label{eq:misc_llk}
	\end{equation}
	where $\calE_N= H_{\mathrm{leC}}^{-N,-N,-N,-N}(X)$, $\calZ_N =H_{\mathrm{leC}}^{-N,(2s-1)/2, (2\varsigma-1)/2, -N,-N}(X)$, $\calX_N = H_{\mathrm{leC}}^{-N,s,\varsigma,-N,-N}(X)$, $\calY_N = H_{\mathrm{leC}}^{-N,s+1,\varsigma+3,-N,-N}(X)$,
	and $\hat{B}_\varepsilon$ and $\hat{E}_\varepsilon$ are given by $\hat{B}_\varepsilon = \Lambda_{0,-s,-\varsigma,0,0} B_\varepsilon$ and 
	\begin{equation} 
		\hat{E}_\varepsilon = \Lambda_{0,-s,-\varsigma,0,0} E_\varepsilon.
	\end{equation} 
	Via elliptic regularity, we can estimate 
	\begin{equation}
		\lVert G \tilde{P} u \rVert_{\calY_N} \preceq \lVert G_2 \tilde{P} u \rVert_{\calY_N} + \lVert u \rVert_{\calE_N}. 
	\end{equation}
	By shrinking the support of $\chi,\psi$ if necessary, we can arrange that the $L^\infty$-esssupp of $a,b,f,e$ is a subset $\operatorname{Ell}_{\mathrm{leC}}^{0,0,0,0,0}(G_3)$, and then we can choose $G$ such that $\operatorname{WF}'_{\mathrm{leC}}(G)\subset \operatorname{Ell}^{0,0,0,0,0}_{\mathrm{leC}}(G_3)$, so that the estimate \cref{eq:misc_llk} implies 
	\begin{equation}
		\lVert \hat{B}_\varepsilon u \rVert_{\calX_N}\preceq \lVert G_2 \tilde{P} u \rVert_{\calY_N} + \lVert G_3 u \rVert_{\calZ_N} + \lVert u \rVert_{\calE_N}. 
	\end{equation}
	Using the Banach--Alaoglu theorem, applied as during the proof of the propagation estimate, we can take $\varepsilon\to 0^+$ to conclude 
	\begin{equation}
		\lVert \hat{B}_0 u \rVert_{\calX_N}\preceq \lVert G_2 \tilde{P} u \rVert_{\calY_N} + \lVert G_3 u \rVert_{\calZ_N}+ \lVert u \rVert_{\calE_N}. 
		\label{eq:misc_eqq}
	\end{equation}

	Let $\calX = H_{\mathrm{leC}}^{m,s,\varsigma,l,\ell}(X)$. 
	Since $\operatorname{Ell}_{\mathrm{leC}}^{0,0,0,0,0}(\hat{B}_0)\supset \calR_+$, \cref{eq:misc_eqq} implies 
	\begin{equation} 
		\lVert G_1 u \rVert_{\calX} \preceq \lVert \hat{B}_0 u \rVert_{\calX_N} + \lVert G_2 \tilde{P} u \rVert_{\calY_N} + \lVert u \rVert_{\calE_N} ,
	\end{equation} 
	where we used the propagation estimate to control $G_1 u$ on $\operatorname{Char}_{\mathrm{leC}}^{2,0,-2,-1,-3}(\tilde{P})$ away from $\calR_+$.  So, 
	\begin{align}
		\begin{split} 
		\lVert G_1 u \rVert_\calX \preceq \lVert \hat{B}_0 u \rVert_{\calX_N} + \lVert G_2 \tilde{P} u \rVert_{\calY_N}  + \lVert u \rVert_{\calE_N} &\preceq \lVert G_2 \tilde{P} u \rVert_{\calY_N}+ \lVert G_3 u \rVert_{\calZ_N} + \lVert u \rVert_{\calE_N} \\
		&\preceq \lVert G_2 \tilde{P} u \rVert_{\calY_N}+ \lVert G_3 u \rVert_{H_{\mathrm{leC}}^{-N,s-1/2, \varsigma-1/2, -N,-N}} + \lVert u \rVert_{\calE_N} . 
		\end{split} 
		\label{eq:misc_p01}
	\end{align}
	An inductive argument (using \cref{eq:misc_inq}) estimating $\lVert G_3 u \rVert_{H_{\mathrm{leC}}^{-N,s-1/2, \varsigma-1/2, -N,-N}}$ finishes the proof. 
\end{proof}

\subsection{The Radial ``Point'' $\calR$}
\label{subsec:rp}

In order to analyze matters uniformly near the corners of leC-phase space (in particular the highlighted edge $\mathrm{bf}\cup \mathrm{ff}$ in \Cref{fig:later}), we work with the bdfs $\varrho_{\mathrm{sf}},\varrho_{\mathrm{bf}},\varrho_{\mathrm{ff}},\varrho_{\mathrm{tf}}$ defined in \cref{eq:bdfs} rather than the leC-adapted momentum coordinates $\xi_{\mathrm{sc,leC}},\eta_{\mathrm{sc,leC}}$ used in the previous section. 

In terms of the bdfs,  
\begin{align} 
\tilde{p}_0 &= \varrho_{\mathrm{df}}^{-2}\varrho_{\mathrm{ff}}^{2}\varrho_{\mathrm{bf}}^{2}\varrho_{\mathrm{tf}}^{4} (1-\varrho_{\mathrm{df}}^2\varrho_{\mathrm{sf}}^2\varrho_{\mathrm{ff}}^2) - 2\Big( \frac{\xi_{\mathrm{b}}}{(1+\xi^2_{\mathrm{b}}+g^{-1}_{\partial X}(\eta_{\mathrm{b}},\eta_{\mathrm{b}}))^{1/2}}\Big)\varrho_{\mathrm{df}}^{-1}\varrho_{\mathrm{ff}}^{2}\varrho_{\mathrm{bf}}\varrho_{\mathrm{tf}}^{3} \\
&= \varrho_{\mathrm{df}}^{-2}\varrho_{\mathrm{ff}}^{2}\varrho_{\mathrm{bf}}^{2}\varrho_{\mathrm{tf}}^{4}(1-\varrho_{\mathrm{df}}^2\varrho_{\mathrm{sf}}^2\varrho_{\mathrm{ff}}^2) - 2 \beth \varrho_{\mathrm{df}}^{-1}\varrho_{\mathrm{ff}}^{2} \varrho_{\mathrm{bf}}\varrho_{\mathrm{tf}}^{3}
\label{eq:misc_844}
\end{align}  
near $\{x=0\}\subset {}^{\mathrm{leC}} \overline{T}^* X$, where $\beth \in S_{\mathrm{cl,leC}}^{0,0,0,0,0}(X)$ is given by $\xi_{\mathrm{b}} (1+\xi_{\mathrm{b}}^2+g^{-1}_{\partial X}(\eta_{\mathrm{b}},\eta_{\mathrm{b}}))^{-1/2}$ near $\{x=0\}$. 
Factoring out common powers of bdfs from \cref{eq:misc_844}, we are left with 
\begin{align} 
	\tilde{p}^{2,0,-2,-1,-3}_0 &= \varrho_{\mathrm{bf}} \varrho_{\mathrm{tf}} (1-\varrho_{\mathrm{df}}^2\varrho_{\mathrm{sf}}^2\varrho_{\mathrm{ff}}^2) -2 \beth \varrho_{\mathrm{df}} \\
	&= \varrho_{\mathrm{bf}} \varrho_{\mathrm{tf}}  -2 \beth \varrho_{\mathrm{df}} \bmod S_{\mathrm{leC}}^{-1,-1,-1,-1,-1}(X).
\end{align}   
On $\mathrm{sf}\cup \mathrm{ff}$, $\tilde{p}^{2,0,-2,-1,-3}_0$ and thus $\tilde{p}^{2,0,-2,-1,-3}$ vanishes if and only if $2 \beth =  \varrho_{\mathrm{bf}} \varrho_{\mathrm{tf}}/\varrho_{\mathrm{df}}$.
Therefore 
\begin{equation} 
	\operatorname{Char}_{\mathrm{leC}}^{2,0,-2,-1,-3}(\tilde{P}) = \{2\beth  = \varrho_{\mathrm{bf}}\varrho_{\mathrm{tf}}/\varrho_{\mathrm{df}}\}\subset {}^{\mathrm{leC}}T^* X. 
\end{equation}
In particular, the portion of the characteristic set that is on the boundary of $\mathrm{bf}\cup \mathrm{tf}$ is precisely $\{\beth=0\} \cap (\mathrm{bf} \cup \mathrm{tf}) \cap (\mathrm{sf} \cup \mathrm{ff})$.

\begin{proposition}
	\label{prop:new_weight_comp}
	We have $H_{\tilde{p}_0}^{2,0,-2} \varrho_{\mathrm{df}_{00}} = F_{0,1} \varrho_{\mathrm{df}_{00}}$ for $F_{0,1} \in S_{\mathrm{cl,leC}}^{0,0,0,0,0}(X)$ given by 
	\begin{equation} 
		F_{0,1}= 2\varrho_{\mathrm{bf}}\varrho_{\mathrm{tf}} \beth (1-\varrho_{\mathrm{df}}^2\varrho_{\mathrm{sf}}^2\varrho_{\mathrm{ff}}^2 ) - 2  \varrho_{\mathrm{df}} \beth^2 - \mathsf{Z}    \varrho_{\mathrm{df}} \varrho_{\mathrm{sf}}  \varrho_{\mathrm{bf}} \beth^2
	\end{equation} 
	near $\{x=0\}\subset {}^{\mathrm{leC}}\overline{T}^* X$. 
\end{proposition}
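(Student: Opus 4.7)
The proof is a direct symbol-level calculation, so the proposal is largely about organizing the bookkeeping. Here is how I would proceed.

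First observe that $\varrho_{\mathrm{df}_{00}} = (1+\xi_{\mathrm{b}}^2 + g_{\partial X}^{ij}(y)\eta_i\eta_j)^{-1/2}$ is independent of $x$ and $\sigma$, so the $x\partial_x$ component of $H_{\tilde{p}_0}$ (as given in \cref{eq:Hp}) contributes nothing. Hence, writing $H_{\tilde{p}_0}^{2,0,-2} = \varrho_{\mathrm{df}} x^{-1}(\sigma^2+\mathsf{Z}x)^{-1/2} H_{\tilde{p}_0}$,
\begin{equation*}
	H_{\tilde{p}_0}^{2,0,-2}\varrho_{\mathrm{df}_{00}} = \varrho_{\mathrm{df}} x^{-1}(\sigma^2+\mathsf{Z}x)^{-1/2}\Big[-(x\partial_x \tilde{p}_0)\partial_{\xi_{\mathrm{b}}}\varrho_{\mathrm{df}_{00}} + \textstyle{\sum_j}\big((\partial_{\eta_j}\tilde{p}_0)\partial_{y_j}\varrho_{\mathrm{df}_{00}} - (\partial_{y_j}\tilde{p}_0)\partial_{\eta_j}\varrho_{\mathrm{df}_{00}}\big)\Big].
\end{equation*}
I would record the elementary identities $\partial_{\xi_{\mathrm{b}}}\varrho_{\mathrm{df}_{00}} = -\xi_{\mathrm{b}}\varrho_{\mathrm{df}_{00}}^3 = -\beth\,\varrho_{\mathrm{df}_{00}}^2$, $\partial_{\eta_j}\varrho_{\mathrm{df}_{00}} = -g^{jk}\eta_k\varrho_{\mathrm{df}_{00}}^3$, and $\partial_{y_j}\varrho_{\mathrm{df}_{00}} = -\tfrac{1}{2}\varrho_{\mathrm{df}_{00}}^3(\partial_{y_j}g^{kl})\eta_k\eta_l$, and analogously $\partial_{\eta_j}\tilde{p}_0 = 2x^2 g^{jk}\eta_k$ and $\partial_{y_j}\tilde{p}_0 = x^2 (\partial_{y_j}g^{kl})\eta_k\eta_l$.

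The pleasant observation, which saves the day, is that the tangential contribution vanishes identically: expanding,
\begin{equation*}
	\textstyle{\sum_j}\big((\partial_{\eta_j}\tilde{p}_0)\partial_{y_j}\varrho_{\mathrm{df}_{00}} - (\partial_{y_j}\tilde{p}_0)\partial_{\eta_j}\varrho_{\mathrm{df}_{00}}\big) = -x^2\varrho_{\mathrm{df}_{00}}^3\textstyle{\sum_{j,k,l,m}}g^{jk}\eta_k(\partial_{y_j}g^{lm})\eta_l\eta_m + x^2\varrho_{\mathrm{df}_{00}}^3\textstyle{\sum_{j,k,l,m}}g^{jm}\eta_m(\partial_{y_j}g^{kl})\eta_k\eta_l,
\end{equation*}
and after relabeling dummy indices in the second sum the two sums agree and cancel. (This cancellation is essentially the statement that $\varrho_{\mathrm{df}_{00}}$ is preserved by the cofiber-radial flow of any homogeneous symbol, reflecting the fact that the tangential Hamiltonian flow preserves the momentum norm.) So it suffices to treat the radial derivative term.

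A short calculation using $\partial_x((\sigma^2+\mathsf{Z}x)^{1/2}) = \tfrac{\mathsf{Z}}{2}(\sigma^2+\mathsf{Z}x)^{-1/2}$ gives
\begin{equation*}
	x\partial_x \tilde{p}_0 = 2\tilde{p}_0 + 2x(\sigma^2+\mathsf{Z}x)^{1/2}\xi_{\mathrm{b}} - \mathsf{Z}x^2\xi_{\mathrm{b}}(\sigma^2+\mathsf{Z}x)^{-1/2}.
\end{equation*}
Multiplying by $\beth\varrho_{\mathrm{df}_{00}}^2$ and using $\varrho_{\mathrm{df}_{00}}\xi_{\mathrm{b}} = \beth$, one obtains three pieces, which after multiplication by the prefactor $\varrho_{\mathrm{df}} x^{-1}(\sigma^2+\mathsf{Z}x)^{-1/2}\cdot\varrho_{\mathrm{df}_{00}}^{-1}$ I would rewrite in terms of the leC-bdfs using the factorizations $\varrho_{\mathrm{bf}_{00}} = \varrho_{\mathrm{sf}}\varrho_{\mathrm{bf}}$, $\varrho_{\mathrm{tf}_{00}} = \varrho_{\mathrm{ff}}\varrho_{\mathrm{tf}}$, $\varrho_{\mathrm{df}_{00}} = \varrho_{\mathrm{df}}\varrho_{\mathrm{sf}}\varrho_{\mathrm{ff}}$, together with the identity $x = \varrho_{\mathrm{bf}_{00}}\varrho_{\mathrm{tf}_{00}}^2$ and $(\sigma^2+\mathsf{Z}x)^{1/2} = \varrho_{\mathrm{ff}}\varrho_{\mathrm{tf}}$. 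The $2\tilde{p}_0$ piece combines with the principal factorization \cref{eq:misc_844} to give $2\beth\cdot \tilde{p}_0^{2,0,-2,-1,-3} = 2\beth\varrho_{\mathrm{bf}}\varrho_{\mathrm{tf}}(1-\varrho_{\mathrm{df}}^2\varrho_{\mathrm{sf}}^2\varrho_{\mathrm{ff}}^2) - 4\beth^2\varrho_{\mathrm{df}}$, the middle term collapses to $+2\beth^2\varrho_{\mathrm{df}}$, and the third collapses to $-\mathsf{Z}\varrho_{\mathrm{df}}\varrho_{\mathrm{sf}}\varrho_{\mathrm{bf}}\beth^2$. Summing yields the claimed identity. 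The only step demanding care is the cancellation of the tangential terms; once that is seen, everything else is a rewrite in the leC-bdfs.
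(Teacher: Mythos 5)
Your proposal is correct and takes essentially the same route as the paper: compute $H_{\tilde{p}_0}\varrho_{\mathrm{df}_{00}}$ via the radial term $-(x\partial_x\tilde{p}_0)\partial_{\xi_{\mathrm{b}}}\varrho_{\mathrm{df}_{00}}$ and then convert to the leC-bdfs using $x=\varrho_{\mathrm{bf}}\varrho_{\mathrm{sf}}\varrho_{\mathrm{ff}}^2\varrho_{\mathrm{tf}}^2$, $(\sigma^2+\mathsf{Z}x)^{1/2}=\varrho_{\mathrm{ff}}\varrho_{\mathrm{tf}}$, $\varrho_{\mathrm{df}_{00}}=\varrho_{\mathrm{df}}\varrho_{\mathrm{sf}}\varrho_{\mathrm{ff}}$. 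Your explicit verification that the tangential Hamiltonian contribution cancels (because both $\tilde{p}_0$ and $\varrho_{\mathrm{df}_{00}}$ depend on $(y,\eta)$ only through $g_{\partial X}^{-1}(\eta,\eta)$) supplies a step the paper asserts without comment, and your decomposition $x\partial_x\tilde{p}_0=2\tilde{p}_0+2x\xi_{\mathrm{b}}(\sigma^2+\mathsf{Z}x)^{1/2}-\mathsf{Z}x^2\xi_{\mathrm{b}}(\sigma^2+\mathsf{Z}x)^{-1/2}$ is a slightly tidier piece of bookkeeping that lets one reuse the factorization \cref{eq:misc_844} directly, but these are refinements of the same argument rather than a different one.
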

\begin{proof}
	Applying \cref{eq:Hp} to $\varrho_{\mathrm{df}_{00}}$, 
	\begin{equation}
		H_{\tilde{p}_0} \varrho_{\mathrm{df}_{00}} =  - (x \partial_x  \tilde{p}_0) \partial_{\xi_{\mathrm{b}}} \varrho_{\mathrm{df}_{00}}
	\end{equation}
	near $\{x=0\}$.  Writing $\tilde{p}_0 = x^2 \varrho_{\mathrm{df}_{00}}^{-2} (1- \varrho_{\mathrm{df}_{00}}^{2}) - 2 x \xi_{\mathrm{b}}  (\sigma^2+\mathsf{Z}x)^{1/2} $, 
	\begin{align}
		\begin{split} 
		\partial_x \tilde{p}_0 &= 2x \varrho_{\mathrm{df}_{00}}^{-2} (1- \varrho_{\mathrm{df}_{00}}^{2})  - 2 \xi_{\mathrm{b}}  (\sigma^2 + \mathsf{Z} x)^{1/2} - \mathsf{Z}x \xi_{\mathrm{b}}  (\sigma^2+\mathsf{Z} x)^{-1/2} \\
		\partial_{\xi_{\mathrm{b}}} \varrho_{\mathrm{df}_{00}} &=  - \xi_{\mathrm{b}} \varrho_{\mathrm{df}_{00}}^{3} .
		\end{split} 
	\end{align} 
	Thus,  
	\begin{equation}
		H_{ \tilde{p}_0} \varrho_{\mathrm{df}_{00}}  = 2  x^{2} \xi_{\mathrm{b}} \varrho_{\mathrm{df}_{00}}(1-\varrho_{\mathrm{df}_{00}}^2) - 2 x \xi_{\mathrm{b}}^2 \varrho_{\mathrm{df}_{00}}^{3} (\sigma^2+\mathsf{Z}x)^{1/2}  -  \mathsf{Z}  \xi_{\mathrm{b}}^2 \varrho_{\mathrm{df}_{00}}^{3} x^{2} (\sigma^2+\mathsf{Z}x)^{-1/2} . 
	\end{equation}
	In terms of the bdfs of ${}^{\mathrm{leC}}\overline{T}^* X$, $x^2 \xi_{\mathrm{b}} \varrho_{\mathrm{df}_{00}} = \varrho_{\mathrm{sf}}^2\varrho_{\mathrm{ff}}^4\varrho_{\mathrm{bf}}^2\varrho_{\mathrm{tf}}^4 \beth $, and $x\xi_{\mathrm{b}}^2 \varrho_{\mathrm{df}_{00}}^3 (\sigma^2+\mathsf{Z} x)^{1/2} = \beth^2 \varrho_{\mathrm{df}} \varrho_{\mathrm{sf}}^2 \varrho_{\mathrm{ff}}^4 \varrho_{\mathrm{bf}} \varrho_{\mathrm{tf}}^3$, and $\xi_{\mathrm{b}}^2 \varrho_{\mathrm{df}_{00}}^3x^2 (\sigma^2+\mathsf{Z}x)^{-1/2} = \beth^2 \varrho_{\mathrm{df}} \varrho_{\mathrm{sf}}^3 \varrho_{\mathrm{ff}}^4 \varrho_{\mathrm{bf}}^2 \varrho_{\mathrm{tf}}^3$. 	
	Adding everything together, we find 
	\begin{align} 
		H_{\tilde{p}_0} \varrho_{\mathrm{df}_{00}} &= 2\varrho_{\mathrm{sf}}^2\varrho_{\mathrm{ff}}^4\varrho_{\mathrm{bf}}^2\varrho_{\mathrm{tf}}^4 \beth(1-\varrho_{\mathrm{df}}^2\varrho_{\mathrm{sf}}^2\varrho_{\mathrm{ff}}^2 ) - 2   \varrho_{\mathrm{df}} \varrho_{\mathrm{sf}}^2 \varrho_{\mathrm{ff}}^4 \varrho_{\mathrm{bf}} \varrho_{\mathrm{tf}}^3\beth^2 - \mathsf{Z}   \varrho_{\mathrm{df}} \varrho_{\mathrm{sf}}^3 \varrho_{\mathrm{ff}}^4 \varrho_{\mathrm{bf}}^2 \varrho_{\mathrm{tf}}^3 \beth^2 .
		\intertext{	 Dividing by $\varrho_{\mathrm{df}}^{-1} x(\sigma^2+\mathsf{Z}x)^{1/2} = \varrho_{\mathrm{df}}^{-1} \varrho_{\mathrm{sf}} \varrho_{\mathrm{ff}}^3 \varrho_{\mathrm{bf}} \varrho_{\mathrm{tf}}^3$,}
		H_{\tilde{p}_0}^{2,0,-2} \varrho_{\mathrm{df}_{00}} &= (2\varrho_{\mathrm{bf}}\varrho_{\mathrm{tf}} \beth (1-\varrho_{\mathrm{df}}^2\varrho_{\mathrm{sf}}^2\varrho_{\mathrm{ff}}^2 ) - 2  \varrho_{\mathrm{df}} \beth^2 - \mathsf{Z}    \varrho_{\mathrm{df}} \varrho_{\mathrm{sf}}  \varrho_{\mathrm{bf}} \beth^2) \varrho_{\mathrm{df}_{00}},
	\end{align} 
	as claimed. 
\end{proof}

Letting $\beta_1,\beta_2$ be as in \Cref{prop:basic_weight_comp}, $\beta_1,\beta_2<0$ on $\calR_0$ and thus on $\calR$. By \Cref{prop:new_weight_comp}, there exists an $F_1 \in S_{\mathrm{leC}}^{0,0,0,0,0}(X)$ such that 
\begin{equation}
	H_{\tilde{p}}^{2,0,-2} \varrho_{\mathrm{df}_{00}} = F_1 \varrho_{\mathrm{df}_{00}}, 
\end{equation}
with $F_1$ vanishing on $\calR$. 
By \Cref{prop:radial_comp}, it is the case that for any fixed $\rho \in S^{0,0,0,0,0}_{\mathrm{cl,leC}}(X)$ equal to $\xi_{\mathrm{sc,leC}}^2 + \eta_{\mathrm{sc,leC}}^2$ in some neighborhood of $\mathrm{bf}\cup \mathrm{tf}$, there exist some symbols  
\begin{equation}
	\beta_0, F_2, F_3 ,F_4 \in S_{\mathrm{leC}}^{0,0,0,0,0}(X)
\end{equation}
such that 
\begin{align}
	H_{\tilde{p}}^{2,0,-2}\rho &= \beta_0 \rho + F_2 + F_3 + x(\sigma^2+\mathsf{Z} x)^{-1/2} F_4\\ 
	 \beta_0|_{\calR_0}&<0, 
\end{align}
$\beta_0,F_2\leq 0$ everywhere, and $F_3$ vanishes cubically at $\calR$ uniformly in $[0,\Sigma]$.  
We may choose $\rho$ such that it is nonnegative everywhere. 

It is necessary to have another weight with semidefinite sign under the Hamiltonian flow.
We may use 
\begin{equation}
	\varrho_{\mathrm{ff}} = \varrho_{\mathrm{df}_{00}}+\varrho_{\mathrm{tf}_{00}} \in S^{0,0,0,0,0}_{\mathrm{cl,leC}}(X). 
\end{equation}
We have $H_{\tilde{p}}^{2,0,-2} \varrho_{\mathrm{ff}} = \beta_3 \varrho_{\mathrm{ff}}$ for $\beta_3 =  (F_1 \varrho_{\mathrm{df}_{00}} +\beta_2 \varrho_{\mathrm{tf}_{00}})(\varrho_{\mathrm{df}_{00}}+\varrho_{\mathrm{tf}_{00}})^{-1} \in S_{\mathrm{leC}}^{0,0,0,0,0}(X)$. We can write this as $\beta_3 = F_1 \varrho_{\mathrm{df}} \varrho_{\mathrm{sf}} + \beta_2 \varrho_{\mathrm{tf}}$. Thus, $\beta_3|_\calR\leq 0$. Note that $\beta_3$ vanishes at $\calR\cap \mathrm{tf}$. Normally, this would be problematic as far as the radial point estimate is concerned, but this fourth weight is used only to give us an extra independent order and not to manufacture positivity, so a semidefinite sign is actually acceptable.

We now have enough basic weights to construct our commutants: the basic weights are $x= \varrho_{\mathrm{bf}} \varrho_{\mathrm{sf}} \varrho_{\mathrm{tf}}^2 \varrho_{\mathrm{ff}}^2$, $(\sigma^2+\mathsf{Z}x)^{1/2} = \varrho_{\mathrm{tf}} \varrho_{\mathrm{ff}}$, $\varrho_{\mathrm{df}_{00}} = \varrho_{\mathrm{df}} \varrho_{\mathrm{sf}} \varrho_{\mathrm{ff}}$, and $\varrho_{\mathrm{ff}}$. For any $s,\varsigma,l,\ell \in \bbR$, 
\begin{equation}
	x^l (\sigma^2+\mathsf{Z}x)^{\ell/2-l} \varrho_{\mathrm{df}_{00}}^{s-l} \varrho_{\mathrm{ff}}^{\varsigma -\ell  - s+l} = \varrho_{\mathrm{df}}^{s-l} \varrho_{\mathrm{sf}}^s \varrho_{\mathrm{ff}}^{\varsigma} \varrho_{\mathrm{bf}}^l \varrho_{\mathrm{tf}}^\ell. 
	\label{eq:misc_alo}
\end{equation}
We do not care about the order at df (because $\calR$ is disjoint from df), so the weights of the form \cref{eq:misc_alo} (which give four independent orders) suffice. 
We now consider the weight (dependent on parameters $s,\varsigma,l,\ell \in \bbR$)  
\begin{equation} 
	a_0 = 	x^{-l} (\sigma^2+\mathsf{Z}x)^{-\ell/2+l} \varrho_{\mathrm{df}_{00}}^{-s+l} \varrho_{\mathrm{ff}}^{-\varsigma +\ell  + s-l} \in S_{\mathrm{cl,leC}}^{s-l,s,\varsigma,l,\ell}(X).
\end{equation} 
Per the above, 
\begin{equation} 
	\beta =-((s-l) F_1 + (\varsigma-\ell-s+l) \beta_3  + l \beta_1  + (\ell-2l) \beta_2) = a_0^{-1}\smash{H_{\tilde{p}}^{2,0,-2} a_0} \in \smash{S^{0,0,0,0,0}_{\mathrm{leC}}(X)}
\end{equation} 
will have a definite sign near  $\calR$ if $l,\ell$ have the same definite sign and $\varsigma-\ell-s+l$ has the same sign semidefinitely.  We will choose these parameters such that $\beta<0$ near $\calR$.

Hence, multiplying $a_0$ by an appropriate microlocal cutoff, we can arrange for $a$ to be everywhere monotonic under the Hamiltonian flow, strictly so near $\calR$. 
Specifically, fix $\chi \in C_{\mathrm{c}}^\infty(\bbR;[0,1])$ as in the proof of \Cref{prop:R+}. Dilating $\chi$ if necessary, we can find $\psi \in S_{\mathrm{cl,leC}}^{0,0,0,0,0}(X)$ that is identically equal to one in some neighborhood of $\mathrm{sf}\cup \mathrm{ff}$ such that 
\begin{equation}
	\beta<0 \label{eq:pos_cond_ii}
\end{equation}
on $\operatorname{supp} \chi(\tilde{p}^{2,0,-2,-1,-3}) \chi(\rho) \psi$ and such that 
\begin{equation}
		\chi(\tilde{p}^{2,0,-2,-1,-3})\chi_0(\rho) \psi \sqrt{-H^{2,0,-2}_{\tilde{p}} \rho} \in S_{\mathrm{leC}}^{0,0,0,0,0}(X)
	\label{eq:pos_cond_iii}
\end{equation}
and $\operatorname{supp} \chi(\tilde{p}^{2,0,-2,-1,-3}) \chi(\rho)  \psi \cap (\calR_+\cup \mathrm{df}) = \varnothing$.
Now set 
\begin{equation}
	a = a_0 \chi(\tilde{p}^{2,0,-2,-1,-3})^2 \chi(\rho)^2 \psi^2 \in S_{\mathrm{cl,leC}}^{s-l,s,\varsigma,l,\ell}(X).
\end{equation}
The three factors $\chi(\tilde{p}^{2,0,-2,-1,-3}), \chi(\rho),\psi  \in S_{\mathrm{cl,leC}}^{0,0,0,0,0}(X)$ together microlocalize near $\calR$.

We can write $H^{2,0,-2}_{\tilde{p}} \tilde{p}^{2,0,-2,-1,-3} = \tilde{q} \tilde{p}^{2,0,-2,-1,-3}$ for  $\tilde{q} \in S_{\mathrm{leC}}^{0,0,0,0,0}(X)$ defined by 
\begin{equation} 
	\tilde{q} = \smash{\varrho_{\mathrm{df}}^{-2}\varrho_{\mathrm{ff}}^{2} \varrho_{\mathrm{bf}}^{1}\varrho_{\mathrm{tf}}^{3}H^{2,0,-2}_{\tilde{p}}(\varrho_{\mathrm{df}}^2\varrho_{\mathrm{ff}}^{-2} \varrho_{\mathrm{bf}}^{-1}\varrho_{\mathrm{tf}}^{-3})}.
\end{equation} 
Then, 
\begin{multline}
	H_{\tilde{p}}^{2,0,-2} a =   \psi^2 \chi(\tilde{p}^{2,0,-2,-1,-3})^2 \chi(\rho)^2  H_{\tilde{p}}^{2,0,-2} a_0  - 2  a_0 \chi_0(\rho)^2 \chi(\tilde{p}^{2,0,-2,-1,-3})^2 \psi^2H^{2,0,-2}_{\tilde{p}} \rho   \\ + 2a_0 \chi(\tilde{p}^{2,0,-2,-1,-3})\chi'(\tilde{p}^{2,0,-2,-1,-3}) \chi(\rho)^2 \psi^2 \tilde{p}^{2,0,-2,-1,-3} \tilde{q}  \\ + 2a_0 \chi(\tilde{p}^{2,0,-2,-1,-3})^2 \chi(\rho)^2 \psi H_{\tilde{p}}^{2,0,-2}\psi   .
	\label{eq:hform}
\end{multline}
Observe: 
\begin{enumerate}
	\item by \cref{eq:pos_cond_ii}, the first term, 
	\begin{equation} 
		\psi^2 \chi(\tilde{p}^{2,0,-2,-1,-3})^2 \chi(\rho)^2  H_{\tilde{p}}^{2,0,-2} a_0 =  \psi^2\chi(\tilde{p}^{2,0,-2,-1,-3})^2 \chi(\rho)^2 \beta a_0 
	\end{equation} 
	will have a definite sign (the same as $\beta$, negative if $l,\ell<0$) for appropriate $\chi,\psi$, 
	\item the second term, 
	\begin{multline} 
		-a_0 \chi_0(\rho)^2 \chi(\tilde{p}^{2,0,-2,-1,-3})^2\psi^2  H^{2,0,-2}_{\tilde{p}} \rho \\ = -a_0 \chi_0(\rho)^2 \chi(\tilde{p}^{2,0,-2,-1,-3})^2 \psi^2 (\beta_0\rho+F_2+F_3 + x(\sigma^2+\mathsf{Z} x)^{-1/2} F_4 ),
	\end{multline} 
	also has a definite sign (positive, since $\beta_0,F_2,F_3|_\calR\leq 0$) for appropriate $\chi,\psi$  and is supported in an annulus around $\calR_+$, which should intersect $\operatorname{Char}_{\mathrm{leC}}^{2,0,-2,-1,-3}(\tilde{P})\backslash (\calR\cup \calR_+)$,  
	\item the third sand fourth terms are supported away from $\operatorname{Char}^{2,0,-2,-1,-3}(\tilde{P}) $ and are therefore unproblematic (as that region of phase space is controlled via elliptic estimates or, in the case of $\mathrm{bf}^\circ \cup \mathrm{tf}^\circ$ where the fourth term might have some support, cannot be controlled by symbolic considerations anyways). 
\end{enumerate}
We will prove a low order radial point estimate. This means that 
\begin{equation}
	l< 0, \qquad \ell<0 \qquad \varsigma\leq \ell+s-l, 
\end{equation}
so that $\beta<0$ near $\calR$. 
Thus, the first and second terms in \cref{eq:hform} have the opposite sign near $\calR$. The second term thus contributes to the right-hand side of the radial point estimate, but this term can itself be controlled using a radial point estimate at $\calR_+$ in conjunction with a propagation estimate and will therefore be unproblematic as well.

In order to ``regularize,''  set $\phi_\varepsilon = (1+\varepsilon x^{-1})^{-K_1}(1+\varepsilon \varrho_{\mathrm{df}_{00}}^{-1})^{-K_3}$, 
\begin{equation}a^{(\varepsilon)} = \phi_\varepsilon^2 a  \in L^\infty([0,1]_\varepsilon;S_{\mathrm{cl,leC}}^{-\infty,s,\varsigma,l,\ell}(X)), 
\end{equation}
for to-be-decided $K_1,K_3\in \bbR$. We then compute that 
\begin{multline}
	H_{\tilde{p}}^{2,0,-2} a^{(\varepsilon)}  = \phi_\varepsilon^2 \Big[ -\chi(\tilde{p}^{2,0,-2,-1,-3})^2 \chi(\rho)^2 \psi^2 a_0 \Big(  (\varsigma-\ell-s+l)\beta_3\\ + F_1\Big(s-l- \frac{K_3 \varepsilon \varrho_{\mathrm{df}_{00}}^{-1}}{1+\varepsilon \varrho_{\mathrm{df}_{00}}^{-1}}\Big)  + \beta_1 \Big(l- \frac{K_1 \varepsilon x^{-1}}{1+\varepsilon x^{-1}}\Big)  +  \beta_2 (\ell-2l) \Big)   - 2 a_0 \chi_0(\rho)^2 \chi(\tilde{p}^{2,0,-2,-1,-3})^2 \psi^2 H_{\tilde{p}}^{2,0,-2} \rho  \\  +2 a_0 \chi(\tilde{p}^{2,0,-2,-1,-3})\chi'(\tilde{p}^{2,0,-2,-1,-3}) \chi(\rho)^2  \psi^2 \tilde{p}^{2,0,-2,-1,-3} \tilde{q} \\ + 2 a_0 \chi(\tilde{p}^{2,0,-2,-1,-3})^2 \chi(\rho)^2 \psi H_{\tilde{p}}^{2,0,-2}\psi \Big]. 
	\label{eq:misc_01h}
\end{multline}
In contrast to the previous radial point estimate, we can make $K_1,K_3$ arbitrarily large without affecting the sign of the parenthetical term, although we might need to choose $\psi$ with smaller support and replace $\chi$ with $\chi\circ \operatorname{dil}_\lambda$  for 
\begin{equation} 
	\lambda = \lambda(l,\ell,s,\varsigma,K_1,K_3)>0
\end{equation}
sufficiently large 
to ensure that the term proportional to $F_1$ in \cref{eq:misc_01h} does not spoil that sign. 
So, for some choice of $\psi,\chi$, we can choose $\underline{\delta}  = \underline{\delta}(K_1,K_3,s,\varsigma,l,\ell,\chi,\psi)>0$ sufficiently small such that there exist 
well-defined uniform families of leC-symbols 
\begin{align}
	\begin{split}
	b_\bullet &\in  L^\infty([0,1]_\varepsilon ; S_{\mathrm{leC}}^{-\infty,(s-1)/2,(\varsigma-3)/2,(l-1)/2,(\ell-3)/2}(X)),\\  
	e_\bullet &\in  L^\infty([0,1]_\varepsilon ; S_{\mathrm{leC}}^{-\infty,(s-1)/2,(\varsigma-3)/2,-\infty,-\infty}(X)),\\  
	f_\bullet &\in L^\infty([0,1]_\varepsilon ; S_{\mathrm{leC}}^{-\infty,s-1,\varsigma-1,l,\ell}(X)), \\
	r_\bullet &\in L^\infty([0,1]_\varepsilon ; S_{\mathrm{leC}}^{-\infty,-\infty,-\infty,l-1,\ell-3}(X)),
	\end{split} 
\end{align} 
such that, in some neighborhood of $\{x=0\}\subset {}^{\mathrm{leC}}\overline{T}^* X$, 
\begin{align}
	\begin{split} 
	b_\varepsilon &= \varrho_{\mathrm{df}}^{-1/2} \varrho_{\mathrm{sf}}^{1/2} \varrho_{\mathrm{ff}}^{3/2} \varrho_{\mathrm{bf}}^{1/2} \varrho_{\mathrm{tf}}^{3/2}  a_0^{1/2} \chi(\tilde{p}^{2,0,-2,-1,-3}) \chi(\rho) \phi_\varepsilon \psi \Big[ F_1\Big(m- \frac{K_3 \varepsilon \varrho_{\mathrm{df}_{00}}^{-1}}{1+\varepsilon \varrho_{\mathrm{df}_{00}^{-1}}}\Big)    + \beta_1 \Big(l- \frac{K_1 \varepsilon x^{-1}}{1+\varepsilon x^{-1}}\Big)  \\&\qquad+  \beta_2 (\ell-2l)  +\beta_3  (\varsigma-\ell-s+l)- 2 \underline{\delta}\phi_\varepsilon^2 \chi(\tilde{p}^{2,0,-2,-1,-3})^2 \chi(\rho)^2 \psi^2- \varrho_{\mathrm{df}} \varrho_{\mathrm{sf}}^{-1} \varrho_{\mathrm{ff}}^{-3} \varrho_{\mathrm{bf}}^{-1} \varrho_{\mathrm{tf}}^{-3} p_1 \Big]^{1/2} \\
	e_{\varepsilon} &= \varrho_{\mathrm{df}}^{-1/2} \varrho_{\mathrm{sf}}^{1/2} \varrho_{\mathrm{ff}}^{3/2} \varrho_{\mathrm{bf}}^{1/2} \varrho_{\mathrm{tf}}^{3/2} \phi_\varepsilon a_0^{1/2} \chi_0(\rho) \chi(\tilde{p}^{2,0,-2,-1,-3}) \psi \sqrt{-2 H_{\tilde{p}}^{2,0,-2} \rho}\\
	f_{\varepsilon} &= 2 \varrho_{\mathrm{df}} \varrho_{\mathrm{sf}} \varrho_{\mathrm{ff}}  a_0 \phi_\varepsilon^2 \chi(\tilde{p}^{2,0,-2,-1,-3})\chi'(\tilde{p}^{2,0,-2,-1,-3})  \chi(\rho)^2 \psi^2\tilde{q} \\
	r_\varepsilon &=   2\varrho_{\mathrm{df}}^{-1}\varrho_{\mathrm{sf}}\varrho_{\mathrm{ff}}^3\varrho_{\mathrm{bf}}\varrho_{\mathrm{tf}}^3  a_0 \phi_\varepsilon^2\chi(\tilde{p}^{2,0,-2,-1,-3})^2 \chi(\rho)^2 \psi H_{\tilde{p}}^{2,0,-2}\psi
	. 
	\end{split} 
\end{align}
In terms of these new symbols, we can write 
\begin{equation}
	H_{\tilde{p}} a^{(\varepsilon)} + p_1 a^{(\varepsilon)}
	= -2\underline{\delta} \varrho_{\mathrm{df}}^{-1} \varrho_{\mathrm{sf}} \varrho_{\mathrm{ff}}^3 \varrho_{\mathrm{bf}} \varrho_{\mathrm{tf}}^3 a_0^{-1} a^{(\varepsilon)2} - b_\varepsilon^2 + e_\varepsilon^2 + f_\varepsilon \tilde{p} + r_\varepsilon.
\end{equation}
We now apply $\operatorname{Op}$. Setting $A_{\varepsilon}=(1/2)(\operatorname{Op}(a^{(\varepsilon)}) + \operatorname{Op}(a^{(\varepsilon)})^*)$, $B_\varepsilon=\operatorname{Op}(b_\varepsilon)$, $E_\varepsilon=\operatorname{Op}(e_\varepsilon)$, $F_\varepsilon=\operatorname{Op}(f_\varepsilon)$, 
we have 
\begin{align}
	\begin{split} 
	A_\bullet &\in L^\infty([0,1]_\varepsilon ; \Psi_{\mathrm{leC}}^{-\infty,s,\varsigma,l,\ell}(X)),  \\
	B_\bullet &\in  L^\infty([0,1]_\varepsilon ; \Psi_{\mathrm{leC}}^{-\infty,(s-1)/2,(\varsigma-3)/2,(l-1)/2,(\ell-3)/2}(X)),\\  
	E_\bullet &\in  L^\infty([0,1]_\varepsilon ; \Psi_{\mathrm{leC}}^{-\infty,(s-1)/2,(\varsigma-3)/2,-\infty,-\infty}(X)),\\  
	F_\bullet &\in L^\infty([0,1]_\varepsilon ; \Psi_{\mathrm{leC}}^{-\infty,s-1,\varsigma-1,l,\ell}(X)), 
	\end{split} 
\end{align} 
and
\begin{multline}
	-i [\Re \tilde{P},A_\varepsilon]- \{\Im \tilde{P},A_\varepsilon\} =  -2 \underline{\delta}A_{\varepsilon}  \Lambda_{1/2,-(s+1)/2,-(\varsigma+3)/2,-(l+1)/2,-(\ell+3)/2}^2 A_{\varepsilon} - B_\varepsilon^* B_\varepsilon  + E_\varepsilon^* E_\varepsilon  \\  + F_\varepsilon^* \tilde{P} +R_\varepsilon
	\label{eq:misc_l29}
\end{multline}
for some $R_\bullet \in L^\infty([0,1]_\varepsilon;\Psi_{\mathrm{leC}}^{-\infty,s-2,\varsigma-4,l-1,\ell-3}(X))$. 
Moreover, we necessarily have 
\begin{multline} 
	\operatorname{WF}'_{L^\infty,\mathrm{leC}}(A_\bullet),\operatorname{WF}'_{L^\infty,\mathrm{leC}}(B_\bullet),\operatorname{WF}'_{L^\infty,\mathrm{leC}}(F_\bullet),\\ \operatorname{WF}'_{L^\infty,\mathrm{leC}}(E_\bullet),\operatorname{WF}'_{L^\infty,\mathrm{leC}}(R_\bullet)\subset \operatorname{supp} \chi(\tilde{p}^{2,0,-2,-1,-3})\chi(\rho) \psi, 
\end{multline}  
where the last of these inclusions (the one for $R_\bullet$) follows from the one for $A_\bullet$ and 
$\operatorname{WF}'_{L^\infty,\mathrm{leC}}([\tilde{P},A_\bullet]) \subset \operatorname{WF}'_{L^\infty,\mathrm{leC}}(A_\bullet)$.

For each $m_0,s_0,\varsigma_0,l_0,\ell_0 \in \bbR$, there exist some $K_{1,0},K_{3,0}>0$ (dependent on $m_0$, $s_0$, $\varsigma_0$, $l_0$, $\ell_0$ and $m,s,\varsigma,l,\ell$) such that, given $\{u(-;\sigma)\}_{\sigma>0}  \subset \calS'(X)$
with $u(-;0) \in H_{\mathrm{scb}}^{m,\varsigma,\ell}(X_{1/2})$ and $u(-;\sigma) \in H_{\mathrm{scb}}^{m,s,l}(X)$ for all $\sigma>0$, 
if we take $K_1>K_{1,0},K_3>K_{3,0}$ in the construction above then it is the case that (for any $\varepsilon>0$, and for each $\sigma>0$, implicit in the notation),
\begin{equation}
		2  \Im \langle \tilde{P} u , A_\varepsilon u \rangle_{L^2} =   -\langle \{\Im \tilde{P}, A_\varepsilon \}u,u\rangle_{L^2} + i \langle [\Re \tilde{P}(\sigma),A_\varepsilon]u,u \rangle_{L^2} ,
		\label{eq:misc_12j}
\end{equation}
where the pairings above are well-defined distributional pairings (with the left argument of each inner product in the dual Sobolev space to a Sobolev space in which the right argument lies). 
Applying \cref{eq:misc_l29}  to $\{u(-;\sigma)\}_{\sigma>0}$ as above and pairing against $u$ (and taking $K_1,K_3$ large enough), we have  
\begin{multline}
	2 \Im \langle \tilde{P}u,A_\varepsilon u \rangle_{L^2} = -\lVert B_\varepsilon u \rVert^2_{L^2} + \lVert E_\varepsilon u \rVert_{L^2}^2 + \langle \tilde{P} u, F_\varepsilon u \rangle_{L^2} + \langle R_\varepsilon u,u \rangle_{L^2} \\ - 2\underline{\delta} \lVert x\Lambda_{1/2,-(s+1)/2,-(\varsigma+3)/2,-(l+1)/2,-(\ell+3)/2} A_\varepsilon u \rVert^2_{L^2}.  \label{eq:misc_0kk}
\end{multline} 
\Cref{eq:misc_0kk} implies
\begin{multline}
	\lVert B_\varepsilon u \rVert_{L^2}^2 + 2\underline{\delta}  \lVert \Lambda_{1/2,-(s+1)/2,-(\varsigma+3)/2,-(l+1)/2,-(\ell+3)/2} A_{\varepsilon} u \rVert^2_{L^2} \\ \leq 2| \langle \tilde{P}u,A_\varepsilon u \rangle_{L^2}| + | \langle \tilde{P}u,F_\varepsilon u \rangle_{L^2}| + | \langle R_\varepsilon u, u \rangle_{L^2}| + \lVert E_\varepsilon u  \rVert_{L^2}^2 .
	\label{eq:misc_k64}
\end{multline}

Suppose $G \in \Psi^{-\infty,0,0,0,0}_{\mathrm{leC}}(X)$ is self-adjoint and such that \begin{equation} 
	\operatorname{WF}_{\mathrm{leC}}^{\prime0,0}(1-G) \cap \operatorname{supp} \chi(\tilde{p}^{2,0,-2,-1,-3})\chi(\rho) \psi = \varnothing.
\end{equation}  
We now estimate the terms in \cref{eq:misc_k64} as in the propagation estimate, except we now must keep track of orders at $\mathrm{bf}$, $\mathrm{tf}$: 
\begin{itemize}
	\item Writing $\tilde{P} = (1-G)\tilde{P}+G\tilde{P}$ 
	we have, for each $N\in \bbN$ (and $K_1,K_3$ large enough), 
	\begin{equation}
		 |\langle \tilde{P} u,A_{\varepsilon} u \rangle_{L^2} | \leq  | \langle G\tilde{P} u,A_{\varepsilon} u \rangle_{L^2} |+ |\langle \tilde{P} u,(1-G)A_{\varepsilon} u \rangle_{L^2} |.
	\end{equation} 
	By \Cref{lem:duality}, for any $\overline{\delta}>0$,  
	\begin{align}
		\begin{split} 
		| \langle G\tilde{P} u,A_{\varepsilon} u \rangle_{L^2} | 
		 &\preceq \overline{\delta}^{-1}\lVert G \tilde{P} u \rVert_{\calY_N}^2 + \overline{\delta}\lVert A_\varepsilon u \rVert_{\calY_{N}^*}^2  + \overline{\delta} \lVert u \rVert_{\bar{\calE}_N}^2  \\
		&\preceq \overline{\delta}^{-1}\lVert G \tilde{P} u \rVert_{\calY_N}^2 + \overline{\delta}\lVert A_\varepsilon u \rVert_{\calY_{*,N}}^2   \\
		&\preceq \overline{\delta}^{-1} \lVert G \tilde{P} u \rVert_{\calY_N}^2 + \overline{\delta}\lVert \Lambda_{1/2,-(s+1)/2,-(\varsigma+3)/2,-(l+1)/2,-(\ell+3)/2} A_\varepsilon u \rVert_{L^2}^2 + \overline{\delta} \lVert u \rVert_{\bar{\calE}_N}^2 
		\end{split} 
	\end{align}
	(where the constant does not depend on $\overline{\delta}$) 
	for	
	\begin{align}
		\begin{split}
			\calY_N&=H_{\mathrm{leC}}^{-N,(s+1)/2,(\varsigma+3)/2,(l+1)/2,(\ell+3)/2}(X),\\ \calY_{*,N}&=H_{\mathrm{leC}}^{-N,-(s+1)/2,-(\varsigma+3)/2,-(l+1)/2,-(\ell+3)/2}(X), \\
			\bar{\calE}_N &= H_{\mathrm{leC}}^{-N,-N,-N,-N,-N}(X). 
		\end{split}
	\end{align}
	On the other hand, noting that $ (1-G)A_{\bullet} \in L^\infty ([0,1]_\varepsilon ; \Psi_{\mathrm{leC}}^{-\infty, -\infty,-\infty,l/2,\ell/2}(X))$,
	\begin{align}
		\begin{split} 
		|\langle \tilde{P} u,(1-G)A_{\varepsilon} u \rangle_{L^2} | &\preceq \lVert \tilde{P} u \rVert_{H_{\mathrm{leC}}^{-N_0,-N_0,-N_0,\frac{l+1}{2},\frac{\ell+3}{2}} }^2 + \lVert (1-G)A_\varepsilon u \rVert_{H_{\mathrm{leC}}^{N_0,N_0,N_0,-\frac{l+1}{2},-\frac{\ell+3}{2}} }^2 \\
		&\preceq \lVert \tilde{P} u \rVert_{H_{\mathrm{leC}}^{-N_0,-N_0,-N_0,\frac{l+1}{2},\frac{\ell+3}{2}} }^2 + \lVert (1-G)A_\varepsilon u \rVert_{H_{\mathrm{leC}}^{-N_0,-N_0,-N_0,-\frac{l+1}{2},-\frac{\ell+3}{2}} }^2 + \lVert u \rVert_{\bar{\calE}_{N_0}}^2 \\
		&\preceq \lVert u \rVert_{H_{\mathrm{leC}}^{-N,-N,-N,\frac{l-1}{2},\frac{\ell-3}{2}} } ^2
		\end{split} 
	\end{align}
	for $N_0$ sufficiently large (relative to $N$). So, $|\langle \tilde{P} u,(1-G)A_{\varepsilon} u \rangle_{L^2} | \preceq \lVert u \rVert^2_{\calE_N} $ for $\calE_N = H_{\mathrm{leC}}^{-N,-N,-N,\frac{l-1}{2},\frac{\ell-3}{2}}(X)$. 
	
	Thus, taking $\overline{\delta}$ sufficiently small (relative to $\underline{\delta})$, 
	\begin{equation}
		2|\langle \tilde{P} u,A_{\varepsilon} u \rangle_{L^2} | - 2\underline{\delta}  \lVert \Lambda_{1/2,-(s+1)/2,-(\varsigma+3)/2,-(l+1)/2,-(\ell+3)/2} A_{\varepsilon} u \rVert^2_{L^2} \\ \preceq \lVert G \tilde{P} u \rVert_{\calY_N}^2 + \lVert u \rVert_{\calE_N}^2 .
	\end{equation}
	\item Similarly, $|\langle \tilde{P} u , F_\varepsilon u \rangle_{L^2}| \preceq \lVert G \tilde{P} u \rVert_{\calY_N}^2 + \lVert F_\varepsilon u\rVert_{\calY_{*,N_0}}^2 + \lVert u \rVert_{\calE_N}^2$, for any $N_0\in \bbN$. 
	
	We have $\operatorname{WF}_{L^\infty,\mathrm{leC}}(F_\bullet) \cap \operatorname{Char}_{\mathrm{leC}}^{2,0,-2,-1,-3}(\tilde{P})=\varnothing$, so we deduce from elliptic regularity that 
	\begin{equation}
		\lVert F_\varepsilon u \rVert^2_{\calY_{*,N_0}} \preceq \lVert G \tilde{P} u\rVert^2_{\calY_N} + \lVert u \rVert_{\calE_N}^2  
	\end{equation}
	for $N_0$ sufficiently large (relative to $N$). Thus, 
	\begin{equation}
		|\langle \tilde{P} u , F_\varepsilon u \rangle_{L^2}|  \preceq  \lVert G \tilde{P} u\rVert^2_{\calY_N} + \lVert u \rVert_{\calE_N}^2 . 
	\end{equation}
	\item $|\langle R_\varepsilon u ,u \rangle_{L^2} | \preceq \lVert G u \rVert_{\calZ_N}^2 + \lVert u \rVert_{\calE_N}^2$ for 
	\begin{equation}
		\calZ_N = H_{\mathrm{leC}}^{-N,(s-2)/2,(\varsigma-4)/2,(l-1)/2,(\ell-3)/2}(X). 
	\end{equation}
\end{itemize}
Thus,
\begin{align}
	\lVert B_\varepsilon u \rVert_{L^2}^2 &\preceq \lVert G\tilde{P} u \rVert_{\calY_N}^2 + \lVert G u \rVert_{\calZ_N}^2 + \lVert E_\varepsilon u \rVert^2_{L^2} + \lVert u \rVert_{\calE_N}^2, 
	\intertext{i.e., letting $\calX_N = H_{\mathrm{leC}}^{-N,(s-1)/2,(\varsigma-3)/2,(l-1)/2,(\ell-3)/2}(X)$ and $\bar{\calX}_N = H_{\mathrm{leC}}^{-N,(s-1)/2,(\varsigma-3)/2,-N,-N}(X)$, } 
	\lVert \hat{B}_\varepsilon u \rVert_{\calX_N}^2 &\preceq \lVert G\tilde{P} u \rVert_{\calY_N}^2 + \lVert G u \rVert_{\calZ_N}^2 + \lVert \hat{E}_\varepsilon u \rVert^2_{\bar{\calX}_N} + \lVert u \rVert_{\calE_N}^2, \\
	\lVert \hat{B}_\varepsilon u \rVert_{\calX_N} &\preceq \lVert G\tilde{P} u \rVert_{\calY_N} + \lVert G u \rVert_{\calZ_N} + \lVert \hat{E}_\varepsilon u \rVert_{\bar{\calX}_N} + \lVert u \rVert_{\calE_N}, 
	\label{eq:misc_9i9}
\end{align}
where
\begin{align}
	\hat{B}_\varepsilon &= \Lambda_{0,-(s-1)/2,-(\varsigma-3)/2,-(l-1)/2,-(\ell-3)/2} B_\varepsilon \\
	\hat{E}_\varepsilon &= \Lambda_{0,-(s-1)/2,-(\varsigma-3)/2,0,0}E_\varepsilon, 
\end{align}
so that $\hat{B}_\bullet,\hat{E}_\bullet \in \Psi_{\mathrm{leC}}^{0,0,0,0,0}(X)$. 

\begin{proposition}
	Suppose that $G_0,G_1 \in \Psi_{\mathrm{leC}}^{-\infty,0,0,0,0}(X)$, $G_2 \in \Psi_{\mathrm{leC}}^{-\infty,0,0,-\infty,-\infty}$ (with essential support away from $\mathrm{bf}\cup\mathrm{tf}$) satisfy
	\begin{enumerate}
		\item $\operatorname{WF}_{\mathrm{leC}}^{\prime}(G_2) \cap \calR_+ =\varnothing$ and 
		\begin{equation} 
			\operatorname{WF}_{\mathrm{leC}}^{\prime0,0}(G_1) \cap \calR_+ =\varnothing,
		\end{equation}
		\item $\calR, \operatorname{WF}^{\prime 0,0}_{\mathrm{leC}}(G_1) \subset \operatorname{Ell}^{0,0,0,0,0}_{\mathrm{leC}}(G_0)$, 
		\item there exist $\Theta_1,\Theta_2 \in (0,\pi)$ with $\Theta_1<\Theta_2$ such that $\operatorname{Ell}^{0,0,0,0,0}_{\mathrm{leC}}(G_2) \supseteq \calP[\Theta_1,\Theta_2]$ and 
		\begin{equation}
			\operatorname{Ell}_{\mathrm{leC}}^{0,0,0,0,0}(G_0) \supset  \calR\cup \bigcup_{\Theta_3\in (\Theta_2,\pi)}\calP[\Theta_1,\Theta_3] 
		\end{equation}
	\end{enumerate} 
	Then, for any $\Sigma>0$, $N\in \bbN$, and $m,s,\varsigma,l,\ell \in \bbR$ with $l< -1/2$, $\ell< -3/2$, and $\varsigma\leq\ell +s-l$, there exists a constant 
	\begin{equation} 
		C=C(\tilde{P},G_0,G_1,G_2,G_3, \Sigma,N,m,s,\varsigma,l,\ell)>0
	\end{equation} such that  
	\begin{equation}
		\lVert G_1 u \rVert_{H_{\mathrm{leC}}^{m,s,\varsigma,l,\ell}} \leq C\Big[ \lVert G_0 \tilde{P} u \rVert_{H_{\mathrm{leC}}^{-N,s+1,\varsigma+3,l+1,\ell+3} } + \lVert G_2 u \rVert_{H_{\mathrm{leC}}^{-N,s,\varsigma,-N,-N}} + \lVert  u \rVert_{H_{\mathrm{leC}}^{-N,-N,-N,l,\ell}}\Big]
		\label{eq:misc_fin}
	\end{equation}
	holds for all $u \in \calS'(X)$ and $\sigma\in [0,\Sigma]$. 
\end{proposition}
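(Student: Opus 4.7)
The symbolic construction preceding the statement has already produced the key commutants $a^{(\varepsilon)}$, $b_\varepsilon$, $e_\varepsilon$, $f_\varepsilon$, $r_\varepsilon$ whose quantizations satisfy the identity \cref{eq:misc_l29}, together with the inequality \cref{eq:misc_9i9}
\begin{equation*}
\lVert \hat{B}_\varepsilon u \rVert_{\calX_N} \preceq \lVert G\tilde{P}u \rVert_{\calY_N} + \lVert Gu \rVert_{\calZ_N} + \lVert \hat{E}_\varepsilon u \rVert_{\bar{\calX}_N} + \lVert u \rVert_{\calE_N},
\end{equation*}
for any self-adjoint $G\in\Psi_{\mathrm{leC}}^{-\infty,0,0,0,0}(X)$ with $1-G$ essentially supported away from the $L^\infty$-essential supports of the commutant symbols. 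My plan is to run this in three logical stages: regularization, Banach--Alaoglu limit, and a microlocal decomposition of the right-hand side terms using the estimates of \S\ref{subsection:ellipticity}, \S\ref{subsec:propagation}, and \S\ref{subsec:rp} (at $\calR_+$).

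First, I choose $\chi,\psi$ appropriately in the construction so that the $L^\infty$-essential supports of $a^{(\bullet)}$, $b_\bullet$, $e_\bullet$, $f_\bullet$, $r_\bullet$ all lie in $\operatorname{Ell}_{\mathrm{leC}}^{0,0,0,0,0}(G_0)$ and so that the supports of $b_\bullet$, $f_\bullet$, $r_\bullet$ lie in a neighborhood of $\calR$ on which $G_1$ is microlocally controlled by $\hat{B}_0$ via elliptic regularity. The regularizing parameters $K_1,K_3$ are taken large enough (depending on $m,s,\varsigma,l,\ell,N$) that the formal integration by parts \cref{eq:misc_12j} is justified for $u\in\calS'(X)$ satisfying the finiteness assumptions implicit in \cref{eq:misc_fin}, using the standard argument of \cite[\S4.7]{VasyGrenoble} applied $\sigma$-wise (for $\sigma>0$ on $X$, and for $\sigma=0$ on $X_{1/2}$). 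Then I pass to the limit $\varepsilon\to 0^+$ using the Banach--Alaoglu argument exactly as in the proof of \Cref{prop:R+}, obtaining the $\varepsilon=0$ version of \cref{eq:misc_9i9}.

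Next I control each term on the right-hand side:
\begin{itemize}
\item Choose $G$ with $\operatorname{WF}'_{\mathrm{leC}}(G)\subset\operatorname{Ell}_{\mathrm{leC}}^{0,0,0,0,0}(G_0)$, so that elliptic regularity gives $\lVert G\tilde{P}u\rVert_{\calY_N}\preceq \lVert G_0\tilde{P}u\rVert_{\calY_N}+\lVert u\rVert_{\calE_N}$.
\item The term $\lVert Gu\rVert_{\calZ_N}$ has orders $(s-2)/2,(\varsigma-4)/2$ at $\mathrm{sf},\mathrm{ff}$ (strictly smaller than $s,\varsigma$), so an induction on the sf- and ff-orders reduces it to a background term, eventually bottoming out in $\lVert u\rVert_{\calE_N}$; the $\mathrm{bf},\mathrm{tf}$ orders of $\calZ_N$ are $(l-1)/2,(\ell-3)/2$ which are bounded by $l,\ell$ since $l,\ell<0$, so they are absorbed into the background term without losing independent b-orders.
\item The essential support of $\hat{E}_\bullet$ is a compactly contained annulus around $\calR$ in $\operatorname{Char}_{\mathrm{leC}}^{2,0,-2,-1,-3}(\tilde{P})$, disjoint from both $\calR$ and $\calR_+$. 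I control $\hat{E}_0 u$ by propagating regularity from $\calP[\Theta_1,\Theta_2]$, where $G_2$ is elliptic, along the Hamiltonian flow toward $\calR$: by \Cref{prop:propagation_estimate}, there is an estimate $\lVert \hat{E}_0 u\rVert_{\bar{\calX}_N}\preceq\lVert G_0\tilde{P}u\rVert_{\bar{\calY}_N}+\lVert G_2 u\rVert_{H_{\mathrm{leC}}^{-N,s,\varsigma,-N,-N}}+\lVert u\rVert_{\calE_N}$, provided the flow from $\operatorname{Ell}(G_2)$ covers $\operatorname{WF}'_{L^\infty,\mathrm{leC}}(\hat{E}_\bullet)$ before running into $\calR_+$; if necessary, the source estimate \Cref{prop:R+} at $\calR_+$ (with regularity threshold $s_0=(s-1)/2,\varsigma_0=(\varsigma-3)/2$, which still satisfy $s_0>-1/2$, $\varsigma_0>-3/2$) serves as the backstop supplying initial regularity for the propagation.
\end{itemize}
Finally, since $\operatorname{Ell}_{\mathrm{leC}}^{0,0,0,0,0}(\hat{B}_0)\supset\calR$ and $\operatorname{WF}_{\mathrm{leC}}'(G_1)$ meets the characteristic set only in a set elliptic for $\hat{B}_0$ (after, if necessary, bridging a distant region by an additional application of \Cref{prop:propagation_estimate} with $G_2$ as the side datum), an elliptic estimate upgrades the $\hat{B}_0$-control to $G_1$-control, giving \cref{eq:misc_fin}.

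\textbf{Main obstacle.} The most delicate part is the coupling between the low-regularity estimate at $\calR$ and the source estimate at $\calR_+$: since $l<-1/2$ and $\ell<-3/2$ means we are below the threshold, we cannot propagate arbitrary regularity into $\calR_+$, yet the $\hat{E}_\varepsilon$-term forces us to consider a neighborhood of $\calR$ meeting the outgoing flow from $\calR_+$. The resolution is that $\hat{E}_\bullet$ is \emph{supported away from} $\calR_+$, so its control only requires propagation from $\operatorname{Ell}(G_2)$ along a compact flow segment, and the background norm at threshold orders $(s-1)/2,(\varsigma-3)/2$ demanded by \Cref{prop:R+} is automatically bounded by the hypothesis that $\lVert G_2 u\rVert_{H_{\mathrm{leC}}^{-N,s,\varsigma,-N,-N}}<\infty$. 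A secondary technical point is the uniform justification of the integration by parts down to $\sigma=0$; this requires the regularization with both $(1+\varepsilon x^{-1})^{-K_1}$ and $(1+\varepsilon\varrho_{\mathrm{df}_{00}}^{-1})^{-K_3}$ (rather than only one) so that the approximants lie in a uniform family of residual leC-operators, but since we only need pointwise (in $\sigma$) justification this reduces to the scb-analogues already handled in \cite{VasyGrenoble,VasyLA}.
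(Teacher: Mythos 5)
Your outline follows the paper's architecture, but there is a genuine gap in the bookkeeping of b-orders at $\mathrm{bf},\mathrm{tf}$, and it is precisely the place where the paper's proof makes a deliberate move that your proposal does not.

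The pre-proposition inequality \cref{eq:misc_9i9} was derived with a commutant built from the weight $a_0 = x^{-l}(\sigma^2+\mathsf{Z}x)^{-\ell/2+l}\varrho_{\mathrm{df}_{00}}^{-s+l}\varrho_{\mathrm{ff}}^{-\varsigma+\ell+s-l}$, and as a consequence the spaces $\calX_N$, $\calZ_N$, $\calE_N$ appearing in it have b-orders $(l-1)/2$ at $\mathrm{bf}$ and $(\ell-3)/2$ at $\mathrm{tf}$ (not $l$ and $\ell$). The proposition's conclusion requires the background term to live in $H_{\mathrm{leC}}^{-N,-N,-N,l,\ell}$ -- with b-orders exactly $l,\ell$. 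You assert that $(l-1)/2,(\ell-3)/2$ are ``bounded by $l,\ell$ since $l,\ell<0$, so they are absorbed into the background term without losing independent b-orders,'' but this is false in the required generality: $(l-1)/2\le l$ iff $l\ge -1$, and $(\ell-3)/2\le \ell$ iff $\ell\ge -3$, while the hypotheses only give $l<-1/2$ and $\ell<-3/2$. For $l\le -1$ or $\ell\le -3$ the inequality goes the wrong way, and there is no norm comparison either way unless equality holds. The paper instead applies \cref{eq:misc_9i9} after substituting $s\mapsto 2s+1$, $\varsigma\mapsto 2\varsigma+3$, $l\mapsto 2l+1$, $\ell\mapsto 2\ell+3$ in the construction, which is exactly engineered so that $(l_0-1)/2=l$ and $(\ell_0-3)/2=\ell$, i.e.\ so that the background and iteration spaces have b-orders $l,\ell$ on the nose, matching the target. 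The threshold hypotheses $l<-1/2$, $\ell<-3/2$, $\varsigma\le\ell+s-l$ are equivalent to $l_0<0$, $\ell_0<0$, $\varsigma_0\le\ell_0+s_0-l_0$, which is what the commutator construction actually needs. Without this reparametrization your induction bottoms out in $\lVert u\rVert_{H_{\mathrm{leC}}^{-N,-N,-N,(l-1)/2,(\ell-3)/2}}$, which cannot be bounded by $\lVert u\rVert_{H_{\mathrm{leC}}^{-N,-N,-N,l,\ell}}$ on the whole parameter range.

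A smaller point: your parenthetical invocation of the $\calR_+$ source estimate (\Cref{prop:R+}) with thresholds $s_0=(s-1)/2>-1/2$, $\varsigma_0=(\varsigma-3)/2>-3/2$ implicitly requires $s>0$ and $\varsigma>0$, which are not among the hypotheses of the statement. You do eventually acknowledge that the $\calR_+$ estimate is not needed because $\hat{E}_\bullet$ is constructed to be essentially supported away from $\calR_+$ -- and indeed the paper controls $\hat E_\varepsilon u$ purely with the propagation estimate \Cref{prop:propagation_estimate}, taking side data from $G_2$ -- but as written the ``backstop'' remark introduces an unstated constraint and should be dropped.
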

\begin{proof} 
	We can assume without loss of generality that 
	\begin{equation}
		\calR\subseteq \operatorname{Ell}^{0,0,0,0,0}_{\mathrm{leC}}(G_1)\label{eq:misc_rek}
	\end{equation} and 
	\begin{equation}
		\operatorname{Ell}^{0,0,0,0,0}_{\mathrm{leC}}(G_0) \supseteq \operatorname{WF}'_{\mathrm{leC}}(G_2).
	\end{equation} 
	
	Let $s_0 = 2s+1$, $l_0 = 2l+1$, and $\ell_0 = 2\ell+3$, and $\varsigma_0 = 2 \varsigma+3$. The condition $l<-1/2$ is equivalent to $l_0<0$, $\ell < -3/2$ is equivalent to $\ell_0<0$, and $\varsigma\leq \ell+s-l$ is equivalent to $\varsigma_0\leq \ell_0+s_0-l_0$. We can therefore apply \cref{eq:misc_9i9} with $s_0,\varsigma_0,l_0,\ell_0$ in place of what we called ``$s,\varsigma,l,\ell$'' there. Thus, for each $\varepsilon>0$, 
	\begin{equation}
		\lVert \hat{B}_\varepsilon u \rVert_{\calX_N} \preceq \lVert G\tilde{P} u \rVert_{\calY_N} + \lVert G u \rVert_{\calZ_N} + \lVert \hat{E}_\varepsilon u \rVert_{\bar{\calX}_N} + \lVert u \rVert_{\calE_N},
		\label{eq:misc_alk}
	\end{equation}
	where $G,\hat{B}_\bullet,\hat{E}_\bullet$ are as above and now 
	\begin{equation} 
		\calX_N = H_{\mathrm{leC}}^{-N,s,\varsigma,l,\ell}(X),
	\end{equation} 
	$\calZ_N = H_{\mathrm{leC}}^{-N,s-1/2,\varsigma-1/2,l,\ell}(X)$, $\bar{\calX}_N = H_{\mathrm{leC}}^{-N,s,\varsigma,-N,-N}(X)$, $\calY_N= H_{\mathrm{leC}}^{-N,s+1,\varsigma+3,l+1,\ell+3}(X)$,
	and 
	\begin{equation} 
		\calE_N = H_{\mathrm{leC}}^{-N,-N,-N,l,\ell}(X).
	\end{equation} 
	If necessary, we can retroactively replace $\chi$, $\psi$ such that 
	\begin{equation} \operatorname{WF}^{\prime}_{L^\infty,\mathrm{leC}}(E_\bullet),\operatorname{WF}^{\prime0,0}_{L^\infty,\mathrm{leC}}(B_\bullet) \subset \operatorname{Ell}^{0,0,0,0,0}_{\mathrm{leC}}(G_0).
	\end{equation}  
	
	We can apply the propagation estimate \Cref{prop:propagation_estimate} to bound 
	\begin{equation}
		\lVert \hat{E}_\varepsilon u \rVert_{\bar{\calX}_N} \preceq \lVert G_0 \tilde{P} u  \rVert_{\calY_N}+ \lVert G_2 u \rVert_{\bar{\calX}_N} + \lVert u \rVert_{\calE_N} .
	\end{equation}
	By \cref{eq:misc_rek}, 
	we can now retroactively choose $G$ (for appropriate $\chi,\psi$) such that $\operatorname{WF}_{\mathrm{leC}}^{\prime0,0}(G)\subset \operatorname{Ell}^{0,0,0,0,0}_{\mathrm{leC}}(G_0) \cap \operatorname{Ell}_{\mathrm{leC}}^{0,0,0,0,0}(G_1)$, so that 
	\begin{equation}
		\lVert G \tilde{P} u \rVert_{\calY_N} \preceq \lVert G_0 \tilde{P} u \rVert_{\calY_N} + \lVert u \rVert_{\calE_N} 
	\end{equation}
	and $\lVert G u \rVert_{\calZ_N} \preceq \lVert G_1  u \rVert_{\calZ_N} + \lVert u \rVert_{\calE_N}$.

	The estimate \cref{eq:misc_alk} therefore implies $\lVert \hat{B}_\varepsilon u \rVert_{\calX_N}\preceq \lVert G_0 \tilde{P} u \rVert_{\calY_N} + \lVert G_1 u \rVert_{\calZ_N}+ \lVert G_2 u \rVert_{\bar{\calX}_N} + \lVert u \rVert_{\calE_N}$. Via the compactness argument utilized in the proof of the propagation estimate and previous radial point estimate, $\lVert \hat{B}_{0} u(-;\sigma) \rVert_{\calX_N}\leq  \liminf_{\varepsilon\to0^+} \lVert \hat{B}_\varepsilon u(-;\sigma) \rVert_{\calX_N}$ for each $\sigma \geq 0$, so 
	\begin{align}
		\lVert \hat{B}_0 u \rVert_\calX&\preceq \lVert G_0 \tilde{P} u \rVert_{\calY_N} + \lVert G_1 u \rVert_{\calZ_N}+ \lVert G_2 u \rVert_{\bar{\calX}_N} + \lVert u \rVert_{\calE_N}. 
		\label{eq:misc_g31}
		\intertext{Unlike $\hat{B}_\varepsilon$ for $\varepsilon>0$, $\smash{\operatorname{Ell}^{0,0,0,0,0}_{\mathrm{leC}}}(\hat{B}_0)\supset \calR$. Thus, $\lVert G_1 u \rVert_{\calX_N}\preceq \lVert \hat{B}_0 u \rVert_{\calX_N}  + \lVert G_0 \tilde{P} u \rVert_{\calY_N}  + \lVert G_2 u \rVert_{\bar{\calX}_N} + \lVert u \rVert_{\calE_N}$, using propagation/elliptic estimates to control $G_1 u$ off $\calR$ in terms of $G_0 \tilde{P}u$ and $G_2u$ (and a global $\calE_N$ error). Substituting \cref{eq:misc_g31} into the right-hand side, we get}
		\lVert G_1 u \rVert_{\calX_N} &\preceq \lVert G_0 \tilde{P} u \rVert_{\calY_N}  + \lVert G_1 u \rVert_{\calZ_N}+ \lVert G_2 u \rVert_{\bar{\calX}_N} + \lVert u \rVert_{\calE_N}. 
		\label{eq:misc_oi1}
	\end{align}
	Since the leC- Sobolev spaces $\calX_N,\calY,\calZ_N$ in \cref{eq:misc_oi1} get weaker as $s,\varsigma,l,\ell$ decrease, we can inductively use the family of estimates \cref{eq:misc_oi1} to bound the $\lVert G_1 u \rVert_{\calZ_N}$ term on the right-hand side: for all $N\in \bbN$, 
	\begin{align}
		\lVert G_1 u \rVert_{\calX_N} &\preceq \lVert G_0 \tilde{P} u \rVert_{\calY_N}  + \lVert G_1 u \rVert_{ \calE_N}+ \lVert G_2 u \rVert_{\bar{\calX}_N} + \lVert u \rVert_{\calE_N} \\
		\lVert G_1 u \rVert_{\calX_N} &\preceq \lVert G_0 \tilde{P} u \rVert_{\calY_N}  + \lVert G_2 u \rVert_{\bar{\calX}_N} + \lVert u \rVert_{\calE_N}.
	\end{align}
	This implies \cref{eq:misc_fin}. 
\end{proof}
\subsection{Upshot}
\label{subsec:upshot}

Combining the estimates above (e.g. using \Cref{lem:combination}), we get:
\begin{theoremp}
	\label{thm:symbolic_main}
	Suppose that $G_1,G_2,G_3 \in \Psi^{0,0,0,0,0}_{\mathrm{leC}}(X)$ satisfy 
	\begin{itemize}
		\item $\operatorname{WF}_{\mathrm{leC}}^{\prime 0,0}(G_1), \operatorname{Char}_{\mathrm{leC}}^{2,0,-2,-1,-3}(\tilde{P}) \subseteq \operatorname{Ell}^{0,0,0,0,0}_{\mathrm{leC}}(G_2)$, 
		\item $\calR_+ \subseteq \operatorname{Ell}_{\mathrm{leC}}^{0,0,0,0,0}(G_3)$. 
	\end{itemize}
	For every $m,s,\varsigma,l, \ell,s_0, \varsigma_0 \in \bbR$ satisfying 
	\begin{equation} 
		l<-1/2, \qquad \ell <-3/2 , \qquad -1/2<s_0<s, \qquad -3/2 < \varsigma_0 < \varsigma \leq \ell+s-l, 
	\end{equation}
	there exists, for each $\Sigma>0$ and $N\in \bbN$, a constant $C=C(\tilde{P},G_1,G_2,G_3, \Sigma,N,m,s,\varsigma,l,\ell,s_0,\varsigma_0)>0$
	such that 
	\begin{equation}
		\lVert G_1 u \rVert_{H_{\mathrm{leC}}^{m,s,\varsigma,l,\ell}} \leq C \Big[\lVert G_2 \tilde{P} u \rVert_{H_{\mathrm{leC}}^{m-2,s+1,\varsigma+3,l+1,\ell+3}}  + \lVert G_3 u \rVert_{H_{\mathrm{leC}}^{-N,s_0,\varsigma_0,-N,-N} } + \lVert u \rVert_{H_{\mathrm{b,leC}}^{-N,l,\ell}} \Big]
	\end{equation}
	for all $u\in \calS'(X)$ and $\sigma \in [0,\Sigma]$ (in the usual strong sense that if the right-hand side is finite, then the left-hand side as well, and the stated inequality holds).
\end{theoremp}

It will also be useful to have the following refinement of \Cref{thm:symbolic_main}:
\begin{proposition}
	\label{prop:refined_combined}
	Given $G_1,G_2,m,s,\varsigma,l,\ell, s_0,\varsigma_0$ as in the setup of \Cref{thm:symbolic_main}, if in addition $\operatorname{Ell}_{\mathrm{leC}}^{0,0,0,0,0}(G_1)\supseteq \calR_+$, then there exists a constant 
	\begin{equation} 
		c=c(\tilde{P},G_1,G_2, \Sigma,N,m,s,\varsigma,l,\ell,s_0)>0
	\end{equation} 
	such that, for any $u\in \calS'(X)$,
	\begin{equation}
		\lVert G_1 u \rVert_{H_{\mathrm{leC}}^{m,s,\varsigma,l,\ell}} \leq c  \Big[\lVert G_2 \tilde{P} u \rVert_{H_{\mathrm{leC}}^{m-2,s+1,\varsigma+3,l+1,\ell+3}}   + \lVert u \rVert_{H_{\mathrm{b,leC}}^{-N,l,\ell}} \Big]
		\label{eq:misc_ufu}
	\end{equation}
	holds for all $\sigma \in [0,\Sigma]$ such that $\lVert G_1(\sigma) u  \rVert_{H_{\mathrm{leC}}^{-N,s_0,\varsigma_0,-N,-N}(X)(\sigma) } < \infty$. 
\end{proposition}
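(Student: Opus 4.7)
The plan is to derive the improved estimate from \Cref{thm:symbolic_main} by trading the $\lVert G_3 u\rVert$ term for a self-referential $\lVert G_1 u\rVert$ term that can then be eaten via interpolation, using the extra hypothesis $\calR_+\subseteq \operatorname{Ell}_{\mathrm{leC}}^{0,0,0,0,0}(G_1)$. Concretely, I would first pick auxiliary orders $s_0^* \in (s_0,s)$, $\varsigma_0^*\in (\varsigma_0,\varsigma)$ and an auxiliary $G_3\in \Psi^{0,0,0,0,0}_{\mathrm{leC}}(X)$ with $\calR_+ \subseteq \operatorname{Ell}^{0,0,0,0,0}_{\mathrm{leC}}(G_3)$ and (crucially, using the new hypothesis) $\operatorname{WF}'_{\mathrm{leC}}(G_3)\subseteq \operatorname{Ell}^{0,0,0,0,0}_{\mathrm{leC}}(G_1)$. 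Applying \Cref{thm:symbolic_main} at these intermediate orders yields a bound in which $\lVert G_3 u\rVert_{H^{-N,s_0^*,\varsigma_0^*,-N,-N}_{\mathrm{leC}}}$ appears; the elliptic parametrix construction for $G_1$ (which applies since $\operatorname{WF}'(G_3)\subseteq \operatorname{Ell}(G_1)$) lets me write $G_3 = H G_1 + R$ with $H\in \Psi^{0,0,0,0,0}_{\mathrm{leC}}$ and $R$ of arbitrarily low $m,s,\varsigma$ order (and zero order at bf, tf). Since $-N < l$ and $-N < \ell$ for $N$ large, the remainder contribution $\lVert R u\rVert$ is dominated by $\lVert u\rVert_{H^{-N',l,\ell}_{\mathrm{b,leC}}}$ via the space inclusions, leaving $\lVert G_1 u\rVert_{H^{-N,s_0^*,\varsigma_0^*,-N,-N}_{\mathrm{leC}}}$ as the essential leftover term.

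Next, I would apply the interpolation inequality \Cref{prop:interpolation} to $G_1 u$ between the target space $H^{m,s,\varsigma,l,\ell}_{\mathrm{leC}}$ (upper, valid since $-N<m$, $s_0^*<s$, $\varsigma_0^*<\varsigma$, $-N<l$, $-N<\ell$ for $N$ large) and $H^{-N-1,s_0,\varsigma_0,-N-1,-N-1}_{\mathrm{leC}}$ (lower), yielding for any $\epsilon > 0$
\begin{equation*}
\lVert G_1 u\rVert_{H^{-N,s_0^*,\varsigma_0^*,-N,-N}_{\mathrm{leC}}} \leq \epsilon \lVert G_1 u \rVert_{H^{m,s,\varsigma,l,\ell}_{\mathrm{leC}}} + C(\epsilon) \lVert G_1 u\rVert_{H^{-N,s_0,\varsigma_0,-N,-N}_{\mathrm{leC}}},
\end{equation*}
where the second term is finite by hypothesis. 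Taking $\epsilon$ small, the $\epsilon$-term is absorbed into the left-hand side — this absorption step is legitimate provided the LHS is a priori finite, which is not obvious from the stated hypothesis alone and must be justified via a regularization argument of the kind used in the proofs of \Cref{prop:propagation_estimate}, \Cref{prop:R+}: multiplying by $\phi_\varepsilon = (1+\varepsilon x^{-1})^{-K_1}(1+\varepsilon(\sigma^2+\mathsf{Z}x)^{-1/2})^{-K_2}$ for $K_1,K_2$ large, running the preceding chain uniformly in $\varepsilon$, and extracting a weak limit via Banach--Alaoglu. The outcome is a bound of the form $\lVert G_1 u\rVert_{H^{m,s,\varsigma,l,\ell}_{\mathrm{leC}}} \leq C'[\lVert G_2 \tilde P u\rVert_{H^{m-2,s+1,\varsigma+3,l+1,\ell+3}_{\mathrm{leC}}} + \lVert G_1 u\rVert_{H^{-N,s_0,\varsigma_0,-N,-N}_{\mathrm{leC}}} + \lVert u\rVert_{H^{-N,l,\ell}_{\mathrm{b,leC}}}]$.

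The final step — and the main obstacle — is eliminating the residual self-referential term $\lVert G_1 u\rVert_{H^{-N,s_0,\varsigma_0,-N,-N}_{\mathrm{leC}}}$ on the right-hand side. The plan here is to bootstrap: apply the estimate derived so far iteratively with the target indices replaced by $(-N,s_0^{(k)},\varsigma_0^{(k)},-N,-N)$ where $s_0^{(k)}, \varsigma_0^{(k)}$ interpolate between the hypothesis indices $(s_0,\varsigma_0)$ and $(s,\varsigma)$; at each stage one uses the previous estimate to bound the intermediate-order $G_1 u$ norm, eating a fixed fraction of the gap between $s_0^{(k)}$ and $s_0$ by another application of \Cref{prop:interpolation}. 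After finitely many steps the coefficient of the leftover term can be made to vanish, leaving only the forcing term and the weak $\lVert u\rVert_{H^{-N,l,\ell}_{\mathrm{b,leC}}}$ term. The hypothesis $\lVert G_1 u\rVert_{H^{-N,s_0,\varsigma_0,-N,-N}_{\mathrm{leC}}} < \infty$ is used throughout only as a qualitative finiteness condition, ensuring the iteration is well-defined; the resulting constant $c$ depends on $s_0$ (through the number of iterations and interpolation constants) but not on $\lVert G_1 u\rVert_{H^{-N,s_0,\varsigma_0,-N,-N}_{\mathrm{leC}}}$, which is exactly what \cref{eq:misc_ufu} claims.
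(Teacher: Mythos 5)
Your overall strategy — apply \Cref{thm:symbolic_main}, interpolate, absorb — is correct, and you rightly identify that the hypothesis $\lVert G_1 u\rVert_{H^{-N,s_0,\varsigma_0,-N,-N}_{\mathrm{leC}}}<\infty$ is needed only as a qualitative finiteness condition justifying the absorption step. However, there are two places where your execution goes astray, one cosmetic and one substantive.

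The cosmetic issue: introducing an auxiliary $G_3$ and decomposing $G_3 = HG_1 + R$ via a parametrix is unnecessary. Since $\calR_+ \subseteq \operatorname{Ell}_{\mathrm{leC}}^{0,0,0,0,0}(G_1)$, the operator $G_1$ itself already satisfies the hypothesis on $G_3$ in \Cref{thm:symbolic_main}, so one simply applies that theorem with $G_3 = G_1$. Likewise, the $\phi_\varepsilon$-regularization/Banach--Alaoglu machinery you propose for the finiteness issue is overkill: once you have a bound of the form $A \le C B + (1/2)A$, a clean casework argument (if a term of $B$ is infinite, the estimate is trivial; if all are finite, \Cref{thm:symbolic_main} itself gives $A<\infty$, so subtraction is legitimate) suffices.

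The substantive gap is the final "bootstrap". You apply \Cref{prop:interpolation} with the lower index set chosen as $(-N-1,s_0,\varsigma_0,-N-1,-N-1)$, which produces a residual term $C(\epsilon)\lVert G_1 u\rVert_{H^{-N,s_0,\varsigma_0,-N,-N}_{\mathrm{leC}}}$ that is \emph{not} dominated by $\lVert u\rVert_{H^{-N,l,\ell}_{\mathrm{b,leC}}}$ (note $H^{-N,l,\ell}_{\mathrm{b,leC}} = H^{-N,-N+l,-N+\ell,l,\ell}_{\mathrm{leC}}$ has sf/ff orders $-N+l,-N+\ell \ll s_0,\varsigma_0$ for $N$ large, so the inclusion fails). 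You then try to kill this residual by iterating. But this iteration does not terminate: each application of interpolation multiplies the residual's coefficient by $C(\epsilon)/(1-\alpha_k\epsilon)$, where $C(\epsilon)\to\infty$ as $\epsilon\to 0$, and the residual's orders never drop below $(-N+l,-N+\ell)$ in a way that would let it be absorbed into $\lVert u\rVert_{H^{-N,l,\ell}_{\mathrm{b,leC}}}$ after a bounded number of steps. So the claim that "after finitely many steps the coefficient of the leftover term can be made to vanish" is not correct. The fix is simple and requires no iteration: choose the \emph{lower} index set in the single application of \Cref{prop:interpolation} with sf, ff, bf, tf orders strictly below $(-N+l,-N+\ell,-N,-N)$; then the lower-norm term $\preceq \lVert u\rVert_{H^{-N,l,\ell}_{\mathrm{b,leC}}}$ directly, there is no $G_1 u$ residual at all, and the remaining $\epsilon\lVert G_1 u\rVert_{H^{m,s,\varsigma,l,\ell}_{\mathrm{leC}}}$ is absorbed using the qualitative finiteness argument.
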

\begin{proof}
	It suffices to consider the case $-N<m,l,\ell$. 
	
	Applying \Cref{thm:symbolic_main} with $G_3=G_1$, 
	\begin{equation}
		\lVert G_1 u \rVert_{H_{\mathrm{leC}}^{m,s,\varsigma,l,\ell}} \leq C'( \lVert G_2 \tilde{P} u \rVert_{H_{\mathrm{leC}}^{m-2,s+1,\varsigma+3,l+1,\ell+3}}  + \lVert G_1 u \rVert_{H_{\mathrm{leC}}^{-N,s_0,\varsigma_0,-N,-N} } + \lVert u \rVert_{H_{\mathrm{b,leC}}^{-N,l,\ell}}) 
		\label{eq:misc_utt}
	\end{equation}
	for some constant $C'>0$. 
	We now use the interpolation inequality \Cref{prop:interpolation}, which, for each $\epsilon>0$, allows us to bound 
	\begin{equation}
		\lVert G_1 u \rVert_{H_{\mathrm{leC}}^{-N,s_0,\varsigma_0,-N,-N} } \leq \epsilon \lVert G_1 u \rVert_{H_{\mathrm{leC}}^{m,s,\varsigma,l,\ell}} + C''(\epsilon) \lVert u \rVert_{H_{\mathrm{b,leC}}^{-N,l,\ell}}
		\label{eq:misc_uyy}
	\end{equation}
	for some $C''(\epsilon) =  C(\tilde{P},G_1,G_2, \Sigma,N,m,s,\varsigma,l,\ell,s_0,\varsigma_0,\epsilon)>0$. Taking $\epsilon < 1/2C'$, plugging \cref{eq:misc_uyy} into \cref{eq:misc_utt} yields 
	\begin{equation}
		\lVert G_1 u \rVert_{H_{\mathrm{leC}}^{m,s,\varsigma,l,\ell}} \leq C'( \lVert G_2 \tilde{P} u \rVert_{H_{\mathrm{leC}}^{m-2,s+1,\varsigma+3,l+1,\ell+3}}  + (1+C'')\lVert u \rVert_{H_{\mathrm{b,leC}}^{-N,l,\ell}}) + (1/2) \lVert G_1 u \rVert_{H_{\mathrm{leC}}^{m,s,\varsigma,l,\ell}}.
	\end{equation}
	If $\sigma \in [0,\Sigma]$ is such that
	\begin{equation} 
		\lVert G_1(\sigma) u(-;\sigma) \rVert_{H_{\mathrm{leC}}^{m,s,\varsigma,l,\ell}(\sigma)}<\infty,
	\end{equation} 	
	then we can subtract the last term on the right-hand side from both sides, getting \cref{eq:misc_ufu},
	\begin{equation}
		\lVert G_1 u \rVert_{H_{\mathrm{leC}}^{m,s,\varsigma,l,\ell}} \leq 2C'(1+C'')( \lVert G_2 \tilde{P} u \rVert_{H_{\mathrm{leC}}^{m-2,s+1,\varsigma+3,l+1,\ell+3}}  +\lVert u \rVert_{H_{\mathrm{b,leC}}^{-N,l,\ell}}). 
		\label{eq:misc_uuy}
	\end{equation}
	If $\lVert G_1(\sigma) u(-;\sigma) \rVert_{H_{\mathrm{leC}}^{-N,s_0,\varsigma_0,-N,-N}(X)(\sigma) } < \infty$, then we break into two cases:
	\begin{itemize}
		\item if one of $\lVert G_2 \tilde{P} u \rVert_{H_{\mathrm{leC}}^{m-2,s+1,\varsigma+3,l+1,\ell+3}}$, $\lVert u \rVert_{H_{\mathrm{b,leC}}^{-N,l,\ell}}$ is infinite, then \cref{eq:misc_ufu} holds trivially;
		\item if both are finite, then \Cref{thm:symbolic_main} implies that $\lVert G_1 u \rVert_{H_{\mathrm{leC}}^{m,s,\varsigma,l,\ell}}<\infty$, and therefore \cref{eq:misc_uuy} holds.
	\end{itemize} 
	We conclude that \cref{eq:misc_ufu} holds if we take $C=2C'(1+C'')$. 
\end{proof}

Taking $G_1=G_2=1$ in \Cref{prop:refined_combined}, we get the main claim \cref{eq:misc_gg1} at the beginning of this section:
\begin{propositionp}
	\label{prop:symbolic_final}
	For every $\Sigma>0$, $N\in \bbN$, and $m,s,\varsigma,l, \ell,s_0,\varsigma_0 \in \bbR$ satisfying  $l<-1/2<s_0<s$ and $\ell<-3/2<\varsigma_0<\varsigma\leq \ell+s-l$, there exists a constant $C=C(\tilde{P},\Sigma,N,m,s,\varsigma,l,\ell)>0$ such that 
	\begin{equation}
		\lVert u \rVert_{H_{\mathrm{leC}}^{m,s,\varsigma,l,\ell} } \leq C\Big[ \lVert \tilde{P} u \rVert_{H_{\mathrm{leC}}^{m-2,s+1,\varsigma+3,l+1,\ell+3}} + \lVert u \rVert_{H_{\mathrm{b,leC}}^{-N,l,\ell}} \Big]
	\end{equation}
	holds for all $u\in \calS'(X)$ and $\sigma \in [0,\Sigma]$ such that $\lVert u  \rVert_{H_{\mathrm{leC}}^{-N,s_0,\varsigma_0,-N,-N}(X)(\sigma)}<\infty$.  
\end{propositionp}

\section{Proof of main theorem}
\label{sec:mainproof}

Most of the results in this section are split among three subsections: 
\begin{enumerate}
	\item \S\ref{subsec:estimates}, where estimates regarding the ``leC-normal operator'' $\operatorname{N}(\tilde{P})$ (see \S\ref{sec:operator}) are proven,
	\item \S\ref{subsec:smoothness1}, where -- using the estimates from the previous subsection -- the conormality of the output of the conjugated resolvent family on $X^{\mathrm{sp}}_{\mathrm{res}}$ is established, along with smoothness (in terms of $E=\sigma^2$) at $\mathrm{zf}^\circ$ (with the terms in the Taylor series being conormal distributions on $\mathrm{zf}$) --- see \Cref{prop:grace} --- and 
	\item \S\ref{subsec:smoothness2}, where the proposition needed to upgrade the conormality established in the previous subsection to the existence of asymptotic expansions is proven.
\end{enumerate}

Central to this section is the analysis of the model problem 
\begin{equation}
	2 i \Big[ \hat{x} \partial_{\hat{x}} + \frac{  \hat{x} }{1+\hat{x}}\Big( k+\frac{1}{4} \Big)  + l + \frac{1}{2} \Big] u = f,  \label{eq:model}
\end{equation}
$u,f \in \calD'(\bbR^+_{\hat{x}})$, where $l,k \in \bbR$ will be required to satisfy some threshold inequalities. We record the exact solution to \cref{eq:model}: for any $l,k\in \bbR$, if $f \in \smash{C^\infty(\bbR^+)}$, then $u\in \calD'(\bbR^+)$ satisfies \cref{eq:model} if and only if 
\begin{equation}
	u(\hat{x}) = \hat{x}^{-l-1/2 } (1+\hat{x})^{-k-1/4} \Big[ c+ \frac{i}{2} \int_{\hat{x}}^1  \hat{x}_0^{l-1/2 } (1+\hat{x}_0)^{k+1/4} f(\hat{x}_0) \dd \hat{x}_0   \Big] 
	\label{eq:model_soln}
\end{equation}
for some $c \in \bbC$. 
The model problem above arises rewriting  
\begin{equation} 
	\tilde{\operatorname{N}}(\tilde{P})=x^{-l-n/2}(\sigma^2+\mathsf{Z}x)^{-k}\operatorname{N}(\tilde{P})x^{l+n/2}(\sigma^2+\mathsf{Z}x)^{k}
\end{equation} 
in terms of $\hat{x} = \mathsf{Z}x/\sigma^2$. Indeed, 
\begin{align}
	\begin{split} 
	x^{-1} (\sigma^2+\mathsf{Z} x)^{-1/2}\tilde{\operatorname{N}}(\tilde{P}) &= 2i \Big( x \partial_x +l + \frac{1}{2} \Big) + \frac{2  i \mathsf{Z}x}{\sigma^2+\mathsf{Z}x} \Big(k + \frac{1}{4} \Big)  \\
	&= 2i \Big[\hat{x} \partial_{\hat{x}} + \frac{  \hat{x} }{1+\hat{x}} \Big( k+\frac{1}{4} \Big)  +l + \frac{1}{2}\Big].
	\end{split} 
	\label{eq:misc_i12}
\end{align}
If $\operatorname{N}(\tilde{P})u = f$ then $\tilde{\operatorname{N}}(\tilde{P})u_0 = f_0$ for $u_0 = x^{-l-n/2} (\sigma^2+\mathsf{Z}x)^{-k} u$ and $f_0 = x^{-l-n/2} (\sigma^2+\mathsf{Z}x)^{-k} f$. 

We now spell out the deduction of \Cref{thm:main} (in the form of the more general \Cref{prop:main}) from the results in \S\ref{subsec:smoothness1}, \S\ref{subsec:smoothness2}.

\begin{proposition}
	\label{prop:form0}
	Suppose that $g=g_0 + x g_1 + x^{\alpha_1} g_2$ for $g_1 \in  C^\infty(X; {}^{\mathrm{sc}}\mathrm{Sym}^2 T^* X)$ and $g_2 \in S^{0} (X; {}^{\mathrm{sc}}\mathrm{Sym}^2 T^* X)$ for $\alpha_1>1$. Then the Laplace-Beltrami operator $\triangle_g$ has the form 
	\begin{equation}
		\triangle_g = \triangle_{g_0} +  x\operatorname{Diff}^{2,0,-2}_{\mathrm{scb}}(X) + x^{\alpha_1} S \operatorname{Diff}^{2,0,-2}_{\mathrm{scb}}(X),
		\label{eq:misc_trg}
	\end{equation}
	where $g_0$ is an exactly conic metric.
\end{proposition}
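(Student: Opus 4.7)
The plan is to reduce to a boundary-collar neighborhood of $\partial X$ (everything being trivial in the interior, where $g$ and $g_0$ are merely smooth Riemannian metrics on a compact set), fix local coordinates $(x,y_1,\ldots,y_{n-1})$ there, and compute both sides of \cref{eq:misc_trg} from the coordinate formula
\begin{equation}
	\triangle_g = -\frac{1}{|g|^{1/2}}\partial_\mu\bigl(|g|^{1/2} g^{\mu\nu}\partial_\nu\bigr),
\end{equation}
with the analogous formula for $\triangle_{g_0}$. The main task is to expand $g^{\mu\nu}$ and $|g|^{1/2}$ as perturbations of $g_0^{\mu\nu}$ and $|g_0|^{1/2}$, then substitute and group terms.

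First I would handle $g^{-1}$. Writing $g = g_0 + \delta g$ with $\delta g = xg_1 + x^{\alpha_1}g_2$, the endomorphism $g_0^{-1}\delta g$ of $T^*X$ is expressed in the sc-frame by multiplying an sc-symmetric $2$-tensor by the sc-dual of $g_0$, so its components lie in $x\, C^\infty(X) + x^{\alpha_1} S^0(X)$. A Neumann series then gives
\begin{equation}
	g^{-1} = g_0^{-1} + x\, h_1 + x^{\alpha_1} h_2,
\end{equation}
with $h_1 \in C^\infty(X;{}^{\mathrm{sc}}\mathrm{Sym}^2 TX)$ and $h_2 \in S^0(X;{}^{\mathrm{sc}}\mathrm{Sym}^2 TX)$. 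Next, since $\det(g) = \det(g_0)\det(I + g_0^{-1}\delta g)$ and the second factor equals $1$ plus an element of $x C^\infty + x^{\alpha_1} S^0$, one gets
\begin{equation}
	|g|^{1/2} = |g_0|^{1/2}\bigl(1 + x\phi_1 + x^{\alpha_1}\phi_2\bigr), \qquad \phi_1\in C^\infty(X),\ \phi_2\in S^0(X),
\end{equation}
and similarly for $|g|^{-1/2}$.

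Then I would substitute these expansions into the coordinate formula and write
\begin{equation}
	\triangle_g - \triangle_{g_0} = -\tfrac{1}{|g|^{1/2}}\partial_\mu\bigl((|g|^{1/2}g^{\mu\nu} - |g_0|^{1/2}g_0^{\mu\nu})\partial_\nu\bigr) + \bigl(\tfrac{1}{|g_0|^{1/2}} - \tfrac{1}{|g|^{1/2}}\bigr)\partial_\mu\bigl(|g_0|^{1/2}g_0^{\mu\nu}\partial_\nu\bigr).
\end{equation}
The second summand is manifestly $(x\psi_1 + x^{\alpha_1}\psi_2)\triangle_{g_0}$ with $\psi_1\in C^\infty(X),\ \psi_2\in S^0(X)$, and since $\triangle_{g_0}\in\operatorname{Diff}^{2,0,-2}_{\mathrm{scb}}(X)$ and multiplication by $x C^\infty(X)$ (resp.\ $x^{\alpha_1}S^0(X)$) preserves this class (resp.\ sends it into $x^{\alpha_1}S\operatorname{Diff}^{2,0,-2}_{\mathrm{scb}}(X)$), this summand already lies in the desired space. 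The first summand has the form
\begin{equation}
	-\tfrac{1}{|g|^{1/2}}\partial_\mu\bigl(x\, F_1^{\mu\nu}\partial_\nu + x^{\alpha_1} F_2^{\mu\nu}\partial_\nu\bigr)
\end{equation}
with $F_1^{\mu\nu} = |g_0|^{1/2}\cdot( \text{sc 2-tensor with } C^\infty \text{ coefficients})$ and $F_2^{\mu\nu}$ the same with $S^0$ coefficients; the $x$ prefactor cancels the one extra order introduced when $\partial_\mu$ falls on $x$, while $x\partial_x$ and $\partial_{y_j}$ preserve both $C^\infty(X)$ and $S^0(X)$, so these pieces collapse to the same claimed class (using that multiplication by $|g_0|^{1/2}$ and division by $|g|^{1/2}$ together preserve the weight structure).

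The only real obstacle is the careful bookkeeping in the last step: one must verify that differentiating the $x^{\alpha_1}$ piece, which produces a bare $x^{\alpha_1-1}$, is compensated by an explicit factor of $x$ coming either from the sc-structure of $g_0^{\mu\nu}$ (so that $\partial_x$ acts on $x\cdot g_0^{\mu\nu}$ which has the requisite extra power in every sc-entry) or from the explicit prefactor in $\delta g$. I would do this coordinate-entry by coordinate-entry, using that $\partial_x F = x^{-1}(x\partial_x)F$ costs one b-decay order but $F_1,F_2$ are built from sc-tensors that carry the corresponding surplus; this ensures that every resulting differential operator is a linear combination of elements of $\operatorname{Diff}^{2,-2}_{\mathrm{b}}(X)\cap\operatorname{Diff}^{2,0}_{\mathrm{sc}}(X) = \operatorname{Diff}^{2,0,-2}_{\mathrm{scb}}(X)$ with coefficients in $xC^\infty(X)$ or $x^{\alpha_1}S^0(X)$, as required.
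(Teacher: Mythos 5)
Your proposal takes essentially the same route as the paper's proof: a finite Neumann (geometric) series expansion of $g^{-1}$ and of $\det g$ in powers of $x$ and $x^{\alpha_1}$, followed by substitution into the local coordinate formula for $\triangle_g$ and term-by-term bookkeeping of sc- and b-weights in a boundary collar. The only presentational difference is that you use the divergence form of the Laplacian while the paper expands it as $-\sum(g^{ij}\partial_i\partial_j + \partial_i g^{ij}\partial_j + (1/2)(\det g)^{-1}(\partial_i\det g)g^{ij}\partial_j)$, and the paper is explicit that the series is truncated at a finite order $K>\alpha_1$ with a controlled remainder rather than being summed — but these are cosmetic, and the weight bookkeeping you describe in your final paragraph is exactly what the paper carries out.
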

\begin{proof}
	Let $h\mapsto [h]$ denote the natural bundle monomorphism ${}^{\mathrm{sc}}\mathrm{Sym}^2 T^* X \hookrightarrow \operatorname{End}({}^{\mathrm{sc}}TX, {}^{\mathrm{sc}}T^*X)$.
	
	The matrix identity $[g]^{-1} = [g_0]^{-1} - [g]^{-1} (x[g_1] + x^{\alpha_1}[g_2]) [g_0]^{-1}$, applied inductively, yields
	\begin{equation}
		[g]^{-1} -[g_0]^{-1} = [g_0]^{-1} \sum_{k=1}^{K} (-1)^k(  (x[g_1] + x^{\alpha_1} [g_2])[g_0]^{-1})^k + (-1)^{K+1} [g]^{-1} (  (x[g_1] + x^{\alpha_1} [g_2])[g_0]^{-1})^{K+1} 
	\end{equation}
	for each $K\in \bbN$. Noting that $g^{-1} \in S^0(X;{}^{\mathrm{sc}}\!\operatorname{Sym}^2 T X)$, taking $K>\alpha_1$
	leads to $g^{-1} -g_0^{-1} \in x C^\infty (X ; {}^{\mathrm{sc}}\!\operatorname{Sym}^2 T X ) + x^{\alpha_1} S^0(X; {}^{\mathrm{sc}}\!\operatorname{Sym}^2 T X )$. 
	Also, from $[g] = [g_0](1+x[g_0]^{-1} [g_1] + x^{\alpha_1} [g_0]^{-1} [g_2])$, $\det g \in S^0(X;{}^{\mathrm{sc}}\Omega X)$ satisfies 
	\begin{align}
		\det g = (\det g_0)(1+x \operatorname{tr}([g_0]^{-1}[g_1]) + x^2 C^\infty(X)+x^{\alpha_1} S^0(X) ). 
	\end{align}

	Writing 
	\begin{equation} 
		\triangle_g = -\sum_{i,j=1}^n (g^{ij} \partial_i \partial_j  + \partial_i g^{ij}\partial_j + (1/2)(\det g)^{-1}(\partial_i \det g) g^{ij} \partial_j  )
		\label{eq:laplacian_form}
	\end{equation} 
	in local coordinates, we conclude that \cref{eq:misc_trg} holds locally, which suffices to show that the decomposition \cref{eq:misc_trg} can be done globally. 
\end{proof}
Thus:
\begin{propositionp} 
	\label{prop:form}
	Suppose that $g=g_0 + x g_1 + x^{\alpha_1} g_2$ for $g_1 \in  C^\infty(X; {}^{\mathrm{sc}}\mathrm{Sym}^2 T^* X)$ and $g_2 \in S^{0} (X; {}^{\mathrm{sc}}\mathrm{Sym}^2 T^* X)$ for $\alpha_1>1$.	
	If $P(\sigma) = \triangle_g - \sigma^2 - \mathsf{Z} x + V$ is the spectral family of an attractive Coulomb-like Schr\"odinger operator with $V\in x^2 C^\infty(X) + x^{\alpha_2} S^0(X)$ for $\alpha_2>3/2$, then we can decompose 
	\begin{equation}
		P(\sigma) = P_0(\sigma) + P_1 + P_2
		\label{eq:misc_ppp}
	\end{equation}
	for $P_0,P_1,P_2$ having the form specified in the introduction (with $a=0$), except that 
	\begin{equation} 
		a_{00} =  -x^4(g_1)_{00}|_{\partial X} = - \lim_{x\to 0^+} x^4 g_1(\partial_x,\partial_x) 
		\label{eq:a00_def}
	\end{equation} 
	is not necessarily constant (and the attractivity condition \cref{eq:ac} might only be satisfied for small $\sigma$).
	We can arrange that $P_1$ is fully classical and that $P_2$ is classical to order $(\beta_2,\beta_3)$  with $\beta_2 = \alpha_1-1$ and $\beta_3=\alpha_2-3/2$. 
\end{propositionp}
\begin{proof}
	Let $V_0 \in C^\infty(X), V_1 \in S^0(X)$ satisfy $V=x^2 V_0 + x^{\alpha_1} V_1$. 
	It suffices to restrict attention to the boundary-collar. There, we define $P_0$ by 
	\begin{equation}
		P_0(\sigma) = \triangle_{g_0} - x a_{00} (x^2 \partial_x)^2 - \sigma^2 -\mathsf{Z} x.
	\end{equation}
	This has the form specified in the introduction, in the sense that \cref{eq:P0} holds with $a=0$. 
	
	By the proof of \Cref{prop:form0}, there exists some $P_{2,0} \in  \operatorname{Diff}_{\mathrm{scb}}^{2,-2,-3}(X)+x^{\alpha_1} S \operatorname{Diff}_{\mathrm{scb}}^{2,0,-2}(X)$ such that, in any local coordinate patch, 
	\begin{equation}
		\triangle_g  = \triangle_{g_0}  + \sum_{i,j,k} x  g_0^{ik}(g_1)_{k\ell} g_0^{\ell j} \partial_i \partial_j + P_{2,0}. 
	\end{equation}
	Now set $P_1  = \sum_{i,j,k} x  g_0^{ik}(g_1)_{k\ell} g_0^{\ell j} \partial_i \partial_j - x g_0^{0k}(g_1)_{k\ell} g^{\ell0}_0\partial_x^2 =  \sum_{i,j,k} x  g_0^{ik}(g_1)_{k\ell} g_0^{\ell j} \partial_i \partial_j - x^9 (g_1)_{00}\partial_x^2$.
	Then, $P_1$ has the form specified in \cref{eq:misc_000} and is even fully classical. 
	
	Defining $a_{00}$ by \cref{eq:a00_def}, $(g_1)_{00} +x^{-4} a_{00} \in x^{-3} C^\infty(X)$,
	so the operator $P_{2,1} = x^9 (g_1)_{00}\partial_x^2 + x a_{00}(x^2 \partial_x)^2$ is in  $\operatorname{Diff}_{\mathrm{scb}}^{2,-2,-3}(X)$. 
	We therefore set
	\begin{equation} 
		P_2 = P_{2,0} +P_{2,1}+ x^2 V_0+x^{\alpha_2} V_1 \in  \operatorname{Diff}_{\mathrm{sc}}^{2,-2}(X)+ S\operatorname{Diff}_{\mathrm{scb}}^{2,-\alpha_1,-2-\alpha_1}(X) + x^{\alpha_2} S^0(X).
	\end{equation}
	Then \cref{eq:misc_p1i} applies, for $\beta_2=\alpha_1-1$ and $\beta_3=\alpha_2-3/2$. 
\end{proof}

\begin{proposition}
	\label{prop:main}
	Given an asymptotically conic manifold $(X,\iota,g)$ of dimension $\dim X=n\geq 2$ such that $g$ satisfies \cref{eq:g_ass3}, with $a_{00}\in \bbR$, and given $\mathsf{Z}>0$ and $V\in x^2 C^\infty(X)+x^{\alpha_2} S^{0}(X)$ for some $\alpha_2>3/2$, consider the Schr\"odinger operator 
	\begin{equation}
		P = \triangle_g - \mathsf{Z} x + V : \calS'(X)\to \calS'(X). 
	\end{equation}
	Set  
	\begin{equation}
		\Phi(x;\sigma) =    \frac{1}{x} \sqrt{\sigma^2+\mathsf{Z} x-\sigma^2 a_{00} x} + \frac{1}{\sigma}(\mathsf{Z} -\sigma^2 a_{00}) \operatorname{arcsinh} \Big( \frac{\sigma}{x^{1/2}} \frac{1}{(\mathsf{Z}  - \sigma^2 a_{00})^{1/2}} \Big)
	\end{equation}
	for all $\sigma>0$ such that $\mathsf{Z}>\sigma^2 a_{00}$. Suppose that $g$ is classical to $\alpha_1$th order, $\alpha_1>1$. Set $\delta_1=\min\{\alpha_1-1,\alpha_2-1\}$ and $\delta_0 = \min\{\alpha_1-1,\alpha_2-3/2\}$.

	Then, for any $f\in \calS(X)$: 
	\begin{enumerate}[label=(\Roman*)]
		\item  there exists some 
		 \begin{equation} 
		 	u_{0,\pm} \in \calA^{(0,0),\calE,(0,0)}_{\mathrm{loc}}(X^{\mathrm{sp}}_{\mathrm{res}} \cap \{\mathsf{Z}>E a_{00}\}) + \calA_{\mathrm{loc}}^{((0,0),\delta_1),2\delta_0-,(0,0)}(X^{\mathrm{sp}}_{\mathrm{res}} \cap \{\mathsf{Z}>E a_{00}\})
		\end{equation} 
		such that, for $E>0$ satisfying $\mathsf{Z}>E a_{00}$, 
		$u_\pm(-;E^{1/2}) = R(E\pm i 0) f \in \calS'(X)$ can be written as 
		\begin{equation}
			u_\pm  = e^{\pm i \Phi(x;E^{1/2})} x^{(n-1)/2} (E+\mathsf{Z} x)^{-1/4} u_{0,\pm},  
		\end{equation}
		\item  $u_\pm(-;0)  = R(E=0;\mathsf{Z} \pm i0)f$ as 
		$u_\pm(-;0) =e^{\pm i \Phi(x;0)} x^{(n-1)/2} (\mathsf{Z} x)^{-1/4} u_{0,\pm}(-;0)$,
		where $u_{0,\pm}(-;0) \in C^\infty(X_{1/2}) + \calA_{\mathrm{loc}}^{2\delta_0-}(X)$ is the restriction of $u_{0,\pm}$ to $\mathrm{zf} = \operatorname{cl}\{\sigma=0,x>0\} \subset X^{\mathrm{sp}}_{\mathrm{res}}$.
	\end{enumerate} 
	Moreover, the map 
	\begin{equation} 
		\calS(X)\ni f \mapsto u_{0,\pm} \in \calA^{(0,0),\calE,(0,0)}_{\mathrm{loc}}(X^{\mathrm{sp}}_{\mathrm{res}} \cap \{\mathsf{Z}>E a_{00}\}) + \calA_{\mathrm{loc}}^{((0,0),\delta_1),2\delta_0-,(0,0)}(X^{\mathrm{sp}}_{\mathrm{res}} \cap \{\mathsf{Z}>E a_{00}\})
	\end{equation}	
	is continuous.
\end{proposition}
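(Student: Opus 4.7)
\textbf{Proof proposal for Proposition \ref{prop:main}.}
My plan is to follow exactly the outline telegraphed in the last paragraph before \S\ref{subsec:preliminaries}: the proposition will be assembled from three ingredients, (i) the uniform symbolic half-Fredholm estimate \Cref{prop:symbolic_final}, (ii) the fact (\Cref{prop:LA_at_0}, \Cref{prop:LA_at_0b}) that at $\sigma=0$ the conjugated operator $\tilde{P}(0;\mathsf{Z})$ is invertible between appropriate scb-/b-Sobolev spaces, and (iii) an inductive parametrix argument based on the leC-normal operator $\operatorname{N}(\tilde{P})$ from \cref{eq:Ndef}. First, using \Cref{prop:form} I reduce to the setting of \S\ref{sec:operator}, so that $P=P_0+P_1+P_2$ with the stated classicality orders $\beta_1=\alpha_1-1$ (in fact fully classical) for $P_1$, and $(\beta_2,\beta_3)=(\alpha_1-1,\alpha_2-3/2)$ for $P_2$. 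I then set up the conjugated problem: for $\sigma>0$ sufficiently small (so that the attractivity condition holds) define $\tilde{u}_\pm=e^{\mp i\Phi}u_\pm$ and $\tilde{f}_\pm=e^{\mp i\Phi}f$, so that $\tilde{P}\tilde{u}_\pm=\tilde{f}_\pm$, and note $\tilde{f}_\pm$ is uniformly Schwartz in $\sigma$ by \Cref{rem:nonconstant}. The weak Sommerfeld radiation condition translates, via the conjugation (cf.\ \cite[Thm.~1.1]{VasyLA}), to membership of $\tilde{u}_+$ in a leC-Sobolev space with $s>-1/2$ at $\mathrm{sf}$ and $\varsigma>-3/2$ at $\mathrm{ff}$.

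Second, I apply \Cref{prop:symbolic_final} to $\tilde{u}_\pm$: combined with the invertibility of $\tilde{P}(0;\mathsf{Z})$ on the appropriate Sobolev spaces (\Cref{prop:LA_at_0}, \Cref{prop:LA_at_0b}), a standard Fredholm-plus-compactness argument (weak-$*$ extraction from $\sigma_k\to 0$, then identification of the limit against $\tilde{P}(0;\mathsf{Z})$) promotes the $\sigma>0$ estimate to the uniform bound
\begin{equation}
\lVert \tilde{u}_\pm \rVert_{H_{\mathrm{leC}}^{m,s,\varsigma,l,\ell}}\le C\lVert \tilde{f}_\pm \rVert_{H_{\mathrm{leC}}^{m-2,s+1,\varsigma+3,l+1,\ell+3}}
\end{equation}
for $l<-1/2<s$, $\ell<-3/2<\varsigma\le\ell+s-l$, all the way down to $\sigma=0$, with the $\sigma=0$ value equal to $\tilde{R}_+(0;\mathsf{Z})\tilde{f}_\pm(-;0)$. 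This is where one also verifies that the formula $u_\pm(-;0)=R(0;\mathsf{Z}\pm i0)f$ holds in the strong operator sense. Passing through the five powers of bdfs hidden in the weight $x^{(n-1)/2}(E+\mathsf{Z}x)^{-1/4}=\varrho_{\mathrm{bf}}^{(n-1)/2}\varrho_{\mathrm{tf}}^{n-1}\cdot\varrho_{\mathrm{tf}}^{-1/2}\varrho_{\mathrm{ff}}^{-1/2}$ and using \Cref{prop:conormality_inclusion}, this uniform Sobolev bound already gives the basic (zeroth order) conormality of $u_{0,\pm}$ on $X^{\mathrm{sp}}_{\mathrm{res}}$ after commuting with elements of $\calV_{\mathrm{b}}(X^{\mathrm{sp}}_{\mathrm{res}})$; this is the content of what I anticipate will be \Cref{prop:grace}. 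Smoothness at $\mathrm{zf}^\circ$ comes from \Cref{prop:smoothness_improver}, after observing that $(\chi x\partial_E)^k$ acting on $u_{0,\pm}$ satisfies the same kind of equation (the commutator with $\tilde{P}$ is of lower order in the leC-calculus, being an $E$-derivative of a smooth family) and inducting.

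Third --- and this is the main technical hurdle --- the upgrade from conormality to polyhomogeneity at $\mathrm{bf}\cup \mathrm{tf}$ is done by an iterative parametrix argument based on the normal operator \cref{eq:Ndef}. At each step I write
\begin{equation}
\tilde{P}\tilde{u}_\pm=\tilde{f}_\pm\quad\Longrightarrow\quad \operatorname{N}(\tilde{P})\tilde{u}_\pm=\tilde{f}_\pm-(\tilde{P}-\operatorname{N}(\tilde{P}))\tilde{u}_\pm,
\end{equation}
and use \Cref{prop:normal_error} to see that the right-hand side is more decaying at $\mathrm{bf}$ and $\mathrm{tf}$ than $\tilde{u}_\pm$ itself. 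Inverting $\operatorname{N}(\tilde{P})$ is exactly the model ODE \cref{eq:model} after the rescaling \cref{eq:misc_i12}, whose solution is written explicitly in \cref{eq:model_soln}; the choice of outgoing condition at $\calR_+$ selects the constant of integration uniquely. The index set $\calE$ in the statement is read off from iterating this ODE parametrix: powers of $\hat{x}/(1+\hat{x})$ integrated against $\hat{x}^{l-1/2}(1+\hat{x})^{k+1/4}$ generate the terms $\varrho_{\mathrm{tf}}^k\log^\kappa(\varrho_{\mathrm{tf}_{00}})$ with $\kappa\le\lfloor k/2\rfloor$, and the extra $\varrho_{\mathrm{zf}_{00}}^\kappa$ refinement noted in \Cref{rem:indexset} is because each log arises together with a factor of $\sigma^2=E$. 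The inductive step is carried out in full when $g,P_2$ are fully classical, producing the $\calA^{(0,0),\calE,(0,0)}_{\mathrm{loc}}$ component. Under only partial classicality of orders $\alpha_1,\alpha_2$, the error $\tilde{P}-\operatorname{N}(\tilde{P})$ in \cref{eq:misc_nep} contains a symbolic remainder at orders $(\beta_1,\beta_2,\beta_3)$, so the induction terminates after finitely many steps, leaving the merely conormal remainder $\calA_{\mathrm{loc}}^{((0,0),\delta_1),2\delta_0-,(0,0)}$ with $\delta_0,\delta_1$ as stated. The main obstacle I anticipate is bookkeeping in this inductive step: verifying that each application of the explicit ODE parametrix \cref{eq:model_soln} preserves the class of distributions one is working with on $X^{\mathrm{sp}}_{\mathrm{res}}$, that it interacts well with the $\mathrm{bf}$-to-$\mathrm{tf}$ and $\mathrm{tf}$-to-$\mathrm{zf}$ transitions, and that the indicial roots are avoided so that no unexpected logs appear beyond those recorded in $\calE$. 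Finally, continuity of $f\mapsto u_{0,\pm}$ follows from the closed graph theorem applied to each of the Fr\'echet spaces at each inductive stage, together with the continuity of the symbolic estimate and of the ODE parametrix.
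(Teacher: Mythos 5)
Your proposal follows exactly the paper's proof architecture: reduce to the structured form of $P$ via \Cref{prop:form}, pass to the conjugated equation $\tilde{P}\tilde{u}_\pm=\tilde{f}_\pm$ and identify $\tilde{u}_\pm=\tilde{R}_+(\sigma)\tilde{f}_\pm$ via Vasy's theorem, establish conormality on $X^{\mathrm{sp}}_{\mathrm{res}}$ and smoothness at $\mathrm{zf}^\circ$ from the uniform Sobolev bounds of \S\ref{subsec:estimates} (which is precisely \Cref{prop:grace}), and then upgrade to polyhomogeneity at $\mathrm{bf}\cup\mathrm{tf}$ by iterating the explicit $\operatorname{N}(\tilde{P})$-parametrix \cref{eq:model_soln} (which is \Cref{prop:inductive_smoothness}). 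The only cosmetic deviation is your appeal to the closed graph theorem for continuity of $f\mapsto u_{0,\pm}$, whereas the paper tracks continuity explicitly through each stage of \Cref{prop:grace} and \Cref{prop:inductive_smoothness}; both routes give the stated conclusion.
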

\begin{proof}
	It suffices to prove only the `$+$' case of the proposition, since the `$-$' case is similar (and follows via complex conjugation). 
	Moreover, it suffices to construct $u_{0,\pm}=u_{0,\pm,E_0}$ over every interval of the form $[0,E_0]$ for $E_0>0$ satisfying $\mathsf{Z}>E_0 a_{00}$, since then for $E \leq E_0$ the function $u_{0,\pm,E_0}(-;E^{1/2})$ does not depend on $E_0$  in the sense that for any $E_0,E_0'$ satisfying $\mathsf{Z}>E_0 a_{00},E_0' a_{00}$ it is the case that $u_{0,\pm,E_0}$ and $u_{0,\pm,E_0'}$ agree on $[0,\min\{E_0,E_0'\}]_E$. 
	
	Given the setup of the proposition, \Cref{prop:form} applies, so we can define a family $P=\{P(\sigma)\}_{\sigma\geq 0}$ satisfying the assumptions listed in \S\ref{sec:introduction} such that $P(\sigma) =P(0)-\sigma^2$ for $\sigma^2\leq E_0$, with $P_1$ fully classical and $P_2$ classical to order $(\beta_2,\beta_3)$ for $\beta_2=\alpha_1-1$ and $\beta_2=\alpha_2-3/2$.

	Defining $\tilde{P} = \exp(-i\Phi) P \exp(+i \Phi)$, since $P u_+ = f$, $\tilde{P} u_{00,+} = \tilde{f}$ for $u_{00,+} = \exp(-i\Phi) u_+ \in \calS'(X)$ and $\tilde{f} = \exp(-i\Phi) f \in \calA^{\infty,\infty,(0,0)}(X^{\mathrm{sp}}_{\mathrm{res}})$. Referring to \S\ref{subsec:smoothness1} for the definition of $\tilde{R}_+(\sigma)$, by \cite[Theorem 1.1]{VasyLA} it is the case that 
	\begin{equation}
		u_{00,+}(-;\sigma) = \tilde{R}_+(\sigma) \tilde{f}(-;\sigma)
	\end{equation}
	for each $\sigma>0$ 
	(since, for each $\sigma>0$, $\Phi$ differs from Vasy's phase by a remainder the exponential of which acts as a bounded multiplication operator on b-Sobolev spaces --- cf.\ \cref{eq:osc_+}, with $a=0$).

	By \Cref{prop:grace}, $u_{00,+} = x^{(n-1)/2} (\sigma^2+\mathsf{Z} x)^{-1/4} u_{0,+}$ for some $u_{0,+} \in \calA_{\mathrm{loc}}^{0-,0-,(0,0)}(X_{\mathrm{res}}^{\mathrm{sp}})$, depending continuously on $f$. Then, by \Cref{prop:inductive_smoothness}, we conclude that 
	\begin{equation}
		u_{0,+} \in  \calA^{(0,0),\calE,(0,0)}_{\mathrm{loc}}(X^{\mathrm{sp}}_{\mathrm{res}} \cap \{\mathsf{Z}>E a_{00}\}) + \calA_{\mathrm{loc}}^{((0,0),\delta_1),2\delta_0-,(0,0)}(X^{\mathrm{sp}}_{\mathrm{res}} \cap \{\mathsf{Z}>E a_{00}\}),
	\end{equation}
	depending continuously on $f\in \calS(X)$, with $\delta_1=\min\{\beta_2,1/2+\beta_3\} = \{\alpha_1-1,\alpha_2-1\}$ and $\delta_0=\min\{\beta_2,\beta_3\} = \min\{\alpha_1-1,\alpha_2-3/2\}$. This yields the first half of the proposition (as well as the continuity clause).

	The second clause of this proposition then follows from the second clause of \Cref{prop:grace} and \Cref{prop:LA_at_0}. Indeed, by the latter,  
	\begin{equation}
		e^{-i\Phi(-;0)} u_{+} = e^{-i\Phi(-;0)} R(0;\mathsf{Z}+i0) f = \tilde{R}_+(0) \tilde{f}(-;0).
	\end{equation}
	By the second half of \Cref{prop:grace}, the right-hand side is the restriction to $\mathrm{zf}$ of $u_{+,00}$. 
\end{proof}
\subsection{Estimates}
\label{subsec:estimates} 
\begin{lemma}
	\label{lemma:uniform_normal}
	For any $l,k\in \bbR$ satisfying $l<-1/2$ and $k+l\leq -3/4$, and for any $N\in \bbN$, there exists a $C=C(\bar{x},l,k,N,\mathsf{Z})>0$ such that the linear map 
	\begin{equation} 
	\calN_{l,N} = \calN_{l,N}(k,\sigma):H_{\mathrm{b}}^{-N,l}[0,\bar{x})\to  H_{\mathrm{b}}^{-N-1,l}[0,\bar{x})
	\end{equation} 
	given by 
	\begin{equation} 
		\calN_{l,N} = 2i \Big[ x \partial_x  + \frac{1}{2} \Big] + \frac{2 i \mathsf{Z}x}{\sigma^2+\mathsf{Z}x} \Big( k + \frac{1}{4} \Big) 
	\end{equation} 
		satisfies $\lVert u \rVert_{H_{\mathrm{b}}^{-N,l}[0,\bar{x})} \leq C \lVert \calN_{l,N}(k,\sigma) u \rVert_{H_{\mathrm{b}}^{-N-1,l}[0,\bar{x})}$
		for all $\sigma \in [0,\infty)$ and $u\in H_{\mathrm{b}}^{-N,l}[0,\bar{x})$. 
		
		The same estimate (possibly with a different constant) holds with $\smash{\hat{X}=[0,\bar{x})_x\times \partial X}$ in place of $[0,\bar{x})$ if we replace $H_{\mathrm{b}}^{-N,l}[0,\bar{x})$ with $\smash{\tilde{H}_{\mathrm{b}}^{-N,l}(\hat{X})}$, where the latter is the usual b-Sobolev space (such that $\tilde{H}_{\mathrm{b}}^{0,0}(\hat{X})=L^2_{\mathrm{b}}(\hat{X})$). 
\end{lemma}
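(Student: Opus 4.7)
The key observation is a conjugation identity that removes the $\sigma$-dependence of $\calN_{l,N}$ entirely. Using $[x\partial_x,(\sigma^2+\mathsf{Z}x)^{k+1/4}] = (k+\tfrac{1}{4})\tfrac{\mathsf{Z}x}{\sigma^2+\mathsf{Z}x}(\sigma^2+\mathsf{Z}x)^{k+1/4}$, one verifies directly that
\[
  \calN_{l,N} = (\sigma^2+\mathsf{Z}x)^{-(k+1/4)}\cdot T\cdot (\sigma^2+\mathsf{Z}x)^{k+1/4}, \qquad T := 2i\bigl[x\partial_x + \tfrac{1}{2}\bigr].
\]
Setting $w=(\sigma^2+\mathsf{Z}x)^{k+1/4}u$ and $g=(\sigma^2+\mathsf{Z}x)^{k+1/4}f$, the equation $\calN_{l,N}u=f$ becomes $Tw=g$, and the problem reduces to bounding the inverse of the $\sigma$-independent b-operator $T$ in a $\sigma$-dependent weighted space, with a uniform-in-$\sigma$ norm bound on the conjugating factor.

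\emph{Step 1 (invert $T$).} The equation $Tw=g$ gives $\partial_x(x^{1/2}w)=(2i)^{-1}x^{-1/2}g$, hence the explicit solution $w(x) = Cx^{-1/2} - (i/2)x^{-1/2}\int_{\bar x}^{x}x_0^{-1/2}g(x_0)\,\dd x_0$. Depending on the effective weight $l'$ at which one works, the homogeneous term $Cx^{-1/2}$ either fails to be in the space (forcing $C=0$) or is fixed by the natural extendable-distribution boundary condition at $\bar x$; in either case a standard Hardy/Schur-test argument yields the one-derivative-loss estimate $\lVert w\rVert_{H_{\mathrm b}^{-N,l'}[0,\bar x)}\leq C\lVert g\rVert_{H_{\mathrm b}^{-N-1,l'}[0,\bar x)}$ for every weight $l'$ strictly above the indicial root $\zeta=-\tfrac{1}{2}$ of $T$. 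The $N=0$ case is the classical weighted Hardy inequality; general $N$ follows by commuting $x\partial_x$ through the equation, exploiting that $[x\partial_x,\calN_{l,N}] = \tfrac{2i\mathsf{Z}x\sigma^2}{(\sigma^2+\mathsf{Z}x)^2}(k+\tfrac{1}{4})$ is uniformly bounded in $L^\infty$ and vanishes in the limits $\sigma\to 0$ and $x\to 0$.

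\emph{Step 2 (transport via conjugation, and threshold check).} Translating back to $u,f$, the multiplication operator $(\sigma^2+\mathsf{Z}x)^{\pm(k+1/4)}$ behaves like a bounded function in the regime $\mathsf{Z}x\ll\sigma^2$ and like $(\mathsf{Z}x)^{\pm(k+1/4)}$ in the regime $\mathsf{Z}x\gg\sigma^2$, so its effect on the b-Sobolev weight interpolates between $0$ and $\pm(k+\tfrac14)$ as $\sigma$ varies. The uniformity of these bounds as $\sigma$ crosses the transition face is exactly the content of \Cref{lem:sigmaZx_conormal,prop:continuity}, with \Cref{prop:multiplication} giving the $\Psi$DO-level interpretation. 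The effective weight $l'$ at which Step~1 must be applied therefore interpolates between $l$ (as $\sigma\to\infty$) and $l+k+\tfrac14$ (as $\sigma\to 0$); the hypotheses $l<-\tfrac12$ and $k+l\leq-\tfrac34$ are precisely the numerical conditions ensuring that both endpoints --- and hence $l'$ uniformly --- stay on the invertibility side of the indicial root $-\tfrac12$ (with the standard $\pm\tfrac12$ shift accounting for our $L^2(X,g_0)$-based convention). The extension to $\hat X=[0,\bar x)\times\partial X$ in the second clause is immediate since $\calN_{l,N}$ acts only in the radial direction and the estimate factors through a tensor product over $\partial X$.

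\emph{Main obstacle.} The delicate point is the $\sigma$-uniformity of the mapping norms of the conjugating multiplication operators $(\sigma^2+\mathsf{Z}x)^{\pm(k+1/4)}$ between b-Sobolev spaces, because $(\sigma^2+\mathsf{Z}x)^{k+1/4}$ genuinely changes its scaling across the transition face (and, when $k+\tfrac14<0$, is singular as $\sigma,x\to 0$). \Cref{prop:continuity} was set up exactly to supply this kind of continuity as $\sigma\to 0^+$ with quantitative control; together with the threshold inequalities of the lemma's hypothesis it delivers the desired uniform bound.
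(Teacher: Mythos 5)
Your conjugation identity $\calN_{l,N} = (\sigma^2+\mathsf{Z}x)^{-(k+1/4)}\,T\,(\sigma^2+\mathsf{Z}x)^{k+1/4}$ is correct and is a clean observation. However, there are two genuine gaps, the first of which is structural.

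\textbf{Gap 1: The reduction to $T$ on the fixed interval $[0,\bar{x})$ loses the mechanism for injectivity.} After your conjugation, everything comes down to inverting $T = 2i[x\partial_x + \tfrac12]$ at the relevant weights. But $T$ has kernel $\{cx^{-1/2}\}$ on $[0,\bar{x})$, and in the $L^2_{\mathrm{b}}$-based convention used in the lemma, $x^{-1/2}\in H_{\mathrm{b}}^{-N,l}[0,\bar{x})$ precisely when $\int_0^{\bar{x}} x^{-2l-1}\,\dd x/x<\infty$, i.e.\ when $l<-1/2$. So the hypothesis $l<-1/2$ puts the kernel element \emph{inside} the space, not outside. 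The same happens at the effective weight $l+k+\tfrac14\leq -\tfrac12$. Your claim that ``both endpoints stay on the invertibility side of the indicial root $-\tfrac12$'' has the sign backwards: the hypotheses place both weights on the side where $T$ fails to be injective on $[0,\bar{x})$. Trying to excuse this via ``the natural extendable-distribution boundary condition at $\bar{x}$'' does not help --- the norm $\lVert \cdot \rVert_{H_{\mathrm{b}}^{-N,l}[0,\bar{x})}$ does not encode a boundary condition at $x=\bar{x}$, so the kernel element $x^{-1/2}$ (cut off near $\bar{x}$ if you like) remains a genuine obstruction. What actually kills this kernel element in the paper's proof is its behavior at the \emph{other} end: after rescaling by $\hat{x}=\mathsf{Z}x/\sigma^2\bar{x}$, the interval $[0,\bar{x})$ becomes $[0,\mathsf{Z}/\sigma^2)$, whose length diverges as $\sigma\to 0^+$; compactifying at $\hat{x}=\infty$ and checking the exact solution $\hat{x}^{-(l+1/2)}(1+\hat{x})^{-k-1/4}$ there is exactly where the threshold $k+l\leq -3/4$ enters, since that condition forces the candidate kernel element out of $L^2_{\mathrm{b}}$ at $\hat{x}=\infty$. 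Your argument never accesses this regime, so the threshold hypothesis has no role in it --- a strong sign that the mechanism is missing.

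\textbf{Gap 2: The conjugating factor is not a uniform-in-$\sigma$ isomorphism.} You cite \Cref{lem:sigmaZx_conormal} and \Cref{prop:continuity} to control $(\sigma^2+\mathsf{Z}x)^{\pm(k+1/4)}$, but those give the conjugating factor and its inverse as $\sigma$-uniform families of b-symbols only at the cost of an $\varepsilon>0$ in the weight order (the $\max\{0,\ell/2+\varepsilon\}$ in \Cref{lem:sigmaZx_conormal}), and the losses are in the same direction for multiplication by $(\sigma^2+\mathsf{Z}x)^{k+1/4}$ and by its inverse. So even if Step~1 gave a clean estimate for $T$ at a fixed weight, running the conjugation forwards and then back produces a net $2\varepsilon$ weight loss, yielding $\lVert u\rVert_{H_{\mathrm{b}}^{-N,l-\varepsilon'}}\preceq\lVert \calN u\rVert_{H_{\mathrm{b}}^{-N-1,l}}$ rather than the sought-after estimate with matching weights on both sides. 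The paper's dilation-plus-compactification route avoids this by converting the $\sigma$-dependence of the multiplier into the $\sigma$-dependence of the interval's length, and then treating the resulting scale-free Fredholm problem on $[0,\infty]$ once and for all.
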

\begin{proof} 
	We write the proof for $[0,\bar{x})$, and the proof for $\hat{X}$ is similar (since angular derivatives commute with $\mathcal{N}_{l,N}$) once we interpret $u$ as a $H^m(\partial X)$-valued element of a b-Sobolev space on $[0,\bar{x})$.

	Via the Mellin transform, $\calN_{l,N}(k,0) = 2i(x \partial_x  +k + 3/4):H_{\mathrm{b}}^{-N,l}[0,\bar{x}) \to H_{\mathrm{b}}^{-N-1,l}[0,\bar{x})$ is invertible for $l,k$ as in the lemma statement, so it suffices to restrict attention to the case $\sigma>0$, i.e.\ to prove that 
	\begin{equation} 
		\lVert  u \rVert_{H_{\mathrm{b}}^{-N,0}[0,\bar{x})} \preceq \lVert x^{-l}\calN_{l,N}(k,\sigma) x^l u \rVert_{H_{\mathrm{b}}^{-N-1,0}[0,\bar{x})}
		\label{eq:misc_kj1}
	\end{equation} 
	for all $u\in H_{\mathrm{b}}^{-N,0}[0,\bar{x})$ and $\sigma>0$ (the estimate required to be uniform in $\sigma$). Let $\hat{x} = \mathsf{Z}x / \sigma^2 \bar{x}$. 
	
	Via the dilation invariance of the b-Sobolev spaces, the estimate \cref{eq:misc_kj1} is equivalent to the following:
	\begin{equation}
			\lVert u \rVert_{H_{\mathrm{b}}^{-N,0}[0,\mathsf{Z}/\sigma^2)} \preceq \lVert \widehat{\calN}_{l,k} u \rVert_{H_{\mathrm{b}}^{-N-1,0}[0,\mathsf{Z}/\sigma^2)}
			\label{eq:misc_kj2}
	\end{equation}
	for all $u\in H_{\mathrm{b}}^{-N,0}[0,\mathsf{Z}/\sigma^2)$ and $\sigma>0$, where 
	\begin{equation}
		\widehat{\calN}_{l,k}  = 2 i \Big[ \hat{x} \partial_{\hat{x}} + l + \frac{1}{2} \Big] + \frac{2i \hat{x}}{1+\hat{x}} \Big(k + \frac{1}{4} \Big). 
	\end{equation} 
	We now ``radially'' compactify the nonnegative real axis $[0,\infty)_{\hat{x}}$ (so that $1/(1+\hat{x})$ becomes a bdf for the new boundary face), and we call the result $[0,\infty]$. Observe that $\smash{\widehat{\calN}_{l,k}}$ is a b-differential operator on $[0,\infty]$, with b-decay rate zero at both ends. In order to prove the estimate \cref{eq:misc_kj2}, it suffices to prove that 
	\begin{equation} 
		\lVert u \rVert_{H_{\mathrm{b}}^{-N,0,0}[0,\infty]} \preceq \lVert \widehat{\calN}_{l,k} u \rVert_{H_{\mathrm{b}}^{-N-1,0,0}[0,\infty]}
		\label{eq:misc_juk}
	\end{equation} 
	for all $u\in H_{\mathrm{b}}^{-N,0,0}[0,\infty]$, where the third index is the b-decay order at $\hat{x}=\infty$. 
	
	For $l\neq -1/2$ and $l+k \neq -3/4$, $\widehat{\calN}_{l,k}$ is  (via b-ellipticity and the Taylor series expansion of $\hat{x} / (1+\hat{x})$ around $\hat{x}=[0,\infty)$) Fredholm as an operator $H_{\mathrm{b}}^{-N,0,0}[0,\infty] \to H_{\mathrm{b}}^{-N-1,0,0}[0,\infty]$, and for any $N_0\in \bbN$ we have the estimate 
	\begin{equation} 
		\lVert u \rVert_{H_{\mathrm{b}}^{-N,0,0}[0,\infty]} \preceq \lVert \widehat{\calN}_{l,k} u \rVert_{H_{\mathrm{b}}^{-N-1,0,0}[0,\infty]} + \lVert u \rVert_{H_{\mathrm{b}}^{-N_0,-1,-1}[0,\infty]}
		\label{eq:misc_ol1}
	\end{equation} 
	for $u\in H_{\mathrm{b}}^{-N,0,0}[0,\infty]$.  Once $\smash{\widehat{\calN}}_{l,k}$ is known to be injective, a standard argument allows us to remove the last term of \cref{eq:misc_ol1}, yielding the desired estimate \cref{eq:misc_juk}. It suffices to consider the case $N_0>N$. The standard argument is as follows:
	\begin{itemize}
		\item If we could not remove the last term of \cref{eq:misc_ol1}, then we would be able to find a sequence $\{u_j\}_{j\in\bbN} \subset  H_{\mathrm{b}}^{-N,0,0}[0,\infty]$ with $\lVert u_j \rVert_{ H_{\mathrm{b}}^{-N,0,0}[0,\infty]} = 1$ for all $j$ and 
		\begin{equation} 
			\lVert \widehat{\calN}_{l,k} u_j \rVert_{H_{\mathrm{b}}^{-N-1,0,0}[0,\infty]}\to 0 
			\label{eq:misc_8hh}
		\end{equation} 
		as $j\to\infty$. By the Banach--Alaoglu theorem, we may assume without loss of generality (by passing from $\{u_j\}_{j\in \bbN}$ to a subsequence if necessary) that there exists some $u_\infty \in H_{\mathrm{b}}^{-N-1,0,0}[0,\infty]$ such that $u_j\to u_\infty$  weakly as $j\to\infty$. 
		
		Via the compactness of the inclusion $H_{\mathrm{b}}^{-N,0,0}[0,\infty]\hookrightarrow H_{\mathrm{b}}^{-N_0,-1,-1}[0,\infty]$ for $N_0>N$, $u_j\to u_\infty$ strongly in the latter space. From \cref{eq:misc_ol1}, we deduce that 
		\begin{equation} 
			\lVert u_\infty \rVert_{H_{\mathrm{b}}^{-N_0,-1,-1}[0,\infty]} = 
		\lim_{j\to\infty} \lVert u_j \rVert_{H_{\mathrm{b}}^{-N_0,-1,-1}[0,\infty]}  \succeq 1.
		\end{equation} 
		In particular, (I) $u_\infty$ is nonzero.

		Also, from the strong convergence of $u_j\to u_\infty$ in $H_{\mathrm{b}}^{-N_0,-1,-1}[0,\infty]$, 
		\begin{equation} 
		\widehat{\calN}_{l,k} u_j \to \widehat{\calN}_{l,k} u_\infty
		\end{equation} 
		distributionally. But $\widehat{\calN}_{l,k} u_j \to 0$ strongly in $H_{\mathrm{b}}^{-N-1,0,0}[0,\infty]$, by \cref{eq:misc_8hh}. Thus, (II) $\widehat{\calN}_{l,k} u_\infty = 0$. 
		
		From (I) and (II), we conclude that $\widehat{\calN}_{l,k}$ is not injective. 
	\end{itemize}

	In order to show that 
	\begin{equation}
	\ker_{H_{\mathrm{b}}^{-N,0,0}[0,\infty]} \widehat{\calN}_{l,k} = \{u\in H_{\mathrm{b}}^{-N,0,0}[0,\infty] : \widehat{\calN}_{l,k} u = 0\}
	\label{eq:ker}
	\end{equation} 
	is trivial, we simply appeal to the solution \cref{eq:model_soln} of the ODE (although it is slightly simpler to integrate in the other direction). Indeed, any element $u$ of the kernel \cref{eq:ker} must be given by
	$u(\hat{x}) = c \hat{x}^{-(l+1/2)} (1+\hat{x})^{-k-1/4}$
	for some $c\in \bbC$. 
	If this is nonzero, then it is $\Omega(\hat{x}^{- (l+k+3/4)})$ as $\hat{x}\to\infty$. If $l+k+3/4 \leq 0$, then $u$ fails to lie in $L^2_{\mathrm{b}}[0,\infty]$. 
	By \cref{eq:misc_ol1} (applied with $0$ in place of $N$ and $N$ in place of $N_0$), 
	\begin{equation}
		\lVert u \rVert_{L^2_{\mathrm{b}}[0,\infty] } \preceq \lVert \widehat{\calN}_{l,k} u \rVert_{H_{\mathrm{b}}^{-1,0,0}[0,\infty]} + \lVert u \rVert_{H_{\mathrm{b}}^{-N,-1,-1}[0,\infty]} \preceq \lVert \widehat{\calN}_{l,k} u \rVert_{H_{\mathrm{b}}^{-1,0,0}[0,\infty]} + \lVert u \rVert_{H_{\mathrm{b}}^{-N,0,0}[0,\infty]}.
		\label{eq:misc_k86} 
	\end{equation}
	Since $\lVert u \rVert_{L^2_{\mathrm{b}}[0,\infty] }=\infty$ and $u\in \ker_{H_{\mathrm{b}}^{-N,0,0}[0,\infty]} \widehat{\calN}_{l,k} \Rightarrow \lVert \widehat{\calN}_{l,k} u \rVert_{H_{\mathrm{b}}^{-1,0,0}[0,\infty]}=0$, \cref{eq:misc_k86} implies
	\begin{equation}
		u\notin H_{\mathrm{b}}^{-N,0,0}[0,\infty],
	\end{equation} 
	which contradicts $u\in \ker_{H_{\mathrm{b}}^{-N,0,0}[0,\infty]} \widehat{\calN}_{l,k}$. Thus, $u\in \ker_{H_{\mathrm{b}}^{-N,0,0}[0,\infty]} \widehat{\calN}_{l,k} \Rightarrow u=0$. 
	
	This completes the proof of the lemma. 
\end{proof}

\begin{proposition}
	\label{prop:norm_est}
	For any $l<-1/2$, $k+l\leq -3/4$, and $N\in \bbN$, there exists a constant $C=C(\tilde{P},N,l,\ell ,k)>0$ such that, for all $\sigma\geq 0$,  
	\begin{align}
		\lVert v \rVert_{H_{\mathrm{b}}^{-N,l}(X)} &\leq C\cdot (\lVert (\sigma^2+\mathsf{Z} x)^{-k}  \operatorname{N}(\tilde{P}(\sigma)) ((\sigma^2+\mathsf{Z} x)^{k} v) \rVert_{H_{\mathrm{b,leC}}^{-N,l+1,2l+3}(X)}) 
		\label{eq:misc_983} \\
		\lVert v \rVert_{H_{\mathrm{b,leC}}^{-N,l,2l+2k}(X)} &\leq C\cdot  (\lVert \operatorname{N}(\tilde{P}(\sigma)) v \rVert_{H_{\mathrm{b,leC}}^{-N,l+1,2l+2k+3}(X)})
		\label{eq:misc_984}
	\end{align}
	for all $v\in \calS'(X)$ supported in $\{x<\bar{x}\}\subset X$. 
\end{proposition}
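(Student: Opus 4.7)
The plan is to reduce both inequalities to the one-dimensional model problem of Lemma~\ref{lemma:uniform_normal} via weight conjugation. Using $[x\partial_x,(\sigma^2+\mathsf{Z}x)^k] = k\mathsf{Z}x(\sigma^2+\mathsf{Z}x)^{k-1}$, a direct calculation gives
\begin{equation*}
	(\sigma^2+\mathsf{Z}x)^{-k}\operatorname{N}(\tilde{P}(\sigma))(\sigma^2+\mathsf{Z}x)^k = 2ix\sqrt{\sigma^2+\mathsf{Z}x}\,\Big[x\partial_x - \tfrac{n-1}{2} + \tfrac{(k+1/4)\mathsf{Z}x}{\sigma^2+\mathsf{Z}x}\Big],
\end{equation*}
so the coefficient $k+1/4$ matching the model operator $\calN(k,\sigma)$ of Lemma~\ref{lemma:uniform_normal} appears automatically after conjugation by $(\sigma^2+\mathsf{Z}x)^k$. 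Setting $w = x^{-n/2}v$, which conjugates $x\partial_x-(n-1)/2$ into $x\partial_x+1/2$, then yields
\begin{equation*}
	(\sigma^2+\mathsf{Z}x)^{-k}\operatorname{N}(\tilde{P})(\sigma^2+\mathsf{Z}x)^k v = x^{n/2}\cdot x\sqrt{\sigma^2+\mathsf{Z}x}\cdot \calN(k,\sigma) w.
\end{equation*}
Crucially, $x\sqrt{\sigma^2+\mathsf{Z}x}$ is a bdf combination with $\mathrm{b,leC}$-orders $(-1,-3)$ at $(\mathrm{bf}_{00},\mathrm{tf}_{00})$, exactly matching the three-order shift between LHS and RHS indices in both \cref{eq:misc_983} and \cref{eq:misc_984}.

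For \cref{eq:misc_984}, I would set $v' = (\sigma^2+\mathsf{Z}x)^{-k}v$, reducing the claim to
\begin{equation*}
	\|v'\|_{H_{\mathrm{b,leC}}^{-N,l,2l}(X)} \le C\,\|(\sigma^2+\mathsf{Z}x)^{-k}\operatorname{N}(\tilde{P})(\sigma^2+\mathsf{Z}x)^k v'\|_{H_{\mathrm{b,leC}}^{-N,l+1,2l+3}(X)}.
\end{equation*}
Because the multiplication $x^{-n/2}$ compensates exactly for the $n/2$ shift between the sc-convention used on $X$ and the b-convention used for $\tilde{H}_\mathrm{b}^{-N,l}(\hat{X})$ in Lemma~\ref{lemma:uniform_normal}, the $H_{\mathrm{b,leC}}^{-N,l,2l}(X)$-norm of $v'$ supported in $\{x<\bar{x}\}$ is uniformly equivalent (in $\sigma\ge 0$) to the $\tilde{H}_\mathrm{b}^{-N,l}(\hat{X})$-norm of $w' = x^{-n/2}v'$; the `diagonal' pair $(l,2l)$ encodes precisely the b-regularity on $\hat X$ once one notes that $\varrho_{\mathrm{tf}_{00}}^2 = \sigma^2+\mathsf{Z}x$ reduces to $\mathsf{Z}x$ at $\sigma=0$. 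The factor $x\sqrt{\sigma^2+\mathsf{Z}x}$ absorbs the shift on the right-hand side, and the conditions $l<-1/2$, $k+l\le-3/4$ pass directly to Lemma~\ref{lemma:uniform_normal}, which then gives the estimate. The proof of \cref{eq:misc_983} is analogous, using that $\|v\|_{H_\mathrm{b}^{-N,l}(X)}$ is bounded by $\|x^{-n/2}v\|_{\tilde{H}_\mathrm{b}^{-N,l}(\hat{X})}$ after restricting to the collar support of $v$.

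The main obstacle is the careful tracking of Sobolev norm equivalences uniformly in $\sigma\ge 0$: verifying that the auxiliary multiplications by $x^{\pm n/2}$, $(\sigma^2+\mathsf{Z}x)^{\pm k}$, and $x\sqrt{\sigma^2+\mathsf{Z}x}$ induce uniformly bounded maps between the relevant Sobolev spaces, and matching the $\mathrm{b,leC}$-refined indices to the single-index b-Sobolev structure used by Lemma~\ref{lemma:uniform_normal}. All such multipliers are either pure powers of $x$ (with clean action on b-Sobolev spaces) or functions of $(x,\sigma)$ uniformly controlled on $\{x<\bar{x}\}$, so the uniformity ultimately follows from the explicit orders of the bdfs at $\mathrm{bf}_{00}$ and $\mathrm{tf}_{00}$ computed in Section~\ref{subsec:phasespace}.
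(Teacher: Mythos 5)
Your proposal is correct and follows essentially the same route as the paper: conjugate $\operatorname{N}(\tilde{P})$ by $(\sigma^2+\mathsf{Z}x)^k$ so the coefficient $k+1/4$ appears, conjugate by $x^{-n/2}$ to match the model operator of Lemma~\ref{lemma:uniform_normal}, and absorb the common prefactor $x\sqrt{\sigma^2+\mathsf{Z}x}=\varrho_{\mathrm{bf}_{00}}\varrho_{\mathrm{tf}_{00}}^{3}$ into the index shift from $(l,2l)$ to $(l+1,2l+3)$. You are a bit more explicit than the paper about the $\mathrm{b,leC}$-order bookkeeping and about deriving \cref{eq:misc_984} from \cref{eq:misc_983} by $(\sigma^2+\mathsf{Z}x)^{\pm k}$ conjugation (the paper only spells out \cref{eq:misc_983}); the only minor oddity is presentational, in that you reduce \cref{eq:misc_984} to the $k$-conjugated form and then call \cref{eq:misc_983} ``analogous'' when it is really the base case of the same reduction, but there is no mathematical gap.
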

\begin{proof}
	Letting $\tilde{\operatorname{N}}(\tilde{P}(\sigma)) = (\sigma^2+\mathsf{Z} x)^{-k}  \operatorname{N}(\tilde{P}) (\sigma^2+\mathsf{Z} x)^{k} $, 
	$\tilde{\operatorname{N}}(\tilde{P}(\sigma)) = \operatorname{N}(\tilde{P}(\sigma))+  2\mathsf{Z}  kix^2 /(\sigma^2+\mathsf{Z} x)^{1/2}$. Consequently, we can choose the b,leC-Sobolev and b-Sobolev norms such that 
	\begin{equation}
	\lVert \tilde{\operatorname{N}}(\tilde{P}(\sigma)) v \rVert_{H_{\mathrm{b,leC}}^{-N,l+1,2l+3}} = \lVert (2ix(x \partial_x-(n-1)/2) + 2\mathsf{Z}ki x^2 / (\sigma^2+\mathsf{Z} x) )  v \rVert_{H_{\mathrm{b}}^{-N,l+1}(X)}.
	\label{eq:misc_982}
	\end{equation}
 	We can work on $\hat{X} = [0,1)_x \times \partial X$, as 
	\begin{equation}
		\lVert x^{-n/2} w \rVert_{\tilde{H}_{\mathrm{b}}^{-N,l}(\hat{X})}\preceq \lVert w \rVert_{H_{\mathrm{b}}^{-N,l}(X)} \preceq  \lVert x^{-n/2} w \rVert_{\tilde{H}_{\mathrm{b}}^{-N,l}(\hat{X})}
	\end{equation} 
	for all $N,l\in \bbR$ and $w\in \calS'(X)$ supported in $\{x<\bar{x}\}$.
	\Cref{eq:misc_983} is therefore equivalent to 
	\begin{align}
		\lVert x^{-n/2} v \rVert_{\tilde{H}_{\mathrm{b}}^{-N,l}(\hat{X})} &\preceq \lVert x^{-n/2} (2ix(x \partial_x-(n-1)/2) + 2\mathsf{Z}ki x^2 / (\sigma^2+\mathsf{Z} x) )  v \rVert_{\tilde{H}_{\mathrm{b}}^{-N,l+1}(\hat{X})} 
		\intertext{	for $v \in H_{\mathrm{b}}^{-N,l}(X)$ supported in $\{x<\bar{x}\}$, which follows if } 
		\lVert w \rVert_{\tilde{H}_{\mathrm{b}}^{-N,l}(\hat{X})} &\preceq \lVert  (2i(x \partial_x+1/2) + 2\mathsf{Z}ki x / (\sigma^2+\mathsf{Z} x) )   w \rVert_{\tilde{H}_{\mathrm{b}}^{-N,l}(\hat{X})}
	\end{align}
	holds for $w \in \tilde{H}_{\mathrm{b}}^{-N,l}(\hat{X})$, which was the conclusion of \Cref{lemma:uniform_normal}. 
\end{proof}

\begin{proposition}
	\label{prop:central}
	For each $\Sigma>0$, $N\in \bbN$, $m,s,\varsigma,l,\ell\in \bbR$ satisfying $l<-1/2$, $\ell< -3/2$, $s>s_0>-1/2$, $-3/2 < \varsigma_0 < \varsigma \leq \ell+s-l$, there exists a constant 
	$
	C = C(\tilde{P}, \Sigma,N,m,s,\varsigma,l,\ell  )>0 
	$ 
	such that, for any $u\in \calS'(X)$  
	\begin{equation}
		\lVert u \rVert_{H_{\mathrm{leC}}^{m,s,\varsigma,l,\ell}} \leq C\cdot ( \lVert \tilde{P} u \rVert_{H_{\mathrm{leC}}^{m-2,s+1,\varsigma+3,l+1,\ell+3}}  + \lVert u \rVert_{H_{\mathrm{b,leC}}^{-N,l-\delta,\ell-2\delta}}  )
		\label{eq:misc_8t6}
	\end{equation}
	holds for any $\sigma \in [0,\Sigma]$ such that  $u(-;\sigma) \in H_{\mathrm{scb}}^{-N,s_0,-N}(X)$. 
\end{proposition}

\begin{proof}
	Consider $u\in \calS'(X)$ and $\sigma \in [0,\Sigma]$ as in the proposition statement.  By \Cref{prop:refined_combined}, we have 
	\begin{equation}
		\lVert  u \rVert_{H_{\mathrm{leC}}^{m,s,\varsigma,l,\ell}} \preceq \lVert \tilde{P} u \rVert_{H_{\mathrm{leC}}^{m-2,s+1,\varsigma+3,l+1,\ell+3}}  + \lVert u \rVert_{H_{\mathrm{b,leC}}^{-N_0-2,l,\ell}}, 
	\end{equation}
	where $N_0\in \bbN$ is arbitrary.
	We now apply \Cref{prop:norm_est} to estimate the remainder term. 
	Let $\chi \in C_{\mathrm{c}}^\infty(X)$ be supported in $x\leq \bar{x}$ and identically equal to one in some neighborhood of $x=0$. 
	First of all, 
	\begin{equation} 
		\lVert u \rVert_{H_{\mathrm{b,leC}}^{-N_0-2,l,\ell}} \preceq  \lVert \chi u \rVert_{H_{\mathrm{b,leC}}^{-N_0-2,l,\ell}} + \lVert u \rVert_{H_{\mathrm{b,leC}}^{-N_0,-N_0,-N_0}}.
	\end{equation}  
	Set $k=(\ell-2l)/2$. Then, $k+l\leq -3/4$. 
	We now apply the previous proposition with $v= \chi u$, the result being 
	\begin{align}
		\begin{split} 
		\lVert \chi u \rVert_{H_{\mathrm{b,leC}}^{-N_0-2,l,\ell}} &\preceq \lVert \operatorname{N}(\tilde{P}) \chi u \rVert_{H_{\mathrm{b,leC}}^{-N_0-2,l+1,\ell+3}}  \\
		&\leq \lVert \tilde{P} \chi u \rVert_{H_{\mathrm{b,leC}}^{-N_0-2,l+1,\ell+3}} + \lVert E \chi u \rVert_{H_{\mathrm{b,leC}}^{-N_0-2,l+1,\ell+3}}  
		\end{split} 
	\end{align}
	for $E= \operatorname{N}(\tilde{P}) - \tilde{P}$. 
	By \Cref{prop:normal_error}, $E \in \Psi_{\mathrm{b,leC}}^{2,-1-\delta,-3-2\delta}(X)$ for some $\delta \in (0,1/2)$, so 
	\begin{equation}
		\lVert E \chi u \rVert_{H_{\mathrm{b,leC}}^{-N_0-2,l+1,\ell+3}} \preceq \lVert u \rVert_{H_{\mathrm{b,leC}}^{-N_0,l-\delta,\ell-2\delta}}.
	\end{equation}
	On the other hand, since $\chi$ is identically one in some neighborhood of $\partial X$, 
	\begin{align}
		\begin{split} 
		 \lVert \tilde{P} \chi u \rVert_{H_{\mathrm{b,leC}}^{-N_0-2,l+1,\ell+3}} &\preceq  \lVert \tilde{P}  u \rVert_{H_{\mathrm{b,leC}}^{-N_0-2,l+1,\ell+3}} + \lVert u \rVert_{H_{\mathrm{b,leC}}^{-N_0,-N_0,-N_0}} \\
		 &\preceq  \lVert \tilde{P}  u \rVert_{H_{\mathrm{b,leC}}^{m-2,s+1,\varsigma+3,l+1,\ell+3}} + \lVert u \rVert_{H_{\mathrm{b,leC}}^{-N_0,-N_0,-N_0}} 
		 \end{split} 
	\end{align}
	for sufficiently large $N_0$. Combining the estimates above (for sufficiently large $N_0$), we get \cref{eq:misc_8t6}.  
\end{proof}

For each $m,s,\varsigma,l,\ell \in \bbR$, we consider the families $\calX=\calX_{m,s,\varsigma,l,\ell}=\{\calX_{m,s,\varsigma,l,\ell}(\sigma)\}_{\sigma\geq 0}$ and $\calY=\calY_{m,s,\varsigma,l,\ell} = \{\calY_{m,s,\varsigma,l,\ell}(\sigma)\}_{\sigma\geq 0}$ given by 
	\begin{align}
	\calX_{m,s,\varsigma,l,\ell}(\sigma) &=  \{u \in H_{\mathrm{leC}}^{m,s,\varsigma,l,\ell}(X) : \tilde{P} u \in H_{\mathrm{leC}}^{m-2,s+1,\varsigma+3,l+1,\ell+3}(X)    \} \label{eq:misc_xxx}\\
	\calY_{m,s,\varsigma,l,\ell}(\sigma) &= H_{\mathrm{leC}}^{m-2,s+1,\varsigma+3,l+1,\ell+3}(X), 
\end{align}
considered as families of Banach spaces in the usual way, 
\begin{equation} 
	\lVert u \rVert_\calX = \lVert u \rVert_{H_{\mathrm{leC}}^{m,s,\varsigma,l,\ell}} + \lVert \tilde{P} u \rVert_{H_{\mathrm{leC}}^{m-2,s+1,\varsigma+3,l+1,\ell+3}}.
\end{equation}  Note that 
$\calX_{m,s,\varsigma,l,\ell}(0) = \calX_{m,\varsigma,\ell}$ and $\calY_{m,s,\varsigma,l,\ell}(0) = \calY_{m,\varsigma,\ell}$, where the right-hand sides are defined \cref{eq:misc_x12} and \cref{eq:misc_y12}. 
Tautologically, $\tilde{P} : \calX\to \calY$ is bounded, uniformly in $\sigma\geq 0$, as $\lVert \tilde{P} u \rVert_{\calY}\leq \lVert u \rVert_{\calX}$.

\begin{proposition}
	\label{prop:alternative} 
	Given $m,s,\varsigma,l,\ell$ satisfying the inequalities $l<-1/2$, $\ell<-3/2$, $-1/2<s$, $-3/2< \varsigma \leq \ell+s-l$, one of the following two alternatives holds: 
	\begin{itemize}
		\item there exists some $\sigma\geq 0$ and nonzero $u\in \calX_{m,s,\varsigma,l,\ell}(\sigma)$ such that $\tilde{P}(\sigma) u = 0$, 
		\item there exists, for each $\Sigma>0$, a constant $C_0=C_0(\tilde{P},m,s,\varsigma,l,\ell,\Sigma)>0$ such that the estimate 
		\begin{equation}
			\lVert u \rVert_{H_{\mathrm{leC}}^{m,s,\varsigma,l,\ell}} \leq C_0 \lVert   \tilde{P} u \rVert_{H_{\mathrm{leC}}^{m-2,s+1,\varsigma+3,l+1,\ell+3}}
			\label{eq:misc_zl1}
		\end{equation}
		holds for all $\sigma \in [0,\Sigma]$ and all $u\in \calX_{m,s,\varsigma,l,\ell}(\sigma)$.  
	\end{itemize}
\end{proposition}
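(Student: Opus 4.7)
The plan is to carry out the standard Fredholm-alternative-by-contradiction argument, adapted to the $\sigma$-dependent family of leC-Sobolev spaces in the same manner as \Cref{prop:interpolation}. The semi-Fredholm estimate \Cref{prop:central} says that, modulo a relatively compact error term $\lVert u\rVert_{H_{\mathrm{b,leC}}^{-N,l-\delta,\ell-2\delta}}$, we already have the desired control; the first alternative exhibits the unique obstruction to removing that error.

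Suppose the second alternative fails. Then we can choose $\Sigma>0$, a sequence $\{\sigma_k\}_{k\in\bbN}\subset [0,\Sigma]$, and elements $u_k\in \calX_{m,s,\varsigma,l,\ell}(\sigma_k)$ with
\begin{equation}
	\lVert u_k\rVert_{H_{\mathrm{leC}}^{m,s,\varsigma,l,\ell}(X)(\sigma_k)} = 1, \qquad \lVert \tilde P(\sigma_k) u_k\rVert_{H_{\mathrm{leC}}^{m-2,s+1,\varsigma+3,l+1,\ell+3}(X)(\sigma_k)} \leq \frac{1}{k}.
\end{equation}
Passing to a subsequence, assume $\sigma_k\to \sigma_\infty\in [0,\Sigma]$. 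Applying \Cref{prop:central} with some fixed $-1/2<s_0<s$ and $-3/2<\varsigma_0<\varsigma$ (and an $N$ taken sufficiently large that $s_0,\varsigma_0 > -N$), we get a fixed positive lower bound
\begin{equation}
	\lVert u_k\rVert_{H_{\mathrm{b,leC}}^{-N,l-\delta,\ell-2\delta}(X)(\sigma_k)}\;\succeq\; 1
\end{equation}
for all sufficiently large $k$.

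Now I would mimic the compactness/weak-limit argument in the proof of \Cref{prop:interpolation} verbatim. Concretely: write the norms on the various leC-Sobolev spaces in the form $\lVert \Lambda_{\star}(\sigma_k) u_k\rVert_{L^2} + \lVert x^{-l_\star}(\sigma_k^2+\mathsf Z x)^{l_\star-\ell_\star/2}u_k\rVert_{H_{\mathrm{b}}^{-M}(X)}$ using the reference elliptic operators $\Lambda_\star$. By Banach--Alaoglu, pass to a further subsequence so that $\Lambda_{m,s,\varsigma,l,\ell}(\sigma_k)u_k$ converges weakly in $L^2(X)$ and $x^{-l}(\sigma_k^2+\mathsf Z x)^{l-\ell/2} u_k$ converges weakly in $H_{\mathrm{b}}^{-M}(X)$. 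Using the joint continuity of the multiplication $x^{l'}(\sigma^2+\mathsf Z x)^{-l'+\ell'/2}:[0,\Sigma]_\sigma \times \calS'(X)\to \calS'(X)$ for the exponents of interest (exactly as in the proof of \Cref{prop:interpolation}), these weak limits arise from a single distribution $u_\infty \in \calS'(X)$. Compactness of the inclusions $H_{\mathrm{b}}^{-N,\tau}(X)\hookrightarrow H_{\mathrm{b}}^{-N',\tau'}(X)$ for $N'>N$, $\tau'<\tau$ upgrades the weak convergence to strong convergence in $H_{\mathrm{b,leC}}^{-N_0,l-\delta-\varepsilon,\ell-2\delta-2\varepsilon}(X)(\sigma_\infty)$ for any $\varepsilon>0$ and $N_0$ large; in particular the lower bound above passes to the limit, giving $u_\infty\neq 0$.

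Three things then need to be checked. (a) $u_\infty\in H_{\mathrm{leC}}^{m,s,\varsigma,l,\ell}(X)(\sigma_\infty)$, which follows from weak lower semicontinuity of the $L^2$- and $H_{\mathrm{b}}^{-M}$-norms together with the identification $H_{\mathrm{leC}}^{\bullet}(X)(\sigma_\infty)=H_{\mathrm{scb}}^{\bullet}(X)$ (or $H_{\mathrm{scb}}^{\bullet}(X_{1/2})$ at $\sigma_\infty=0$) up to equivalence. (b) $\tilde P(\sigma_\infty)u_\infty = 0$: since $\tilde P(\sigma_k)u_k\to 0$ in $\calS'(X)$ by the hypothesized norm decay, while $\tilde P(\sigma_k)u_k\to \tilde P(\sigma_\infty)u_\infty$ in $\calS'(X)$ by the smoothness of $\sigma^2\mapsto \tilde P(\sigma)$ as a family of differential operators on $X^\circ$ together with the strong $\calS'$ convergence $u_k\to u_\infty$, the two limits must agree. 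In particular $u_\infty\in \calX_{m,s,\varsigma,l,\ell}(\sigma_\infty)$. (c) $u_\infty\neq 0$, as already noted. This produces the nontrivial kernel element required by the first alternative, contradicting the failure of the second alternative.

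The main obstacle is technical rather than conceptual: tracking norms through the $\sigma$-dependent family of leC-Sobolev spaces and identifying all the relevant weak limits with a single element of $\calS'(X)$, so that the strong convergence of the multiplication operators $x^{l'}(\sigma^2+\mathsf Z x)^{-l'+\ell'/2}$ as $\sigma\to\sigma_\infty$ can be invoked at $\sigma_\infty=0$ as well as at $\sigma_\infty>0$. This is precisely the bookkeeping already done in \Cref{prop:interpolation}, which we simply transplant.
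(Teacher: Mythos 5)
Your proposal is correct and follows essentially the same strategy as the paper's own proof: contradiction via a normalized sequence violating the clean estimate, the semi-Fredholm estimate from \Cref{prop:central} to force the remainder norm $\lVert u_k\rVert_{H_{\mathrm{b,leC}}^{-N,l-\delta,\ell-2\delta}}$ to stay bounded below, a Banach--Alaoglu extraction with the same two weak limits (one $L^2$ via $\Lambda_{m,s,\varsigma,l,\ell}$, one weighted b-Sobolev), identification of a single $u_\infty$ using joint continuity of the multiplication weights in $(\sigma,u)$, compactness of the b-Sobolev inclusion to preserve the lower bound, and finally identification of $u_\infty$ as a nonzero element of $\calX_{m,s,\varsigma,l,\ell}(\sigma_\infty)$ killed by $\tilde{P}(\sigma_\infty)$. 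The only place where you are slightly more compressed than the paper is item (a): the paper derives $u_\infty\in H_{\mathrm{leC}}^{m,s,\varsigma,l,\ell}(X)(\sigma_\infty)$ not from abstract lower semicontinuity but by identifying $v=\Lambda_{m,s,\varsigma,l,\ell}(\sigma_\infty)u_\infty$ through $\calS'$-convergence and then invoking elliptic regularity together with the a priori b-Sobolev membership of $u_\infty$; your phrasing via lower semicontinuity plus the Banach space identification reaches the same place. Membership in $\calX$ is automatic once $\tilde{P}(\sigma_\infty)u_\infty=0$, as you implicitly note. No gap.
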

\begin{proof}
	The following is a variant of the proof of \cite[Theorem 26.1.7]{Hormander}, also used in the proof of the main theorem in \cite{VasyLA}.
	
	Suppose that the second of the two alternatives does not hold, so that there exist $\Sigma>0$ and sequences $\{\sigma_k\}_{k=0}^\infty \subset [0,\Sigma]$ and $\{u_k\}_{k=0}^\infty \subset \calS'(X)$ with $\lVert u_k \rVert_{H_\mathrm{leC}^{m,s,\varsigma,l,\ell}(X)(\sigma_k) }=1$ and 
	\begin{equation} 
		\lVert \tilde{P}(\sigma_k) u_k \rVert_{H_{\mathrm{leC}}^{m-2,s+1,\varsigma+3,l+1,\ell+3}(X)(\sigma_k)} < 1/k 
		\label{eq:misc_kka}
	\end{equation} 
	for all $k\in \bbN$.  By passing to a subsequence if necessary (and noting that \cref{eq:misc_kka} continues to hold upon doing so), we can arrange that $\sigma_k\to \sigma_\infty$ for some $\sigma_\infty \in [0,\Sigma]$.
	
	Even though \cref{eq:misc_zl1} might not hold, by \Cref{prop:central} we at least have the bound 
	\begin{equation}
		1=\lVert u_k \rVert_{H_{\mathrm{leC}}^{m,s,\varsigma,l,\ell}(X)(\sigma_k)} \leq C \cdot (\lVert   \tilde{P} u_k \rVert_{H_{\mathrm{leC}}^{m-2,s+1,\varsigma+3,l+1,\ell+3}(X)(\sigma_k)} + \lVert u_k \rVert_{H_{\mathrm{b,leC}}^{-N,l-\delta,\ell-2\delta}} ),
		\label{eq:misc_oop}
	\end{equation}
	for any $N\in \bbN$, 
	where $C=C(\tilde{P},\Sigma,N,m,s,\varsigma,l,\ell)$ is some constant.
	On the other hand, for sufficiently large $N_0>N$, we can bound 
	\begin{equation}
		\lVert u \rVert_{H_\mathrm{leC}^{m,s,\varsigma,l,\ell}(X)(\sigma_k) } \succeq \lVert \Lambda_{m,s,\varsigma,l,\ell} (\sigma_k) u \rVert_{L^2(X)} + \lVert x^{-l} (\sigma^2+\mathsf{Z}x)^{l-\ell/2} u \rVert_{H_{\mathrm{b}}^{-N_0,0}(X)},
	\end{equation}
	this holding for all $u \in \calS'(X)$ and $\sigma \in [0,\Sigma]$. 
	Consequently, $\{\Lambda_{m,s,\varsigma,l,\ell}(\sigma_k) u_k \}_{k\in \bbN}$ is bounded in $L^2(X)$ and $\{x^{-l} (\sigma_k^2+\mathsf{Z}x)^{l-\ell/2} u_k\}_{k\in \bbN}$ is bounded in $H_{\mathrm{b}}^{-N_0,0}(X)$. 
	By Banach--Alaoglu -- passing to a subsequence if necessary (and once again noting that \cref{eq:misc_kka} continues to hold upon doing so) -- we can assume that there exist some $v\in L^2(X)$ and $w \in H_{\mathrm{b}}^{-N_0,0}(X)$ such that 
	\begin{equation} 
		\Lambda_{m,s,\varsigma,l,\ell}(\sigma_k) u_k \to v
	\end{equation}  
	as $k\to\infty$ weakly in $L^2(X)$ and $x^{-l} (\sigma_k^2+\mathsf{Z}x)^{l-\ell/2} u_k \to w$ as $k\to\infty$ weakly in $H_{\mathrm{b}}^{-N_0,0}(X)$. 
	
	It follows from the latter that $u_k\to x^{l} (\sigma_\infty^2+\mathsf{Z}x)^{\ell/2-l} w$ strongly in some b-Sobolev space. 
	This has two consequences: 
	\begin{itemize}
		\item First, 
		\begin{equation}
			\tilde{P}(\sigma_k)  u_k \to \tilde{P}(\sigma_\infty)( x^{l} (\sigma_\infty^2+\mathsf{Z}x)^{\ell/2-l} w) 
		\end{equation}
		in $\calS'(X)$ as $k\to\infty$ (e.g. using \Cref{prop:operator_cont} and \Cref{prop:tilde_P_inc}).  But, the assumption  $\lVert \tilde{P}(\sigma_k) u_k \rVert_{H_{\mathrm{leC}}^{m-2,s+1,\varsigma+3,l+1,\ell+3}(X)(\sigma_k)} < 1/k$ implies that $\tilde{P}(\sigma_k)u_k \to 0$ in $\calS'(X)$. Therefore $ \tilde{P}(\sigma_\infty)( x^{l} (\sigma_\infty^2+\mathsf{Z}x)^{\ell/2-l} w)  = 0$. 
		\item Second (using \Cref{prop:operator_cont}),  $ \Lambda_{m,s,\varsigma,l,\ell}(\sigma_k) u_k \to \Lambda_{m,s,\varsigma,l,\ell}(\sigma_\infty) ( x^{l} (\sigma_\infty^2+\mathsf{Z}x)^{\ell/2-l} w)$ in $\calS'(X)$. Since $\calS'(X)$ is Hausdorff, this implies that 
		\begin{equation} \Lambda_{m,s,\varsigma,l,\ell}(\sigma_\infty) ( x^{l} (\sigma_\infty^2+\mathsf{Z}x)^{\ell/2-l} w) = v \in L^2(X),
		\end{equation} 
		which in turn implies that $x^{l} (\sigma_\infty^2+\mathsf{Z}x)^{\ell/2-l} w \in H_{\mathrm{leC}}^{m,s,\varsigma,l,\ell}(X)(\sigma_\infty)$ by elliptic regularity (and the fact that $w \in H_{\mathrm{b}}^{-N_0,0}$).
	\end{itemize} 
	Let $u = x^{l} (\sigma_\infty^2+\mathsf{Z}x)^{\ell/2-l} w$. What we proved above is $\tilde{P}(\sigma_\infty) u =0$ and $u\in H_{\mathrm{leC}}^{m,s,\varsigma,l,\ell}(X)(\sigma_\infty)$.
	Thus, 
	\begin{equation} 
		u \in \calX_{m,s,\varsigma,l,\ell}(\sigma_\infty).
	\end{equation}  
	Since $N>N_0$, via the compactness of the inclusion $ H_{\mathrm{b}}^{-N_0,0}(X)\hookrightarrow H_{\mathrm{b}}^{-N,-\delta}(X)$ it is the case that $x^{-l} (\sigma_k^2+\mathsf{Z}x)^{l-\ell/2} u_k \to w$ strongly in $H_{\mathrm{b}}^{-N,-\delta}(X)$, so 
	\begin{equation}
		\lVert w \rVert_{H_{\mathrm{b}}^{-N,-\delta}(X)} = \lim_{k\to\infty} 	\lVert x^{-l} (\sigma_k^2+\mathsf{Z}x)^{l-\ell/2} u_k \rVert_{H_{\mathrm{b}}^{-N,-\delta}(X)}.
		\label{eq:misc_ooi}
	\end{equation}
	On the other hand, we can bound $
	\lVert u_k \rVert_{H_{\mathrm{b,leC}}^{-N,l-\delta,\ell-2\delta}(X)(\sigma_k)} \preceq \lVert x^{-l} (\sigma_k^2+\mathsf{Z}x)^{l-\ell/2}u_k \rVert_{H_{\mathrm{b}}^{-N,-\delta}(X)}$. So, \cref{eq:misc_oop} yields 
	\begin{align}
		\begin{split} 
		1 &\preceq \lVert w \rVert_{H_{\mathrm{b}}^{-N,-\delta}(X)}+ \limsup_{k\to\infty} \lVert   \tilde{P}(\sigma_k) u_k \rVert_{H_{\mathrm{leC}}^{m-2,s+1,\varsigma+3,l+1,\ell+3}(X)(\sigma_k)}  \\
		&= \lVert w \rVert_{H_{\mathrm{b}}^{-N,-\delta}(X)}. 
		\end{split} 
	\end{align}
	Therefore $w\neq 0$. It follows that $u\neq 0$.  We have therefore succeeded in showing that the first of the two alternatives listed in the proposition holds. 
\end{proof}

\begin{proposition}
	\label{prop:invert}
	Suppose that $P$ is the spectral family of an attractive Coulomb-like Schr\"odinger operator for $\sigma\leq \Sigma$. Then, given $m,s,\varsigma,l,\ell\in \bbR$ satisfying  $l<-1/2$, $\ell<-3/2$, $-1/2<s$, $-3/2< \varsigma$, it is the case that,  
	for each $\sigma\geq \Sigma$, 
	\begin{equation} 
		\tilde{P}(\sigma) : \calX_{m,s,\varsigma,l,\ell}(\sigma) \to \calY_{m,s,\varsigma,l,\ell}(\sigma)
		\label{eq:misc_tps}
	\end{equation} 
	is invertible.
\end{proposition}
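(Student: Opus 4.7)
The plan is to establish invertibility pointwise in $\sigma \in [0,\Sigma]$ by matching $\tilde{P}(\sigma)$ at each fixed $\sigma$ to a setting in which the limiting absorption principle has already been proven in the paper or in the cited literature.

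At $\sigma = 0$, the definition of the leC-Sobolev spaces from \S\ref{sec:calculus} gives, at the level of topological vector spaces, the identification $H_{\mathrm{leC}}^{m,s,\varsigma,l,\ell}(X)(0) = H_{\mathrm{scb}}^{m,\varsigma+n/2,\ell+n/2}(X_{1/2})$, and similarly for the target indices (shifted up by $(1,3,3)$). Consequently $\calX_{m,s,\varsigma,l,\ell}(0)$ and $\calY_{m,s,\varsigma,l,\ell}(0)$ coincide, as Banach spaces, with the domain and codomain of $\tilde{P}(0;\mathsf{Z})$ in \Cref{prop:LA_at_0}. Our threshold hypotheses $\ell < -3/2 < \varsigma$ are exactly those required there, so invertibility at $\sigma = 0$ is immediate.

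At fixed $\sigma > 0$, I would use that $H_{\mathrm{leC}}^{m,s,\varsigma,l,\ell}(X)(\sigma) \cong H_{\mathrm{scb}}^{m,s,l}(X)$ as TVSs (cf.\ the discussion after \cref{eq:misc_sim}), and that $\tilde{P}(\sigma)$ is the conjugation of $P(\sigma)$ by $e^{+i\Phi(-;\sigma)}$. The outgoing-radiation LAP of \cite[Theorem 1.1]{VasyLA} gives invertibility of the analogous conjugation by Vasy's phase $\Phi_{\mathrm{Vasy}}$ between scb-Sobolev spaces with indices matching ours, under the threshold conditions $l < -1/2 < s$. At fixed positive $\sigma$, $\Phi(-;\sigma)$ differs from $\Phi_{\mathrm{Vasy}}(-;\sigma)$ by a real logarithmic-plus-smooth correction $\alpha(\sigma) \log x + \beta(-;\sigma)$ (cf.\ \cref{eq:osc_+}), so $\exp(i(\Phi - \Phi_{\mathrm{Vasy}})) = x^{i\alpha(\sigma)} e^{i\beta(-;\sigma)}$ has unit modulus and defines an isomorphism of each scb-Sobolev space with itself. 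Composing Vasy's inverse with this isomorphism then produces a bounded inverse of $\tilde{P}(\sigma):\calX(\sigma) \to \calY(\sigma)$.

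The main obstacle will be bookkeeping: verifying carefully that the phase discrepancy, although blowing up as $\sigma \to 0^+$, really acts boundedly on the relevant scb-Sobolev spaces at each fixed positive $\sigma$, and that the threshold inequalities on $(m,s,\varsigma,l,\ell)$ imposed here line up with both the zero-energy conditions of \Cref{prop:LA_at_0} and the positive-energy conditions of \cite[Theorem 1.1]{VasyLA}. Should the direct identification prove awkward, a fallback is to combine \Cref{prop:central} with the dichotomy in \Cref{prop:alternative}: rule out kernel elements by using \Cref{prop:LA_at_0} at $\sigma=0$ and standard uniqueness for $R(\sigma^2+i0)$ at $\sigma>0$ (applied to $v = e^{+i\Phi}u$, whose outgoing Sommerfeld condition is encoded by $u \in H_{\mathrm{leC}}$), then obtain surjectivity by the symmetric argument on the cokernel via the $e^{-i\Phi}$ conjugation, together with closed range from the uniform estimate \cref{eq:misc_zl1}.
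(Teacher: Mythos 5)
Your proposal takes the same route as the paper: invoke \Cref{prop:LA_at_0} at $\sigma=0$, and at each fixed $\sigma>0$ reduce to the scb-Sobolev picture and to Vasy's limiting absorption principle. The paper's mechanism for the $\sigma>0$ reduction is slightly different from yours: rather than conjugating by the bounded isomorphism $e^{i(\Phi-\Phi_{\mathrm{Vasy}})}$, the paper writes $\tilde{\tilde{P}}(\sigma)=e^{-i\sigma/x}P(\sigma)e^{+i\sigma/x}=\tilde{P}(\sigma)+T(\sigma)$ with $T(\sigma)\in\operatorname{Diff}_{\mathrm{sc}}^{1,-1,-1}(X)$, uses the lower-order mapping property of $T$ to see that the domain $\calX(\sigma)$ is the same whether defined via $\tilde{P}$ or via $\tilde{\tilde{P}}$, and then appeals to Vasy's proof (with \Cref{prop:central} as a drop-in for one of his estimates) running for $\tilde{P}$ directly. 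Your conjugation argument is a clean alternative to the paper's ``mutatis mutandis'' step, and your verification that the phase discrepancy is real (because $a=0$ for Schr\"odinger operators) and acts as a zeroth-order scb-symbol is exactly the right thing to check. Your fallback via \Cref{prop:central} and \Cref{prop:alternative} is the paper's stated alternative too; if you take that route, note (as the paper does) that the cross condition $\varsigma\leq\ell+s-l$ in \Cref{prop:central} is irrelevant for the pointwise-in-$\sigma$ statement, so the weaker hypotheses of this proposition still suffice.
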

\begin{proof}
	We already observed the $\sigma=0$ case in \S\ref{sec:0_operator}. The $\sigma>0$ case is essentially proven in \cite[\S4]{VasyLA}. In order to see this, note that, for each $\sigma>0$,  
	\begin{align}
		\calX_{m,s,\varsigma,l,\ell}(\sigma) &= \{u \in H_{\mathrm{scb}}^{m,s,l}(X) : \tilde{\tilde{P}}(\sigma) u \in H_{\mathrm{scb}}^{m-2,s+1,l+1}(X) \}  \\
		\calY_{m,s,\varsigma,l,\ell}(\sigma) &= H_{\mathrm{scb}}^{m-2,s+1,l+1}(X) 
	\end{align}	
	at the level of sets, where 
	\begin{equation} 
	\tilde{\tilde{P}}(\sigma) = e^{ - i \Phi_0+ i\Phi} \tilde{P} e^{+i \Phi_0-i\Phi}
	\label{eq:misc_12v}
	\end{equation}
	is Vasy's conjugated operator, $\Phi_0$ simply being defined by $\Phi_0 = \sigma x^{-1}$.
	Since the leC-Sobolev spaces are just scb-Sobolev spaces for $\sigma>0$, the crux of the previous claim is that the $\tilde{P}$ on the right-hand side of \cref{eq:misc_xxx} can be replaced by Vasy's \cref{eq:misc_12v}.  Indeed, $\Phi_0(x;\sigma)- \Phi(x;\sigma) \in \log x C^\infty([0,\bar{x})_x)+  C^\infty([0,\bar{x})_x)$ for each $\sigma>0$, so 
	\begin{equation} 
	\tilde{\tilde{P}}(\sigma) = \tilde{P}(\sigma) + T(\sigma)
	\end{equation}
	for $T(\sigma)=e^{ - i \Phi_0(-;\sigma)+ i\Phi(-;\sigma)} [\tilde{P}(\sigma), e^{+i \Phi_0(-;\sigma)-i\Phi(-;\sigma)}] \in \operatorname{Diff}_{\mathrm{scb}}^{1,-1,-1}(X)$. Thus,
		\begin{equation}
			T(\sigma) :H_{\mathrm{scb}}^{m,s,l}(X) \to H_{\mathrm{scb}}^{m-2,s+1,l+1}(X).
		\end{equation}
		So, for $u \in H_{\mathrm{scb}}^{m,s,l}(X)$,  $\tilde{\tilde{P}}(\sigma) u \in H_{\mathrm{scb}}^{m-2,s+1,l+1}(X)$ if and only if  $\tilde{P}(\sigma) u \in H_{\mathrm{scb}}^{m-2,s+1,l+1}(X)$.
	The operators we consider have slightly more general $\sigma$-dependence than the ones in Vasy (as some additional assumptions are needed for the $\sigma\notin \bbR$ case of \cite[Theorem 1.1]{VasyLA}), but since we are only considering real $\sigma$ his proof of the real case of \cite[Theorem 1.1]{VasyLA} goes through in this slightly greater generality \emph{mutatis mutandis}. 
	
	Alternatively, the $\sigma>0$ case of \Cref{prop:central} suffices as a replacement for \cite[Prop. 4.16]{VasyLA} in his proof of \cite[Theorem 1.1]{VasyLA}, the rest of which is identical. (For the purpose of the proof above, we do not need to know that the estimate in \Cref{prop:central} is uniform down to $\sigma=0$, so the $\varsigma \leq \ell+s-l$ hypothesis there is not relevant here.)
\end{proof}

\subsection{Smoothness at $\mathrm{zf}$, Conormality elsewhere}
\label{subsec:smoothness1}

For this subsection, we suppose that $P(\sigma)$ is the spectral family of an attractive Coulomb-like Schr\"odinger operator for $\sigma$ in some neighborhood of $[0,\Sigma]$, $\Sigma>0$. 
By analogy with the terminology in \cite{MelroseSC}, we might say that $u\in \calS'(X)$ ``satisfies the conjugated Sommerfeld radiation condition'' for some given $\sigma\geq 0$ if $u\in \calX_{m,s,\varsigma,l,\ell}(X)(\sigma)$ for some $m,s,\varsigma,l,\ell\in \bbR$ satisfying  $l<-1/2$, $\ell<-3/2$, $s>-1/2$, $\ell+s-l\geq \varsigma>-3/2$.
One of the main tasks of this subsection is to show that the limiting resolvent output converges as $\sigma\to 0^+$ to something satisfying the zero energy version of the Sommerfeld radiation condition. 

For each $\sigma \in [0,\Sigma]$, let $\tilde{R}_+(\sigma) : \calY_{m,s,\varsigma,l,\ell}(\sigma) \to  \calX_{m,s,\varsigma,l,\ell}(\sigma)$ denote the set-theoretic inverse to \cref{eq:misc_tps} (which, of course, must actually be an isomorphism of Banach spaces e.g. by the closed graph theorem). (The `$+$' subscript of $\tilde{R}_+ = \{\tilde{R}_+(\sigma)\}_{\sigma\geq 0}$ refers to the choice of sign in defining the conjugation.) This extends the definition of the operator $\tilde{R}_+(0)$ introduced in \S\ref{sec:0_operator} to the $\sigma>0$ case. For each $\sigma \in (0,\Sigma]$, 
\begin{equation} 
	\tilde{R}_+(\sigma) : H_{\mathrm{scb}}^{m-2,s+1,l+1}(X) \to H_{\mathrm{scb}}^{m,s,l}(X)
	\label{eq:misc_rps}
\end{equation} 
is bounded (but not uniformly in $\sigma$).  Considering the case $s=m+l$, $\tilde{R}_+(\sigma):H_{\mathrm{b}}^{m,l+1}(X) \to H_{\mathrm{b}}^{m,l}(X)$ if $l<-1/2<m+l$. 
Note that the mapping properties of the resolvent with respect to the b-Sobolev spaces are slightly lossy, in the sense that we can no longer keep track of the fact that the map \cref{eq:misc_rps} smooths by two orders.
As the notation in \cref{eq:misc_rps} indicates,  the operator $\smash{\tilde{R}_+(\sigma)}$ makes sense as a map 
\begin{equation} 
	\bigcup_{\substack{m,s,l\in \bbR \\ s>-1/2> l}} H_{\mathrm{scb}}^{m-2,s+1,l+1}(X) \to 	\bigcup_{\substack{m,s,l\in \bbR \\ s>-1/2> l}} H_{\mathrm{scb}}^{m,s,l}(X),
	\label{eq:misc_674}
\end{equation} 
hence we can just write ``$\tilde{R}_+(\sigma)$'' without specifying $m,s,l$.
A similar statement holds for $\sigma=0$.

\begin{proposition} 
	\label{prop:misc_estimate}
	Given $m,s,\varsigma,l,\ell\in \bbR$ satisfying  $l<-1/2$, $\ell<-3/2$, $s>-1/2$, $\ell+s-l\geq \varsigma>-3/2$,  there exists some constant $C=C(m,s,\varsigma,l,\ell,\Sigma) >0$ such that 
	\begin{equation}
		\lVert \tilde{R}_+(\sigma) \tilde{f} \rVert_{H_{\mathrm{leC}}^{m,s,\varsigma,l,\ell}(X) (\sigma)} \leq C \lVert \tilde{f} \rVert_{H_{\mathrm{leC}}^{m-2,s+1,\varsigma+3,l+1,\ell+3}(X)(\sigma)} 
		\label{eq:misc_ues}
	\end{equation}
	for all $\sigma \in [0,\Sigma]$ and $\tilde{f} \in H_{\mathrm{leC}}^{m-2,s+1,\varsigma+3,l+1,\ell+3}(X)(\sigma)$.  	Moreover, for $m,l\in \bbR$ with $l<-1/2$ and $-1<m+2l$: 
	\begin{enumerate}[label=(\Roman*)] 
		\item 
		 for any $\tilde{f} \in H_{\mathrm{b}}^{m,l+5/4}(X)$, we have $\lVert (\sigma^2 + \mathsf{Z}x)^{1/4} \tilde{R}_+(\sigma) \tilde{f} \rVert_{H_{\mathrm{b}}^{m,l}} \leq  C_0 \lVert \tilde{f} \rVert_{H_{\mathrm{b}}^{m,l+5/4}}$
		for some constant $C_0=C_0(\tilde{P},m,l,\Sigma)>0$  
		\item  for any $\tilde{f} \in H_{\mathrm{b}}^{m,l+1}(X)$ we have $
			\lVert (\sigma^2 + \mathsf{Z}x)^{1/4} \tilde{R}_+(\sigma)  (\sigma^2 + \mathsf{Z}x)^{1/4}  \tilde{f} \rVert_{H_{\mathrm{b}}^{m,l}} \leq  C_1 \lVert \tilde{f} \rVert_{H_{\mathrm{b}}^{m,l+1}}$
		for some constant $C_1=C_1(\tilde{P},m,l,\Sigma)>0$
	\end{enumerate}
	for all $\sigma \in [0,\Sigma]$.
\end{proposition}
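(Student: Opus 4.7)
The plan is to deduce \cref{eq:misc_ues} from \Cref{prop:alternative} and \Cref{prop:invert}, and then to obtain (I) and (II) as corollaries by a judicious choice of leC-Sobolev indices, translated back to b-Sobolev form via \Cref{lem:bleC_equiv}.

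For \cref{eq:misc_ues} itself, observe that the indices $(m,s,\varsigma,l,\ell)$ satisfy exactly the hypotheses of \Cref{prop:alternative}. That proposition presents a dichotomy: either some $\tilde{P}(\sigma)$ admits a nonzero element of $\calX_{m,s,\varsigma,l,\ell}(\sigma)$ in its kernel, or the uniform estimate
\begin{equation*}
\lVert u\rVert_{H^{m,s,\varsigma,l,\ell}_{\mathrm{leC}}} \leq C_0 \lVert \tilde{P} u\rVert_{H^{m-2,s+1,\varsigma+3,l+1,\ell+3}_{\mathrm{leC}}}
\end{equation*}
holds for all $u \in \calX_{m,s,\varsigma,l,\ell}(\sigma)$, uniformly for $\sigma \in [0,\Sigma]$. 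But \Cref{prop:invert} rules out the first alternative by asserting that $\tilde{P}(\sigma) : \calX_{m,s,\varsigma,l,\ell}(\sigma) \to \calY_{m,s,\varsigma,l,\ell}(\sigma)$ is a bijection for each $\sigma \in [0,\Sigma]$. Applying the resulting uniform estimate to $u = \tilde{R}_+(\sigma)\tilde{f}$, which lies in $\calX_{m,s,\varsigma,l,\ell}(\sigma)$ by the defining property of $\tilde{R}_+$ and satisfies $\tilde{P} u = \tilde{f}$, yields \cref{eq:misc_ues}.

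To obtain (I) and (II), I would specialize \cref{eq:misc_ues} with the indices
\begin{equation*}
s = m + l, \qquad \varsigma = m + 2l - 1/2, \qquad \ell = 2l - 1/2.
\end{equation*}
With this choice, \Cref{lem:bleC_equiv} identifies $H_{\mathrm{b}}^{m,l}$ with $H_{\mathrm{leC}}^{m, m+l, m+2l, l, 2l}$, and since $(\sigma^2+\mathsf{Z}x)^{1/4} = \varrho_{\mathrm{tf}_{00}}^{1/2} = (\varrho_{\mathrm{tf}}\varrho_{\mathrm{ff}})^{1/2}$ acts as a multiplier shifting the ff- and tf-orders by $+1/2$ each (cf.\ \Cref{prop:multiplication}), the target space $H_{\mathrm{leC}}^{m, s, \varsigma, l, \ell}$ is identified with $(\sigma^2+\mathsf{Z}x)^{-1/4} H_{\mathrm{b}}^{m,l}$. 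The hypotheses of \cref{eq:misc_ues} check out at once: $l < -1/2$ gives $\ell = 2l - 1/2 < -3/2$; $\varsigma > -3/2$ is precisely $m + 2l > -1$; $s > -1/2$ follows since $m > -1 - 2l$ combined with $l < -1/2$ yields $m + l > -1 - l > -1/2$; and $\varsigma \leq \ell + s - l$ holds with equality. For (II), the source $(\sigma^2+\mathsf{Z}x)^{1/4} H_{\mathrm{b}}^{m,l+1} = H_{\mathrm{leC}}^{m, m+l+1, m+2l+5/2, l+1, 2l+5/2}$ embeds continuously into the required source $H_{\mathrm{leC}}^{m-2, s+1, \varsigma+3, l+1, \ell+3}$ of \cref{eq:misc_ues} at the cost of just two derivatives; for (I), $H_{\mathrm{b}}^{m, l+5/4}$ identifies with $H_{\mathrm{leC}}^{m, m+l+5/4, m+2l+5/2, l+5/4, 2l+5/2}$ and embeds into the same space, losing two derivatives plus $1/4$ of decay at sf and bf. Composing with \cref{eq:misc_ues} then yields both estimates. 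With the heavy machinery of \Cref{prop:alternative} and \Cref{prop:invert} already in hand, the remaining work is essentially index bookkeeping, and no substantive obstacle is anticipated.
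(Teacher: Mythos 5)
Your proposal is correct and follows essentially the same route as the paper: deduce \cref{eq:misc_ues} by combining \Cref{prop:alternative} with \Cref{prop:invert} (the latter ruling out the first alternative), then obtain (I) and (II) by translating to and from b-Sobolev spaces via \Cref{lem:bleC_equiv} and the multiplier $(\sigma^2+\mathsf{Z}x)^{1/4}=(\varrho_{\mathrm{tf}}\varrho_{\mathrm{ff}})^{1/2}$. The only difference is presentational: you pre-commit to the indices $s=m+l$, $\varsigma=m+2l-1/2$, $\ell=2l-1/2$ and verify the hypotheses in one pass, whereas the paper arrives at the same indices by stepping through the chain of equivalences and embeddings.
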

\begin{proof} 
	As a corollary of \Cref{prop:alternative} and \Cref{prop:invert}, we get that for $m,s,\varsigma,l,\ell$ as above and $P$ the spectral family of an attractive Coulomb-like Schr\"odinger operator, 
	\begin{equation}
		\lVert u \rVert_{H_{\mathrm{leC}}^{m,s,\varsigma,l,\ell}(X) (\sigma)} \leq C(m,s,\varsigma,l,\ell,\Sigma) \lVert \tilde{P} u \rVert_{H_{\mathrm{leC}}^{m-2,s+1,\varsigma+3,l+1,\ell+3}}
		\label{eq:misc_931}
	\end{equation}
	holds for all $\sigma \in [0,\Sigma]$ and all $u\in \calX_{m,s,\varsigma,l,\ell}(\sigma)$. We also have 
	\begin{equation}
		\tilde{R}_+(\sigma) : H_{\mathrm{leC}}^{m-2,s+1,\varsigma+3,l+1,\ell+3}(X)(\sigma) \to \calX_{m,s,\varsigma,l,\ell}(\sigma)\subseteq  H_{\mathrm{leC}}^{m,s,\varsigma,l,\ell}(X) (\sigma).  
	\end{equation}
	Taking $u= \tilde{R}_+(\sigma) \tilde{f}$ in \cref{eq:misc_931} yields \Cref{eq:misc_ues}. 
	
	Suppose now that $m,l\in \bbR$ satisfy $l<-1/2$ and $-1<m+2l$ (in which case $-1/2<m+l$ holds as well). First suppose that $\smash{\tilde{f} \in H_{\mathrm{b}}^{m,l+5/4}(X)}$. Applying \cref{eq:misc_ues} (observing that the required inequalities $l<-1/2<m+l$ and $-3/2<m+2l-1/2$ hold), 
	\begin{align}
		\begin{split} 
			\lVert (\sigma^2+\mathsf{Z}x)^{1/4}\tilde{R}_+(\sigma)\tilde{f} \rVert_{H_{\mathrm{b}}^{m,l}} &\preceq 
			\lVert (\sigma^2+\mathsf{Z}x)^{1/4}\tilde{R}_+(\sigma)\tilde{f} \rVert_{H_{\mathrm{leC}}^{m,m+l,m+2l,l,2l}(\sigma)} \\ & \preceq \lVert \tilde{R}_+(\sigma)\tilde{f} \rVert_{H_{\mathrm{leC}}^{m,m+l,m+2l-1/2,l,2l-1/2}(\sigma)} \\
			&\preceq \lVert \tilde{f} \rVert_{H_{\mathrm{leC}}^{m-2, m+l+1,m+2l+5/2,l+1,2l+5/2}(\sigma)} \\
			&\preceq \lVert \tilde{f} \rVert_{H_{\mathrm{b}}^{m,l+5/4}}.
		\end{split}  
		\intertext{Now supposing that $\tilde{f} \in H_{\mathrm{b}}^{m,l+1}(X)$, } 
		\begin{split} 
		\lVert (\sigma^2+\mathsf{Z}x)^{1/4}\tilde{R}_+(\sigma) (\sigma^2+\mathsf{Z}x)^{1/4}\tilde{f} \rVert_{H_{\mathrm{b}}^{m,l}} &\preceq 
		\lVert (\sigma^2+\mathsf{Z}x)^{1/4}\tilde{R}_+(\sigma)(\sigma^2+\mathsf{Z}x)^{1/4}\tilde{f} \rVert_{H_{\mathrm{leC}}^{m,m+l,m+2l,l,2l}(\sigma)} \\ & \preceq \lVert \tilde{R}_+(\sigma) (\sigma^2+\mathsf{Z} x)^{1/4}\tilde{f} \rVert_{H_{\mathrm{leC}}^{m,m+l,m+2l-1/2,l,2l-1/2}(\sigma)} \\
		&\preceq \lVert (\sigma^2+\mathsf{Z} x)^{1/4} \tilde{f} \rVert_{H_{\mathrm{leC}}^{m-2, m+l+1,m+2l+5/2,l+1,2l+5/2}(\sigma)} \\
		&\preceq \lVert  \tilde{f} \rVert_{H_{\mathrm{leC}}^{m-2, m+l+1,m+2l+2,l+1,2l+2}(\sigma)} \\
		&\preceq \lVert \tilde{f} \rVert_{H_{\mathrm{b}}^{m,l+1}}.
	\end{split} 
	\end{align}
\end{proof}

\begin{proposition}
	\label{prop:prelim_convergence}
	For any $m,l\in \bbR$ with $l<-1/2$ and $-1<m+2l$, for any  $f\in H_{\mathrm{b}}^{m,l+5/4}(X)$, 
	\begin{equation} 
		(\sigma^2+\mathsf{Z}x)^{1/4}\tilde{R}_+(\sigma)f \to  \mathsf{Z}^{1/4} x^{1/4}\tilde{R}_+(0)f
		\label{eq:misc_cvg}
	\end{equation}  
	weakly in $H_{\mathrm{b}}^{m,l}(X)$ as $\sigma\to 0^+$. 
	In fact, the map $(\sigma,f)\mapsto (\sigma^2+\mathsf{Z}x)^{1/4}\tilde{R}_+(\sigma)f $ defines a jointly continuous map 
	\begin{equation} 
	[0,\Sigma)\times H_{\mathrm{b}}^{m,l+5/4}(X) \to H_{\mathrm{b}}^{m-\epsilon,l-\epsilon}(X),
	\end{equation} 
	for any $\epsilon>0$, where we are using the strong topologies on $H_{\mathrm{b}}^{m,l+5/4}(X)$ and $H_{\mathrm{b}}^{m-\epsilon,l-\epsilon}(X)$.

	This (applied for slightly smaller $m,l$) implies that 
	\begin{equation} 
		\{(\sigma^2+\mathsf{Z} x)^{1/4}\tilde{R}_+(\sigma)\}_{\sigma\geq 0} \subset \calL(H_{\mathrm{b}}^{m,l+5/4}(X),H_{\mathrm{b}}^{m-\epsilon,l-\epsilon}(X))
	\end{equation} 
	is continuous with respect to the uniform operator topology. 
\end{proposition}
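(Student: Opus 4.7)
The plan is to combine the uniform estimate from \Cref{prop:misc_estimate} with a weak compactness argument and a conjugation that reduces the identification of limits to the zero-energy resolvent theory of \S\ref{sec:0_operator}. Write $u_\sigma = (\sigma^2+\mathsf{Z}x)^{1/4}\tilde{R}_+(\sigma)f$, with the convention $u_0 := \mathsf{Z}^{1/4} x^{1/4}\tilde{R}_+(0)f$. The element $\tilde{R}_+(0)f$ is well-defined in $H_{\mathrm{b}}^{m,l-1/4}(X)$, because under the shift $l'' = l-1/4$ the hypotheses $l<-1/2$ and $m+2l>-1$ translate to the thresholds $l''<-3/4$ and $m+2l''>-3/2$ needed by \Cref{prop:LA_at_0b}; hence $u_0 \in H_{\mathrm{b}}^{m,l}(X)$. \Cref{prop:misc_estimate} gives $\sup_{\sigma\in[0,\Sigma]}\lVert u_\sigma\rVert_{H_{\mathrm{b}}^{m,l}} \preceq \lVert f \rVert_{H_{\mathrm{b}}^{m,l+5/4}}$, so by Banach--Alaoglu and a subsequence argument, proving $u_\sigma\rightharpoonup u_0$ reduces to showing that every weak limit $v$ along a subsequence $\sigma_k\to 0^+$ coincides with $u_0$.

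To identify such a $v$, I would introduce the conjugated family $Q(\sigma) = (\sigma^2+\mathsf{Z}x)^{1/4}\tilde{P}(\sigma)(\sigma^2+\mathsf{Z}x)^{-1/4}$. Its coefficients depend continuously on $\sigma\in[0,\Sigma]$ as elements of $\operatorname{Diff}^2(X^\circ)$, and by construction $Q(\sigma) u_\sigma = (\sigma^2+\mathsf{Z}x)^{1/4} f$. Testing against $\varphi\in C_{\mathrm{c}}^\infty(X^\circ)$, the strong convergence $Q(\sigma_k)^*\varphi\to Q(0)^*\varphi$ in the dual of $H_{\mathrm{b}}^{m,l}$ combined with the uniform boundedness of $u_{\sigma_k}$ lets me pass to the limit and obtain $Q(0)v = (\mathsf{Z}x)^{1/4} f$ in $\calD'(X^\circ)$, equivalently $\tilde{P}(0) [(\mathsf{Z}x)^{-1/4} v] = f$. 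Since $(\mathsf{Z}x)^{-1/4}v$ lies in $H_{\mathrm{b}}^{m,l-1/4}(X)$ and extendable distributions on $X$ are determined by their restrictions to $X^\circ$, this identity holds in $\calS'(X)$; the injectivity half of \Cref{prop:LA_at_0b} at b-order $l-1/4$ then forces $(\mathsf{Z}x)^{-1/4} v = \tilde{R}_+(0) f$, i.e.\ $v = u_0$.

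For joint continuity I plan to rerun this weak-limit argument for sequences $(\sigma_k, f_k)\to(\sigma_0, f_0)$ in $[0,\Sigma)\times H_{\mathrm{b}}^{m,l+5/4}$, with continuity of $\tilde{R}_+(\sigma)$ at positive $\sigma_0$ being standard perturbation theory for the invertible operator $\tilde{P}(\sigma_0):\calX\to\calY$ in Vasy's scb-framework, and then upgrade the resulting weak convergence in $H_{\mathrm{b}}^{m,l}$ to strong convergence in $H_{\mathrm{b}}^{m-\epsilon,l-\epsilon}$ via the compact b-Sobolev embedding $H_{\mathrm{b}}^{m,l}(X)\hookrightarrow H_{\mathrm{b}}^{m-\epsilon,l-\epsilon}(X)$ on the compact manifold-with-boundary $X$. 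The operator-topology statement then follows by running the joint strong continuity over $f$ in a bounded subset of $H_{\mathrm{b}}^{m,l+5/4}(X)$ and extracting uniform convergence by the same compact embedding. I expect the main subtlety to be the identification step in the second paragraph: the quarter-order shift $l\to l-1/4$ has to land precisely inside the window where the zero-energy operator is invertible in the sense of \Cref{prop:LA_at_0b}, and the numerical match between the hypothesis $l<-1/2$, $m+2l>-1$ and the thresholds required there is exactly what makes the scheme go through.
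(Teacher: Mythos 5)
Your proposal is correct and follows essentially the same route as the paper's proof: the uniform bound from \Cref{prop:misc_estimate}, Banach--Alaoglu plus a subsequence argument to reduce to identifying weak subsequential limits, the PDE identity at $\sigma=0$ combined with the invertibility statement of \Cref{prop:LA_at_0b} at the shifted b-order $l-1/4$ (your threshold bookkeeping $l''<-3/4$, $m+2l''>-3/2$ matches the paper's $l_0 = l-1/4$, $f\in H_{\mathrm{b}}^{m,l_0+3/2}(X)$), and finally the compact b-Sobolev embedding to upgrade weak to strong convergence. The only superficial difference is in the identification step: you conjugate $\tilde{P}(\sigma)$ by $(\sigma^2+\mathsf{Z}x)^{1/4}$ to form $Q(\sigma)$ and test against $C_{\mathrm{c}}^\infty(X^\circ)$, whereas the paper keeps $\tilde{P}(\sigma)$ fixed and absorbs the weight into a commutator $[\tilde{P}(\sigma),\mathsf{Z}^{-1/4}x^{-1/4}(\sigma^2+\mathsf{Z}x)^{1/4}]$ that vanishes as $\sigma\to 0^+$; these are interchangeable ways of making the same observation.
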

\begin{proof}
	First consider the claim of joint continuity. By the metrizability of $[0,\Sigma)\times \smash{H_{\mathrm{b}}^{m,l+5/4}(X)}$ and $\smash{H_{\mathrm{b}}^{m-\epsilon,l-\epsilon}(X)}$, joint continuity follows from the claim that 
	\begin{itemize}
		\item given $f \in \smash{H_{\mathrm{b}}^{m,l+5/4}(X)}$ and $\sigma_\infty \in [0,\Sigma)$, for any and sequences $\{\sigma_k\}_{k\in \bbN}\subset [0,\Sigma)$ and $\{f_k\}_{k\in \bbN} \subset H_{\mathrm{b}}^{m,l+5/4}(X)$ with $\sigma_k\to \sigma_\infty$ and $f_k \to f$ as $k\to\infty$, 
		\begin{equation}
			(\sigma^2_k+\mathsf{Z}x)^{1/4}\tilde{R}_+(\sigma_k)f_k \to  (\sigma_\infty^2+\mathsf{Z}x)^{1/4}\tilde{R}_+(\sigma_\infty)f
		\end{equation}
		strongly in $ H_{\mathrm{b}}^{m-\epsilon,l-\epsilon}(X)$.
	\end{itemize}
	Since a sequence of elements of a metric space converges to some element  if and only if every subsequence thereof contains a further subsequence converging to that same element, it suffices to show that
	\begin{itemize}
		\item  given any $\{\sigma_k\}_{k\in \bbN} \subset [0,\Sigma)$, $\{f_k\}_{k\in \bbN} \subset \smash{H_{\mathrm{b}}^{m,l+5/4}(X)}$ with $\sigma_k\to \sigma_\infty$ and $f_k \to f$ as $k\to\infty$, there exists a subsequence $\{k_\kappa\}_{\kappa\in \bbN}\subset \{k\}_{k\in \bbN}$  such that 
		\begin{equation} 
			(\sigma^2_{k_\kappa}+\mathsf{Z}x)^{1/4}\tilde{R}_+(\sigma_{k_\kappa})f_{k_\kappa} \to (\sigma^2_\infty+\mathsf{Z}x)^{1/4}\tilde{R}_+(\sigma_\infty)f
		\end{equation} 
		strongly in $H_{\mathrm{b}}^{m-\epsilon,l-\epsilon}(X)$.
	\end{itemize}
	 We now handle the case of $\sigma_\infty = 0$. The case $\sigma_\infty>0$ follows by a similar but even easier argument.

	By \Cref{prop:misc_estimate} and Banach--Alaoglu, we can find a subsequence $\{k_\kappa\}_{\kappa\in \bbN}\subset \{k\}_{k\in \bbN}$ such that  $\smash{(\sigma^2_{k_\kappa}+\mathsf{Z}x)^{1/4}\tilde{R}_+(\sigma_{k_\kappa})f_{k_\kappa}}$ converges weakly in  $H_{\mathrm{b}}^{m,l}(X)$. Let $w\in H_{\mathrm{b}}^{m,l}(X)$ denote the weak limit. We first want to show that  $w =  \mathsf{Z}^{1/4} x^{1/4}\tilde{R}_+(0)f$. 
	\begin{itemize}
		\item We first check that $w$ solves the PDE $\tilde{P}(0)(\mathsf{Z}^{-1/4} x^{-1/4}w) = f$. Indeed, for any $m_0,l_0$, 
		\begin{equation}
			[0,\infty)_\sigma \times H_{\mathrm{b}}^{m_0,l_0}(X) \ni (\sigma, u)\mapsto \tilde{P}(\sigma) u \in \calS'(X) 
		\end{equation}
		is jointly continuous with respect to the strong topology on $H_{\mathrm{b}}^{m_0,l_0}(X)$. (Besides being clear from the explicit formulas for $\tilde{P}$ in \S\ref{sec:operator}, this follows from \Cref{prop:operator_cont} and \Cref{prop:tilde_P_inc}.) 
		Since 
		\begin{equation} 
			\mathsf{Z}^{-1/4} x^{-1/4}(\sigma^2_{k_\kappa}+\mathsf{Z} x)^{1/4}\tilde{R}_+(\sigma_{k_\kappa})f_{k_\kappa} \to \mathsf{Z}^{-1/4} x^{-1/4} w
		\end{equation} 
		weakly in $H_{\mathrm{b}}^{m,l-1/4}(X)$, this convergence occurs strongly in $H_{\mathrm{b}}^{m-\varepsilon,l-1/4-\varepsilon}(X)$ for any $\varepsilon>0$, so (by the aforementioned joint continuity)
		\begin{equation}
			\tilde{P}(\sigma_{k_\kappa})( \mathsf{Z}^{-1/4} x^{-1/4}(\sigma^2_{k_\kappa}+\mathsf{Z} x)^{1/4}\tilde{R}_+(\sigma_{k_\kappa})f_{k_\kappa}) \to \tilde{P}(0)(\mathsf{Z}^{-1/4} x^{-1/4}w) 
		\end{equation}
		in $\calS'(X)$. 
		
		Moreover, it is not difficult to see that $\tilde{P}(\sigma_{k_\kappa})( \mathsf{Z}^{-1/4} x^{-1/4}(\sigma^2_{k_\kappa}+\mathsf{Z} x)^{1/4}\tilde{R}_+(\sigma_{k_\kappa})f_{k_\kappa}) \to f$ in $\calS'(X)$: indeed
			\begin{multline}
				\tilde{P}(\sigma_{k_\kappa})( \mathsf{Z}^{-1/4} x^{-1/4}(\sigma^2_{k_\kappa}+\mathsf{Z} x)^{1/4}\tilde{R}_+(\sigma_{k_\kappa})f_{k_\kappa}) \\ = [\tilde{P}(\sigma_{k_\kappa}), \mathsf{Z}^{-1/4} x^{-1/4}(\sigma^2_{k_\kappa}+\mathsf{Z} x)^{1/4}]\tilde{R}_+(\sigma_{k_\kappa})f_{k_\kappa} + f_{k_\kappa},  
			\end{multline}
			and $[\tilde{P}(\sigma), \mathsf{Z}^{-1/4} x^{-1/4}(\sigma^2+\mathsf{Z} x)^{1/4}] \in S\operatorname{Diff}_{\mathrm{leC}}^{1,-3/4,-3,-3/4,-3}(X)$ satisfies 
			\begin{equation} 
				[\tilde{P}(\sigma), \mathsf{Z}^{-1/4} x^{-1/4}(\sigma^2+\mathsf{Z} x)^{1/4}]|_{\sigma = 0} = 	[\tilde{P}(0),1] = 0,
			\end{equation} 
			so the boundedness of $\smash{\tilde{R}_+(\sigma_{k_\kappa})f_{k_\kappa}}$  in  $H_{\mathrm{b}}^{m,l}(X)$ in some b-Sobolev space (as given by \Cref{prop:misc_estimate}) and \Cref{prop:operator_cont}
			 show that $[\tilde{P}(\sigma_{k_\kappa}), \mathsf{Z}^{-1/4} x^{-1/4}(\sigma^2_{k_\kappa}+\mathsf{Z} x)^{1/4}]\tilde{R}_+(\sigma_{k_\kappa})f_{k_\kappa} \to 0$ in $\calS'(X)$.

		Since $\calS'(X)$ is Hausdorff, it follows that $ \tilde{P}(0)(\mathsf{Z}^{-1/4} x^{-1/4} w)  = f$. 
		\item 
		Thus, we have 
		\begin{align}
				\tilde{P}(0)(\mathsf{Z}^{-1/4} x^{-1/4} w)  &= f. 
				\intertext{But, also,} 
				\tilde{P}(0)(\tilde{R}_+(0) f) &= f. 
			\label{eq:misc_pde}
		\end{align}
		Set $l_0 = l-1/4$, so that $f\in H_{\mathrm{b}}^{m,l+5/4}(X) = H_{\mathrm{b}}^{m,l_0+3/2}(X)$. In terms of $\ell_0=2l_0$, the inequalities $l<-1/2$ and $-1<m+2l$ become $\ell_0<-3/2$ and $-3/2<m+\ell_0$, so \Cref{prop:LA_at_0b} applies (with $l_0$ in place of $l$ and $\ell_0$ in place of $\ell$):
		\begin{equation}
			\tilde{P}(0) : \{u \in H_{\mathrm{b}}^{m,l-1/4}(X) : \tilde{P}(0) u \in H_{\mathrm{b}}^{m,l+5/4}(X) \} \to H_{\mathrm{b}}^{m,l+5/4}(X) 
			\label{eq:misc_cdd}
		\end{equation}
		is invertible, and the inverse is $\tilde{R}_+(0)$. Thus, $u=\tilde{R}_+(0)f$ is the unique solution to $\tilde{P}(0) u = f$ in the domain of \cref{eq:misc_cdd}. But  $\mathsf{Z}^{-1/4} x^{-1/4} w$ is in the domain, and as we saw in \cref{eq:misc_pde} solves this PDE. We conclude that
		\begin{equation} 
			\mathsf{Z}^{-1/4} x^{-1/4} w = \tilde{R}_+(0)f.
		\end{equation}  
	\end{itemize}
	Via the compactness of the inclusion $H_{\mathrm{b}}^{m,l}\hookrightarrow H_{\mathrm{b}}^{m-\epsilon,l-\epsilon}$, we conclude that 
	\begin{equation}
		(\sigma^2_{k_\kappa}+\mathsf{Z}x)^{1/4}\tilde{R}_+(\sigma_{k_\kappa})f_{k_\kappa} \to \mathsf{Z}^{1/4}x^{1/4} \tilde{R}_+(0)f
	\end{equation} 
	strongly in $H_{\mathrm{b}}^{m-\epsilon,l-\epsilon}$. This completes the proof of joint continuity. 
	
	By \Cref{prop:misc_estimate}, for each $\Sigma>0$ the set  $\{(\sigma^2+\mathsf{Z}x)^{1/4}\tilde{R}_+(\sigma)f\}_{\sigma \in [0,\Sigma]}$ is bounded in $H_{\mathrm{b}}^{m,l}(X)$, and the result above shows that \cref{eq:misc_cvg} holds in the topology generated by Schwartz test functions (and, in fact, even the strong topologies $\smash{H_{\mathrm{b}}^{m-\epsilon,l-\epsilon}}$). It follows from the conjunction of these observations  that \cref{eq:misc_cvg} holds with respect to the weak topology of $\smash{H_{\mathrm{b}}^{m,l}(X)}$ (since $\calS(X)$ is dense in $\smash{H_{\mathrm{b}}^{m,l}(X)}$). 
	
 	We now deduce the uniform continuity statement from the joint continuity statement. Suppose, to the contrary, that we have some $m,l\in \bbR$ with $l<-1/2$ and $-1<m+2l$ and some $\epsilon>0$ such that $\{(\sigma^2+\mathsf{Z} x)^{1/4}\tilde{R}_+(\sigma)\}_{\sigma\geq 0}$ is \emph{not} continuous with respect to the uniform operator topology, generated by the norm 
 	\begin{equation} 
 		\lVert- \rVert_{\calL(H_{\mathrm{b}}^{m,l+5/4}(X),H_{\mathrm{b}}^{m-\epsilon,l-\epsilon}(X))}. 
 	\end{equation} 
 	We handle the case of a discontinuity at $\sigma=0$, and the case of $\sigma>0$ follows via a similar, easier argument. The discontinuity statement means that there exists some $\varepsilon>0$ such that there exist sequences $\{\sigma_k\}_{k\in \bbN}$ with $\sigma_k\to 0^+$ and $\{f_k\}_{k\in \bbN} \subset \smash{H_{\mathrm{b}}^{m,l+5/4}}$ with $\lVert f_k \rVert_{H_{\mathrm{b}}^{m,l+5/4}}\leq 1$ such that 
 	\begin{equation}
 		\lVert (\sigma^2_k+\mathsf{Z} x)^{1/4}\tilde{R}_+(\sigma_k) f_k - (\mathsf{Z} x)^{1/4}\tilde{R}_+(0) f_k \rVert_{H_{\mathrm{b}}^{m-\epsilon,l-\epsilon}} \geq \varepsilon 
 		\label{eq:misc_ls1}
 	\end{equation} 
 	for all $k$. By Banach--Alaoglu, we can choose these sequences such that there exists some 
 	\begin{equation} 
 		f_\infty \in H_{\mathrm{b}}^{m,l+5/4}
 	\end{equation} 
 	such that $f_k\to f_\infty$ weakly, which implies strong convergence in $\smash{H_{\mathrm{b}}^{m-\epsilon',l+5/4-\epsilon'}}$ for any $\epsilon'>0$. But then, by the joint continuity statement already proven, as long as $\epsilon'$ is sufficiently small such that $-1<m+2l-3\epsilon'$, we have
 	\begin{equation} 
 		\lVert (\sigma^2_k+\mathsf{Z} x)^{1/4}\tilde{R}_+(\sigma_k) f_k - (\mathsf{Z} x)^{1/4}\tilde{R}_+(0) f_k \rVert_{H_{\mathrm{b}}^{m-\epsilon'-\epsilon'',l-\epsilon'-\epsilon''}} \to 0
 		\label{eq:misc_ls2}
 	\end{equation} 
 	as $k\to\infty$, for any $\epsilon''>0$. But we can take $\epsilon',\epsilon''$ sufficiently small such that $\epsilon'+\epsilon''<\epsilon$, in which case \cref{eq:misc_ls2} contradicts \cref{eq:misc_ls1}.
\end{proof}

Via Sobolev embedding, the previous proposition already yields the following corollary on the continuity of the resolvent output at zero energy:
\begin{itemize}
	\item 
	for any $f\in H^{\infty, l+5/4}_{\mathrm{b}}(X)$, $l<-1/2$, and for any $\chi \in C_{\mathrm{c}}^\infty(X^\circ)$,  $ \chi \tilde{R}_+(\sigma)f \to \chi \tilde{R}_+(0)f$ in $C_{\mathrm{c}}^\infty(X^\circ)$ as $\sigma\to 0^+$. 
\end{itemize}
We will need to strengthen this result to apply to $\partial_\sigma$ derivatives of the resolvent output. In order to handle the compositions that arise, we will use the following variant of the preceding proposition.

\begin{proposition}
	\label{prop:uniform_bound}
	For any $m,l\in \bbR$ with $l<-1/2$ and $-1<m+2l$, the map $(\sigma,f)\mapsto (\sigma^2+\mathsf{Z}x)^{1/4}\tilde{R}_+(\sigma)f (\sigma^2+\mathsf{Z}x)^{1/4} $ defines a jointly continuous map 
	\begin{equation}
	[0,\Sigma)\times H_{\mathrm{b}}^{m,l+1}(X) \to H_{\mathrm{b}}^{m-\epsilon,l-\epsilon}(X),
	\end{equation}
	for any $\epsilon>0$ (with respect to the strong topologies on $H_{\mathrm{b}}^{m,l+1}(X)$ and $H_{\mathrm{b}}^{m-\epsilon,l-\epsilon}(X)$). 
	Consequently, $\{(\sigma^2+\mathsf{Z} x)^{1/4}\tilde{R}_+(\sigma) (\sigma^2+\mathsf{Z} x)^{1/4}\}_{\sigma\geq 0} \subset \calL(H_{\mathrm{b}}^{m,l+1}(X),H_{\mathrm{b}}^{m-\epsilon,l-\epsilon}(X))$ is continuous with respect to the uniform operator topology. 
\end{proposition}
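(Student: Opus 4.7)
The proof parallels that of \Cref{prop:prelim_convergence}, with an additional $(\sigma^2+\mathsf{Z}x)^{1/4}$ factor on the right. Uniform boundedness of
\begin{equation}
A(\sigma) := (\sigma^2+\mathsf{Z}x)^{1/4}\tilde{R}_+(\sigma)(\sigma^2+\mathsf{Z}x)^{1/4} : H_{\mathrm{b}}^{m,l+1}(X) \to H_{\mathrm{b}}^{m,l}(X)
\end{equation}
is \Cref{prop:misc_estimate}(II). For joint continuity, by the subsequence criterion in metric spaces it will suffice to show that given any $\sigma_k \to \sigma_\infty \in [0,\Sigma)$ and $f_k \to f$ in $H_{\mathrm{b}}^{m,l+1}(X)$, some subsequence of $\{A(\sigma_k) f_k\}$ converges strongly to $A(\sigma_\infty) f$ in $H_{\mathrm{b}}^{m-\epsilon,l-\epsilon}(X)$. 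We treat the case $\sigma_\infty = 0$; the case $\sigma_\infty > 0$ is easier. Setting $w_k = A(\sigma_k) f_k$ and $u_k = \tilde{R}_+(\sigma_k)[(\sigma_k^2+\mathsf{Z}x)^{1/4} f_k]$, so that $w_k = (\sigma_k^2+\mathsf{Z}x)^{1/4} u_k$ and $\tilde{P}(\sigma_k) u_k = (\sigma_k^2+\mathsf{Z}x)^{1/4} f_k$, the uniform bound shows $\{w_k\}$ is bounded in $H_{\mathrm{b}}^{m,l}(X)$. Banach--Alaoglu yields a subsequence $\{k_\kappa\}$ along which $w_{k_\kappa}$ converges weakly in $H_{\mathrm{b}}^{m,l}(X)$ to some $w$, hence strongly in $H_{\mathrm{b}}^{m-\epsilon,l-\epsilon}(X)$ by compactness of the inclusion.

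We then identify $w$ by passing to the limit in the PDE. Strong convergence $w_{k_\kappa} \to w$ in a weaker b-Sobolev space gives $u_{k_\kappa} = (\sigma_{k_\kappa}^2+\mathsf{Z}x)^{-1/4} w_{k_\kappa} \to \mathsf{Z}^{-1/4} x^{-1/4} w$ in $\calS'(X)$, and likewise $(\sigma_{k_\kappa}^2+\mathsf{Z}x)^{1/4} f_{k_\kappa} \to (\mathsf{Z}x)^{1/4} f$ in $\calS'(X)$. Via joint continuity of $\tilde{P}$ in $(\sigma, u)$ (cf.\ \Cref{prop:tilde_P_inc}, \Cref{prop:operator_cont}), passing to the limit in the identity $\tilde{P}(\sigma_{k_\kappa}) u_{k_\kappa} = (\sigma_{k_\kappa}^2+\mathsf{Z}x)^{1/4} f_{k_\kappa}$ yields
\begin{equation}
\tilde{P}(0)(\mathsf{Z}^{-1/4} x^{-1/4} w) = (\mathsf{Z}x)^{1/4} f.
\end{equation}
Now $(\mathsf{Z}x)^{1/4} f \in H_{\mathrm{b}}^{m,l+5/4}(X)$, and the hypotheses $l < -1/2$, $m+2l > -1$ translate (setting $L := l-1/4$) into the conditions $L < -3/4$ and $m + 2L > -3/2$ required by \Cref{prop:LA_at_0b}; that proposition thus furnishes invertibility of $\tilde{P}(0)$ between the relevant spaces with inverse $\tilde{R}_+(0)$, giving $\mathsf{Z}^{-1/4} x^{-1/4} w = \tilde{R}_+(0)[(\mathsf{Z}x)^{1/4} f]$, i.e., $w = A(0) f$. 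Since every subsequence has a further subsequence converging to $A(0)f$, the full sequence $A(\sigma_k) f_k$ converges to $A(0) f$ strongly in $H_{\mathrm{b}}^{m-\epsilon,l-\epsilon}(X)$.

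The uniform operator continuity will follow from the joint continuity exactly as in the last paragraph of the proof of \Cref{prop:prelim_convergence}: a hypothetical failure produces sequences $\sigma_k \to \sigma_\infty$ and $\{f_k\}$ with $\lVert f_k\rVert_{H_{\mathrm{b}}^{m,l+1}}\leq 1$ witnessing it, and Banach--Alaoglu delivers a weakly convergent subsequence $f_k \to f_\infty$ that converges strongly in a slightly weaker b-Sobolev space; applying joint continuity in that weaker setting will then contradict the supposed failure. The principal obstacle we anticipate is the PDE identification of $w$ in the middle step, where both sides of the equation vary with $k$; but the symbolic computations of \S\ref{sec:operator} together with the uniform $\sigma$-dependence furnished by \Cref{prop:operator_cont} ensure that the commutator-type remainders vanish in the $\sigma \to 0^+$ limit, and the invertibility of $\tilde{P}(0)$ from \Cref{prop:LA_at_0b} closes the loop.
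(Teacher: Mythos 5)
Your proposal is correct and follows essentially the same route as the paper's proof: the subsequence criterion, Banach--Alaoglu plus compact embedding to extract a strongly convergent subsequence of the rescaled resolvent output, identification of the weak limit through the PDE, invocation of \Cref{prop:LA_at_0b} for uniqueness, and the same final argument for uniform operator continuity.

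There is one small point where your argument diverges from the paper's, and it is in fact a modest streamlining. The paper identifies the limiting PDE by applying $\tilde{P}(\sigma_{k_\kappa})$ to $\mathsf{Z}^{-1/4}x^{-1/4}w_{k_\kappa}$ and then invokes the commutator computation $[\tilde{P}(\sigma),\mathsf{Z}^{-1/4}x^{-1/4}(\sigma^2+\mathsf{Z}x)^{1/4}]|_{\sigma=0}=0$ (as in \Cref{prop:prelim_convergence}) to show the right-hand side converges to $\mathsf{Z}^{1/4}x^{1/4}f$. You instead apply $\tilde{P}(\sigma_{k_\kappa})$ directly to $u_{k_\kappa}=\tilde{R}_+(\sigma_{k_\kappa})[(\sigma_{k_\kappa}^2+\mathsf{Z}x)^{1/4}f_{k_\kappa}]$, whereby the right-hand side of the PDE identity is literally $(\sigma_{k_\kappa}^2+\mathsf{Z}x)^{1/4}f_{k_\kappa}$ and the commutator never appears. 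Both arrive at the same equation $\tilde{P}(0)(\mathsf{Z}^{-1/4}x^{-1/4}w)=\mathsf{Z}^{1/4}x^{1/4}f$; yours is a bit more direct. Your closing remark anticipating ``commutator-type remainders'' is therefore somewhat misdirected, since in the form you wrote the argument there are none.

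One small imprecision: to invoke the joint continuity of $(\sigma,u)\mapsto\tilde{P}(\sigma)u$ you need strong convergence of $u_{k_\kappa}$ to $\mathsf{Z}^{-1/4}x^{-1/4}w$ in some fixed $H_{\mathrm{b}}^{m_0,l_0}(X)$, not merely in $\calS'(X)$ as you assert. This does hold: $w_{k_\kappa}\to w$ strongly in $H_{\mathrm{b}}^{m-\epsilon,l-\epsilon}(X)$, and $(\sigma^2+\mathsf{Z}x)^{-1/4}\in C^0([0,\infty)_\sigma;S_{\mathrm{b}}^{0,1/4+\varepsilon}(X))$ by \Cref{lem:sigmaZx_conormal}, so $u_{k_\kappa}\to\mathsf{Z}^{-1/4}x^{-1/4}w$ strongly in $H_{\mathrm{b}}^{m-\epsilon,l-\epsilon-1/4-\varepsilon}(X)$. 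The paper is a touch more explicit on this point; you should be too.
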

\begin{proof}
	We mimic the proof of \Cref{prop:prelim_convergence}. By the metrizability of $[0,\Sigma)\times \smash{H_{\mathrm{b}}^{m,l+1}(X)}$ and $\smash{H_{\mathrm{b}}^{m-\epsilon,l-\epsilon}(X)}$, joint continuity follows from the claim that 
	\begin{itemize}
		\item given $f \in \smash{H_{\mathrm{b}}^{m,l+1}(X)}$ and $\sigma_\infty \in [0,\Sigma)$, for any and sequences $\{\sigma_k\}_{k\in \bbN}$, $\{f_k\}_{k\in \bbN} \subset H_{\mathrm{b}}^{m,l+1}(X)$ with $\sigma_k\to \sigma_\infty$ and $f_k \to f$ as $k\to\infty$, 
		\begin{equation}
			(\sigma^2_k+\mathsf{Z}x)^{1/4}\tilde{R}_+(\sigma_k) (\sigma^2_k+\mathsf{Z}x)^{1/4} f_k \to  (\sigma_\infty^2+\mathsf{Z}x)^{1/4}\tilde{R}_+(\sigma_\infty) (\sigma_\infty^2+\mathsf{Z}x)^{1/4}f
		\end{equation}
		strongly in $ H_{\mathrm{b}}^{m-\epsilon,l-\epsilon}(X)$.
	\end{itemize}
	It suffices to show that
	\begin{itemize}
		\item  given any $\{\sigma_k\}_{k\in \bbN} \subset [0,\Sigma)$, $\{f_k\}_{k\in \bbN} \subset \smash{H_{\mathrm{b}}^{m,l+1}(X)}$ with $\sigma_k\to \sigma_\infty$ and $f_k \to f$ as $k\to\infty$, there exists a subsequence $\{k_\kappa\}_{\kappa\in \bbN}\subset \{k\}_{k\in \bbN}$  such that 
		\begin{equation} 
			(\sigma^2_{k_\kappa}+\mathsf{Z}x)^{1/4}\tilde{R}_+(\sigma_{k_\kappa}) (\sigma^2_{k_\kappa}+\mathsf{Z}x)^{1/4}f_{k_\kappa} \to (\sigma^2_\infty+\mathsf{Z}x)^{1/4}\tilde{R}_+(\sigma_\infty)(\sigma^2_\infty +\mathsf{Z}x)^{1/4} f
		\end{equation} 
		strongly in $H_{\mathrm{b}}^{m-\epsilon,l-\epsilon}(X)$.
	\end{itemize}
	As before, we only consider the case of $\sigma_\infty = 0$, since the case $\sigma_\infty>0$ follows by a similar but even easier argument.

	By \Cref{prop:misc_estimate} and Banach--Alaoglu, we can find a subsequence $\{k_\kappa\}_{\kappa\in \bbN}\subset \{k\}_{k\in \bbN}$ such that  $\smash{(\sigma^2_{k_\kappa}+\mathsf{Z}x)^{1/4}\tilde{R}_+(\sigma_{k_\kappa})(\sigma^2_{k_\kappa}+\mathsf{Z}x)^{1/4}f_{k_\kappa}}$ converges weakly in  $H_{\mathrm{b}}^{m,l}(X)$. Let $w\in H_{\mathrm{b}}^{m,l}(X)$ denote the weak limit. We now show that  $w =  \mathsf{Z}^{1/2} x^{1/4}\tilde{R}_+(0) x^{1/4}f$. 
	\begin{itemize}
		\item We first check that $w$ solves the PDE $\tilde{P}(0)(\mathsf{Z}^{-1/4} x^{-1/4}w) = \mathsf{Z}^{1/4} x^{1/4} f$. As above, we use that, for any $m_0,l_0$, 
		\begin{equation}
			[0,\infty)_\sigma \times H_{\mathrm{b}}^{m_0,l_0}(X) \ni (\sigma, u)\mapsto \tilde{P}(\sigma) u \in \calS'(X) 
		\end{equation}
		is jointly continuous with respect to the strong topology on $H_{\mathrm{b}}^{m_0,l_0}(X)$.
		Since 
		\begin{equation} 
			\mathsf{Z}^{-1/4} x^{-1/4}(\sigma^2_{k_\kappa}+\mathsf{Z} x)^{1/4}\tilde{R}_+(\sigma_{k_\kappa}) (\sigma^2_{k_\kappa}+\mathsf{Z} x)^{1/4} f_{k_\kappa} \to \mathsf{Z}^{-1/4} x^{-1/4} w
		\end{equation} 
		weakly in $H_{\mathrm{b}}^{m,l-1/2}(X)$, 
		\begin{equation}
			\tilde{P}(\sigma_{k_\kappa})( \mathsf{Z}^{-1/4} x^{-1/4}(\sigma^2_{k_\kappa}+\mathsf{Z} x)^{1/4}\tilde{R}_+(\sigma_{k_\kappa}) (\sigma^2_{k_\kappa}+\mathsf{Z} x)^{1/4} f_{k_\kappa}) \to \tilde{P}(0)(\mathsf{Z}^{-1/4} x^{-1/4}w) 
		\end{equation}
		$\calS'(X)$. Moreover, it is not difficult to see that, as before, $\tilde{P}(\sigma_{k_\kappa})( \mathsf{Z}^{-1/4} x^{-1/4}(\sigma^2_{k_\kappa}+\mathsf{Z} x)^{1/4}\tilde{R}_+(\sigma_{k_\kappa}) (\sigma^2_{k_\kappa}+\mathsf{Z} x)^{1/4}f_{k_\kappa}) \to  \mathsf{Z}^{1/4} x^{1/4} f$ in $\calS'(X)$.

		Since $\calS'(X)$ is Hausdorff, it follows that $ \tilde{P}(0)(\mathsf{Z}^{-1/4} x^{-1/4} w)  = \mathsf{Z}^{1/4} x^{1/4}f$. 
		\item 
		Thus, we have 
		\begin{align}
			\begin{split} 
				\tilde{P}(0)(\mathsf{Z}^{-1/4} x^{-1/4} w)  &= \mathsf{Z}^{1/4} x^{1/4}f \\
				\tilde{P}(0)(\tilde{R}_+(0) (\mathsf{Z}^{1/4} x^{1/4}f)) &= \mathsf{Z}^{1/4} x^{1/4}f. 
			\end{split} 
			\label{eq:misc_pdf}
		\end{align}
		Set $l_0 = l-1/4$, so that $x^{1/4} f\in H_{\mathrm{b}}^{m,l+5/4}(X) = H_{\mathrm{b}}^{l_0+3/2}(X)$. By \Cref{prop:LA_at_0b}, 
		\begin{align}
			\begin{split} 
			\tilde{P}(0) &: \{u \in H_{\mathrm{b}}^{m,l_0}(X) : \tilde{P}(0) u \in H_{\mathrm{b}}^{m,l_0+3/2}(X) \} \to H_{\mathrm{b}}^{m,l_0+3/2}(X) \\
			&: \{u \in H_{\mathrm{b}}^{m,l-1/4}(X) : \tilde{P}(0) u \in H_{\mathrm{b}}^{m,l+5/4}(X) \} \to H_{\mathrm{b}}^{m,l+5/4}(X)
			\end{split}
			\label{eq:misc_cd1}
		\end{align}
		is invertible, and the inverse is $\tilde{R}_+(0)$. Thus, $u=\tilde{R}_+(0) \mathsf{Z}^{1/4} x^{1/4} f$ is the unique solution to $\tilde{P}(0) u = \mathsf{Z}^{1/4} x^{1/4} f$ in the domain of \cref{eq:misc_cd1}. But  $\mathsf{Z}^{-1/4} x^{-1/4} w$ is in the domain, and as we saw in \cref{eq:misc_pdf} solves this PDE. We conclude that
		\begin{equation} 
			\mathsf{Z}^{-1/4} x^{-1/4} w = \tilde{R}_+(0) \mathsf{Z}^{1/4} x^{1/4} f.
		\end{equation}  
	\end{itemize}
	Via the compactness of the inclusion $H_{\mathrm{b}}^{m,l}\hookrightarrow H_{\mathrm{b}}^{m-\epsilon,l-\epsilon}$, we conclude that 
	\begin{equation} 
	(\sigma^2_{k_\kappa}+\mathsf{Z}x)^{1/4}\tilde{R}_+(\sigma_{k_\kappa}) 	(\sigma^2_{k_\kappa}+\mathsf{Z}x)^{1/4} f_{k_\kappa} \to \mathsf{Z}^{1/4}x^{1/4} \tilde{R}_+(0) \mathsf{Z}^{1/4}x^{1/4} f
	\end{equation}
	strongly in $H_{\mathrm{b}}^{m-\epsilon,l-\epsilon}$. 
	
	Uniform continuity follows as in the proof of \Cref{prop:prelim_convergence}. 
\end{proof}

\begin{proposition}
	\label{prop:conormP}
	Fix $\psi \in C_{\mathrm{c}}^\infty(\bbR)$ supported sufficiently close to $(-\infty,\Sigma^2]$ and $\chi \in C_{\mathrm{c}}^\infty(X_{\mathrm{res}}^{\mathrm{sp}})$ supported away from $\mathrm{bf}$. 
	Then, for each $k,K\in \bbN$ with $k+K>0$, 
	\begin{equation} 
		\{ \psi(E)(E \partial_E)^k ( \chi x \partial_E)^K \tilde{P}(E^{1/2})\}_{E\geq 0} \in S\operatorname{Diff}_{\mathrm{leC}}^{1,0,-2,-1,-3}(X) \subseteq S\operatorname{Diff}_{\mathrm{b,leC}}^{1,-1,-3}(X).
		\label{eq:misc_yy1}
	\end{equation} 
\end{proposition}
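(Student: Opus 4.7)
The plan is to reduce the claim to two geometric observations about the operators $E\partial_E$ and $\chi x \partial_E$ viewed as acting on functions on $X^{\mathrm{sp}}_{\mathrm{res}}$: (A) $E\partial_E$ extends to a smooth b-vector field on $X^{\mathrm{sp}}_{\mathrm{res}}$, globally tangent to every boundary face --- this can be checked in coordinates, for instance near $\mathrm{tf}^\circ$ by introducing $\hat{E} = E/x$, in which case $E\partial_E = \hat{E}\partial_{\hat{E}}$, a b-vector field tangent to both $\mathrm{tf} = \{x = 0\}$ and $\mathrm{zf} = \{\hat{E} = 0\}$, while near $\mathrm{bf}^\circ$ (away from $E = 0$), $E\partial_E$ is manifestly smooth and tangent to $\{x = 0\}$; (B) $\chi x \partial_E$, with $\chi$ supported away from $\mathrm{bf}$, defines a smooth vector field on $X^{\mathrm{sp}}_{\mathrm{res}}$ which vanishes on $\mathrm{bf} \cup \mathrm{tf}$ (where $x = 0$), hence is trivially tangent there. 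The role of the $\chi$ cutoff is to eliminate the region near $\mathrm{bf}$ where compact support is otherwise lost, and together the two vector fields preserve both conormality on $X^{\mathrm{sp}}_{\mathrm{res}}$ and the decay orders at $\mathrm{bf}, \mathrm{tf}$ that enter the leC symbol classes.

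Granted (A) and (B), I would write $\tilde{P}$ in the form $\tilde{P} = \sum_{|\alpha| \leq 2} \tilde{p}_\alpha(x, y; E)\, \partial^\alpha$, where $\partial^\alpha$ are fixed $(x, y)$-differential operators (independent of $E$) and the coefficients $\tilde{p}_\alpha$ are conormal functions on $X^{\mathrm{sp}}_{\mathrm{res}}$ whose leC classes are dictated by \Cref{prop:P00tilde_comp}, \Cref{prop:Lcomp}, \Cref{prop:P1tilde_comp_new}, and \Cref{prop:P2tilde_comp} (combined in \Cref{prop:tilde_P_inc}). Since $\partial_E$ commutes with $\partial_x$ and $\partial_y$, the iterated operator $\mathcal{D}_{k,K} := (E\partial_E)^k(\chi x \partial_E)^K$ acts coefficientwise:
\begin{equation*}
\psi(E)\, \mathcal{D}_{k,K}\, \tilde{P} = \sum_{|\alpha| \leq 2} \bigl[\psi(E)\, \mathcal{D}_{k,K}\, \tilde{p}_\alpha\bigr]\, \partial^\alpha.
\end{equation*}

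The crux is the following observation about the top-order coefficients: the principal symbol of $\tilde{P}$ coincides with that of $P(0)$ (conjugation by $e^{\pm i\Phi}$ does not alter it), and $P(0) = \Delta_g - \mathsf{Z}x + V$ has principal symbol independent of $E$. Hence $\partial_E \tilde{p}_\alpha = 0$ for $|\alpha| = 2$, so for $k + K \geq 1$ the $|\alpha| = 2$ contributions vanish, yielding the claimed drop in differential order from $2$ to $1$. For $|\alpha| \leq 1$, the coefficients $\tilde{p}_\alpha$ depend on $E$ through combinations such as $\sqrt{E+\mathsf{Z}x - Ea_{00}x}$, $(E+\mathsf{Z}x)^{-1}$, together with the remainder classes in $P_1, P_2$; each of these sits in an explicit leC class that makes $\tilde{p}_\alpha\, \partial^\alpha \in S\operatorname{Diff}_{\mathrm{leC}}^{1,0,-2,-1,-3}(X)$ already. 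By (A) and (B), $\mathcal{D}_{k, K} \tilde{p}_\alpha$ lies in the same leC class as $\tilde{p}_\alpha$ (or a subclass thereof), and the inclusion $\psi(E) \in C^\infty_{\mathrm{c}}(\bbR)$ does not disturb this. Assembling the contributions gives the claimed membership.

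The main obstacle will be the precise bookkeeping of leC orders in the verification step, particularly the verification that $E\partial_E$ and $\chi x \partial_E$ preserve the sharp $(s, \varsigma, l, \ell)$ orders of coefficients like $x(E+\mathsf{Z}x)^{1/2}$ and $\mathsf{Z}x^2(E+\mathsf{Z}x)^{-1/2}$ after iterated application. This reduces to verifying in local coordinates (near $\mathrm{bf}^\circ$, $\mathrm{tf}^\circ$, $\mathrm{bf}\cap\mathrm{tf}$, and $\mathrm{tf}\cap\mathrm{zf}$) that $E\partial_E\varrho_{\mathrm{tf}_{00}} = \tfrac{1}{2}\varrho_{\mathrm{zf}_{00}}\varrho_{\mathrm{tf}_{00}}$, $\chi x \partial_E \varrho_{\mathrm{tf}_{00}} = \tfrac{1}{2}\chi\varrho_{\mathrm{bf}_{00}}\varrho_{\mathrm{tf}_{00}}$, and similar identities for the other bdfs, which give the b-tangency explicitly. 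Once these tangency calculations are in hand, assertions (A) and (B) follow, and the orderwise preservation under $\mathcal{D}_{k, K}$ becomes automatic.
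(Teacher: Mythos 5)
Your overall strategy mirrors the paper's own proof: split off the $E$-independent order-two part of $\tilde{P}$ so that applying any $E$-derivative drops the differential order to $1$, then show that $E\partial_E$ and $\chi x\partial_E$ preserve the class $S\operatorname{Diff}_{\mathrm{leC}}^{1,0,-2,-1,-3}(X)$. Your observation that the order-two coefficients are $E$-independent (conjugation by $e^{\pm i\Phi}$ leaves the principal symbol unchanged, and $P(\sigma) = P(0) - \sigma^2$ has $E$-independent leading part) is the same content as the paper's decomposition $\tilde{P} = K(\sigma) + C$ with $C$ constant in $\sigma$.

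Assertion (B) is, however, incorrect as stated, and the reasoning behind it hides a real issue. You claim that $\chi x\partial_E$ vanishes on $\mathrm{tf}$ because $x = 0$ there. That is true for the vector field $x\partial_E$ downstairs on $[0,\infty)_E \times X$, but the lift of a vanishing vector field through a blow-up need not vanish on the front face. Near $\mathrm{tf}^\circ$, taking $\hat{E} = E/x$ as the interior coordinate of $\mathrm{tf}$ (with $x$ held fixed), one has $x\partial_E = \partial_{\hat{E}}$, which is nonvanishing. So the lift of $\chi x\partial_E$ is merely \emph{tangent} to $\mathrm{tf}$, not zero there. Worse for your proposed verification strategy, $\chi x\partial_E$ is not b at $\mathrm{zf}$: one computes $\chi x \partial_E \varrho_{\mathrm{zf}_{00}} = \chi\mathsf{Z}x^2/(E+\mathsf{Z}x)^2$, which is not $\varrho_{\mathrm{zf}_{00}}$ times something bounded near $\mathrm{zf}$. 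So if you carry out the bdf identity checks you propose ``for the other bdfs,'' the one for $\varrho_{\mathrm{zf}_{00}}$ will fail and reveal the gap.

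The repair is exactly what the paper exploits by writing that $\chi x\partial_E$ lifts to a ``smooth vector field which is conormal at $\mathrm{tf}$ (and identically zero near $\mathrm{bf}$)'': the leC coefficient classes require conormality at $\mathrm{bf}$ and $\mathrm{tf}$, but only \emph{smoothness} (the $(0,0)$ index set, i.e.\ a full Taylor series) at $\mathrm{zf}$, and smoothness at a boundary face is preserved by any smooth vector field, b or not. Tangency to $\mathrm{tf}$ preserves the conormal order there; the cutoff $\chi$ makes everything vanish near $\mathrm{bf}$; and mere smoothness handles $\mathrm{zf}$. With (B) amended to ``lifts to a smooth vector field on $X^{\mathrm{sp}}_{\mathrm{res}}$ tangent to $\mathrm{tf}$ and identically zero near $\mathrm{bf}$,'' your argument goes through and matches the paper's.
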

\begin{proof}
	For $\sigma \in [0,\Sigma)$, $\tilde{P}(\sigma) = K(\sigma) + C$ for $K(\sigma) \in S\operatorname{Diff}_{\mathrm{leC}}^{1,0,-2,-1,-3}(X)$
	containing the $\sigma$-dependent part of $\tilde{P}(\sigma)$ and $C \in S\operatorname{Diff}_{\mathrm{scb}}^{2,0,-2}(X)$ constant in $\sigma$. Thus, for $k,K \in \bbN$ which are not both zero, 
	\begin{equation}
			\{ \psi(E)(E \partial_E)^k ( \chi x \partial_E)^K \tilde{P}(\sigma)\}_{\sigma\geq 0} \in
			(E \partial_E)^k ( \chi x \partial_E)^K S\operatorname{Diff}_{\mathrm{leC}}^{1,0,-2,-1,-3}(X).
	\end{equation}
	Since $E \partial_E$ lifts to a b-vector field on $X^{\mathrm{sp}}_{\mathrm{res}}$ and $\chi x \partial_E$ lifts to a smooth vector field which is b- at $\mathrm{tf}$ (and identically zero near $\mathrm{bf}$), 
	\begin{equation} 
		(E \partial_E)^k ( \chi x \partial_E)^K  S\operatorname{Diff}_{\mathrm{leC}}^{1,0,-2,-1,-3}(X) \subseteq S\operatorname{Diff}_{\mathrm{leC}}^{1,0,-2,-1,-3}(X), 
	\end{equation}	
	so \cref{eq:misc_yy1} follows.
\end{proof}

\begin{proposition}
	\label{prop:E>0_differentiation}
	For each $m,l\in \bbR$ with $m+l>-1/2>l$ and for each $f \in H_{\mathrm{b}}^{m,l+1}(X)$, the map 
	\begin{equation}
		\tilde{R}_+(\bullet)f:(0,\Sigma) \ni \sigma \mapsto \tilde{R}_+(\sigma) f \in \calS'(X) 
	\end{equation}
	is smooth as a map $(0,\Sigma)\to \calS'(X)$. Thus, $ \partial_\sigma^k (\tilde{R}_+(\sigma)\tilde{f}(-;\sigma)) : \bbR^+_\sigma \to \calS'(X)$ is well-defined for $k\in \bbN$ and $\tilde{f} \in C^\infty((0,\Sigma); H_{\mathrm{b}}^{m,l+1}(X))$ and is given by 
	\begin{equation}
		\Big[ \sum_{\kappa=0}^k \binom{k}{\kappa}  \partial_\sigma^\kappa \Big( \tilde{R}_+(\sigma) (\partial_{\sigma_0}^{k-\kappa} \tilde{f}(-;\sigma_0)) \Big) \Big] \Big|_{\sigma=\sigma_0}.
	\end{equation}

	Moreover, for $k\in \bbN$, it is the case that, for each $\sigma>0$, the map 
	\begin{align} 
		\partial_\sigma^k \tilde{R}_+(\sigma) = \partial_{\sigma_0}^k \tilde{R}_+(\sigma_0)|_{\sigma_0=\sigma}&: \bigcup_{\substack{ m,l\in \bbR \\ k-m-1/2<l<-1/2 }} H_{\mathrm{b}}^{m,l+1}(X)\to \calS'(X) \\
		&: f \mapsto 	 \partial_\sigma^k( \tilde{R}_+(\sigma)f)
	\end{align} 
	satisfies $\partial_\sigma^k \tilde{R}_+(\sigma) \in \calL(H_{\mathrm{b}}^{m,l+1}(X),H_{\mathrm{b}}^{m-k,l}(X))$. For each $k\in \bbN^+$, the identity 
	\begin{equation}
		 \partial_\sigma^k \tilde{R}_+(\sigma) = \Big[ \sum_{\{k_i\}_{i=1}^I\in \calI_k} c_{\{k_i\}_{i=1}^I} \prod_{i=1}^I (   \tilde{R}_+(\sigma)  \partial_\sigma^{k_i} \tilde{P}(\sigma)) \Big]  \tilde{R}_+(\sigma) 
		\label{eq:misc_ghj}
	\end{equation}
	holds for some $c_{\{k_i\}_{i=1}^I} \in \bbZ$, where $\calI_k$ is the set of finite sequences of positive integers summing to $k$. 
\end{proposition}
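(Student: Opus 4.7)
The plan is to derive formula \cref{eq:misc_ghj} by differentiating $\tilde{P}(\sigma)\tilde{R}_+(\sigma) = I$ via the second resolvent identity and iterating, then to verify the mapping property by induction on $k$, tracking Sobolev orders carefully through each composition.

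The first preliminary is that $\partial_\sigma^k \tilde{P}(\sigma) \in \operatorname{Diff}_{\mathrm{b}}^{1,-1}(X)$ for each $k \geq 1$, uniformly for $\sigma$ in compact subsets of $(0,\Sigma)$. This follows from the explicit form of $\tilde{P}$ given in \Cref{prop:tilde_P_inc}: every $\sigma$-dependent coefficient is a smooth function of $(\sigma^2+\mathsf{Z}x)$ (entering through factors $(\sigma^2+\mathsf{Z}x)^{\pm 1/2}$) multiplied by at least one power of $x$ and paired with a first-order b-derivative; each $\sigma$-differentiation preserves this structure with coefficients that are bounded for $\sigma$ away from zero. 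Next, for $\sigma,\sigma_0 \in (0,\Sigma)$, the second resolvent identity
\begin{equation*}
\tilde{R}_+(\sigma) - \tilde{R}_+(\sigma_0) = -\tilde{R}_+(\sigma)(\tilde{P}(\sigma) - \tilde{P}(\sigma_0))\tilde{R}_+(\sigma_0)
\end{equation*}
combined with the just-noted regularity of $\sigma \mapsto \tilde{P}(\sigma)$ and the bound from \Cref{prop:misc_estimate} yields differentiability of $\tilde{R}_+(\sigma)f$ in $\mathcal{S}'(X)$, with $\partial_\sigma \tilde{R}_+(\sigma) = -\tilde{R}_+(\sigma)(\partial_\sigma \tilde{P}(\sigma))\tilde{R}_+(\sigma)$. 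Iterating $k-1$ further times via Leibniz produces \cref{eq:misc_ghj}: each differentiation either promotes some $\partial_\sigma^{k_i}\tilde{P}$ to $\partial_\sigma^{k_i+1}\tilde{P}$, or splits a $\tilde{R}_+$ factor as $-\tilde{R}_+\partial_\sigma\tilde{P}\tilde{R}_+$, producing a linear combination of the claimed form with integer coefficients $c_{(k_1,\ldots,k_I)}$ determined inductively. Given differentiability of $\tilde{R}_+(\sigma)$, the formula for $\partial_\sigma^k(\tilde{R}_+(\sigma)\tilde{f}(-;\sigma))$ is the ordinary product rule.

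For the mapping property, I induct on $k$; the base case is \Cref{prop:misc_estimate}. In the inductive step, \cref{eq:misc_ghj} exhibits $\partial_\sigma^k\tilde{R}_+(\sigma)$ as a finite linear combination of chains $\tilde{R}_+(\sigma)\partial_\sigma^{k_I}\tilde{P}(\sigma)\cdots\tilde{R}_+(\sigma)\partial_\sigma^{k_1}\tilde{P}(\sigma)\tilde{R}_+(\sigma)$ of length $I \leq k$. The preliminary $\operatorname{Diff}_{\mathrm{b}}^{1,-1}$ bound gives $\partial_\sigma^{k_i}\tilde{P} : H_{\mathrm{b}}^{m-j,l} \to H_{\mathrm{b}}^{m-j-1,l+1}$, and \Cref{prop:misc_estimate} gives $\tilde{R}_+ : H_{\mathrm{b}}^{m-j,l+1} \to H_{\mathrm{b}}^{m-j,l}$ whenever the threshold $(m-j)+l > -1/2$ holds. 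Chaining $I$ such pairs with the leading $\tilde{R}_+$ maps $H_{\mathrm{b}}^{m,l+1}$ into $H_{\mathrm{b}}^{m-I,l} \subseteq H_{\mathrm{b}}^{m-k,l}$, with every intermediate threshold $(m-j)+l>-1/2$, for $0 \leq j \leq I \leq k$, holding precisely under the sharp hypothesis $k - m - 1/2 < l$. The principal obstacle is exactly this chained Sobolev bookkeeping: the constraint $k - m - 1/2 < l$ is the sharp bound imposed by the worst-case summand $I=k$ with $(k_i) = (1,\ldots,1)$, where the regularity margin $m+l$ degrades by one at each of the $k$ intermediate $\tilde{R}_+$ applications, and this worst case is tight.
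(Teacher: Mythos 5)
Your proof is correct but takes a genuinely different route from the paper's. The paper outsources the differentiability and mapping properties of the conjugated resolvent to external references for the ``usual'' conjugation $\tilde{\tilde{R}}_+(\sigma) = e^{-i\sigma/x}R_+(\sigma)e^{+i\sigma/x}$ (citing \cite[Lemma 2.10]{HintzPrice} and \cite[Proposition 2.10]{HafnerHintzVasy}) and then transfers to $\tilde{R}_+$ by noting that multiplication by $\partial_\sigma^\kappa\exp(\pm i(\Phi_0-\Phi))$ acts boundedly on b-Sobolev spaces for each fixed $\sigma>0$; only the formula \cref{eq:misc_ghj} is derived via the resolvent identity in the paper. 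You instead prove everything from scratch via the second resolvent identity, which is more self-contained but at the cost of having to verify that the difference quotients converge --- a point you cover somewhat tersely. Your chaining argument for the mapping property reproduces exactly what the paper spells out in the remark following the proposition, and your identification of the worst case ($I=k$, $(k_i)=(1,\dots,1)$) as the source of the constraint $k-m-1/2<l$ is correct.

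One place where your justification is slightly off: you support the preliminary claim $\partial_\sigma^k\tilde{P}(\sigma)\in\operatorname{Diff}_{\mathrm{b}}^{1,-1}(X)$ by saying every $\sigma$-dependent coefficient of $\tilde{P}$ is ``a smooth function of $(\sigma^2+\mathsf{Z}x)$ entering through factors $(\sigma^2+\mathsf{Z}x)^{\pm 1/2}$.'' This is true of the coefficients of $\tilde{P}$ as computed in \Cref{prop:P00tilde_comp}/\Cref{prop:tilde_P_inc}, but it is not the most transparent route: if one instead writes $\partial_\sigma\tilde{P} = i[\tilde{P},\partial_\sigma\Phi] + e^{-i\Phi}(\partial_\sigma P)e^{+i\Phi}$ and notes that $\partial_\sigma\Phi$ contains $\operatorname{arcsinh}$ and $\log x$ pieces, one sees the order drop comes from the commutator, not from the coefficients being ``nice'' a priori. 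What you are really using is the paper's own \Cref{prop:conormP}, which directly asserts the desired membership. Also, since the coefficients are conormal rather than classical, the claim should more precisely be stated in $S\operatorname{Diff}_{\mathrm{b}}^{1,-1}(X)$; this makes no difference to the Sobolev bookkeeping. Neither point affects the correctness of the argument.
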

\begin{proof}
	The proposition holds for the usual conjugated resolvent family 
	\begin{equation} 
		\tilde{\tilde{R}}_+ = e^{ - i \sigma/x+ i\Phi} \tilde{R}_+ e^{+i \sigma/x-i\Phi}
	\end{equation} 
	in place of $\tilde{R}_+$ and $\tilde{\tilde{P}} =e^{-i\sigma/x} P e^{+i\sigma/x}$ in place of $\tilde{P}$ --- see \cite[Lemma 2.10]{HintzPrice}, \cite[Proposition 2.10]{HafnerHintzVasy}. 
	Since multiplication by $\partial_\sigma^\kappa \exp (\pm i (\Phi_0- \Phi))$ acts boundedly (but not uniformly so as $\sigma\to 0^+$) on the b-Sobolev spaces for each $\kappa\in \bbN$ --- see \cref{eq:osc_+} --- the mapping properties asserted in the proposition follow from those for the usual conjugated resolvent family. 
	The specific formula \cref{eq:misc_ghj} follows via inductively applying $  \partial_\sigma \tilde{R}_+(\sigma) f  = - \tilde{R}_+(\sigma)  (\partial_\sigma \tilde{P}(\sigma) )\tilde{R}_+(\sigma) f$. 
\end{proof}
\begin{remark}
	By the conjunction of 
	\begin{enumerate}
		\item $\tilde{R}_+(\sigma) : H_{\mathrm{b}}^{m,l+1}(X) \to H_{\mathrm{b}}^{m,l}(X)$ holding for all $m,l\in \bbR$ with $l<-1/2<m+l$ and $\sigma>0$, and 
		\item  $\partial_\sigma^k \tilde{P}(\sigma): H_{\mathrm{b}}^{m,l}(X) \to H_{\mathrm{b}}^{m-1,l+1}(X)$ holding for all $m,l\in \bbR$ and $\sigma>0$, 
	\end{enumerate}
it is the case that 
	\begin{equation}
	\tilde{R}_+(\sigma) \partial_\sigma^{k} \tilde{P}(\sigma)  : H_{\mathrm{b}}^{m,l}(X)\to H_{\mathrm{b}}^{m-1,l}(X)
	\end{equation}
	for each $\sigma>0$, $k\in \bbN$, and $m,l\in \bbR$ satisfying $l<-1/2<m-1+l$. 
	
	Consequently, for $\{i_i\}_{i=1}^I\in \calI_k$, 
	\begin{equation}
	\prod_{i=1}^I \Big[ \tilde{R}_+(\sigma)   \partial_\sigma^{k_i} \tilde{P}(\sigma) \Big]: H_{\mathrm{b}}^{m,l}(X)\to H_{\mathrm{b}}^{m-I,l}(X)
	\end{equation}
	is a well-defined composition whenever $l<-1/2<m+l-I$. 
	The right-hand side of \cref{eq:misc_ghj} is therefore a well-defined map $H_{\mathrm{b}}^{m,l+1}(X) \to H_{\mathrm{b}}^{m-k,l}(X)$ whenever $l<-1/2<m+l-k$. 
	
	The identity \cref{eq:misc_ghj} should therefore be read as stating that both sides agree as maps $H_{\mathrm{b}}^{m,l+1}(X) \to H_{\mathrm{b}}^{m-k,l}(X)$. 
\end{remark}

\begin{proposition}
	\label{prop:convergence_main}
	For each $k,K\in \bbN$ and $l<-1/2$, there exists an $m_0(l,k,K)\in \bbR$ such that for all $m>m_0$ and $\chi \in C_{\mathrm{c}}^\infty(X_{\mathrm{res}}^{\mathrm{sp}})$ supported away from $\mathrm{bf}$,   
	\begin{multline}
		\{ (E \partial_E)^k (\chi x \partial_E)^K ( (E+\mathsf{Z} x)^{1/4} \tilde{R}_+(E^{1/2})) \}_{E \in (0,\Sigma^2) } \\ \in  L^\infty \cap C^0 ((0,\Sigma) ; \calL(H_{\mathrm{b}}^{m,l+5/4}(X),H_{\mathrm{b}}^{m-k-K-\epsilon,l-\epsilon}(X) ) )
		\label{eq:misc_273}
	\end{multline}
	for all $\epsilon>0$. 
\end{proposition}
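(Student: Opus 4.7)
I would argue by induction on $N = k + K$. The base case $N = 0$ is already available: \Cref{prop:misc_estimate} gives the uniform bound $(E+\mathsf{Z}x)^{1/4}\tilde{R}_+(\sigma):H_{\mathrm{b}}^{m,l+5/4}(X)\to H_{\mathrm{b}}^{m,l}(X)$ for $l<-1/2$ and $-1<m+2l$, while \Cref{prop:prelim_convergence} upgrades this to joint continuity $[0,\Sigma)\times H_{\mathrm{b}}^{m,l+5/4}\to H_{\mathrm{b}}^{m-\epsilon,l-\epsilon}$ in the uniform operator topology.

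For the inductive step, I would iterate the first-order identity $\partial_\sigma\tilde{R}_+ = -\tilde{R}_+(\partial_\sigma\tilde{P})\tilde{R}_+$ from \Cref{prop:E>0_differentiation} together with the Leibniz rule for $\chi x\partial_E$ (a smooth vector field on $X^{\mathrm{sp}}_{\mathrm{res}}$, since $\chi$ is supported away from $\mathrm{bf}$) to express
\begin{equation*}
(E\partial_E)^k(\chi x\partial_E)^K\bigl[(E+\mathsf{Z}x)^{1/4}\tilde{R}_+(E^{1/2})\bigr] = \sum_\alpha c_\alpha\, M_\alpha\,\tilde{R}_+\,(W_{\alpha,1}\tilde{P})\,\tilde{R}_+\cdots(W_{\alpha,I_\alpha}\tilde{P})\,\tilde{R}_+,
\end{equation*}
where the sum is finite, each $W_{\alpha,i}$ is a product of $E\partial_E$ and $\chi x\partial_E$ (plus commutators with smooth coefficients) with $\sum_i |W_{\alpha,i}|\leq N$, and $M_\alpha$ is multiplication by $(E+\mathsf{Z}x)^{1/4}$ times a function bounded on $\operatorname{supp}\chi$ (arising from Leibniz-differentiating the outer weight $(E+\mathsf{Z}x)^{1/4}$; note $E\partial_E(E+\mathsf{Z}x)^{1/4} = (1/4)\varrho_{\mathrm{zf}_{00}}(E+\mathsf{Z}x)^{1/4}$, and $\chi x\partial_E(E+\mathsf{Z}x)^{1/4}$ has analogous form away from $\mathrm{bf}$).

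To obtain the uniform bound, I would insert pairs $(E+\mathsf{Z}x)^{1/4}(E+\mathsf{Z}x)^{-1/4}=1$ so that each inner $\tilde{R}_+$ is sandwiched as $(E+\mathsf{Z}x)^{1/4}\tilde{R}_+(E+\mathsf{Z}x)^{1/4}$, which maps $H_{\mathrm{b}}^{m',l'+1}\to H_{\mathrm{b}}^{m'-\epsilon,l'-\epsilon}$ uniformly in $\sigma\in[0,\Sigma]$ by \Cref{prop:uniform_bound}; each adjacent factor $W_{\alpha,i}\tilde{P}$ is rewritten as $(E+\mathsf{Z}x)^{-1/4}(W_{\alpha,i}\tilde{P})(E+\mathsf{Z}x)^{-1/4}$. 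By \Cref{prop:conormP}, $W_{\alpha,i}\tilde{P}\in S\operatorname{Diff}_{\mathrm{b,leC}}^{1,-1,-3}(X)$, so the weight compensation produces an operator of the qualitative form $x\cdot(\text{b-differential operator of order }1)$, mapping $H_{\mathrm{b}}^{m',l'-\epsilon}\to H_{\mathrm{b}}^{m'-1,l'-\epsilon}$ uniformly in $\sigma$. Composing $I_\alpha+1$ sandwiches with $I_\alpha$ middle factors yields a uniformly bounded map $H_{\mathrm{b}}^{m,l+5/4}\to H_{\mathrm{b}}^{m-I_\alpha-\epsilon',l-\epsilon'}$; since $I_\alpha\leq N$, the stated uniform bound follows by choosing $m_0=m_0(l,N)$ large enough that every intermediate Sobolev space is admissible for the sandwich and middle-factor estimates used.

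Continuity in $\sigma$ of the full composition follows from the continuity of each factor: the sandwiched resolvent is continuous in the uniform operator topology by \Cref{prop:uniform_bound}, and $\tilde{P}(E^{1/2})$ and its derivatives depend smoothly on $E$ as b,leC-differential operators on compact subsets away from $\mathrm{bf}$. The main obstacle I anticipate is the bookkeeping: one must carefully track how differential orders and b-decay orders accumulate through the composition (accounting both for the $-\epsilon$ loss per sandwich and the one-order differential loss per middle factor), and verify that the various commutator terms $[\chi x\partial_E,W_{\alpha,i}\tilde{P}]$ generated by the iterated Leibniz/derivation formula remain within $S\operatorname{Diff}_{\mathrm{b,leC}}^{1,-1,-3}(X)$; here the hypothesis that $\chi$ is supported away from $\mathrm{bf}$ is essential, since it is what makes $\chi x\partial_E$ a smooth vector field on $X^{\mathrm{sp}}_{\mathrm{res}}$ of the right kind for \Cref{prop:conormP} to be applicable to iterated derivatives.
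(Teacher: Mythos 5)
Your proposal is correct and follows essentially the same route as the paper: both expand the iterated derivatives via the resolvent identity $\partial_\sigma\tilde R_+ = -\tilde R_+(\partial_\sigma\tilde P)\tilde R_+$, insert weight pairs $(E+\mathsf Z x)^{\pm 1/4}$ to sandwich every resolvent factor, and then invoke \Cref{prop:uniform_bound} for the sandwiched resolvents, \Cref{prop:conormP} (together with \Cref{lem:sigmaZx_conormal}) for the weight-conjugated derivatives of $\tilde P$, and \Cref{prop:prelim_convergence} for the rightmost resolvent acting on $f$. The only stylistic difference is that you organize this as an induction on $N=k+K$, whereas the paper writes out the explicit product formula for $(\sigma\partial_\sigma)^k$ in the $K=0$ case and remarks that $K>0$ is similar; the substance and the lemmas invoked are identical.
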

\begin{proof}
	We explicitly consider the $K=0$ case, and the $K\in \bbN^+$ case is similar (if a bit messier). 
	
	So, we want to prove that 
	\begin{equation}
		\{ (\sigma \partial_\sigma)^k ( (\sigma^2+\mathsf{Z} x)^{1/4} \tilde{R}_+(\sigma)) \}_{\sigma \in (0,\Sigma) } \in L^\infty\cap C^0 ((0,\Sigma) ; \calL(H_{\mathrm{b}}^{m,l+5/4}(X),H_{\mathrm{b}}^{m-k-\epsilon,l-\epsilon}(X) )).
		\label{eq:misc_272}
	\end{equation}
	
	By \cref{eq:misc_ghj}, 
	\begin{multline}
	(\sigma \partial_\sigma)^k ( (\sigma^2+\mathsf{Z} x)^{1/4} \tilde{R}_+(\sigma)) = \sum_{j=0}^k \binom{k}{j} (\sigma^2+\mathsf{Z} x)^{-1/4} ((\sigma \partial_\sigma)^j (\sigma^2+\mathsf{Z} x)^{1/4} ) \\  \Big[  \sum_{\{k_i\}_{i=1}^I\in \calI_{k-j}} c_{\{k_i\}_{i=1}^I} \prod_{i=1}^I (\sigma^2+\mathsf{Z}x)^{1/4} \tilde{R}_+(\sigma) (\sigma \partial_\sigma)^{k_i} \tilde{P}(\sigma) (\sigma^2+\mathsf{Z}x)^{-1/4} \Big]  \Big[ (\sigma^2+\mathsf{Z}x)^{1/4} \tilde{R}_+(\sigma)\Big] 
	\end{multline}
	for all $\sigma>0$. 
	By \Cref{prop:conormP}, for any $k\in \bbN^+$ we can write 
	\begin{equation}
		 (\sigma^2+\mathsf{Z} x)^{-1/4} ((\sigma \partial_\sigma )^{k} \tilde{P}(\sigma)) (\sigma^2+\mathsf{Z} x)^{-1/4} \in  L^\infty \cap C^0([0,\Sigma); S\operatorname{Diff}_{\mathrm{b}}^{1,-1+\epsilon}(X) )
	\end{equation}
	for any $\epsilon>0$. 
	On the other hand, by \Cref{prop:uniform_bound}, for all $\epsilon>0$, 
	\begin{equation}
		 (\sigma^2+\mathsf{Z}x)^{1/4} \tilde{R}_+(\sigma) (\sigma^2+\mathsf{Z} x)^{1/4} \in C^0([0,\Sigma); \calL(H_{\mathrm{b}}^{m-1,l+1-\epsilon/2}(X),H_{\mathrm{b}}^{m-1-\epsilon,l-\epsilon}(X)))
	\end{equation}
	for any $m,l\in \bbR$ satisfying $l<-1/2$ and $m>-2l$. 
	
	Combining these two observations, for any $k\in \bbN^+$ and $\epsilon>0$ we have
	that 
	\begin{multline}
		(\sigma^2+\mathsf{Z}x)^{\frac{1}{4}} \tilde{R}_+(\sigma) ((\sigma \partial_\sigma )^{k} \tilde{P}(\sigma)) (\sigma^2+\mathsf{Z}x)^{-\frac{1}{4}} \\ = (\sigma^2+\mathsf{Z}x)^{\frac{1}{4}} \tilde{R}_+(\sigma) (\sigma^2+\mathsf{Z}x)^{\frac{1}{4}}(\sigma^2+\mathsf{Z}x)^{-\frac{1}{4}} ((\sigma \partial_\sigma )^{k} \tilde{P}(\sigma)) (\sigma^2+\mathsf{Z}x)^{-\frac{1}{4}}
	\end{multline}
	lies in 
	\begin{equation} 
		C^0( [0,\Sigma) ; \calL(H_{\mathrm{b}}^{m-1,l+1 - \frac{\epsilon}{2}},H_{\mathrm{b}}^{m-1-\epsilon,l-\epsilon}) S\operatorname{Diff}_{\mathrm{b}}^{1,-1+\frac{\epsilon}{2}}(X) )  
	 \subseteq C^0( [0,\Sigma); \calL(H_{\mathrm{b}}^{m,l},H_{\mathrm{b}}^{m-1 - \epsilon,l - \epsilon}))
	\end{equation}
	for any $m,l\in \bbR$ satisfying $l<-1/2$ and $m>-2l$. Therefore,  by \Cref{prop:prelim_convergence}, 
	\begin{multline}
	 \Big[  \sum_{\{k_i\}_{i=1}^I\in \calI_{k-j}} c_{\{k_i\}_{i=1}^I} \prod_{i=1}^I (\sigma^2+\mathsf{Z}x)^{1/4} \tilde{R}_+(\sigma) (\sigma \partial_\sigma)^{k_i} \tilde{P}(\sigma) (\sigma^2+\mathsf{Z}x)^{-1/4} \Big] \Big[ (\sigma^2+\mathsf{Z}x)^{1/4} \tilde{R}_+(\sigma)\Big]\\ \in  C^0([0,\Sigma)_\sigma ; \calL(H_{\mathrm{b}}^{m,l+5/4} ,H_{\mathrm{b}}^{m-k-\epsilon,l-\epsilon} ) )
	\end{multline} 
	for any $\epsilon>0$ and for any $m,l\in \bbR$ satisfying $l < -1/2$  and $-2l+k<m$.

	Since $ (\sigma^2+\mathsf{Z}x)^{-1/4}(\sigma \partial_\sigma)^j (\sigma^2+\mathsf{Z} x)^{1/4} \in C^0([0,\infty)_\sigma ; S^{\epsilon}_{\mathrm{b}}(X ))$ (as checked in \Cref{lem:sigmaZx_conormal}), we conclude \cref{eq:misc_272}, \cref{eq:misc_273}. 
\end{proof}

The preceding results amount to:
\begin{proposition}
	\label{prop:grace}
	Given $\tilde{f}\in C^\infty(X_{\mathrm{res}}^{\mathrm{sp}})$  vanishing rapidly at $\mathrm{tf},\mathrm{bf}$ (i.e.\ $\tilde{f} \in \calA^{\infty,\infty,(0,0)}(X_{\mathrm{res}}^{\mathrm{sp}} )$), set  $u_{00,+}(\sigma) = \tilde{R}_+(\sigma) \tilde{f}(-;\sigma)$ for all $\sigma \in [0,\Sigma)$. 
	Defining $u_{0,+}(\sigma) = x^{-(n-1)/2} (\sigma^2+\mathsf{Z} x)^{1/4} u_{00,+}(\sigma)$, $u_{0,+}  =\{u_{0,+}(-;\sigma)\}_{\sigma \in [0,\Sigma)} \in \calA^{0-,0-,(0,0)}_{\mathrm{loc}}(X^{\mathrm{sp}}_{\mathrm{res}} \cap \{\sigma<\Sigma \})$. 
	
	Moreover, the mapping $\calA^{\infty,\infty,(0,0)}(X_{\mathrm{res}}^{\mathrm{sp}})\ni \tilde{f} \mapsto u_{0,+} \in \calA_{\mathrm{loc}}^{0-,0-,(0,0)}(X_{\mathrm{res}}^{\mathrm{sp}} \cap \{\sigma<\Sigma\} )$ is continuous. 
\end{proposition}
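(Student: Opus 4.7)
The strategy is to combine the uniform mapping properties of $(E\partial_E)^k (\chi x \partial_E)^K [(E+\mathsf{Z}x)^{1/4} \tilde{R}_+(E^{1/2})]$ from \Cref{prop:convergence_main} with the embedding of $\sigma$-uniform b-Sobolev control into conormality on $X_{\mathrm{res}}^{\mathrm{sp}}$ provided by \Cref{prop:conormality_inclusion}, and then to use \Cref{prop:smoothness_improver} to promote the resulting conormality to smoothness at $\mathrm{zf}^\circ$. The continuity of the map $\tilde{f}\mapsto u_{0,+}$ is read off from operator-norm continuity at each step.

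More concretely, first I would observe that multiplication by $x^{-(n-1)/2}$ commutes with $E\partial_E$ and with $\chi x \partial_E$ (since $x^{-(n-1)/2}$ is $E$-independent), so that, writing $T_E = (E+\mathsf{Z}x)^{1/4}\tilde{R}_+(E^{1/2})$, we have
\begin{equation}
	(E\partial_E)^k (\chi x \partial_E)^K u_{0,+} = x^{-(n-1)/2}\,(E\partial_E)^k (\chi x \partial_E)^K [T_E \tilde{f}(-;E^{1/2})].
\end{equation}
Expanding the right-hand side by (operator) Leibniz produces a finite sum of terms of the form
\begin{equation}
	x^{-(n-1)/2}\bigl[(E\partial_E)^{k_1} (\chi x \partial_E)^{K_1} T_E\bigr]\bigl[(E\partial_E)^{k_2} (\chi x \partial_E)^{K_2}\tilde{f}(-;E^{1/2})\bigr],
\end{equation}
with $k_1+k_2=k$ and $K_1+K_2=K$. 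For each such term, \Cref{prop:convergence_main} bounds the operator factor as a continuous family in $\calL(H_{\mathrm{b}}^{m,l+5/4}(X),H_{\mathrm{b}}^{m-k-K-\epsilon,l-\epsilon}(X))$ uniformly for $E\in[0,\Sigma^2)$, while the hypothesis $\tilde{f}\in\calA^{\infty,\infty,(0,0)}(X_{\mathrm{res}}^{\mathrm{sp}})$ guarantees that $(E\partial_E)^{k_2}(\chi x\partial_E)^{K_2}\tilde{f}(-;E^{1/2})$ is a continuous family in $H_{\mathrm{b}}^{m',l'}(X)$ for every $m',l'\in\bbR$, depending continuously on $\tilde{f}$ in its natural Fr\'echet topology. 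The weight $x^{-(n-1)/2}$ shifts the b-decay index by $-(n-1)/2$. Summing over the finitely many Leibniz terms gives, for every $l<-1/2$ and every $m$ sufficiently large (depending on $k,K,l$),
\begin{equation}
	(E\partial_E)^k (\chi x \partial_E)^K u_{0,+} \in \calA^0\bigl([0,\Sigma)_\sigma;H_{\mathrm{b}}^{m,l-(n-1)/2}(X)\bigr),
\end{equation}
with a bound that depends continuously on $\tilde{f}$.

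Second, I would apply \Cref{prop:conormality_inclusion} with $\alpha=-(n-1)/2$ to deduce, for each fixed $K\in\bbN$,
\begin{equation}
	(\chi x \partial_E)^K u_{0,+} \in \calA^{0-,0-,0}_{\mathrm{loc}}(X_{\mathrm{res}}^{\mathrm{sp}}\cap\{\sigma<\Sigma\}),
\end{equation}
with continuous dependence on $\tilde{f}$; here the $(E\partial_E)^k$ derivatives are exactly what is needed to land in $\calA^0$ in $\sigma$. Since this holds for every $K$, \Cref{prop:smoothness_improver} upgrades $u_{0,+}$ to $\calA^{0-,0-,(0,0)}_{\mathrm{loc}}(X_{\mathrm{res}}^{\mathrm{sp}}\cap\{\sigma<\Sigma\})$, with continuous dependence on $\tilde{f}$ (tracked via the continuity remark following \Cref{prop:smoothness_improver}).

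The main obstacle I foresee is the Leibniz bookkeeping in the first step: one must justify that differentiating the composition $T_E\tilde{f}(-;E^{1/2})$ by b-vector fields on $X_{\mathrm{res}}^{\mathrm{sp}}$ does distribute term-by-term in a way compatible with the operator-valued estimates of \Cref{prop:convergence_main}. The reason this is not entirely mechanical is that $\chi x \partial_E$ is an operator-valued vector field whose $\partial_E$ derivative, when it falls on $T_E$, must be interpreted in the uniform operator sense supplied by \Cref{prop:E>0_differentiation}; the cutoff $\chi$ is crucial here, since $\partial_E$ without $x$ is singular at $\mathrm{bf}$. Once this is set up carefully (e.g.\ by first checking the identity on $\sigma>0$, where everything is classically differentiable, and then passing to the uniform bounds down to $\sigma=0$ via \Cref{prop:convergence_main}), the remainder of the argument is a direct assembly of the three propositions above.
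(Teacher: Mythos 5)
Your proposal reproduces the paper's own argument essentially step for step: the uniform bounds on $(E\partial_E)^k(\chi x\partial_E)^K[(E+\mathsf{Z}x)^{1/4}\tilde{R}_+(E^{1/2})]$ from \Cref{prop:convergence_main} (combined with \Cref{prop:E>0_differentiation} and the rapid vanishing of $\tilde{f}$), then the Sobolev-to-conormal inclusion of \Cref{prop:conormality_inclusion}, then \Cref{prop:smoothness_improver} to obtain smoothness at $\mathrm{zf}^\circ$, with continuity tracked through the operator-norm estimates at each stage. The Leibniz bookkeeping you flag as the main obstacle is indeed the content of \Cref{prop:convergence_main}, and your elaboration of it is correct; the only small item the paper's proof adds that you omit is the identification (via \Cref{prop:conormP}) of the restriction of $u_{0,+}$ to $\mathrm{zf}$ with $x^{-(n-1)/2}(\mathsf{Z}x)^{1/4}\tilde{R}_+(0)\tilde{f}(-;0)$, which is used later in the proof of \Cref{prop:main} but is not strictly part of the proposition's stated conclusion.
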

\begin{proof} Fix $\chi \in C_{\mathrm{c}}^\infty(X_{\mathrm{res}}^{\mathrm{sp}})$ supported away from $\mathrm{bf}$ and nonvanishing near $\mathrm{zf}$. 
	\begin{itemize}
		\item 
		We first show that $u_{0,+} =\{u_{0,+}(-;\sigma)\}_{\sigma \in (0,\Sigma)}\in \calA^{0-,0-,0}_{\mathrm{loc}}(X^{\mathrm{sp}}_{\mathrm{res}} \cap \{\sigma<\Sigma\})$. 
		
		For any $k,K\in \bbN$, $(E\partial_E)^k(\chi x \partial_E)^K \tilde{f} \in \calA^{\infty,\infty,(0,0)}(X_{\mathrm{res}}^{\mathrm{sp}})$.
		Thus, 
		\Cref{prop:E>0_differentiation} and \Cref{prop:convergence_main}  imply that, for all $m\in \bbR$,  $l<-1/2$, $K\in \bbN$,  
		\begin{multline}
			\{(\chi x \partial_E)^K u_{0,+}(-;E^{1/2}) \}_{E \in (0,\Sigma^2)} \\ =\Big\{\Big( \chi x \frac{\partial}{\partial E}\Big)^K\Big[ x^{-(n-1)/2} (E+\mathsf{Z}x)^{1/4}  \tilde{R}_+(E^{1/2}) \tilde{f}(-;E^{1/2})\Big]\Big\}_{E \in (0,\Sigma^2) } 
		\end{multline}
		is in $\calA^0_{\mathrm{loc}} ([0,\Sigma^2)_E; H_{\mathrm{b}}^{m,l-(n-1)/2}(X))$. 
		
		By \Cref{prop:conormality_inclusion},  we deduce that $(\chi x \partial_E)^K u_{0,+} \in \calA^{0-,0-,0}_{\mathrm{loc}}(X^{\mathrm{sp}}_{\mathrm{res}} \cap \{\sigma<\Sigma\})$.
		Moreover, \Cref{prop:convergence_main} shows that the map 
		\begin{equation} 
			\calA^{\infty,\infty,(0,0)}(X)\ni \tilde{f} \mapsto  (\chi x \partial_E)^K  u_{0,+}(-;E^{1/2}) \in \calA^0_{\mathrm{loc}} ([0,\Sigma^2)_E; H_{\mathrm{b}}^{m,l-(n-1)/2}(X))
		\end{equation} 
		is continuous, for each $K\in \bbN$, and the inclusion in \Cref{prop:conormality_inclusion} is continuous, so the map $\calA^{\infty,\infty,(0,0)}(X)\ni \tilde{f} \mapsto  (\chi x \partial_E)^K  u_{0,+}(-;E^{1/2}) \in \calA^{0-,0-,0}_{\mathrm{loc}}(X^{\mathrm{sp}}_{\mathrm{res}})$ is continuous as well.
		\item It now follows from \Cref{prop:smoothness_improver}  that $\{u_{0,+}(-;\sigma)\}_{\sigma>0}$ defines an element of the space $\smash{\calA_{\mathrm{loc}}^{0-,0-,(0,0)}(X_{\mathrm{res}}^{\mathrm{sp}} \cap \{\sigma < \Sigma\})}$. By the remark following \Cref{prop:smoothness_improver}, 
		\begin{equation}
			\calA^{\infty,\infty,(0,0)}(X_{\mathrm{res}}^{\mathrm{sp}})\ni\tilde{f} \mapsto u_{0,+} \in \calA^{0-,0-,(0,0)}_{\mathrm{loc}}(X^{\mathrm{sp}}_{\mathrm{res}}\cap \{\sigma < \Sigma\})
		\end{equation}
		is continuous.
		\item 
		Finally, we observe from \Cref{prop:conormP} that $\{u_{0,+}(-;\sigma)\}_{\sigma>0}$, considered as an element of $\calA_{\mathrm{loc}}^{0-,0-,(0,0)}(X_{\mathrm{res}}^{\mathrm{sp}})$, restricts to $x^{-(n-1)/2} (\mathsf{Z} x)^{1/4} \tilde{R}_+(0) \tilde{f}(-;0)$ at $\mathrm{zf}$.

	\end{itemize}
\end{proof}

\subsection{Asymptotics at $\mathrm{bf}$, $\mathrm{tf}$}
\label{subsec:smoothness2}

For this subsection, we do not assume that $P$ is the spectral family of an attractive Coulomb-like Schr\"odinger operator, only that $P$ satisfies the hypotheses in \S\ref{sec:introduction}. Let $\calE$ denote the index set 
\begin{equation}
	\calE = \{(k,\kappa)\in \bbN\times \bbN : \kappa \leq \lfloor k/2 \rfloor \}.
\end{equation}

Suppose that $P_1$ is classical to order $\beta_1>0$ and $P_2$ is classical to order $(\beta_2,\beta_3)$. 
Then, by \Cref{prop:normal_error}, 
\begin{equation} 
	\operatorname{N}(\tilde{P}) - \tilde{P} \in \operatorname{Diff}_{\mathrm{b,leC}}^{2,-2,-4}(X) + S\operatorname{Diff}_{\mathrm{b,leC}}^{2,-1-\delta_1,-3-2\delta_0}(X)
\end{equation}
holds for $\delta_1 = \min\{1+\beta_1,\beta_2,1/2+\beta_3\}$  and $\delta_0 = \min\{1/2+\beta_1,\beta_2,\beta_3\}$.

\begin{proposition}
	\label{prop:inductive_smoothness}
	Suppose that $P_1$ is classical to $\beta_1$th order and $P_2$ is classical to order $(\beta_2,\beta_3)$. 
	If $u_0\in \smash{\calA_{\mathrm{loc}}^{0-,0-,(0,0)}(X^{\mathrm{sp}}_{\mathrm{loc}})}$ and $\tilde{f} \in \calA_{\mathrm{loc}}^{\infty,\infty,(0,0)}(X^{\mathrm{sp}}_{\mathrm{res}})$, then, setting 
	\begin{equation} 
		u_{00}(-;\sigma)= x^{(n-1)/2}(\sigma^2+\mathsf{Z}x)^{-1/4}u_0(-;\sigma),
	\end{equation} 
	if $\tilde{P}u_{00} = \tilde{f}$, then
	\begin{equation} 
		u_0 \in \calA_{\mathrm{loc}}^{(0,0),\calE,(0,0)}(X^{\mathrm{sp}}_{\mathrm{res}}) + \calA_{\mathrm{loc}}^{((0,0),\delta_1),2\delta_0-,(0,0)}(X^{\mathrm{sp}}_{\mathrm{res}}), 
		\label{eq:misc_kzh}
	\end{equation}  
	and $(\calA_{\mathrm{loc}}^{0-,0-,(0,0)} \times \calA_{\mathrm{loc}}^{\infty,\infty,(0,0)})|_{\tilde{P}u_{00}=\tilde{f}} \ni (u_0,\tilde{f})\mapsto u_0 \in  \calA_{\mathrm{loc}}^{(0,0),\calE,(0,0)} + \calA_{\mathrm{loc}}^{((0,0),\delta_1),2\delta_0-,(0,0)}$ is continuous. 
	In particular, if $P$ is fully classical, then 
	\begin{equation} 
		u_{0}\in  \calA_{\mathrm{loc}}^{(0,0),\calE,(0,0)}(X^{\mathrm{sp}}_{\mathrm{res}}).
	\end{equation} 
\end{proposition}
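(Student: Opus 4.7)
The strategy is an iterative improvement argument driven by the leC-normal operator $\operatorname{N}(\tilde{P})$. By \Cref{prop:grace} we start with $u_0 \in \calA_{\mathrm{loc}}^{0-,0-,(0,0)}(X^{\mathrm{sp}}_{\mathrm{res}})$, so $u_{00} \in x^{(n-1)/2}(\sigma^2+\mathsf{Z}x)^{-1/4}\calA_{\mathrm{loc}}^{0-,0-,(0,0)}(X_{\mathrm{res}}^{\mathrm{sp}})$. The key observation is that, by \Cref{prop:normal_error}, $\operatorname{N}(\tilde{P})-\tilde{P}$ gains two sc,leC-orders (giving an entire power of $\varrho_{\mathrm{bf}}$ and $\varrho_{\mathrm{tf}}^2$) modulo a lower-order classical error and a symbolic error gaining $\delta_1$ extra orders at $\mathrm{bf}$ and $2\delta_0$ at $\mathrm{tf}$. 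Rewriting the equation as
\begin{equation}
\operatorname{N}(\tilde{P}) u_{00} = \tilde{f} + (\operatorname{N}(\tilde{P})-\tilde{P})u_{00},
\end{equation}
the right-hand side is better than $u_{00}$ by two orders in the classical piece (plus $\delta_1,2\delta_0$ extra in the symbolic remainder), so an inductive parametrix construction for $\operatorname{N}(\tilde{P})$ will let me peel off successive terms of the expansion.

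\textbf{Step 1: reduce to the model ODE.} I conjugate as in \cref{eq:misc_i12}: for fixed indices $l,k$, the operator $x^{-l-n/2}(\sigma^2+\mathsf{Z}x)^{-k}\operatorname{N}(\tilde{P})x^{l+n/2}(\sigma^2+\mathsf{Z}x)^{k}$ becomes the model operator $\widehat{\mathcal{N}}_{l,k}=2i[\hat{x}\partial_{\hat{x}}+\tfrac{\hat{x}}{1+\hat{x}}(k+\tfrac14)+l+\tfrac12]$ on $[0,\infty]_{\hat{x}}$, which is precisely \cref{eq:model}. The explicit solution formula \cref{eq:model_soln} gives a right inverse that maps $\hat{x}^a(1+\hat{x})^{-b}$ to an explicit function with controlled behavior at $\hat{x}=0$ and $\hat{x}=\infty$, with logarithms appearing exactly when the exponent of $\hat{x}_0$ in the integrand equals $-1$, i.e., at resonant values of the indices. \Cref{prop:norm_est} provides the quantitative estimates on the Sobolev-norm side that make this parametrix well-defined uniformly in $\sigma$.

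\textbf{Step 2: inductive construction of the expansion.} I build up the expansion term by term. Suppose inductively we have written $u_0 = v_N + r_N$ where $v_N$ is a partial polyhomogeneous sum at $\mathrm{tf}\cup\mathrm{bf}$ of indices in $\calE$ up to total order $N$, and $r_N$ has improved decay (conormal of order $N+1-\varepsilon$ at $\mathrm{tf}$ and $\mathrm{bf}$, smooth at $\mathrm{zf}$). Applying $\operatorname{N}(\tilde{P})$ to $v_N$ lifted back via the factor $x^{(n-1)/2}(\sigma^2+\mathsf{Z}x)^{-1/4}$ and subtracting from $\tilde{f}+(\operatorname{N}(\tilde{P})-\tilde{P})u_{00}$ produces, by \Cref{prop:normal_error}, a forcing term of order $N+2$ at $\mathrm{tf}\cup\mathrm{bf}$ (classical piece) plus a symbolic remainder suppressed by the extra $\delta_1,2\delta_0$. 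I solve the model ODE for the next coefficient via \cref{eq:model_soln}; this gives the next term in the expansion, and the resulting conormal remainder is bounded in the right b-Sobolev space thanks to \Cref{prop:norm_est}. Iterating and then using smoothness at $\mathrm{zf}^\circ$ (already granted by \Cref{prop:grace}) to propagate the expansion into the interior of the front face, I obtain asymptotic expansions at all orders. The classical part of the error drives the $\calE$-polyhomogeneous term in \cref{eq:misc_kzh}, while the symbolic part drives the $\calA_{\mathrm{loc}}^{((0,0),\delta_1),2\delta_0-,(0,0)}$ remainder. Asymptotic summation (using Borel's lemma on $X^{\mathrm{sp}}_{\mathrm{res}}$) produces the full polyhomogeneous $v_\infty$.

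\textbf{Step 3: bookkeeping for the index set $\calE$.} The hard part is verifying that logarithms accumulate at no worse than the rate $\kappa\le\lfloor k/2\rfloor$. At the $k$th inductive step the new forcing lies in $\varrho_{\mathrm{tf}}^{k} \log^{\kappa_0}$ with $\kappa_0\le\lfloor (k-2)/2\rfloor+1$, and \cref{eq:model_soln} produces at most one additional log when the exponent in the integrand is resonant. One checks directly from the explicit formula that resonances occur only every other order at $\mathrm{tf}$ (because the effective index shifts by $2$ each step, matching the gain of two sc,leC-orders in $\operatorname{N}(\tilde{P})-\tilde{P}$), yielding exactly the bound $\kappa\le\lfloor k/2\rfloor$. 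At $\mathrm{bf}$ no logarithms arise, because the model ODE at $\mathrm{bf}$ is regular-singular with non-integer indicial roots generically (reflected in \cref{eq:model_soln} by the exponent $-l-1/2$ being non-integer for $l\in\bbR\setminus(\tfrac12+\bbZ)$; the specific values we encounter, shifted by integer amounts from the background weights, are generic thanks to the half-integer shifts coming from $(\sigma^2+\mathsf{Z}x)^{-1/4}$). The main obstacle of the whole argument is this log-counting: one must carefully track how the logs produced by integrating \cref{eq:model_soln} interact with the $\log^{\kappa_0}$ factors already present in the forcing, and verify that the resulting index set is exactly $\calE$ and not larger. The continuity of the map $(u_0,\tilde{f})\mapsto u_0$ into the polyhomogeneous plus partially polyhomogeneous space follows from the continuity statement in \Cref{prop:grace} together with the continuity of the parametrix at each stage and the closed-graph-type reasoning built into the asymptotic summation.
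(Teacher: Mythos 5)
Your overall strategy matches the paper's: iteratively improve the conormal order of the remainder by writing $\operatorname{N}(\tilde{P})u_{00} = \tilde{f} - (\tilde{P}-\operatorname{N}(\tilde{P}))u_{00}$, use \Cref{prop:normal_error} to see that the right-hand side gains orders, and invert the normal operator via the explicit integral formula \cref{eq:model_soln} while tracking the index set. That is exactly what the paper does. Two points of the write-up are off, though, and deserve comment.

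First, you cite \Cref{prop:norm_est} as the workhorse that ``makes the parametrix well-defined uniformly in $\sigma$.'' That lemma is a Sobolev-norm estimate and belongs to the Fredholm-type argument in \S\ref{subsec:estimates} (it feeds into \Cref{prop:central}); it plays no role here. What the smoothness argument actually needs is the mapping behaviour of the integration $I[g]=\int_x^{\bar{x}}x_0^{-1}g\,dx_0$ on spaces of polyhomogeneous and conormal functions on $X^{\mathrm{sp}}_{\mathrm{res}}$, and that is the content of \Cref{prop:N_mapping_properties}, \Cref{cor:normal_mapping_1}, and \Cref{prop:normal_mapping_2}. The first of these is what turns ``solve the model ODE'' into a precise statement about index sets; without it (or an equivalent), Step 2 of your plan has no quantitative backbone.

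Second, your explanation of the parity in the log-counting is subtly wrong. You attribute the ``every other order'' resonance at $\mathrm{tf}$ to the two-order gain of $\operatorname{N}(\tilde{P})-\tilde{P}$. In fact the parity is a consequence of the $\rho=x^{1/2}$ blow-up at $\mathrm{tf}$: after changing variables, the integrand depends on $\hat{E}(\rho/\rho_0)^2$, so Taylor-expanding in $\hat{E}$ produces exponents $\rho_0^{k-2j}$, and the resonant case $k-2j=-1$ forces $k$ odd. It is this squared ratio, not the order gain of the error term, that makes $\calF_+\subseteq\calE$ when $\calF=\calE$. Similarly, the absence of logarithms at $\mathrm{bf}$ is not really a statement about ``generic indicial roots'': after the conjugation by $x^{(n-1)/2}(\sigma^2+\mathsf{Z}x)^{-1/4}$, the relevant forcing satisfies $x_0^{-1}g\in\calA^{(0,0),\dots}$ near $\mathrm{bf}$, so the integrand is already regular there and integration produces no resonance at all. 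Finally, the Borel summation in Step 2 is unnecessary: the induction on $\alpha,\beta$ terminates directly in the space $\calA_{\mathrm{loc}}^{(0,0),\calE,(0,0)}+\calA_{\mathrm{loc}}^{((0,0),\delta_1),2\delta_0-,(0,0)}$ because $\calE$ reproduces itself under $\calF\mapsto\calF_+$. None of this is a fatal gap — the architecture of your proof is correct — but the pointers to the supporting lemmas and the reason the index set stabilizes should be corrected.
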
 
\begin{proof}
	We will prove via induction on $\alpha,\beta$ that, for all $\alpha\in (-\infty,\delta_1]$ and $\beta \in (-\infty,\delta_0)$, 
	\begin{equation}
		u_0 \in  \calA_{\mathrm{loc}}^{(0,0),\calE,(0,0)}(X^{\mathrm{sp}}_{\mathrm{res}}) + \calA_{\mathrm{loc}}^{((0,0),\alpha),2\beta ,(0,0)}(X^{\mathrm{sp}}_{\mathrm{loc}}).
		\label{eq:misc_ao1}
	\end{equation}
	The $\alpha,\beta < 0$ case of \cref{eq:misc_ao1} is the hypothesis of the proposition.

	Consider $\alpha,\beta$ such that \cref{eq:misc_ao1} holds.
	From $\tilde{P} u_{00} = \tilde{f}$, writing $\tilde{P} = \operatorname{N}(\tilde{P}) + (\tilde{P} - \operatorname{N}(\tilde{P}))$, we get 
	\begin{multline}
		\operatorname{N}(\tilde{P})  u_{00} = \tilde{f}  - (\tilde{P} - \operatorname{N}(\tilde{P})) u_{00} \in  \calA_{\mathrm{loc}}^{\infty,\infty,(0,0)}(X^{\mathrm{sp}}_{\mathrm{res}}) \\ + (\operatorname{Diff}_{\mathrm{b,leC}}^{2,-2,-4}(X) + S\operatorname{Diff}_{\mathrm{b,leC}}^{2,-1-\delta_1,-3-2\delta_0}(X))x^{(n-1)/2}\calA_{\mathrm{loc}}^{((0,0),\alpha),2\beta-1/2,(0,0)}(X^{\mathrm{sp}}_{\mathrm{res}}) \\
		+ (\operatorname{Diff}_{\mathrm{b,leC}}^{2,-2,-4}(X) + S\operatorname{Diff}_{\mathrm{b,leC}}^{2,-1-\delta_1,-3-2\delta_0}(X))x^{(n-1)/2}\calA_{\mathrm{loc}}^{(0,0),\calE-1/2,(0,0)}(X^{\mathrm{sp}}_{\mathrm{res}}). 
	\end{multline}
	The set on the right-hand side is $x^{(n-1)/2}$ times 
	\begin{multline}
		  \calA_{\mathrm{loc}}^{(2,0),\calE+7/2,(0,0)} (X_{\mathrm{res}}^{\mathrm{sp}} )  
		   +  \calA_{\mathrm{loc}}^{((2,0),2+\alpha ),2\beta+7/2,(0,0)} (X^{\mathrm{sp}}_{\mathrm{res}}) \\ +\calA_{\mathrm{loc}}^{\min\{1+\delta_1,1+\alpha+\delta_1\}, \min\{5/2+2\delta_0,5/2+2\delta_0+2\beta\},(0,0)}(X^{\mathrm{sp}}_{\mathrm{res}}).
	\end{multline}
	For $\alpha$ sufficiently close to zero or positive, we can apply \Cref{prop:N_mapping_properties} below to conclude that 
	\begin{multline}
	 u_0 \in  \calA^{(0,0),\calE,(0,0)}_{\mathrm{loc}}  + \calA_{\mathrm{loc}}^{((0,0),1+\alpha),1+2\beta-,(0,0)}   +\calA_{\mathrm{loc}}^{((0,0),\min\{\delta_1,\alpha+\delta_1\}), \min\{2\delta_0,2\delta_0+2\beta\}-,(0,0)} \\
	 \subseteq \calA^{(0,0),\calE,(0,0)}_{\mathrm{loc}}(X^{\mathrm{sp}}_{\mathrm{res}})  +\calA_{\mathrm{loc}}^{((0,0),\min\{1+\alpha,\delta_1,\alpha+\delta_1\}), \min\{1+2\beta,2\delta_0,2\delta_0+2\beta\}-,(0,0)}(X^{\mathrm{sp}}_{\mathrm{res}}).
	\end{multline}
	If $\delta_1 \leq \alpha+1 , \alpha+\delta_1$ and $2\delta_0 \leq 2\beta+1,2\beta+ 2\delta_0$, then 
	\begin{equation} 
		\calA_{\mathrm{loc}}^{((0,0),\min\{\alpha+1,\delta_1,\alpha+\delta_1\}), \min\{1+2\beta,2\delta_0,2\delta_0+2\beta\}-,(0,0)}(X^{\mathrm{sp}}_{\mathrm{res}}) = \calA_{\mathrm{loc}}^{((0,0),\delta_1),2\delta_0-,(0,0)}(X^{\mathrm{sp}}_{\mathrm{res}}),
	\end{equation} 
	so we have concluded that \cref{eq:misc_kzh} holds. Otherwise, if neither inequality holds,  
	\begin{equation}
		u_0 \in  \calA^{(0,0),\calE,(0,0)}_{\mathrm{loc}}(X^{\mathrm{sp}}_{\mathrm{res}}) + \calA_{\mathrm{loc}}^{((0,0),\alpha+\varepsilon_1),2\beta+2\varepsilon_0-}(X^{\mathrm{sp}}_{\mathrm{res}})
		\label{eq:misc_j12}
	\end{equation}
	for $\varepsilon_1 = \min\{1,\delta_1\}$ and $\varepsilon_0 = \min\{1/2,\delta_0\}$, so we have improved $\alpha,\beta$ by some amount bounded below. If one of the two inequalities holds, for example $\delta_1 \leq \alpha+1,\alpha+\delta_1$, then we get \cref{eq:misc_j12} with the corresponding one of $\varepsilon_0,\varepsilon_1=0$ and the corresponding of $\alpha,\beta$ equal to the corresponding of $\delta_1,\delta_0$. Either way, we have improved the other of $\alpha,\beta$ by some amount bounded below (and the other is already equal to the target value).
	The claim therefore follows by induction. 
	
	The continuity clause of the proposition can be proven using the same argument, keeping track of topologies. 
\end{proof}

\begin{proposition}
	\label{prop:N_mapping_properties}
	Suppose that $P$ satisfies the minimal hypotheses of \S\ref{sec:introduction}. 
	
	Suppose further that $u\in \calA_{\mathrm{loc}}^{-\infty,-\infty,(0,0)}(X^{\mathrm{sp}}_{\mathrm{res}}) =  \calA_{\mathrm{loc}}^{-\infty,(0,0)}([0,\infty)_{E}\times X)$ satisfies $\operatorname{N}(\tilde{P}) u =f$ for some 
	\begin{equation}
		f \in  x^{(n-1)/2}  \calA^{(2,0),\calE+7/2,(0,0)}_{\mathrm{loc}}(X^{\mathrm{sp}}_{\mathrm{res}} )  + x^{(n-1)/2}\calA_{\mathrm{loc}}^{((2,0),\alpha +1),\beta  + 5/2,(0,0)}(X^{\mathrm{sp}}_{\mathrm{res}})
		\label{eq:misc_ffa}
	\end{equation}
	for $\alpha,\beta \in \bbR^+$, $\beta\notin 2\bbN$. Then, setting $u_0 = x^{-(n-1)/2} (\sigma^2+\mathsf{Z}x)^{1/4} u$, 
	\begin{equation}
		u_0 \in \calA^{(0,0),\calE,(0,0)}_{\mathrm{loc}}(X^{\mathrm{sp}}_{\mathrm{res}}) + \calA_{\mathrm{loc}}^{((0,0),\alpha),\beta,(0,0)} (X^{\mathrm{sp}}_{\mathrm{res}})
		\label{eq:misc_u00}
	\end{equation} 
	holds.
\end{proposition}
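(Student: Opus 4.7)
The plan is to reduce the equation $\operatorname{N}(\tilde{P})u = f$ to a first-order radial ODE that I can integrate explicitly using \cref{eq:model_soln}, and then read off the polyhomogeneity of $u_0$ from the resulting integral formula. A direct computation using the definition \cref{eq:Ndef} of $\operatorname{N}(\tilde{P})$ shows that the conjugation $u = x^{(n-1)/2}(\sigma^2+\mathsf{Z}x)^{-1/4}u_0$ reduces the operator to a pure radial vector field:
\[
\operatorname{N}(\tilde{P})\bigl(x^{(n-1)/2}(\sigma^2+\mathsf{Z}x)^{-1/4}u_0\bigr) = 2i\,x^{(n+1)/2}(\sigma^2+\mathsf{Z}x)^{1/4}\,x\partial_x u_0,
\]
the $-(n-1)/2$ and $(\mathsf{Z}/4)x/(\sigma^2+\mathsf{Z}x)$ correction terms in $\operatorname{N}(\tilde{P})$ being exactly what $x\partial_x$ produces when applied to the prefactor. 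This realizes the $l = -1/2$, $k = -1/4$ specialization of the model identity \cref{eq:misc_i12}. Consequently the PDE becomes $x\partial_x u_0 = g$ with $g = (2i)^{-1}x^{-(n+1)/2}(\sigma^2+\mathsf{Z}x)^{-1/4}f$, and a direct weight calculation using the hypothesis on $f$ gives
\[
g \in \calA_{\mathrm{loc}}^{(1,0),\calE+1,(0,0)}(X^{\mathrm{sp}}_{\mathrm{res}}) + \calA_{\mathrm{loc}}^{((1,0),\alpha),\beta,(0,0)}(X^{\mathrm{sp}}_{\mathrm{res}}).
\]

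Since $g$ vanishes to order $x^1$ at $\mathrm{bf}$, the integral $v(x,y;\sigma) = \int_0^x g(x',y;\sigma)\,dx'/x'$ converges absolutely; the difference $u_0 - v$ is annihilated by $x\partial_x$ and so is radially constant, and calling its restriction to $\mathrm{bf}$ by $c(y,\sigma)$ we obtain $u_0 = c + v$ for $\sigma > 0$, which is exactly the $l = -1/2$, $k = -1/4$ case of \cref{eq:model_soln} with integration based at $\hat{x} = 0$. I would then verify polyhomogeneity face by face. At $\mathrm{bf}$, expanding $g$ as $\sum_{k \geq 1}g_k(y,\sigma)x^k$ plus conormal remainder of order $\alpha+1$ and integrating term-by-term against $dx/x$ produces no new logs, because $g$ has no $x^0$ or $x^0 \log x$ term; thus $v \in \calA^{(1,0),\ldots}$ with conormal remainder of order $\alpha$ at bf, and together with the $(0,0)$-order coefficient $c$ the claim at bf follows. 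Smoothness at $\mathrm{zf}^\circ$ is immediate by differentiation under the integral sign, since the integrand is smooth in $E = \sigma^2$ there. At $\mathrm{tf}$, using the coordinate $\hat{x} = \mathsf{Z}x/\sigma^2$ (so $x\partial_x = \hat{x}\partial_{\hat{x}}$ and $\varrho_{\mathrm{tf}} = \sigma(1+\hat{x})^{1/2}$), integration against $d\hat{x}/\hat{x}$ of a term $g_{k,\kappa}(y,\hat{x})\varrho_{\mathrm{tf}}^k\log^\kappa\varrho_{\mathrm{tf}}$ with $(k,\kappa) \in \calE+1$, after expanding $\log\varrho_{\mathrm{tf}} = \log\sigma + \tfrac{1}{2}\log(1+\hat{x})$, produces only expressions of the form $\varrho_{\mathrm{tf}}^k\log^j\varrho_{\mathrm{tf}}$ with $j \leq \kappa$; thus the index set $\calE + 1 \subseteq \calE$ is preserved, and adjoining the $(0,0)$-coefficient contributed by $c$ yields the full index set $\{(0,0)\} \cup (\calE+1) \subseteq \calE$ at tf.

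The main obstacle will be the careful bookkeeping of the partially polyhomogeneous summand $\calA_{\mathrm{loc}}^{((1,0),\alpha),\beta,(0,0)}$: I must split $g = g_{\mathrm{pol}} + g_{\mathrm{cnml}}$ coherently so as to match the two summands in the stated decomposition, show that integration against $dx/x$ preserves the partially polyhomogeneous structure with the conormal-remainder orders $\alpha$ at bf and $\beta$ at tf intact, and use the hypothesis $\beta \notin 2\bbN$ to ensure these conormal orders do not collide with the polyhomogeneous index set $\calE$ (which contains the orders $(2m,m)$). Once this bookkeeping is complete -- a routine consequence of the mapping properties of integration against $dx/x$ on partial-polyhomogeneous spaces on $X^{\mathrm{sp}}_{\mathrm{res}}$ -- the claimed membership \cref{eq:misc_u00} follows directly from the integral representation, which is also manifestly linear in $f$.
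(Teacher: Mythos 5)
The proposal starts along the same line as the paper --- the conjugation that turns $\operatorname{N}(\tilde{P})$ into a first-order radial operator and the resulting integral formula --- but a key choice is made differently, and it breaks the argument: you integrate from $x'=0$ instead of from $x'=\bar{x}$.

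Consider the paper's own test case near \cref{eq:mistake_example}: take $g=x\sigma^2(\sigma^2+\mathsf{Z}x)^{-1}$, which lies in $\calA^{(1,0),\calE+1,(0,0)}_{\mathrm{loc}}(X^{\mathrm{sp}}_{\mathrm{res}})$. Your $v=\int_0^x g(x')\,dx'/x'$ evaluates to
\begin{equation}
	v = \frac{\sigma^2}{\mathsf{Z}}\log\!\Big(1 + \frac{\mathsf{Z}x}{\sigma^2}\Big),
\end{equation}
and on $\mathrm{zf}^\circ$ (i.e.\ $x$ bounded away from $0$, $\sigma\to 0^+$) this has a $\sigma^2\log\sigma = \tfrac12 E\log E$ term, which is \emph{not} smooth in $E$. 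So your $v$ fails to lie in $\calA^{\ldots,(0,0)}_{\mathrm{loc}}$, and therefore $c' = u_0 - v$ fails as well. The step ``smoothness at $\mathrm{zf}^\circ$ is immediate by differentiation under the integral sign'' does not hold for $\int_0^x$: the contributions from $x'\to 0^+$ sit at the $\mathrm{tf}$ corner, where the polyhomogeneity of $g$ forces logarithms of $\sigma$ into the antiderivative. The paper's choice $\int_x^{\bar{x}}$ avoids precisely this, since that integral is supported in $x'\in[x,\bar{x}]$ away from $\partial X$, so differentiation under the integral at $\mathrm{zf}^\circ$ is legitimate and both $c$ and the integral are individually smooth there. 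With your base point, you would need a separate argument that the singularities of $c'$ and $v$ cancel, and you would also need to produce a different decomposition of $u_0$ into two pieces each smooth at $\mathrm{zf}$ in order to match the stated conclusion $u_0\in \calA^{(0,0),\calE,(0,0)}_{\mathrm{loc}}+\calA^{((0,0),\alpha),\beta,(0,0)}_{\mathrm{loc}}$.

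There is a second, smaller gap at $\mathrm{tf}$. You assert that integrating a $\varrho_{\mathrm{tf}}^k\log^\kappa\varrho_{\mathrm{tf}}$ term produces only log powers $j\le\kappa$. The paper's \Cref{prop:normal_mapping_1} shows this is false: the antiderivative $\digamma_{-1,\varkappa}(\rho)=\tfrac{1}{\varkappa+1}\log^{\varkappa+1}\rho$ appears whenever an odd power $k$ pairs against $d\rho_0/\rho_0$, producing index pairs $(k+1,\kappa+1)$ and $(k+2,\kappa+1)$. The proof is saved only because $\calE_+=\calE$, i.e.\ the index set $\calE$ is stable under exactly this log-raising operation; in the paper's worked example the integral of a $(1,0)$-order term at $\mathrm{tf}$ produces a $(2,1)$ term. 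Your claim ``$j\le\kappa$'' silently throws away these genuinely-present logarithms, and (in your $\hat{x}$-based parametrization) the logs reappear as $\log\hat{x}$ singularities of the expansion coefficients at the $\hat{x}=\infty$ end of $\mathrm{tf}$, which is another manifestation of the same $\mathrm{zf}^\circ$ problem.

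The fix in both places is what the paper does: base the integral at $\bar{x}$ so that $\mathrm{zf}^\circ$-smoothness is immediate and $c$ comes out in $C^\infty([0,\infty)_E\times\partial X)$; then track the log production at $\mathrm{tf}$ honestly, obtaining $\calF\mapsto\calF_+$ as in \Cref{prop:normal_mapping_1}, and invoke $\calE_+=\calE$ (and the constraint $\beta\notin 2\bbN$ to keep the conormal-remainder order from colliding with the odd-$k$ slots that generate logs). Your conjugation and the reduction to the radial ODE are correct and agree with the paper; the gap is entirely in the choice of base point and the resulting failure to control $\mathrm{zf}^\circ$ and the $\mathrm{tf}$ logarithms.
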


\begin{proof}
	Now letting  $\tilde{\operatorname{N}}(\tilde{P})=x^{-(n-1)/2}(\sigma^2+\mathsf{Z}x)^{1/4}\operatorname{N}(\tilde{P})x^{(n-1)/2}(\sigma^2+\mathsf{Z}x)^{-1/4}$, $\tilde{\operatorname{N}}(\tilde{P})u_0 = f_0$ for $u_0 = x^{-(n-1)/2} (\sigma^2+\mathsf{Z}x)^{1/4} u$ and $f_0 = x^{-(n-1)/2} (\sigma^2+\mathsf{Z}x)^{1/4} f$. Below, it will be slightly more convenient to work with $f_1 =x^{-1} (\sigma^2+\mathsf{Z} x)^{-1/2} f_0$.
	\Cref{eq:misc_ffa} yields 
	\begin{equation}
			f_1 \in  x  (\sigma^2+\mathsf{Z}x)^{-1/2} \calA^{(0,0),\calE,(0,0)}_{\mathrm{loc}}(X^{\mathrm{sp}}_{\mathrm{res}} )  + \calA_{\mathrm{loc}}^{((1,0),\alpha),\beta,(0,0)}(X^{\mathrm{sp}}_{\mathrm{res}}).
	\end{equation}

	In order to prove the proposition, it suffices to restrict attention to $\hat{X} = [0,\bar{x})_x\times \partial X_y$.
	By \cref{eq:misc_i12} (with $k=-1/4$ and $l=-1/2$), we have 
	\begin{equation} 
		\tilde{\operatorname{N}}(\tilde{P}) = 2 i x^2(\sigma^2+\mathsf{Z}x)^{1/2} \partial_x. 
	\end{equation}	
	Thus, integrating $\tilde{\operatorname{N}}(\tilde{P}) u_0 =f_0$, we get
	\begin{equation}
		u_0(x,y;\sigma) = c(y;\sigma) + \frac{i}{2} \int_{x}^{\bar{x}} x_0^{-1} f_1(x_0,y;\sigma) \dd x _0 
		\label{eq:misc_999}
	\end{equation}
	for some $c(y;\sigma) \in \bbC$, for each $\sigma\geq 0$. Since $u_0 \in \calA_{\mathrm{loc}}^{-\infty,(0,0)}([0,\infty)_E\times \hat{X} )$, and since the same applies to the integral in \cref{eq:misc_999} (cut-off near $\partial X$), we deduce that $\smash{c \in \calA_{\mathrm{loc}}^{-\infty,(0,0)}( [0,\infty)_E \times \hat{X}) }$.
	Since $c(y,\sigma)$ does not depend on $x$, this implies that $c \in C^\infty([0,\infty)_E \times \partial X)$.
	
	\Cref{eq:misc_u00} then follows from the mapping properties of the integral in \cref{eq:misc_999}, which we record in \Cref{cor:normal_mapping_1} and \Cref{prop:normal_mapping_2} below.
\end{proof}

For the following proposition, we use $\hat{X}^{\mathrm{sp}}_{\mathrm{res}}$, defined as $X^{\mathrm{sp}}_{\mathrm{res}}$ with $\hat{X} = [0,\bar{x})\times \partial X$ in place of $X$. 

\begin{proposition}
	\label{prop:normal_mapping_1}
	Let $\calF\subset \bbN\times \bbN$ be some index set containing $(0,0)$.
	
	If $g \in  x (\sigma^2+\mathsf{Z}x)^{-1/2}\calA^{(0,0),\calF,(0,0)}_{\mathrm{loc}}(\hat{X}^{\mathrm{sp}}_{\mathrm{res}})= \calA^{(1,0),\calF+1,(0,0)}_{\mathrm{loc}}(\hat{X}^{\mathrm{sp}}_{\mathrm{res}})$, then 
	\begin{equation} 
		I=\int_x^{\bar{x}} x_0^{-1} g(x_0,y;\sigma) \dd x_0 \in \calA^{(0,0),\calF_+,(0,0)}_{\mathrm{loc}}(\hat{X}^{\mathrm{sp}}_{\mathrm{res}}),
	\end{equation} 
	where $\calF_+ = \calF\cup  \{ (k+1,\kappa+1) , (k+2,\kappa+1) :  (k,\kappa)\in \calF, k\equiv 1 \bmod 2 \}$.
\end{proposition}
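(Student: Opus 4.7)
The plan is to analyze $I$ separately at each of the three boundary hypersurfaces $\mathrm{zf}, \mathrm{bf}, \mathrm{tf}$ of $\hat{X}^{\mathrm{sp}}_{\mathrm{res}}$. Near $\mathrm{tf}$, I will work in coordinates $(\theta_0, \varrho_{\mathrm{tf},0}, y)$ where $\theta_0 = x_0/(\sigma^2+\mathsf{Z}x_0) = \varrho_{\mathrm{bf},0}$ and $\varrho_{\mathrm{tf},0} = (\sigma^2+\mathsf{Z}x_0)^{1/2}$, so that $x_0 = \theta_0\varrho_{\mathrm{tf},0}^2$ and $x_0^{-1}\theta_0 = \varrho_{\mathrm{tf},0}^{-2}$. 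Writing $g = x_0(\sigma^2+\mathsf{Z}x_0)^{-1/2} h$ with $h \in \calA^{(0,0),\calF,(0,0)}_{\mathrm{loc}}$, the integrand $x_0^{-1} g = (\sigma^2+\mathsf{Z}x_0)^{-1/2} h$ is conormal on $\hat X^{\mathrm{sp}}_{\mathrm{res}}$ and smooth away from $\mathrm{tf}$. Consequently $I$ is smooth at $\mathrm{zf}^\circ$ and $\mathrm{bf}^\circ$ by differentiation under the integral sign; the main task is the expansion at $\mathrm{tf}$.

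For this expansion I will write the tf-asymptotics of $g$ as
\[
g \sim \sum_{(k,\kappa)\in\calF} \theta_0\, \tilde a_{k,\kappa}(\theta_0, y)\,\varrho_{\mathrm{tf},0}^{k+1}\log^{\kappa}\varrho_{\mathrm{tf},0}
\]
with each $\tilde a_{k,\kappa}$ smooth in $\theta_0\in[0,1/\mathsf{Z}]$ and $y$, truncate the expansion at some large order $N$, and handle the truncation remainder by standard size estimates showing it integrates to a contribution vanishing at $\mathrm{tf}$ of order at least $N+1$ (which can be made arbitrarily large). For a single principal term, the substitution $u = \sigma^2+\mathsf{Z}x_0 = \varrho_{\mathrm{tf},0}^2$ reduces the contribution to
\[
I_{k,\kappa} = \frac{1}{2^\kappa \mathsf{Z}} \int_{\varrho_{\mathrm{tf}}^2}^{u_1} \tilde a_{k,\kappa}\!\bigl(\tfrac{u-\sigma^2}{\mathsf{Z}u},\,y\bigr)\, u^{(k-1)/2}\log^\kappa u\, du,
\]
where $u_1 = \sigma^2+\mathsf{Z}\bar x$ contributes smoothly at tf. Taylor expanding $\tilde a_{k,\kappa}$ at $\theta = 1/\mathsf{Z}$ via the identity $\theta_0 - 1/\mathsf{Z} = -\sigma^2/(\mathsf{Z}u)$ (whose uniform boundedness $\sigma^2/u \le \sigma^2/\varrho_{\mathrm{tf}}^2 = \varrho_{\mathrm{zf}}\le 1$ along the integration path makes the expansion valid) produces a finite sum of elementary integrals $\int u^{(k-1)/2-j}\log^\kappa u\, du$ with prefactors $\sigma^{2j}\partial_\theta^j\tilde a_{k,\kappa}(1/\mathsf{Z}, y)/j!$, plus a Taylor remainder of order $(\sigma^2/u)^J$ for any chosen $J$.

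The key observation is that the antiderivative $\int u^\alpha \log^\kappa u \, du$ produces an extra logarithm precisely when $\alpha = -1$, i.e., when $(k-1)/2 - j = -1$, equivalently $j = (k+1)/2$; this is an integer only when $k$ is odd. In that case the lower-limit contribution is proportional to $\sigma^{k+1}\log^{\kappa+1}\varrho_{\mathrm{tf}} = \varrho_{\mathrm{zf}}^{(k+1)/2}\varrho_{\mathrm{tf}}^{k+1}\log^{\kappa+1}\varrho_{\mathrm{tf}}$, a polyhomogeneous term of tf-index $(k+1,\kappa+1)$ with smooth coefficient on $\mathrm{tf}$; all other values of $j$ give lower-limit contributions of tf-index $(k+1,\kappa')$ with $\kappa'\le\kappa$, already in $\calF$. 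The further entry $(k+2,\kappa+1)$ in $\calF_+$ is then automatic from the index-set closure property $(K,\kappa')\in\calF_+\Rightarrow (K+1,\kappa')\in\calF_+$ (cf.\ the paper's footnote defining admissible index sets), so $\calF_+$ is a valid upper bound on the actual index set. Continuity of the map $g\mapsto I$ follows by tracking Fr\'echet seminorms through each step.

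The main obstacle is the careful bookkeeping of when logarithmic factors are generated by the one-dimensional integrations, together with verifying that the coefficients of each polyhomogeneous term assemble into smooth functions on $\mathrm{tf}$ (in particular, the extra-log coefficients involve only a power of $\varrho_{\mathrm{zf}}$ and a derivative of $\tilde a_{k,\kappa}$ at $\theta = 1/\mathsf{Z}$, which is smooth in $y$), and that the Taylor remainder is controlled uniformly down to $\varrho_{\mathrm{tf}} = 0$ so as to yield genuine polyhomogeneity rather than mere conormality.
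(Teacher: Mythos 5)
Your approach is genuinely different from the paper's. The paper works in coordinates $\rho_0 = x_0^{1/2}$, $\hat E = E/x$, performs a \emph{double} expansion (first in $\rho_0$, then in $\hat E(\rho/\rho_0)^2$), and then handles the corner $\mathrm{bf}\cap\mathrm{tf}$ by a separate decomposition of $g$ into parts supported in $\{x/E > C\}$ and $\{x/E < c\}$. You instead use a single chart $(\theta_0,\varrho_{\mathrm{tf},0},y)$ with the substitution $u = \sigma^2 + \mathsf{Z}x_0$ and Taylor-expand the $\mathrm{tf}$-coefficient $\tilde a_{k,\kappa}$ at $\theta_0 = 1/\mathsf{Z}$, using $\theta_0 - 1/\mathsf{Z} = -\sigma^2/(\mathsf{Z}u)$. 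Both routes reduce the appearance of extra logarithms to the same elementary fact: $\int u^\alpha\log^\kappa u\,du$ picks up a $\log^{\kappa+1}$ precisely at $\alpha = -1$, i.e.\ $j = (k+1)/2$, an integer iff $k$ is odd, producing the index $(k+1,\kappa+1)$. If your route can be made to close uniformly at the corners, it has the attraction of not needing the paper's separate two-case argument at $\mathrm{bf}\cap\mathrm{tf}$, since the coefficient that emerges is a smooth function of $\varrho_{\mathrm{zf}} = 1 - \mathsf{Z}\theta$ and $y$, which is exactly smoothness on $\mathrm{tf}$ including both corners.

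However there is a genuine gap, and it is precisely at the step you flag as ``the main obstacle'' but then do not carry out. After Taylor expansion of $\tilde a_{k,\kappa}$ to order $J$, the remainder is $(\theta_0 - 1/\mathsf{Z})^J S_J(\theta_0,y) = (-\sigma^2/(\mathsf{Z}u))^J S_J$, and its contribution to the integral is, after the substitution $v = \varrho_{\mathrm{tf}}^2/u$,
\[
\frac{(-\varrho_{\mathrm{zf}})^J}{\mathsf{Z}^J}\,\varrho_{\mathrm{tf}}^{k+1}\sum_{m\le\kappa}\binom{\kappa}{m}2^m\log^m\varrho_{\mathrm{tf}}\int_{\varrho_{\mathrm{tf}}^2/u_1}^{1} S_J\!\Bigl(\tfrac{1-v\varrho_{\mathrm{zf}}}{\mathsf{Z}},y\Bigr)v^{J-(k+3)/2}(-\log v)^{\kappa-m}\,dv.
\]
Observe that this still sits at $\mathrm{tf}$-order exactly $k+1$; taking $J\to\infty$ only increases the power of $\varrho_{\mathrm{zf}}$ in the prefactor, not the power of $\varrho_{\mathrm{tf}}$. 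So you cannot dispose of the Taylor remainder by saying it is ``small at $\mathrm{tf}$'': it is not smaller in $\varrho_{\mathrm{tf}}$ than the terms you are trying to isolate. What is actually needed is to show that this remainder contribution is itself polyhomogeneous at $\mathrm{tf}$ of order $(k+1,m)$ with coefficient smooth in $(\varrho_{\mathrm{zf}},y)$ plus a genuinely higher-order error. This does work out for $J > (k+1)/2$, because then $J - (k+3)/2 > -1$ so the inner integral converges as $\varrho_{\mathrm{tf}}\to 0$ to $\int_0^1 S_J((1-v\varrho_{\mathrm{zf}})/\mathsf{Z},y)v^{J-(k+3)/2}(-\log v)^{\kappa-m}\,dv$, a smooth function of $(\varrho_{\mathrm{zf}},y)$, with correction $O(\varrho_{\mathrm{tf}}^{2J-(k+1)})$. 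But this computation \emph{is} the content of the proposition, not a bookkeeping afterthought, and without it the argument establishes only conormality at $\mathrm{tf}$, not polyhomogeneity. The paper avoids the issue by a different organization: truncating both the $\rho_0$-expansion and the $\hat E$-expansion at finite orders $K,J$ chosen so that the residual integrals $I_2,I_3$ are manifestly of $\mathrm{tf}$-order $\ge\min\{K+1,2J-K+1\}$, then letting $K,J\to\infty$ together. If you want to keep your single-chart route, you should spell out the displayed computation above (in particular, the limit of the inner integral as $\varrho_{\mathrm{tf}}\to 0$ and the rate of convergence), since that is where the polyhomogeneity, rather than mere conormality, is actually established.
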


\begin{proof}
	It suffices to consider the case when $g$ is supported in $[0,\bar{x}/2)_x$.
	Let us write $G(x,y;E)=x^{-1} (\sigma^2+\mathsf{Z}x)^{1/2} g$, so 
	\begin{equation}
		I =\int_x^{\bar{x}} (E+\mathsf{Z}x_0)^{-1/2} G(x_0,y;E) \dd x_0  = 2 \int_{x^{1/2}}^{\bar{x}^{1/2}} \Big( \frac{\rho_0^2}{E+\mathsf{Z} \rho_0^2} \Big)^{1/2} G(\rho_0^2,y;E) \dd \rho_0 .
		\label{eq:misc_hz2}
	\end{equation}
	This is evidently smooth away from $\mathrm{tf}$. 
	
	By the polyhomogeneity of $G$ at $\mathrm{zf}\cap\mathrm{tf}$, there exists a $G_0 \in \calA^{(0,0),\calF}_{\mathrm{loc}}([0,\infty)_{\hat{E}}\times [0,\infty)_\rho\times \partial X)$
	such that $G_0(\rho,y;\hat{E}) = G(x,y;E)$ when $x=\rho^2$ and $E=\hat{E}x$. That is, $G(x,y;E)=G_0(x^{1/2},y;E x^{-1})$.
	
	Then, as a function of $\rho,\rho_0,y$, and $\hat{E}=E/x$, $G(\rho_0^2,y;E)=G_0(\rho_0,y;E \rho_0^{-2}) = G_0(\rho_0,y;\hat{E}\rho^2 \rho_0^{-2})$. We can therefore write
	\begin{multline}
		I=I(\rho,y;\hat{E})  = 2\int_\rho^{\bar{x}^{1/2}} \frac{1}{(\hat{E} \rho^2 \rho_0^{-2}+\mathsf{Z})^{1/2}}  G_0\Big(\rho_0,y;\hat{E} \Big( \frac{\rho}{\rho_0} \Big)^2\Big) \dd \rho_0  \\ 
		= \int_\rho^{\bar{x}^{1/2}}   G_1\Big(\rho_0,y;\hat{E} \Big( \frac{\rho}{\rho_0} \Big)^2\Big) \dd \rho_0 
	\end{multline}
	for $G_1 \in \calA^{(0,0),\calF}_{\mathrm{loc}}([0,\infty)_{\hat{E}}\times [0,\infty)_\rho\times \partial X)$ defined by $G_1(\rho ,y;\hat{E}) =  2(\hat{E} +\mathsf{Z})^{-1/2} G_0(\rho,y;\hat{E})$.

	We now expand $G_1(\rho,y;\hat{E})$ polyhomogeneously around $\rho = 0$: there exist $G_1^{(k,\kappa)} \in  C^\infty([0,\infty)_{\hat{E}}\times \partial X)$ and $F_1^{(k)} \in \calA^{(0,0), \calF|_{\geq k}-k}_{\mathrm{loc}} ([0,\infty)_{\hat{E}}\times [0,\infty)_{\rho}\times \partial X)$ for $k\in \bbN$ such that
	\begin{equation}
		G_1\Big(\rho_0,y;\hat{E} \Big( \frac{\rho}{\rho_0} \Big)^2 \Big) =  \sum_{k=0}^K \sum_{\varkappa=0}^{\varkappa_k} \rho_0^k \log^\varkappa (\rho_0)  G_1^{(k,\varkappa)}\Big(y; \hat{E} \Big( \frac{\rho}{\rho_0} \Big)^2 \Big) + \rho_0^{K+1}  F_1^{(K+1)}\Big(\rho_0,y;\hat{E} \Big( \frac{\rho}{\rho_0} \Big)^2 \Big),
	\end{equation}
	where $\calF|_{\geq k}-k=\{(k'-k,\kappa) \in \calE : \Re k'\geq k \}$, $\varkappa_k = \max\{\varkappa: (k,\varkappa)\in \calF\}$,  and $G_1^{(k,\varkappa)}$ can be nonzero only if $(k,\varkappa)\in \calF$.

	Expand $G_1^{(k,\varkappa)}$ in the second slot: there exist $G_1^{(j,k,\varkappa)}\in C^\infty(\partial X)$ and $F_2^{(j,k,\varkappa)} \in C^\infty([0,\infty)_{\hat{E}} \times \partial X)$ such that 
	\begin{equation}
		G_1^{(k,\varkappa)}\Big(y; \hat{E} \Big( \frac{\rho}{\rho_0} \Big)^2 \Big) = \sum_{j=0}^J \rho^{2j} \rho_0^{-2j} \hat{E}^j G_1^{(j,k,\varkappa)}(y) + \rho^{2J+2}\rho_0^{-2J-2} \hat{E}^{J+1} F_2^{(J+1,k,\varkappa)}\Big(y; \hat{E}\Big(\frac{\rho}{\rho_0} \Big)^2\Big).
	\end{equation} 
	Let $\digamma_{k,\varkappa} = \int \rho^k \log^\varkappa(\rho) \dd \rho$ for $k\in \bbZ$ and $\varkappa \in \bbN$, with the additive constant chosen for convenience. Then, if $k\neq -1$, we can take 
	\begin{equation}
		\digamma_{k,\varkappa}(\rho) = \rho^{k+1} \sum_{\kappa=0}^\varkappa c_{k,\varkappa,\kappa} \log^\kappa(\rho)
	\end{equation} 
	for some $c_{k,\varkappa,\kappa}\in \bbR$. If instead $k=-1$, we have $\digamma_{-1,\varkappa}(\rho) = (\varkappa+1)^{-1} \log^{\varkappa+1}(\rho)$. Integrating $G_1$, we write 
	\begin{multline}
		I(\rho,y;\hat{E}) =  \sum_{k=0}^{K} \sum_{\varkappa=0}^{\varkappa_k}   \sum_{j=0}^{J}  \rho^{2j} \hat{E}^j G_1^{(j,k,\varkappa)}(y)\int_\rho^{\bar{x}^{1/2}}  \rho_0^{k-2j} \log^\varkappa (\rho_0) \dd \rho_0  \\ 
		+\sum_{k=0}^K  \sum_{\varkappa=0}^{\varkappa_k}  \rho^{2J+2} \hat{E}^{J+1} \int_{\rho}^{\bar{x}^{1/2}} \rho_0^{k-2J-2} \log^\varkappa (\rho_0) F_2^{(J+1,k,\varkappa)}\Big( y; \hat{E} \Big( \frac{\rho}{\rho_0} \Big)^2 \Big) \dd \rho_0  \\ 
		+ \int_{\rho}^{\bar{x}^{1/2}}  \rho_0^{K+1} F_1^{(K+1)}\Big(\rho_0,y;\hat{E} \Big( \frac{\rho}{\rho_0} \Big)^2 \Big) \dd \rho_0.
	\end{multline}
	We decompose this as $I=I_1+I_2+I_3$ for 
	\begin{align}
		\begin{split} 
		I_1 &=  \sum_{k=0}^{K} \sum_{\varkappa=0}^{\varkappa_k}   \sum_{j=0}^{J}  \rho^{2j} \hat{E}^j G_1^{(j,k,\varkappa)}(y)\int_\rho^{\bar{x}^{1/2}}  \rho_0^{k-2j} \log^\varkappa (\rho_0) \dd \rho_0 \\
		&=  \tilde{I}_1+  \sum_{k=0}^{K}\sum_{\varkappa=0}^{\varkappa_k}\sum_{\substack{ j=0 \\ k-2j\neq -1} }^{J} \rho^{2j}\hat{E}^j G_1^{(j,k,\varkappa)}(y) [\digamma_{k-2j,\varkappa}(\bar{x}^{1/2})  -\digamma_{k-2j,\varkappa}(\rho)  ] \\
		&=  \tilde{I}_1+  \sum_{k=0}^{K}\sum_{\varkappa=0}^{\varkappa_k}\sum_{\substack{ j=0 \\ k-2j\neq -1} }^{J}  \hat{E}^j G_1^{(j,k,\varkappa)}(y) \sum_{\kappa=0}^{\varkappa} c_{k-2j,\varkappa,\kappa} [ \rho^{2j}\bar{x}^{k-2j+1}\log^\varkappa(\bar{x}) - \rho^{k+1} \log^\varkappa(\rho) ], 
		\end{split} 
	\end{align} 
	where 
	\begin{align} 
		\begin{split}
		\tilde{I}_1 &=  
		\sum_{k=0}^{K}\sum_{\varkappa=0}^{\varkappa_k}  \sum_{\substack{ j=0 \\ 2j=  k+1} }^{J}  \rho^{2j} \hat{E}^j G_1^{(j,k,\varkappa)}(y) \Big[\digamma_{-1,\varkappa}(\bar{x}^{1/2}) - \digamma_{-1,\varkappa}(\rho) \Big]  \\ 
		&=\sum_{k=0}^{K}\sum_{\varkappa=0}^{\varkappa_k}  \sum_{\substack{ j=0 \\ 2j=  k+1} }^{J}  \rho^{k+1} \hat{E}^j G_1^{(j,k,\varkappa)}(y) \frac{1}{\varkappa+1} \Big[ \log^{\varkappa+1}(\bar{x}^{1/2})  -\log^{\varkappa+1}(\rho) \Big],
		\end{split}   
	\end{align} 
	and 
	\begin{align} 
		I_2 &= \sum_{k=0}^K  \sum_{\varkappa=0}^{\varkappa_k}  \rho^{2J+2} \hat{E}^{J+1} \int_{\rho}^{\bar{x}^{1/2}} \rho_0^{k-2J-2} \log^\varkappa (\rho_0) F_2^{(J+1,k,\varkappa)}\Big( y; \hat{E} \Big( \frac{\rho}{\rho_0} \Big)^2 \Big) \dd \rho_0 \\
		I_3 &= \int_{\rho}^{\bar{x}^{1/2}}  \rho_0^{K+1} F_1^{(K+1)}\Big(\rho_0,y;\hat{E} \Big( \frac{\rho}{\rho_0} \Big)^2 \Big) \dd \rho_0. 
	\end{align}
	Since $(0,0)\in \calF$,  
	\begin{equation}
		 \sum_{k=0}^{K}\sum_{\substack{ j=0 \\ k-2j\neq -1} }^{J}  \hat{E}^j G_1^{(j,k,\varkappa)}(y) \sum_{\kappa=0}^{\varkappa} c_{k-2j,\varkappa,\kappa}  \rho^{2j}\bar{x}^{k-2j+1}\log^\varkappa(\bar{x}) \in  \calA^{(0,0),\calF}_{\mathrm{loc}} ([0,\infty)_{\hat{E}}\times [0,\infty)_\rho\times \partial X). 
	\end{equation}
	Also, 
	\begin{equation}
		\sum_{k=0}^{K}\sum_{\substack{ j=0 \\ k-2j\neq -1} }^{J}  \hat{E}^j G_1^{(j,k,\varkappa)}(y) \sum_{\kappa=0}^{\varkappa} c_{k-2j,\varkappa,\kappa}  \rho^{k+1} \log^\varkappa(\rho) \in  \calA^{(0,0),\calF+1}_{\mathrm{loc}} ([0,\infty)_{\hat{E}}\times [0,\infty)_\rho\times \partial X). 
	\end{equation}
	Thus, $I_1-\tilde{I}_1  \in \calA^{(0,0),\calF,(0,0)}_{\mathrm{loc}} ([0,\infty)_{\hat{E}}\times [0,\infty)_\rho\times \partial X)$.  
	Moreover, $\tilde{I}_1 \in \hat{E} \calA^{(0,0),\calF_+,(0,0)}_{\mathrm{loc}} ([0,\infty)_{\hat{E}}\times [0,\infty)_\rho\times \partial X) $. 
	
	On the other hand, 
	 \begin{equation} 
	 I_3 \in \calA^{(0,0),((0,0),K+1) }_{\mathrm{loc}} ([0,\infty)_{\hat{E}} \times [0,\infty)_\rho \times \partial X ),
	 \end{equation} 
	 as follows from an $L^\infty$ estimate on $(\partial_{\hat{E}})^a (\rho\partial_\rho)^b (\partial_\rho)^c L I_3$ for $a,b,c\in \bbN$ with $c \leq K+1$ and $L\in \operatorname{Diff}(\partial X)$ (using that $\rho/\rho_0\leq 1$ in the domain of integration).

 	Additionally, we can write 
 	\begin{equation}
 	I_2 = \sum_{k=0}^K \sum_{\varkappa=0}^{\varkappa_k}\sum_{a=0}^\varkappa \rho^{k+1}  \hat{E}^{J+1}  \log^{\varkappa-a}(\rho) \int_{\rho/\bar{x}^{1/2}}^{1}  t^{2J-k} (\log t)^{a}  \tilde{F}_2^{(J+1,k,\varkappa),a}(y;\hat{E}t^2) \dd t 
 	\end{equation}
	for some $\tilde{F}_2^{(j,k,\kappa),a} \in C^\infty([0,\infty)_{\hat{E} }\times \partial X)$. We have
	\begin{equation}
	\int_{\rho/\bar{x}^{1/2}}^{1}  t^{2J-k} (\log t)^a \tilde{F}_2^{(J+1,k,\varkappa),a}(y;\hat{E}t^2) \dd t \in \calA_{\mathrm{loc}}^{(0,0),((0,0),2J-K+1-)}([0,\infty)_{\hat{E}}\times [0,\infty)_\rho \times \partial X ).
	\end{equation}
	So,
	\begin{equation}
	I_2  \in \calA^{(0,0), (\calF,2J-K+1-)}_{\mathrm{loc}} ([0,\infty)_{\hat{E}}\times [0,\infty)_\rho\times \partial X). 
	\end{equation}
	
	Combining all of the parts above, we have 
	\begin{equation}
		I \in \calA^{(0,0), (\calF_+, \min\{K+1, 2J-K+1\}-)}_{\mathrm{loc}} ([0,\infty)_{\hat{E}}\times [0,\infty)_\rho\times \partial X)
	\end{equation}
	By taking $J$ to be very large, $K+1$ and $2J-K+1$ can be made arbitrarily large simultaneously, so we conclude that $I$ defines an element of 
	\begin{equation}
		\bigcap_{L=1}^\infty \calA^{(0,0), (\calF_+, L)}_{\mathrm{loc}} ([0,\infty)_{\hat{E}}\times [0,\infty)_\rho\times \partial X) = \calA^{(0,0),\calF_+}_{\mathrm{loc}}([0,\infty)_{\hat{E}}\times [0,\infty)_\rho\times \partial X). 
	\end{equation}
	This shows -- in conjunction with the smoothness of $I$ away from $\mathrm{tf}$ -- that $I$ is locally in $\calA^{(0,0),\calF_+,(0,0)}(\hat{X}_{\mathrm{res}}^{\mathrm{sp}})$ everywhere except possibly the corner $\mathrm{bf} \cap \mathrm{tf}$, to which we now turn. 
	
	We consider the following two cases: 
	\begin{itemize}
		\item 
		Suppose that $G$ is supported in some set of the form $\{x/E>C\}$. Let $\varrho = x/E$. Near $\mathrm{bf}\cap \mathrm{tf}$,
		\begin{equation}
			I(x,y,E) = I(\varrho E,y,E) = I(C E,y,E).
			\label{eq:misc_ixy}
		\end{equation}
		Defining  $\rho = C^{1/2} \sigma$ and $\hat{E} = C^{-1}$, $I(C E,y,E)=I(\rho^2,y, \hat{E} \rho^2 ) $. Since we already know that $I(\rho^2,y,\hat{E} \rho^2)$ depends polyhomogeneously on $\rho,\hat{E},y$, with the desired index set $\calF_+$, we conclude that $I$ depends polyhomogeneously on $\sigma,y$ alone near $\mathrm{bf}\cap \mathrm{tf}$. 
		\item  
		On the other extreme, suppose that $G$ is supported in some set of the form $\{x/E<c\}$, for $c>0$.
		In order to study $I$ near $\mathrm{bf} \cap \mathrm{tf}$, we work with the coordinate $\varrho = x / E$.  By the polyhomogeneity of $G$, there exists a $G_2 \in \calA^{\calF,(0,0)}_{\mathrm{loc}} ([0,\infty)_\sigma \times [0,\infty)_\varrho \times \partial X )$ such that $G_2(\varrho,y;\sigma) = G(x,y;E)$ whenever $\varrho = x/E$ and $\sigma^2=E$. We can then write 
		\begin{equation}
			I = \sigma \int_{\varrho}^{\min\{\bar{x}/\sigma^2 ,c \}} (1+\mathsf{Z} \varrho_0)^{-1/2} G_2 (\varrho_0,y;\sigma)  \dd \varrho_0 =\sigma \int_{\varrho}^{\min\{\bar{x}/\sigma^2 ,c \}}  G_3 (\varrho_0,y;\sigma)  \dd \varrho_0 
			\label{eq:misc_o1g}
		\end{equation}
		for $G_3 \in  \calA^{\calF,(0,0)}_{\mathrm{loc}}([0,\infty)_\sigma \times [0,\infty)_\varrho \times \partial X )$ defined by $G_3(\varrho,y;\sigma) = (1+\mathsf{Z} \varrho)^{-1/2} G_2(\varrho,y;\sigma) $. The right-hand side of \cref{eq:misc_o1g} is in $\sigma \calA^{\calF,(0,0)}([0,\infty)_\sigma \times [0,\infty)_\varrho \times \partial X )$ for $\sigma < \bar{x}^{1/2} / c^{1/2}$.
	\end{itemize}
	Since any $G  \in \calA_{\mathrm{loc}}^{(0,0),\calF,(0,0)}(\hat{X}_{\mathrm{res}}^{\mathrm{sp}})$ can be decomposed $G=G_1+G_2$ into $G_1,G_2 \in \calA_{\mathrm{loc}}^{(0,0),\calF,(0,0)}(\hat{X}_{\mathrm{res}}^{\mathrm{sp}})$ with $G_1$ supported on $\{x/E>C\}$ and $G_2$ supported on $\{x/E<c\}$ for some $c,C>0$, and since $I=I[G]$ depends linearly on $G$, we can conclude that 
	\begin{equation} 
		I\in \calA_{\mathrm{loc}}^{(0,0),\calF_+,(0,0)}(\hat{X}_{\mathrm{res}}^{\mathrm{sp}}).
	\end{equation}  
\end{proof}
A simple example worth keeping mind is $g = x \sigma^2 (\sigma^2+\mathsf{Z} x)^{-1}$, for which 
\begin{equation}
	\int_{x}^{\bar{x}} x_0^{-1} g(x_0;\sigma) \dd x_0  = -\frac{\sigma^2}{\mathsf{Z}} \log \Big( \frac{\sigma^2+\mathsf{Z}x}{\sigma^2+\mathsf{Z} \bar{x} } \Big). 
\end{equation}
This shows that, even when $g\in x  C^\infty(\hat{X}_{\mathrm{res}}^{\mathrm{sp}})$, solving $x\partial_x u= g$ for $u$ may produce logarithms at $\mathrm{tf}$. To compare with \Cref{prop:normal_mapping_1}, 
\begin{equation} 
	(\sigma^2+\mathsf{Z}x)^{1/2}x^{-1} g \in \calA^{(0,0),(1,0),(1,0)}_{\mathrm{loc}}(\hat{X}_{\mathrm{res}}^{\mathrm{sp}} ) \subset \calA^{(0,0),(0,0),(0,0)}_{\mathrm{loc}}(\hat{X}_{\mathrm{res}}^{\mathrm{sp}} ),
\end{equation} 
while 
\begin{equation}
	-\frac{\sigma^2}{\mathsf{Z}} \log \Big( \frac{\sigma^2+\mathsf{Z}x}{\sigma^2+\mathsf{Z} \bar{x} } \Big) \in \calA_{\mathrm{loc}}^{(0,0), (2,1), (1,0)}(\hat{X}_{\mathrm{res}}^{\mathrm{sp}}) \subset \calA_{\mathrm{loc}}^{(0,0), (2,1), (0,0)}(\hat{X}_{\mathrm{res}}^{\mathrm{sp}}) .  
	\label{eq:mistake_example}
\end{equation}
Setting $\calF$ to be the index set generated by $(0,0)$, $\calF_+$ is generated by $(0,0)$ and $(2,1)$, so this is in accordance with \Cref{prop:normal_mapping_1}. 

Since the index set $\calE\subset \bbN\times \bbN$ contains $(0,0)$ and satisfies $\calE=\calE_+$:
\begin{corollary}
	If $g \in  x (\sigma^2+\mathsf{Z}x)^{-1/2}\calA^{(0,0),\calE,(0,0)}_{\mathrm{loc}}(\hat{X}^{\mathrm{sp}}_{\mathrm{res}})$, then 
	\begin{equation} 
		I=\int_x^{\bar{x}} x_0^{-1} g(x_0,y;\sigma) \dd x_0 \in \calA^{(0,0),\calE,(0,0)}_{\mathrm{loc}}(\hat{X}^{\mathrm{sp}}_{\mathrm{res}}).
	\end{equation}
	\label{cor:normal_mapping_1}
\end{corollary}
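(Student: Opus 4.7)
The plan is to deduce the corollary as an immediate application of Proposition \ref{prop:normal_mapping_1} with $\calF = \calE$. The hypothesis on $g$ matches the hypothesis of the proposition exactly upon taking $\calF = \calE$, since $\calE$ evidently contains $(0,0)$: indeed $0 \leq \lfloor 0/2 \rfloor$. So the only thing to verify is the set-theoretic identity $\calE_+ = \calE$, which will force the conclusion of the proposition to read $I \in \calA^{(0,0),\calE,(0,0)}_{\mathrm{loc}}(\hat{X}^{\mathrm{sp}}_{\mathrm{res}})$, exactly as desired.

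To verify $\calE_+ = \calE$, recall that
\begin{equation}
\calE_+ = \calE \cup \{(k+1,\kappa+1),(k+2,\kappa+1) : (k,\kappa)\in \calE,\; k \equiv 1 \bmod 2\},
\end{equation}
so the inclusion $\calE \subseteq \calE_+$ is automatic, and I only need to check that the added points all lie in $\calE$. Fix $(k,\kappa)\in \calE$ with $k$ odd, so that $\kappa \leq \lfloor k/2\rfloor = (k-1)/2$. Then $k+1$ is even, hence $\lfloor (k+1)/2\rfloor = (k+1)/2 \geq \kappa+1$, which gives $(k+1,\kappa+1)\in \calE$. Similarly, $k+2$ is odd, so $\lfloor (k+2)/2\rfloor = (k+1)/2 \geq \kappa+1$, giving $(k+2,\kappa+1)\in \calE$. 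Thus $\calE_+ \subseteq \calE$.

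The only conceivable subtlety is that the special index set $\calE$ of the paper happens to be preserved by the combinatorial operation $\calF \mapsto \calF_+$ coming from Proposition \ref{prop:normal_mapping_1}; this is precisely the reason the factor of $\lfloor k/2\rfloor$ appears in the definition of $\calE$, so there is no real obstacle. With $\calE_+ = \calE$ in hand, the conclusion of Proposition \ref{prop:normal_mapping_1} directly yields the corollary.
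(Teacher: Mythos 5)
Your argument is correct and is exactly the paper's intended proof: the text immediately preceding the corollary notes that $\calE$ contains $(0,0)$ and satisfies $\calE = \calE_+$, and you have supplied the short parity check verifying that identity. Nothing else needs to be said.
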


\begin{proposition}
	\label{prop:normal_mapping_2}
	If $g \in \calA_{\mathrm{loc}}^{((\alpha_0,0),\alpha),\beta,(0,0)}(\hat{X}_{\mathrm{res}}^{\mathrm{sp}})$ for $\alpha_0\in \bbN^+$, $\alpha,\beta>0$, $\beta\notin 2\bbN$, then  
	\begin{equation} 
		\int_x^{\bar{x}} x_0^{-1} g(x_0,y;\sigma) \dd x_0 \in C^\infty(\hat{X}^{\mathrm{sp}}_{\mathrm{res}}) + \calA_{\mathrm{loc}}^{((0,0),\alpha),\beta,(0,0)}(\hat{X}^{\mathrm{sp}}_{\mathrm{res}})
	\end{equation}  
	holds. 
\end{proposition}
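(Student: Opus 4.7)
The plan is to mimic the structure of the proof of \Cref{prop:normal_mapping_1}, adapting it for a partial polyhomogeneous expansion at $\mathrm{bf}$ (rather than a full one) and conormal behavior at $\mathrm{tf}$ (rather than polyhomogeneous). I would first localize near $\mathrm{bf}$: writing $g = \chi g + (1-\chi)g$ with $\chi\in C^\infty(\hat{X}^{\mathrm{sp}}_{\mathrm{res}})$ supported in a small neighborhood of $\mathrm{bf}$, the piece $(1-\chi)g$ has integrand $x_0^{-1}(1-\chi)g$ supported in $x_0\geq \epsilon>0$, so differentiation under the integral and the zf-smoothness of $g$ on $\{x_0\geq \epsilon\}$ (Taylor expansion in $\sigma^2$) yield $\int_x^{\bar{x}} x_0^{-1}(1-\chi)g\, \dd x_0 \in C^\infty(\hat{X}^{\mathrm{sp}}_{\mathrm{res}})$.

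Next I would decompose the near-$\mathrm{bf}$ piece according to the partial expansion: $\chi g = \chi g_{\mathrm{phg}} + \chi g_{\mathrm{rem}}$, where $g_{\mathrm{phg}} = \sum_{(k,\kappa)\in\calE_0} x^k\log^\kappa(x)\,\tilde{b}_{k,\kappa}(y,\sigma)$ is a finite sum over the partial expansion (indices $\alpha_0\leq k<\alpha$), and $g_{\mathrm{rem}}\in\calA^{\alpha,\beta,(0,0)}(\hat{X}^{\mathrm{sp}}_{\mathrm{res}})$. For the remainder integral $I_{\mathrm{rem}} = \int_x^{\bar{x}} x_0^{-1}\chi g_{\mathrm{rem}}\, \dd x_0$, I would check directly that $I_{\mathrm{rem}}\in \calA^{((0,0),\alpha),\beta,(0,0)}$: the bf structure (leading $(0,0)$ plus conormal remainder at order $\alpha$) comes from $(x\partial_x)I_{\mathrm{rem}} = -\chi g_{\mathrm{rem}}$ combined with the integrability of $\int_0^{\bar{x}} x_0^{\alpha-1}\,\dd x_0$ (using $\alpha>0$); the tf conormality of order $\beta$ follows by commuting $\sigma\partial_\sigma$ through the integral and using $|(\sigma\partial_\sigma)^k g_{\mathrm{rem}}|\lesssim \varrho_{\mathrm{bf}}^\alpha\varrho_{\mathrm{tf}}^\beta$; the zf smoothness follows from Taylor-expanding $g_{\mathrm{rem}}$ in $\sigma^2$ over $[x,\bar{x}]$ for $x>0$ bounded away from zero.

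For the polyhomogeneous terms, I would integrate each $\chi(x_0) x_0^{k-1}\log^\kappa(x_0)$ explicitly: since $k\geq \alpha_0\geq 1$, we get $A_{k,\kappa}-x^k q_\kappa(\log x)/k$ modulo a smooth-in-$x$ correction (for $x$ small), with $q_\kappa$ a polynomial of degree $\kappa$. Multiplying by $\tilde{b}_{k,\kappa}(y,\sigma)$, the $x$-dependent piece $-x^k q_\kappa(\log x)\tilde{b}_{k,\kappa}(y,\sigma)/k$ is polyhomogeneous at $\mathrm{bf}$ with order $k\geq 1$ and so contributes directly to $\calA^{((0,0),\alpha),\beta,(0,0)}$, provided $\tilde{b}_{k,\kappa}$ is chosen as an extension compatible with the overall structure of $g$.

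The main obstacle is the assembly of the $x$-independent contributions $\sum_{(k,\kappa)} A_{k,\kappa}\tilde{b}_{k,\kappa}(y,\sigma) + I_{\mathrm{rem}}(0,y,\sigma)$ into the target decomposition. To handle this, I would independently verify that $I$ is smooth at $\mathrm{zf}$ for $x>0$ by Taylor-expanding $g$ in $\sigma^2$ over $[x,\bar{x}]$ (valid since the integration range stays bounded away from $x_0=0$); this zf-smoothness then lets me extract a smooth-in-$\sigma^2$ part $f_1\in C^\infty(\hat{X}^{\mathrm{sp}}_{\mathrm{res}})$ from the $(y,\sigma)$-only contribution, leaving a remainder $f_2$ that is conormal of order $\beta$ in $\sigma$ at $\sigma=0$ and extends to $\calA^{((0,0),\alpha),\beta,(0,0)}$. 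The hypothesis $\beta\notin 2\bbN$ is crucial here: it guarantees that the smooth-in-$\sigma^2$ component and the conormal-of-order-$\beta$ component are uniquely separable (if $\beta$ were an even non-negative integer, $\sigma^\beta$ could be absorbed into the smooth part, making the decomposition ambiguous). The hardest technical step is checking that this decomposition can be made globally consistent on $\hat{X}^{\mathrm{sp}}_{\mathrm{res}}$—respecting all three boundary faces simultaneously and the constraints imposed by the integration constants along $\mathrm{bf}$—which I expect to require a careful choice of extension for the bf coefficients $\tilde{b}_{k,\kappa}$ together with a Borel-style summation to match compatibility conditions.
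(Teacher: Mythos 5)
Your localization strategy differs fundamentally from the paper's, and the difference matters: the paper splits $g$ according to whether $x/E$ is large (near $\mathrm{bf}$, away from $\mathrm{tf}$) or small (near $\mathrm{tf}$, away from $\mathrm{bf}^\circ$), precisely because these two decompositions each decouple the two scales $x$ and $E$ near the corner $\mathrm{bf}\cap\mathrm{tf}$. Your cutoff $\chi$ supported in $\{x\leq\epsilon\}$ is a neighborhood of the whole union $\mathrm{bf}\cup\mathrm{tf}$, so the near-boundary piece retains the full complexity of the corner. This is where your argument breaks.

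Concretely, your claim that $I_{\mathrm{rem}}\in\calA_{\mathrm{loc}}^{((0,0),\alpha),\beta,(0,0)}(\hat X_{\mathrm{res}}^{\mathrm{sp}})$ is false — the $C^\infty$ summand in the conclusion is \emph{needed} for the remainder piece, not just for the polyhomogeneous-coefficients piece. Take $\alpha=1$, $\mathsf{Z}=1$, and $g_{\mathrm{rem}}(x_0;\sigma)=\chi(x_0)\, x_0(\sigma^2+x_0)^{\beta/2-1}$, which lies in $\calA^{1,\beta,(0,0)}$. Where $\chi\equiv 1$,
\begin{equation}
	\int_x^{\bar x} x_0^{-1} g_{\mathrm{rem}}\,\dd x_0 = \tfrac{2}{\beta}\big[(\sigma^2+\bar x)^{\beta/2}-(\sigma^2+x)^{\beta/2}\big] + (\text{smooth corrections}),
\end{equation}
and the term $(\sigma^2+\bar x)^{\beta/2}$ is bounded away from zero at $\mathrm{tf}$, hence is $O(1)$, not $O(\varrho_{\mathrm{tf}}^\beta)$. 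The ``commute $\sigma\partial_\sigma$ through the integral and bound pointwise'' step fails because the bound on the integrand only controls what happens near the lower limit $x_0\approx x$; the bulk of the integral is over $x_0\gg x$, a region far from $\mathrm{tf}$ where the integrand has no $\varrho_{\mathrm{tf}}^\beta$-smallness, and in that range it contributes an order-one amount. Extracting the $C^\infty$ part here cannot be postponed to the final ``assembly'' stage; it has to be done at the level of $I_{\mathrm{rem}}$ itself, and that is exactly what the paper's change of variables $\rho=x^{1/2}$, $\hat E = E/x$, with Taylor expansion in $\hat E$, accomplishes.

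Your reading of the hypothesis $\beta\notin 2\bbN$ is also off. It is not about uniqueness of a $C^\infty+\calA$ decomposition (indeed if $\beta$ were an even integer we could absorb $\sigma^\beta$ either way and the \emph{statement} would still be unambiguous). The hypothesis is used so that the integrals $\int_\rho^{\bar x^{1/2}}\rho_0^{-1-2k}G^{(k)}(\rho_0,y)\,\dd\rho_0$ arising after Taylor-expanding in $\hat E$ are convergent at $\rho_0=0$ for every $k\leq K=\lfloor\beta/2\rfloor$: one needs $\beta>2k$, and if $\beta$ were an even integer the borderline case $k=\beta/2$ would produce a logarithm at $\mathrm{tf}$, and the conclusion would then need an extra log in the index set at $\mathrm{tf}$ (contrast with the $\calE$-preserving \Cref{cor:normal_mapping_1}, where such logs are built into $\calE$).
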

\begin{proof}
	Let $I= \int_x^{\bar{x}} x_0^{-1} g(x_0,y;\sigma) \dd x_0 $. 
	It suffices to prove the following two claims:
	\begin{enumerate}[label=(\Roman*)]
		\item if $g$ is supported in $\{x/E>C\}$, then $I \in C^\infty(\hat{X}^{\mathrm{sp}}_{\mathrm{res}}) + \calA_{\mathrm{loc}}^{(0,0),\beta,(0,0)}(\hat{X}^{\mathrm{sp}}_{\mathrm{res}})$, 
		\item if $g$ is supported in $\{x/E<c\}$, then $I\in \calA_{\mathrm{loc}}^{((0,0),\alpha),\beta,(0,0)}(\hat{X}^{\mathrm{sp}}_{\mathrm{res}})$.
	\end{enumerate}

	In order to prove (II), we write $I$ in terms of $\varrho=x/E$:
	\begin{equation}
			I =  \int_{\varrho}^{\min\{\bar{x}/\sigma^2 ,c \}} \varrho_0^{-1}  G (\varrho_0,y;\sigma)  \dd \varrho_0, 
	\end{equation}
	where $G \in \calA_{\mathrm{loc}}^{\beta,((\alpha_0,0),\alpha)} ( [0,\infty)_\sigma \times [0,\infty)_\varrho \times \partial X)$. 
	Since $\alpha_0,\alpha>0$, we can write 
	\begin{equation}
		I = + \int_{0}^{c } \varrho_0^{-1}  G (\varrho_0,y;\sigma)  \dd \varrho_0 - \int_{0}^{\varrho} \varrho_0^{-1}  G (\varrho_0,y;\sigma)  \dd \varrho_0
	\end{equation}
	for $\sigma$ sufficiently small. The first term is in $\calA^\beta_{\mathrm{loc}} ([0,\infty)_\sigma \times \partial X)\subset \calA^{\beta,(0,0)}_{\mathrm{loc}} ([0,\infty)_\sigma\times [0,\infty)_\varrho \times \partial X)$, while the second is in $\calA_{\mathrm{loc}}^{\beta,((\alpha_0,0),\alpha)} ( [0,\infty)_\sigma \times [0,\infty)_\varrho \times \partial X)$. Thus, it is also the case that 
	\begin{equation} 
		I\in \calA_{\mathrm{loc}}^{((0,0),\alpha),\beta,(0,0)}(\hat{X}^{\mathrm{sp}}_{\mathrm{res}})
	\end{equation}
	away from $\mathrm{zf}$. 
	Since $I$ is identically zero in the set $\{Ec<x\}$ and therefore smooth in some neighborhood of $\mathrm{zf}$, this suffices to show that, globally, $I\in \calA_{\mathrm{loc}}^{((0,0),\alpha),\beta,(0,0)}(\hat{X}^{\mathrm{sp}}_{\mathrm{res}})$. 
	
	To prove (I), we write $I$ in terms of $\rho = x^{1/2}$ and $\hat{E} = E/x$:
	\begin{equation}
		I =  \int_{\rho }^{\bar{x}^{1/2}} \rho_0^{-1} G\Big(\rho_0,y; \hat{E} \Big( \frac{\rho}{\rho_0} \Big)^2 \Big) \dd \rho_0, 
		\label{eq:misc_o11}
	\end{equation}
	where $G(\rho,y;\hat{E})\in C^\infty([0,\infty)_{\hat{E}} ;  \calA^{\beta}_{\mathrm{loc}}( [0,\infty)_\rho\times \partial X)  )$ is supported in $\hat{E}<C^{-1}$. 
	
	We expand $G(\rho,y;\hat{E})$ in Taylor series around $\hat{E}=0$: there exist $G^{(k)} \in \smash{\calA^{\beta}_{\mathrm{loc}}}( [0,\infty)_\rho\times \partial X)$ and $F^{(k)} \in C^\infty([0,\infty)_{\hat{E}} ;  \calA^{\beta}_{\mathrm{loc}}( [0,\infty)_\rho\times \partial X))$ such that 
	\begin{equation}
		G(\rho,y;\hat{E}) = \sum_{k=0}^K  \hat{E}^k G^{(k)}(\rho,y) + \hat{E}^{K+1} F^{(K+1)}(\rho,y;\hat{E}). 
	\end{equation}
	Substituting this into \cref{eq:misc_o11}, 
	\begin{equation}
		I= \sum_{k=0}^K  \hat{E}^k \rho^{2k}\int_{\rho }^{\bar{x}^{1/2}} \rho_0^{-1-2k}  G^{(k)}(\rho_0,y) \dd \rho_0 + \hat{E}^{K+1} \rho^{2K+2} \int_\rho^{\bar{x}^{1/2}} \rho_0^{-3-2K} F^{(K+1)}\Big(\rho_0,y;\hat{E} \Big( \frac{\rho}{\rho_0} \Big)^2\Big) \dd \rho_0 
	\end{equation}
	for each $K\in \bbN$. We take $K=\lfloor \beta/2 \rfloor$, in which case we can write 
	\begin{equation}
		\int_{\rho }^{\bar{x}^{1/2}} \rho_0^{-1-2k}  G^{(k)}(\rho_0,y) \dd \rho_0  = \int_{0 }^{\bar{x}^{1/2}} \rho_0^{-1-2k}  G^{(k)}(\rho_0,y) \dd \rho_0  - \int_{0}^\rho \rho_0^{-1-2k}  G^{(k)}(\rho_0,y) \dd \rho_0, 
	\end{equation} 
	where the integrals on the right-hand side are well-defined because $\rho^{-2k} G^{(k)} \in  \calA^{0+}_{\mathrm{loc}}$ for $k=0,\ldots,K$ (owing to $\beta\notin 2\bbN$). 
	We split $I=I_0+I_1+I_2$, where 
	\begin{align}
		\begin{split} 
		I_0 &= \sum_{k=0}^K  \hat{E}^k \rho^{2k}\int_{0 }^{\bar{x}^{1/2}} \rho_0^{-1-2k}  G^{(k)}(\rho_0,y) \dd \rho_0  \in C^\infty([0,\infty)_{\hat{E}} \times [0,\infty)_\rho \times \partial X )\\
		I_1 &= - \sum_{k=0}^K  \hat{E}^k \rho^{2k}\int_{0}^{\rho} \rho_0^{-1-2k}  G^{(k)}(\rho_0,y) \dd \rho_0 \\
		I_2 &= \hat{E}^{K+1} \rho^{2K+2} \int_\rho^{\bar{x}^{1/2}} \rho_0^{-3-2K} F^{(K+1)}\Big(\rho_0,y;\hat{E} \Big( \frac{\rho}{\rho_0} \Big)^2\Big) \dd \rho_0.
		\end{split}
	\end{align}
	Since $\rho^{2k}\int_{0 }^{\rho} \rho_0^{-1-2k}  G^{(k)}(\rho_0,y) \dd \rho_0 \in \calA^\beta_{\mathrm{loc}}([0,\infty)_\rho \times \partial X)$, $I_1 \in C^\infty([0,\infty)_{\hat{E}} ;  \calA^{\beta}_{\mathrm{loc}}( [0,\infty)_\rho\times \partial X)  ) $. 
	The same holds for $I_2$. The $L^\infty$ case of this estimate is 
	\begin{align}
		\begin{split} 
		\Big|\rho^{2K+2} \int_\rho^{\bar{x}^{1/2}} \rho_0^{-3-2K} F^{(K+1)}\Big(\rho_0,y;\hat{E} \Big( \frac{\rho}{\rho_0} \Big)^2\Big) \dd \rho_0 \Big| &\preceq \rho^{2K+2} \int_\rho^{\bar{x}^{1/2}} \rho_0^{-3-2K + \beta} \dd \rho_0 \\
		&\preceq \rho^{2K+2} \Big[ (\bar{x}^{1/2})^{-2-2K+\beta} + \rho^{-2-2K+\beta} \Big] \\
		&\preceq \rho^{2\lfloor \beta /2\rfloor +2} + \rho^\beta =O( \rho^\beta),
		\end{split} 
	\end{align}
	and $\partial_{\hat{E}}^\kappa (\rho \partial_\rho)^{\varkappa} I_2$ is estimated similarly. 
	
	Thus, $I\in C^\infty([0,\infty)_{\hat{E}} \times [0,\infty)_\rho \times \partial X ) + C^\infty([0,\infty)_{\hat{E}} ;  \calA^{\beta}_{\mathrm{loc}}( [0,\infty)_\rho\times \partial X)  )$.
	Using \cref{eq:misc_ixy} as in the proof of the previous proposition, this suffices to show that 
	$I \in C^\infty(\hat{X}^{\mathrm{sp}}_{\mathrm{res}}) + \calA_{\mathrm{loc}}^{(0,0),\beta,(0,0)}(\hat{X}^{\mathrm{sp}}_{\mathrm{res}})$.
\end{proof}

\appendix

\section{The model ODE}
\label{ap:model} 

We record in this appendix some computations regarding the model ODE, \cref{eq:misc_aku}, now allowing a Schwarzschild-like subleading term and nonzero forcing: 
\begin{equation}
(1+x \mathsf{a} ) \Big(x^2 \frac{\partial}{\partial x} \Big)^2 u   + \sigma^2 u + \mathsf{Z} x u = -f, 
\label{eq:model_ODE}
\end{equation}
where $\mathsf{a} \neq 0$, $f\in C_{\mathrm{c}}^\infty(\bbR^+_x)$, and $u\in C^\infty(\bbR^+)$.
The case $\mathsf{a} <0$ is computationally similar to the case $\mathsf{a}\geq 0$, but in the former it is necessary to restrict $x$ to $(0,1 / |\mathsf{a}|)$ to avoid the second order term in \cref{eq:model_ODE} vanishing at $x=1/|\mathsf{a}|$. We will therefore only consider the case $\mathsf{a}\geq 0$.  
If we allow $\mathsf{Z}$ to depend on $\sigma$, the $\mathsf{a}>0$ case can be reduced to the $\mathsf{a} = 0$ case via a simple change of variables: let $x_0 = (\mathsf{a} + x^{-1} )^{-1} = x/(1+\mathsf{a}x)$, so that $r_0=1/x_0$ is given by $r_0=\mathsf{a}+r$, where $r=1/x$. Then $\partial_r = \partial_{r_0}$, and the ODE \cref{eq:model_ODE} is equivalent to 
\begin{equation}
		 \Big( x^2_0 \frac{\partial}{\partial x_0} \Big)^2 u + \sigma^2 u+  (\mathsf{Z} - \sigma^2 \mathsf{a}) x_0 u   = -f_0(x_0)
		\label{eq:model_ODE2}
\end{equation}
for $f_0(x_0)=(1- \mathsf{a} x_0) f(x)$. The interval $[0,\infty)_x$ becomes $[0,1/\mathsf{a})_{x_0}$, but we can analyze \cref{eq:model_ODE2} on the larger region $\smash{[0,\infty)_{x_0}}$, so that the analysis of \cref{eq:model_ODE} is reduced to the $\mathsf{a}=0$ case, with $\mathsf{Z}(\sigma)=\mathsf{Z} -\smash{\sigma^2} \mathsf{a}$. 
Since the results in this section mainly serve to illustrate the general features of the problem observed in the body of the paper, proofs are either sketched or omitted entirely when elementary. 
References for many of the elementary statements can be found in \cite{Bateman}\cite{Slater}\cite{SpecialFunctions}\cite{Olver}. We will mostly cite \cite{SpecialFunctions} when an explicit reference is desired.  

The (nonzero) solutions to the homogeneous ODE cannot be written in terms of elementary functions for any value of $\sigma\geq 0$. In fact, for $\sigma\neq 0$, the homogeneous ODE is essentially a special case of \emph{Whittaker's ODE} 
\begin{equation}
	\frac{d^2 W}{dz^2} + \Big( - \frac{1}{4} + \frac{\kappa}{z} + \frac{1/4-\mu^2}{z^2} \Big) W = 0,  
	\label{eq:whittaker_ODE}
\end{equation}
where $\kappa\in \bbC$, $\mu \in \bbC$ are parameters and $W \in \calD'(\bbR^+_z)$. When $\mu\notin - 2^{-1}\bbN^+$, there exist two named solutions to \cref{eq:whittaker_ODE}, and these extend from the nonnegative real axis to analytic functions 
\begin{equation}
	\operatorname{WhittM}_{\kappa,\mu}, \operatorname{WhittW}_{\kappa,\mu}: \bbC_z\backslash (-\infty,0] \to \bbC 
	\label{eq:misc_698}
\end{equation}
which solve Whittaker's ODE in the complex analytic sense. 
These functions are Whittaker's M- and W- functions. They can be written in terms of Kummer's and Tricomi's confluent hypergeometric functions \cite[\S13.1.32, \S13.1.33]{SpecialFunctions}. For certain values of $\mu$, the branch cuts in \cref{eq:misc_698} can be removed.

\begin{propositionp}
	For $\sigma>0$, the set of $u\in \calD'(\bbR^+_r)$ solving the original homogeneous ODE $\partial_r^2 u + \sigma^2 u + \mathsf{Z} r^{-1} u = 0$ is given by 
	\begin{equation}
		\calW_\sigma =\{c_1 \operatorname{WhittM}_{\kappa,1/2}(2i\sigma r) +c_2 \operatorname{WhittW}_{\kappa,1/2}(2i\sigma r) :c_1,c_2\in \bbC\}
	\end{equation}
	for $\kappa = - i\mathsf{Z}  / 2\sigma$. 
\end{propositionp}

The asymptotic expansions of the Whittaker M- and W-functions at large imaginary argument are due originally to Whittaker. For fixed $\sigma>0$, we have
\begin{align}
	\begin{split} 
	\operatorname{WhittM}_{\kappa,1/2}(2i\sigma r)  &= \Big[- \frac{(-2i \sigma)^{-i\mathsf{Z}/2\sigma} }{\Gamma(1+\kappa ) }e^{-i \sigma r - (i \mathsf{Z}/2\sigma) \log r}  \\ &\qquad\qquad\qquad + \frac{(+2i \sigma)^{i\mathsf{Z}/2\sigma } }{\Gamma(1-\kappa) }e^{+i \sigma r + (i\mathsf{Z}/2\sigma) \log r}  \Big] \Big(1+O_{\kappa,\sigma} \Big( \frac{1}{r} \Big) \Big) 
	\end{split} \\
	\operatorname{WhittW}_{\kappa,1/2}(2i\sigma r)  &= (2i \sigma)^{-i\mathsf{Z}/2\sigma } e^{-i \sigma r - (i\mathsf{Z}/2\sigma)\log r}  \Big(1+O_{\kappa,\sigma} \Big(\frac{1}{r}\Big)\Big) 
\end{align}
as $r\to\infty$. Here, and below, we leave the $\mathsf{Z}$ dependence of the estimates implicit.
We are using the principal branch of the logarithm in making sense of $ (2i \sigma)^{-i\mathsf{Z}/2\sigma}$. 
As we are concerned with $r\to\infty$ behavior, the Whittaker M- function (which is singled out of the space of all solutions to Whittaker's ODE by its behavior at small argument) is not one of the solutions of direct interest. For $\sigma>0$, 
\begin{align}
	\calW_{\sigma,-} &=  \operatorname{span}_\bbC\{ \operatorname{WhittW}_{\kappa,1/2}(2i\sigma r) \} \\
	\begin{split} 
		\calW_{\sigma,+} &= \operatorname{span}_\bbC\Big\{ \Big[ (2i \sigma)^{-i\mathsf{Z}/2\sigma }  \operatorname{WhittM}_{\kappa,1/2}(2i\sigma r)  \\
		&\qquad\qquad + (-2i\sigma)^{- (i\mathsf{Z}/2\sigma) }\Gamma(1+\kappa)^{-1} \operatorname{WhittW}_{\kappa,1/2}(2i\sigma r) \Big] \Big\}
	\end{split} 
\end{align}
are the spaces of ``incoming'' or ``outgoing'' solutions to the ODE, and $\calW_\sigma = \calW_{\sigma,-}\oplus \calW_{\sigma,+}$. It is the spaces $\calW_{\sigma,-},\calW_{\sigma,+}$ that concern us.

Set $w_-(r;\sigma) =  (2i \sigma)^{i\mathsf{Z}/2\sigma} \operatorname{WhittW}_{\kappa,1/2}(2i\sigma r)$ and 
\begin{equation}
	w_+(r;\sigma) = \Big[ (2i \sigma)^{-i\mathsf{Z}/2\sigma} \Gamma(1-\kappa) \operatorname{WhittM}_{\kappa,1/2}(2i\sigma r) + (-2i\sigma)^{-i\mathsf{Z}/2\sigma}\frac{\Gamma(1-\kappa)}{\Gamma(1+\kappa)} \operatorname{WhittW}_{\kappa,1/2}(2i\sigma r) \Big].
	\label{eq:misc_u++}
\end{equation}
Thus, $w_-(-;\sigma) \in \calW_{\sigma,-}$ and $w_+(-;\sigma)\in \calW_{\sigma,+}$, and $w_{\pm}(r;\sigma) \in C^\infty(\bbR^+_\sigma\times \bbR^+_r)$. 
These have normalized oscillatory behavior $\exp(\pm i \sigma r)$ as $r\to\infty$,
\begin{equation}
	w_\pm(r;\sigma) =  e^{\pm i \sigma r \pm (i\mathsf{Z}/2\sigma)\log r} \Big(1+O_{\sigma}\Big(\frac{1}{r}\Big)\Big).
	\label{eq:misc_asy}
\end{equation}
For each $\sigma>0$, $w_{\pm}$ are the unique solutions to the ODE satisfying  \cref{eq:misc_asy}. 
It follows from this and the fact that the ODE has real coefficients that $w_{-}(r;\sigma) = w_{+}(r;\sigma)^*$. 
Expanding to higher order \cite[\S13.5]{SpecialFunctions}: for each $\sigma>0$ and $K\in \bbN^+$, 
\begin{equation}
	w_{\pm}(r;\sigma) = e^{\pm i \sigma r} r^{\pm i\mathsf{Z}/2\sigma } \Big[1 + \mathsf{Z} \sum_{k=1}^{K-1} \frac{(\pm i)^k}{8^k k! \sigma^{3k} r^k} (\mathsf{Z} \pm 2k i \sigma ) \prod_{j=1}^{k-1}(\mathsf{Z} \pm 2j i \sigma )^2 + O_{\sigma}\Big( \frac{1}{r^K} \Big) \Big]
\end{equation}
as $r\to\infty$. 

For $\sigma=0$, the set of $u\in \calD'(\bbR^+_x)$ solving the ODE $(x^2 \partial_x)^2 u + \mathsf{Z} xu = 0$ is 
\begin{equation}
	 \{c_1 r^{1/2}  J_1(2 \mathsf{Z}^{1/2} r^{1/2} ) + c_2 r^{1/2} Y_1(2 \mathsf{Z}^{1/2} r^{1/2}) : c_1,c_2 \in \bbC\},
\end{equation}
where $J_1,Y_1$ denote the Bessel J- and Y- functions of order one. 
As $r\to\infty$, 
\begin{align}
	r^{1/2} J_1(2\mathsf{Z}^{1/2} r^{1/2}) &= \frac{r^{1/4}}{\pi^{\frac{1}{2}} \mathsf{Z}^{\frac{1}{4}}}\Big[-   \cos\Big(2 \sqrt{\mathsf{Z}r} + \frac{\pi}{4} \Big)  + \frac{3}{16} \frac{1}{\sqrt{\mathsf{Z}r}} \sin \Big(2\sqrt{\mathsf{Z}r} + \frac{\pi}{4} \Big)\Big] \Big(1+O\Big( \frac{1}{r}\Big)\Big) \\
	r^{1/2} Y_1(2\mathsf{Z}^{1/2} r^{1/2}) &= \frac{r^{1/4}}{\pi^{\frac{1}{2}} \mathsf{Z}^{\frac{1}{4}}}\Big[ -\sin\Big( 2  \sqrt{\mathsf{Z}r} + \frac{\pi}{4} \Big)-\frac{3}{16} \frac{1}{\sqrt{\mathsf{Z}r}} \cos \Big(2  \sqrt{\mathsf{Z}r} + \frac{\pi}{4}  \Big)  \Big] \Big(1+O\Big( \frac{1}{r}\Big)\Big), 
\end{align}
and we have a full expansion in powers of $r^{-1/2}$. 
For $\sigma=0$, set  
\begin{align}
	v_{-}(r;0) &=  r^{1/2} J_1(2\mathsf{Z}^{1/2} r^{1/2}) - i r^{1/2} Y_1(2\mathsf{Z}^{1/2} r^{1/2}) = r^{1/2} H_1^{(2)}(2\mathsf{Z}^{1/2} r^{1/2}) \label{eq:u-0}  \\
	v_{+}(r;0) &= r^{1/2} J_1(2\mathsf{Z}^{1/2} r^{1/2}) + i r^{1/2} Y_1(2\mathsf{Z}^{1/2} r^{1/2}) 
	= r^{1/2} H_1^{(1)}(2\mathsf{Z}^{1/2} r^{1/2}), \label{eq:u+0}
\end{align}
where $H_1^{(1)},H_1^{(2)}$ denote the Hankel functions of order one. These have the asymptotics
\begin{equation}
	v_{\pm}(r;\sigma) = e^{\pm 2 i \mathsf{Z}^{1/2} r^{1/2}\mp  3\pi i/4} \frac{r^{1/4}}{\pi^{1/2} \mathsf{Z}^{1/4}} \Big(1 + O\Big(\frac{1}{r^{1/2}} \Big) \Big)
	\label{eq:misc_asz}
\end{equation}
in the $r\to\infty$ limit. Expanding \cref{eq:misc_asz} to higher order \cite[\S9.2]{SpecialFunctions}: for each $K\in \bbN^+$, 
\begin{equation}
	v_{\pm}(r;0) = e^{\pm 2 i \mathsf{Z}^{1/2} r^{1/2} \mp 3\pi i/4} \frac{r^{1/4}}{\pi^{1/2} \mathsf{Z}^{1/4}} \sum_{k=0}^{K-1} (\pm i )^k \mathsf{Z}^{-k/2} \frac{(2k+1)! (2k)!}{64^k(k!)^3} \frac{1}{r^{k/2}} + O\Big( \frac{1}{r^{K/2 - 1/4}} \Big) 
\end{equation}
as $r\to\infty$. 

\begin{propositionp}
	$v_{\pm}$ are the unique solutions to the $\sigma=0$ ODE satisfying the asymptotic \cref{eq:misc_asz}. 
\end{propositionp}

\begin{propositionp}
	\label{prop:radialcont}
	Setting 
	\begin{equation}
		C_\pm(\sigma) = -\frac{\mathsf{Z}^{1/2}}{2\pi \sigma} (\mp 2i \sigma)^{\pm  i \mathsf{Z}/(2\sigma)  } \Gamma \Big( \mp\frac{i\mathsf{Z}}{2\sigma} \Big)
		\label{eq:CPM}
	\end{equation}
	and $v_{\pm}(r;\sigma) = C_\pm(\sigma) w_{\pm}(r;\sigma)$, the functions $v_{\pm}(r;E^{1/2}) :  [0,\infty)_E\times (0,\infty)_r\to \bbC$ are both smooth all the way down to $E = 0$. Consequently, if 
	\begin{align}
		C_{\pm,0}(\sigma) =   e^{\pm \pi i/4}\pi^{-1/2}\sigma^{-1/2} \exp\Big[\pm  \frac{\mathsf{Z} i}{\sigma} \Big(\log\Big(\frac{2\sigma}{ \mathsf{Z}^{1/2}} \Big) + \frac{1}{2} \Big) \Big] ,
		\label{eq:cpm}
	\end{align}
	then $C_{\pm,0}(\sigma) w_{\pm}(r;\sigma): [0,\infty)_\sigma \times (0,\infty)_r\to \bbC$ are smooth all the way down to $\sigma=0$, with restriction $v_{\pm}(-;0)$ to $\sigma = 0$.  
\end{propositionp}
\begin{remark*}
	The $C^0$ case of this proposition is similar to \cite[\S13.3.4, \S 13.3.5]{SpecialFunctions}, except that our $\kappa$ is purely imaginary rather than purely real. See also \cite{Taylor}\cite[\S6.13.3, Eq. (21) - (24)]{Bateman}.
	Of course, if we were to multiply $C_\pm(\sigma)$ by any element of $C^\infty[0,\infty)_E$, the resultant functions would also satisfy the proposition above, and likewise with $C_{\pm,0}(\sigma)$ in place of $C_\pm(\sigma)$ and $C^\infty[0,\infty)_\sigma$ in place of $C^\infty[0,\infty)_E$. 
	
\end{remark*}
\begin{remark*}
	Once we know that $C_\pm$ satisfies the first clause of the conclusion of \Cref{prop:radialcont}, the second clause of the proposition follows from the large argument asymptotics of $\log \Gamma$, which can be found in \cite[\S6.1.40]{SpecialFunctions}. 
	Observe, using \cite[\S6.1.40]{SpecialFunctions}, that the ratio $C_\pm / C_{\pm,0}: [0,\infty)_\sigma\to  \bbC$ is a smooth function of $\sigma$, not $E=\sigma^2$. This is related to the fact that, for $u_{0,\pm}$ as in \Cref{thm:main}, $u_{0,\pm}|_{\mathrm{bf}}:[0,\infty)_\sigma\times \partial X \to \bbC$ is only smooth with respect to $\sigma$, in contrast to $u_{0,\pm}|_{\{x=\varepsilon\}}: [0,\infty)_\sigma\times \partial X \to \bbC$ for $\varepsilon \in (0,\bar{x})$, which is smooth with respect to $E$.
\end{remark*}

\begin{figure}[t]
	\begin{center}
		\includegraphics[scale=.65]{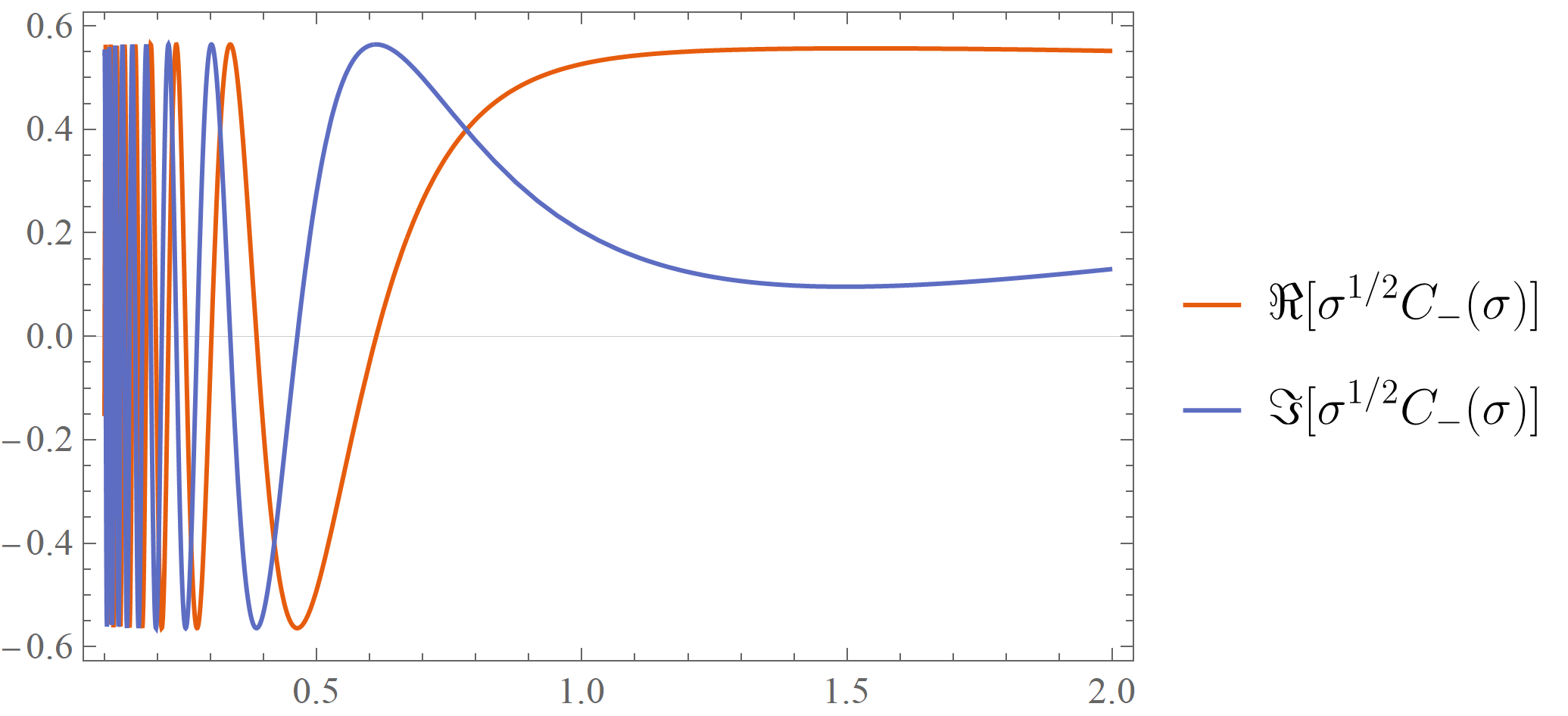}
	\end{center}
	\caption{The real and imaginary parts of the function $C_-(\sigma)$ defined by \cref{eq:CPM}, rescaled so that the oscillations have constant amplitude.}
\end{figure}

Up to the $\exp(\pm \pi i /4) \pi^{-1/2}$ in \cref{eq:cpm}, \cref{eq:cpm} is suggested by \Cref{thm:main}, as, for each $\sigma>0$, 
\begin{equation}
	\Phi(x;\sigma) = \frac{\sigma}{x} - \frac{\mathsf{Z}}{2\sigma} \log x + \frac{\mathsf{Z}}{2\sigma} + \frac{\mathsf{Z}}{\sigma} \log\Big( \frac{2\sigma}{\mathsf{Z}^{1/2}} \Big) + O_\sigma(x)
	\label{eq:misc_upi}
\end{equation}
as $x\to 0^+$, 
while the phase of the bracketed term in \cref{eq:misc_u++} consists only of the first two terms of \cref{eq:misc_upi}. The $\sigma^{-1/2}$ term in \cref{eq:cpm} is needed to match the $r^{1/2}$ in \cref{eq:u-0}, \cref{eq:u+0}.
This heuristic should be taken with a grain of salt, as the $O_\sigma(x)$ term blows up as $\sigma\to 0^+$.
Assuming that there exist some $C_\pm$ satisfying the conclusion of \Cref{prop:radialcont}, the sufficiency of the formula above can be seen from the $r\to 0^+$ asymptotics of the Whittaker functions. Indeed, the ODE can be written in the form 
\begin{equation}
	\Big( r \frac{\partial}{\partial r} \Big)^2 u -  \Big( r \frac{\partial}{\partial r} \Big) u  + ( \sigma^2 r^2 + \mathsf{Z} r)u = 0,
	\label{eq:misc_8y3}
\end{equation}
which has a regular singular point at $r=0$. The ``normal operator'' of $(r\partial_r)^2 - (r\partial_r) + \sigma^2 r^2 + \mathsf{Z} r$ at $r=0$ is $(r\partial_r)^2  - r \partial_r$, the indicial roots of which are $0$ and $1$. We can deduce that a family $\{u(-;\sigma)\}_{\sigma\geq 0} \subset \calD'(\bbR^+)$ of solutions to \cref{eq:misc_8y3} can be smooth at $\{r>0,\sigma=0\}$  only if it extends continuously to $[0,1)_\sigma\times [0,1)_r$ (we will not have smoothness at $r=0$ because of the presence of a $r\log r$ term), in which case $\lim_{\sigma\to 0^+} \lim_{r\to 0^+} u = \lim_{r\to 0^+}\lim_{\sigma\to 0^+}  u$ exists. Now observe that, as $r\to 0^+$,  
\begin{align}
	w_{\pm}(r;\sigma) &= \pm \frac{2i}{\mathsf{Z}} ( \mp 2i  \sigma)^{\mp i\mathsf{Z}/2\sigma} \frac{\sigma}{\Gamma(\mp i\mathsf{Z}/2\sigma)} + O_\sigma(r \log r), \\
	\lim_{r\to 0^+} w_{\pm}(r;\sigma) &= \pm \frac{2i}{\mathsf{Z}} (\mp 2i \sigma)^{\mp i\mathsf{Z}/2\sigma } \frac{\sigma}{\Gamma(\mp i\mathsf{Z}/2\sigma)}
	\label{eq:misc_9oo}
\end{align}
for each $\sigma>0$. We now compare \cref{eq:misc_9oo} with 
\begin{equation}
	\lim_{r\to 0^+} v_{\pm}(r;0) =  \mp \frac{i}{\pi \mathsf{Z}^{1/2}} + O(r \log r)
\end{equation}
\cite[\S9.1.9, \S9.1.11]{SpecialFunctions}.
This yields $C_\pm(\sigma) =  -(2 \pi \sigma)^{-1} \mathsf{Z}^{1/2} (\mp 2 i \sigma)^{\pm i \mathsf{Z}/2\sigma} \Gamma(\mp i \mathsf{Z}/2\sigma) $, which is \cref{eq:CPM}. A more thorough analysis of the ODE near $r=0$ suffices to prove \Cref{prop:radialcont} properly.

\begin{figure}
	\begin{center}
		\includegraphics[width =  \textwidth ]{w.png}
	\end{center}
	\caption{The function  $U(r;E)=v_{-}(r;E^{1/2})/v_{-}(r;0)$  (\textit{left}) and its first derivative $\partial_{E} U(r;E)$ (\textit{right}) evaluated at $r=5$, plotted against $E \in [0,2]$. The real parts  are plotted in orange and the imaginary parts are in blue. From the figures it appears that $U \in C^2[0,2)_E$ and $U(0)=1$, in accord with \Cref{prop:radialcont} (and with \Cref{cor:main1}).}
\end{figure}

Letting $\chi \in C^\infty_{\mathrm{c}}[0,\infty)$ be identically one in some neighborhood of the origin, $v_{\pm}^\circ=\chi(1/r) v_{\pm}(r;\sigma)$ satisfies 
\begin{equation}
	\frac{\partial^2 v_{\pm}^\circ}{\partial r^2} + \Big( \sigma^2+\frac{\mathsf{Z}}{r} \Big) v_{\pm}^\circ = f_\pm 
\end{equation}
for some $f_\pm\in \cap_{E_0>0} C_{\mathrm{c}}^\infty([0,E_0]_E\times \bbR^+ )$. 
We can deduce (e.g. by appealing to \Cref{thm:main} plus \Cref{rem:nonconstant} in the case of spherical symmetry) that $v_{\pm}^\circ$ has the form 
\begin{equation} 
	v_{\pm}^\circ  = \exp(\pm i \Phi(r^{-1};E^{1/2})) (E+\mathsf{Z}r^{-1})^{-1/4} v_{0,\pm}^\circ
\end{equation} 
for some $v_{0,\pm} \in \calA^{(0,0),\calE,(0,0)}_{\mathrm{loc}}(X_{\mathrm{res}}^{\mathrm{sp}})$, where $X=[0,\infty)_x$ and $X_{\mathrm{res}}^{\mathrm{sp}} = [[0,\infty)_E\times X ; \{0\}\times \{0\} ;1/2]$. Thus, $w_{\pm}(r,\sigma)$ satisfy 
\begin{equation}
	w_{\pm}(r;\sigma) \in C_\pm^{-1}(\sigma) e^{\pm i \Phi(r^{-1} ; E^{1/2}) } (E+\mathsf{Z} x)^{-1/4} \calA^{(0,0),\calE,(0,0)}_{\mathrm{loc}}(X_{\mathrm{res}}^{\mathrm{sp}}).  
\end{equation}
That is, in the `$-$' case: 
\begin{propositionp}
\label{prop:Whittakerfullasymptotics}
The Whittaker W-function $\operatorname{WhittW}_{\kappa,1/2}$ satisfies 
\begin{multline}
	\operatorname{WhittW}_{-i \mathsf{Z}/2\sigma,1/2}(2i\sigma r) \in  \sigma \Gamma \Big( \frac{i\mathsf{Z}}{2\sigma}\Big)^{-1} \Big(\sigma^2 + \frac{\mathsf{Z}}{r} \Big)^{-1/4} \\ \exp \Big( - ir \sqrt{\sigma^2 + \frac{\mathsf{Z}}{r} } - \frac{i\mathsf{Z}}{\sigma} \log \Big( \frac{\sigma r^{1/2}}{\mathsf{Z}^{1/2}} + \Big(1+\frac{\sigma^2 r}{\mathsf{Z}} \Big)^{1/2} \Big) \Big)  \calA^{(0,0),\calE,(0,0)}_{\mathrm{loc}}(X_{\mathrm{res}}^{\mathrm{sp}}). 
	\label{eq:Whittakerfullasymptotics}
\end{multline}
\end{propositionp}

\begin{remark*}
	It seems that there are no logarithmic terms in the expansion of the polyhomogeneous function in \cref{eq:Whittakerfullasymptotics}. In order to prove this, it should be possible to combine the previous proposition with a WKB type expansion at $\mathrm{tf}$, with $\sigma$ being the semiclassical parameter. The point here is that the Whittaker W-function has simple asymptotics at $\mathrm{bf}$, and the asymptotic expansion at $\mathrm{tf}$ without log terms can be concluded from this. The alternative argument above is indirect, utilizing the asymptotics at $\mathrm{tf}$, and from this perspective it is somewhat miraculous that the asymptotic expansion at $\mathrm{tf}$ is one-step polyhomogeneous. For general forcing $f\in C_{\mathrm{c}}^\infty(\bbR^+)$, we should not expect the outgoing solution to the forced ODE to have a one-step polyhomogeneous expansion at $\mathrm{tf}$. 
\end{remark*}

\begin{figure}[t]
	\begin{center}
		\includegraphics[width = .75\textwidth ]{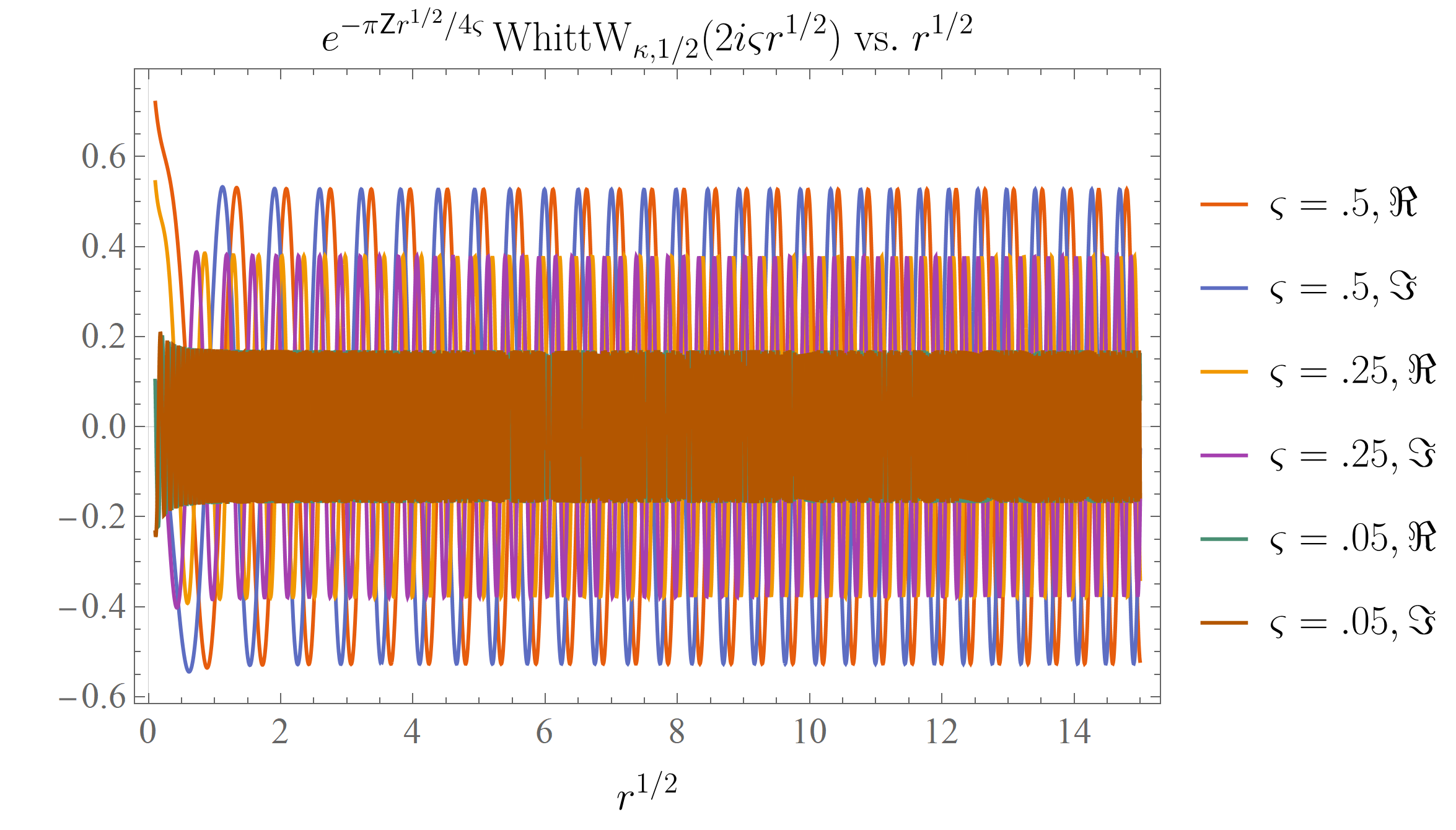}
	\end{center}
	\caption{The real (in red, orange, and green) and imaginary (in blue, purple, and brown) parts of the function $\exp(-\pi \mathsf{Z} r^{1/2} / 4 \varsigma) \operatorname{Whitt}_{\kappa,1/2}(2i\varsigma r^{1/2})$ as a function of $r^{1/2}$, for $\varsigma \in \{.05,.25,.5\}$ and $\mathsf{Z}=3$ fixed, where $\kappa = -i \mathsf{Z}r^{1/2}/2\varsigma$.}
	\label{fig:penultimate}
\end{figure}
\begin{figure}
	\begin{center}
		\includegraphics[width = .75 \textwidth ]{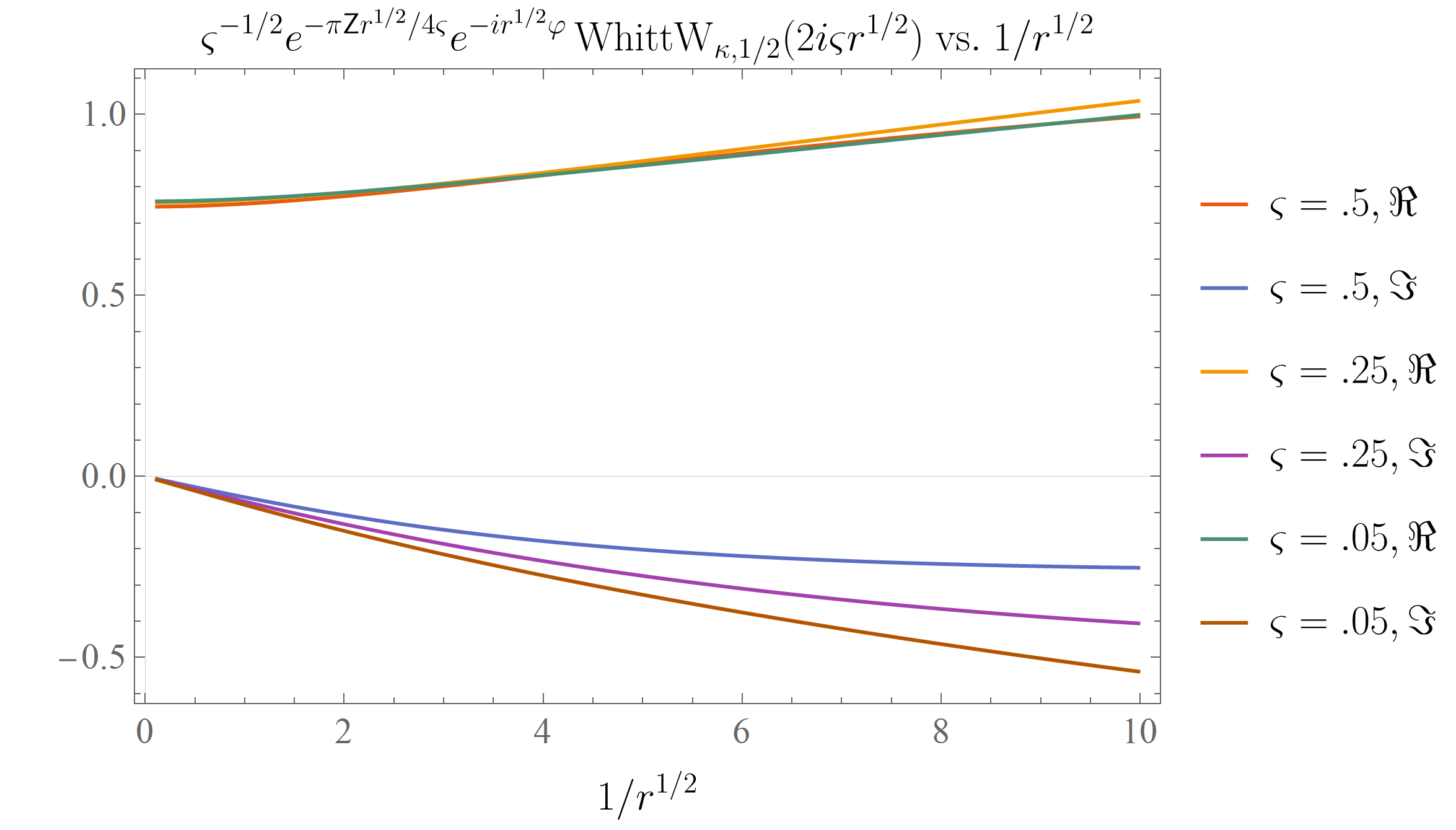}
	\end{center}
	\caption{The real and imaginary parts of the function $\varsigma^{-1/2}\exp(-\pi \mathsf{Z} r^{1/2} / 4 \varsigma) \exp(-i r^{1/2} \varphi) \operatorname{Whitt}_{\kappa,1/2}(2i\varsigma r^{1/2})$ (with the same color scheme as in \Cref{fig:penultimate}), now plotted as a function of $\rho=1/r^{1/2}$, for $\varsigma \in \{.05,.25,.5\}$ and $\mathsf{Z}=3$.}
	\label{fig:final}
\end{figure}

\Cref{prop:Whittakerfullasymptotics} can be strengthened by specifying the behavior of $\operatorname{WhittW}_{-i \mathsf{Z}/2\sigma,1/2}(2i\sigma r)$ at small $r$. Namely, if we replace $X$ by $\smash{\bar{X} =[0,\infty]_x}$, then, replacing $r$ by $\langle r \rangle$ in \cref{eq:Whittakerfullasymptotics}, the same statement holds, with $\calA^{(0,0),\calE,(0,0)}(X_{\mathrm{res}}^{\mathrm{sp}})$ replaced by the set of polyhomogeneous functions on $\smash{\bar{X}_{\mathrm{res}}^{\mathrm{sp}}}$ with index set $\bbN$ at $\mathrm{zf},\mathrm{tf},\mathrm{bf}$ and with some other index set (which can be specified) at the new face formed by the lift of $[0,\infty)_E\times \{\infty\}_x$ to $\smash{\bar{X}_{\mathrm{res}}^{\mathrm{sp}}}$.

The most interesting regime is $\mathrm{tf}$, which we can probe using the coordinates $\varsigma = \sigma r^{1/2}$ and $r^{-1/2}$. As a corollary of \Cref{prop:Whittakerfullasymptotics}, analogous to \Cref{cor:main3}, we deduce that 
\begin{multline}
	\operatorname{WhittW}_{-i\mathsf{Z} r^{1/2} /2\varsigma  ,1/2}(2i \varsigma r^{1/2}) \in \varsigma^{1/2} e^{ \frac{\pi \mathsf{Z}}{4\varsigma} r^{1/2}} \exp \Big( - i r^{1/2} \sqrt{\varsigma^2+\mathsf{Z}} + \frac{i\mathsf{Z}}{2\varsigma}r^{1/2} \Big)\\ 
	\exp\Big[ -  \frac{i\mathsf{Z}}{\varsigma}r^{1/2} \Big[\frac{1}{2}\log\Big( \frac{\mathsf{Z} r^{1/2}}{2\varsigma} \Big) + \log\Big( \frac{\varsigma}{\mathsf{Z}^{1/2}} + \Big(1+\frac{\varsigma^2}{\mathsf{Z}} \Big)^{1/2} \Big)  \Big]  \Big]
	 \calA^{(0,0),\calF}([0,\infty)_{\varsigma} \times [0,\infty)_{r^{-1/2}} )
\end{multline}
for some index set $\calF\subset \bbN\times \bbN$.
Here we used the large argument expansion of the $\Gamma$-function, i.e.\ Stirling's formula \cite[\S6.1.37]{SpecialFunctions}, which implies 
\begin{equation} 
	\Gamma(i \mathsf{Z} r^{1/2} / 2\varsigma) \in r^{-1/4} \varsigma^{1/2} e^{- \pi \mathsf{Z} r^{1/2} / 4 \varsigma } e^{-i \mathsf{Z} r^{1/2}/2\varsigma} ( \mathsf{Z} r^{1/2}/2\varsigma )^{i \mathsf{Z} r^{1/2}/2\varsigma}    C^\infty([0,\infty)_{\varsigma} \times [0,\infty)_{r^{-1/2}} ).
	\label{eq:Whittakerasymptoticstransitional}
\end{equation} 
Thus, if we let 
\begin{equation}
	\varphi =  - (\varsigma^2+\mathsf{Z})^{1/2}+ \frac{\mathsf{Z}}{2\varsigma} -    \frac{\mathsf{Z}}{\varsigma} \Big[\frac{1}{2}\log\Big( \frac{\mathsf{Z} r^{1/2}}{2\varsigma} \Big) + \log\Big( \frac{\varsigma}{\mathsf{Z}^{1/2}} + \Big(1+\frac{\varsigma^2}{\mathsf{Z}} \Big)^{1/2} \Big)  \Big], 
	\label{eq:varphi}
\end{equation}
then, for each $\varsigma>0$, $e^{-\pi \mathsf{Z} r^{1/2}/4\varsigma} e^{-i r^{1/2} \varphi} \smash{\operatorname{Whitt}_{\kappa,1/2}(2i \varsigma r^{1/2})}$ is a polyhomogeneous function of $r^{-1/2}$, all the way down to $r^{-1/2} = 0$. 
The convergence aspect of this result, in particular the fact that the multiplication by $\exp(-i r^{1/2} \varphi)$ kills off the oscillations of $\exp(-\pi \mathsf{Z} r^{1/2}/4\varsigma) \smash{\operatorname{Whitt}_{\kappa,1/2}(2i\varsigma r^{1/2})}$, is depicted in \Cref{fig:final} (and the contrast with \Cref{fig:penultimate}, where the $\exp(-i r^{1/2} \varphi)$ factor is missing).
The $C^0$ case of this is similar to \cite[\S6.13.3, Eq. (21) - (24)]{Bateman} (though we did not compute out the leading order term in the asymptotic expansion). 
\section*{Acknowledgements}

The author thanks Peter Hintz for guidance and comments on the manuscript, and in addition Nick Lohr for comments on the introduction. This work was supported by a Hertz fellowship. 

\printbibliography

\end{document}